\def\Xint#1{\mathchoice
  {\XXint\displaystyle\textstyle{#1}}%
  {\XXint\textstyle\scriptstyle{#1}}%
  {\XXint\scriptstyle\scriptscriptstyle{#1}}%
  {\XXint\scriptscriptstyle\scriptscriptstyle{#1}}%
  \!\int}
\def\XXint#1#2#3{{\setbox0=\hbox{$#1{#2#3}{\int}$}
  \vcenter{\hbox{$#2#3$}}\kern-.5\wd0}}
\def\dashint{\Xint-}
\newcommand{\al}{\alpha}            
\newcommand{\lda}{\lambda}
\newcommand{\om}{\Omega}            
\newcommand{\pa}{\partial}
\newcommand{\va}{\varepsilon}       
\newcommand{\ud}{\mathrm{d}}
\newcommand{\be}{\begin{equation}} 
\newcommand{\ee}{\end{equation}}
\newcommand{\w}{\omega}      
\newcommand{\Lda}{\Lambda}    
\newcommand{\A}{\mathbf{A}}
\newcommand{\cA}{\mathcal{A}}
\newcommand{\B}{\mathbf{B}}
\newcommand{\cB}{\mathcal{B}}
\newcommand{\CC}{\mathbf{C}}
\newcommand{\cC}{\mathcal{C}}
\newcommand{\E}{\mathbb{E}}
\newcommand{\cE}{\mathcal{E}}
\newcommand{\e}{\mathbf{e}}
\newcommand{\fF}{\mathbf{F}}
\newcommand{\cF}{\mathcal{F}}
\newcommand{\bG}{\mathbb{G}}
\newcommand{\fG}{\mathbf{G}}
\newcommand{\cG}{\mathcal{G}}
\newcommand{\rh}{\mathrm{h}}
\newcommand{\rH}{\mathrm{H}}
\newcommand{\bH}{\mathbb{H}}
\newcommand{\I}{\mathbf{I}}  
\newcommand{\ii}{\mathrm{i}}
\newcommand{\jj}{\mathrm{j}}
\newcommand{\kk}{\mathrm{k}}
\newcommand{\LL}{\mathbf{L}} 
\newcommand{\Z}{\mathbb{Z}}
\newcommand{\M}{\mathcal{M}}
\newcommand{\MM}{\mathbb{M}}
\newcommand{\m}{\mathbf{m}}
\newcommand{\cN}{\mathcal{N}}
\newcommand{\n}{\mathbf{n}}
\newcommand{\cP}{\mathcal{P}}
\newcommand{\PP}{\mathbf{P}}
\newcommand{\p}{\mathbf{p}}
\newcommand{\Q}{\mathbf{Q}}  
\newcommand{\R}{\mathbb{R}}
\newcommand{\cS}{\mathcal{S}} 
\newcommand{\Ss}{\mathbb{S}}
\newcommand{\fS}{\mathbf{S}}
\newcommand{\uu}{\mathbf{u}}
\newcommand{\U}{\mathbf{U}}
\newcommand{\vv}{\mathbf{v}}
\newcommand{\V}{\mathbf{V}}
\newcommand{\W}{\mathbf{W}}
\newcommand{\X}{\mathbf{X}}
\newcommand{\cX}{\mathcal{X}}
\newcommand{\Y}{\mathbf{Y}}
\newcommand{\cY}{\mathcal{Y}}
\newcommand{\wc}{\rightharpoonup}        
\newcommand{\HH}{\mathcal{H}}
\newcommand{\FF}{\mathcal{F}}
\newcommand{\vp}{\varphi}
\newcommand{\T}{\mathrm{T}}
\newcommand{\ga}{\gamma}
\newcommand{\Ga}{\Gamma}
\newcommand{\sg}{\sigma} 
\newcommand{\ift}{\infty} 
\newcommand{\wt}{\widetilde}
\newcommand{\wh}{\widehat}
\newcommand{\f}{\frac}
\newcommand{\ol}{\overline}
\newcommand{\Ra}{\Rightarrow}
\newcommand{\op}{\operatorname}
\newcommand{\nn}{\nonumber}
\newcommand{\na}{\nabla}
\DeclareMathOperator{\dist}{dist}
\DeclareMathOperator{\diam}{diam}
\DeclareMathOperator{\sgn}{sgn}
\DeclareMathOperator{\supp}{supp}
\DeclareMathOperator{\pts}{pts}
\DeclareMathOperator{\tr}{tr}
\DeclareMathOperator*{\osc}{osc}
\DeclareMathOperator{\diag}{diag}
\DeclareMathOperator{\loc}{loc}
\DeclareMathOperator{\ad}{ad}
\DeclareMathOperator*{\essinf}{ess\,inf}
\def\<{\langle}\def\>{\rangle}
\def\({\left(}\def\){\right)}
\def\[{\left[}\def\]{\right]}
\numberwithin{equation}{section}
\theoremstyle{plain}
\newtheorem{thm}{Theorem}[section]
\newtheorem{cor}[thm]{Corollary}
\newtheorem{lem}[thm]{Lemma}
\newtheorem{prop}[thm]{Proposition}
\theoremstyle{definition}
\newtheorem{defn}[thm]{Definition}
\newtheorem{rem}[thm]{Remark}
\newtheorem{q}[thm]{Question}
\title[Landau-de Gennes model with sextic potentials]{ Landau-de Gennes model with sextic potentials: asymptotic behavior of minimizers}  
\author[W. Wang]{Wei Wang}
\address{School of Mathematical Sciences, Peking University, Beijing 100871, China}
\email{2201110024@stu.pku.edu.cn}
\author[Z. Zhang]{Zhifei Zhang}
\address{School of Mathematical Sciences, Peking University, Beijing 100871, China}
\email{zfzhang@math.pku.edu.cn}
\date{\today}
\begin{document}

\maketitle

\begin{abstract}
We study a class of Landau-de Gennes energy functionals with a sextic bulk energy density in a three-dimensional domain.  We examine the asymptotic behavior of uniformly bounded minimizers in two distinct scenarios: one where their energy remains uniformly bounded, and another where it logarithmically diverges as a function of the elastic constant. In the first case, we show that up to a subsequence, the minimizers converge to a locally minimizing harmonic map  in both the $ H_{\loc}^1 $ and $ C_{\loc}^j, j\in \Z_+ $ norms within compact subsets that are distant from the singularities of the limit. For the second case, we establish the existence of a closed set denoted as $ \cS_{\op{line}} $. This set has finite length and consists of finite segments of lines locally such that the energy of minimizers are locally uniformly bounded away from it. 
This work solves an open question raised by Canevari ({\it ARMA, 223 (2017), 591-676}), specifically concerning point and line defects in the Landau-de Gennes model with sextic potentials.
\end{abstract}

\tableofcontents

\section{Introduction}

\subsection{Background and main results}
Liquid crystals are basically anisotropic fluids, where the anisotropy arises from the directional nature of the molecular geometry, physical, or chemical properties. To characterize this anisotropy, mathematical models are introduced to describe the orientation of the molecules. These models are referred to as order parameters in physics. Depending on the choice of order parameter, existing mathematical models for liquid crystals can be broadly classified into three models. The first is to use a probability distribution function, denoted as $ f(x,\m) $, to represent the probability of the liquid crystal molecules at a fluid point $ x $ being oriented in the direction $ \m $. This model was initially established by Onsager in \cite{O49}. The second model, referred to as the vector model or the Oseen-Frank model, uses a unit vector field denoted as $ \n(x) $ to characterize the average orientation of liquid crystal molecules at each point $ x $. A comprehensive review of the research findings on the Oseen-Frank model and its unresolved issues was provided by \cite{LL01}. The third one, known as the Landau-de Gennes model, describes the local configurations of the medium using $ \Q $-tensors, which are traceless $ 3\times 3 $ matrices represented by
$$
\Ss_0:=\{\Q\in\mathbb{M}^{3\times 3}:\Q^{\T}=\Q,\,\,\tr\Q=0\}.
$$
Each of these models has its own advantages and disadvantages in practical applications, stemming from their different starting points. In this paper, our focus is specifically on the Landau-de Gennes model.

Let $ \om\subset\R^3 $ be a bounded domain. The general Landau-de Gennes energy functional is defined as
\be
\cF(\Q,\om)=\int_{\om}\cF(\Q)\ud x,\label{GeneralLdG}
\ee
where the total energy density $ \cF(\Q)=\cF_e(\Q)+\cF_b(\Q) $ consists of two parts. The elastic part $ \cF_e(\Q) $ is given by
\begin{align*}
\cF_e(\Q)&=\f{L_1}{2}\pa_k\Q_{ij}\pa_k\Q_{ij}+\f{L_2}{2}\pa_j\Q_{ij}\pa_k\Q_{ik}+\f{L_3}{2}\pa_k\Q_{ij}\pa_j\Q_{ik}+\f{L_4}{2}\Q_{ij}\pa_i\Q_{k\ell}\pa_j\Q_{k\ell},
\end{align*}
where $ \{L_i\}_{i=1}^4 $ are non-negative constants and we have used the Einstein summation convention throughout this paper. The bulk energy density $ \cF_b $ is given by the following sextic form
\begin{align*}
\cF_b(\Q)&=a_1-\f{a_2}{2}\tr \Q^2+\f{a_3}{3}\tr\Q^3+\f{a_4}{4}(\tr\Q^2)^2\\
&\quad\quad\,\,+\f{a_5}{5}(\tr\Q^2)(\tr\Q^3)+\f{a_6}{6}(\tr\Q^2)^3+\f{a_6'}{6}(\tr\Q^3)^2.
\end{align*}
Here, $ \{a_i\}_{i=1}^6 $ and $ a_6' $ are non-negative constants. In the context of the Landau-de Gennes theory, the elastic part of the energy functional describes the distortion of the liquid crystal director field, while the bulk part captures the interactions among the molecules. 

To simplify the analysis, many researchers make certain assumptions: $ L_1>0 $, $ L_2=L_3=L_4=0 $, and $ a_5=a_6=a_6'=0 $. Additionally, they investigate the asymptotic behavior of minimizers in $ H^1(\om,\Ss_0) $, subject to appropriate boundary conditions, for the functional \eqref{GeneralLdG} as $ L_1\to 0^+ $. This convergence result is closely related to the minimizers of the energy functional
\be
E_{\va}^{(4)}(\Q,\om)=\int_{\om}\(\f{1}{2}|\na\Q|^2+\f{1}{\va^2}\FF_b^{(4)}(\Q)\)\ud x,\label{quarticbulk}
\ee
where the quartic bulk energy density $ \FF_b^{(4)} $ is defined by 
$$
\FF_b^{(4)}(\Q)=a_1-\f{a_2}{2}\tr\Q^2+\f{a_3}{3}\tr\Q^3+\f{a_3}{4}(\tr\Q^2)^2
$$
for $ a_2,a_3,a_4>0 $. Here, the constant $ a_1 $ is chosen such that $ \min_{\Q\in\Ss_0}\FF_b^{(4)}(\Q)=0 $. From an intuitive perspective, as $ \va\to 0^+ $, the term $ \va^{-2}\cF_b^{(4)}(\Q) $ in \eqref{quarticbulk} forces the minimizers to take their value in $ \cN_u=(\cF_b^{(4)})^{-1}(0) $, defined as
$$
\cN_u=\left\{s_0\(\n\n-\f{1}{3}\I\):\n\in\Ss^2\right\},\quad s_0=\f{a_3+\sqrt{a_2^2+24a_2a_3}}{4a_4},
$$
where $ \n\n=\n\otimes\n $ is the matrix defined by $ (\n\n)_{ij}=\n_i\n_j $. Here, $ \cN_u $ corresponds the unaxial nematic phase, and extensive research has been conducted in this problem setting. In their work \cite{MZ10}, Majumdar and Zarnescu established $ H^1 $ and uniform convergence for a sequence of minimizers with prescribed boundary conditions as $ \va\to 0^+ $. Specifically, they proved that if $ \om $ is a bounded smooth domain, $ \Q_b\in C^{\ift}(\pa\om,\cN_u) $, and $ \{\Q_{\va}\}_{0<\va<1} $ is a sequence of minimizers for the minimizing problem
$$
\min\{E_{\va}^{(4)}(\Q,\om):\Q\in H^1(\om,\Ss_0),\,\,\Q|_{\pa\om}=\Q_b\},
$$
then up to a subsequence, $ \Q_{\va} $ converges in $ H^1(\om,\Ss_0) $ to some $ \Q_0\in H^1(\om,\cN_u) $. Furthermore, $ \Q_0 $ is a minimizer for
$$
\min\left\{\int_{\om}|\na\Q|^2\ud x:\Q\in H^1(\om,\cN),\,\,\Q|_{\pa\om}=\Q_b\right\}
$$
and $ \Q_{\va} $ converges to $ \Q_0 $ uniformly in every compact subset of $ \om $ containing no singularities of $ \Q_0 $. In \cite{NZ13}, Nguyen and Zarnescu improved the uniform convergence result to $ C^j $ convergence with $ j\in\Z_+ $. Indeed, results in \cite{MZ10,NZ13} are all given in the regime where the sequence of minimizers has uniformly bounded energy, that is, $ E_{\va}^{(4)}(\Q_{\va},\om)\leq C $ for some $ C>0 $ independent of $ \va $. A recent notable contribution to this model was presented by Canevari \cite{C17}. In this paper, the author examined minimizers whose energy diverge logarithmically as the elastic constant approaches zero. Specifically, the minimizers $ \{\Q_{\va}\}_{0<\va<1} $ satisfy the energy bound
$$ 
E_{\va}^{(4)}(\Q_{\va},\om)\leq C\(\log\f{1}{\va}+1\),\quad 0<\va<1.
$$
In \cite{C17}, the author established the clearing-out lemma, which states that there exists a sufficiently small $ \eta>0 $ and a $ \va $-independent constant $ C>0 $, such that 
$$
E_{\va}^{(4)}(\Q_{\va},B_r)\leq\eta\log\f{1}{\va}\quad\Ra\quad E_{\va}^{(4)}(\Q_{\va},B_{r/2})\leq C,
$$
for $ B_r(x)\subset\subset\om $. By using this lemma, the existence of line defects can be inferred by studying the convergence of the measure
$$
\(\f{1}{2}|\na\Q_{\va}|^2+\f{1}{\va^2}\FF_b^{(4)}(\Q_{\va})\)\f{\ud x}{\log\f{1}{\va}}
$$
as $ \va\to 0^+ $. Furthermore, it can be proved that the set of line defects is a $ 1 $-dimensional stationary varifold, and standard results imply that it consists of finite segments of lines within the interior of $ \om $. For further investigations on \eqref{quarticbulk}, interested readers can refer to \cite{BZ11,DMP21,GZ23,MP21}. In \cite{C17}, the author also proposed an open question as follows.

\begin{q}\label{openq}
Are the results on the model \eqref{quarticbulk} can be generalized to the model with sextic potentials such that $ a_6,a_6'>0 $? 
\end{q}

When both $ a_6 $ and $ a_6' $ are positive, the situation becomes considerably more complex. It is crucial to note that if $ a_6'>0 $, the set $ \FF_b^{-1}(0) $ can be characterized as
$$
\cN_b=\left\{s_1\(\n\n-\f{1}{3}\I\)+s_1r_1\(\m\m-\f{1}{3}\I\):\n,\m\in\Ss^2,\,\,\n\cdot\m=0\right\},
$$
where $ s_1>0 $ and $ 0<r_1<1 $. Here, $ \cN_b $ represents the biaxial nematic phase. These types of nematic phases have been investigated in physics, for example, in \cite{AL08,SS04}, but they lack thorough mathematical analysis. The geometric structure of $ \cN_b $ is significantly more intricate compared to that of $ \cN_u $, requiring a more sophisticated analysis to explore this model. In their work \cite{DEG98}, Davis, Eugene, and Gartland established the existence of minimizers for this model. Additionally, in \cite{HL22}, Huang and Lin examined its orientability and dynamic properties. Furthermore, the study conducted by Monteil, Rodiac, and Schaftingen in \cite{MRS21,MRS22} considered a more general scenario in a two-dimensional domain.

The goal of this paper is to answer {\bf Question \ref{openq}} in some context. More precisely, we generalize the results presented in \cite{MZ10,NZ13,C17} to the model with sextic bulk energy density. In \cite{HL22}, the authors simplified their analysis by assuming that $ a_3=a_5=0 $. In other words, they considered the following energy functional
\be
E_{\va}(\Q,\om)=\int_{\om}\(\f{1}{2}|\na\Q|^2+\f{1}{\va^2}f_b(\Q)\)\ud x,\label{energy}
\ee
where $ f_b $ represents a sextic bulk energy density defined as
\be
f_b(\Q):=a_1-\f{a_2}{2}\tr \Q^2+\f{a_4}{4}(\tr\Q^2)^2+\f{a_6}{6}(\tr\Q^2)^3+\f{a_6'}{6}(\tr\Q^3)^2\label{fbQa}
\ee
with $ a_2,a_4,a_6,a_6'>0 $. Here, $ a_1 $ is chosen such that $ \min_{\Q\in\Ss_0}f_b(\Q)=0 $. In this paper, we also investigate this model, and for  more general case when $ a_3 $ and $ a_5 $ are non-zero, one can use almost the same arguments.

To determine the minimum of $ f_b(\Q) $ within $ \Ss_0 $, following the calculations in \cite{HL22}, we introduce the variables $ \mu=\tr\Q^2 $ and $ \nu=\tr\Q^3 $. We define
$$
f_b(\Q)=f_0(\mu,\nu):=a_1-\f{a_2}{2}\mu+\f{a_4}{4}\mu^2+\f{a_6}{6}\mu^3+\f{a_6'}{6}\nu^2.
$$
To determine the minimum of $ f_0 $, we consider the equations $ \pa_{\mu}f_0(\mu,\nu)=0 $ and $ \pa_{\nu}f_0(\mu,\nu)=0 $. These equations imply that $ -a_2+a_4\mu+a_6\mu^2=0 $ and $ a_6'\nu=0 $. Consequently,
$$
\mu=\f{-a_4+\sqrt{a_4^2+4a_2a_6}}{2a_6},\quad\nu=0,
$$
where we have eliminated the negative root of $ \mu $ due to the fact that $ \mu=\tr\Q^2\geq 0 $. Thus, the set of eigenvalues of $ \Q $ is $ \{r_*,-r_*,0\} $, where $ r_*>0 $ satisfies 
\be
4a_6r_*^4+2a_4r_*^2-a_2=0.\label{rstar}
\ee
As a result, $ f_b(\Q)=0 $ if and only if
$$
\Q\in\cN:=\{r_*(\n\n-\m\m):(\n,\m)\in\M\},
$$
where $ \M $ is defined by
$$
\M:=\{(\n,\m)\in\Ss^2\times\Ss^2:\n\cdot\m=0\}.
$$
For $ 0<\va<1 $, there are two types of minimizing problems defined as follows.

\begin{defn}[Local minimizers]
Let $ 0<\va<1 $. Assume that $ \om $ is a bounded domain. A map $ \Q_{\va}\in H_{\loc}^1(\om,\Ss_0) $ is a local minimizer of the energy \eqref{energy} if for any ball $ B\subset\subset\om $ and every map $ \PP\in H^1(B,\Ss_0) $ with $ \PP=\Q_{\va}$ on $\partial B$, there holds $ E_{\va}(\Q_{\va},B)\leq E_{\va}(\PP,B) $.
\end{defn}

\begin{defn}[Global minimizers]
Let $ 0<\va<1 $. Assume that $ \om $ is a bounded Lipschitz domain. A map $ \Q_{\va}\in H^1(\om,\Ss_0) $ is a global minimizer of the energy \eqref{energy} with the boundary condition $ \Q_{b,\va}\in H^{1/2}(\pa\om,\Ss_0) $ if for any $ \PP\in H^1(\om,\Ss_0;\Q_{b,\va}) $, there holds $
E_{\va}(\Q_{\va},\om)\leq E_{\va}(\PP,\om) $, where
$$
H^1(\om,\Ss_0;\Q_{b,\va})=\{\Q\in H^1(\om,\Ss_0):\Q|_{\pa\om}=\Q_{b,\va}\}.
$$
We also call that $ \Q_{\va} $ is a global minimizer of \eqref{energy} in the space $ H^1(\om,\Ss_0;\Q_{b,\va}) $.
\end{defn}

For the sake of simplicity, we define $ \cA:=\{a_2,a_4,a_6,a_6'\} $ as the set of coefficients of the bulk energy defined in \eqref{fbQa}. We call a constant $ C>0 $ depends on $ \cA $, if the positive constant $ C $ depends on $ a_2,a_4,a_6 $, and $ a_6' $. Our main results are stated as follows.

\begin{thm}\label{main1}
Let $ \om\subset\R^3 $ be a bounded domain. Assume that $ \{\Q_{\va}\}_{0<\va<1} $ is a sequence of local minimizers of \eqref{energy}, satisfying
\be
E_{\va}(\Q_{\va},\om)\leq M\text{ and }\|\Q_{\va}\|_{L^{\ift}(\om)}\leq M\label{assumptionbound1}
\ee
for some $ M>0 $. There exist a subsequence $ \va_n\to 0^+ $ and $ \Q_0\in H^1(\om,\cN) $ such that the following properties hold.
\begin{enumerate}
\item $ \Q_{\va_n}\to\Q_0 $ strongly in $ H_{\loc}^1(\om,\Ss_0) $ and $ \va_n^{-2}f_b(\Q_{\va_n})\to 0 $ in $ L_{\loc}^1(\om) $.
\item $ \Q_0 $ is locally minimizing harmonic in $ \om $, that is, for every ball $ B\subset\subset\om $ and any $ \PP\in H^1(B,\cN) $ with $ \PP=\Q_0 $ on $ \pa B $, there holds
$$
\f{1}{2}\int_B|\na\Q_0|^2\ud x\leq\f{1}{2} \int_B|\na\PP|^2\ud x.
$$
Moreover, $ \Q_0 $ is a weak solution of
\be
\Delta\Q_0=-\f{1}{2r_*^2}|\na\Q_0|^2\Q_0-\f{3}{r_*^4}\tr(\na\Q_0\na\Q_0\Q_0)\(\Q_0^2-\f{2r_*^2}{3}\I\).\label{weakharmoniceq}
\ee
\item There exists a locally finite set $ \cS_{\pts}\subset\om $, such that $ \Q_0 $ is smooth in $ \om\backslash\cS_{\pts} $.
\item For any $ j\in\Z_{\geq 0} $, $ \Q_{\va_n}\to\Q_{0} $ in $ C_{\loc}^j(\om\backslash\cS_{\pts},\Ss_0) $. In particular, for any $ B_r(x_0)\subset\subset\om\backslash\cS_{\pts} $, with $ r>0 $, $ \Q_{\va_n} $ is a classical solution for
\be
\Delta\Q_{\va_n}=-\f{1}{2r_*^2}|\na\Q_{\va_n}|^2\Q_{\va_n}-\f{3}{r_*^4}\tr(\na\Q_{\va_n}\na\Q_{\va_n}\Q_{\va_n})\(\Q_{\va_n}^2-\f{2r_*^2}{3}\I\)+\mathbf{R}_n.\label{Reesti}
\ee
in $ B_{r/2}(x_0) $. Here, $ \mathbf{R}_n $ is a remainder satisfying the estimate
$$
\|D^j\mathbf{R}_n\|_{L^{\ift}(B_{r/2}(x_0))}\leq C\va_n^2r^{-j-2},
$$
where $ C>0 $ depends only on $ \cA $ and $ M $.
\end{enumerate}
\end{thm}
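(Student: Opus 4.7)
The plan is to adapt the strategy of Majumdar--Zarnescu \cite{MZ10} and Nguyen--Zarnescu \cite{NZ13} for the quartic case, accounting for the richer geometry of the biaxial limit manifold $\cN$. From the uniform bounds in \eqref{assumptionbound1} I extract a subsequence with $\Q_{\va_n}\wc\Q_0$ weakly in $H^1_{\loc}(\om,\Ss_0)$ and a.e.\ pointwise; the energy bound forces $\int_{\om'}f_b(\Q_{\va_n})\,\ud x\leq M\va_n^2\to 0$ on each $\om'\subset\subset\om$, so by Fatou $f_b(\Q_0)=0$ a.e.\ and hence $\Q_0\in H^1(\om,\cN)$. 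To upgrade to strong convergence and local minimality, I fix a ball $B\subset\subset\om$ and an arbitrary competitor $\PP\in H^1(B,\cN)$ with $\PP=\Q_0$ on $\pa B$, and construct maps $\PP_n$ which equal $\Q_{\va_n}$ on $\pa B$ and $\PP$ on most of $B$. A Fubini/slicing argument on concentric spheres $\pa B_r$ produces a radius on which the traces $\Q_{\va_n}|_{\pa B_r}$ converge strongly in $H^{1/2}$ to $\Q_0|_{\pa B_r}\in\cN$ and hence eventually lie in a tubular neighborhood $\cN(\delta)\subset\Ss_0$; I then interpolate between $\PP$ inside $B_r$ and $\Q_{\va_n}$ outside across a thin annular layer, using the nearest-point projection $\pi\colon\cN(\delta)\to\cN$ to stay close to $\cN$. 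The annular contribution is $o(1)$, giving $\limsup E_{\va_n}(\Q_{\va_n},B)\leq\f{1}{2}\int_B|\na\PP|^2\,\ud x$. Weak lower semicontinuity and the admissible choice $\PP=\Q_0$ then imply strong $H^1_{\loc}$ convergence, $\va_n^{-2}f_b(\Q_{\va_n})\to 0$ in $L^1_{\loc}$, and the local minimality asserted in item~(2).

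To derive \eqref{weakharmoniceq}, I insert variations $\Q_0+t\X$ with $\X$ tangent to $\cN$ and extract the normal component of $\Delta\Q_0$: using the parametrization $\Q_0=r_*(\n\n-\m\m)$, together with $|\Q_0|^2=2r_*^2$ and $\tr\Q_0^3=0$, a direct computation identifies the second fundamental form of $\cN$ in $\Ss_0$ in its two independent normal directions, one proportional to $\Q_0$ and the other to $\Q_0^2-\f{2r_*^2}{3}\I$, producing exactly the coefficients $\f{1}{2r_*^2}$ and $\f{3}{r_*^4}$. For item~(3), $\Q_0$ is a locally energy-minimizing harmonic map from a $3$-dimensional domain into the smooth compact real-analytic manifold $\cN$, so Schoen--Uhlenbeck partial regularity gives $\dim_{\HH}\sing(\Q_0)\leq 0$; the standard no-accumulation refinement in dimension $3$ upgrades this to local finiteness, and I set $\cS_{\pts}:=\sing(\Q_0)$.

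For item~(4), on $B_r(x_0)\subset\subset\om\setminus\cS_{\pts}$ I first upgrade $H^1_{\loc}$ convergence to $L^{\ift}_{\loc}$ convergence via a small-energy $\va$-regularity lemma adapted from \cite{MZ10,NZ13}, whose key input is that the Hessian of $f_b$ on $\Ss_0$ is uniformly positive definite in the two directions normal to $\cN$, a fact verified by direct spectral analysis in the $(\mu,\nu)$-variables. I then rewrite the Euler--Lagrange system for $\Q_{\va_n}$ by Taylor-expanding $\va_n^{-2}\na_{\Q}f_b(\Q_{\va_n})$ around the nearest point $\pi(\Q_{\va_n})\in\cN$: the leading normal contribution reproduces the right-hand side of \eqref{weakharmoniceq} with $\Q_{\va_n}$ in place of $\Q_0$, while the remainder $\mathbf{R}_n$ vanishes on $\cN$ and is quadratic in the normal deviation. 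Since that deviation is of size $O(\va_n^2)$ by the Hessian nondegeneracy and the $L^{\ift}$ bound, Schauder bootstrap on the semilinear system, followed by interior rescaling, yields the $C^j$ convergence and the estimate $\|D^j\mathbf{R}_n\|_{L^{\ift}(B_{r/2}(x_0))}\leq C\va_n^2 r^{-j-2}$.

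The main obstacle, and what distinguishes the sextic analysis from the quartic one, is twofold. First, the comparison argument in the first step requires careful good-sphere selection and a $\cN$-valued thin-layer interpolation because $\cN$ has codimension two and nontrivial topology, rather than admitting a simple radial retraction as in the case of $\cN_u\cong\mathbb{RP}^2$; controlling the sextic bulk contribution of the interpolated competitor to be of the correct order (so that it vanishes faster than the Dirichlet part) is the delicate point. Second, the spectral analysis underlying the Hessian nondegeneracy of $f_b$ transverse to the higher-codimension manifold $\cN$ is substantially more involved, and it is precisely this analysis that makes the Taylor expansion in the final step identifiable with the two distinct normal terms appearing in \eqref{weakharmoniceq}.
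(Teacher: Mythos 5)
Your overall strategy matches the paper's (weak compactness, comparison to upgrade to strong $H^1$ convergence and minimality, Schoen--Uhlenbeck for the singular set, then a Taylor expansion of the Euler--Lagrange equation around the projection onto $\cN$), but there is a genuine gap in the comparison step, and the final bootstrap is substantially understated.

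The decisive flaw is in your construction of the competitor. You choose a good sphere $\pa B_r$, assert that $\Q_{\va_n}|_{\pa B_r}\to\Q_0|_{\pa B_r}$ strongly in $H^{1/2}$, and then conclude that the traces ``eventually lie in a tubular neighborhood $\cN(\delta)$,'' so that a thin annular interpolation via the nearest-point projection is available. That inference is false: $\pa B_r$ is $2$-dimensional, and neither $H^{1/2}(\pa B_r)$ nor even $H^1(\pa B_r)$ embeds into $L^\infty(\pa B_r)$. Strong $H^{1/2}$ convergence gives no pointwise control, and the energy bound on the sphere gives only an $L^2$-distance estimate to $\cN$ (since $\int_{\pa B_r} f_b(\Q_{\va_n}) = O(\va_n^2)$), not an $L^\infty$ one. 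Without pointwise closeness the projection $\pi$ is not even defined on $\Q_{\va_n}|_{\pa B_r}$, and the radial interpolation can blow up. This is exactly the difficulty the paper's Proposition~\ref{Luckhaus2} / Lemma~\ref{Luckhauslemma} is designed to overcome: one works on a random good grid of mesh $h(\ol\va)$, exploits that $H^1\hookrightarrow C^{0,1/2}$ on the $1$-skeleton to prove the trace is uniformly $\delta_0$-close to $\cN$ there, projects and extends through the $2$-cells (Lemma~\ref{k2trivial}), and only then builds the $3$-dimensional interpolation on a thin annulus with controlled bulk energy (using Corollary~\ref{fBc} to keep the $\va^{-2}f_b$-contribution of the interpolant bounded). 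Your proposal cannot recover item~(1) or the local minimality in item~(2) without some substitute for this machinery.

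A secondary issue is in item~(4). You propose a ``Schauder bootstrap on the semilinear system'' and claim the normal deviation is $O(\va_n^2)$, hence $\|D^j\mathbf{R}_n\|_{L^\infty}\le C\va_n^2 r^{-j-2}$. The conclusion is correct, but a naive Schauder iteration does not close here: the Euler--Lagrange equation $\va^2\Delta\Q_\va=\Psi(\Q_\va)$ is singularly perturbed, and differentiating it produces factors of $\va^{-2}$ that a straightforward bootstrap cannot absorb uniformly in $\va$. The paper avoids this by introducing the rescaled normal-deviation variables $\Y_\va=\va^{-2}(\Q_\va^3-r_*^2\Q_\va)$ and $h_\va=\va^{-2}(\tr\Q_\va^2-2r_*^2)$, and deriving (Lemma~\ref{Ckestimate}, Lemma~\ref{uvavvvainterior}) a carefully weighted iteration in which the $\va^{-2}$-coefficients are tamed by the $e^{-c/\va}$-type decay supplied by Lemma~\ref{au}/Corollary~\ref{corau}. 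Your outline names the right quantity ($O(\va^2)$ normal deviation) but omits the mechanism that makes the derivative estimates uniform in $\va$, which is where the real work of item~(4) lies.
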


This theorem provides a generalization of the $ H^1 $ and uniform convergence results presented in \cite{MZ10}, as well as the $ C^j $ convergence result discussed in \cite{NZ13}. In our specific context, where we consider local minimizers, the $ H^1 $ convergence can be established using a Luckhaus-type lemma. Furthermore, we provide detailed estimates for the remainder terms in the Euler-Lagrange equation. These results heavily rely on the specific structure of the Euler-Lagrange equation and necessitate the use of bootstrap arguments.

Furthermore, when considering global minimizers in a smooth domain with prescribed boundary values that are sufficiently smooth themselves, we can obtain stronger regularity properties for the map $ \Q_{\va} $. These enhanced regularity results will be presented in detail in Theorem \ref{expansion}.

\begin{thm}\label{main}
Let $ \om\subset\R^3 $ be a bounded Lipschitz domain. Assume that $ \{\Q_{\va}\}_{0<\va<1} $ is a sequence of local minimizers of \eqref{energy}, satisfying
\be
E_{\va}(\Q_{\va},\om)\leq M\(\log\f{1}{\va}+1\)\text{ and }\|\Q_{\va}\|_{L^{\ift}(\om)}\leq M\label{assumptionbound}
\ee
for some $ M>0 $. There exists a subsequence $ \va_n\to 0^+ $ and a closed set $ \cS_{\op{line}}\subset\ol{\om} $ such that
$$
\(\f{1}{2}|\na\Q_{\va_n}|^2+\f{1}{\va_n^2}f_b(\Q_{\va_n})\)\f{\ud x}{\log\f{1}{\va_n}}\wc^*\mu_0\text{ in }(C(\ol{\om}))'
$$
as $ n\to+\ift $ and the following properties hold.
\begin{enumerate}
\item $ \supp(\mu_0)=\cS_{\op{line}} $. 
\item $ \om\cap\cS_{\op{line}} $ is a countably $ \HH^1 $-rectifiable set, and $ \HH^1(\om\cap\cS_{\op{line}})<+\ift $.
\item If a subdomain $ U $ of $ \om $ satisfies $ U\subset\subset\om\backslash\cS_{\op{line}} $, then
$$
E_{\va_n}(\Q_{\va_n},U)\leq C,
$$
where $ C>0 $ depends only on $ \cA,M $ and $ U $. In such $ U $, one can apply Theorem \ref{main1} to get further results.
\item For $ \HH^1 $-a.e. $ x\in\cS_{\op{line}}\cap\om $,
$$
\lim_{r\to 0^+}\f{\mu_0(\ol{B_r(x)})}{2r}\in\{\kappa_*,2\kappa_*\},
$$
where $ \kappa_*=\pi r_*^2/2 $.
\item $ \mu_0\left\llcorner\right.\om $ is associated with a $ 1 $-dimensional stationary varifold (see subsection \ref{sectionvarifold} for the definition). 
\item For any open set $ K\subset\subset\om $, there holds
$$
\cS_{\op{line}}\cap\ol{K}=\{L_1,L_2,...,L_p\},\,\,p\in\Z_+,
$$
where $ \{L_i\}_{i=1}^p $ are closed straight line segments such that for $ i\neq j $, $ L_i $ and $ L_j $ are disjoint or they intersect at a common endpoint. Moreover, the assertions as follows are true.
\begin{enumerate}
\item If $ \ol{B_r^2(x)}\subset K $ satisfies $ B_r^2(x)\cap\cS_{\op{line}}=\{x\} $, and $ x $ is not an endpoint for any $ L_i $, then the free homotopy class (see Definition \ref{freehomodef}) of $ \Q_0|_{\pa B_r^2(x_0)} $ is non-trivial.\label{pra}
\item If $ x\in K $ is an endpoint of exactly $ q $ segments $ L_{i_1},...,L_{i_q} $, then $ q\neq 1 $. If $ q=2 $, the angle of $ L_{i_1} $ and $ L_{i_2} $ is $ \pi $. If $ q=3 $, $ L_{i_1} $, $ L_{i_2} $, and $ L_{i_2} $ are in the same plane and the angles of them are all $ 2\pi/3 $.\label{prb}
\end{enumerate}
\label{pr6}
\end{enumerate}
\end{thm}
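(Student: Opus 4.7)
The plan is to adapt the strategy of Canevari \cite{C17} from the quartic case to the sextic setting; the main new ingredient is the analysis of the topologically richer biaxial target manifold $\cN$. First I would normalize $\mu_{\va}:=\bigl(\tfrac12|\na\Q_\va|^2+\va^{-2}f_b(\Q_\va)\bigr)\,dx/\log(1/\va)$, so that \eqref{assumptionbound} yields $\mu_\va(\ol{\om})\le M$. Extract a subsequence with $\mu_{\va_n}\wc^*\mu_0$ in $(C(\ol{\om}))'$ and set
\[
\cS_{\op{line}}:=\bigl\{x\in\ol{\om}:\liminf_{n\to\ift}\mu_{\va_n}(\ol{B_r(x)})\ge\eta\text{ for all }r>0\bigr\},
\]
where $\eta>0$ is the threshold of a clearing-out lemma for the sextic functional, established earlier in the paper as the analogue of the corresponding lemma in \cite{C17}. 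Then $\supp(\mu_0)=\cS_{\op{line}}$, and for any $U\subset\subset\om\setminus\cS_{\op{line}}$ a finite covering by clearing-out balls gives $E_{\va_n}(\Q_{\va_n},U)\le C(U)$, so Theorem \ref{main1} applies to $\{\Q_{\va_n}|_U\}$. This yields (1) and (3).

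To prove (2) and (5), I would upgrade the clearing-out lemma to a uniform lower density bound $\Theta^{*1}(\mu_0,x)\ge c(\eta)>0$ for every $x\in\cS_{\op{line}}\cap\om$; combined with $\mu_0(\om)\le M$ this forces $\HH^1(\cS_{\op{line}}\cap\om)<+\ift$. The stationary-varifold property (5) is obtained by passing to the limit in the inner variation (Pohozaev) identity
\[
\int_\om\Bigl[\Bigl(\tfrac12|\na\Q_\va|^2+\tfrac1{\va^2}f_b(\Q_\va)\Bigr)\na\!\cdot\!X-\pa_i\Q_\va\!\cdot\!\pa_j\Q_\va\,\pa_j X_i\Bigr]\,dx=0,
\]
valid for all $X\in C_c^1(\om,\R^3)$ by local minimality; dividing by $\log(1/\va)$ and using Theorem \ref{main1} on compacts of $\om\setminus\cS_{\op{line}}$ to drive the bulk term to zero, one concludes that $\mu_0\llcorner\om$ has vanishing first variation. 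Rectifiability in (2) then follows from existence of $1$-dimensional tangent measures $\mu_0$-a.e., via blow-up combined with Preiss-type criteria for stationary varifolds.

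The density dichotomy (4) is the topological heart of the argument and the main obstacle. By monotonicity $\Theta^1(\mu_0,x)$ exists $\mu_0$-a.e. At $\HH^1$-a.e.\ point $x_0$ of $\cS_{\op{line}}$ there is an approximate tangent line; blow-up on a transverse plane reduces the local problem to a $2$D logarithmic minimization on a disk whose boundary trace determines a free homotopy class in $\cN$. A Sandier-Jerrard-type ball construction adapted to $\cN$ yields a class-dependent lower bound $\kappa([\gamma])$, and an explicit interpolation through the universal cover $\mathrm{SO}(3)\to\cN$ supplies the matching upper bound. The essential new difficulty compared to \cite{C17} is that $\pi_1(\cN)$ is non-abelian (isomorphic to the quaternion group), so several classes compete; one must show that for local minimizers only the two lowest-energy classes persist in the limit, the higher-energy classes being unstable against splitting into defects of class $\kappa_*$, and verify that the two surviving levels are precisely $\kappa_*=\pi r_*^2/2$ and $2\kappa_*$.

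Finally, (6) is a structural consequence of (2), (4), and (5). The first-variation balance $\sum_j\theta_j\tau_j=0$ at an interior junction, combined with $\theta_j\in\{\kappa_*,2\kappa_*\}$, is rigid: $q=1$ is excluded because a dangling endpoint violates the balance; $q=2$ forces antipodal tangents, so two segments merge into a single straight line; and $q=3$ forces equal weights, coplanarity, and angles $2\pi/3$. The segment structure itself comes from the local flatness of the support of a stationary $1$-rectifiable varifold away from junctions (monotonicity plus uniqueness of tangents). Assertion (a) is immediate from the lower density bound together with the $H^1$ convergence outside $\cS_{\op{line}}$ supplied by Theorem \ref{main1}, and (b) is exactly the statement of the junction analysis above.
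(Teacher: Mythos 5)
Your overall architecture tracks the paper's closely: normalize the energy by $\log(1/\va)$, extract a weak-$*$ limit $\mu_0$, define $\cS_{\op{line}}$ via a clearing-out threshold (the paper uses $\supp(\mu_0)$ directly, equivalent by your argument), pass the stress-energy identity to the limit for stationarity, and invoke the structure theory of $1$-dimensional stationary varifolds for the segment decomposition. For (2), the paper takes a somewhat different route from ``Preiss-type criteria'': it lets $\A_\va = (\log(1/\va))^{-1}(e_\va\delta_{ij}-\pa_i\Q_\va:\pa_j\Q_\va)$, extracts a limit $\A_0 = \fF_0\mu_0\llcorner\om$, reads off the eigenvalue constraints $\tr\fF_0\ge 1$, $\lda_i\le 1$ from the stress-energy tensor, and then applies the rectifiability criterion of Arroyo-Rabasa--De Philippis--Hirsch--Rindler \cite{A19} to conclude rectifiability directly; this avoids any tangent-measure blow-up. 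Also, a small inaccuracy: the universal cover of $\cN$ is $\Ss^3$ (order $8$), not $\mathrm{SO}(3)$, which is only the degree-$4$ cover $\M\cong\mathrm{SO}(3)$ of $\cN$.

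There is a genuine gap in your treatment of (\ref{prb}) at $q=3$. You assert that the first-variation balance $\sum_j\theta_j\tau_j=0$ with $\theta_j\in\{\kappa_*,2\kappa_*\}$ ``forces equal weights,'' but it does not. The case $2\tau_1+\tau_2+\tau_3=0$ does collapse ($\tau_2\cdot\tau_3=1$ forces $\tau_2=\tau_3$), but the case $2\tau_1+2\tau_2+\tau_3=0$ is perfectly consistent with the balance: $|\tau_3|=1$ gives $\tau_1\cdot\tau_2=-7/8$, a valid configuration. The balance identity alone cannot exclude it. The paper excludes this case by a separate topological argument exploiting $\pi_1(\cN)\cong Q_8$: the segments of density $2\kappa_*$ carry free-homotopy classes in $\{\rH_3,\rH_4\}$ (i.e.\ $\{\pm\kk,-1\}$ in $Q_8$), the segment of density $\kappa_*$ carries a class in $\{\rH_1,\rH_2\}$ (i.e.\ $\{\pm\ii,\pm\jj\}$), and consistency of $\Q_0$ on a small tube around the junction would force a product of two elements of $\{-1,\pm\kk\}$ (which always lands in $\{\pm1,\pm\kk\}$) to equal an element of $\{\pm\ii,\pm\jj\}$ — impossible. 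Without this algebraic obstruction your $q=3$ conclusion does not follow. A related remark: your claim that ``for local minimizers only the two lowest-energy classes persist'' is also imprecise; the classes $\rH_3,\rH_4$ do persist in the limit, but the density they contribute is governed by $\cE^*$ (the infimum over decompositions into products in $\pi_1$) rather than by $\cE$, and $\cE^*(\rH_3)=\cE^*(\rH_4)=2\kappa_*$ while $\cE(\rH_3)<4\kappa_*=\cE(\rH_4)$.
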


This result generalizes Theorem 1 and Proposition 2 from \cite{C17} to our specific model. The proof of this theorem relies on the monotonicity formula, which is derived from the Pohozaev identity. Notably, the proof of the sixth property is based on well-established findings regarding $ 1 $-dimensional stationary varifolds, as outlined in \cite{AA76}.

It is important to emphasize that in \cite{C17}, the author demonstrated that the endpoints of straight line segments in $ \cS_{\op{line}} $ must be even. However, in our model, obtaining such a result is not possible due to the differing topological structure of the vacuum manifold. Instead, for our model, we investigate the endpoints of segments with a maximum of three elements and provide their corresponding structure.

There are additional results available for the generalized Ginzburg-Landau model, and interested readers can refer to \cite{CS18,CO21,RJ21} for further developments.

\subsection{Main steps of the proof} 

The proofs of the main theorems in this paper are based on the methods and arguments presented in the works \cite{C17,MZ10,NZ13}. However, there are still some essential difficulties in dealing with our model.

\begin{enumerate}
\item The different tangent and normal space structures of $ \cN $ and $ \cN_u $ require the use of different quantities when considering higher-order convergence of the minimizers. This necessitates the application of modified iterative arguments from \cite{NZ13}.
\item The topological characterization of the vacuum manifold $ \cN $ differs significantly from the uniaxial case. In fact, we will prove that $ \cN=\M/(\Z_2\times \Z_2)=\Ss^3/Q_8 $, where $ Q_8 $ denotes the quaternion group (see Lemma \ref{FG}). The structure of $ \cN $ presents challenges in understanding the obstructions on $ \cN $ compared to $ \cN_u $.
\item When establishing Jerrard-Sandier type estimates (see, for example, \cite{S98}) for our model, the functions associated with the uniaxial case are not applicable. We need to develop more refined estimates.
\end{enumerate}

To overcome all these difficulties and establish our results, we conclude our framework of the proof into the following steps.

\begin{enumerate}
\item[Step 1.] We introduce quantities associated with the minimal polynomial of matrices in $ \cN $ and present new iterative arguments to obtain a priori estimates for the solutions of the Euler-Lagrange equation related to the minimizing problem. These calculations heavily rely on the characterization of the tangent and normal spaces of $ \cN $.

\item[Step 2.] We examine the free homotopy class on the manifold $ \cN $ and use a direct approach to characterize the topological properties of loops on $ \cN $.

\item[Step 3.] Building upon the definitions provided in \cite{C17}, we introduce new functions to evaluate the differences in eigenvalues for symmetric traceless matrices. Additionally, we emphasize that these functions require higher regularity. By utilizing these functions and the results of Step 2, we establish a new Jerrard-Sandier type estimate (see Proposition \ref{lowerboundprop}).

\item[Step 4.] By combining the results obtained from the first three steps, we use the well-known Luckhaus arguments (see \cite{L88}) to prove the main theorems outlined in this paper.
\end{enumerate}
 
\subsection{Notations}

\begin{itemize}
\item Throughout this paper, we will use $ C $ to denote positive constants, which may change from line to line.
\item $ \MM^{k\times k} $: the set of $ k\times k $ real. $ \Ss^{k\times k} $: the set of $ k\times k $ real symmetric matrices. $ \Ss_0 $: the set of traceless matrices in $ \Ss^{3\times 3} $.
\item We represent matrices using bold capital letters like $ \A,\B $. In this paper, without special clarification, matrices are all in $ \MM^{3\times 3} $.
\item $ \I_k $: identity matrix of order $ k $. $ \mathbf{O}_k $: zero matrix of order $ k $. If $ k=3 $, we omit the subscript and use $ \I,\mathbf{O} $.
\item $ \mathrm{O}(3):=\{\A\in\MM^{3\times 3}:\A^{\T}\A=\I\} $, and $ \mathrm{SO}(3):=\{\A\in\mathrm{O}(3),\,\,\det\A=1\} $.
\item $ \mathrm{SO}(3) $ is considered as a smooth Riemannian submanifold $ \MM^{3\times 3} $.
\item Let $ \A,\B\in\MM^{3\times 3} $. Their inner product is $ \A: \B:=\A_{ij}\B_{ij} $. In particular, $ |\A|^2=\A:\A $ if and only if $ \A\perp\B\Leftrightarrow\A:\B=0 $.
\item Assume that $ \A,\B:\om\subset\R^3\to\mathbb{M}^{3\times 3} $ are two differentiable matrix valued functions. The gradient or the first derivative of $ \A $ is $ \na\A=D\A:=(\pa_1\A,\pa_2\A,\pa_3\A) $. Furthermore, $
\na\A:\na \B=D\A:D\B:=\pa_{\ell}\A_{ij}\pa_{\ell}\B_{ij} $. In addition, $ |\na\A|^2=|D\A|^2=\na\A:\na \B $. For $ j\in\Z_+ $, $ |D^j\A|^2:=\sum_{|\al|=j}|\pa^{\al}\A|^2 $, where $ \al $ is the multi-index.
\item For $ g\in C^{\ift}(\MM^{3\times 3},\R) $, we denote $ \f{\pa g}{\pa\A_{ij}}=g_{ij} $, $ \f{\pa^2g}{\pa\A_{ij}\pa\A_{pq}}=g_{ij,pq} $, and $ \f{\pa^3g}{\pa\A_{ij}\pa\A_{pq}\pa\A_{mn}}=g_{ij,pq,mn} $.
\item For $ \m,\n\in\R^k $, $ k\in\Z_{\geq 2} $, $ \m\cdot\n=\n_i\m_i $ and $ \m\n=\m\otimes\n\in\mathbb{M}^{k\times k} $ with $ (\m\otimes\n)_{ij}=(\m_i\n_j) $.
\item For $ k\in\Z_{\geq 2} $, $ B_r^k(x):=\{x\in\R^k:|y-x|<r\} $. We will drop the superscript $ k $ if $ k=3 $ and drop $ x $ when $ x $ is the original point.
\item $ \HH^k $: $ k $-dimensional Hausdorff measure. If $ k=3 $, we denote $ \ud\HH^3:=\ud x $.
\item $ E\subset\R^3 $ is called to be countably $ \HH^1 $-rectifiable if there exist countably many Lipschitz functions $ f_i:\R^1\to\R^3 $ such that $ \HH^1(E\backslash\cup_{i=1}^{+\ift}f_i(\R^1))=0 $.
\item Let $ \ell,k\in\Z_+ $. Assume that $ \cX\subset\R^{\ell} $ is a Lipschitz Riemannian $ k $-submanifold of $ \R^{\ell} $, we define $ \na_{\cX} $ as the tangent gradient on $ \cX $. Such gradient exists a.e. by Rademacher theorem. If $ \cX $ is $ C^j $ with $ j\in\Z_{\geq 2} $, we denote $ D_{\cX}^j $ as the higher derivatives on $ \cX $. If $ \Q\in H^1(\cX,\Ss_0) $, we define
$$ 
e_{\va}(\Q,\cX):=\f{1}{2}|\na_{\cX}\Q|^2+\f{1}{\va^2}f_b(\Q),\text{ and }E_{\va}(\Q,\cX):=\int_\cX e_{\va}(\Q,\cX)\ud\HH^k, 
$$ 
for $ 0<\va<1 $. If $ \ell=k $, we write $ \na_{\cX}=\na $, $ D_{\cX}^j=D^j $ and $ e_{\va}(\Q,\cX)=e_{\va}(\Q) $.
\item $ \e^{(1)}:=(1,0,0)^{\T} $, $ \e^{(2)}:=(0,1,0)^{\T} $, and $ \e^{(3)}:=(0,0,1)^{\T} $.
\item For $ k\in\Z_{\geq 2} $, $ \R\PP^{k-1}:=\{\n\n\in\MM^{k\times k}:\n\in\Ss^{k-1}\} $ is the $ (k-1) $-projective space. $ \R\PP^{k-1} $ is a compact smooth Riemannian $ (k-1) $-submanifold of $ \MM^{k\times k} $.
\end{itemize}

Next, we recall the regularity assumption of bounded domain.

\begin{defn}\label{DefnLocal}
Let $ U\subset\R^3 $ be a bounded domain. We say $ U $ is $ C^{k,1} $ with $ k\in\Z_{\geq 0} $, (for $ k=0 $, $ U $ is called Lipschitz), if there exist
\be
r_{U,k}>0,\,\,M_{U,k}>0,\label{r0M0}
\ee
such that the following property holds. For any $ x_0\in\pa U $, there exist a $ C^{k,1} $ function $ \psi $ defined on $ \R^2 $, a coordinate system under a translation, and a rotation such that $ x_0=(0,0,0)\in\R^3 $, 
\be
U\cap B_r(x_0)=\{(y_1,y_2,y_3)\in\R^3:(y_1,y_2)\in\R^2,\,\,y_3>\psi(y_1,y_2)\}\cap B_r(x_0)\label{UcapB}
\ee
for any $ 0<r<20(M_{U,k}+1)r_{U,k} $, and
\be
\psi(0,0)=0,\quad\|D^i\psi\|_{L^{\ift}(\R^2)}\leq M_{U,k}\text{ for }i=1,2,...,k+1.\label{Dkest}
\ee
To this end, we say $ U $ is a bounded $ C^{k,1} $ domain with $ r_{U,k} $ and $ M_{U,k} $, if $ U $ is a bounded domain satisfying \eqref{UcapB} and \eqref{Dkest}, where $ r_{U,k} $ and $ M_{U,k} $ are given by \eqref{r0M0}. We call $ U $ is a smooth domain, if $ U $ is $ C^{k,1} $ domain for any $ k\in\Z_+ $.
\end{defn}

\section{Preliminaries}\label{Preliminaries}

\subsection{Some properties of \texorpdfstring{$ \Ss_0 $}{}, \texorpdfstring{$ \cN $}{}, \texorpdfstring{$ \M $}{} and \texorpdfstring{$ f_b $}{}}

Firstly, we will present some properties related to the set of symmetric traceless matrices $ \Ss_0 $, the set associated with the nematic phase $ \cN $ and $ \M $, as well as the bulk energy density $ f_b $. These results will serve as a foundation for the subsequent analysis.

\begin{lem}\label{reprelem}
For any fixed non-zero $ \Q\in\Ss_0 $, there exist two numbers $ s\in\R_+ $, $ r\in[0,1] $ and a pair $ (\n,\m)\in\M $ such that
\be
\Q=s\left\{\n\n-\f{1}{3}\I+r\(\m\m-\f{1}{3}\I\)\right\}.\label{representationQ}
\ee
The maps $ \Q\mapsto s(\Q) $ and $ \Q\mapsto r(\Q) $ defined on $ \Ss_0\backslash\{\mathbf{O}\} $ are continuous. Moreover, the parameters $ s(\Q),r(\Q) $ can be determined uniquely from the eigenvalues $ \lda_1(\Q)\geq \lda_2(\Q)\geq\lda_3(\Q) $ of $ \Q $ by
$$
s(\Q):=2\lda_1(\Q)+\lda_2(\Q)\text{ and }r(\Q):=\f{\lda_1(\Q)+2\lda_2(\Q)}{2\lda_1(\Q)+\lda_2(\Q)}.
$$
\end{lem}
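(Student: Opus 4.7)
The proof is essentially linear algebra: diagonalize $\Q$ in an orthonormal eigenbasis and solve for $s$ and $r$. Here is how I would carry it out.

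Since $\Q \in \Ss_0$ is real symmetric and traceless, spectral theorem produces an orthonormal basis $\{\n_0,\m_0,\p_0\}$ of $\R^3$ of eigenvectors with eigenvalues $\lda_1 \ge \lda_2 \ge \lda_3$ satisfying $\lda_1+\lda_2+\lda_3 = 0$. In particular $\n_0 \cdot \m_0 = 0$, so $(\n_0,\m_0) \in \M$. This is the only natural candidate pair, so I would try $\Q = s(\n_0\n_0 - \tfrac{1}{3}\I) + sr(\m_0\m_0 - \tfrac{1}{3}\I)$ and solve for $s,r$.

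Applying the proposed right-hand side to the three basis vectors and identifying with $\lda_i$ gives the linear system
$$
\lda_1 = \tfrac{s(2-r)}{3}, \qquad \lda_2 = \tfrac{s(2r-1)}{3}, \qquad \lda_3 = -\tfrac{s(1+r)}{3},
$$
whose compatibility is exactly $\lda_1+\lda_2+\lda_3=0$. Taking pairwise differences kills the $\I$-contribution and yields the clean relations $\lda_1-\lda_3 = s$ and $\lda_2-\lda_3 = sr$. Substituting $\lda_3 = -(\lda_1+\lda_2)$ gives the announced formulas $s = 2\lda_1+\lda_2$ and $r = (\lda_1+2\lda_2)/(2\lda_1+\lda_2)$; I would simply verify them algebraically, which also establishes the uniqueness claim for $s(\Q)$ and $r(\Q)$.

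Next I would verify the range. If $\Q \ne \mathbf{O}$ then tracelessness combined with $\lda_1 \ge \lda_2 \ge \lda_3$ forces $\lda_1 > 0$ and $\lda_3 < 0$ (otherwise all eigenvalues would be zero or share a common sign inconsistent with $\sum \lda_i = 0$), so $s = \lda_1 - \lda_3 > 0$. The identity $r = (\lda_2-\lda_3)/(\lda_1-\lda_3)$ then makes $r \in [0,1]$ obvious from the ordering of the eigenvalues.

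Finally, for continuity on $\Ss_0 \setminus \{\mathbf{O}\}$ I would invoke the standard fact that the ordered eigenvalues of a symmetric matrix depend continuously on the matrix; since the denominator $2\lda_1+\lda_2 = s > 0$ throughout, both $s(\Q)$ and $r(\Q)$ are continuous as compositions of continuous functions. I expect no serious obstacle here—the only minor subtlety is the nonuniqueness of the eigenbasis $(\n_0,\m_0)$ when eigenvalues coincide, but the statement only asks for existence of such a pair, so any consistent choice in the (possibly higher-dimensional) eigenspaces works.
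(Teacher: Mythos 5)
Your proof is correct and follows essentially the same approach as the paper: diagonalize $\Q$ via the spectral theorem, manipulate the eigendecomposition using tracelessness and $\n_0\n_0+\m_0\m_0+\p_0\p_0=\I$ to arrive at the representation, and appeal to continuity of ordered eigenvalues. The only difference is cosmetic — the paper substitutes $\p\p=\I-\n\n-\m\m$ and reads off the coefficients directly, whereas you posit the ansatz and solve the resulting linear system; you also spell out the verification that $s>0$ and $r\in[0,1]$, which the paper leaves implicit.
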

\begin{proof}
Since $ \Q $ is a real symmetric matrix, it can be diagonalized as $ \Q=\lda_1\n\n+\lda_2\m\m+\lda_3\p\p $, where $ \n,\m $, and $ \p $ form an orthonormal basis of $ \R^3 $. This implies that $ \n\n+\m\m+\p\p=\I $. Additionally, from $ \tr\Q=0 $, we have $ \sum_{i=1}^3\lda_i=0 $ and 
$$
\Q=(2\lda_1+\lda_2)\n\n+(\lda_1+2\lda_2)\m\m-(\lda_1+\lda_2)\I,
$$
which directly implies \eqref{representationQ}. As $ \{\lda_i(\Q)\}_{i=1}^3 $ are continuous functions with respect to $ \Q $, $ s(\Q) $ and $ r(\Q) $ are continuous in $ \Ss_0\backslash\{\mathbf{O}\} $.
\end{proof}

\begin{lem}\label{mu1mu2mu3S}
Assuming $ \mu_1>\mu_2>\mu_3 $ and $ \sum_{i=1}^3\mu_i=0 $, we can define
$$
\Ss_0(\mu_1,\mu_2,\mu_3):=\{\Q\in\Ss_0:\lda_i(\Q)=\mu_i\text{ for any }i=1,2,3\}.
$$
This set is a compact, smooth $ 3 $-dimensional Riemannian manifold without boundary, which is isometrically embedded into $ \Ss_0 $.
\end{lem}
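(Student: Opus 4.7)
The plan is to realize $\Ss_0(\mu_1,\mu_2,\mu_3)$ as a single orbit of the conjugation action of $\mathrm{SO}(3)$ on $\Ss_0$, and then invoke the standard theory of orbits of compact Lie group actions. Set $\Q_0:=\diag(\mu_1,\mu_2,\mu_3)$ and introduce the smooth map $\Phi:\mathrm{SO}(3)\to\Ss_0$ given by $\Phi(\RR):=\RR\Q_0\RR^{\T}$. By the spectral theorem applied to $\Q\in\Ss_0(\mu_1,\mu_2,\mu_3)$, and the observation that replacing a unit eigenvector $\n_i$ by $-\n_i$ does not alter $\n_i\n_i^{\T}$, every such $\Q$ can be written as $\RR\Q_0\RR^{\T}$ for some $\RR\in\mathrm{SO}(3)$. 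Hence $\Ss_0(\mu_1,\mu_2,\mu_3)=\Phi(\mathrm{SO}(3))$; since $\mathrm{SO}(3)$ is compact and $\Phi$ is continuous, compactness follows at once.

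Next I would compute the stabilizer $H:=\{\RR\in\mathrm{SO}(3):\RR\Q_0=\Q_0\RR\}$. Because the eigenvalues $\mu_i$ are pairwise distinct, each one-dimensional eigenspace of $\Q_0$ must be preserved by any $\RR\in H$, forcing $\RR=\diag(\epsilon_1,\epsilon_2,\epsilon_3)$ with $\epsilon_i\in\{\pm 1\}$ and $\epsilon_1\epsilon_2\epsilon_3=1$. Thus $H\cong\Z_2\times\Z_2$ is a discrete subgroup of order $4$. I would then verify that $\Phi$ has constant rank $3$. A tangent vector at $\RR\in\mathrm{SO}(3)$ has the form $\RR\fS$ with $\fS^{\T}=-\fS$, and $d\Phi_{\RR}(\RR\fS)=\RR[\fS,\Q_0]\RR^{\T}$. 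The kernel of $d\Phi_{\RR}$ thus consists of antisymmetric $\fS$ commuting with $\Q_0$; comparing off-diagonal entries of $\fS\Q_0-\Q_0\fS$ gives $(\mu_j-\mu_i)\fS_{ij}=0$ for $i\ne j$, so distinctness of the $\mu_i$ forces $\fS=0$. Consequently $d\Phi_{\RR}$ is injective of rank $\dim\mathrm{SO}(3)=3$ at every $\RR$.

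The orbit-stabilizer theorem now yields a smooth injective immersion $\overline{\Phi}:\mathrm{SO}(3)/H\to\Ss_0$ whose image is $\Ss_0(\mu_1,\mu_2,\mu_3)$. Since $\mathrm{SO}(3)/H$ is compact and $\Ss_0$ is Hausdorff, $\overline{\Phi}$ is automatically a topological embedding, and combined with the rank computation it is a smooth embedding. Because $H$ is discrete, the source has dimension $3$, realizing $\Ss_0(\mu_1,\mu_2,\mu_3)$ as a compact, smooth, boundaryless, $3$-dimensional embedded submanifold of $\Ss_0$. Finally, endow $\Ss_0(\mu_1,\mu_2,\mu_3)$ with the Riemannian metric obtained by restriction of the Frobenius inner product $\A:\B=\tr(\A\B)$ on $\Ss_0$; the inclusion is then an isometric embedding by construction. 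The step I would expect to require the most care is the passage from immersion to embedded submanifold, since a priori the orbit map could accumulate on itself; here compactness of $\mathrm{SO}(3)/H$ together with Hausdorffness of $\Ss_0$ is the decisive and purely topological input, while all other verifications reduce to the commutator computation above.
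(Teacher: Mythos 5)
Your argument is correct, but it proceeds by a genuinely different route from the paper. You realize $\Ss_0(\mu_1,\mu_2,\mu_3)$ as the orbit of $\Q_0=\diag(\mu_1,\mu_2,\mu_3)$ under conjugation by the compact group $\mathrm{SO}(3)$, compute that the stabilizer is the discrete group $\Z_2\times\Z_2$ and that the orbit map has constant rank $3$, and then invoke compactness plus Hausdorffness to upgrade the injective immersion to an embedding. The paper instead treats $\Ss_0(\mu_1,\mu_2,\mu_3)$ as the preimage of a regular value: it introduces $F(\Q,(t_1,t_2,t_3))=(p_\Q(t_1),p_\Q(t_2),p_\Q(t_3))$ built from the characteristic polynomial, checks via the Jacobi formula and the adjugate matrices $\ad(\Q_0-\mu_i\I)$ that both partial differentials are nondegenerate/surjective, and concludes by the implicit function theorem that the ordered eigenvalue map $\Lda$ is a submersion near $\Q_0$, whence $\Lda^{-1}(\mu_1,\mu_2,\mu_3)$ is a $3$-manifold. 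Your approach is shorter and more structural, and the commutator computation $(\mu_j-\mu_i)\fS_{ij}=0$ makes the role of distinct eigenvalues transparent; it would be the natural choice if the lemma stood alone. The paper's route has the advantage that it simultaneously establishes the smoothness of the eigenvalue map $\Lda$ on $\Lda^{-1}(V_0)$ (recorded separately as Lemma \ref{smoothLda}) and the surjectivity of $D_\Q F$, both of which are reused later in the analysis of $\phi_\tau$ and $\varrho$; your orbit argument does not yield those byproducts directly, so if one adopted it one would still need a separate argument for the regularity of $\Lda$.
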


\begin{cor}\label{PropLdaM}
$ \cN $ is a compact and smooth $ 3 $-dimensional Riemannian manifold without boundary, which is isometrically embedded into $ \Ss_0 $.
\end{cor}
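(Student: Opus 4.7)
The plan is to identify $\cN$ with the set $\Ss_0(r_*, 0, -r_*)$ and then invoke Lemma \ref{mu1mu2mu3S} directly. Since $r_* > 0$ by \eqref{rstar}, the triple $(\mu_1, \mu_2, \mu_3) = (r_*, 0, -r_*)$ satisfies the strict ordering $\mu_1 > \mu_2 > \mu_3$ and $\sum_i \mu_i = 0$ required by Lemma \ref{mu1mu2mu3S}, so once the identification is established the corollary follows immediately.

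The first step is to show $\cN \subseteq \Ss_0(r_*, 0, -r_*)$. Given $\Q = r_*(\n\n - \m\m)$ with $(\n,\m) \in \M$, define $\p := \n \times \m$. Because $\n \cdot \m = 0$ and $|\n| = |\m| = 1$, the triple $(\n, \m, \p)$ is an orthonormal basis of $\R^3$, hence $\n\n + \m\m + \p\p = \I$. A direct computation gives
\[
\Q\n = r_*\n, \qquad \Q\m = -r_*\m, \qquad \Q\p = \mathbf{0},
\]
so the eigenvalues of $\Q$ are exactly $\{r_*, 0, -r_*\}$, showing $\Q \in \Ss_0(r_*, 0, -r_*)$. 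Clearly $\tr \Q = 0$ as well.

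For the reverse inclusion $\Ss_0(r_*, 0, -r_*) \subseteq \cN$, take any $\Q$ with eigenvalues $r_* > 0 > -r_*$. By the spectral theorem for real symmetric matrices, one can pick orthonormal eigenvectors $\n, \m, \p \in \Ss^2$ associated with $r_*, -r_*, 0$ respectively. Then
\[
\Q = r_*\,\n\n + (-r_*)\,\m\m + 0\cdot\p\p = r_*(\n\n - \m\m),
\]
and since $\n \perp \m$ one has $(\n, \m) \in \M$, so $\Q \in \cN$.

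Combining the two inclusions yields $\cN = \Ss_0(r_*, 0, -r_*)$, and Lemma \ref{mu1mu2mu3S} then delivers all the stated properties: compactness, smoothness, the dimension count (three, matching the two degrees of freedom of $\n \in \Ss^2$ plus one of $\m$ in the circle $\n^\perp \cap \Ss^2$, modulo the $\pm$ ambiguities that are absorbed into the embedding), and the isometric embedding into $\Ss_0$. There is no real obstacle here; the only subtlety worth flagging is the eigenvalue identification, which is purely linear-algebraic and relies solely on the orthonormality enforced by $(\n,\m) \in \M$.
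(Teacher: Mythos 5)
Your proof is correct and follows the same route as the paper: identify $\cN$ with $\Ss_0(r_*,0,-r_*)$ and apply Lemma \ref{mu1mu2mu3S}. You simply spell out the identification (both inclusions via the eigenvalue computation) that the paper leaves implicit, which is a reasonable addition but not a different argument.
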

\begin{proof}
Choosing $ (\mu_1,\mu_2,\mu_3)=(r_*,0,-r_*) $, the result follows from Lemma \ref{mu1mu2mu3S}.
\end{proof}

\begin{proof}[Proof of Lemma \ref{mu1mu2mu3S}]
Define a map $ \Lda:\Ss_0\to\R^3 $ by
\be
\Lda(\Q):=(\lda_1(\Q),\lda_2(\Q),\lda_3(\Q)),\label{LdaFunctionDef}
\ee
where $ \lda_1(\Q)\geq\lda_2(\Q)\geq\lda_3(\Q) $ are eigenvalues of $ \Q $. Clearly, $ \Ss_0(\mu_1,\mu_2,\mu_3)=\Lda^{-1}((\mu_1,\mu_2,\mu_3)) $. Define $ \{F_i\}_{i=1}^3 $ as $ F_i:\Ss_0\times V_0\to\R $ such that $ F_i(\Q,(t_1,t_2,t_3)):=p_{\Q}(t_i) $,
where $ p_{\Q}(t):=\det(\Q-t\I) $ and
\be
V_0:=\{(t_1,t_2,t_3)\in\R^3:t_1>t_2>t_3\}.\label{V0SetDef}
\ee 
We also define $ F:\Ss_0\times V_0\to\R^3 $ by $
F:=(F_1,F_2,F_3) $. For $ \Q_0\in\Ss_0(\mu_1,\mu_2,\mu_3) $, we have $
p_{\Q_0}(t)=-(t-\mu_1)(t-\mu_2)(t-\mu_3) $ and $ F(\Q_0,\Lda(\Q_0))=0 $. For $ D_2F=(\pa_{t_i}F_j)_{i,j\in\{1,2,3\}} $, since $ \mu_1>\mu_2>\mu_3 $, we have
$$
D_2F|_{(\Q_0,\Lda(\Q_0))}=\diag\{p_{\Q_0}'(\mu_1),p_{\Q_0}'(\mu_2),p_{\Q_0}'(\mu_3)\}
$$
is non-degenerate. We note that $ F $ is a smooth map. By the implicit function theorem, there exist open sets $ U\subset\Ss_0 $ and $ V\subset V_0 $,together with a smooth map $ \Lambda_*:U\to V $ such that $ \Q_0\in U $, $ (\mu_1,\mu_2,\mu_3)\in V $, and
\be
F(\Q,\Lda_*(\Q))=0\text{ for any }\Q\in U.\label{FLda}
\ee
The implicit function theorem also implies that the choice of $ \Lda_* $ is unique and then $ \Lda_*=\Lda $ in $ U $. Taking the derivative with respect to the variable $ \Q $ for both sides of \eqref{FLda}, we have, for any $ \Q\in U $ and $ \PP\in\Ss_0 $, $ D_{\Q}\Lda:\Ss_0\to\R^3 $ satisfies
$$
D_{\Q}\Lda(\PP)=-(D_{\mathbf{t}}F|_{(\Q,\Lda(\Q))})^{-1}(D_{\Q}F|_{(\Q,\Lda(\Q))})(\PP).
$$
By the previous analysis, $ D_2F|_{(\Q,\Lda(\Q))} $ is non-degenerate in $ U $. We claim that $ D_{\Q}F|_{(\Q_0,\Lda(\Q_0))}:\Ss_0\to\R^3 $ is surjective. If the claim is true, we can choose an open set $ U_1\subset U $ such that $ D_{\Q}\Lda $ is surjective in $ U_1 $ and then $ (\mu_1,\mu_2,\mu_3) $ is a regular point of $ \Lda $, which directly implies that $ \Ss_0(\mu_1,\mu_2,\mu_3) $ is a $ 3 $-dimensional manifold. Obviously, by the natural including map $ \iota:\Ss_0(\mu_1,\mu_2,\mu_3)\hookrightarrow\Ss_0 $, $ \Ss_0(\mu_1,\mu_2,\mu_3) $ is isometrically embedded into $ \Ss_0 $. Next we will show the claim. By the Jacobi formula, for any $ \PP\in\Ss_0 $, we have
\begin{align*}
D_{\Q}F|_{(\Q_0,\Lda(\Q_0))}(\PP)=(\tr(\ad(\Q_0-\mu_i\I)\PP))_{i=1,2,3},
\end{align*}
where $ \ad(\Q) $ is the adjoint matrix for $ \Q $. Without loss of generality, we assume $ \Q_0=\diag\{\mu_1,\mu_2,\mu_3\} $. For otherwise, there is $ \U\in\mathrm{O}(3) $ such that $ \U^{\T}\Q_0\U=\diag\{\mu_1,\mu_2,\mu_3\} $ and the same arguments can be applied. By simple calculations,
\begin{align*}
\ad(\Q_0-\mu_1\I)&=\diag\{(\mu_2-\mu_1)(\mu_3-\mu_1),0,0\},\\
\ad(\Q_0-\mu_2\I)&=\diag\{0,(\mu_1-\mu_2)(\mu_3-\mu_2),0\},\\
\ad(\Q_0-\mu_3\I)&=\diag\{0,0,(\mu_1-\mu_3)(\mu_2-\mu_3)\}.
\end{align*}
This directly implies that $ D_{\Q}F|_{(\Q_0,\Lda(\Q_0))} $ is surjective since $ \mu_1>\mu_2>\mu_3 $. 
\end{proof}

In the proof of Lemma \ref{mu1mu2mu3S}, we obtain a byproduct, which can be stated as follows.

\begin{lem}\label{smoothLda}
Assume that the map $ \Lda $ and the set $ V_0 $ are given by \eqref{LdaFunctionDef} and \eqref{V0SetDef}, respectively. There holds that $ \Lda $ is continuous in $ \Ss_0 $ and smooth in $ \Lda^{-1}(V_0) $.
\end{lem}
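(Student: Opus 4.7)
The plan is to treat continuity and smoothness as two separate claims, since the smoothness part is almost entirely contained in the implicit function theorem argument already carried out in the proof of Lemma \ref{mu1mu2mu3S}, while continuity requires a short independent argument.

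First I would establish continuity of $\Lda$ on all of $\Ss_0$ via the Courant--Fischer min--max characterization
\[
\lda_k(\Q) \;=\; \max_{\substack{W\subset\R^3\\ \dim W = k}}\ \min_{\substack{v\in W\\ |v|=1}} \<\Q v,v\>,\qquad k=1,2,3.
\]
Since $(\Q,v)\mapsto \<\Q v,v\>$ is jointly continuous and both the unit sphere and the Grassmannian $\mathrm{Gr}(k,3)$ are compact, a standard uniform-continuity/compactness argument shows that each $\lda_k$ is continuous as a function of $\Q\in\Ss^{3\times 3}$, and in particular on $\Ss_0$.

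For smoothness on $\Lda^{-1}(V_0)$, fix $\Q_0\in\Lda^{-1}(V_0)$ and set $(\mu_1,\mu_2,\mu_3):=\Lda(\Q_0)\in V_0$. The implicit function theorem applied to the smooth map
\[
F(\Q,(t_1,t_2,t_3))=(p_{\Q}(t_1),p_{\Q}(t_2),p_{\Q}(t_3)),\qquad p_{\Q}(t)=\det(\Q-t\I),
\]
exactly as in the proof of Lemma \ref{mu1mu2mu3S}, produces open neighborhoods $\Q_0\in U\subset\Ss_0$ and $(\mu_1,\mu_2,\mu_3)\in V\subset V_0$ together with a \emph{unique} smooth map $\Lda_*:U\to V$ satisfying $F(\Q,\Lda_*(\Q))=\mathbf{0}$ on $U$; non-degeneracy of $D_{\mathbf{t}}F|_{(\Q_0,\Lda(\Q_0))}$ is free because $\mu_1>\mu_2>\mu_3$. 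Using the continuity of $\Lda$ just proved, I would then shrink $U$ so that $\Lda(U)\subset V$. Since the components of $\Lda(\Q)$ are roots of $p_{\Q}$, this forces $F(\Q,\Lda(\Q))=\mathbf{0}$ throughout $U$, and the uniqueness clause of the IFT yields $\Lda\equiv\Lda_*$ on $U$. Hence $\Lda$ is smooth at $\Q_0$, and since $\Q_0\in\Lda^{-1}(V_0)$ was arbitrary, smoothness holds on all of $\Lda^{-1}(V_0)$.

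The only subtle point—really the reason for handling continuity first—is the identification of the IFT-produced $\Lda_*$ with the ordered-eigenvalue map $\Lda$: without an a priori continuity statement there is no way to guarantee that $\Lda(\Q)$ stays in $V$ and hence falls within the uniqueness regime of the IFT. All the remaining ingredients (smoothness of $F$, invertibility of $D_{\mathbf{t}}F$) are inherited verbatim from the proof of Lemma \ref{mu1mu2mu3S}, so no new computation is needed.
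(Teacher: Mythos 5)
Your proof is correct and follows essentially the same route as the paper: the paper presents Lemma \ref{smoothLda} as a byproduct of the implicit-function-theorem argument in Lemma \ref{mu1mu2mu3S}, which is exactly the smoothness argument you reproduce. The one thing you add is an explicit proof of continuity via Courant--Fischer (the paper simply asserts continuity of eigenvalues without argument, e.g.\ in the proof of Lemma \ref{reprelem}), and you correctly observe that this continuity is what licenses the identification $\Lda\equiv\Lda_*$ on a small neighborhood — a step the paper states but does not fully justify.
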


Now we can characterize the tangent and normal spaces of the smooth manifold $ \cN $ as a submanifold of $ \Ss_0 $. In particular, we have the following lemmas.

\begin{lem}[\cite{HL22}, Proposition 3.1]\label{minimalp}
For $ \Q\in\cN $, the following properties hold.
\begin{enumerate}
\item $ \tr\Q^2=2r_*^2 $ and $ \tr\Q^3=0 $.
\item The minimal polynomial of $ \Q $ is $ \lda^3-r_*^2\lda $.
\end{enumerate}
\end{lem}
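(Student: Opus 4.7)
The plan is to use the explicit parametrization $\Q = r_*(\n\n - \m\m)$ with $(\n,\m) \in \M$ and compute everything directly. The orthogonality condition $\n \cdot \m = 0$ makes the matrix algebra collapse nicely.

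First, I would compute $\Q^2$ by expanding $\Q^2 = r_*^2(\n\n - \m\m)(\n\n - \m\m)$ and using the identity $(\uu\vv)(\X\Y) = (\vv \cdot \X)\uu\Y$ for outer products. Since $\n \cdot \m = 0$, $|\n| = |\m| = 1$, the cross terms $\n\n\m\m$ and $\m\m\n\n$ vanish, while $\n\n\n\n = \n\n$ and $\m\m\m\m = \m\m$. This yields the clean identity
\[
\Q^2 = r_*^2(\n\n + \m\m).
\]
Taking the trace immediately gives $\tr\Q^2 = r_*^2(|\n|^2 + |\m|^2) = 2r_*^2$. Multiplying once more, the same cancellations produce $\Q^3 = r_*^2(\n\n - \m\m) \cdot r_* = r_*^2 \Q$, from which $\tr\Q^3 = r_*^2 \tr\Q = 0$ since $\Q \in \Ss_0$. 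This proves property (1).

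For property (2), the calculation $\Q^3 = r_*^2 \Q$ from the previous step shows that the polynomial $p(\lambda) = \lambda^3 - r_*^2 \lambda = \lambda(\lambda - r_*)(\lambda + r_*)$ annihilates $\Q$. To conclude $p$ is the minimal polynomial, I would exhibit three distinct eigenvalues of $\Q$. Choosing $\p := \n \times \m$ so that $\{\n,\m,\p\}$ is an orthonormal basis of $\R^3$, one checks $\Q\n = r_*\n$, $\Q\m = -r_*\m$, $\Q\p = \mathbf{0}$, so the eigenvalues are $r_*, -r_*, 0$, all distinct. Any polynomial annihilating $\Q$ must vanish at each eigenvalue, so its degree is at least three; combined with $p(\Q) = \mathbf{O}$ and the normalization of $p$ as monic of degree three, this identifies $p$ as the minimal polynomial.

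The proof involves only elementary matrix computations; no real obstacle is anticipated. The only care needed is to confirm that $(\n,\m)$ can be extended to an orthonormal basis (which uses only that $\n,\m \in \Ss^2$ with $\n \cdot \m = 0$) and to remember that the minimal polynomial of a diagonalizable matrix has simple roots equal to the distinct eigenvalues.
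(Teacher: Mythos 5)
Your proof is correct. The paper cites this lemma from the external reference [HL22, Proposition 3.1] and does not reproduce a proof, so there is nothing in this paper to compare against; your argument is the natural direct computation one would write. All the key steps check out: $\Q^2 = r_*^2(\n\n + \m\m)$ and hence $\tr\Q^2 = 2r_*^2$; $\Q^3 = r_*^2\Q$ and hence $\tr\Q^3 = r_*^2\tr\Q = 0$; and since $\Q\n = r_*\n$, $\Q\m = -r_*\m$, $\Q\p = 0$ with $\p = \n\times\m$, the three eigenvalues $r_*, -r_*, 0$ are distinct, forcing the minimal polynomial to be exactly $\lambda(\lambda-r_*)(\lambda+r_*) = \lambda^3 - r_*^2\lambda$.
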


\begin{lem}[\cite{HL22}, Proposition 3.2]\label{tangentnormal}
For $ \Q=r_*(\n\n-\m\m)\in\cN $ and $ \p=\n\times\m $ with $ (\n,\m)\in\M $, there hold
\begin{align*}
T_{\Q}\cN&=\op{span}\left\{\f{1}{\sqrt{2}}(\n\m+\m\n), \f{1}{\sqrt{2}}(\n\p+\p\n),\f{1}{\sqrt{2}}(\m\p+\p\m)\right\},\\
(T_{\Q}\cN)_{\Ss_0}^{\perp}&=\op{span}\left\{\f{1}{\sqrt{2}}(\n\n-\m\m),\sqrt{6}\(\f{1}{2}\n\n+\f{1}{2}\m\m-\f{1}{3}\I\)\right\}.
\end{align*}
\end{lem}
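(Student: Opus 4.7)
The plan is to obtain $T_{\Q}\cN$ by differentiating one-parameter families of points in $\cN$ passing through $\Q = r_*(\n\n - \m\m)$, and then to determine $(T_{\Q}\cN)_{\Ss_0}^{\perp}$ by exhibiting two traceless symmetric matrices orthogonal to $T_{\Q}\cN$ together with a dimension count. Since $\p = \n\times\m$, the triple $(\n,\m,\p)$ is an orthonormal basis of $\R^3$, so one can construct three natural curves in $\cN$ by applying infinitesimal rotations of this frame about the $\p$-, $\m$-, and $\n$-axes.

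First I would carry out the rotation about $\p$: taking $\n(t) = \cos t\,\n + \sin t\,\m$ and $\m(t) = -\sin t\,\n + \cos t\,\m$, one has $(\n(t),\m(t))\in\M$ and a direct expansion gives
\[
\n(t)\n(t) - \m(t)\m(t) = \cos(2t)(\n\n - \m\m) + \sin(2t)(\n\m + \m\n).
\]
Differentiating at $t=0$ and multiplying by $r_*$ produces a tangent vector proportional to $\n\m + \m\n$. Analogous rotations -- fixing $\m$ while mixing $\n$ and $\p$, and fixing $\n$ while mixing $\m$ and $\p$ -- yield tangent vectors proportional to $\n\p + \p\n$ and $\m\p + \p\m$, respectively. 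In the orthonormal basis $(\n,\m,\p)$ these three symmetric matrices have disjoint off-diagonal supports, so they are linearly independent; since $\cN$ is three-dimensional by Corollary \ref{PropLdaM}, they span $T_{\Q}\cN$. A one-line trace computation gives $|\uu\vv + \vv\uu|^2 = 2$ for any orthonormal pair $\uu,\vv\in\Ss^2$, which fixes the normalization $1/\sqrt{2}$.

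For the normal space I would verify three properties of the two proposed matrices. First, both lie in $\Ss_0$: the trace of $\n\n - \m\m$ vanishes, and the trace of $\f{1}{2}\n\n + \f{1}{2}\m\m - \f{1}{3}\I$ equals $\f{1}{2} + \f{1}{2} - 1 = 0$. Second, they are orthogonal to every basis vector of $T_{\Q}\cN$: in the basis $(\n,\m,\p)$ the tangent vectors are purely off-diagonal while both candidate matrices are diagonal, so the Frobenius inner products vanish termwise. Third, they are mutually orthogonal and have the claimed norms, which reduces to elementary trace identities; using $\n\n + \m\m = \I - \p\p$ one obtains $|\n\n - \m\m|^2 = 2$ and $|\f{1}{2}\n\n + \f{1}{2}\m\m - \f{1}{3}\I|^2 = \f{1}{6}$, and the cross-term trace is zero. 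The normalization constants $1/\sqrt{2}$ and $\sqrt{6}$ are then immediate. Since $\dim\Ss_0 = 5$ and $\dim T_{\Q}\cN = 3$, the two unit vectors exhaust $(T_{\Q}\cN)_{\Ss_0}^{\perp}$.

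The argument is essentially mechanical; there is no serious obstacle beyond keeping track of signs and normalizations, which the frame-rotation parametrization handles naturally. The only conceptual point is that the three rotation-generated tangent vectors are genuinely independent -- this reflects the fact that the conjugation action of $\mathrm{SO}(3)$ on $\cN$ has discrete stabilizer at each point, consistent with the identification $\cN \cong \mathrm{SO}(3)/Q_8$ noted in the introduction.
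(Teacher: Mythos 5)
The paper states this lemma without proof, attributing it to Proposition~3.2 of \cite{HL22}, so there is no in-paper argument to compare against. Your blind proof is correct and self-contained: the rotation curves about the $\p$-, $\m$-, and $\n$-axes of the orthonormal frame do produce the three off-diagonal symmetric matrices, linear independence is clear from disjoint supports in the $(\n,\m,\p)$-basis, the dimension count via Corollary~\ref{PropLdaM} closes the tangent-space argument, and the two candidate normal vectors are diagonal in that basis, traceless, mutually orthogonal, and have the claimed Frobenius norms. All the trace computations check out. This is a perfectly reasonable way to establish the lemma directly rather than by citation.

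One peripheral slip in your last sentence: $\cN$ is not $\mathrm{SO}(3)/Q_8$. As the paper records in the abstract and in Lemma~\ref{FG}, the covering $\M \cong \mathrm{SO}(3)$ is a $4$-to-$1$ cover of $\cN$ and $\Ss^3$ is the $8$-to-$1$ universal cover, so $\cN \cong \Ss^3/Q_8 \cong \mathrm{SO}(3)/(\Z_2\times\Z_2)$. The underlying point you are making -- that the $\mathrm{SO}(3)$-action on $\cN$ by conjugation has discrete (here finite) stabilizer, so the orbit map is a local diffeomorphism and the three infinitesimal rotations span the full tangent space -- is correct; only the identification of the quotient group is off. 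Since this remark is not load-bearing for the proof, the argument stands.
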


\begin{rem}\label{normalremQ}
For any $ \Q=r_*(\n\n-\m\m)\in\cN $ with $ (\n,\m)\in\M $, by  Lemma \ref{tangentnormal}, we have
$$
(T_{\Q}\cN)_{\Ss_0}^{\perp}=\op{span}\left\{\f{\Q}{\sqrt{2}r_*},\f{\sqrt{6}}{2r_*^2}\(\Q^2-\f{2r_*^2}{3}\I\)\right\}.
$$
Consequently, the tangent space $ T_{\Q}\cN $ can be characterized as
$$
T_{\Q}\cN=\{\PP\in\Ss_0:\tr(\PP\Q)=0,\,\,\tr(\PP\Q^2)=0\}.
$$
\end{rem}

In view of the characterization of the tangent and normal space of the manifold $ \cN $, we can further calculate its second fundamental form, which is useful for studying harmonic maps with $ \cN $ as the target manifold.

\begin{lem}
The second fundamental form of $ \cN $ in $ \Ss_0 $ is given by
\be
\Pi_{\cN}(\mathbf{X},\Y)=-\f{1}{2r_*^2}\tr(\mathbf{X}\Y)\Q-\f{3}{r_*^4}\tr(\mathbf{X}\Y\Q)\(\Q^2-\f{2r_*^2}{3}\I\),\label{SecondFundamentalForm}
\ee
where $ \mathbf{X} $ and $ \Y $ are tangent vectors to $ \cN $ at the point $ \Q\in\cN $.
\end{lem}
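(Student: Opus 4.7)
The plan is to view $\cN$ as the zero set of two smooth scalar functions on $\Ss_0$, compute their gradients and Hessians, and then apply the standard formula for the second fundamental form of a codimension-two level set. By Lemma \ref{minimalp}, $\cN$ coincides with the set
$$
\{\Q\in\Ss_0:g_1(\Q)=0,\ g_2(\Q)=0\}\quad\text{where}\quad g_1(\Q):=\tr\Q^2-2r_*^2,\ g_2(\Q):=\tr\Q^3,
$$
and one checks that the two defining functions cut out $\cN$ transversally (since their gradients are proportional to the two independent normal vectors listed in Remark \ref{normalremQ}). This will justify the level-set computation.

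The first computational step is to calculate, for $\PP\in\Ss_0$, the linear forms $dg_i(\PP)$ and identify the $\Ss_0$-gradients: one obtains $\nabla_{\Ss_0}g_1=2\Q$ directly, while for $g_2$ the formula $dg_2(\PP)=3\tr(\Q^2\PP)$ forces projection onto traceless matrices, yielding $\nabla_{\Ss_0}g_2=3\bigl(\Q^2-\tfrac{2r_*^2}{3}\I\bigr)$. The single subtle point is precisely this projection; once it is handled, a short computation using $\tr\Q^2=2r_*^2$, $\tr\Q^3=0$, and $\Q^4=r_*^2\Q^2$ (from Lemma \ref{minimalp}) gives $|\nabla g_1|^2=8r_*^2$ and $|\nabla g_2|^2=6r_*^4$, and in particular recovers exactly the orthonormal normal frame $\mathbf{N}_1,\mathbf{N}_2$ of Remark \ref{normalremQ}. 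Next I would compute the Hessians by expanding $g_i(\Q+s\mathbf{X}+t\Y)$ and reading off the $st$-coefficients, using the cyclicity of trace together with the symmetry of $\mathbf{X},\Y$ to simplify: this gives
$$
D^2g_1(\mathbf{X},\Y)=2\tr(\mathbf{X}\Y),\qquad D^2g_2(\mathbf{X},\Y)=6\tr(\mathbf{X}\Y\Q).
$$

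The final step is to assemble these ingredients. Take a curve $\gamma:(-\delta,\delta)\to\cN$ with $\gamma(0)=\Q$ and $\gamma'(0)=\mathbf{X}$; differentiating $g_i(\gamma(t))\equiv 0$ twice at $t=0$ yields
$$
\langle\nabla g_i,\gamma''(0)\rangle=-D^2g_i(\mathbf{X},\mathbf{X}),
$$
so that the normal component of $\gamma''(0)$ is $\sum_{i=1}^{2}\langle\mathbf{N}_i,\gamma''(0)\rangle\mathbf{N}_i=-\sum_{i=1}^{2}|\nabla g_i|^{-2}\,D^2g_i(\mathbf{X},\mathbf{X})\,\nabla g_i$. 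Polarizing and substituting the quantities computed above, one finds
$$
\Pi_{\cN}(\mathbf{X},\Y)=-\f{2\tr(\mathbf{X}\Y)}{8r_*^2}(2\Q)-\f{6\tr(\mathbf{X}\Y\Q)}{6r_*^4}\cdot 3\(\Q^2-\f{2r_*^2}{3}\I\),
$$
which simplifies precisely to \eqref{SecondFundamentalForm}. There is no genuine obstacle here; the proof is essentially bookkeeping with traces, and the only place where care is needed is the projection of $\nabla g_2$ onto $\Ss_0$ and the verification that the two gradients form (after normalization) the normal frame given in Remark \ref{normalremQ}.
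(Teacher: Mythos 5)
Your proof is correct but takes a genuinely different route from the paper. The paper's argument is a concrete coordinate computation: it uses the transitive $\mathrm{SO}(3)$-action to reduce to the base point $\Q=r_*\diag\{1,-1,0\}$, writes an explicit basis $\{\X_1,\X_2,\X_3\}$ of tangent vector fields as first-order differential operators on $\MM^{3\times 3}$, computes the ambient covariant derivatives $\ol{\na}_{\X_i}\X_j$ entry by entry, projects to the normal space, and finally checks that formula \eqref{SecondFundamentalForm} reproduces these values on the basis, invoking bilinearity. Your approach instead realizes $\cN$ as the level set $\{g_1=0,\,g_2=0\}$ with $g_1=\tr\Q^2-2r_*^2$, $g_2=\tr\Q^3$ (which does coincide with $\cN$ by the eigenvalue/characteristic-polynomial observation underlying Lemma \ref{minimalp}), computes the $\Ss_0$-gradients and Hessians of $g_1,g_2$, checks that the gradients are mutually orthogonal and normalize to exactly the normal frame of Remark \ref{normalremQ} (so $|\na g_1|^2=8r__*^2$, $|\na g_2|^2=6r_*^4$, using $\Q^4=r_*^2\Q^2$), and then reads off $\Pi_{\cN}$ from the standard identity $\langle\na g_i,\Pi(\X,\X)\rangle=-D^2g_i(\X,\X)$. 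Your route is coordinate-free and point-independent, avoids the somewhat opaque component bookkeeping, and exhibits the two coefficients in \eqref{SecondFundamentalForm} as literally $D^2g_i/|\na g_i|^2$; the paper's route is more elementary and self-contained (no appeal to the level-set second-fundamental-form formula) and makes the verification fully mechanical. One small thing worth making explicit in your write-up is the orthogonality $\tr(\na g_1\na g_2)=6\tr\Q^3-4r_*^2\tr\Q=0$, which you implicitly use when decomposing the normal component of $\gamma''(0)$ along $\na g_1$ and $\na g_2$ separately.
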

\begin{proof}
The calculations presented here are analogous to the proof of Lemma 3.5 in \cite{NZ13}. Due to the transitive action of the special orthogonal group $ \mathrm{SO}(3) $ on the manifold $ \cN $, we can assume that $ \Q=r_*\diag\{1,-1,0\} $. We can isometrically embed $ \cN $ into the space of $ 3\times 3 $ matrices $ \MM^{3\times 3} $ through inclusion. For a matrix in $ \MM^{3\times 3} $, there exists a parametrization
$$
\A=\(\begin{matrix}
y_1&y_2&y_3\\
y_4&y_5&y_6\\
y_7&y_8&y_9
\end{matrix}\)=\sum_{i=1}^9y_i\pa_i.
$$
By this, we can choose tangential vectors at $ \Q $ in the form of differential operators, which are given by
\begin{align*}
\X_1&=-(y_2+y_4)\pa_1+(y_1-y_5)\pa_2-y_6\pa_3+(y_1-y_5)\pa_4+(y_2+y_4)\pa_5+y_3\pa_6-y_8\pa_7+y_7\pa_8,\\
\X_2&=-(y_3+y_7)\pa_1-y_8\pa_2+(y_1-y_9)\pa_3-y_6\pa_4+y_4\pa_6+(y_1-y_9)\pa_7+y_2\pa_8+(y_3+y_7)\pa_9,\\
\X_3&=-y_3\pa_2+y_2\pa_3-y_7\pa_4-(y_6+y_8)\pa_5+(y_5-y_9)\pa_6+y_4\pa_7+(y_5-y_9)\pa_8+(y_6+y_8)\pa_9.
\end{align*}
We have that $ \{\X_i\}_{i=1}^3 $ form a basis of $ T_{\Q}\cN $. Simple calculations yield that
\begin{align*}
\ol{\na}_{\X_1}\X_1&=2(y_1-y_5)(-\pa_1+\pa_5)+\cdots,\,\,\ol{\na}_{\X_1}\X_2=(y_1-y_5)(-\pa_6+\pa_8)+\cdots,\\
\ol{\na}_{\X_2}\X_2&=2(y_1-y_9)(-\pa_1+\pa_9)+\cdots,\,\,\ol{\na}_{\X_1}\X_3=(y_1-y_5)(-\pa_3+\pa_7)+\cdots,\\
\ol{\na}_{\X_2}\X_3&=(y_1-y_9)(-\pa_2-\pa_4)+\cdots,\,\,\ol{\na}_{\X_3}\X_3=2(y_5-y_9)(-\pa_5+\pa_9)+\cdots,
\end{align*}
where the dots comprise of terms that vanish at $ \Q $ and $ \ol{\na} $ denotes the connection of $ \MM^{3\times 3} $. Since $ \Pi_{\cN}(\X,\Y) $ is the normal component of $ \ol{\na}_{\X}\Y $, we can obtain
\begin{align*}
\Pi_{\cN}(\X_1,\X_1)(\Q)&=[4r_*(-\pa_1+\pa_5)]^{\perp}=4r_*\diag\{-1,1,0\},\\
\Pi_{\cN}(\X_1,\X_2)(\Q)&=[2r_*(\pa_6+\pa_8)]^{\perp}=0,\\
\Pi_{\cN}(\X_2,\X_2)(\Q)&=[2r_*(-\pa_1+\pa_5)]^{\perp}=2r_*\diag\{-1,0,1\},\\
\Pi_{\cN}(\X_1,\X_3)(\Q)&=[r_*(\pa_3+\pa_7)]^{\perp}=0,\\
\Pi_{\cN}(\X_2,\X_3)(\Q)&=[r_*(-\pa_2-\pa_4)]^{\perp}=0,\\
\Pi_{\cN}(\X_3,\X_3)(\Q)&=[-2r_*(-\pa_5+\pa_9)]^{\perp}=-2r_*\diag\{0,-1,1\}.
\end{align*}
Since it is easy to check that \eqref{SecondFundamentalForm} is true when $ \X,\Y\in\{\X_1,\X_2,\X_3\} $ and $ \Pi_{\cN}(\cdot,\cdot) $ is bilinear, we can verify the result of this lemma for any $ \X,\Y\in T_{\Q}\cN $.
\end{proof}

The next lemma shows that the set $ \M $ is a smooth manifold and give a Riemannian metric on it.

\begin{lem}
$ \M $ is a compact, smooth $ 3 $-dimensional Riemannian manifold without boundary, which is isometrically embedded into $ \R^6 $.    
\end{lem}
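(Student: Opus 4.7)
The plan is to realize $\M$ as a regular level set of a smooth submersion $\R^6 \to \R^3$ and then read off all four required properties (compactness, smoothness, dimension 3, and isometric embedding) from that description.

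First, identify $\Ss^2 \times \Ss^2$ with a subset of $\R^6$ by the obvious product inclusion, so that a pair $(\n,\m)\in\Ss^2\times\Ss^2$ is viewed as the column vector $(\n_1,\n_2,\n_3,\m_1,\m_2,\m_3)^{\T}$. Define
\[
F:\R^6\longrightarrow\R^3,\qquad F(\n,\m):=(|\n|^2-1,\,|\m|^2-1,\,\n\cdot\m),
\]
so that $\M=F^{-1}(0)$. The map $F$ is obviously smooth, and $\M$ is closed in $\R^6$ (as the preimage of a point) and bounded (since any $(\n,\m)\in\M$ satisfies $|\n|=|\m|=1$), hence compact.

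Next, I will verify that $0$ is a regular value of $F$. The differential at $(\n,\m)$ is the $3\times 6$ matrix
\[
DF(\n,\m)=\begin{pmatrix} 2\n^{\T} & 0 \\ 0 & 2\m^{\T} \\ \m^{\T} & \n^{\T}\end{pmatrix}.
\]
To see that the three rows are linearly independent on $\M$, suppose $a(2\n,0)+b(0,2\m)+c(\m,\n)=0$ in $\R^6$. Then $2a\n+c\m=0$ and $c\n+2b\m=0$. Since $\n\cdot\m=0$ and $\n,\m$ are unit vectors, $\{\n,\m\}$ is linearly independent in $\R^3$, forcing $a=b=c=0$. Hence $DF(\n,\m)$ has rank $3$ at every point of $\M$, so by the regular value theorem $\M$ is a smooth embedded submanifold of $\R^6$ of dimension $6-3=3$, without boundary.

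Finally, equip $\M$ with the Riemannian metric induced by the standard Euclidean inner product on $\R^6$; the inclusion $\iota:\M\hookrightarrow\R^6$ is then by construction an isometric embedding. Combining these facts yields the claim. The only step that requires any thought is the rank computation for $DF$, and the key input there is precisely the orthogonality constraint $\n\cdot\m=0$ together with $|\n|=|\m|=1$, which guarantees linear independence of $\n$ and $\m$; everything else is routine from the regular value theorem.
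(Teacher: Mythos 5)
Your proof is correct and follows essentially the same route as the paper: realize $\M$ as a regular level set of the smooth map $(\n,\m)\mapsto(|\n|^2,|\m|^2,\n\cdot\m)$ and apply the regular value theorem. The only difference is cosmetic (you shift $F$ so the level is $0$ instead of $(1,1,0)$) and that you write out the rank computation for $DF$ in full, which the paper leaves as a ``simple calculation''.
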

\begin{proof}
Considering the parametrization of $ \R^6 $ by $ (\n,\m) $ with $ \n,\m\in\R^3 $, we can define $ F:\R^6\to\R^3 $ by $
F(\n,\m):=(|\n|^2,|\m|^2,\n\cdot\m) $. Obviously, we have $ \M=F^{-1}((1,1,0)) $. Through simple calculations, we observe that on $ \M $, $ (1,1,0) $ is a regular point of $ F $, which means that $ DF|_{\M} $ has rank $ 3 $. Consequently, $ \M $ is a compact, smooth $ 3 $-dimensional Riemannian submanifold of $ \R^6 $ inheriting the Euclidean metric of $ \R^6 $.
\end{proof}

The sets $ \{\cC_i\}_{i=1}^2 $ are defined by
\begin{align*}
\cC_1&:=\{\Q\in\Ss_0:\lda_1(\Q)=\lda_2(\Q)\},\\
\cC_2&:=\{\Q\in\Ss_0:\lda_2(\Q)=\lda_3(\Q)\},
\end{align*}
where $ \lda_1(\Q)\geq\lda_2(\Q)\geq\lda_3(\Q) $ are eigenvalues of $ \Q $. For matrices $ \Q\in\Ss_0\backslash\cC_1 $ with distinct first two eigenvalues and $ \Q\in\Ss_0\backslash\cC_2 $ with distinct second and third eigenvalues, we can choose eigenvectors corresponding to $ \lda_1(\Q) $ and $ \lda_3(\Q) $, denoted as $ \n(\Q) $ and $ \m(\Q) $, respectively. Moreover, the following maps
\be
\begin{aligned}
\varrho&:\Ss_0\backslash(\cC_1\cup\cC_2)\to\cN,\quad\varrho(\Q):=r_*(\n(\Q)\n(\Q)-\m(\Q)\m(\Q),\\
\varrho_1&:\Ss_0\backslash\cC_1\to\cP,\quad\quad\quad\,\,\,\,\varrho_1(\Q):=r_*\n(\Q)\n(\Q),\\
\varrho_2&:\Ss_0\backslash\cC_2\to\cP,\quad\quad\quad\,\,\,\,\varrho_2(\Q):=r_*\m(\Q)\m(\Q)
\end{aligned}\label{rho1rho2rho}
\ee
are well defined, where $ \cP $ is given by
\be
\cP:=\{r_*\n\n\in\MM^{3\times 3}:\n\in\Ss^2\}.\label{N0}
\ee

The following lemma shows that $ \varrho $ is exactly the nearest point projection map to the manifold $ \cN $.

\begin{lem}\label{varrhop}
The maps $ \varrho_1,\varrho_2 $ are of class $ C^1 $ on $ \Ss_0\backslash\cC_1 $ and $ \Ss_0\backslash\cC_2 $, respectively and satisfy
\be
|D\varrho_i(\Q)|\leq C\text{ for any }\Q\in\Ss_0\backslash\cC_i\label{Dvarrhoestimate}
\ee
with $ i=1,2 $, where $ C>0 $ depends only on $ \cA $. The map $ \varrho $ is of class $ C^1 $ and $ \varrho=\varrho_1-\varrho_2 $ on $ \Ss_0\backslash(\cC_1\cup\cC_2) $. Furthermore, $ \varrho $ precisely agrees with the nearest point projection onto $ \cN $, in the sense that
\be
|\Q-\varrho(\Q)|\leq|\Q-\PP|\label{rhomin}
\ee
holds for all $ \Q\in\Ss_0\backslash(\cC_1\cup\cC_2) $ and any $ \PP\in\cN $, with strict inequality when $ \PP\neq\varrho(\Q) $. In other words, $ \varrho(\Q) $ uniquely achieves the minimum distance from $ \Q\in\Ss_0\backslash(\mathcal{C}_1\cup\mathcal{C}_2) $ to $ \cN $.
\end{lem}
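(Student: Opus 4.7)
The claim splits into three parts: the $C^1$ regularity of $\varrho_1,\varrho_2$ together with the derivative bound \eqref{Dvarrhoestimate}; the identity $\varrho=\varrho_1-\varrho_2$; and the nearest-point characterization \eqref{rhomin}.

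For regularity, I would represent the rank-one projection onto the top eigenspace by a Riesz contour integral. Fix $\Q_0\in\Ss_0\setminus\cC_1$, so $\lambda_1(\Q_0)>\lambda_2(\Q_0)$. Choose a positively oriented circle $\gamma\subset\C$ enclosing $\lambda_1(\Q_0)$ but not $\lambda_2(\Q_0),\lambda_3(\Q_0)$. By Lemma \ref{smoothLda} the same separation persists for $\Q$ near $\Q_0$, so
$$
\Pi_1(\Q):=\f{1}{2\pi\ii}\oint_\gamma(\zeta\,\I-\Q)^{-1}\,d\zeta
$$
is analytic in $\Q$ and coincides with $\n(\Q)\n(\Q)$ for any choice of unit top eigenvector. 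Hence $\varrho_1=r_*\Pi_1$ is smooth, and the same argument with a contour around $\lambda_3$ handles $\varrho_2$. For the bound \eqref{Dvarrhoestimate} I would differentiate under the integral sign to arrive at the Daletskii--Krein expression
$$
D\Pi_1(\Q)[\PP]=-\sum_{i\ne 1}\f{1}{\lambda_i-\lambda_1}\bigl(\Pi_1\PP\,\Pi_i+\Pi_i\,\PP\,\Pi_1\bigr),
$$
and extract the $\cA$-dependent constant from there. On $\Ss_0\setminus(\cC_1\cup\cC_2)$ both $\varrho_i$ are thus $C^1$, and $\varrho=\varrho_1-\varrho_2$ is immediate from \eqref{rho1rho2rho}.

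For the nearest-point property, I would invoke Lemma \ref{minimalp} to note that $|\PP|^2=\tr\PP^2=2r_*^2$ for every $\PP\in\cN$, so
$$
|\Q-\PP|^2=|\Q|^2+2r_*^2-2\,\tr(\Q\PP)
$$
reduces the minimization to maximizing $\tr(\Q\PP)$ over $\cN$. Parametrizing $\PP=r_*(\n'\n'-\m'\m')$ with $(\n',\m')\in\M$ gives
$$
\tr(\Q\PP)=r_*\bigl((\n')^{\T}\Q\,\n'-(\m')^{\T}\Q\,\m'\bigr).
$$
By the Courant--Fischer principle, $(\n')^{\T}\Q\,\n'\le\lambda_1(\Q)$ with equality iff $\n'$ is a top unit eigenvector, and $(\m')^{\T}\Q\,\m'\ge\lambda_3(\Q)$ with equality iff $\m'$ is a bottom unit eigenvector. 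For $\Q\in\Ss_0\setminus(\cC_1\cup\cC_2)$ the eigenvalues $\lambda_1>\lambda_2>\lambda_3$ are distinct, so both extremal eigenspaces are one-dimensional and automatically orthogonal. The constraint $\n'\cdot\m'=0$ is therefore satisfied by the joint maximizers, and the maximum is attained precisely at $\PP=r_*(\n(\Q)\n(\Q)-\m(\Q)\m(\Q))=\varrho(\Q)$, yielding \eqref{rhomin} with strict inequality for every other $\PP\in\cN$.

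I expect the derivative estimate \eqref{Dvarrhoestimate} to be the main obstacle. The Daletskii--Krein expression carries a spectral-gap denominator $\lambda_1-\lambda_i$ that degenerates as $\Q$ approaches $\cC_1$, so a naive pointwise bound cannot be uniform. Reconciling this with a constant depending only on $\cA$ will require exploiting the compactness of the target $\cP$ together with the zero-order homogeneity $\Pi_1(c\Q)=\Pi_1(\Q)$ for $c>0$, so that small spectral gaps are balanced against correspondingly small admissible perturbations; isolating this cancellation is the technical heart of the bound.
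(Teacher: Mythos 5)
Your treatment of \eqref{rhomin} is correct and matches the paper's argument: the paper expands $|\Q-\PP|^2$ explicitly in the eigenframe of $\Q$ and minimizes over $(\uu,\vv)\in\M$, while you write $|\Q-\PP|^2=|\Q|^2+2r_*^2-2\tr(\Q\PP)$ and maximize $\tr(\Q\PP)$ by Courant--Fischer; these are the same computation, and the observation that the joint maximizer automatically satisfies the orthogonality constraint because eigenspaces for distinct eigenvalues are orthogonal is exactly what both arguments hinge on. Your Riesz-projection argument for $C^1$ regularity is also fine and more explicit than the paper, which simply cites a reference for this step.

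The genuine gap is the derivative bound \eqref{Dvarrhoestimate}, and you have put your finger on something more serious than you let on: the estimate as stated is actually false. Take $\Q_\epsilon=\diag(1,1-\epsilon,-2+\epsilon)\in\Ss_0\setminus\cC_1$, whose norm stays bounded away from $0$ and $\infty$ as $\epsilon\to 0^+$, and perturb in the direction $\PP=\e^{(1)}\otimes\e^{(2)}+\e^{(2)}\otimes\e^{(1)}$; the Daletskii--Krein formula, or a direct $2\times 2$ computation of the top eigenvector of $\Q_\epsilon+t\PP$, gives $D\varrho_1(\Q_\epsilon)[\PP]=(r_*/\epsilon)\PP$, so $|D\varrho_1(\Q_\epsilon)|\geq r_*/\epsilon\to\infty$. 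Your proposed rescue does not work: zero-homogeneity only says $D\Pi_1$ is homogeneous of degree $-1$, which shows the bound also fails as $|\Q|\to 0$ but does nothing to cure the blow-up at fixed $|\Q|$ near $\cC_1$, and compactness of the target $\cP$ is irrelevant to the size of the differential. The paper supplies no actual proof here either; it cites Atkinson Section 9.1, but the formula it invokes elsewhere, (9.1.32), controls eigenvalue derivatives $|D\lambda_i|\leq 1$, not the eigenvector projections $\n(\Q)\n(\Q)$. What is true, and what the paper actually needs downstream in Corollary \ref{lowerQcorollary} and Proposition \ref{logboundenergy}, is the weighted estimate $(\phi_\tau\circ\Q)^2|\na(\varrho\circ\Q)|^2\leq|\na\Q|^2$ from \eqref{naQdetau}, in which the factor $\phi_\tau(\Q)\sim\lambda_1(\Q)-\lambda_2(\Q)$ near $\cC_1$ exactly cancels the spectral-gap denominator; that bound is proven independently by a direct spectral-frame computation in Lemma \ref{TechLemma1} and does not rely on \eqref{Dvarrhoestimate}. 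A sound version of the argument should target that product estimate rather than a pointwise bound on $D\varrho_i$.
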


\begin{proof}
The $ C^1 $ regularity of $ \varrho,\varrho_1,\varrho_2 $ and \eqref{Dvarrhoestimate} can be deduced from classical differentiability results for matrix eigenvectors (see Section 9.1 of \cite{A89} for example). Therefore, we focus on establishing property \eqref{rhomin}. For $ \Q\in\Ss_0 $, we write its representation as $
\Q=\lda_1\n\n+\lda_2\p\p+\lda_3\m\m $, with $ \lda_1\geq\lda_2\geq\lda_3 $, where $ (\n,\p,\m) $ forms an orthonormal basis of $ \R^3 $ and $ \sum_{i=1}^3\lda_i=0 $. Clearly, $ \lda_1\geq 0\geq\lda_3 $. Let $ \PP\in\cN $ be given by $ \PP=r_*(\uu\uu-\vv\vv) $ with $ (\uu,\vv)\in\M $. Simple calculations yield that
\begin{align*}
|\Q-\PP|^2&=\sum_{i=1}^3\lda_i^2+2r_*^2-2\lda_1r_*(\n\cdot\uu)^2-2\lda_2r_*(\p\cdot\uu)^2-2\lda_3r_*(\m\cdot\uu)^2\\
&\quad\quad+2\lda_1r_*(\n\cdot\vv)^2+2\lda_2r_*(\p\cdot\vv)^2+2\lda_3r_*(\m\cdot\vv)^2:=f(\uu,\vv).
\end{align*}
Since $ \Q\notin\cC_1\cup\cC_2 $, we have $ \lda_1\geq\lda_2\geq\lda_3 $ and $ \lda_1>0>\lda_3 $. Hence, $ f(\uu,\vv) $ achieves its minimum when $ \uu=\n $ and $ \vv=\m $, which implies \eqref{rhomin}.
\end{proof}

\begin{rem}\label{rhorem}
In fact, one can show that $ \varrho\in C^{\ift}(\Ss_0\backslash(\mathcal{C}_1\cup\mathcal{C}_2)) $ by using Lemma \ref{mu1mu2mu3S} and standard arguments. For any $ \Q\in\Ss_0\backslash(\mathcal{C}_1\cup\mathcal{C}_2) $, $ \varrho(\Q) $ is well-defined. Let $ \Q=\lda_1\n\n+\lda_2\p\p+\lda_3\m\m $ with $ \lda_1\geq\lda_2\geq\lda_3 $. Lemma \ref{varrhop} implies that $ \varrho(\Q)=r_*(\n\n-\m\m) $ and $
\dist^2(\Q,\cN)=(\lda_1-r_*)^2+\lda_2^2+(\lda_3+r_*)^2 $. Since $ \lda_2=-\lda_1-\lda_3 $, we can use Cauchy inequality to get
\be
(\lda_1-r_*)^2+(\lda_3+r_*)^2\leq\dist^2(\Q,\cN)\leq 2((\lda_1-r_*)^2+(\lda_3+r_*)^2).\label{eqdist}
\ee
Since $ \cN\cap(\mathcal{C}_1\cup\mathcal{C}_2)\neq\emptyset $ and $ \cN $ is a compact smooth manifold, there exists $ 0<\delta_0<1 $ depending only on $ \cA $ such that $ \varrho(\Q) $ is well defined if $ \dist(\Q,\cN)<\delta_0 $.
\end{rem}

Consider functions $ \phi_1,\phi_2:\Ss_0\to\R $ defined by
\begin{align*}
\phi_1(\Q)&:=r_*^{-1}(\lda_1(\Q)-\lda_2(\Q)),\\
\phi_2(\Q)&:=r_*^{-1}(\lda_2(\Q)-\lda_3(\Q)).
\end{align*}
Obviously, $ \phi_1,\phi_2\geq 0 $, $ \phi_1(\Q)=0 $ if and only if $ \Q\in\cC_1 $ and $ \phi_2(\Q)=0 $ if and only if $ \Q\in\cC_2 $. With the help of representation formula \eqref{representationQ}, it follows that
\begin{align*}
\phi_1(\Q)&=r_*^{-1}s(\Q)(1-r(\Q)),\\
\phi_2(\Q)&=r_*^{-1}s(\Q)r(\Q).
\end{align*}
with $ s(\Q)\in(0,+\ift) $ and $ r(\Q)\in[0,1] $ when $ \Q\in\Ss_0\backslash\{\mathbf{O}\} $. For functions $ \phi_1 $ and $ \phi_2 $, we have the following results on Lipschitz estimates of them.

\begin{lem}\label{Lipphi}
For each $ i=1,2 $, the following properties hold.
\begin{enumerate}
\item $ \phi_i $ is homogeneous of degree $ 1 $.
\item $ \phi_i\in W^{1,\ift}(\Ss_0)\cap C^{\ift}(\Ss_0\backslash(\cC_1\cup\cC_2)) $  and satisfies
\be
\sqrt{2}r_*^{-1}\leq |D\phi_i(\Q)|\leq 2r_*^{-1}\text{ for any }\Q\in\Ss_0\backslash(\cC_1\cup\cC_2).\label{Dphiestimate}
\ee
\end{enumerate}
\end{lem}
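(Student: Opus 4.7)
The plan is to handle each of the two assertions in turn. Positive homogeneity of degree $1$ is immediate from the corresponding property of the ordered eigenvalues: for $t>0$ the ordering $\lda_1\ge\lda_2\ge\lda_3$ is preserved, so $\lda_j(t\Q)=t\lda_j(\Q)$ for each $j$, and hence $\phi_i(t\Q)=t\phi_i(\Q)$.

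For assertion (2), I would establish smoothness, the global Lipschitz bound (which simultaneously gives $\phi_i\in W^{1,\infty}(\Ss_0)$ and the upper bound on $|D\phi_i|$), and the pointwise lower bound separately. Smoothness of $\phi_i$ on $\Ss_0\setminus(\cC_1\cup\cC_2)$ is immediate from Lemma \ref{smoothLda}, since that set coincides with $\Lda^{-1}(V_0)$. For the global Lipschitz bound I would invoke Weyl's perturbation inequality: each ordered eigenvalue $\lda_j$ is $1$-Lipschitz on $\Ss^{3\times 3}$ with respect to the spectral norm, which in turn is dominated by the Frobenius norm, so $\lda_j$ is also $1$-Lipschitz in Frobenius norm on $\Ss_0$. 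Therefore $\phi_i=r_*^{-1}(\lda_i-\lda_{i+1})$ is $(2r_*^{-1})$-Lipschitz, which places it in $W^{1,\infty}(\Ss_0)$ with $|D\phi_i|\le 2r_*^{-1}$ a.e.\ by Rademacher's theorem.

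For the lower bound, fix $\Q\in\Ss_0\setminus(\cC_1\cup\cC_2)$ with diagonalization $\Q=\lda_1\n\n+\lda_2\p\p+\lda_3\m\m$ in an orthonormal basis $(\n,\p,\m)$ of $\R^3$ and distinct eigenvalues. First-order perturbation theory for simple eigenvalues (which in the present setting can also be obtained by differentiating the identity $F(\Q,\Lda(\Q))=0$ from the proof of Lemma \ref{mu1mu2mu3S}) gives $D\lda_1(\Q)[\Y]=\tr(\n\n\Y)$ for every $\Y\in\Ss_0$, and analogous formulas for $\lda_2,\lda_3$. Since $\tr\Y=0$, the Frobenius gradient in $\Ss_0$ of $\lda_j$ is the traceless part of the rank-one projection, so
\[
D\phi_1(\Q)=r_*^{-1}(\n\n-\p\p),\qquad D\phi_2(\Q)=r_*^{-1}(\p\p-\m\m).
\]
Expanding the Frobenius norm and using the orthogonality of the eigenvectors,
\[
|D\phi_i(\Q)|^2=r_*^{-2}\bigl(1-2(\n\cdot\p)^2+1\bigr)=2r_*^{-2}
\]
(with $\p\cdot\m$ replacing $\n\cdot\p$ when $i=2$), yielding $|D\phi_i(\Q)|=\sqrt{2}\,r_*^{-1}$, which is the desired lower bound in \eqref{Dphiestimate}.

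The only step requiring any care is the first-order perturbation formula for simple eigenvalues, but this is classical and in our setting is already subsumed by the smooth dependence supplied by Lemma \ref{smoothLda}; I do not anticipate any genuine obstacle.
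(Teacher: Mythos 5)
Your proof is correct and fills in the details that the paper defers to Lemma 13 of \cite{C17}, using the expected toolkit: Lemma \ref{smoothLda} for smoothness on $\Ss_0\setminus(\cC_1\cup\cC_2)$, Weyl's eigenvalue perturbation inequality for the global Lipschitz bound (hence $\phi_i\in W^{1,\infty}(\Ss_0)$), and first-order perturbation theory for the explicit gradient formula on the regular set. A small bonus of your explicit computation is that it identifies $|D\phi_i(\Q)|=\sqrt{2}\,r_*^{-1}$ exactly on $\Ss_0\setminus(\cC_1\cup\cC_2)$, a cleaner statement than the two-sided bound \eqref{Dphiestimate}; the upper constant $2r_*^{-1}$ there is the slack bound one gets from $|D\phi_i|\le r_*^{-1}(|D\lda_i|+|D\lda_{i+1}|)$ together with $|D\lda_j|\le 1$, which is the estimate the paper actually uses later in the proof of Lemma \ref{Lipphi4}.
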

\begin{proof}
Firstly, for any $ i=1,2 $, $ t>0 $ and $ \Q\in\Ss_0 $, by the definition of $ \phi_i $, we have $ \phi_i(t\Q)=t\phi_i(\Q) $. This implies that $ \{\phi_i\}_{i=1}^2 $ are homogeneous of degree $ 1 $. By similar arguments in Lemma 13 of \cite{C17}, for each $ i=1,2 $, we deduce $ \phi_i\in W^{1,\ift}(\Ss_0)\cap C^1(\Ss_0\backslash(\cC_1\cup\cC_2)) $ and \eqref{Dphiestimate}. By using Lemma \ref{smoothLda}, we obtain $ \{\phi_i\}_{i=1}^2 $ are smooth in $ \Ss_0\backslash(\cC_1\cup\cC_2) $.
\end{proof}

We can further define the function $ \phi_0:\Ss_0\to\R $ by
$$
\phi_0(\Q):=\min\{\phi_1(\Q),\phi_2(\Q)\}.
$$
Now we have, $ \Q\in\cC_1\cup\cC_2 $ if and only if $ \phi_0(\Q)=0 $.

\begin{lem}\label{Lipphi0lem}
For $ \phi_0 $, the following properties hold.
\begin{enumerate}
\item $ \phi_0 $ is homogeneous of degree $ 1 $.
\item $ \phi_0\in W^{1,\ift}(\Ss_0) $ and satisfies 
\be
|D\phi_0(\Q)|\leq 2r_*^{-1}\text{ for any }\Q\in\Ss_0\backslash(\cC_1\cup\cC_2).\label{Lipestiphi0eq}
\ee
\end{enumerate}
\end{lem}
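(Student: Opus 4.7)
The plan is to deduce both assertions directly from the corresponding properties of $\phi_1$ and $\phi_2$ established in Lemma \ref{Lipphi}, using that the pointwise minimum operation is compatible with both positive $1$-homogeneity and Lipschitz regularity, and preserves the relevant constants.

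For property (1), I would simply compute: for every $t>0$ and $\Q\in\Ss_0$, degree-$1$ homogeneity of $\phi_1,\phi_2$ gives
$$
\phi_0(t\Q)=\min\{\phi_1(t\Q),\phi_2(t\Q)\}=\min\{t\phi_1(\Q),t\phi_2(\Q)\}=t\,\phi_0(\Q),
$$
which is the claim.

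For property (2), the key identity is $\min\{a,b\}=\tfrac12(a+b)-\tfrac12|a-b|$, which I would use to write $\phi_0=\tfrac12(\phi_1+\phi_2)-\tfrac12|\phi_1-\phi_2|$. Since $\phi_1,\phi_2\in W^{1,\infty}(\Ss_0)$ by Lemma \ref{Lipphi}, sums, differences, and the absolute value all stay in $W^{1,\infty}(\Ss_0)$, hence $\phi_0\in W^{1,\infty}(\Ss_0)$. To obtain the pointwise gradient bound on $\Ss_0\setminus(\cC_1\cup\cC_2)$, I would fix $\Q$ in that set and split into two cases. If $\phi_1(\Q)\neq\phi_2(\Q)$, continuity of $\phi_1,\phi_2$ provides an open neighborhood of $\Q$ on which the strict inequality persists, so $\phi_0$ coincides there with whichever of $\phi_1,\phi_2$ is smaller, and the bound $|D\phi_0(\Q)|\leq 2r_*^{-1}$ is inherited directly from \eqref{Dphiestimate}. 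If $\phi_1(\Q)=\phi_2(\Q)$, then $\phi_0$ may fail to be classically differentiable at $\Q$, but such points lie on the closed coincidence set $\{\phi_1=\phi_2\}\cap(\Ss_0\setminus(\cC_1\cup\cC_2))$; on this exceptional set, \eqref{Lipestiphi0eq} is understood in the a.e.\ sense furnished by Rademacher's theorem applied to $\phi_0\in W^{1,\infty}(\Ss_0)$, and since $|D\phi_i|\leq 2r_*^{-1}$ a.e.\ for $i=1,2$, the same upper bound transfers to $|D\phi_0|$ a.e.

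There is no serious obstacle here; the only point requiring any care is the potential non-differentiability of $\phi_0$ along the coincidence locus $\{\phi_1=\phi_2\}$, which is resolved by reading \eqref{Lipestiphi0eq} in the weak/a.e.\ sense consistent with the Lipschitz class of $\phi_0$. Everything else is an immediate consequence of Lemma \ref{Lipphi}.
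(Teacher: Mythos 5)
Your proof is correct and follows essentially the same route as the paper: homogeneity by direct computation from the homogeneity of $\phi_1,\phi_2$, and the $W^{1,\infty}$ membership plus gradient bound from the identity $\phi_0=\tfrac12(\phi_1+\phi_2-|\phi_1-\phi_2|)$ combined with Lemma \ref{Lipphi}. Your additional remark that \eqref{Lipestiphi0eq} must be read in the a.e./weak sense along the coincidence locus $\{\phi_1=\phi_2\}$ is a sensible clarification of a point the paper glosses over.
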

\begin{proof}
For $ \Q\in\Ss_0 $, there exists $ i\in\{1,2\} $ such that $ \phi_0(\Q)=\phi_i(\Q) $. Applying the first property of Lemma \ref{Lipphi}, for any $ t>0 $, we have $ \phi_0(t\Q)=\phi_i(t\Q)=t\phi_i(\Q)=t\phi_0(\Q) $. In view of the second property of Lemma \ref{Lipphi} and the fact that the $ \phi_0=(\phi_1+\phi_2-|\phi_1-\phi_2|)/2 $, we deduce $ \phi_0\in W^{1,\ift}(\Ss_0) $ and satisfies \eqref{Lipestiphi0eq}.
\end{proof}

Noticing that $ \phi_0 $ is not $ C^2 $ near the set $ \{\Q\in\Ss_0:\phi_1(\Q)=\phi_2(\Q)\} $, for later use, we define a comparable function of $ \phi_0 $, which has higher regularity. Precisely, for $ 0<\tau<1/2 $, we set $ \phi_{\tau}:\Ss_0\to\R $ by
\be
\begin{aligned}
\phi_{\tau}(\Q):=\left\{\begin{aligned}
&r_*^{-1}\vp_{\tau}(s(\Q),r(\Q))&\text{ if }&\Q\neq\mathbf{O},\\
&0&\text{ if }&\Q=\mathbf{O},
\end{aligned}\right.
\end{aligned}\label{phitaudefinition}
\ee
where
$$
\vp_{\tau}(s,r):=\left\{\begin{aligned}
&sr&\text{ if }&s\in\R_+,\,\,r\in[0,(1-2\tau)/2),\\
&s\beta_{\tau}(r)&\text{ if }&s\in\R_+,\,\,r\in[(1-2\tau)/2,(1+2\tau)/2],\\
&s(1-r)&\text{ if }&s\in\R_+,\,\,r\in((1+2\tau)/2,1],
\end{aligned}\right.
$$
with
$$
\beta_{\tau}(r):=\f{1}{24\tau^5}\(\f{2r-1}{2}\)^6-\f{5}{8\tau}\(\f{2r-1}{2}\)^2+\f{6-5\tau}{12}.
$$

\begin{lem}\label{Lipphi4}
For each $ 0<\tau<1/2 $ the following properties hold.
\begin{enumerate}
\item $ \phi_{\tau}^{-1}(0)=\cC_1\cup\cC_2 $ and for any $ \Q\in\Ss_0 $,
\be
\phi_{\tau}(\Q)\leq\phi_0(\Q)\leq\f{6}{6-5\tau}\phi_{\tau}(\Q).\label{phi42phi3}
\ee 
\item $ \phi_{\tau}\in W^{1,\ift}(\Ss_0)\cap C^2(\Ss_0\backslash(\cC_1\cup\cC_2)) $ and satisfies
\be
|D\phi_{\tau}(\Q)|\leq 5r_*^{-1}\text{ for any }\Q\in\Ss_0\backslash(\cC_1\cup\cC_2).\label{Dphi4es}
\ee
\end{enumerate}
\end{lem}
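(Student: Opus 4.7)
The plan is to prove parts (1) and (2) by direct computation: the piecewise polynomial $\beta_\tau$ is engineered so that $\vp_\tau$ is $C^2$ across the seams $r = (1 \pm 2\tau)/2$, which immediately gives the regularity claim off $\cC_1 \cup \cC_2$. The Lipschitz control in the transition regime will then be obtained by expressing $\phi_\tau$ in terms of $\phi_1, \phi_2$ and invoking the bounds already proved in Lemma \ref{Lipphi}. The central observation is that in the middle regime,
\[
\phi_\tau = (\phi_1 + \phi_2)\,\beta_\tau(r), \qquad r = \tfrac{\phi_2}{\phi_1 + \phi_2},
\]
which uses $s = r_*(\phi_1 + \phi_2)$.

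First I would check the matching conditions for $\vp_\tau$ at the seams. Substituting $u := (2r-1)/2 = \pm\tau$ gives $\beta_\tau(\pm\tau) = (1-2\tau)/2$, $\beta_\tau'(\pm\tau) = \mp 1$, and $\beta_\tau''(\pm\tau) = 0$, which are precisely the values and first two derivatives of $r$ on the left and of $1-r$ on the right. Consequently $\vp_\tau \in C^2([0,\infty) \times [0,1])$, and composing with the smooth map $\Q \mapsto (s(\Q), r(\Q))$ on $\Ss_0 \setminus (\cC_1 \cup \cC_2)$ (smoothness from Lemma \ref{smoothLda}) yields $\phi_\tau \in C^2(\Ss_0 \setminus (\cC_1 \cup \cC_2))$.

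Next I would establish \eqref{phi42phi3} and the zero-set claim. Outside the middle regime, $\phi_\tau$ equals $\phi_1$ or $\phi_2$, hence equals $\phi_0$, and the inequality is trivial. In the middle regime, the symmetry $\beta_\tau(1-r) = \beta_\tau(r)$ lets me reduce to $r \in [1/2, (1+2\tau)/2]$, where $\min(r, 1-r) = 1-r$, and it suffices to show $\tfrac{6-5\tau}{6}(1-r) \leq \beta_\tau(r) \leq 1-r$. Writing $u = r - 1/2 \in [0, \tau]$ and $f(u) := \beta_\tau(r) - (1-r)$, a short computation gives $f''(u) = \tfrac{5u^4}{4\tau^5} - \tfrac{5}{4\tau} \leq 0$, together with $f'(0) = 1$, $f'(\tau) = 0$, and $f(\tau) = 0$, which force $f \leq 0$ on $[0, \tau]$. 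The lower bound follows from a parallel concavity analysis of $g(u) := \beta_\tau(r) - \tfrac{6-5\tau}{6}(1-r)$, for which $g(0) = 0$, $g'(0) > 0$, $g'(\tau) < 0$ and $g(\tau) > 0$. In particular $\beta_\tau > 0$ throughout the middle interval, so $\phi_\tau^{-1}(0)$ reduces to $\{\Q : r(\Q) \in \{0, 1\}\} \cup \{\mathbf{O}\} = \cC_1 \cup \cC_2$.

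Finally, for the gradient bound \eqref{Dphi4es}, I would differentiate the identity above. Setting $A(r) := \beta_\tau(r) - r\beta_\tau'(r)$ and $B(r) := \beta_\tau(r) + (1-r)\beta_\tau'(r)$, the chain rule yields
\[
D\phi_\tau = A(r)\,D\phi_1 + B(r)\,D\phi_2 \quad \text{in the middle regime.}
\]
Differentiating once more, $A'(r) = -r\beta_\tau''(r) \geq 0$ and $B'(r) = (1-r)\beta_\tau''(r) \leq 0$ on the interval $[(1-2\tau)/2, (1+2\tau)/2]$, while direct evaluation at the endpoints gives $A((1-2\tau)/2) = 0$, $A((1+2\tau)/2) = 1$, $B((1-2\tau)/2) = 1$, and $B((1+2\tau)/2) = 0$. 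Hence $0 \leq A, B \leq 1$, and combining with $|D\phi_i| \leq 2r_*^{-1}$ from Lemma \ref{Lipphi} gives $|D\phi_\tau| \leq 4r_*^{-1}$ in the middle regime and $|D\phi_\tau| \leq 2r_*^{-1}$ in the outer regimes, well within the claimed $5r_*^{-1}$. Since $\phi_\tau$ is continuous on $\Ss_0$ and $\cC_1 \cup \cC_2$ has empty interior and vanishing measure, $\phi_\tau \in W^{1,\infty}(\Ss_0)$ follows. The main obstacle will be the one-variable sign analysis needed for the comparison in (1); however, everything ultimately traces back to the single observation $\beta_\tau'' \leq 0$ on $[-\tau, \tau]$, which the polynomial is evidently designed to ensure.
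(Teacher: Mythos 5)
Your reduction of the comparison estimate \eqref{phi42phi3} to a one-variable concavity analysis of $\beta_\tau$, and your chain-rule identity
\[
D\phi_\tau = A(r)\,D\phi_1 + B(r)\,D\phi_2, \qquad A(r) = \beta_\tau(r) - r\beta_\tau'(r), \quad B(r) = \beta_\tau(r) + (1-r)\beta_\tau'(r),
\]
with $A, B$ monotone in $[0,1]$, are both correct and genuinely different from the paper's argument. The paper verifies \eqref{phi42phi3} by explicitly factoring the relevant polynomials in the auxiliary variable $t$ (e.g. $\beta_\tau - r = \tau(t-1)^3(t^3+3t^2+6t+10)/24$), while you deduce the same inequalities from a single observation, $\beta_\tau'' \le 0$ on the transition interval, combined with boundary data. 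For the gradient bound, the paper works with $D\lambda_1, D\lambda_2$ and bounds $\beta_\tau, \beta_\tau'$ separately to get $5r_*^{-1}$, while your $A,B$ decomposition yields the cleaner bound $4r_*^{-1}$. Both values comfortably serve the purposes of the paper, so the improvement is cosmetic, but your version is more transparent.

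There is, however, a genuine gap in the last step. You assert that continuity of $\phi_\tau$ on $\Ss_0$, together with $|D\phi_\tau|\le 4r_*^{-1}$ off the measure-zero closed set $\cC_1\cup\cC_2$, implies $\phi_\tau \in W^{1,\ift}(\Ss_0)$. This implication is false in general: the Cantor staircase function on $[0,1]$ is continuous, smooth off a closed set of measure zero with vanishing derivative there, yet fails to be Lipschitz (its distributional derivative has a nontrivial singular part on the Cantor set). What actually rescues the conclusion here is the extra structure that you do not invoke, namely that $\phi_\tau\equiv 0$ on $\cC_1\cup\cC_2$. The paper's proof exploits exactly this: in the key Case 2, for $\Q\in\cC_1\cup\cC_2$ and $\PP\notin\cC_1\cup\cC_2$, it picks the nearest point $\PP_0\in\cC_1\cup\cC_2$ to $\PP$, observes the open segment from $\PP_0$ to $\PP$ lies entirely off $\cC_1\cup\cC_2$, applies the gradient bound and lets $t\to 0^+$ to reach $\phi_\tau(\PP_0)=0=\phi_\tau(\Q)$; Cases 3--4 reduce to this. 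Without such an argument (or an appeal to the fact that $\cC_1\cup\cC_2$ is a real algebraic variety, so that line segments intersect it in finite sets and the fundamental theorem of calculus applies piecewise), the $W^{1,\ift}$ conclusion does not follow from the hypotheses you cite.
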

\begin{proof}
By the definition of $ \{\phi_{\tau}\}_{0<\tau<1/2} $, we have 
$$
\phi_{\tau}^{-1}(0)=\{\mathbf{O}\}\cup\{\Q\in\Ss_0\backslash\{\mathbf{O}\}:r(\Q)=0\text{ or }r(\Q)=1\}=\cC_1\cup\cC_2.
$$
To show the first property, we assume $ \Q\neq\mathbf{O} $, since for otherwise, there is nothing to prove. Indeed, if $ \Q\neq\mathbf{O} $, we have
\begin{align*}
\phi_0(\Q):=\left\{\begin{aligned}
&r_*^{-1}s(\Q)r(\Q)&\text{ if }&r(\Q)\in[0,1/2),\\
&r_*^{-1}s(\Q)(1-r(\Q))&\text{ if }&r(\Q)\in[1/2,1].
\end{aligned}\right.
\end{align*}
This directly implies that \eqref{phi42phi3} when $ r(\Q)\in[0,(1-2\tau)/2)\cup((1+2\tau)/2,1] $. On the other hand, since $ \phi_{\tau} $ is an even function with respect to $ r $, it suffices to show $
\beta_{\tau}(r)\leq r\leq 6\beta_{\tau}(r)/(6-5\tau) $ for any $ r\in[(1-2\tau)/2,1/2] $. Let $ (1-2\tau t)/2=r $, it is reduced to prove that
$$
\f{(6-5\tau)(1-2\tau t)}{12}\leq\beta_{\tau}\(\f{1-2\tau t}{2}\)\leq\f{1-2\tau t}{2}
$$
for any $ t\in[0,1] $. By simple calculations, we get
\begin{align*}
\beta_{\tau}\(\f{1-2\tau t}{2}\)-\f{1-2\tau t}{2}&=\f{\tau(t-1)^3(t^3+3t^2+6t+10)}{24}\leq 0,\\
\beta_{\tau}\(\f{1-2\tau t}{2}\)-\f{(6-5\tau)(1-2\tau t)}{12}&=\f{\tau t(t^5-15t-20\tau+24)}{24}\geq 0,
\end{align*}
where for the second inequality, we have used $ \tau\in(0,1/2) $ and the fact
\be
\min_{t\in[0,1]}\{t^5-15t\}=-14.\label{t515}
\ee
Therefore, we complete the proof of \eqref{phi42phi3}. Next, we show the second property. Firstly, $ \{\phi_{\tau}(s,r)\}_{0<\tau<1/2} $ is $ C^2 $ in $ (0,+\ift)\times(0,1) $. Combined with Lemma \ref{smoothLda}, we have, $ \{\phi_{\tau}\}_{0<\tau<1/2}\subset C^{\ift}(\Ss_0\backslash(\cC_1\cup\cC_2)) $. In view of the regularity results for eigenvalues given by equation (9.1.32) of \cite{A89}, we have, for $ i=1,2 $ and $ \Q\in\Ss_0\backslash(\cC_1\cup\cC_2) $, there holds
\be
|D\lda_i(\Q)|=\max_{\mathbf{B}\in\Ss_0,\,\,|\mathbf{B}|=1}|\B\n\cdot\n|\leq 1,\label{Dlda12}
\ee
where $ \B\n\cdot\n=\B_{ij}\n_i\n_j $. By simple calculations, for $ \Q\in\Ss_0\backslash(\cC_1\cup\cC_2) $, we deduce that
\be
D\phi_{\tau}(\Q)=\left\{\begin{aligned}
&r_*^{-1}(D\lda_1(\Q)+2D\lda_2(\Q)),\quad\quad r(\Q)\in(0,(1-2\tau)/2),\\
&r_*^{-1}(2D\lda_1(\Q)+D\lda_2(\Q))\beta_{\tau}(r(\Q))\\
&\quad\quad+r_*^{-1}(1-2r(\Q))D\lda_1(\Q)+(2-r(\Q))D\lda_2(\Q))\beta_{\tau}'(r(\Q)),\\
&\quad\quad\quad\quad\quad\quad\quad\quad\quad\quad\quad\quad\quad r(\Q)\in[(1-2\tau)/2,(1+2\tau)/2],\\
&r_*^{-1}(D\lda_1(\Q)-D\lda_2(\Q)),\quad\quad r(\Q)\in((1-2\tau)/2,1).\\
\end{aligned}\right.\label{Dphitauestimate2}
\ee
Direct computations yield that 
\be
\f{1-2\tau}{2}\leq\beta_{\tau}(r)\leq\f{6-5\tau}{12},\,\,0\leq|\beta_{\tau}'(r)|\leq 1,\label{estibetatau}
\ee
for $ r\in[(1-2\tau)/2,(1+2\tau)/2] $. This, together with \eqref{Dlda12} and \eqref{Dphitauestimate2}, implies \eqref{Dphi4es}. Next, we claim, for any $ \PP,\Q\in\Ss_0 $, and $ \tau\in(0,1/2) $,
\be
|\phi_{\tau}(\Q)-\phi_{\tau}(\PP)|\leq 5r_*^{-1}|\Q-\PP|.\label{Lipclaim}
\ee
This claim shows that $ \{\phi_{\tau}\}_{\tau\in(0,1/2)}\subset W^{1,\ift}(\Ss_0) $. We firstly note that $ \phi_{\tau} $ is continuous. Indeed, by Lemma \ref{reprelem}, we have, for any $ \tau\in(0,1/2) $, $ \phi_{\tau}\in C^2(\Ss_0\backslash\{\mathbf{O}\}) $. On the other hand, as $ |\Q|\to 0^+ $, we have $ s(\Q)\to 0^+ $ and $ r(\Q)\in[0,1] $, which implies that $ \phi_{\tau}(\Q)\to 0=\phi_{\tau}(\mathbf{O}) $. Next, we will divide the proof of the claim into several cases. We fix $ \tau\in(0,1/2) $.\smallskip

\noindent
\underline{\textbf{Case 1.}} $ \Q,\PP\in\cC_1\cup\cC_2 $. Now we have $ \phi_{\tau}(\Q)=\phi_{\tau}(\PP)=0 $ and there is nothing to prove.\smallskip

\noindent
\underline{\textbf{Case 2.}} $ \Q\in\cC_1\cup\cC_2 $ and $ \PP\notin\cC_1\cup\cC_2 $. Since $ \cC_1\cup\cC_2 $ is closed in $ \Ss_0 $, there exists $ \PP_0 $ such that $ |\PP-\PP_0|=\dist(\PP,\cC_1\cup\cC_2) $. Choosing $ \PP(t)=t\PP+(1-t)\PP_0 $ with $ t\in(0,1) $, we have $ \PP(t)\notin\cC_1\cup\cC_2 $ and $ \PP(t)\to\PP_0 $ as $ t\to 0^+ $. By using \eqref{Dphi4es}, we deduce
$$
|\phi_{\tau}(\PP(t))-\phi_{\tau}(\PP)|\leq 5r_*^{-1}|\PP(t)-\PP|\leq 5r_*^{-1}|\Q-\PP|.
$$
Since $ \phi_{\tau} $ is continuous, we let $ t\to 0^+ $ and obtain \eqref{Lipclaim}.\smallskip

\noindent
\underline{\textbf{Case 3.}} $ \Q\notin\cC_1\cup\cC_2 $ and $ \PP\in\cC_1\cup\cC_2 $. This case is almost the same as Case 2 and we omit the proof of it for simplicity.\smallskip

\noindent
\underline{\textbf{Case 4.}} $ \Q,\PP\notin\cC_1\cup\cC_2 $. Let $ \PP(t)=(1-t)\Q+t\PP $ with $ t\in[0,1] $. If $ \PP(t)\notin\cC_1\cup\cC_2 $ for any $ t\in[0,1] $, the claim follows directly from \eqref{Dphi4es}. If there is some $ t\in(0,1) $, such that $ \PP(t)\in\cC_1\cup\cC_2 $, we apply the result in Case 2 to get
\begin{align*}
|\phi_{\tau}(\Q)-\phi_{\tau}(\PP)|&\leq|\phi_{\tau}(\Q)-\phi_{\tau}(\PP(t))|+|\phi_{\tau}(\PP(t))-\phi_{\tau}(\PP)|\\
&\leq 5r_*^{-1}(|\Q-\PP(t)|+|\PP-\PP(t)|)\\
&=5r_*^{-1}|\Q-\PP|.
\end{align*}
Consequently, we complete the proof of this claim.
\end{proof}

\begin{lem}\label{fBl}
There exist constants $ c_0,c_1>0 $, and $ C_1>0 $ which only depend on $ \cA $ such that the following properties hold. For any $ \Q\in\Ss_0 $ and $ \tau\in(0,1/2) $,
\begin{align}
f_b(\Q)&\geq c_0\max_{i=0,1,2}\{(1-\phi_i(\Q))^2\},\label{1phi}\\
f_b(\Q)&\geq c_0\min\left\{(1-\phi_{\tau}(\Q))^2,\(1-\f{6}{6-5\tau}\phi_{\tau}(\Q)\)^2\right\}.\label{tauphi}
\end{align}
If $ \Q^*\in\cN $ and $ \PP\in(T_{\Q^*}\cN)_{\Ss_0}^{\perp} $, then
\be
c_1|\PP|^2\leq\left.\f{\ud^2}{\ud t^2}\right|_{t=0}f_b(\Q^*+t\PP)\leq C_1|\PP|^2.\label{d2es}
\ee
\end{lem}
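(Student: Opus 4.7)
I would establish the three estimates in the order \eqref{d2es}, then \eqref{1phi}, then \eqref{tauphi}: the Hessian bound \eqref{d2es} supplies the local quadratic information used in the other two. For \eqref{d2es}, set $h(t)=f_b(\Q^*+t\PP)$; since $\cN=f_b^{-1}(0)$ is a smooth critical submanifold, $h(0)=h'(0)=0$ and $h''(0)$ equals the Euclidean Hessian of $f_b$ at $\Q^*$ evaluated on $(\PP,\PP)$. The upper bound $C_1$ is immediate from compactness of $\cN$ and smoothness of $f_b$. For the lower bound, by $\mathrm{SO}(3)$-invariance of $f_b$ and the transitive $\mathrm{SO}(3)$-action on $\cN$, it suffices to check positive definiteness at a single basepoint. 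Using the basis $\mathbf{N}_1=\Q^*/(\sqrt{2}r_*)$ and $\mathbf{N}_2=(\sqrt{6}/(2r_*^2))(\Q^{*2}-(2r_*^2/3)\I)$ of $(T_{\Q^*}\cN)_{\Ss_0}^{\perp}$ from Remark \ref{normalremQ}, the minimal-polynomial identity $\Q^{*3}=r_*^2\Q^*$ from Lemma \ref{minimalp}, and the decomposition $f_b=f_0(\mu,\nu)$ with $\mu=\tr\Q^2$, $\nu=\tr\Q^3$, a direct computation shows that variations along $\mathbf{N}_1$ move $\mu$ linearly while keeping $\nu$ stationary at $t=0$, whereas variations along $\mathbf{N}_2$ do the opposite. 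Since $\partial_\mu^2 f_0(2r_*^2,0)=a_4/2+a_6\cdot 2r_*^2>0$, $\partial_\nu^2 f_0(2r_*^2,0)=a_6'/3>0$, and $\partial_\mu\partial_\nu f_0\equiv 0$ (as $f_0$ splits as a sum of a function of $\mu$ and a function of $\nu$), the restricted Hessian is diagonal with strictly positive entries on $\op{span}\{\mathbf{N}_1,\mathbf{N}_2\}$, giving a uniform $c_1>0$.

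For \eqref{1phi}, I use a homogeneity/compactness argument. Parametrize $\Q\neq \mathbf{O}$ by $(s,r)\in(0,+\ift)\times[0,1]$ via Lemma \ref{reprelem}; then $\phi_1=s(1-r)/r_*$, $\phi_2=sr/r_*$, $\phi_0=\min\{\phi_1,\phi_2\}$, and $f_b=0$ precisely when $(s,r)=(2r_*,1/2)$. Suppose \eqref{1phi} fails for some $i\in\{0,1,2\}$: then there is a sequence $\Q_n$ with $f_b(\Q_n)/(1-\phi_i(\Q_n))^2\to 0$. The sextic growth of $f_b$ at infinity (from the $a_6\mu^3/6$ and $a_6'\nu^2/6$ terms) against the degree-$1$ homogeneity of $\phi_i$ forbids $|\Q_n|\to\ift$, so a subsequence converges to some $\Q_\ift\in\cN$ with $\phi_i(\Q_\ift)=1$. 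For $n$ large, $\Q_n\in\Ss_0\backslash(\cC_1\cup\cC_2)$ and the nearest-point projection $\varrho(\Q_n)\in\cN$ of Lemma \ref{varrhop} satisfies $\Q_n-\varrho(\Q_n)\in(T_{\varrho(\Q_n)}\cN)_{\Ss_0}^{\perp}$. Applying \eqref{d2es} at $\Q^*=\varrho(\Q_n)$, $\PP=\Q_n-\varrho(\Q_n)$, together with Taylor's theorem and continuity of the Hessian of $f_b$, yields $f_b(\Q_n)\geq(c_1/4)|\Q_n-\varrho(\Q_n)|^2$ for large $n$, while Lipschitz continuity of $\phi_i$ (noting $\phi_i\equiv 1$ on $\cN$) gives $(1-\phi_i(\Q_n))^2\leq C|\Q_n-\varrho(\Q_n)|^2$, contradicting the assumed rate.

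An analogous compactness scheme delivers \eqref{tauphi}. The key new input is the identity $\phi_\tau\equiv(6-5\tau)/6$ on $\cN$, obtained by evaluating $\varphi_\tau(2r_*,1/2)=2r_*\beta_\tau(1/2)=r_*(6-5\tau)/6$; consequently $(1-\frac{6}{6-5\tau}\phi_\tau)^2$ vanishes identically on $\cN$. Repeating the argument, if \eqref{tauphi} fails a subsequence converges to $\Q_\ift\in\cN$; near $\Q_\ift$, the $C^2$ regularity of $\phi_\tau$ from Lemma \ref{Lipphi4} gives $(1-\frac{6}{6-5\tau}\phi_\tau(\Q_n))^2\leq C|\Q_n-\varrho(\Q_n)|^2$, so the minimum in \eqref{tauphi} is dominated by $C|\Q_n-\varrho(\Q_n)|^2$, contradicting the Hessian-driven lower bound on $f_b(\Q_n)$. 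The main technical obstacle is the Hessian computation in the first step: the decoupling of $\mu$- and $\nu$-variations along $\mathbf{N}_1,\mathbf{N}_2$ relies crucially on $\Q^{*3}=r_*^2\Q^*$, and care is needed in the subsequent compactness arguments near the eigenvalue-crossing sets $\cC_1\cup\cC_2$, where $\phi_i$ fails to be smooth.
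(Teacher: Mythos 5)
Your proposal is correct. The Hessian computation for \eqref{d2es} is essentially the paper's argument, just phrased in the $(\mu,\nu)=(\tr\Q^2,\tr\Q^3)$ coordinates rather than eigenvalue coordinates: the key observation that $f_0$ decouples (so $\partial_\mu\partial_\nu f_0\equiv 0$) and that $\mathbf{N}_1,\mathbf{N}_2$ diagonalize the $(\mu,\nu)$-variations is equivalent to the paper's direct calculation with $\PP=\diag\{x,y,-x-y\}$ and the identity $h''(0)=3a_6'r_*^4(x+y)^2+2r_*^2(a_4+4a_6r_*^2)(x-y)^2$.

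Your route to \eqref{1phi} and \eqref{tauphi}, however, is genuinely different. The paper works globally and explicitly: it writes $f_b=f(s,t)$ in the $(s,t)=(s(\Q),s(\Q)r(\Q))$ parametrization, proves $D^2f(2r_*,r_*)>0$, uses sextic growth to get the global bound $f_b\geq C((s-2r_*)^2+(sr-r_*)^2)$ from which \eqref{1phi} is immediate, and then derives \eqref{tauphi} from a sharp algebraic inequality (\eqref{phi4red}, verified by a discriminant analysis of the polynomial $\beta_\tau$). Your proposal replaces this with a compactness/contradiction argument feeding on \eqref{d2es}: any failing sequence must be bounded (sextic growth), must converge into $\cN$ (else $f_b$ does not tend to $0$), and there a Taylor expansion plus Lipschitz continuity of $\phi_i$ (resp.\ of $\frac{6}{6-5\tau}\phi_\tau$, using the value $\phi_\tau\equiv(6-5\tau)/6$ on $\cN$) forces a positive lower bound on the ratio. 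This is sound but less constructive: the paper's route yields an explicit $c_0$, while yours gives only existence. A couple of small remarks worth tightening. First, the constant $c_0$ in \eqref{tauphi} must be uniform in $\tau\in(0,1/2)$; your sketch fixes $\tau$ and does not explicitly extract a convergent subsequence $\tau_n\to\tau_\ift$. The uniformity does hold because $|D\phi_\tau|\leq 5r_*^{-1}$ and $6/(6-5\tau)\leq 12/7$ are $\tau$-uniform, so the Lipschitz constant in Step~5 of the contradiction is $\tau$-independent, but this should be said. Second, the invocation of ``$C^2$ regularity of $\phi_\tau$'' in the last paragraph is unnecessary: only the $W^{1,\infty}$ bound from Lemma~\ref{Lipphi4} is used there.
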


\begin{cor}\label{fBc}
There exist constants $ c'>0 $ and $ \delta_0>0 $ depending only on $ \cA $ such that if $ \Q\in\Ss_0 $ satisfies $ \dist(\Q,\cN)<\delta_0 $, then
\begin{align}
f_b(\Q)&\geq c'\dist^2(\Q,\cN),\label{fBnond}\\
f_b(t\Q+(1-t)\varrho(\Q))&\leq c't^2f_b(\Q)\text{ for any }t\in[0,1].\label{fBconvex}
\end{align}
\end{cor}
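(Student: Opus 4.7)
The plan is to apply the nearest-point projection $\varrho$ together with the transverse Hessian bound of Lemma \ref{fBl} to obtain a quantitative two-sided comparison between $f_b(\Q)$ and $\dist^2(\Q,\cN)$. Starting from Remark \ref{rhorem}, I would shrink $\delta_0$ so that $\varrho(\Q)$ is well defined whenever $\dist(\Q,\cN)<\delta_0$ and the segment $[\varrho(\Q),\Q]$ stays inside a fixed bounded tubular neighborhood of $\cN$. For such $\Q$, set $\Q^*:=\varrho(\Q)\in\cN$ and $\PP:=\Q-\Q^*$, so that $|\PP|=\dist(\Q,\cN)$, and by the nearest-point characterization in Lemma \ref{varrhop}, $\PP\in(T_{\Q^*}\cN)^{\perp}_{\Ss_0}$. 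Because $f_b\geq 0$ on $\Ss_0$ and $f_b(\Q^*)=0$, the point $\Q^*$ is a global minimum of $f_b$, hence $Df_b(\Q^*)=0$; in particular, the scalar function $g(s):=f_b(\Q^*+s\PP)$ satisfies $g(0)=g'(0)=0$.

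For \eqref{fBnond}, I would use Taylor's formula with integral remainder,
\[
f_b(\Q)=g(1)=\int_0^1(1-u)g''(u)\,du,
\]
together with the lower bound $g''(0)\geq c_1|\PP|^2$ provided by \eqref{d2es}. Since $f_b$ is polynomial, $D^2f_b$ is uniformly continuous on any bounded neighborhood of $\cN$, so after possibly shrinking $\delta_0$ one has $|g''(u)-g''(0)|\leq(c_1/2)|\PP|^2$ uniformly in $u\in[0,1]$. Consequently $g''(u)\geq(c_1/2)|\PP|^2$ on $[0,1]$, and integrating yields $f_b(\Q)\geq(c_1/4)|\PP|^2=(c_1/4)\dist^2(\Q,\cN)$.

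For \eqref{fBconvex}, I would note that $t\Q+(1-t)\varrho(\Q)=\Q^*+t\PP$, so the same Taylor formula gives
\[
f_b(\Q^*+t\PP)=\int_0^t(t-u)g''(u)\,du.
\]
The upper half of \eqref{d2es} combined with the same continuity argument yields $g''(u)\leq 2C_1|\PP|^2$ on $[0,1]$, whence $f_b(\Q^*+t\PP)\leq C_1 t^2|\PP|^2$. Chaining this with \eqref{fBnond} produces $f_b(\Q^*+t\PP)\leq(4C_1/c_1)\,t^2 f_b(\Q)$, and the final constant $c':=\max\{c_1/4,\,4C_1/c_1\}$ realises both inequalities simultaneously.

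The main obstacle is a conceptually minor but technically delicate gap: Lemma \ref{fBl} controls the second derivative of $f_b$ only at the basepoint $\Q^*\in\cN$ in transverse directions, while the Taylor remainder formula demands pointwise control of $g''$ along the entire segment $[\Q^*,\Q]$. This is overcome by exploiting the polynomial nature of $f_b$ and the compactness of $\cN$: $D^2f_b$ is then uniformly continuous in a bounded tubular neighborhood of $\cN$, so one chooses $\delta_0$ small enough that the deviation of $g''(u)$ from $g''(0)$ is absorbed by half of the base constants $c_1$ and $C_1$, uniformly in $\Q^*$ and in the normal direction $\PP/|\PP|$.
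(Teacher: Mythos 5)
Your proof is correct and follows essentially the same route as the paper's: Taylor-expand $f_b$ along the normal segment at $\Q^*=\varrho(\Q)$, invoke the transverse Hessian comparison \eqref{d2es} with $\PP=\Q-\Q^*\in(T_{\Q^*}\cN)^{\perp}_{\Ss_0}$, and shrink $\delta_0$ to absorb the remainder, where the paper's cubic bound $R(s)\leq C_2|s|^3$ plays the same role as your uniform-continuity control of $g''(u)-g''(0)$. One small point: the closing choice $c'=\max\{c_1/4,\,4C_1/c_1\}$ does not deliver \eqref{fBnond}, which requires $c'\leq c_1/4$, whereas \eqref{fBconvex} forces $c'\geq 1$ (set $t=1$) --- the two lines genuinely want constants of opposite size, an imprecision shared by the paper's own statement, and in context $c'$ should be read as denoting distinct constants in the two displays.
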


\begin{proof}
This Corollary follows directly from the lemma \ref{fBl} and the Taylor expansion formula. By \eqref{eqdist}, there exists $ \delta_0>0 $ depending only only on $ \cA $ such that if $ \dist(\Q,\cN)<\delta_0 $, then $ \Q\in\Ss_0\backslash(\cC_1\cup\cC_2) $ and $ \varrho(\Q) $ is well defined. For such $ \Q $, set $ \Q^*=\varrho(\Q) $ and $ \PP=\Q-\Q^* $. We assume that $ |\PP|\neq 0 $ since for otherwise there is nothing to prove. Now define $ g(s):=f_b(\Q^*+s\PP/|\PP|) $. Applying Taylor expansion (taking a smaller $ \delta_0 $ if necessary), we have
$$
g(s)=g(0)+g'(0)t+\f{g''(0)}{2}s^2+R(s),\,\,R(s)\leq C_2|s|^3
$$
for some $ C_2>0 $ depending only on $ \cA $. Choosing $ \delta_0>0 $ such that $ C_2\delta_0<c_1/4 $, we can use \eqref{d2es} to obtain $ c_1\leq g''(0)\leq C_1 $ and $ c_1s^2/4\leq g(s)\leq Cs^2 $ for any $ s\in(0,\delta_0) $. Letting $ s=|\PP| $, we get $ f_b(\Q)=g(\PP)\geq c_1|\PP|^2/4 $. On the other hand, letting $ s=t|\PP| $ with $ t\in[0,1] $, we deduce that $
f_b(t\Q+(1-t)\Q^*)\leq Ct^2|\PP|^2\leq Ct^2f_b(\Q) $, which completes the proof.
\end{proof}

\begin{proof}[Proof of Lemma \ref{fBl}]
By the representation formula \eqref{representationQ}, we have
\begin{align*}
f_b(\Q)&=a_1-\f{a_2(s^2-st+t^2)}{3}+\f{a_4(s^2-st+t^2)^2}{9}+\f{4a_6(s^2-st+t^2)^3}{81}\\
&\quad\quad\,\,+\f{a_6'(2s^3-3s^2t-3st^2+2t^3)^2}{486}:=f(s,t),
\end{align*}
where $ s=s(\Q) $ and $ t=s(\Q)r(\Q) $. Here, $ f(s,t) $ is a function defined on $
D:=\{(s,t)\in\R^2:0\leq t\leq s\} $. Since $ f_b(\Q) $ attains its minimum on $ \cN $, $ f(s,t) $ attains its minimum at the point $ (2r_*,r_*) $. Moreover $ (2r_*,r_*) $ is the unique minimum  point of $ f(s,t) $ in $ D $. By simple calculations, we obtain
\begin{gather*}
f(2r_*,r_*)=\pa_sf(2r_*,r_*)=\pa_tf(2r_*,r_*)=0,\\
\pa_s^2f(2r_*,r_*)=\f{6a_4r_*^2+(24a_6+a_6')r_*^4}{3},\,\,\pa_t^2f(2r_*,r_*)=\f{4a_6'r_*^4}{3},\\
\det(D^2f(2r_*,r_*))=\f{8a_6'r_*^6(a_4+4a_6r_*^2)}{3},
\end{gather*}
which gives 
\be
C_0^{-1}\I\leq D^2f(2r_*,r_*)\leq C_0\I,\label{strictlyPositive}
\ee
for some $ C_0>0 $ depending only on $ \cA $. On the other hand, we note that by the definition of $ f(s,t) $, we deduce that $ |f(s,t)|\geq c_0(s^6+t^6) $ for sufficiently large $ |t|,|s|>0,(s,t)\in D $. This, together with \eqref{strictlyPositive} and the fact that $ (2r_*,r_*) $ is the unique minimum point of $ f(s,t) $ in $ D $ such that $ f(2r_*,r_*)=0 $, implies that
\be
f_b(\Q)\geq C((s-2r_*)^2+(sr-r_*)^2)\geq C\max_{i=1,2}\{(1-\phi_i(\Q))^2\}.\label{fb2es}
\ee
This also implies that $ f_b(\Q)\geq C(1-\phi_0(\Q))^2 $ by the definition of $ \phi_0 $. Next, we will show \eqref{tauphi}. By \eqref{fb2es} and the definition of $ \phi_{\tau} $, it suffices to prove that
\be
\begin{aligned}
&2((s-2r_*)^2+(sr-r_*)^2)\\
&\quad\quad\geq\left\{\begin{aligned}
&(r_*-sr)^2&\text{ if }&s\in\R_+,\,\,r\in[0,(1-2\tau)/2),\\
&\(r_*-\f{6s\beta_{\tau}(r)}{6-5\tau}\)^2&\text{ if }&s\in\R_+,\,\,r\in[(1-2\tau)/2,(1+2\tau)/2],\\
&(r_*-s(1-r))^2&\text{ if }&s\in\R_+,\,\,r\in((1+2\tau)/2,1].
\end{aligned}\right.\label{phi4red}
\end{aligned}
\ee
Moreover, we only need to consider the case that $ s\in\R_+ $ and $ r\in[(1-2\tau)/2,(1+2\tau)/2] $. Simple calculations yield that
$$
(s-2r_*)^2+(sr-r_*)^2-\f{1}{2}\(r_*-\f{6s\beta_{\tau}(r)}{6-5\tau}\)^2=a_{\tau}(r)s^2+b_{\tau}(r)s+\f{9r_*^2}{2},
$$
where
\begin{align*}
a_{\tau}(r)&:=1+r^2-\f{18\beta_{\tau}(r)^2}{(6-5\tau)^2},\\
b_{\tau}(r)&:=-4r_*-2r_*r+\f{6r_*\beta_{\tau}(r)}{6-5\tau}.
\end{align*}
We claim, for $ \tau\in(0,1/2) $ and $ \lda\in[-1,1] $,
\be
a_{\tau}\(\f{1-2\tau\lda}{2}\)\geq 0\text{ and }\left\{b_{\tau}\(\f{1-2\tau\lda}{2}\)\right\}^2-18r_*^2a_{\tau}\(\f{1-2\tau\lda}{2}\)\leq 0,\label{abclaim}
\ee
which completes the proof. By direct computations, we obtain
\begin{align*}
a_{\tau}\(\f{1-2\tau\lda}{2}\)&=1+\(\f{1-2\tau\lda}{2}\)^2-\f{(12-10\tau-15\tau\lda^2+\tau\lda^6)^2}{32(6-5\tau)^2}\\
&\geq 1+\(\f{1-2\tau\lda}{2}\)^2-\f{(12-10\tau)^2}{32(6-5\tau)^2}\\
&\geq 1-\f{1}{8}>0,
\end{align*}
where for the first inequality, we have used the fact that $ -14\leq\lda^6-15\lda^2\leq 0 $ for any $ \lda\in[-1,1] $. On the other hand, we note
\begin{align*}
&\left\{b_{\tau}\(\f{1-2\tau\lda}{2}\)\right\}^2-18r_*^2a_{\tau}\(\f{1-2\tau\lda}{2}\)\\
&\quad\quad=\f{r_*^2\tau^2\lda^2(\lda^5-15\lda+20\tau-24)(5\lda^5-75\lda-140\tau+168)}{8(6-5\tau)^2}.
\end{align*}
For $ \lda\in[-1,1] $, by \eqref{t515}, we have $ \lda^5-15\lda\in[-14,14] $ and then
\begin{align*}
\lda^5-15\lda+20\tau-24&\leq 14+\f{20}{2}-24=0,\\
5\lda^5-75\lda-140\tau+168&\geq-70-\f{140}{2}+168=28>0.
\end{align*}
Therefore, the claim \eqref{abclaim} is true. 

Finally, we show \eqref{d2es}. Since $ \mathrm{SO}(3) $ acts transitively on $ \cN $, we can assume $ \Q^*=r_*\diag\{1,-1,0\} $. Indeed, for $ \Q^*=r_*(\n\n-\m\m)\in\cN $ with $ (\n,\m)\in\M $, there exists a matrix $ \mathbf{R}\in\mathrm{SO}(3) $, such that $ \mathbf{R}\n=\e^{(1)} $ and $ \mathbf{R}\n=\e^{(2)} $. Consequently, $ \mathbf{R}\Q^*\mathbf{R}^{\T}=r_*\diag\{1,-1,0\} $ and $ f_b(\mathbf{R}\Q^*\mathbf{R}^{\T})=f_b(\Q^*) $. In view of the second property of Lemma \ref{tangentnormal}, we have $ \PP=\diag\{x,y,-x-y\} $. As a result, simple computations yield that
$$
\left.\f{\ud^2}{\ud t^2}\right|_{t=0}f_b(\Q_0+t\PP)=3a_6'r_*^4(x+y)^2+2r_*^2(a_4+4a_6r_*^2)(x-y)^2.
$$
Note that $ |\PP|^2=x^2+y^2+(x+y)^2 $, we can use Cauchy inequality to obtain
$$
c_1|\PP|^2\leq\left.\f{\ud^2}{\ud t^2}\right|_{t=0}f_b(\Q_0+t\PP)\leq C_1|\PP|^2,
$$
which completes the proof.
\end{proof}

\begin{lem}\label{TechLemma1}
Let $ U\subset\R^k $ with $ k\in\Z_+ $ be a domain and $ \Q\in C^1(U,\Ss_0) $. For any $ \tau\in(0,1/2) $,
\be
\begin{aligned}
|\na\Q|^2&\geq\f{r_*^2}{18}|\na(\phi_{\tau}\circ\Q)|^2+(\phi_{\tau}\circ\Q)^2|\na(\varrho\circ\Q)|^2,\quad\HH^k\text{-a.e. in }U,
\end{aligned}\label{naQdetau}
\ee
where we set 
\be
(\phi_{\tau}\circ\Q)|\na(\varrho\circ\Q)|(x)=0\text{ if }\Q(x)\in\cC_1\cup\cC_2.\label{conven0phi}
\ee
\end{lem}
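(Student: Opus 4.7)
The plan is to prove the inequality pointwise $\HH^k$-a.e.\ by spectrally decomposing $\na\Q$ in the orthonormal eigenframe of $\Q(x)$ on the open set $U_1 := \{x \in U : \Q(x) \notin \cC_1 \cup \cC_2\}$, and to dispose of the complement via the convention \eqref{conven0phi}. Indeed, on $U\setminus U_1$ the first property of Lemma~\ref{Lipphi4} gives $\phi_\tau\circ\Q \equiv 0$; since $\phi_\tau$ is Lipschitz on $\Ss_0$ and $\Q \in C^1$, $\phi_\tau\circ\Q$ is locally Lipschitz and its gradient vanishes $\HH^k$-a.e.\ on its zero set, which together with \eqref{conven0phi} makes the right-hand side of \eqref{naQdetau} vanish $\HH^k$-a.e.\ on $U\setminus U_1$.

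On $U_1$, the eigenvalues $\lda_1>\lda_2>\lda_3$ of $\Q(x)$ are simple, so by Lemma~\ref{smoothLda} and standard matrix perturbation theory they, together with an orthonormal eigenframe $(\n,\p,\m)$, depend smoothly on $x$ locally. Setting $\w_\ell:=\p\cdot\pa_\ell\n$, $\wt{\w}_\ell := \m\cdot\pa_\ell\n$, $\ol{\w}_\ell := \m\cdot\pa_\ell\p$, a direct expansion of $\pa_\ell\Q$ in this frame (using $\n\cdot\pa_\ell\n=0$ and the skew-symmetry of the connection coefficients) yields the orthogonal decomposition
\[
|\pa_\ell\Q|^2 = \sum_{i=1}^3(\pa_\ell\lda_i)^2 + 2\bigl[(\lda_1-\lda_2)^2\w_\ell^2 + (\lda_1-\lda_3)^2\wt{\w}_\ell^2 + (\lda_2-\lda_3)^2\ol{\w}_\ell^2\bigr],
\]
so summing over $\ell$ splits $|\na\Q|^2$ into an eigenvalue block $\sum_i|\na\lda_i|^2$ and a rotational block.

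The identical computation applied to $\varrho\circ\Q = r_*(\n\n-\m\m)$, whose ``eigenvalues'' are the constants $(r_*,0,-r_*)$, gives $|\pa_\ell(\varrho\circ\Q)|^2 = 2r_*^2[\w_\ell^2+4\wt{\w}_\ell^2+\ol{\w}_\ell^2]$. Using $\lda_1-\lda_2=r_*\phi_1$, $\lda_2-\lda_3=r_*\phi_2$, $\lda_1-\lda_3=r_*(\phi_1+\phi_2)$ together with $\phi_\tau\leq\phi_0\leq\min(\phi_1,\phi_2)$ and $2\phi_\tau\leq\phi_1+\phi_2$ from Lemma~\ref{Lipphi4}, a term-by-term comparison shows $\phi_\tau^2|\pa_\ell(\varrho\circ\Q)|^2$ is dominated by the rotational block. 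For the eigenvalue block, the invariance of $\phi_\tau$ under orthogonal conjugation makes $D\phi_\tau(\Q)$ diagonal in the $\Q$-eigenframe, so $\pa_\ell(\phi_\tau\circ\Q) = \sum_i (\pa_{\lda_i}\phi_\tau)\pa_\ell\lda_i$ and Cauchy--Schwarz reduces matters to verifying $\sum_i(\pa_{\lda_i}\phi_\tau)^2\leq 18/r_*^2$. This is clear with value $2/r_*^2$ in the two linear regions where $\phi_\tau$ coincides with $\phi_1$ or $\phi_2$; in the interpolation region $r(\Q)\in[(1-2\tau)/2,(1+2\tau)/2]$, direct differentiation of $\phi_\tau = r_*^{-1}(\lda_1-\lda_3)\beta_\tau((\lda_2-\lda_3)/(\lda_1-\lda_3))$ together with $|\beta_\tau|\leq 1/2$ and $|\beta_\tau'|\leq 1$ from \eqref{estibetatau} yields a bound well below $18/r_*^2$. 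Adding the two estimates finishes the proof.

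The main technical point is precisely this last estimate in the interpolation regime: the naive application of $|D\phi_\tau|\leq 5/r_*$ from \eqref{Dphi4es} gives only $25/r_*^2$, which is too large for the coefficient $r_*^2/18$ to absorb. The necessary extra slack comes from exploiting the trace-free constraint $\na\lda_1+\na\lda_2+\na\lda_3=0$ implicit in \eqref{Dphitauestimate2}, which produces cancellations between the $D\lda_1$ and $D\lda_2$ contributions.
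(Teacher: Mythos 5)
Your proposal is correct and follows essentially the same route as the paper's proof: both reduce to the set $\{\Q\notin\cC_1\cup\cC_2\}$, decompose $|\na\Q|^2$ in the eigenframe of $\Q$ (the paper's $s,r,\n,\m$ parametrization is exactly your $\lda_i,\w_\ell,\wt\w_\ell,\ol\w_\ell$), compare the rotational block term by term with $\phi_\tau^2|\na(\varrho\circ\Q)|^2$ via $\phi_\tau\leq\phi_0=\min(\phi_1,\phi_2)$, and estimate the eigenvalue block; your Cauchy--Schwarz step $\pa_\ell(\phi_\tau\circ\Q)=\sum_i(\pa_{\lda_i}\phi_\tau)\pa_\ell\lda_i$ is just a tidier packaging of the paper's $s',t'$ algebra, and the explicit computation indeed gives $\sum_i(\pa_{\lda_i}\phi_\tau)^2\leq 11/(2r_*^2)<18/r_*^2$ using \eqref{estibetatau}. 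One small correction to your final remark: the slack does not really come from cancellations forced by the trace-free constraint (a non--trace-free Cauchy--Schwarz with the raw coefficients $c_1=2\beta_\tau+(1-2r)\beta_\tau'$, $c_2=\beta_\tau+(2-r)\beta_\tau'$ already gives $c_1^2+c_2^2\leq 41/4<18$), but rather from computing the coefficients explicitly instead of triangle-inequalitying through the crude operator bound $|D\phi_\tau|\leq 5/r_*$ of \eqref{Dphi4es}.
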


\begin{rem}
By the definition of $ \varrho $, it is well defined in the set $ \Ss_0\backslash(\cC_1\cup\cC_2) $. This implies that the right hand side of \eqref{naQdetau} is always well defined.
\end{rem}

\begin{proof}[Proof of Lemma \ref{TechLemma1}]
Since there is the formula $
|\na\Q|^2=\sum_{i=1}^k|\pa_i\Q|^2 $, we only need to show \eqref{naQdetau} for $ k=1 $. If $ k\in\Z_{\geq 2} $, we can simply sum $ |\pa_i\Q|^2 $ up and obtain the result. Firstly, we fix $ \tau\in(0,1/2) $. By Lemma \ref{Lipphi4}, $ \phi_{\tau} $ is Lipschitz continuous, which implies that $ \phi_{\tau}\circ\Q\in W_{\loc}^{1,\ift}(U) $. Combining the fact that $ \phi_{\tau}\circ\Q=0 $ in $ \Q^{-1}(\cC_1\cup\cC_2) $, we have 
$$
(\phi_{\tau}\circ\Q)'=0,\quad\HH^1\text{-a.e. in }\Q^{-1}(\cC_1\cup\cC_2).
$$
This, together with \eqref{conven0phi}, implies that \eqref{naQdetau} is true in the set $ \Q^{-1}(\cC_1\cup\cC_2) $. Now, we fix $ x\in U\backslash\Q^{-1}(\cC_1\cup\cC_2) $. By Lemma \ref{reprelem}, we obtain 
\be
\Q=s\(\n\n-\f{1}{3}\I\)+sr\(\m\m-\f{1}{3}\I\),\label{Repre2}
\ee
where $ s>0 $, $ 0<r<1 $ and $ (\n,\m)\in\M $. In view of Lemma \ref{smoothLda}, $ s,r,\n,\m\in C^1 $ in a neighborhood of $ x $. It follows from the definition $ \varrho $ that $
\varrho\circ\Q=r_*(\n\n-\p\p) $, where $ \p=\n\times\m $. Now, $ \{|(\phi_i\circ\Q)'|^2\}_{i=1}^2 $, $ \{|(\varrho_i\circ\Q)'|^2\}_{i=1}^2 $, $ |(\phi_{\tau}\circ\Q)'|^2 $, $ |(\varrho\circ\Q)'|^2 $ and $ |\Q'|^2 $ can be computed in terms of $ s,r,\n,\m $ and their derivatives. Setting $ t=sr $, we get
\be
\begin{gathered}
|(\phi_1\circ\Q)'|^2=r_*^{-2}((s')^2-2s't'+(t')^2),\,\,|(\phi_2\circ\Q)'|^2=r_*^{-2}(t')^2,\\
|(\varrho_1\circ\Q)'|^2=2r_*^2|\n'|^2,\,\,|(\varrho_2\circ\Q)'|^2=2r_*^2|\p'|^2,\\
(\varrho_1\circ\Q)':(\varrho_2\circ\Q)'=-2r_*^2(\n'\cdot\p)^2,
\end{gathered}\label{rho1rho2sr}
\ee
and
\be
\begin{gathered}
|(\phi_{\tau}\circ\Q)'|^2=\left\{\begin{aligned}
&r_*^{-2}(t')^2&\text{ if }&s\in\R_+,\,\,r\in[0,(1-2\tau)/2),\\
&r_*^{-2}(s'\beta_{\tau}(r)+sr'\beta_{\tau}'(r))^2&\text{ if }&s\in\R_+,\,\,r\in[(1-2\tau)/2,(1+2\tau)/2],\\
&r_*^{-2}((s')^2-2s't'+(t')^2)&\text{ if }&s\in\R_+,\,\,r\in((1+2\tau)/2,1],
\end{aligned}\right.\\
|(\varrho\circ\Q)'|^2=2r_*^2(|\n'|^2+|\p'|^2+2(\n'\cdot\p)^2).
\end{gathered}\label{phitaurepr}
\ee
Moreover, we have
\be
|\Q'|^2=\f{2((s')^2-s't'+(t')^2)}{3}+2s^2(|\n'|^2+r^2|\m'|^2+2r(\n'\cdot\m)(\n\cdot\m')).\label{Qlowerprime}
\ee
By differentiating the orthogonality conditions for $ (\n,\m,\p) $, we obtain
$$
\n'=\alpha\m+\beta\p,\,\,\m'=-\alpha\n+\ga\p,\text{ and }\p'=-\beta\n-\ga\m
$$
for some continuous, real-valued functions $ \alpha,\beta,\gamma $. Consequently, we deduce
\begin{gather*}
|(\varrho_1\circ\Q)'|^2=2r_*^2(\al^2+\beta^2),\,\,
|(\varrho_2\circ\Q)'|^2=2r_*^2(\beta^2+\ga^2),\\
(\varrho_1\circ\Q)':(\varrho_2\circ\Q)'=-2r_*^2\beta^2\leq 0,
\end{gather*}
and then
\begin{align*}
&|\Q'|^2-\f{2((s')^2-s't'+(t')^2)}{3}\\
&\quad\quad=2s^2(1-r)^2(\al^2+\beta^2)+4s^2r(1-r)\beta^2+2s^2r^2(\ga^2+\beta^2)\\
&\quad\quad=r_*^{-2}s^2(1-r)^2|(\varrho_1\circ\Q)'|^2+r_*^{-2}s^2r^2|(\varrho_2\circ\Q)'|^2\\
&\quad\quad\quad\quad-2(r_*^{-1}sr)\cdot(r_*^{-1}s(1-r))((\varrho_1\circ\Q)':(\varrho_2\circ\Q)')\\
&\quad\quad=\sum_{i=1,2}(\phi_i\circ\Q)^2|(\varrho_i\circ\Q)'|^2-2(\phi_1\circ\Q)(\phi_2\circ\Q)((\varrho_1\circ\Q)':(\varrho_2\circ\Q)').
\end{align*}
This, together with \eqref{phi42phi3}, implies that
\be
\begin{aligned}
|\Q'|^2&\geq\f{2((s')^2-s't'+(t')^2)}{3}+(\phi_0\circ\Q)^2|(\varrho\circ\Q)'|^2\\
&\geq\f{2((s')^2-s't'+(t')^2)}{3}+(\phi_{\tau}\circ\Q)^2|(\varrho\circ\Q)'|^2.
\end{aligned}\label{23stgeq2}
\ee
In view of \eqref{phitaurepr}, there holds $ |(\phi_{\tau}\circ\Q)'|=|(\phi_0\circ\Q)'| $ if $ r\in[0,(1-2\tau)/2)\cup((1+2\tau)/2,1] $. Combining \eqref{rho1rho2sr}, we have
$$
\f{2((s')^2-s't'+(t')^2)}{3}\geq\f{r_*^2}{3}(|(\phi_1\circ\Q)'|^2+|(\phi_2\circ\Q)'|^2)\geq\f{r_*^2}{3}|(\phi_{\tau}\circ\Q)'|^2.
$$
This, together with \eqref{23stgeq2}, implies \eqref{naQdetau} for $ r\in[0,(1-2\tau)/2)\cup((1+2\tau)/2,1] $. On the other hand, for $ r\in[(1-2\tau)/2,(1+2\tau)/2] $, we have
\begin{align*}
|(\phi_{\tau}\circ\Q)'|^2&=r_*^{-2}(s'\beta_{\tau}(r)+sr'\beta_{\tau}'(r))^2\\
&\leq 2r_*^{-2}(s')^2(\beta_{\tau}(r))^2+2(sr')^2(\beta_{\tau}'(r))^2\\
&=2r_*^{-2}(s')^2(\beta_{\tau}(r))^2+2(t'-s'r)^2(\beta_{\tau}'(r))^2\\
&\leq r_*^{-2}(s')^2(2(\beta_{\tau}(r))^2+4r_*^{-2}r^2(\beta_{\tau}'(r))^2)+4(t')^2(\beta_{\tau}'(r))^2\\
&\leq 6r_*^{-2}((s')^2+(t')^2)\leq \f{1}{18r_*^2}\(\f{2((s')^2-s't'+(t')^2)}{3}\),
\end{align*}
where for the first, second and forth inequalities, we have used Cauchy inequality and for the third inequality, we have used \eqref{estibetatau}. Applying \eqref{23stgeq2} again, we can complete the proof of \eqref{naQdetau}.
\end{proof}

\begin{cor}\label{lowerQcorollary}
Let $ U\subset\R^k $ be a domain with $ k\in\Z_{\geq 2} $. For $ \tau\in(0,1/2) $, the map $ \sg_{\tau}:\Ss_0\to\Ss_0 $ given by
$$
\sg_{\tau}:\Q\mapsto\left\{\begin{aligned}
&\phi_{\tau}(\Q)\varrho(\Q)&\text{ if }&\Q\in\Ss_0\backslash(\cC_1\cup\cC_2)\\
&\mathbf{O}&\text{ if }&\Q\in \cC_1\cup\cC_2
\end{aligned}\right.
$$
is Lipschitz continuous. Moreover, for $ \Q\in H^1(U,\Ss_0) $, there holds $ \sg_{\tau}\circ\Q\in H^1(U,\Ss_0) $ and
\be
\f{1}{4}|\na(\sg_{\tau}\circ\Q)|^2\leq\f{r_*^2}{18}|\na(\phi_{\tau}\circ\Q)|^2+(\phi_{\tau}\circ\Q)^2|\na(\varrho\circ\Q)|^2\leq|\na\Q|^2\label{H1Qlowerupper}
\ee
holds $ \HH^k $-a.e. in $ U $.
\end{cor}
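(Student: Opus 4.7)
The plan is to establish Lipschitz continuity of $\sg_\tau$ globally on $\Ss_0$ first, then combine this with Stampacchia's chain rule and a direct pointwise computation to extract a clean identity for $|\na(\sg_\tau \circ \Q)|^2$ that implies the first half of \eqref{H1Qlowerupper}; the right-hand inequality is simply Lemma \ref{TechLemma1} summed over coordinate derivatives.

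For Lipschitz continuity I would observe that on the open set $\Ss_0 \setminus (\cC_1 \cup \cC_2)$ both factors $\phi_\tau$ and $\varrho$ are smooth (by Lemmas \ref{Lipphi4} and \ref{varrhop}), so $\sg_\tau = \phi_\tau \varrho$ is smooth there, and the product rule together with $|\varrho(\Q)|^2 = \tr(\varrho(\Q)^2) = 2r_*^2$ from Lemma \ref{minimalp} gives
\begin{equation*}
|D\sg_\tau(\Q)| \le |D\phi_\tau(\Q)|\,|\varrho(\Q)| + \phi_\tau(\Q)\,|D\varrho(\Q)| \le 5r_*^{-1}\sqrt{2}r_* + C(\cA)\phi_\tau(\Q).
\end{equation*}
Since $\phi_\tau$ is locally bounded, $\sg_\tau$ is locally Lipschitz on $\Ss_0 \setminus (\cC_1 \cup \cC_2)$. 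It extends continuously by $\mathbf{O}$ across $\cC_1 \cup \cC_2$ because $\phi_\tau$ vanishes there (Lemma \ref{Lipphi4}) while $|\varrho| \le \sqrt{2}r_*$ remains bounded. To upgrade to Lipschitz continuity on all of $\Ss_0$, I would run the same four-case argument as in the proof of Lemma \ref{Lipphi4}: for $\Q_1, \Q_2 \in \Ss_0$, either both lie in $\cC_1 \cup \cC_2$ (trivial), exactly one does (use continuity and approach from the complement), or neither does but the connecting segment meets $\cC_1 \cup \cC_2$ at some point (split at the crossing and apply the pointwise bound on $|D\sg_\tau|$ on each sub-segment).

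Once $\sg_\tau$ is Lipschitz, Stampacchia's chain rule yields $\sg_\tau \circ \Q \in H^1_{\loc}(U, \Ss_0)$ for any $\Q \in H^1(U, \Ss_0)$. At a point $x$ with $\Q(x) \notin \cC_1 \cup \cC_2$, where $\varrho \circ \Q$ is classically differentiable, the product rule reads
\begin{equation*}
\pa_i(\sg_\tau \circ \Q) = \pa_i(\phi_\tau \circ \Q)\,(\varrho \circ \Q) + (\phi_\tau \circ \Q)\,\pa_i(\varrho \circ \Q).
\end{equation*}
The crucial algebraic fact is that $|\varrho \circ \Q|^2 \equiv 2r_*^2$ on this set, and differentiating gives $(\varrho \circ \Q):\pa_i(\varrho \circ \Q) = 0$: the two summands are orthogonal in $\Ss_0$. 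Squaring and summing over $i$ produces the clean identity
\begin{equation*}
|\na(\sg_\tau \circ \Q)|^2 = 2r_*^2|\na(\phi_\tau \circ \Q)|^2 + (\phi_\tau \circ \Q)^2|\na(\varrho \circ \Q)|^2.
\end{equation*}
On $\Q^{-1}(\cC_1 \cup \cC_2)$ the Lipschitz composition $\phi_\tau \circ \Q$ is identically zero, so its weak gradient vanishes $\HH^k$-a.e.~there, and similarly $\na(\sg_\tau \circ \Q) = 0$ a.e.; combined with convention \eqref{conven0phi} this makes every term in the target inequality vanish a.e.

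The first inequality in \eqref{H1Qlowerupper} then follows from the identity above by comparing coefficients (absorbing the $2r_*^2|\na(\phi_\tau \circ \Q)|^2$ term into the $r_*^2/18$ piece at the cost of a constant factor on the left), while the second inequality is exactly Lemma \ref{TechLemma1} summed over $i = 1, \ldots, k$. The main technical obstacle in this argument is precisely the Lipschitz extension of $\sg_\tau$ across the eigenvalue-degeneracy locus $\cC_1 \cup \cC_2$: although $\varrho$ itself is not defined on this set and its derivative can be large near it, the vanishing rate of $\phi_\tau$ on $\cC_1 \cup \cC_2$ is commensurate with any such singular behaviour of $\varrho$, so the product $\phi_\tau \varrho$ remains uniformly Lipschitz.
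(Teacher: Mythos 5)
Your orthogonality observation is the real improvement here: differentiating the constraint $|\varrho\circ\Q|^2\equiv 2r_*^2$ to get $(\varrho\circ\Q):\pa_i(\varrho\circ\Q)=0$, hence the exact Pythagorean identity $|\na(\sg_\tau\circ\Q)|^2 = 2r_*^2|\na(\phi_\tau\circ\Q)|^2 + (\phi_\tau\circ\Q)^2|\na(\varrho\circ\Q)|^2$, is correct and sharper than the paper's route, which simply applies $|a+b|^2\leq 2|a|^2+2|b|^2$. Either way, the prefactor on the left of \eqref{H1Qlowerupper} that can actually be proved is smaller than the stated $\tfrac14$ (your identity gives $\tfrac1{36}$ after comparing $2r_*^2$ with $r_*^2/18$; the Cauchy--Schwarz route gives $\tfrac1{72}$), so the $\tfrac14$ in the displayed inequality appears to be a misprint in the paper, and your honest "at the cost of a constant factor on the left" is the right thing to say.

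The genuine gap is in the Lipschitz claim. Your derivative bound $|D\sg_\tau(\Q)| \leq 5\sqrt2 + C(\cA)\phi_\tau(\Q)$ is \emph{not} uniform on $\Ss_0$: $\phi_\tau$ is homogeneous of degree one (Lemma \ref{Lipphi4}) and therefore unbounded, so this estimate yields only local Lipschitz continuity. The four-case device from the proof of Lemma \ref{Lipphi4} cannot upgrade a local bound to a global one --- its whole mechanism is to integrate a uniform $\|D\sg_\tau\|_{L^\infty}$ bound along segments that may cross $\cC_1\cup\cC_2$, so it needs the uniform bound as input, not as output. What is actually required, and what the paper obtains precisely by citing \eqref{naQdetau}, is the cancellation $\phi_\tau(\Q)\,|D\varrho(\Q)| \leq 1$: specialize Lemma \ref{TechLemma1} to a one-dimensional domain $\Q(t)=\Q_0+t\PP$ to read off $(\phi_\tau(\Q_0))^2\,|D\varrho(\Q_0)\PP|^2 \leq |\PP|^2$ for every direction $\PP\in\Ss_0$. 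This is exactly the "commensurateness" between the vanishing rate of $\phi_\tau$ and the blow-up of $D\varrho$ near $\cC_1\cup\cC_2$ that you invoke informally in your last paragraph, but the estimate \eqref{Dvarrhoestimate} (a mere upper bound on $|D\varrho|$) multiplied by the unbounded factor $\phi_\tau$ does not produce it. Without this input your product-rule computation does not close the global Lipschitz claim, and Stampacchia's chain rule --- which needs global Lipschitz for an unbounded-range $H^1$ inner map --- is not available either.
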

\begin{proof}
By using \eqref{Dvarrhoestimate}, \eqref{Dphi4es}, and \eqref{naQdetau}, we have, for any $ \Q\in\Ss_0\backslash(\cC_1\cup\cC_2) $,
\begin{align*}
|D\sg_{\tau}(\Q)|^2&=|D\phi_{\tau}(\Q)\varrho(\Q)+(\phi_{\tau}\circ\Q)D\varrho(\Q)|^2\\
&\leq 4r_*^2|D\phi_{\tau}(\Q)|^2+2(\phi_{\tau}\circ\Q)^2|D\varrho(\Q)|^2\leq C,
\end{align*}
where $ C>0 $ depends only on $ \cA $. Also, we can show that $ \sg_{\tau}\in C(\Ss_0,\Ss_0) $. By applying arguments in the proof of the property that $ \phi_{\tau}\in W^{1,\ift}(\Ss_0) $ in Lemma \ref{Lipphi4}, since $ \sg_{\tau}|_{\cC_1\cup\cC_2}=\mathbf{O} $, we deduce that $ \sg_{\tau}\in W^{1,\ift}(\Ss_0,\Ss_0) $ and the lower bound of \eqref{H1Qlowerupper} holds. The upper bound of \eqref{H1Qlowerupper} follows directly from smoothness approximation of $ H^1 $ maps by smooth maps and \eqref{naQdetau}.
\end{proof}
\subsection{Existence and smoothness of minimizers}

In this subsection, we first give some basic results for minimizers of \eqref{energy}. Next we will give some examples on the boundary conditions such that \eqref{assumptionbound1} and \eqref{assumptionbound} hold true. 

\begin{prop}[Existence of the minimizer]\label{ExistenceThm}
Let $ \om\subset\R^3 $ be a bounded Lipschitz domain and $ 0<\va<1 $. There exists $ \Q_{\va} $, being a global minimizer of the energy \eqref{energy} in the space $ H^1(\om,\Ss_0;\Q_{b,\va}) $ with $ \Q_{b,\va}\in H^{1/2}(\pa\om,\Ss_0) $.
\end{prop}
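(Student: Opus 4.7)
The plan is a standard application of the direct method in the calculus of variations, with the only mild care needed coming from the sextic growth of $f_b$. First I would observe that $E_\va(\,\cdot\,,\om)\geq 0$, since $f_b\geq 0$ by the choice of $a_1$, and that the class $H^1(\om,\Ss_0;\Q_{b,\va})$ is non-empty: because $\om$ is bounded Lipschitz and $\Q_{b,\va}\in H^{1/2}(\pa\om,\Ss_0)$, there exists an extension $\widetilde{\Q}\in H^1(\om,\Ss_0)$ with $\widetilde{\Q}|_{\pa\om}=\Q_{b,\va}$; in particular $m:=\inf_{\Q\in H^1(\om,\Ss_0;\Q_{b,\va})}E_\va(\Q,\om)\leq E_\va(\widetilde{\Q},\om)<+\ift$.

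Next I would pick a minimizing sequence $\{\Q_n\}\subset H^1(\om,\Ss_0;\Q_{b,\va})$, so that $E_\va(\Q_n,\om)\to m$. Writing $\Q_n=\widetilde{\Q}+\W_n$ with $\W_n\in H_0^1(\om,\Ss_0)$, the bound $\|\nabla\Q_n\|_{L^2(\om)}^2\leq 2m+1$ for large $n$ together with the Poincar\'e inequality on $H_0^1(\om)$ gives uniform boundedness of $\W_n$, hence of $\Q_n$, in $H^1(\om,\Ss_0)$. By Banach--Alaoglu I can extract a subsequence (still denoted $\Q_n$) with $\Q_n\wc\Q_\va$ weakly in $H^1(\om,\Ss_0)$, and by Rellich--Kondrachov $\Q_n\to\Q_\va$ strongly in $L^p(\om,\Ss_0)$ for every $1\leq p<6$; passing to a further subsequence I may assume pointwise a.e.\ convergence. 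Since the trace operator $H^1(\om,\Ss_0)\to H^{1/2}(\pa\om,\Ss_0)$ is weakly continuous, $\Q_\va|_{\pa\om}=\Q_{b,\va}$, so $\Q_\va\in H^1(\om,\Ss_0;\Q_{b,\va})$.

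It remains to show $E_\va(\Q_\va,\om)\leq\liminf_{n\to\ift}E_\va(\Q_n,\om)$. Weak lower semicontinuity of $\Q\mapsto\tfrac{1}{2}\int_\om|\nabla\Q|^2\ud x$ on $H^1(\om,\Ss_0)$ handles the elastic part. For the bulk part, $f_b\geq 0$ and $f_b$ is a continuous (polynomial) function of $\Q$, so $f_b(\Q_n)\to f_b(\Q_\va)$ a.e.\ in $\om$, and Fatou's lemma gives
\be
\int_\om f_b(\Q_\va)\ud x\leq\liminf_{n\to\ift}\int_\om f_b(\Q_n)\ud x.\nn
\ee
Adding the two inequalities (after dividing the bulk term by $\va^2$) yields $E_\va(\Q_\va,\om)\leq m$, so $\Q_\va$ is a minimizer.

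The only step that requires any thought is the uniform $H^1$ bound on the minimizing sequence, since the sextic term $f_b$ a priori gives no coercivity in $H^1$; this is the reason I fix a reference extension $\widetilde{\Q}$ of $\Q_{b,\va}$ at the outset and work with $\W_n=\Q_n-\widetilde{\Q}\in H_0^1(\om,\Ss_0)$, so that the Dirichlet energy alone, combined with Poincar\'e's inequality, controls the full $H^1$ norm. All other steps are routine.
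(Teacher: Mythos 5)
Your proposal is correct and follows essentially the same direct-method argument as the paper: extend the boundary datum, use nonnegativity of $f_b$ plus Poincar\'e for the uniform $H^1$ bound on a minimizing sequence, extract a weakly convergent subsequence, and combine weak lower semicontinuity of the Dirichlet term with Fatou for the bulk term. The only cosmetic difference is how boundary preservation is justified — you use weak continuity of the trace operator (equivalently, weak closedness of $\widetilde{\Q}+H_0^1$), while the paper invokes compactness of $H^1(\om)\hookrightarrow L^2(\pa\om)$ — both are standard and correct.
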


\begin{proof}
Fix $ 0<\va<1 $. Obviously, since $ \Q_{b,\va}\in H^{1/2}(\pa\om,\Ss_0) $, there exists $ \PP\in H^1(\om,\Ss_0) $ such that $ \PP|_{\pa\om}=\Q_{b,\va} $ and $ \|\PP\|_{H^1(\om)}\leq\|\Q_{b,\va}\|_{H^{1/2}(\pa\om)} $. 
By using Sobolev embedding theorem, when $ \om\subset\R^3 $ is bounded with a Lipschitz boundary, we have $ H^1(\om)\hookrightarrow L^6(\om) $. Consequently, $ 0\leq E_{\va}(\PP,\om)\leq C $, which implies that 
$$
0\leq E_{\min}=\min\{E_{\va}(\Q,\om):\Q\in H^1(\om,\Ss_0;\Q_{b,\va})\}<+\ift.
$$
Choosing $ \{\Q_{\va}^{(n)}\}_{n\in\Z_+}\subset H^1(\om,\Ss_0;\Q_{b,\va}) $ such that $ \lim_{n\to+\ift}E_{\va}(\Q_{\va}^{(n)},\om)=E_{\min} $, it follows from the definition of $ a_1 $ and $ f_b $ ($ f_b\geq 0 $) that $ \|\na\Q_{\va}\|_{L^2(\om)} $ is uniformly bounded with respect to $ n $. Furthermore, by Poincar\'{e} inequality, we have $
\|\Q_{\va}^{(n)}\|_{L^2(\om)}\leq C(\|\na\Q_{\va}^{(n)}\|_{L^2(\om)}+\|\Q_{b,\va}\|_{H^{1/2}(\pa\om)}) $, and then $ \{\Q_{\va}^{(n)}\}_{n\in\Z_+} $ is uniformly bounded in $ H^1(\om,\Ss_0) $. By using Sobolev embedding theorem once again, we can extract a subsequence from $ \{\Q_{\va}^{(n)}\}_{n\in\Z_+} $, without changing the notation, and obtain $ \Q_{\va}\in H^1(\om,\Ss_0) $ such that as $ n\to+\ift $,
\begin{align*}
\Q_{\va}^{(n)}&\wc\Q_{\va}\text{ weakly in }H^1(\om,\Ss_0),\\
\Q_{\va}^{(n)}&\to\Q_{\va}\text{ strongly in }L^p(\om,\Ss_0)\text{ for any }1\leq p<6,\text{ a.e. in }\om.
\end{align*}
and $ \sup_{n\geq 1}\|\Q_{\va}^{(n)}\|_{L^6(\om)}<+\ift $. By applying Fatou lemma and the property of weak convergence, we deduce $ E_{\va}(\Q_{\va},\om)\leq\liminf_{n\to+\ift}E_{\va}(\Q_{\va}^{(n)},\om)=E_{\min} $. Finally, by the compactness of the trace operator, i.e., $ H^1(\om,\Ss_0)\hookrightarrow\hookrightarrow L^2(\pa\om,\Ss_0) $,we can conclude that $ \Q_{\va}|_{\pa\om}=\Q_{b,\va} $.
\end{proof}

\begin{prop}\label{boundednessofminimizer}
Let $ \om\subset\R^3 $ be a bounded Lipschitz domain and $ 0<\va<1 $. Assume that $ \Q_{\va} $ is a global minimizer of \eqref{energy} in the space $ H^1(\om,\Ss_0;\Q_{b,\va}) $ with $ \Q_{b,\va}\in(H^{1/2}\cap L^{\ift})(\pa\om,\Ss_0) $. There holds
$$
\|\Q_{\va}\|_{L^{\ift}(\om)}\leq\max\{\sqrt{2}r_*,\|\Q_{b,\va}\|_{L^{\ift}(\pa\om)}\}.
$$
\end{prop}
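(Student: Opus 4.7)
The plan is to run a radial truncation argument at the level of $\Ss_0$. Set $M := \max\{\sqrt{2}r_*,\|\Q_{b,\va}\|_{L^{\ift}(\pa\om)}\}$ and define the comparison map $\tilde{\Q}_{\va}:=F\circ\Q_{\va}$, where $F:\Ss_0\to\ol{B_M(\mathbf{O})}\subset\Ss_0$ is the Euclidean (Frobenius) projection onto the closed ball of radius $M$, namely $F(\Q)=\Q$ when $|\Q|\le M$ and $F(\Q)=M\Q/|\Q|$ when $|\Q|>M$. Since $\ol{B_M(\mathbf{O})}$ is convex, $F$ is $1$-Lipschitz, so $\tilde{\Q}_{\va}\in H^1(\om,\Ss_0)$ with $|\na\tilde{\Q}_{\va}|\le|\na\Q_{\va}|$ almost everywhere. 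Because $|\Q_{b,\va}|\le M$ on $\pa\om$, $F$ fixes the boundary values, so $\tilde{\Q}_{\va}|_{\pa\om}=\Q_{b,\va}$ and $\tilde{\Q}_{\va}$ is an admissible competitor in $H^1(\om,\Ss_0;\Q_{b,\va})$.

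The next step is to show $f_b(\tilde{\Q}_{\va}(x))\le f_b(\Q_{\va}(x))$ pointwise, with strict inequality wherever $|\Q_{\va}(x)|>M$. Write an arbitrary $\Q\in\Ss_0$ as $\Q=s\U$ with $s:=|\Q|$ and $|\U|=1$, and use $\tr\Q^2=s^2$, $\tr\Q^3=s^3\tr\U^3$ to compute
\begin{equation*}
h(s):=f_b(s\U)=a_1-\f{a_2 s^2}{2}+\f{a_4 s^4}{4}+\f{a_6 s^6}{6}+\f{a_6' s^6 (\tr\U^3)^2}{6},
\end{equation*}
whence
\begin{equation*}
h'(s)=s\bigl(-a_2+a_4 s^2+(a_6+a_6'(\tr\U^3)^2)s^4\bigr).
\end{equation*}
At $s=\sqrt{2}r_*$, the defining relation \eqref{rstar} gives $-a_2+2a_4 r_*^2+4a_6 r_*^4=0$, so the bracket reduces to $4 a_6'(\tr\U^3)^2 r_*^4\ge 0$. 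Since $a_4,a_6,a_6'\ge 0$ with $a_4>0$, the bracket is strictly increasing in $s^2$, so $h'(s)>0$ for every $s>\sqrt{2}r_*$, and $h$ is strictly increasing on $[\sqrt{2}r_*,\ift)$. As $M\ge\sqrt{2}r_*$, this yields $f_b(F(\Q))=h(M)<h(s)=f_b(\Q)$ when $|\Q|=s>M$, and $f_b(F(\Q))=f_b(\Q)$ when $|\Q|\le M$.

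Combining the gradient and bulk bounds gives $E_{\va}(\tilde{\Q}_{\va},\om)\le E_{\va}(\Q_{\va},\om)$, with strict inequality whenever $\{x\in\om:|\Q_{\va}(x)|>M\}$ has positive Lebesgue measure. The minimality of $\Q_{\va}$ in $H^1(\om,\Ss_0;\Q_{b,\va})$ then forces this set to be negligible, proving $\|\Q_{\va}\|_{L^{\ift}(\om)}\le M$. The only mildly delicate point is the radial monotonicity of $f_b$ outside the sphere $\{|\Q|=\sqrt{2}r_*\}$: it rests on the nonnegativity of all coefficients together with the relation \eqref{rstar}, and the sign $a_6'\ge 0$ is exactly what lets the otherwise sensitive cross term $(\tr\U^3)^2$ enter with a favorable sign, so the argument transfers cleanly from the uniaxial quartic model to the biaxial sextic regime considered here.
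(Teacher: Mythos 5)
Your argument is essentially identical to the paper's: truncate $\Q_{\va}$ to $\ol{B_M}\subset\Ss_0$ via the $1$-Lipschitz radial projection, check that $f_b$ is nondecreasing along rays outside $\{|\Q|=\sqrt{2}r_*\}$ using the defining relation \eqref{rstar} for $r_*$, and conclude by minimality with strict energy drop if $\{|\Q_{\va}|>M\}$ has positive measure. The one minor cosmetic difference is that the paper drops the nonnegative term $a_6'(\tr\Q^3)^2$ at the start of the radial-monotonicity computation, while you retain the corresponding $a_6'(\tr\U^3)^2 s^4$ in the derivative bracket and observe it only helps; both give the same conclusion.
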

\begin{proof}
We define a map $ \pi:\Ss_0\to\Ss_0 $ by
$$
\pi(\Q):=\left\{\begin{aligned}
&C_0|\Q|^{-1}\Q&\text{ if }&|\Q|\geq C_0,\\  
&\Q&\text{ if }&|\Q|<C_0,
\end{aligned}\right.
$$
where $ C_0:=\max\{\sqrt{2}r_*,\|\Q_{b,\va}\|_{L^{\ift}(\pa\om)}\} $. Simple calculations yield that
\begin{align*}
Df_b(\Q):\Q&=-a_2|\Q|^2+a_4|\Q|^4+a_6|\Q|^6+a_6'(\tr \Q^3)^2\\
&\geq-a_2|\Q|^2+a_4|\Q|^4+a_6|\Q|^6,
\end{align*}
where for the last inequality we have used the fact $ \sqrt{6}|\tr\Q^3|\leq|\Q|^3 $ (see Lemma 1 of \cite{MZ10} for details). If $ |\Q|>\sqrt{2}r_* $, we can obtain from the definition of $ r_* $ that $ Df_b(\Q):\Q>0 $. By this, we deduce that $ f_b(\pi(\Q))\leq f_b(\Q) $ for any $ \Q\in\Ss_0 $. On the other hand, $ \pi $ is $ 1 $-Lipschitz continuous. Hence, $ \pi\circ\Q_{\va}\in H^1(\om,\Ss_0;\Q_{b,\va}) $ and satisfies $ |\na(\pi\circ\Q_{\va})|\leq|\na\Q_{\va}| $ a.e. in $ \om $. Choosing $ \pi\circ\Q_{\va} $ as a competitor, we have
$$
E_{\va}(\pi\circ\Q_{\va},\om)\leq \int_{\om}\(\f{1}{2}|\na\Q_{\va}|^2+\f{1}{\va^2}f_b(\Q_{\va})\)\ud x=E_{\va}(\Q_{\va},\om),
$$
with strict inequality if the set where $ |\Q_{\va}|>C_0 $ has positive measure. Since $ \Q_{\va} $ is a global minimizer in $ H^1(\om,\Ss_0;\Q_{b,\va}) $, we conclude that $ |\Q_{\va}|\leq C_0 $ a.e. in $ \om $. 
\end{proof}

\begin{prop}\label{logboundenergy}
Let $ \om\subset\R^3 $ be a bounded smooth domain. Assume that $ \{\Q_{\va}\}_{0<\va<1} $ are global minimizers of \eqref{energy} in the space $ H^1(\om,\Ss_0;\Q_{b,\va}) $ with $ \Q_{b,\va}\in H^{1/2}(\pa\om,\cN) $. The following properties hold.
\begin{enumerate}
\item If 
\be
\sup_{0<\va<1}\|\Q_{b,\va}\|_{H^{1/2}(\pa\om,\cN)}\leq C_0\label{QbvaH12esbou}
\ee
for some $ C_0>0 $, then there exists $ C>0 $ depending only on $ \cA,C_0 $, and $ \om $ such that
\be
E_{\va}(\Q_{\va},\om)\leq C\(\log\f{1}{\va}+1\).\label{logH1}
\ee
\item If $ \Q_{b,\va}=\Q_b\in H^{1/2}(\pa\om,\cN) $ for any $ 0<\va<1 $, then
\be
E_{\va}(\Q_{\va},\om)\leq C,\label{boundedH1com}
\ee
where $ C>0 $ depends only on $ \cA,\om $, and $ \Q_b $. Moreover, there exists a subsequence $ \va_n\to 0^+ $ and $ \Q_0\in H^1(\om,\cN) $ such that $ \Q_0 $ is a minimizer of the problem
\be
\min\left\{\int_{\om}|\na\Q|^2\ud x:\Q\in H^1(\om,\cN),\,\,\Q|_{\pa\om}=\Q_b\right\},\label{limitlift}
\ee
$ \Q_{\va_n}\to\Q_0 $ in $ H^1(\om,\Ss_0) $ as $ n\to+\ift $, and
\be
\lim_{n\to+\ift}\f{1}{\va^2}\int_{\om}f_b(\Q_{\va_n})\ud x=0.\label{fvanlimit}
\ee
\end{enumerate}
\end{prop}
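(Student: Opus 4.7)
\emph{Part (1): Logarithmic upper bound.} The plan is to exploit the global minimality of $\Q_\va$ by building, for each $\va$, a competitor $\PP_\va\in H^1(\om,\Ss_0;\Q_{b,\va})$ with $E_\va(\PP_\va,\om)\le C(\log(1/\va)+1)$; then \eqref{logH1} is immediate from $E_\va(\Q_\va,\om)\le E_\va(\PP_\va,\om)$. The competitor is built by the standard vortex-line construction adapted to $\cN$-valued data. First, approximate $\Q_{b,\va}$ in $H^{1/2}(\pa\om,\cN)$ by smooth maps $\wt{\Q}_{b,\va}\in C^\infty(\pa\om,\cN)$ with loss of $O(1)$ in $H^{1/2}$ controlled by \eqref{QbvaH12esbou}. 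Next, extend $\wt{\Q}_{b,\va}$ into a tubular neighborhood of $\pa\om$ as an $\cN$-valued map, using the $C^{0,1}$-regularity of $\pa\om$ and the retraction $\varrho$. In the remaining interior region, the nontrivial topology of $\cN$ (with $\pi_1(\cN)=Q_8$ as indicated in the introduction) may forbid bounded-energy $\cN$-valued extensions; instead, construct the map to be $\cN$-valued away from an $\va$-tube $T_\va$ around a finite union $L$ of line segments accounting for the obstruction. Inside $T_\va$ interpolate smoothly to a core profile parametrized on the $2$-dimensional cross-section perpendicular to $L$ with core radius $\va$; using $f_b(\Q)\simeq\dist^2(\Q,\cN)$ near $\cN$ from Corollary \ref{fBc}, such a profile contributes elastic energy $\kappa_*\log(1/\va)+O(1)$ per unit length of $L$, while the sextic potential contribution is $O(|L|)$ since $T_\va$ has volume $O(\va^2|L|)$ and $\va^{-2}f_b$ is bounded on the core. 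Summing yields the stated bound.

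\emph{Part (2): Fixed boundary data.} Assume the admissible class of \eqref{limitlift} is nonempty (otherwise the infimum is vacuous). First produce a minimizer $\Q_0\in H^1(\om,\cN)$ of \eqref{limitlift} by weak $H^1$-compactness, weak lower semicontinuity of the Dirichlet energy, and the fact that weak $H^1$-limits of $\cN$-valued maps remain $\cN$-valued a.e.\ since $\cN$ is closed and $L^2$-convergence along a subsequence is a.e.\ pointwise. Using $\Q_0$ as a competitor, $f_b(\Q_0)=0$ a.e.\ gives $E_\va(\Q_\va,\om)\le\tfrac{1}{2}\int_\om|\na\Q_0|^2\ud x$, which is \eqref{boundedH1com}. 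Extract a weakly convergent subsequence $\Q_{\va_n}\wc\Q_0^*$ in $H^1(\om,\Ss_0)$. From $\int_\om f_b(\Q_{\va_n})\ud x\le C\va_n^2\to 0$ and Fatou's lemma, $f_b(\Q_0^*)=0$ a.e., hence $\Q_0^*\in H^1(\om,\cN)$, with $\Q_0^*|_{\pa\om}=\Q_b$ by trace compactness, so $\Q_0^*$ is admissible for \eqref{limitlift}. Then
\[
\tfrac{1}{2}\int_\om|\na\Q_0^*|^2\ud x\le\liminf_{n\to+\ift}\tfrac{1}{2}\int_\om|\na\Q_{\va_n}|^2\ud x\le\liminf_{n\to+\ift}E_{\va_n}(\Q_{\va_n},\om)\le\tfrac{1}{2}\int_\om|\na\Q_0|^2\ud x.
\]
By minimality of $\Q_0$, all inequalities are equalities: $\Q_0^*$ is itself a minimizer of \eqref{limitlift}, \eqref{fvanlimit} holds, and $\|\na\Q_{\va_n}\|_{L^2}\to\|\na\Q_0^*\|_{L^2}$. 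Combined with weak $H^1$-convergence in the Hilbert space $H^1(\om,\Ss_0)$, this upgrades to strong $H^1$-convergence, after relabeling $\Q_0^*$ as $\Q_0$.

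\emph{Main obstacle.} The principal difficulty is in Part (1): achieving the sharp $\log(1/\va)$ upper bound. The nontrivial topology $\pi_1(\cN)=Q_8$ precludes a naive bounded-energy $\cN$-valued extension of generic boundary data satisfying only \eqref{QbvaH12esbou}, so one must introduce the defect line set $L$ and carefully engineer the core profile in $T_\va$. Because $\cN$ abuts the discriminant strata $\cC_1\cup\cC_2$ where the projection $\varrho$ is undefined, the interpolation must bypass those strata in a controlled way; the second fundamental form \eqref{SecondFundamentalForm} and the convexity bound \eqref{fBconvex} are the tools that make such a quantitative construction possible.
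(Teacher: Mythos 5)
Your Part (2) is correct and follows essentially the same route as the paper: produce a minimizer $\Q_0$ of \eqref{limitlift} by the direct method, compare, extract a weak limit, apply Fatou to kill the potential, and use lower semicontinuity plus minimality to upgrade weak to strong $H^1$ convergence.

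Your Part (1), however, has a genuine gap. You propose to approximate $\Q_{b,\va}$ by \emph{smooth} $\cN$-valued maps with $O(1)$ loss in $H^{1/2}$, then build an explicit defect-line competitor. Two things break down. First, smooth $\cN$-valued maps are in general \emph{not dense} in $H^{1/2}(\pa\om,\cN)$: here $sp=1$ and $\pi_1(\cN)=Q_8\neq 0$, which is precisely the regime where density fails for fractional Sobolev spaces of manifold-valued maps (Brezis--Mironescu, Bousquet--Ponce--Van Schaftingen). Second, and more seriously, even if you somehow produced smooth approximants, nothing in the hypothesis \eqref{QbvaH12esbou} controls the \emph{topological complexity} of $\Q_{b,\va}$ as $\va\to 0^+$. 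Your construction delivers an energy of order $\kappa_*\HH^1(L)\log(1/\va)+O(1)$, where $L$ is the defect line set dictated by the homotopy class of the boundary data; without a uniform-in-$\va$ bound on $\HH^1(L)$, the coefficient of $\log(1/\va)$ is not bounded and \eqref{logH1} is not obtained. There is no reason the $H^{1/2}$-norm bound alone should control $\HH^1(L)$.

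The paper's proof is designed exactly to sidestep this: instead of resolving the topology explicitly, it takes the $\Ss_0$-valued harmonic extension $\U_\va$ of $\Q_{b,\va}$, shifts by a \emph{generic} small $\A\in\Ss_0$, projects $\U_\va-\A$ onto $\cN$ via $\varrho$ with the cutoff $\eta_\va\circ\phi_{1/4}$ near the discriminant set $\cC_1\cup\cC_2$, and then averages over $\A\in B_{\delta_1}^{\Ss_0}$ using Fubini together with the coarea-type bound of Lemma \ref{MNprime}. The geometric input that makes the $\log(1/\va)$ appear with an $\va$-independent coefficient is that $\cC_1\cup\cC_2$ has codimension $3$ in the $5$-dimensional space $\Ss_0$, which yields the estimate \eqref{KvaH5}; a mean-value selection of $\A_0$ then produces a competitor with energy $\leq C(\log(1/\va)+1)$, and a final Lipschitz reparametrization $F$ corrects the boundary trace back to $\Q_{b,\va}$. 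The total length of the "defect locus" $\{\phi_{1/4}(\U_\va-\A_0)\leq\va\}$ is controlled \emph{implicitly} by this averaging, without ever invoking the homotopy type of the boundary data. If you want to repair your approach, you would need to replace the "construct $L$ by hand" step with this sort of generic-slicing argument; as written, the step from \eqref{QbvaH12esbou} to a uniform bound on the defect length is missing.

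A minor remark: in Part (2), your caveat that the admissible class of \eqref{limitlift} might be empty is implicitly assumed away in the paper as well (the paper simply invokes Proposition \ref{ExistenceThm}-type arguments), so that is not a point of difference.
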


\begin{lem}[\cite{C17}, Lemma 69]\label{MNprime}
Let $ \cY $ be a smooth Riemannian $ m $-manifold and $ \cX $ be a compact smooth Riemannian $ n $-submanifold of $ \cY $. There exist $ \delta,C>0 $, depending only on $ \cX,\cY $ such that for any non-increasing function $ h:\R_+\to\R_+ $, there holds
\be
\int_{U_{\delta}}h(\dist_{\cY}(x,\cX))\ud\HH^m(x)\leq C\int_0^{C\delta}s^{m-n-1}h(s)\ud s,\label{hsmnminus}
\ee
where $
U_{\delta}:=\{x\in\cY:\dist_{\cY}(x,\cX)<\delta\} $ is the $ \delta $-neighborhood of $ \cX $ in $ \cY $. 
\end{lem}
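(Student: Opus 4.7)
The plan is to reduce the integral to polar coordinates on the normal fibers of $\cX$ via the tubular neighborhood theorem. First, I would invoke the tubular neighborhood theorem: since $\cX$ is a compact smooth submanifold of $\cY$, there exists $\delta_0>0$ (depending on $\cX$ and $\cY$) such that the normal exponential map $\exp^{\perp}:N_{\delta_0}\cX\to V_{\delta_0}\subset\cY$ is a smooth diffeomorphism onto an open neighborhood of $\cX$, where $N_{\delta_0}\cX$ denotes the disk subbundle of the normal bundle of radius $\delta_0$. By compactness of $\cX$ and continuity of the Jacobian, one can shrink $\delta_0$ so that the Jacobian of $\exp^{\perp}$ is uniformly bounded above by some $C_1>0$ on $N_{\delta_0}\cX$.

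Second, I would establish a comparison between $|v|$ and the ambient distance $\dist_{\cY}(\exp^{\perp}(p,v),\cX)$. The Gauss lemma for submanifolds implies that short normal geodesics from $\cX$ realize the distance to $\cX$, so after possibly shrinking $\delta_0$, there exist constants $c_1,c_2>0$ such that
$$
c_1|v|\le\dist_{\cY}(\exp^{\perp}(p,v),\cX)\le c_2|v|\quad\text{for any }(p,v)\in N_{\delta_0/2}\cX.
$$
In particular, one can choose $\delta\in(0,\delta_0)$ small enough that $U_{\delta}\subset\exp^{\perp}(N_{\delta_0/2}\cX)$.

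Third, by the change-of-variables formula $x=\exp^{\perp}(p,v)$, the monotonicity of $h$, and the Jacobian bound,
$$
\int_{U_{\delta}}h(\dist_{\cY}(x,\cX))\ud\HH^m(x)\le C_1\int_{\cX}\int_{\{v\in N_p\cX:\,|v|<\delta_0/2\}}h(c_1|v|)\ud v\,\ud\HH^n(p).
$$
Using that $\HH^n(\cX)<+\ift$ (by compactness) and switching to polar coordinates on the $(m-n)$-dimensional fiber $N_p\cX$, the inner integral becomes $\omega_{m-n-1}\int_0^{\delta_0/2}h(c_1s)s^{m-n-1}\ud s$, which after rescaling $s\mapsto s/c_1$ yields a constant multiple of $\int_0^{C\delta}s^{m-n-1}h(s)\ud s$ for a suitable $C$ depending only on $\cX,\cY$. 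Combining all constants gives \eqref{hsmnminus}.

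The main technical obstacle is justifying the distance comparison $\dist_{\cY}(\exp^{\perp}(p,v),\cX)\ge c_1|v|$ uniformly in $p\in\cX$; the upper bound is automatic since the normal geodesic itself has length $|v|$, but the lower bound requires the Gauss lemma together with the compactness of $\cX$ so that the injectivity radius of the normal exponential map is bounded away from zero. Once this uniform lower bound and the uniform Jacobian bound are in place, the rest is a routine application of Fubini and polar coordinates.
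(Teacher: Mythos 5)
Your proof is correct. The paper does not prove this lemma; it is cited from \cite{C17}, and the tubular-neighborhood argument you give is the standard (and, in essence, Canevari's) approach. Two small points worth noting. First, by the generalized Gauss lemma the comparison $c_1|v|\le\dist_{\cY}(\exp^{\perp}(p,v),\cX)\le c_2|v|$ is in fact an equality $\dist_{\cY}(\exp^{\perp}(p,v),\cX)=|v|$ for $|v|$ below the normal injectivity radius, so the constants $c_1,c_2$ are unnecessary baggage, although harmless. Second, in passing to the inner fiber integral you enlarge the domain from $(\exp^{\perp})^{-1}(U_{\delta})$ to all of $\{|v|<\delta_0/2\}$; since $h\ge 0$ this is a valid upper bound, and because $\delta$ and $\delta_0$ are both constants depending only on $\cX,\cY$ the resulting upper limit of integration can indeed be absorbed into a constant of the form $C\delta$, but the cleaner route is to keep the domain $|v|<\delta$ (using $\dist=|v|$), which gives the upper limit $\delta$ directly. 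Otherwise the Jacobian bound, Fubini, and fiber polar coordinates are exactly as they should be, and the compactness of $\cX$ is correctly invoked to make the injectivity radius and Jacobian bounds uniform.
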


\begin{proof}[Proof of Proposition \ref{logboundenergy}]
For fixed $ 0<\va<1 $, choose $ \U_{\va}\in H^1(\om,\Ss_0) $ such that
$$
\left\{\begin{aligned}
-\Delta\U_{\va}&=0&\text{ in }&\om,\\
\U_{\va}&=\Q_{b,\va}&\text{ on }&\pa\om.
\end{aligned}\right.
$$
By \eqref{QbvaH12esbou}, $ \{\U_{\va}\}_{0<\va<1} $ is a bounded sequence in $ (H^1\cap L^{\ift})(\om,\Ss_0) $, which satisfies
\be
\|\U_{\va}\|_{H^1(\om)}+\|\U_{\va}\|_{L^{\ift}(\om)}\leq C,\label{UvanablaestimateOm}
\ee
where $ C>0 $ depends only on $ \cA,C_0 $ and $ \om $. Choose $ 0<\delta_1<1 $ sufficiently small, depending only on $ \cA $, such that the maps $ \{\varrho_{\A}\}_{|\A|<\delta_1}\subset C^1(\cN,\cN) $ defined by $ \varrho_{\A}(\Q)=\varrho(\Q-\A) $ are diffeomorphism. We define the function $ \eta_{\va}(r):=\va^{-1}r $ if $ 0\leq r<\va $, and $ \eta_{\va}(r)=1 $ if $ r\geq\va $. For $ \A\in\Ss_0 $, $ |\A|<\delta_1 $, let $
\U_{\va,\A}:=(\eta_{\va}\circ\phi_{1/4})(\U_{\va}-\A)\varrho(\U_{\va}-\A) $, where $ \varrho:\Ss_0\backslash(\cC_1\cup\cC_2)\to\cN $ and $ \phi_{1/4}:\Ss_0\to\R $ are given by \eqref{rho1rho2rho} and \eqref{phitaudefinition}. Since $ \phi_{1/4}(\U_{\va}(x)-\A)=0 $ when $ \U_{\va}(x)-\A\in\cC_1\cup\cC_2 $, $ \U_{\va,\A} $ is always well defined. It follows from Lemma \ref{varrhop} and Corollary \ref{lowerQcorollary} that $ \U_{\va,\A}\in(H^1\cap L^{\ift})(\om,\Ss_0) $. Using \eqref{Dphi4es}, we have
$$
|\na\U_{\va,\A}|^2\leq C((\eta_{\va}'\circ\phi_{1/4})^2(\U_{\va}-\A)|\na\U_{\va}|^2+(\eta_{\va}\circ\phi_{1/4})^2(\U_{\va}(\cdot)-\A)|\na(\varrho(\U_{\va}-\A))|^2).
$$
By Corollary \ref{lowerQcorollary} again, we deduce
$$
|\na\U_{\va,\A}|^2\leq C\((\eta_{\va}'\circ\phi_{1/4})(\U_{\va}-\A)+\f{(\eta_{\va}\circ\phi_{1/4})^2(\U_{\va}-\A)}{\phi_{1/4}^2(\U_{\va}-\A)}\)|\na\U_{\va}|^2.
$$
Combining the fact that $ f_b(\U_{\va,\A})\leq C\chi_{\{\phi(\U_{\va}-\A)\leq\va\}} $, there holds
\be
\begin{aligned}
&E_{\va}(\U_{\va,\A},\om)\\
&\quad\leq C\int_{\om}\left\{\(\f{\chi_{\{\phi_{1/4}(\U_{\va}-\A)\geq\va\}}}{\phi_{1/4}^2(\U_{\va}-\A)}+\f{\chi_{\{\phi_{1/4}(\U_{\va}-\A)\leq\va\}}}{\va^2}\)|\na\U_{\va}|^2+\f{\chi_{\{\phi_{1/4}(\U_{\va}-\A)\leq\va\}}}{\va^2}\right\}\ud x.
\end{aligned}\label{Evaphi14es}
\ee
For simplicity, for $ \Q\in\Ss_0 $, we denote $ B_r^{\Ss_0}(\Q):=\{\PP\in\Ss_0:|\PP-\Q|<r\} $, which is the ball in $ \Ss_0 $ with radius $ r $ and center $ \Q $. If the center $ \Q=\mathbf{O} $, we denote it by $ B_r^{\Ss_0} $. In view of \eqref{UvanablaestimateOm}, we can choose $ R>0 $, depending only on $ \cA,C_0 $, and $ \om $, such that $
B_{\delta_1}^{\Ss_0}(\U_{\va}(x))\in B_R^{\Ss_0} $ for a.e. $ x\in\om $.
Define $ K:=B_R^{\Ss_0} $ and $ K_{\va}:=K\cap\{\Q:\phi_{1/4}(\Q)\leq\va\} $. Integrating \eqref{Evaphi14es} with respect to $ \A\in B_{\delta_1}^{\Ss_0} $ and applying Fubini theorem, we have
\begin{align*}
\int_{B_{\delta_1}^{\Ss_0}}E_{\va}(\U_{\va,\A},\om)\ud\HH^5(\A)\leq C\int_{\om}\left\{\(\int_{K\backslash K_{\va}}\f{\ud\HH^5(\B)}{\phi_{1/4}^2(\B)}+\f{\HH^5(K_{\va})}{\va^2}\)|\na\U_{\va}|^2+\f{\HH^5(K_{\va})}{\va^2}\right\}\ud x.
\end{align*}
We claim
\be
\HH^5(K_{\va})\leq C\va^2\text{ and }\int_{K\backslash K_{\va}}\f{\ud\HH^5(\B)}{\phi_{1/4}^2(\B)}\leq C\(\log\f{1}{\va}+1\),\label{KvaH5}
\ee
where $ C>0 $ depends only on $ \cA,C_0 $ and $ \om $. If this claim is true, by \eqref{UvanablaestimateOm}, we have
\begin{align*}
\int_{B_{\delta}^{\Ss_0}}E_{\va}(\U_{\va,\A},\om)\ud\HH^5(\A)&\leq C\(\log\f{1}{\va}+1\)\|\na\U_{\va}\|_{L^2(\om)}^2+C\leq C\(\log\f{1}{\va}+1\).
\end{align*}
By using average arguments, we can choose some $ \A_0\in\Ss_0 $ such that $ |\A_0|<\delta_1 $,
\be
E_{\va}(\U_{\va},\om)\leq C\delta_1^{-5}\(\log\f{1}{\va}+1\)\leq C\(\log\f{1}{\va}+1\).\label{deltamius5}
\ee
and
\be
\U_{\va}=\varrho(\Q_{b,\va}-\A_0)\text{ on }\pa\om.\label{UvaAboundary}
\ee
Define $ \mathcal{U}:=\{\lda\Q:\lda\in\R_{+}:\Q\in\cN\} $. The map $ F:\mathcal{U}\to\mathcal{U} $ given by $ F(\lda\Q):=\lda^{-1}\varrho_{\A_0}^{-1}(\Q) $ for $ \lda\in\R_+,\Q\in\cN $ is Lipschitz and well defined. Also we note that $ f_b(F(\PP))=f_b(\PP) $ for any $ \PP\in\mathcal{U} $ since if $ \PP=\lda r_*(\n\n-\m\m) $ with $ (\n,\m)\in\M $, the value $ f_b(\PP) $ is independent of $ \n $ and $ \m $. In view of \eqref{UvaAboundary} and the fact that $ \U_{\va,\A_0}\in\cN $, we have that $ \PP_{\va}:=F(\U_{\va,\A_0}) $ is well defined and $ \PP_{\va}\in H^1(\om,\Ss_0;\Q_{b,\va}) $. Moreover, it follows from \eqref{deltamius5} that $
E_{\va}(\PP_{\va},\om)\leq C(\log(1/\va)+1) $. By using the assumption that $ \Q_{\va} $ is a global minimizers, we deduce \eqref{logH1}. 

Next, let us prove the claim \eqref{KvaH5}. Actually, we prove the following property. For any non-increasing, non-negative function $ g:\R_+\to\R_+ $, there holds
\be
\int_{B_R^{\Ss_0}}(g\circ\phi_{1/4})(\Q)\ud\HH^5(\Q)\leq C_1\int_0^{C_2}(s+s^4)g(s)\ud s,\label{ss4gs}
\ee
where $ C_1,C_2>0 $ depends only on $ \cA,C_0 $, and $ \om $. This directly implies \eqref{KvaH5} by choosing $ g(s)=\chi_{(0,\va)}(s) $ and $ g(s)=\va^{-2}\chi_{(0,\va)}(s)+s^{-2}\chi_{[0,+\ift)}(s) $ for $ s\in\R_+ $. Before we prove \eqref{ss4gs}, we firstly introduce some notations. For $ r>0 $, define
$$
\dist_r(\Q,\PP):=\inf\left\{\int_0^1|\ga'(t)|\ud t:\ga\in C^1([0,1],\pa B_r^{\Ss_0}),\,\,\ga(0)=\Q,\,\,\ga(1)=\PP\right\}
$$
for any $ \Q,\PP\in\pa B_r^{\Ss_0} $. Let $ \cN_r':=(\cC_1\cup\cC_2)\cap\pa B_r^{\Ss_0} $. We note that $ \cN_r' $ is a smooth Riemannian $ 2 $-submanifold of $ \pa B_r^{\Ss_0} $ since in $ \Ss_0 $, the only singularity of $ \cC_1\cup\cC_2 $ is $ \mathbf{O} $. There holds, for some constant $ C_3 $ depending only on $ \cA $, such that
\be
\phi_0(\Q)\geq C_3\dist_{|\Q|}(\Q,\cN_{|\Q|}')\text{ for any }\Q\in\Ss_0.\label{phiQaldist}
\ee
To show this, we fix $ \Q\in\Ss_0 $ and $ \PP\in\cN_{|\Q|}' $, and we can use the the formula \eqref{representationQ} to obtain
$$
\Q=s\(\n\n-\f{1}{3}\I\)+sr\(\m\m-\f{1}{3}\I\),\,\,\PP=t\(\p\p-\f{1}{3}\I\)
$$
for some $ (\n,\m)\in\M $, $ \p\in\Ss^2 $, $ s\in\R_+ $, $ t\in\R $ and $ r\in[0,1] $. Simple calculations yield that 
$$
|\Q-\PP|^2=\f{2s^2(r^2-r+1)-2st(1+r)+2t^2}{3}+r(\m\cdot\p)^2
$$
and $ t^2=s^2(r^2-r+1) $. Consequently, we have
\begin{align*}
\dist^2(\Q,\cN_{|\Q|}')&=\left\{\begin{aligned}
&\f{2s^2\sqrt{r^2-r+1}((1-r)^2-(\sqrt{r^2-r+1}-1)^2)}{3}&\text{ if }&r\geq\f{1}{2}\\
&\f{2s^2\sqrt{r^2-r+1}(r^2-(\sqrt{r^2-r+1}-1)^2)}{3}&\text{ if }&r<\f{1}{2}
\end{aligned}\right.\\
&\leq\f{2\min\left\{s^2r^2,s^2(1-r)^2\right\}}{3}=\f{2r_*^2}{3}\phi_0^2(\Q).
\end{align*}
In view of the fact that $
\dist_{|\Q|}(\Q,\PP)\leq C|\Q-\PP| $ for any $ \Q,\PP\in\Ss_0 $, $ |\Q|=|\PP| $, we deduce \eqref{phiQaldist}. By using the first property of Lemma \ref{Lipphi0lem}, \eqref{phi42phi3}, \eqref{phiQaldist} and the assumption that $ g $ is non-increasing, we can obtain
\begin{align*}
\int_{B_R^{\Ss_0}}(g\circ\phi_{1/4})(\Q)\ud\HH^5(\Q)&\leq\int_0^R\rho^4\int_{\pa B_1^{\Ss_0}}g(\rho\phi_0(\Q))\ud\HH^4(\Q)\ud\rho\\
&\leq\int_0^R\rho^4\int_{\pa B_1^{\Ss_0}}g(C_3\rho\dist_1(\Q,\cN_1'))\ud\HH^4\ud\rho.
\end{align*}
Applying Lemma \ref{MNprime} to $ \cX=\cN_1' $ and $ \cY=\pa B_1^{\Ss_0} $, and $ h(s)=g(C_3\rho s) $, we can find $ \delta,C>0 $ such that
\begin{align*}
\int_{B_R^{\Ss_0}}(g\circ\phi_{1/4})(\Q)\ud\HH^5(\Q)&\leq\int_0^R\rho^4\left\{\(\int_{U_{\delta}}+\int_{V_{\delta}}\)g(C_3\rho\dist_1(\Q,\cN_1'))\ud\HH^4(\Q)\right\}\ud\rho\\
&\leq C\int_0^R\rho^4\(\int_0^{C\delta}sg(C_3\rho s)\ud s+g(C_3\rho\delta)\HH^4(V_{\delta})\)\ud\rho,
\end{align*}
where $ U_{\delta} $ is the $ \delta $-neighborhood of $ \cN_1' $ in $ \pa B_1^{\Ss_0} $ and $ V_{\delta}:=\pa B_1^{\Ss_0}\backslash U_{\delta} $. Finally, it follows from Fubini theorem and change of variables that
\begin{align*}
\int_{B_R^{\Ss_0}}(g\circ\phi_{1/4})(\Q)\ud\HH^5(\Q)&\leq \f{C}{C_3^2}\int_0^R\rho^2\(\int_0^{CC_3\rho\delta}tg(t)\ud t\)\ud\rho+\f{C}{C_3^5\delta^5}\HH^4(V_{\delta})\int_0^{C_3\delta R}t^4g(t)\ud t\\
&\leq C_1\int_0^{C_2}(t+t^4)g(t)\ud t,
\end{align*}
which completes the proof of \eqref{ss4gs}. 

If $ \Q_{b,\va}=\Q_b $ for any $ 0<\va<1 $, we choose $ \PP\in H^1(\om,\cN) $ as a minimizer of the problem \eqref{limitlift}. Such minimizer exists through standard arguments like Proposition \ref{ExistenceThm}. Obviously, we have $ \PP|_{\pa\om}=\Q_b $ on $ \pa\om $ and the minimizing property of $ \Q_{\va} $ implies that
\be
E_{\va}(\Q_{\va},\om)\leq E_{\va}(\PP,\om)\leq\f{1}{2}\int_{\om}|\na\PP|^2\ud x,\label{EvaQvawhQ}
\ee
which completes the proof of \eqref{boundedH1com}. Now we can choose a subsequence $ \va_n\to 0^+ $ as $ n\to+\ift $ and $ \Q_0\in H^1(\om,\Ss_0) $ such that
\begin{align*}
&\Q_{\va_n}\wc\Q_0\text{ weakly in }H^1(\om,\Ss_0),\\
&\Q_{\va_n}\to\Q_0\text{ strongly in }L^2(\om,\Ss_0)\text{ and a.e. in }\om.
\end{align*}
In view of compactness of the trace operator ($ H^1(\om,\Ss_0)\hookrightarrow\hookrightarrow L^2(\pa\om,\Ss_0) $) and the property that $ \Q_{\va}|_{\pa\om}=\Q_b $, we have $ \Q_0|_{\pa\om}=\Q_b $. Using \eqref{EvaQvawhQ} and Fatou lemma, there holds
$$
\int_{\om}f_b(\Q_0)\ud x\leq\liminf_{n\to+\ift}\int_{\om}f_b(\Q_{\va_n})\ud x\leq\liminf_{n\to+\ift}\f{\va_n^2}{2}\int_{\om}|\na\PP|^2\ud x=0,
$$
which implies that $ \Q_0\in H^1(\om,\cN) $. By \eqref{EvaQvawhQ} and the property of the weak convergence, we deduce
\begin{align*}
\int_{\om}|\na\PP|^2\ud x\leq\int_{\om}|\na\Q_0|^2\ud x\leq\liminf_{n\to+\ift}\int_{\om}|\na\Q_{\va_n}|^2\ud x\leq \limsup_{n\to+\ift}\int_{\om}|\na\Q_{\va_n}|^2\ud x\leq\int_{\om}|\na\PP|^2\ud x.
\end{align*}
This implies that all the inequalities in the above formula are actually equalities, and then $ \Q_0 $ is a minimizer of the problem \eqref{limitlift}. Also, we have
\be
\lim_{n\to+\ift}\int_{\om}|\na\Q_{\va_n}|^2\ud x=\int_{\om}|\na\Q_0|^2\ud x\label{nablaconvergence}
\ee
and then $ \Q_{\va_n}\to\Q_0 $ strongly in $ H^1(\om,\Ss_0) $ as $ n\to+\ift $. Taking $ \Q_0 $ as a competitor, there holds
$$
E_{\va}(\Q_{\va},\om)\leq\f{1}{2}\int_{\om}|\na\Q_0|^2\ud x.
$$
This, together with \eqref{nablaconvergence}, implies \eqref{fvanlimit}.
\end{proof}

For $ 0<\va<1 $, if $ \Q_{\va} $ is a local minimizer of the minimizing problem \eqref{energy}, then it satisfies the Euler-Lagrange equation
\be
-\va^2\Delta \Q_{\va}+\Psi(\Q_{\va})=0\label{E-L}
\ee
in weak sense, where the map $ \Psi=(\Psi_{ij})_{i,j=1}^3:\Ss_0\to\Ss_0 $ is defined as
\be
\begin{aligned}
\Psi_{ij}(\Q):=(f_b)_{ij}(\Q)-\f{a_6'}{3}(\tr\Q^3)(\tr\Q^2)\delta_{ij}
\end{aligned}\label{PsiQ}
\ee
for any $ 1\leq i,j\leq 3 $, or, more precisely,
$$
\Psi(\Q)=(-a_2+a_4(\tr\Q^2)+a_6(\tr \Q^2)^2)\Q+a_6'(\tr\Q^3)\(\Q^2-\f{1}{3}(\tr\Q^2)\I\).
$$
That is to say, for any $ \PP\in C_c^{\ift}(\om,\mathbb{M}^{3\times 3}) $,
$$
\int_{\om}(\na \Q_{\va}:\na\PP+\Psi(\Q_{\va}):\PP)\ud x=0.
$$

\begin{cor}\label{smooth}
Let $ \om\subset\R^3 $ be a bounded domain. Assume that $ \Q_{\va} $ satisfies \eqref{E-L} in weak sense and $ \|\Q_{\va}\|_{L^{\ift}(\om)}\leq C_0 $ for some constant $ C_0>0 $. The following properties hold.
\begin{enumerate}
\item $ \Q_{\va}\in C^{\ift}(\om,\Ss_0) $. For any $ U\subset\subset\om $, $ \va\|\na\Q_{\va}\|_{L^{\ift}(U)}\leq C $, where $ C>0 $ depends only on $ \cA,C_0 $, and $ U $.
\item If $ \om $ is a smooth domain and $ \Q_{\va}|_{\pa\om}=\Q_b $ for some $ \Q_b\in C^{\ift}(\pa\om,\Ss_0) $, then $ \Q_{\va}\in C^{\ift}(\ol{\om},\Ss_0) $ such that $ \va\|\na\Q_{\va}\|_{L^{\ift}(\om)}\leq C $, where where $ C>0 $ depends only on $ \cA,C_0,\Q_b $, and $ \om $.
\end{enumerate}
\end{cor}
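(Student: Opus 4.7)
The plan is to combine a standard elliptic bootstrap for smoothness with a rescaling argument for the scale-invariant gradient bound, first interior and then up to the boundary.

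For the first property, observe that $\Psi:\Ss_0\to\Ss_0$ is a polynomial map, so in particular it is smooth. Since $\|\Q_\va\|_{L^\infty(\om)}\leq C_0$, the composition $\Psi(\Q_\va)$ lies in $L^\infty(\om)$. Viewing \eqref{E-L} as a linear system $-\va^2\Delta\Q_\va=-\Psi(\Q_\va)\in L^\infty$ componentwise, interior $W^{2,p}$-theory for the Laplacian gives $\Q_\va\in W^{2,p}_{\loc}(\om)$ for every $p<\infty$, hence $\Q_\va\in C^{1,\al}_{\loc}(\om,\Ss_0)$ by Sobolev embedding. Then $\Psi(\Q_\va)\in C^{1,\al}_{\loc}$, so Schauder estimates yield $\Q_\va\in C^{3,\al}_{\loc}$, and iterating this bootstrap gives $\Q_\va\in C^\infty(\om,\Ss_0)$.

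For the gradient bound, I would use a blow-up (rescaling) argument. Fix $U\subset\subset\om$, set $d:=\dist(U,\pa\om)>0$, and for $x_0\in U$ define
\[
\wt\Q(y):=\Q_\va(x_0+\va y),\qquad y\in B_{d/\va}(0).
\]
Then $\wt\Q$ satisfies $-\Delta\wt\Q+\Psi(\wt\Q)=0$ on $B_{d/\va}(0)$ with $\|\wt\Q\|_{L^\infty}\leq C_0$. Applying the already-established interior $W^{2,p}$/Schauder theory on, say, $B_1(0)\subset B_{d/\va}(0)$ (for $\va$ small enough; the $\va$ comparable to $1$ case is handled directly from the bootstrap above), one gets $|\na\wt\Q(0)|\leq C$ with $C$ depending only on $\cA$ and $C_0$. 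Translating back gives $\va|\na\Q_\va(x_0)|\leq C$, and taking the supremum over $x_0\in U$ yields the claim.

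For the second property (up to the boundary), the scheme is identical once one flattens the boundary. Because $\om$ is smooth, at every $x_0\in\pa\om$ there is a smooth diffeomorphism $\Phi$ straightening $\pa\om\cap B_r(x_0)$ to a piece of $\{y_3=0\}$; in the new coordinates the equation becomes a linear elliptic system with smooth coefficients and the same lower-order nonlinearity $\Psi(\Q_\va)$, and the boundary trace becomes $\Q_b\circ\Phi^{-1}\in C^\infty$. Global $W^{2,p}$ estimates and boundary Schauder estimates then give $\Q_\va\in C^\infty(\ol{\om},\Ss_0)$ after the usual iteration. The uniform bound $\va\|\na\Q_\va\|_{L^\infty(\om)}\leq C$ follows by the same rescaling $\wt\Q(y)=\Q_\va(x_0+\va y)$ at a boundary point: in straightened coordinates $\wt\Q$ solves an elliptic system on a half-ball with smooth boundary data $\wt\Q_b(y)=\Q_b(\Phi^{-1}(x_0+\va y))$ whose $C^{1,\al}$ norm is bounded independently of $\va$ on bounded sets (since $\Q_b$ and $\Phi$ are smooth and $\va<1$), so boundary $C^{1,\al}$ estimates again give $|\na\wt\Q(0)|\leq C$. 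The one place to be slightly careful is the dependence of the constant in the boundary estimate on the rescaled boundary data; however, because we only evaluate $\Q_b$ on balls of radius $\va$, its pulled-back $C^{1,\al}$ norm actually stays bounded uniformly in $\va$. This is the one technical point I expect to be the main obstacle; everything else is routine elliptic bootstrap.
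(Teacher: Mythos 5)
Your proposal is correct and follows exactly the standard elliptic-regularity route that the paper invokes (the paper's proof is a one-line citation to elliptic theory, e.g. \cite{HL97}); your interior/boundary bootstrap plus the rescaling $\wt\Q(y)=\Q_\va(x_0+\va y)$ to turn $-\va^2\Delta\Q_\va+\Psi(\Q_\va)=0$ into the $\va$-free equation $-\Delta\wt\Q+\Psi(\wt\Q)=0$ is precisely the argument one would write out. The point you flag about the rescaled boundary data is not an obstacle: pulling back $\Q_b$ under $y\mapsto x_0+\va y$ actually \emph{shrinks} its $C^{1,\al}$ norm by factors of $\va$ and $\va^{1+\al}$, so the boundary Schauder constant is uniform in $\va$.
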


\begin{proof}
The proof directly follows from the regularity theory of elliptic equations. One can see \cite{HL97} for references.
\end{proof}

\subsection{Some useful identities}

In this subsection, we will give some identities of ourLandau-de Gennes model, which are important in the analysis of the fine structure of line defect.

\begin{lem}[Stress-energy identity]\label{StressThm}
Let $ \om\subset\R^3 $ be a bounded domain. Assume that $ \Q_{\va}\in C^{\ift}(\om,\Ss_0) $ and satisfies \eqref{E-L}. There holds
\be
\pa_j(e_{\va}(\Q_{\va})\delta_{ij}-\pa_i\Q_{\va}:\pa_j\Q_{\va})=0\label{StressEnergy}
\ee
in $ \om $, for any $ i=1,2,3 $.
\end{lem}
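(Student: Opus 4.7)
The plan is to compute $\pa_j(e_{\va}(\Q_{\va})\delta_{ij}-\pa_i\Q_{\va}:\pa_j\Q_{\va})$ directly, substitute the Euler--Lagrange equation \eqref{E-L}, and then observe that the discrepancy between $\Psi(\Q_\va)$ and $Df_b(\Q_\va)$ (which is only a scalar multiple of $\I$, by \eqref{PsiQ}) is annihilated because $\Q_\va$ is traceless.

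First, I would carry out the componentwise differentiation. Since $\Q_\va\in C^\ift(\om,\Ss_0)$, we may differentiate freely. The $e_\va\delta_{ij}$ part contracts with $\pa_j$ to give $\pa_i e_\va(\Q_\va)$, which by the chain rule expands to
$$
\pa_i e_{\va}(\Q_\va)=\pa_\ell\Q_{\va,pq}\,\pa_i\pa_\ell\Q_{\va,pq}+\f{1}{\va^2}(f_b)_{pq}(\Q_\va)\,\pa_i\Q_{\va,pq}.
$$
The second term gives
$$
\pa_j(\pa_i\Q_\va:\pa_j\Q_\va)=\pa_i\pa_j\Q_{\va,pq}\,\pa_j\Q_{\va,pq}+\pa_i\Q_{\va,pq}\,\Delta\Q_{\va,pq}.
$$
The two second-derivative terms cancel after relabelling $\ell\leftrightarrow j$, leaving
$$
\pa_j\bigl(e_\va(\Q_\va)\delta_{ij}-\pa_i\Q_\va:\pa_j\Q_\va\bigr)=\pa_i\Q_{\va,pq}\Bigl(\f{1}{\va^2}(f_b)_{pq}(\Q_\va)-\Delta\Q_{\va,pq}\Bigr).
$$

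Next, I would invoke the Euler--Lagrange equation \eqref{E-L} in its strong form (legitimate since $\Q_\va\in C^\ift$), namely $\Delta\Q_\va=\va^{-2}\Psi(\Q_\va)$. Together with the definition \eqref{PsiQ}, which reads $\Psi_{pq}(\Q)=(f_b)_{pq}(\Q)-\frac{a_6'}{3}(\tr\Q^3)(\tr\Q^2)\delta_{pq}$, this yields
$$
\f{1}{\va^2}(f_b)_{pq}(\Q_\va)-\Delta\Q_{\va,pq}=\f{a_6'}{3\va^2}(\tr\Q_\va^3)(\tr\Q_\va^2)\,\delta_{pq}.
$$
Substituting back gives
$$
\pa_j\bigl(e_\va(\Q_\va)\delta_{ij}-\pa_i\Q_\va:\pa_j\Q_\va\bigr)=\f{a_6'}{3\va^2}(\tr\Q_\va^3)(\tr\Q_\va^2)\,\pa_i\Q_{\va,pq}\,\delta_{pq}=\f{a_6'}{3\va^2}(\tr\Q_\va^3)(\tr\Q_\va^2)\,\pa_i(\tr\Q_\va),
$$
which vanishes identically because $\tr\Q_\va\equiv 0$.

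There is no genuine obstacle here; the only subtlety worth flagging is that $\Psi$ is not literally the $\Ss_0$-gradient of $f_b$ (it differs by a scalar multiple of $\I$, introduced so that $\Psi$ is the correct constrained-gradient on the traceless subspace). This scalar part is exactly what the tracelessness of $\Q_\va$ kills, so the identity is a manifestation of the fact that the constrained and unconstrained Euler--Lagrange equations differ only by a Lagrange multiplier term proportional to $\I$, invisible to any traceless contraction such as $\pa_i\Q_{\va,pq}\delta_{pq}$.
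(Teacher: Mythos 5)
Your proposal is correct and follows essentially the same route as the paper's proof: expand the divergence, cancel the two second-derivative terms after relabelling the dummy index, substitute the strong-form Euler--Lagrange equation, and use $\tr(\pa_i\Q_\va)=0$ to kill the scalar (Lagrange-multiplier) part of $\Psi-Df_b$. The paper is slightly more terse but relies on exactly the same cancellation and the same observation about tracelessness.
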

\begin{proof}
For $ i=1,2,3 $, simple calculations yield that
\begin{align*}
&\pa_j(e_{\va}(\Q_{\va})\delta_{ij}-\pa_i\Q_{\va}:\pa_j\Q_{\va})\\
&\quad\quad=\pa_i\pa_k\Q_{\va}:\pa_k\Q_{\va}+\f{1}{\va^2}(f_b)_{pq}(\Q_{\va})\pa_i(\Q_{\va})_{pq}-\pa_i\pa_j\Q_{\va}:\pa_j\Q_{\va}-\pa_i\Q_{\va}:\pa_j\pa_j\Q_{\va}\\
&\quad\quad=\pa_i\pa_k\Q_{\va}:\pa_k\Q_{\va}+\Delta \Q_{\va}:\pa_i \Q_{\va}-\pa_i\pa_j\Q_{\va}:\pa_j\Q_{\va}-\pa_i\Q_{\va}:\Delta \Q_{\va}=0,
\end{align*}
where for the second inequality, we have used \eqref{E-L} and the fact that $ \tr(\pa_i \Q_{\va})=0 $ for $ i=1,2,3 $.
\end{proof}

By using Stress-energy identity, we can derive the following Pohozaev identity.

\begin{lem}[Pohozaev identity]\label{lemPohozaev}
Let $ \om\subset\R^3 $ be a bounded domain and $ U\subset\subset\om $ be a Lipschitz subdomain with $ x_0
\in U $. Assume that $ \Q_{\va}\in C^{\ift}(\om,\Ss_0) $ and satisfies \eqref{E-L}. There holds
\be
\begin{aligned}
&\f{1}{2}\int_{U}|\na \Q_{\va}|^2\ud x+\int_{U}\f{3}{\va^2}f_b(\Q_{\va})\ud x+\int_{\pa U}\nu(x)\cdot(x-x_0)|\pa_{\nu}\Q_{\va}|^2\ud\HH^2\\
&\quad\quad=\int_{\pa U}\nu(x)\cdot(x-x_0)e_{\va}(\Q_{\va})\ud\HH^2-\int_{\pa U}\pa_{\nu}\Q_{\va}:(\na \Q_{\va}\cdot\mathbb{P}_{\pa U}(x-x_0))\ud\HH^2,
\end{aligned}\label{Pohozaev}
\ee
where $ \nu(x) $ is the unit outward normal vector to $ \pa U $ at $ x $, $ \pa_{\nu}\Q_{\va}=\nu\cdot\na\Q_{\va} $ and $ \mathbb{P}_{\pa U}(x-x_0) $ is the component of $ x-x_0 $ that is tangent to $ \pa U $.
\end{lem}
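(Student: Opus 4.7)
The plan is to derive the Pohozaev identity as a direct consequence of the stress-energy identity in Lemma~\ref{StressThm} via the classical multiplier $(x-x_0)$ and the divergence theorem.

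First I would introduce the stress-energy tensor
\[
T_{ij}:=e_{\va}(\Q_{\va})\delta_{ij}-\pa_i\Q_{\va}:\pa_j\Q_{\va},\quad i,j=1,2,3,
\]
which by \eqref{StressEnergy} is divergence-free in the $j$-index, i.e.\ $\pa_j T_{ij}=0$ in $\om$. Consider the smooth vector field $V_j(x):=T_{ij}(x)(x-x_0)_i$. Using $\pa_j(x-x_0)_i=\delta_{ij}$ and the divergence-free property, one gets $\pa_j V_j=\tr T=3e_{\va}(\Q_{\va})-|\na\Q_{\va}|^2$, which by the definition of $e_\va$ simplifies to
\[
\pa_j V_j=\f{1}{2}|\na\Q_{\va}|^2+\f{3}{\va^2}f_b(\Q_{\va}).
\]
This is exactly the bulk integrand on the left-hand side of \eqref{Pohozaev}.

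Next I would apply the divergence theorem on the Lipschitz subdomain $U$ (which is justified since $\Q_{\va}\in C^{\ift}(\om,\Ss_0)$ by Corollary~\ref{smooth} applied in a neighborhood of $\ol{U}\subset\om$). This yields
\[
\int_{U}\Big(\tfrac{1}{2}|\na\Q_{\va}|^2+\tfrac{3}{\va^2}f_b(\Q_{\va})\Big)\ud x=\int_{\pa U}T_{ij}(x-x_0)_i\nu_j\,\ud\HH^2.
\]
The integrand on the right expands as $e_{\va}(\Q_{\va})\nu\cdot(x-x_0)-(\pa_i\Q_{\va}:\pa_j\Q_{\va})(x-x_0)_i\nu_j$.

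The main (and only mildly nontrivial) step is the orthogonal decomposition on the boundary. Writing $x-x_0=(\nu\cdot(x-x_0))\nu+\mathbb{P}_{\pa U}(x-x_0)$ and using $\nu_j\pa_j\Q_{\va}=\pa_{\nu}\Q_{\va}$, one obtains
\[
(\pa_i\Q_{\va}:\pa_j\Q_{\va})(x-x_0)_i\nu_j=\nu\cdot(x-x_0)|\pa_{\nu}\Q_{\va}|^2+\pa_{\nu}\Q_{\va}:\bigl(\na\Q_{\va}\cdot\mathbb{P}_{\pa U}(x-x_0)\bigr).
\]
Substituting this into the boundary integral and moving the $|\pa_{\nu}\Q_{\va}|^2$ term to the left-hand side gives \eqref{Pohozaev}. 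I expect no real obstacle: the only minor care is the correct bookkeeping of the tangential/normal split and a standard approximation argument to apply the divergence theorem on the Lipschitz domain $U$ (which can be done by exhausting $U$ with smooth subdomains or invoking the divergence theorem for Lipschitz domains with smooth integrands).
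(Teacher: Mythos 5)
Your proof is correct and follows essentially the same route as the paper's: both rely on the stress–energy identity from Lemma~\ref{StressThm}, contract it with $(x-x_0)$, integrate by parts (equivalently, apply the divergence theorem to $V_j=T_{ij}(x-x_0)_i$), and then split $x-x_0$ into normal and tangential components on $\pa U$. The only cosmetic difference is that you package the integration by parts as a divergence of the vector field $V$ rather than computing $\int_U x_i\pa_j T_{ij}\,\ud x$ directly; the content is identical.
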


\begin{proof}
Up to a translation, we assume that $ x_0=0 $. Firstly, by simple calculations, we note that
\be
\pa_{\nu}\Q_{\va}:(x\cdot\na\Q_{\va})=\pa_{\nu}\Q_{\va}:(\mathbb{P}_{\pa U}(x)\cdot\na\Q_{\va})+(x\cdot\nu)|\pa_{\nu}\Q_{\va}|^2.\label{nux}
\ee
In view of Stress-energy identity \eqref{StressEnergy} and integration by parts, we have
\begin{align*}
0&=\int_Ux_i\pa_j(e_{\va}(\Q_{\va})\delta_{ij}-\pa_i\Q:\pa_j\Q)\ud x\\
&=\int_U(x_j\pa_je_{\va}(\Q_{\va})-x_i\pa_j(\pa_i\Q_{\va}:\pa_j\Q_{\va}))\ud x\\
&=\int_{U}(|\na\Q_{\va}|^2-3e_{\va}(\Q_{\va}))\ud x+\int_{\pa U}\(e_{\va}(\Q_{\va})(x\cdot\nu)-(x\cdot\na\Q_{\va}):(\pa_{\nu}\Q_{\va})\)\ud\HH^2.
\end{align*}
This, together with \eqref{nux}, implies the identity \eqref{Pohozaev}.
\end{proof}

Applying Pohozaev identity, we can obtain the following monotonicity formula.

\begin{cor}[Monotonicity formula]\label{Monotonicity}
Let $ \om\subset\R^3 $ be a bounded domain. Assume that $ \Q_{\va}\in C^{\ift}(\om,\Ss_0) $ and satisfies \eqref{E-L}. If $ x_0\in\om $ and $ 0<r_1<r_2<\dist(x_0,\pa\om) $, then
\be
\f{1}{r_1}E_{\va}(\Q_{\va},B_{r_1}(x_0))\leq\f{1}{r_2}E_{\va}(\Q_{\va},B_{r_2}(x_0)).\label{Mo}
\ee
\end{cor}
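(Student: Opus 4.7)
The plan is to derive the monotonicity directly from the Pohozaev identity (Lemma \ref{lemPohozaev}) by differentiating $\Phi(r):=\frac{1}{r}E_{\va}(\Q_{\va},B_r(x_0))$ in the radius $r$ and showing $\Phi'(r)\ge 0$ a.e. Up to translation I would take $x_0=0$. The key geometric simplification is that on $U=B_r(x_0)$ the outward unit normal is $\nu(x)=(x-x_0)/r$, so $\nu(x)\cdot(x-x_0)\equiv r$ on $\partial B_r$ and the tangential component $\mathbb{P}_{\partial U}(x-x_0)$ vanishes identically. Plugging these into \eqref{Pohozaev} makes the last boundary term on the right disappear and converts the identity into the clean relation
\begin{equation*}
r\int_{\partial B_r(x_0)}e_{\va}(\Q_{\va})\,\ud\HH^2=\frac{1}{2}\int_{B_r(x_0)}|\na\Q_{\va}|^2\,\ud x+\frac{3}{\va^2}\int_{B_r(x_0)}f_b(\Q_{\va})\,\ud x+r\int_{\partial B_r(x_0)}|\pa_{\nu}\Q_{\va}|^2\,\ud\HH^2,
\end{equation*}
valid for every $0<r<\dist(x_0,\pa\om)$ by Lemma \ref{lemPohozaev} (applicable since $\Q_{\va}\in C^{\infty}$ and balls are Lipschitz).

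Next I would compute $\Phi'(r)$. Writing $E_{\va}(\Q_{\va},B_r(x_0))=\int_{B_r}\bigl(\tfrac12|\na\Q_{\va}|^2+\tfrac1{\va^2}f_b(\Q_{\va})\bigr)\,\ud x$, the coarea/layer-cake formula gives $\tfrac{\ud}{\ud r}E_{\va}(\Q_{\va},B_r(x_0))=\int_{\partial B_r(x_0)}e_{\va}(\Q_{\va})\,\ud\HH^2$, so
\begin{equation*}
\Phi'(r)=-\frac{1}{r^2}E_{\va}(\Q_{\va},B_r(x_0))+\frac{1}{r}\int_{\partial B_r(x_0)}e_{\va}(\Q_{\va})\,\ud\HH^2.
\end{equation*}
Substituting the boundary-energy expression from the rewritten Pohozaev identity, the $\tfrac12|\na\Q_{\va}|^2$ terms cancel, the bulk contribution leaves a factor $\tfrac{2}{\va^2}f_b(\Q_{\va})$ with a good sign, and one is left with
\begin{equation*}
\Phi'(r)=\frac{2}{r^2\va^2}\int_{B_r(x_0)}f_b(\Q_{\va})\,\ud x+\frac{1}{r}\int_{\partial B_r(x_0)}|\pa_{\nu}\Q_{\va}|^2\,\ud\HH^2\ge 0.
\end{equation*}
Integrating this inequality on $[r_1,r_2]$ yields \eqref{Mo}.

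There is no serious obstacle here: Pohozaev does all the work, and the only thing to check is that the two boundary simplifications $\nu\cdot(x-x_0)=r$ and $\mathbb{P}_{\pa U}(x-x_0)=0$ on $\pa B_r$ convert \eqref{Pohozaev} into a form where the $\tfrac12|\na\Q_{\va}|^2$ piece of $e_{\va}$ on the sphere cancels exactly against the energy derivative, leaving only the manifestly nonnegative radial-derivative and bulk terms. The only mild care needed is in justifying the differentiation of $r\mapsto\int_{B_r}e_{\va}(\Q_{\va})\,\ud x$, which is immediate from the smoothness of $\Q_{\va}$ asserted in Corollary \ref{smooth}.
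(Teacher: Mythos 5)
Your proof is correct and follows essentially the same route as the paper: specialize the Pohozaev identity \eqref{Pohozaev} to $U=B_\rho(x_0)$ using $\nu\cdot(x-x_0)=\rho$ and $\mathbb{P}_{\pa U}(x-x_0)=0$, then differentiate $\rho\mapsto\rho^{-1}E_{\va}(\Q_{\va},B_\rho)$ and observe the remaining terms are nonnegative. The computation and the final expression for $\Phi'(\rho)$ coincide with those in the paper.
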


\begin{proof}
Up to a translation, we assume that $ x_0=0 $. Choosing $ U=B_{\rho} $ with $ 0<\rho<\dist(x_0,\pa\om) $ in \eqref{Pohozaev}, using $ \nu(x)=x/\rho $ on $ \pa B_{\rho} $, we obtain
\begin{align*}
-\int_{B_{\rho}}e_{\va}(\Q_{\va})\ud x+\rho\int_{\pa B_{\rho}}e_{\va}(\Q_{\va})\ud\HH^2=\f{1}{\rho}\int_{\pa B_{\rho}}|x\cdot\na\Q_{\va}|^2\ud x+\f{2}{\va^2}\int_{B_{\rho}}f_b(\Q_{\va})\ud x.
\end{align*}
By this, we can calculate that
\begin{align*}
\f{\pa}{\pa\rho}\(\f{1}{\rho}E_{\va}(\Q_{\va},B_{\rho})\)&=-\f{1}{\rho^2}\(\int_{B_{\rho}}e_{\va}(\Q_{\va})\ud x-\rho\int_{\pa B_{\rho}}e_{\va}(\Q_{\va})\ud\HH^2\)\\
&=\f{1}{\rho}\int_{\pa B_{\rho}}|\pa_{\nu}\Q_{\va}|^2\ud x+\f{2}{\va^2\rho^2}\int_{B_{\rho}}f_b(\Q_{\va})\ud x\geq 0,
\end{align*}
which completes the proof.
\end{proof}

\section{A priori estimates for the Euler-Lagrange equation}\label{AprioriSec}

In this section, for $ 0<\va<1 $, we will provide some a priori estimates for $ \Q_{\va} $, the classical solution of \eqref{E-L}, namely $ -\va^2\Delta\Q_{\va}+\Psi(\Q_{\va})=0 $. Using Lemma \ref{minimalp}, we can observe that $ \Q\in\cN $ if and only if $ \Q^3-r_*^2\Q=\mathbf{O} $. Additionally, when $ \Q\in\cN $, we have $ \tr\Q^2=2r_*^2 $. Therefore, the functions $ \Q^3-r_*^2\Q $ and $ \tr\Q^2-2r_*^2 $ are essential in characterizing the ``distance" between $ \Q $ and $ \cN $. In light of this, we introduce
\begin{align}
\zeta(\Q)&:=|\Q^3-r_*^2\Q|^2\label{zetaQ}\\
\xi(\Q)&:=(\tr\Q^2-2r_*^2)^2.\label{xiQ}
\end{align}

\subsection{Lower order estimates}
We start with some elementary estimates for functions of $ \Q\in\Ss_0 $ in a small tubular neighborhood of the manifold $ \cN $. These estimates will be useful in the study for the solutions of \eqref{E-L}. Specifically, Lemmas \ref{smallreg1} and \ref{smallreg2} fall under  more general results of \cite{CLR18}. However, we still present the direct proof here for the sake of completeness. In the proofs, we introduce some quantities that will be used in studying higher regularity results.

\begin{lem}\label{fBcomparison}
There exist two constants $ \delta_0>0 $ and $ C>0 $ depending only $ \cA $ such that  if $ \dist(\Q,\cN)<\delta_0 $, then
\begin{align}
C^{-1}\dist^2(\Q,\cN)&\leq \zeta(\Q),|\Psi(\Q)|^2,f_b(\Q)\leq C\dist^2(\Q,\cN),\label{uPsifQnon}\\
0&\leq \xi(\Q)\leq C\dist^2(\Q,\cN),\label{vBound}
\end{align}
and
\be
C^{-1}\dist^2(\Q,\cN)\leq\zeta_{ij}(\Q)\Psi_{ij}(\Q)\leq C\dist^2(\Q,\cN),\label{uPsiQnon}
\ee
where $ \Psi(\Q),\zeta(\Q) $, and $ \xi(\Q) $ are defined by \eqref{PsiQ}, \eqref{zetaQ}, and \eqref{xiQ} respectively.
\end{lem}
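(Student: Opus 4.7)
The plan is to reduce everything to a second-order Taylor expansion around the nearest-point projection onto $\cN$. By Lemma \ref{varrhop} and Remark \ref{rhorem}, there is $\delta_0>0$ (depending only on $\cA$) such that if $\dist(\Q,\cN)<\delta_0$ then $\Q^* := \varrho(\Q) \in \cN$ is well defined and $\PP := \Q - \Q^* \in (T_{\Q^*}\cN)_{\Ss_0}^{\perp}$ with $|\PP| = \dist(\Q,\cN)$. Each of the functions $\zeta, f_b, |\Psi|^2$, and $\xi$ vanishes on $\cN$, and $\zeta, f_b$ attain their global minimum there; so $\Q^*$ is a critical point of each, and the task reduces to analysing their Hessians on the two-dimensional normal space.

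The structural observation that makes the bookkeeping clean is that both $\zeta$ and $f_b$ depend on $\Q$ only through $\mu = \tr\Q^2$ and $\nu = \tr\Q^3$: this is manifest for $f_b$, and for $\zeta$ it follows from Cayley-Hamilton for traceless $3\times 3$ matrices, which yields $\tr\Q^4 = \mu^2/2$ and $\tr\Q^6 = \mu^3/4 + \nu^2/3$, hence $\zeta = \mu^3/4 - r_*^2\mu^2 + r_*^4\mu + \nu^2/3$. The $(\mu,\nu)$-Hessians of $\zeta_0(\mu,\nu)$ and $f_0(\mu,\nu)$ at the $\cN$-value $(2r_*^2, 0)$ are explicit, diagonal, and strictly positive, with entries controlled by $\cA$. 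By the chain rule, the $\Ss_0$-Hessians at $\Q^*$ take the form
\[
D^2\zeta(\Q^*)[\PP,\PP] = 4r_*^2(\Q^*:\PP)^2 + 6\bigl((\Q^{*2}-\tfrac{2r_*^2}{3}\I):\PP\bigr)^2,
\]
and analogously for $f_b$ with coefficients $2(a_4+4a_6r_*^2)$ and $3a_6'$, which matches the computation already done in Lemma \ref{fBl}.

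The upper bounds in \eqref{uPsifQnon}, \eqref{vBound}, and \eqref{uPsiQnon} are immediate from Taylor expansion, since each function is polynomial in $\Q$ with value and gradient vanishing at $\Q^*$. For the lower bounds on $f_b$ and $\zeta$, Remark \ref{normalremQ} identifies the normal space $(T_{\Q^*}\cN)_{\Ss_0}^{\perp}$ as the orthogonal span of $\Q^*$ and $\Q^{*2}-\tfrac{2r_*^2}{3}\I$, with squared norms $2r_*^2$ and $2r_*^4/3$. Decomposing $\PP = \alpha\Q^* + \beta(\Q^{*2}-\tfrac{2r_*^2}{3}\I)$ turns the displayed quadratic form into a positive diagonal in $(\alpha,\beta)$ that is comparable to $|\PP|^2 = 2r_*^2\alpha^2 + \tfrac{2r_*^4}{3}\beta^2$. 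The lower bound on $|\Psi(\Q)|^2$ then follows by combining the $f_b$ bound with Cauchy-Schwarz: since $\Psi$ is the $\Ss_0$-gradient of $f_b$, one has $D^2 f_b(\Q^*)[\PP,\PP] = \Psi(\Q):\PP + O(|\PP|^3) \leq |\Psi(\Q)||\PP| + O(|\PP|^3)$, forcing $|\Psi(\Q)|\geq c|\PP|$ for $|\PP|$ small. The asymmetry in \eqref{vBound} (only an upper bound) is consistent with the structure: $\xi$ depends only on $\mu$, hence is blind to variation along the biaxial normal direction $\Q^{*2}-\tfrac{2r_*^2}{3}\I$, and so cannot bound $\dist^2(\Q,\cN)$ from above.

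The main obstacle is the cross-contraction $\zeta_{ij}(\Q)\Psi_{ij}(\Q)$, which a priori is a scalar product of two $\Ss_0$-valued quantities each of size $O(|\PP|)$, so one needs positivity and not merely size control. The reduction to functions of $(\mu,\nu)$ is exactly what rescues this: the linearisations $\PP \mapsto \zeta_{ij}(\Q^*+\PP)$ and $\PP \mapsto \Psi(\Q^*+\PP)$ are simultaneously diagonalised on the normal space in the orthogonal basis $\{\Q^*,\Q^{*2}-\tfrac{2r_*^2}{3}\I\}$, with strictly positive eigenvalues depending only on $\cA$. Pairing them in $\Ss_0$ produces a sum of positive multiples of $\alpha^2$ and $\beta^2$, which bounds $|\PP|^2$ from below, while the $O(|\PP|^3)$ Taylor remainder is absorbed by shrinking $\delta_0$. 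The upper bound is immediate from Cauchy-Schwarz together with the linear-in-$|\PP|$ bounds on $|D\zeta(\Q)|$ and $|\Psi(\Q)|$ already used above.
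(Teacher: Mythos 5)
Your argument is correct, and it takes a genuinely different route from the paper's. The paper parametrizes $\Q$ by its eigenvalues $(x,y,-x-y)$, checks by direct computation that $F_1,\dots,F_5$ (corresponding to $\zeta, |\Psi|^2, f_b, \zeta_{ij}\Psi_{ij}, \xi$ respectively) all vanish to first order at $(r_*,-r_*)$, and verifies that the $(x,y)$-Hessians of $F_1,\dots,F_4$ have positive diagonal and positive determinant (with $F_5$ degenerate), then invokes \eqref{eqdist} to translate $(x-r_*)^2+(y+r_*)^2$ into $\dist^2(\Q,\cN)$. You instead pull everything back through the nearest-point projection $\Q^*=\varrho(\Q)$, reduce both $\zeta$ and $f_b$ to functions $\zeta_0,f_0$ of the invariants $(\mu,\nu)=(\tr\Q^2,\tr\Q^3)$ via the traceless Cayley–Hamilton identities $\tr\Q^4 = \mu^2/2$, $\tr\Q^6 = \mu^3/4+\nu^2/3$, and use the explicit orthogonal basis $\{\Q^*,\,(\Q^*)^2-\tfrac{2r_*^2}{3}\I\}$ of the normal space from Remark \ref{normalremQ}. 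What this buys, beyond matching the computation in Lemma \ref{fBl}, is a transparent structural reason why the cross-contraction $\zeta_{ij}\Psi_{ij}$ is positive: since $\zeta_{0,\mu\nu}=f_{0,\mu\nu}\equiv 0$, the linearisations of $D\zeta$ and $\Psi$ are \emph{simultaneously} diagonal in the orthogonal normal basis, so pairing them yields a genuine sum of positive multiples of $\alpha^2$ and $\beta^2$ rather than an indefinite bilinear form — positivity of two self-adjoint operators does not in general make their pairing positive, and your observation that they commute here is exactly the point. The paper reaches the same conclusion by computing $\det D^2F_4(r_*,-r_*)>0$ explicitly, which is shorter but opaque. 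Your remark that $\xi$ depends on $\mu$ alone and so is blind to the $\wh\Q$-direction also gives a clean explanation for the asymmetry of \eqref{vBound}, which the paper obtains only by noticing $\det D^2F_5 = 0$. Both proofs ultimately shrink $\delta_0$ to absorb the cubic Taylor remainder, and both are complete.
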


\begin{proof}
Assume that the set of eigenvalues of the matrix $ \Q $ is $ \{x,y,-x-y\} $. Define
\begin{align*}
F_1(x,y)&:=\zeta(\Q),\,\,F_2(x,y):=|\Psi(\Q)|^2,\,\,
F_3(x,y):=f_b(\Q),\\
F_4(x,y)&:=\pa\zeta_{ij}(\Q)\Psi_{ij}(\Q),\,\,F_5(x,y):=\xi(\Q).
\end{align*}
Now, we only need to show that there are $ \delta_i>0 $ ($ i=1,2,...,6 $) such that 
\begin{align}
C^{-1}\dist^2(\Q,\cN)\leq F_j(x,y)&\leq C\dist^2(\Q,\cN),\quad j\in\{1,2,3,4\},\label{F14non}\\
0\leq F_5(x,y)&\leq C\dist^2(\Q,\cN),\label{F5non}
\end{align}
for $ (x,y) $ in any of the following balls:
\be
\begin{aligned}
&B_{\delta_1}^2(r_*,-r_*),\,\,B_{\delta_2}^2(r_*,0),\,\,B_{\delta_3}^2(-r_*,0),\\
&B_{\delta_4}^2(-r_*,r_*),\,\,B_{\delta_5}^2(0,r_*),\,\,B_{\delta_6}^2(0,-r_*).
\end{aligned}\label{ball16}
\ee
Indeed, choosing $ \delta_0>0 $ sufficiently small, by the continuity of eigenvalues with respect to $ \Q $, we have, if $ \dist(\Q,\cN)<\delta_0 $, then $ (x,y) $ must in one of the balls given by \eqref{ball16}. Consequently, \eqref{F14non} and \eqref{F5non} directly imply \eqref{uPsifQnon}, \eqref{vBound}, and \eqref{uPsiQnon}. 

For simplicity, we only consider the case that $ (x,y)\in B_{\delta_1}^2(r_*,-r_*) $ and other cases can be dealt with by almost the same argument. By direct calculations and \eqref{rstar}, we have
\begin{gather*}
F_j(r_*,-r_*)=\pa_xF_j(r_*,-r_*)=\pa_yF_j(r_*,-r_*)=0,\quad j\in\{1,2,3,4,5\},\\
\pa_x^2F_1(r_*,-r_*)=\pa_y^2F_1(r_*,-r_*)=10r_*^2,\\
\det(D^2F_1(r_*,-r_*))=96r_*^4,\\
\pa_x^2F_2(r_*,-r_*)=\pa_y^2F_2(r_*,-r_*)=4r_*^4(4a_4^2+32a_4a_6r_*^2+(64a_6^2+3(a_6')^2)r_*^4),\\
\det(D^2F_2(r_*,-r_*))=768r_*^{12}(a_4+4a_6r_*^2)^2(a_6')^2,\\
\pa_x^2F_3(r_*,-r_*)=\pa_y^2F_3(r_*,-r_*)=2a_4r_*^2+(8a_6+3a_6')r_*^4,\\
\det(D^2F_3(r_*,-r_*))=24r_*^6(a_4+ 4a_6'r_*^2)a_6',\\
\pa_x^2F_4(r_*,-r_*)=\pa_y^2F_4(r_*,-r_*)=8r_*^6(4a_4+(16a_6+3a_6')r_*^2),\\
\det(D^2F_4(r_*,-r_*))=3072r_*^{14}(a_4+4a_6r_*^2)a_6',
\end{gather*}
and
$$
\pa_x^2F_5(r_*,-r_*)=\pa_x^2F_5(r_*,-r_*)=8r_*^2,\,\,\det(D^2F_5(r_*,-r_*))=0.
$$
The above results implies that there is a constant $ C_0>0 $, depending only on $ \cA $ such that  
\be
\begin{aligned}
C_0^{-1}\I_2\leq D^2F_{j}(r_*,-r_*)&\leq C_0\I_2,\quad j\in\{1,2,3,4\},\\
0\leq D^2F_5(r_*,-r_*)&\leq C_0\I_2. 
\end{aligned}\label{nonmatrix}
\ee
Now by Taylor expansion, for $ j=1,2,3,4,5 $, if $ \delta_1=\delta_1(\cA)>0 $ is sufficiently small, and $ (x,y)\in B_{\delta_1}^2(r_*,-r_*) $, we have
\be
\begin{aligned}
F_j(x,y)&=\pa_x^2F_j(r_*,-r_*)(x-r_*)^2+2\pa_{xy}^2F_j(r_*,-r_*)(x-r_*)(x+r_*)\\
&\quad\quad+\pa_y^2F_j(r_*,-r_*)(x+r_*)^2+R_{j}(x,y),
\end{aligned}\label{Taylor}
\ee
where $ \{R_j(x,y)\}_{j=1}^5 $ are remainders, satisfying 
$$ 
|R_j(x,y)|\leq\f{1}{2C_0}((x-r_*)^2+(y+r_*)^2).
$$
This, together with \eqref{eqdist} and \eqref{Taylor}, implies \eqref{F14non} and \eqref{F5non} as desired.
\end{proof}

\begin{lem}\label{minimalHessian}
If $ \Q^*\in\cN $ and $ \PP\in\Ss_0 $, then
$$
(f_b)_{ij,pq}(\Q^*)\PP_{ij}\PP_{pq}\geq 0,\,\,\zeta_{ij,pq}(\Q^*)\PP_{ij}\PP_{pq}\geq 0.
$$
\end{lem}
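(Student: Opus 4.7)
The plan is very short: both $f_b$ and $\zeta$ are smooth non-negative functions on $\Ss_0$ that attain their minimum value zero on $\cN$, so the conclusion is just the standard second-order necessary condition for a minimum applied along the one-parameter family $t \mapsto \Q^* + t\PP$.

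For $f_b$: by the very definition of $\cN$ as the zero set of $f_b$, we have $f_b(\Q^*) = 0 \leq f_b(\Q)$ for every $\Q \in \Ss_0$. Since both $\Q^* \in \cN \subset \Ss_0$ and $\PP \in \Ss_0$, the affine segment $t \mapsto \Q^* + t\PP$ lies entirely in $\Ss_0$, so the real-valued function $g(t) := f_b(\Q^* + t\PP)$ satisfies $g(t) \geq 0 = g(0)$ for all $t \in \R$. Hence $t = 0$ is a global minimizer of $g$, which forces $g''(0) \geq 0$. A direct chain-rule computation gives
$$
g''(0) = (f_b)_{ij,pq}(\Q^*)\PP_{ij}\PP_{pq},
$$
yielding the first inequality.

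For $\zeta$: by Lemma \ref{minimalp}, every matrix $\Q \in \cN$ satisfies $\Q^3 - r_*^2 \Q = \mathbf{O}$, so $\zeta(\Q^*) = 0$. On the other hand, $\zeta(\Q) = |\Q^3 - r_*^2 \Q|^2 \geq 0$ for every $\Q \in \Ss_0$ (in fact for every $\Q \in \MM^{3\times 3}$). Consequently $\Q^*$ is again a global minimizer of $\zeta$ on $\Ss_0$, and the same one-variable argument applied to $h(t) := \zeta(\Q^* + t\PP)$ yields $h''(0) = \zeta_{ij,pq}(\Q^*)\PP_{ij}\PP_{pq} \geq 0$.

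There is essentially no obstacle here: the argument is the standard second-order necessary condition, and its only inputs are the non-negativity of $f_b$ and $\zeta$ together with Lemma \ref{minimalp} identifying $\cN$ with the zero set of $\Q \mapsto \Q^3 - r_*^2 \Q$. The fact that $\PP$ is restricted to $\Ss_0$ is what allows us to stay inside the domain where the non-negativity is known, so the proof never leaves the subspace $\Ss_0$.
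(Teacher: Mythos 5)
Your proposal is correct and matches the paper's proof, which likewise just observes that $f_b$ and $\zeta$ are nonnegative on $\Ss_0$ and vanish on $\cN$, so the second-order necessary condition at a global minimizer gives nonnegativity of the Hessian in every direction of $\Ss_0$. You have merely spelled out the one-variable chain-rule argument that the paper leaves implicit.
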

\begin{proof}
Given that both $ f_b $ and $ \zeta $ achieve their minimums (both $ 0 $) on the manifold $ \cN $, it follows that the Hessian matrices of these functions, when restricted to $ \cN $, are non-negative. This observation directly leads to the desired result.
\end{proof}

Next, we will consider solutions of $ -\va^2\Delta\Q_{\va}+\Psi(\Q_{\va})=0 $ in $ U\cap B_r(x_0) $ with boundary data given on $ \pa U\cap B_r(x_0) $ for appropriate domain $ U $, $ x_0\in\pa U $, and $ r>0 $. 

For $ \Q_{\va}\in C^{\ift} $ satisfying $ -\va^2\Delta\Q_{\va}+\Psi(\Q_{\va})=0 $, we define two functions
\begin{align}
\Y_{\va}&:=\va^{-2}(\Q_{\va}^3-r_*^2\Q_{\va}),\label{Xva}\\
h_{\va}&:=\va^{-2}(\tr(\Q_{\va}^2)-2r_*^2).\label{hva}
\end{align}

\begin{lem}\label{LiftQNll1}
Let $ 0<\va<1 $. There exists $ \delta_0>0 $ depending only on $ \cA $ such that the following properties hold. 
\begin{enumerate}
\item If $ \Q_{\va}\in C^{\ift}(B_r,\Ss_0) $ is a solution of $
-\va^2\Delta\Q_{\va}+\Psi(\Q_{\va})=0 $ in $ B_r $, and satisfies the estimate $
\|\dist(\Q_{\va},\cN)\|_{L^{\ift}(B_r)}<\delta_0 $, then
$$
\|(|\Delta\Q_{\va}|+\va^{-2}f_b^{1/2}(\Q_{\va})+|h_{\va}|+|\Y_{\va}|)\|_{L^{\ift}(B_{r/2})}\leq C(\|\na\Q_{\va}\|_{L^{\ift}(B_r)}+r^{-2}).
$$
where $ C>0 $ depends only on $ \cA $.
\item If $ U $ is a bounded Lipschitz domain with $ r_{U,0},M_{U,0} $, and $ \Q_{\va}\in C^{\ift}(U\cap B_r(x_0),\Ss_0)\cap C^0(\ol{U\cap B_r(x_0)},\Ss_0) $ is a solution of 
$$
\left\{\begin{aligned}
-\va^2\Delta\Q_{\va}+\Psi(\Q_{\va})&=0&\text{ in }&U\cap B_r(x_0),\\
\Q_{\va}&\in\cN&\text{ on }&\pa U\cap B_r(x_0)
\end{aligned}\right.
$$
for some $ x_0\in\pa U $ and $ 0<r<r_{U,0} $, satisfying $ \|\dist(\Q_{\va},\cN)\|_{L^{\ift}(U\cap B_r(x_0))}<\delta_0 $,
then
$$
\|(|\Delta\Q_{\va}|+\va^{-2}f_b^{1/2}(\Q_{\va})+|h_{\va}|+|\Y_{\va}|)\|_{L^{\ift}(U\cap B_{r/2}(x_0))}\leq C(\|\na\Q_{\va}\|_{L^{\ift}(U\cap B_r(x_0))}+r^{-2}),
$$
where $ C>0 $ depends only on $ \cA $.
\end{enumerate}
\end{lem}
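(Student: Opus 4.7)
The plan is to use Lemma \ref{fBcomparison} as a reduction device. On the tubular neighborhood $\{\dist(\Q,\cN)<\delta_0\}$, every quantity on the left-hand side is comparable to $\va^{-2}\dist(\Q_\va,\cN)$: the Euler--Lagrange equation gives $|\Delta\Q_\va|=\va^{-2}|\Psi(\Q_\va)|\leq C\va^{-2}\dist(\Q_\va,\cN)$ by \eqref{uPsifQnon}; the remaining three terms $\va^{-2}f_b^{1/2}(\Q_\va)$, $|h_\va|$, $|\Y_\va|$ are bounded by the same quantity via \eqref{uPsifQnon} and \eqref{vBound}. Hence the lemma is equivalent to the pointwise estimate
\begin{equation*}
\va^{-2}\dist(\Q_\va,\cN)\leq C\bigl(\|\na\Q_\va\|_{L^\infty(B_r)}+r^{-2}\bigr)\quad\text{in }B_{r/2},
\end{equation*}
and analogously on $U\cap B_{r/2}(x_0)$ for the second property.

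The natural smooth proxy for $\dist^2(\Q_\va,\cN)$ is $w:=\zeta(\Q_\va)=|\Q_\va^3-r_*^2\Q_\va|^2$, which by \eqref{uPsifQnon} satisfies $c_0\dist^2(\Q_\va,\cN)\leq w\leq C_0\dist^2(\Q_\va,\cN)$ on the relevant tubular neighborhood. Applying the chain rule and the Euler--Lagrange equation $\Delta\Q_\va=\va^{-2}\Psi(\Q_\va)$ I will compute
\begin{equation*}
\Delta w=\zeta_{ij,pq}(\Q_\va)\,\pa_k(\Q_\va)_{ij}\pa_k(\Q_\va)_{pq}+\va^{-2}\zeta_{ij}(\Q_\va)\Psi_{ij}(\Q_\va).
\end{equation*}
The quadratic-in-derivative term is bounded pointwise by $C|\na\Q_\va|^2$, since $\zeta$ is a smooth polynomial and $\Q_\va$ is uniformly bounded. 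The second term is bounded below by $c_1\va^{-2}w$ by the lower bound in \eqref{uPsiQnon} combined with $w\leq C_0\dist^2(\Q_\va,\cN)$. Shrinking $\delta_0$ if necessary, this yields the key elliptic differential inequality
\begin{equation*}
-\Delta w+c_1\va^{-2}w\leq C_2|\na\Q_\va|^2\leq C_2\|\na\Q_\va\|_{L^\infty(B_r)}^2\quad\text{in }B_r.
\end{equation*}

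The third step is the comparison argument for the operator $L:=-\Delta+c_1\va^{-2}$, whose natural length scale is $\va$. When $r\leq C_*\va$ for a fixed constant $C_*$ the conclusion is trivial: the a priori bound $\dist(\Q_\va,\cN)<\delta_0$ gives $\va^{-2}\dist(\Q_\va,\cN)\leq\delta_0\va^{-2}\leq C r^{-2}$. When $r\geq C_*\va$, I construct a barrier of the form
\begin{equation*}
\phi(x):=C_3\va^2\|\na\Q_\va\|_{L^\infty(B_r)}^2+C_4\va^2 r^{-4}\chi(x),
\end{equation*}
where $\chi$ is a smooth, nonnegative function on $B_r$ that blows up on $\pa B_r$ and is chosen so that $L\phi\geq C_2\|\na\Q_\va\|_{L^\infty(B_r)}^2$ throughout $B_r$ and $\phi\geq w$ on $\pa B_r$ (the a priori bound $w\leq C\delta_0^2$ there absorbing into the tail $\va^2 r^{-4}\chi$); a concrete choice is $\chi(x)=(1-|x|^2/r^2)^{-\alpha}$ for a small exponent $\alpha>0$, whose differential inequality $L\chi\geq 0$ is verified directly by computing $\Delta\chi$ and using $r\geq C_*\va$. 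The weak maximum principle for $L$ (valid because $c_1\va^{-2}>0$) then gives $w\leq\phi$ throughout $B_r$; restricting to $B_{r/2}$ and taking square roots yields the required $\dist(\Q_\va,\cN)\leq C\va^2(\|\na\Q_\va\|_{L^\infty(B_r)}+r^{-2})$.

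For the second statement, the same differential inequality for $w$ holds on $U\cap B_r(x_0)$, and the crucial simplification is the boundary condition $\Q_\va\in\cN$ on $\pa U\cap B_r(x_0)$, which forces $w\equiv 0$ there; the barrier only has to dominate $w$ on $\pa B_r(x_0)\cap U$. Using the Lipschitz regularity of $\pa U$ with parameters $r_{U,0},M_{U,0}$, I locally flatten $\pa U\cap B_r(x_0)$ (for $r<r_{U,0}$) and transplant a half-space analogue of $\chi$ onto $U\cap B_r(x_0)$; the rest of the argument is unchanged. The main obstacle is the quantitative calibration of the barrier: the exponential-decay scale $\va$ of $L$ and the polynomial geometric scale $r$ interact non-trivially, so extracting precisely the $r^{-2}$ tail (rather than a weaker $\va/r$-type tail) requires the split into the regimes $r\lesssim\va$ and $r\gtrsim\va$ and the careful verification of $L\chi\geq 0$ together with the matching of the barrier to the a priori boundary bound.
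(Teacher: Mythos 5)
Your proposal follows the same overall architecture as the paper's proof—same choice of $w=\zeta(\Q_\va)$, same chain-rule identity for $\Delta w$, and a comparison/barrier argument for the operator $-\Delta+c\va^{-2}$ (the paper invokes Lemma \ref{au} and Corollary \ref{aub}, which are exactly such barrier estimates)—but there is a quantitative gap at the central step, and it is not repairable by the barrier alone.

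When you bound the second-order term by "$\zeta_{ij,pq}(\Q_\va)\,\pa_k(\Q_\va)_{ij}\pa_k(\Q_\va)_{pq}\leq C|\na\Q_\va|^2$ since $\zeta$ is a smooth polynomial and $\Q_\va$ is uniformly bounded," you arrive at the differential inequality $-\Delta w+c_1\va^{-2}w\leq C_2\|\na\Q_\va\|_{L^\infty}^2$. Feeding that into the barrier argument produces $w\leq C\va^{2}\|\na\Q_\va\|_{L^\infty}^2+\text{tail}$, hence $\dist(\Q_\va,\cN)\leq C\va\|\na\Q_\va\|_{L^\infty}+\cdots$, hence $|\Y_\va|=\va^{-2}\zeta_\va^{1/2}\leq C\va^{-1}\|\na\Q_\va\|_{L^\infty}+\cdots$. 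This is worse than the asserted bound by a factor of $\va^{-1}$, which diverges as $\va\to0$. So the conclusion you write at the end of your third paragraph ($\dist\leq C\va^2(\|\na\Q_\va\|+r^{-2})$) does not actually follow from the differential inequality you derived—there is a mismatch of two powers of $\va$.

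The missing idea is the sign of the Hessian of $\zeta$ on $\cN$. Because $\zeta$ attains its minimum $0$ on $\cN$ (Lemma \ref{minimalHessian}), $\zeta_{ij,pq}(\Q_\va^*)$ is a non-negative form where $\Q_\va^*=\varrho(\Q_\va)$. The paper Taylor-expands $\zeta_{ij,pq}(\Q_\va)=\zeta_{ij,pq}(\Q_\va^*)+O(|\Q_\va-\Q_\va^*|)$, discards the non-negative leading contraction, and retains only the error $O(\zeta_\va^{1/2})|\na\Q_\va|^2$. A Young inequality $C\zeta_\va^{1/2}|\na\Q_\va|^2\leq\tfrac{c}{2}\va^{-2}\zeta_\va+C'\va^{2}|\na\Q_\va|^4$ then gives the sharper
\begin{equation*}
-\Delta\zeta_\va+\tfrac{c}{2}\va^{-2}\zeta_\va\leq C'\va^{2}\|\na\Q_\va\|_{L^\infty}^4,
\end{equation*}
whose barrier bound $\zeta_\va\leq C\va^{4}(\|\na\Q_\va\|_{L^\infty}^4+r^{-4})$ is exactly what is needed. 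Your crude $C|\na\Q_\va|^2$ replaces a right-hand side that should carry an extra $\va^2$; without exploiting the vanishing of the Hessian contraction to leading order in $|\Q_\va-\Q_\va^*|$, that factor is irrecoverable. The choice of polynomial vs. exponential barrier, and the flattening of the Lipschitz boundary, are fine and parallel to the paper, but they do not compensate for the loss in the differential inequality.
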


\begin{proof}
For simplicity, we only prove the second property and give some remarks for the other one at the end. Up to a translation, we assume that $ x_0=0 $. By Remark \ref{rhorem}, if $ 0<\delta_0<1 $ is sufficiently small, depending only on $\cA$, then the projection $ \varrho $ makes sense and $ |\Q_{\va}-\Q_{\va}^{*}|=\dist(\Q_{\va},\cN) $ with $ \Q_{\va}^*=\varrho(\Q_{\va}) $. In particular, in $ U\cap B_r $,
\be
|\Q_{\va}|\leq\dist(\Q_{\va},\cN)+|\Q_{\va}^*|\leq 1+\sqrt{2}r_*.\label{deltabound}
\ee
Moreover, by using \eqref{uPsifQnon}, we have
\be
C^{-1}\zeta_{\va}\leq|\Q_{\va}-\Q_{\va}^*|^2\leq C\zeta_{\va}\text{ in } U\cap B_r,\label{uQUB}
\ee
where $ \zeta_{\va}:=\zeta(\Q_{\va}) $. Now, direct computations yield that 
\be
\Delta \zeta_{\va}=\zeta_{ij}(\Q_{\va})\Delta(\Q_{\va})_{ij}+\zeta_{ij,pq}(\Q_{\va})\pa_{k}(\Q_{\va})_{ij}\pa_{k}(\Q_{\va})_{pq}.\label{DeltauQ}
\ee
By Taylor expansion, we get
\be
\begin{aligned}
\zeta_{ij,pq}(\Q_{\va})&=\zeta_{ij,pq}(\Q_{\va}^*)+\zeta_{ij,pq,mn}(\Q_{\va}^*)(\Q_{\va}-\Q_{\va}^*)_{mn}+\mathcal{R}^{ijpq}(\Q_{\va},\Q_{\va}^*)
\end{aligned}\label{pa2uQ}
\ee
in $  U\cap B_r $, where $ \mathcal{R}^{ijpq} $ is a remainder satisfying
\be
|\mathcal{R}^{ijpq}(\Q_{\va},\Q_{\va}^*)|\leq C|\Q_{\va}-\Q_{\va}^*|^2.\label{EsR}
\ee
Now, using \eqref{E-L}, \eqref{uPsiQnon}, \eqref{uQUB}, \eqref{pa2uQ}, \eqref{EsR} and Lemma \ref{minimalHessian}, we have
\begin{align*}
\va^2\Delta\zeta_{\va}&=\va^2\zeta_{ij}(\Q_{\va})\Delta(\Q_{\va})_{ij}+\va^2\zeta_{ij,pq}(\Q_{\va})\pa_{k}(\Q_{\va})_{ij}\pa_{k}(\Q_{\va})_{pq}\\
&\geq C\zeta_{\va}+\va^2\zeta_{ij,pq}(\Q_{\va}^*)\pa_{k}(\Q_{\va})_{ij}\pa_{k}(\Q_{\va})_{pq}-C\va^2\|\na\Q_{\va}\|_{L^{\ift}(U\cap B_r)}^2|\Q_{\va}-\Q_{\va}^*|\\
&\geq C\zeta_{\va}-C\va^2\zeta_{\va}^{1/2}\|\na\Q_{\va}\|_{L^{\ift}(U\cap B_r)}^2\\
&\geq C\zeta_{\va}-C\va^4\|\na\Q_{\va}\|_{L^{\ift}(U\cap B_r)}^4,
\end{align*}
Since $ \Q_{\va}\in\cN $ on $ \pa  U\cap B_r $, we have $ \zeta_{\va}=0 $ on $ \pa U\cap B_r $. Applying \eqref{deltabound} and Corollary \ref{aub} to $ \zeta_{\va} $ with $ L=\va^2 $, $ \ga=2 $, we can obtain
\be
\|\zeta_{\va}\|_{L^{\ift}(U\cap B_{r/2})}\leq C\va^4(\|\na\Q_{\va}\|_{L^{\ift}(U\cap B_r)}^4+r^{-4}).\label{uQes}
\ee
In view of Lemma \ref{fBcomparison}, we can choose smaller $ 0<\delta_0<1 $ such that when $ \dist(\Q_{\va},\cN)<\delta_0 $, there holds
$$
\xi(\Q_{\va})=|\va^2h_{\va}|^2,f_b(\Q_{\va}),|\va^2\Delta\Q_{\va}|^2=|\Psi(\Q_{\va})|^2\leq C\zeta_{\va}=|\va^2\Y_{\va}|^2,
$$
and the result follows directly from this and \eqref{uQes}. For the interior case, one only need to use almost the same argument as above and replace Corollary \ref{aub} by Lemma \ref{au}.
\end{proof}

\begin{prop}\label{DeltaQstarformula}
Let $ U\subset\R^3 $ be a bounded domain and $ 0<\va<1 $. If $ \Q_{\va}\in C^{\ift}(U,\Ss_0) $ satisfies $ -\va^2\Delta\Q_{\va}+\Psi(\Q_{\va})=0 $ and $ \phi_0(\Q_{\va})>0 $ in $ U $, then $ \Q_{\va}^*=\varrho(\Q_{\va})\in C^{\ift}(U,\cN) $ is well defined and there holds
\be
\begin{aligned}
&\Delta\Q_{\va}^*+(\mu_{\va}^{(1)})^{-1}\mu_{\va}^{(2)}((\Delta\Q_{\va}^*)\Q_{\va}^*+\Q_{\va}^*\Delta\Q_{\va}^*)\\
&\quad\quad=-\f{1}{2r_*^2}\tr(\na\Q_{\va}^*\na\Q_{\va}^*)\Q_{\va}^*-\f{3}{r_*^4}\tr(\na\Q_{\va}^*\na\Q_{\va}^*\Q_{\va}^*)\wh{\Q}_{\va}\\
&\quad\quad\quad\quad+\f{(\mu_{\va}^{(1)})^{-1}\mu_{\va}^{(2)}}{r_*^4}(r_*^2\tr(\na\Q_{\va}^*\na\Q_{\va}^*\Q_{\va}^*)\Q_{\va}^*-3\tr(\na\Q_{\va}^*\na\Q_{\va}^*(\Q_{\va}^*)^2)\wh{\Q}_{\va}\\
&\quad\quad\quad\quad\quad\quad\quad\quad\quad\quad+2r_*^2\tr(\na\Q_{\va}^*\na\Q_{\va}^*)\wh{\Q}_{\va}+2r_*^4\na\Q_{\va}^*\na\Q_{\va}^*),
\end{aligned}\label{longfor}
\ee
where
$$
\mu_{\va}^{(1)}:=\f{1}{2r_*^2}\tr(\Q_{\va}\Q_{\va}^*),\,\,\mu_{\va}^{(2)}:=\f{3}{2r_*^4}\tr(\Q_{\va}(\Q_{\va}^*)^2),\text{ and }\wh{\Q}_{\va}:=(\Q_{\va}^*)^2-\f{2r_*^2}{3}\I.
$$
Moreover, there exist $ 0<\delta_0<1 $ and $ C>0 $, depending only on $ \cA $, such that if $ \dist(\Q_{\va}^*,\cN)<\delta_0 $, then
\be
|\Delta\Q_{\va}^*|\leq C|\na\Q_{\va}|^2.\label{Deltanablacontrol}
\ee
\end{prop}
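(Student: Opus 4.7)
The plan is to derive \eqref{longfor} from a normal-bundle representation of $\Q_\va$ relative to $\cN$. Since $\phi_0(\Q_\va) > 0$, Remark~\ref{rhorem} gives $\Q_\va^* = \varrho(\Q_\va) \in C^\infty(U, \cN)$, and the nearest-point property forces $\Q_\va - \Q_\va^* \in (T_{\Q_\va^*}\cN)_{\Ss_0}^{\perp}$. By Remark~\ref{normalremQ} this normal space is spanned by $\Q_\va^*$ and $\wh{\Q}_\va$, so resolving $\Q_\va$ in this orthogonal basis and matching coefficients against the definitions of $\mu_\va^{(1)}, \mu_\va^{(2)}$ produces the pointwise identity $\Q_\va = \mu_\va^{(1)}\Q_\va^* + \mu_\va^{(2)}\wh{\Q}_\va$.

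Next I would apply $\Delta$ to this identity via the Leibniz rule, using $\Delta\wh{\Q}_\va = (\Delta\Q_\va^*)\Q_\va^* + \Q_\va^*(\Delta\Q_\va^*) + 2\pa_k\Q_\va^*\pa_k\Q_\va^*$. The $\Delta\Q_\va^*$-dependent contributions then regroup to exactly $\mu_\va^{(1)}$ times the LHS of \eqref{longfor}, while the residue consists of $2\mu_\va^{(2)}\pa_k\Q_\va^*\pa_k\Q_\va^*$, two scalar-Laplacian terms $(\Delta\mu_\va^{(1)})\Q_\va^* + (\Delta\mu_\va^{(2)})\wh{\Q}_\va$ that lie in the normal bundle, and gradient cross-terms in $\na\mu_\va^{(1,2)}$. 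The pivotal algebraic observation is that $\Psi(\Q_\va) \in \mathrm{span}(\Q_\va^*, \wh{\Q}_\va)$: substituting the normal decomposition into the formula for $\Psi$ and applying Lemma~\ref{minimalp} (so $(\Q_\va^*)^3 = r_*^2\Q_\va^*$), together with the derived identities $\Q_\va^*\wh{\Q}_\va = \wh{\Q}_\va\Q_\va^* = (r_*^2/3)\Q_\va^*$ and $\wh{\Q}_\va^2 = -(r_*^2/3)\wh{\Q}_\va + (2r_*^4/9)\I$, shows that both $\Q_\va$ and $\Q_\va^2 - (1/3)(\tr\Q_\va^2)\I$ are normal to $\cN$ at $\Q_\va^*$; hence $\Delta\Q_\va = \va^{-2}\Psi(\Q_\va)$ is too. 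Taking the tangential projection of the expanded $\Delta\Q_\va$-identity at $\Q_\va^*$ kills $\Delta\Q_\va$ itself and the scalar-Laplacian normal group, and supplies the tangential component of \eqref{longfor}; the normal component is the Gauss-type identity $(\Delta\Q_\va^*)^{\perp} = -\Pi_\cN(\na\Q_\va^*, \na\Q_\va^*)$, obtained by twice-differentiating the constraints $\tr(\Q_\va^*)^2 = 2r_*^2$ and $\tr(\Q_\va^*)^3 = 0$ and reading the result against the normal basis of Lemma~\ref{tangentnormal}; this produces the first two summands of the RHS. The $\mu_\va^{(2)}/\mu_\va^{(1)}$ correction on the RHS records the propagation of the non-tangential pieces of $\mu_\va^{(1)}\Delta\Q_\va^* + \mu_\va^{(2)}((\Delta\Q_\va^*)\Q_\va^* + \Q_\va^*\Delta\Q_\va^*)$ together with $2\mu_\va^{(2)}\pa_k\Q_\va^*\pa_k\Q_\va^*$ through the same basis, closing all powers of $\Q_\va^*$ via the three algebraic identities above.

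For the pointwise estimate \eqref{Deltanablacontrol}, the linear operator $T(\A) := \A + (\mu_\va^{(1)})^{-1}\mu_\va^{(2)}(\A\Q_\va^* + \Q_\va^*\A)$ on the LHS of \eqref{longfor} is a small perturbation of the identity when $\dist(\Q_\va, \cN) < \delta_0$ is small, since $\mu_\va^{(1)} = 1 + O(\delta_0)$ and $\mu_\va^{(2)} = O(\delta_0)$; hence $T$ admits a bounded inverse. The RHS of \eqref{longfor} is manifestly bounded by $C|\na\Q_\va^*|^2$, and Corollary~\ref{lowerQcorollary} combined with the lower bound $\phi_\tau(\Q_\va) \geq c > 0$ (which holds uniformly near $\cN$ for any fixed $\tau \in (0,1/2)$) gives $|\na\Q_\va^*| \leq C|\na\Q_\va|$, whence $|\Delta\Q_\va^*| \leq C|\na\Q_\va|^2$. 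The main obstacle I anticipate is the algebraic bookkeeping in the middle paragraph: the very specific coefficient structure on the RHS of \eqref{longfor} must emerge exactly from combining the Gauss-type normal identity with the quadratic residual $2\mu_\va^{(2)}\pa_k\Q_\va^*\pa_k\Q_\va^*$ and the gradient cross-terms, which requires careful, repeated use of $(\Q_\va^*)^3 = r_*^2\Q_\va^*$ to close higher powers of $\Q_\va^*$ and to keep the tangent, normal, and $\I$-directional components separately tracked (since $\A \mapsto \A\Q_\va^* + \Q_\va^*\A$ does not preserve $\Ss_0$).
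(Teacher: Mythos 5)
Your proposal follows the same strategy as the paper's proof: decompose $\Q_\va=\mu_\va^{(1)}\Q_\va^*+\mu_\va^{(2)}\wh\Q_\va$ via the normal frame of Lemma~\ref{tangentnormal}, use the second fundamental form identity for $(\Delta\Q_\va^*)^{\perp}$, observe that $\Psi(\Q_\va)\in\op{span}\{\Q_\va^*,\wh\Q_\va\}$ so $(\Delta\Q_\va)^{\parallel}=0$, and solve the tangential balance for $(\Delta\Q_\va^*)^{\parallel}$. Your derivation of the estimate~\eqref{Deltanablacontrol} via $\mu_\va^{(1)}\to 1$, $\mu_\va^{(2)}\to 0$ and $|\na\Q_\va^*|\leq C|\na\Q_\va|$ also matches the paper (though invoking the Lipschitz bound for $\varrho$ from Lemma~\ref{varrhop} directly is cleaner than going through Corollary~\ref{lowerQcorollary}).

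The one place you should look harder, and where your write-up and the paper's are equally terse, is the gradient cross-terms $2\na\mu_\va^{(1)}\cdot\na\Q_\va^*$ and $2\na\mu_\va^{(2)}\cdot\na\wh\Q_\va$ that you explicitly flag in the Leibniz expansion. Both $\na\Q_\va^*$ and $\na\wh\Q_\va$ are tangent to $\cN$ at $\Q_\va^*$ (one checks, for instance, that $\X\Q_\va^*+\Q_\va^*\X\in T_{\Q_\va^*}\cN$ for each basis tangent vector $\X$), so these cross-terms survive the projection onto $T_{\Q_\va^*}\cN$ and sit on the same footing as $\mu_\va^{(1)}(\Delta\Q_\va^*)^{\parallel}+\mu_\va^{(2)}(\Delta\wh\Q_\va)^{\parallel}$. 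Moreover $\na\mu_\va^{(1)}$ and $\na\mu_\va^{(2)}$ depend on the normal components of $\na\Q_\va$, not on $\na\Q_\va^*$, so they do not obviously re-express themselves as the quadratic $\na\Q_\va^*$-terms appearing in the right-hand side of~\eqref{longfor}. The paper's equation~\eqref{Deltaparallel} is stated without the cross-terms, which is exactly the step you would need to justify (either by showing $\na\mu_\va^{(1)}\cdot\na\Q_\va^*+\na\mu_\va^{(2)}\cdot\na\wh\Q_\va=0$, or by tracking these contributions through to the final identity). Your instinct that this is the real obstacle is correct; ``closing all powers of $\Q_\va^*$'' only accounts for the $\Delta\Q_\va^*$-dependent pieces and the $2\mu_\va^{(2)}\pa_k\Q_\va^*\pa_k\Q_\va^*$ residual, not for the $\na\mu_\va^{(i)}$ contributions, so this part of your sketch needs an actual computation before it can be considered complete.
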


\begin{proof}
In view of Remark \ref{rhorem}, $ \Q_{\va}^* $ is well defined in $ U $ and smooth. In view of Lemma \ref{tangentnormal} and Remark \ref{normalremQ}, we have
\be
\begin{aligned}
\Q_{\va}&=\Q_{\va}^*+\left\{(\Q_{\va}-\Q_{\va}^*):\f{\Q_{\va}^*}{\sqrt{2}r_*}\right\}\f{\Q_{\va}^*}{\sqrt{2}r_*}+\left\{(\Q_{\va}-\Q_{\va}^*):\f{\sqrt{6}\wh{\Q}_{\va}}{2r_*^2}\right\}\f{\sqrt{6}\wh{\Q}_{\va}}{2r_*^2}\\
&=\mu_{\va}^{(1)}\Q_{\va}^*+\mu_{\va}^{(2)}\wh{\Q}_{\va},
\end{aligned}\label{ProjectionPva}
\ee
where we have used the property that
\be
\tr(\Q_{\va}^*)=\tr((\Q_{\va}^*)^3)=\tr(\Q_{\va})=0\text{ and }\tr((\Q_{\va}^*)^2)=2r_*^2.\label{Pvatr}
\ee
Applying Lemma \ref{tangentnormal}, we deduce
\be
\Q_{\va}^*,\wh{\Q}_{\va}\in (T_{\Q_{\va}^*}\cN)_{\Ss_0}^{\perp}.\label{TanPstar}
\ee
Since $ \Q_{\va}^*\in\cN $ and $ \phi_0(\Q_{\va})>0 $ in $ U $, we have $ \mu_{\va}^{(1)}>0 $. To compute $ \Delta\Q_{\va}^* $, we note that $
\Delta\Q_{\va}^*=(\Delta\Q_{\va}^*)^{\perp}+(\Delta\Q_{\va}^*)^{\parallel} $, where $ (\Delta\Q_{\va}^*)^{\perp}\in (T_{\Q_{\va}^*}\cN)_{\Ss_0}^{\perp} $ and $ (\Delta\Q_{\va}^*)^{\parallel}\in T_{\Q_{\va}^*}\cN $. In view of \eqref{SecondFundamentalForm}, it is widely known (see Proposition 1.3.1 of \cite{LW08}) that
\be
\begin{aligned}
(\Delta\Q_{\va}^*)^{\perp}&=\Pi_{\cN}(\na\Q_{\va}^*,\na\Q_{\va}^*)(\Q_{\va}^*)\\
&=-\f{1}{2r_*^2}\tr(\na\Q_{\va}^*\na\Q_{\va}^*)\Q_{\va}^*-\f{3}{r_*^4}\tr(\na\Q_{\va}^*\na\Q_{\va}^*\Q_{\va}^*)\wh{\Q}_{\va}.
\end{aligned}\label{perpQvastar}
\ee
Moreover, it follows from \eqref{Pvatr} and Remark \ref{normalremQ} that
\begin{align}
\tr(\Delta\Q_{\va}^*\Q_{\va}^*)&=\tr((\Delta\Q_{\va}^*)^{\perp}\Q_{\va}^*)=-\tr(\na\Q_{\va}^*\na\Q_{\va}^*),\label{Qva1star}\\
\tr(\Delta\Q_{\va}^*(\Q_{\va}^*)^2)&=\tr(\Delta\Q_{\va}^*\wh{\Q}_{\va})=\tr((\Delta\Q_{\va}^*)^{\perp}\wh{\Q}_{\va})=-2\tr(\na\Q_{\va}^*\na\Q_{\va}^*\Q_{\va}^*).\label{Qva1star2}
\end{align}
Taking $ \Delta $ for both sides of \eqref{ProjectionPva} to $ T_{\Q_{\va}^*}\cN $, we can obtain from \eqref{TanPstar} that
\be
(\Delta\Q_{\va})^{\parallel}=\mu_{\va}^{(1)}(\Delta\Q_{\va}^*)^{\parallel}+\mu_{\va}^{(2)}(\Delta\wh{\Q}_{\va})^{\parallel},\label{Deltaparallel}
\ee
By simple calculations, \eqref{TanPstar} and \eqref{ProjectionPva}, we deduce
\begin{align*}
\Q_{\va},\Q_{\va}^2-\f{1}{3}(\tr\Q_{\va}^2)\I\in\op{span}\{\Q_{\va}^*,\wh{\Q}_{\va}\}.
\end{align*}
This, together with \eqref{E-L}, implies that $
(\Delta\Q_{\va})^{\parallel}=0 $. Now we can obtain from \eqref{Deltaparallel} that
\be
(\Delta\Q_{\va}^*)^{\parallel}=-(\mu_{\va}^{(1)})^{-1}\mu_{\va}^{(2)}(\Delta\wh{\Q}_{\va})^{\parallel}.\label{QvaDeltamu1}
\ee
By Remark \ref{normalremQ}, \eqref{Qva1star}, and \eqref{Qva1star2}, we have
\begin{align*}
(\Delta\wh{\Q}_{\va})^{\parallel}&=(2\na\Q_{\va}^*\na\Q_{\va}^*+(\Delta\Q_{\va}^*)\Q_{\va}^*+\Q_{\va}^*\Delta\Q_{\va}^*)^{\parallel}\\
&=2\na\Q_{\va}^*\na\Q_{\va}^*+(\Delta\Q_{\va}^*)\Q_{\va}^*+\Q_{\va}^*\Delta\Q_{\va}^*-\f{1}{r_*^2}\tr(\na\Q_{\va}^*\na\Q_{\va}^*\Q_{\va}^*)\Q_{\va}^*\\
&\quad\quad-\f{3}{r_*^4}\tr(\na\Q_{\va}^*\na\Q_{\va}^*\wh{\Q}_{\va})\wh{\Q}_{\va}-\f{1}{r_*^2}\tr(\Delta\Q_{\va}^*(\Q_{\va}^*)^2)\Q_{\va}^*-\f{3}{r_*^4}\tr(\Delta\Q_{\va}^*\Q_{\va}^*\wh{\Q}_{\va})\wh{\Q}_{\va}\\
&=2\na\Q_{\va}^*\na\Q_{\va}^*+(\Delta\Q_{\va}^*)\Q_{\va}^*+\Q_{\va}^*\Delta\Q_{\va}^*+\f{1}{r_*^2}\tr(\na\Q_{\va}^*\na\Q_{\va}^*\Q_{\va}^*)\Q_{\va}^*\\
&\quad\quad-\f{3}{r_*^4}\tr(\na\Q_{\va}^*\na\Q_{\va}^*(\Q_{\va}^*)^2)\wh{\Q}_{\va}+\f{2}{r_*^2}\tr(\na\Q_{\va}^*\na\Q_{\va}^*)\wh{\Q}_{\va}.
\end{align*}
This, together with \eqref{Deltaparallel}, implies that
\begin{align*}
&(\Delta\Q_{\va}^*)^{\parallel}+(\mu_{\va}^{(1)})^{-1}\mu_{\va}^{(2)}((\Delta\Q_{\va}^*)\Q_{\va}^*+\Q_{\va}^*\Delta\Q_{\va}^*)\\    
&\quad\quad=\f{(\mu_{\va}^{(1)})^{-1}\mu_{\va}^{(2)}}{r_*^4}(r_*^2\tr(\na\Q_{\va}^*\na\Q_{\va}^*\Q_{\va}^*)\Q_{\va}^*-3\tr(\na\Q_{\va}^*\na\Q_{\va}^*(\Q_{\va}^*)^2)\wh{\Q}_{\va}\\
&\quad\quad\quad\quad\quad\quad\quad\quad+2r_*^2\tr(\na\Q_{\va}^*\na\Q_{\va}^*)\wh{\Q}_{\va}+2r_*^4\na\Q_{\va}^*\na\Q_{\va}^*)
\end{align*}
Combined with \eqref{perpQvastar}, we can complete the proof of \eqref{longfor}. To prove \eqref{Deltanablacontrol}, we note that $ \mu_{\va}^{(1)}\to 1 $ and $ \mu_{\va}^{(2)}\to 0 $ as $ \dist(\Q_{\va},\cN)\to 0^+ $. Thus \eqref{Deltanablacontrol} directly follows from \eqref{longfor} by using the property that $ |\na\Q_{\va}^*|\leq C|\na\Q_{\va}| $.
\end{proof}

\begin{lem}\label{BoundaryUse}
Let $ 0<\va<1 $. Assume that $ U $ is a bounded $ C^{2,1} $ domain with $ r_{U,2} $ and $ M_{U,2} $. There exists $ \delta_0>0 $ depending only on $ \cA $ such that the following property holds. If $ \Q_{\va}\in C^{\ift}(U\cap B_r(x_0),\Ss_0)\cap C^2(\ol{U\cap B_r(x_0)},\Ss_0) $ is a solution of
$$
\left\{\begin{aligned}
-\va^2\Delta\Q_{\va}+\Psi(\Q_{\va})&=0&\text{ in }&U\cap B_r(x_0),\\
\Q_{\va}&\in\cN&\text{ on }&\pa  U\cap B_r(x_0),
\end{aligned}\right.
$$
for some $ x_0\in\pa U $ and $ 0<r<r_{U,2} $, satisfying $
\|\dist(\Q_{\va},\cN)\|_{L^{\ift}(U\cap B_r)}<\delta_0 $, then $ \Q_{\va}^*=\varrho(\Q_{\va}) $ is well defined and 
\begin{align*}
\|\na(\Q_{\va}-\Q_{\va}^*)\|_{L^{\ift}(\pa U\cap B_{r/2}(x_0))}&\leq Cr^{-1}\|\Q_{\va}-\Q_{\va}^*\|_{L^{\ift}(U\cap B_r(x_0))}\\
&\quad\quad+Cr^{-1/2}\|\na\Q_{\va}\|_{L^2(U\cap B_r(x_0))}\|\na\Q_{\va}\|_{L^{\ift}(U\cap B_r(x_0))}\\
&\quad\quad+Cr^{1/4}\|\na\Q_{\va}\|_{L^2(U\cap B_r(x_0))}^{1/2}\|\na\Q_{\va}\|_{L^{\ift}(U\cap B_r(x_0))}^{3/2},
\end{align*}
where $ C>0 $ depends only on $ \cA,r_{U,2} $, and $ M_{U,2} $.
\end{lem}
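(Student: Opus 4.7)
The plan is to set $\W_\va := \Q_\va - \Q_\va^*$, which is well defined on $U\cap B_r(x_0)$ once $\delta_0$ is small enough (by Remark \ref{rhorem}). The key initial observation is that $\Q_\va\in\cN$ on $\pa U\cap B_r(x_0)$ forces $\varrho(\Q_\va)=\Q_\va$ there, so $\W_\va$ vanishes identically on $\pa U\cap B_r(x_0)$ and in particular its tangential derivatives along this boundary portion are zero. Therefore estimating $\|\na\W_\va\|_{L^\ift(\pa U\cap B_{r/2}(x_0))}$ reduces to estimating the normal derivative, which in turn can be controlled by combined $L^\ift$/Hessian bounds on $\W_\va$ in a neighborhood of the boundary via a Taylor-expansion or barrier argument.

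Next, I would derive the equation for $\W_\va$. Subtracting $\va^2\Delta\Q_\va^*$ from the Euler--Lagrange equation gives $-\va^2\Delta\W_\va = -\Psi(\Q_\va)+\va^2\Delta\Q_\va^*$. Proposition \ref{DeltaQstarformula} yields the pointwise bound $|\Delta\Q_\va^*|\leq C|\na\Q_\va|^2$, while Lemma \ref{fBcomparison} gives $|\Psi(\Q_\va)|\leq C|\W_\va|$; Lemma \ref{LiftQNll1} then shows $\va^{-2}\|\W_\va\|_{L^\ift}\leq C(\|\na\Q_\va\|_{L^\ift}+r^{-2})$, so the subprincipal contribution from $\Psi(\Q_\va)$ is absorbable. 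In summary, $\W_\va$ solves an elliptic Dirichlet problem $\va^2\Delta\W_\va=F$, $\W_\va|_{\pa U\cap B_r(x_0)}=0$, where $|F|\leq C\va^2|\na\Q_\va|^2 + C|\W_\va|$.

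I would then flatten $\pa U\cap B_r(x_0)$ by the $C^{2,1}$ chart of Definition \ref{DefnLocal} (whose bounds on first and second derivatives are controlled by $M_{U,2}$, and which may be applied since $r<r_{U,2}$), converting the problem into an elliptic equation with $C^{1,1}$ coefficients on a half-ball of radius comparable to $r$, with zero Dirichlet data on the flat portion. Applying the standard Calder\'on--Zygmund boundary $L^p$ estimate and the Sobolev embedding $W^{2,p}\hookrightarrow C^{1,1-3/p}$ (valid for $p>3$), together with the interpolation $\||\na\Q_\va|^2\|_{L^p(U\cap B_r(x_0))}\leq \|\na\Q_\va\|_{L^2}^{2/p}\|\na\Q_\va\|_{L^\ift}^{2-2/p}$, produces the desired estimate: the choice $p=4$ (giving boundary H\"older exponent $1-3/4=1/4$) gives the $r^{1/4}\|\na\Q_\va\|_{L^2}^{1/2}\|\na\Q_\va\|_{L^\ift}^{3/2}$ term, an $L^2$ Gagliardo--Nirenberg/trace argument gives the $r^{-1/2}\|\na\Q_\va\|_{L^2}\|\na\Q_\va\|_{L^\ift}$ term, and the zero-boundary data supplies the $r^{-1}\|\W_\va\|_{L^\ift}$ term.

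The main obstacle is tracking the dependence on $\va$ and $r$ through the flattening diffeomorphism while keeping the $\va^{-2}$ prefactor in the equation from blowing anything up: this is handled by playing the pointwise Hessian bound from Lemma \ref{LiftQNll1} against the RHS so that the $\va^{-2}|\W_\va|$ piece is absorbed into the $|\na\Q_\va|^2$ piece, leaving a final estimate in which all $\va$-dependence has cancelled and only $r$, $\cA$, $r_{U,2}$, and $M_{U,2}$ appear in the constants.
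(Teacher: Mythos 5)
Your overall framework matches the paper's: both arguments reduce the boundary gradient bound to a pointwise bound of the form $|\Q_\va-\Q_\va^*|(y)\leq C(\cdots)\dist(y,\pa U)$ using zero Dirichlet data, both invoke Proposition~\ref{DeltaQstarformula} for $|\Delta\Q_\va^*|\leq C|\na\Q_\va|^2$, and both use a boundary $W^{2,4}$/Schauder-type estimate (the paper's Lemma~\ref{LempRe} with $p=4$) to produce exactly the three terms on the right-hand side. However, there is a genuine gap in how you handle the potential term $\va^{-2}\Psi(\Q_\va)$, and it is precisely the point where the paper does something structurally different.

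You write $-\va^2\Delta\W_\va=-\Psi(\Q_\va)+\va^2\Delta\Q_\va^*$ and then claim the $\va^{-2}\Psi(\Q_\va)$ contribution is ``absorbable'' by the bound $\va^{-2}\|\W_\va\|_{L^\ift}\leq C(\|\na\Q_\va\|_{L^\ift}+r^{-2})$ from Lemma~\ref{LiftQNll1}. But absorbing it into $|\na\Q_\va|^2$ is not legitimate, since $\|\na\Q_\va\|_{L^\ift}+r^{-2}$ is in general \emph{not} $\leq C|\na\Q_\va|^2$. If you instead insert this bound into the $L^4$ norm of the right-hand side and apply Lemma~\ref{LempRe}, the term $r^{1/4}\|\va^{-2}\Psi(\Q_\va)\|_{L^4}$ produces $r\|\na\Q_\va\|_{L^\ift}+r^{-1}$ in the conclusion, and neither of these is controlled by $r^{-1}\|\Q_\va-\Q_\va^*\|_{L^\ift}$, $r^{-1/2}\|\na\Q_\va\|_{L^2}\|\na\Q_\va\|_{L^\ift}$, or $r^{1/4}\|\na\Q_\va\|_{L^2}^{1/2}\|\na\Q_\va\|_{L^\ift}^{3/2}$ in general. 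So as stated the proof yields a weaker estimate with extraneous terms.

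What the paper does instead is not estimate this term but \emph{drop} it by exploiting its sign. Rather than working with the tensor $\W_\va$, the paper works with the scalar $|\mathbf{Z}_\va|$ where $\mathbf{Z}_\va=\Q_\va^3-r_*^2\Q_\va$, which is comparable to $|\Q_\va-\Q_\va^*|$. Writing $\zeta_\va=|\mathbf{Z}_\va|^2$, the identity $\Delta|\mathbf{Z}_\va|=I_1+I_2$ splits the Laplacian into $I_1=\zeta_{ij}(\Q_\va)\Delta(\Q_\va)_{ij}/(2|\mathbf{Z}_\va|)$, which upon substituting the Euler--Lagrange equation becomes $\va^{-2}\zeta_{ij}(\Q_\va)\Psi_{ij}(\Q_\va)/(2|\mathbf{Z}_\va|)\geq 0$ by \eqref{uPsiQnon}, and $I_2$, which is bounded below by $-C|\na\Q_\va|^2$ via Taylor expansion of $\zeta_{ij,pq}$ around $\Q_\va^*$ and the Hessian positivity from Lemma~\ref{minimalHessian}. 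This yields the clean distributional inequality $-\Delta|\mathbf{Z}_\va|\leq C_1|\na\Q_\va|^2$ with no $\va^{-2}$ or $r^{-2}$ term at all, after which a comparison with the solution of $-\Delta w_\va=C_1|\na\Q_\va|^2$ (with $w_\va=|\mathbf{Z}_\va|$ on the boundary) and the boundary gradient estimate give exactly the three desired terms. To repair your proof you would need to replace the estimate-and-absorb step by this sign argument (or some analogous scalar maximum-principle device), because no sufficiently strong pointwise bound on $\va^{-2}\Psi(\Q_\va)$ alone can reproduce the stated conclusion.
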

\begin{proof}
Up to a translation, we assume that $ x_0=0 $. Firstly, define $ \mathbf{Z}_{\va}:=\va^2\Y_{\va} $. By \eqref{uQUB}, if $ 0<\delta_0<1 $ is sufficiently small, then for $ \Q_{\va}^*=\varrho(\Q_{\va}) $, there holds
\be
C^{-1}|\mathbf{Z}_{\va}|\leq|\Q_{\va}-\Q_{\va}^*|=\dist(\Q_{\va},\cN)\leq C|\mathbf{Z}_{\va}|\text{ in } U\cap B_r.\label{wtYva}
\ee
We note that $ |\mathbf{Z}_{\va}|\in W^{1,\ift}(U\cap B_r)\cap C^{\ift}(U\cap B_r\cap\{|\mathbf{Z}_{\va}|>0\}) $ and 
\be
\na|\mathbf{Z}_{\va}|=0\text{ a.e. in } U\cap B_r\cap\{|\mathbf{Z}_{\va}|=0\},\label{paX0}
\ee
Denoting $ \zeta_{\va}=|\mathbf{Z}_{\va}|^2 $ and using \eqref{DeltauQ}, we have, in $ U\cap B_r\cap\{|\mathbf{Z}_{\va}|>0\} $,
\be
\begin{aligned}
&\Delta(|\mathbf{Z}_{\va}|)=\f{\zeta_{ij}(\Q_{\va})\Delta(\Q_{\va})_{ij}}{2|\mathbf{Z}_{\va}|}\\
&\quad\quad+\f{2(\zeta_{ij,pq}\zeta)(\Q_{\va})\pa_{k}(\Q_{\va})_{ij}\pa_{k}(\Q_{\va})_{pq}-(\zeta_{ij}\zeta_{pq})(\Q_{\va})\pa_{k}(\Q_{\va})_{ij}\pa_{k}(\Q_{\va})_{pq}}{4|\mathbf{Z}_{\va}|^3}:=I_1+I_2.
\end{aligned}\label{DeltaYvawt}
\ee
By Taylor expansion, when $ |\Q_{\va}-\Q_{\va}^*|<\delta_0 $ is sufficiently small, we have
\begin{align*}
\zeta_{\va}(\Q_{\va})&=\f{1}{2}\zeta_{ij,pq}(\Q_{\va}^*)(\Q_{\va}-\Q_{\va}^*)_{ij}(\Q_{\va}-\Q_{\va}^*)_{pq}+O(|\Q_{\va}-\Q_{\va}^*|^3),\\
\zeta_{ij,pq}(\Q_{\va})&=\zeta_{ij,pq}(\Q_{\va}^*)+O(|\Q_{\va}-\Q_{\va}^*|),\\
(\zeta_{ij}\zeta_{pq})(\Q_{\va})&=(\zeta_{ij,mn}\zeta_{pq,k\ell})(\Q_{\va}^*)(\Q_{\va}-\Q_{\va}^*)_{mn}(\Q_{\va}-\Q_{\va}^*)_{k\ell}+O(|\Q_{\va}-\Q_{\va}^*|^3).
\end{align*}
In view of Lemma \ref{minimalHessian}, we obtain
$$
(\zeta_{ij,pq}(\Q_{\va}^*)\PP_{ij}\PP_{pq})(\zeta_{ij,pq}(\Q_{\va}^*)\wh{\PP}_{ij}\wh{\PP}_{pq})-(\zeta_{ij,pq}(\Q_{\va}^*)\PP_{ij}\wh{\PP}_{pq})^2\geq 0,
$$
for any $ \PP,\wh{\PP}\in\Ss_0 $.
This, together with \eqref{wtYva}, implies 
\be
I_2\geq-\f{C|\Q_{\va}-\Q_{\va}^*|^3|\na\Q_{\va}|^2}{|\mathbf{Z}_{\va}|^3}\geq-C|\na\Q_{\va}|^2.\label{I2es2}
\ee
In view \eqref{E-L} and \eqref{uPsiQnon}, when $ \dist(\Q_{\va},\cN)<\delta_0 $ is sufficiently small, $
I_1\geq 0 $. By \eqref{DeltaYvawt} and \eqref{I2es2}, we have $
-\Delta(|\mathbf{Z}_{\va}|)\leq C|\na\Q_{\va}|^2 $ in $ U\cap B_r\cap\{|\mathbf{Z}_{\va}|>0\} $. Combined with \eqref{paX0}, there holds
\be
-\Delta(|\mathbf{Z}_{\va}|)\leq C_1|\na\Q_{\va}|^2\text{ in } U\cap B_r\text{ in the sense of distribution,}\label{distriY}
\ee
for some constant $ C_1>0 $ depending only on $ \cA $. Indeed, for $ 0\leq\vp\in C_c^{\ift}(U\cap B_r) $, 
$$
\int_{ U\cap B_r}\na(|\mathbf{Z}_{\va}|)\cdot\na\vp\ud x\leq C_1\int_{ U\cap B_r}|\na\Q_{\va}|^2\vp\ud x.
$$
Now, we define $ w_{\va} $ as the solution of
$$
\left\{\begin{aligned}
-\Delta w_{\va}&=C_1|\na\Q_{\va}|^2&\text{ in }& U\cap B_r,\\
w_{\va}&=|\mathbf{Z}_{\va}|&\text{ on }&\pa(U\cap B_r).
\end{aligned}\right.
$$
Define $ \{w_{\va,i}\}_{i=1}^2 $ such that
\be
\left\{\begin{aligned}
-\Delta w_{\va,1}&=C_1|\na\Q_{\va}|^2&\text{ in }& U\cap B_r,\\
w_{\va,1}&=0&\text{ on }&\pa(U\cap B_r),
\end{aligned}\right.\label{wva1}
\ee
and $ w_{\va,2}:=w_{\va}-w_{\va,1} $. Applying Lemma \ref{LempRe} with $ p=4 $ and noting that $ w_{\va}=|\mathbf{Z}_{\va}|=0 $ on $ \pa  U\cap B_r $, we have, for any $ y\in U\cap B_{r/2} $,
\be
\begin{aligned}
w_{\va}(y)&\leq C\|\na w_{\va}\|_{L^{\ift}(U\cap B_{2r/3})}\dist(y,\pa U)\\
&\leq Cr^{-3/2}\|\na w_{\va}\|_{L^2(U\cap B_{3r/4})}\dist(y,\pa U)\\
&\quad\quad+Cr^{1/4}\|\na\Q_{\va}\|_{L^2(U\cap B_{3r/4})}^{1/2}\|\na\Q_{\va}\|_{L^{\ift}(U\cap B_{3r/4})}^{3/2}\dist(y,\pa U).
\end{aligned}\label{wvay}
\ee
For $ w_{\va,1} $, testing \eqref{wva1} by $ w_{\va,1} $ itself and using Poincar\'{e} inequality, we have
\begin{align*}
\|\na w_{\va,1}\|_{L^2(U\cap B_r)}^2&\leq C_1\int_{U\cap B_r}|\na\Q_{\va}|^2|w_{\va,1}|\ud x\\
&\leq Cr\|\na\Q_{\va}\|_{L^{\ift}(U\cap B_r)}\|\na\Q_{\va}\|_{L^2(U\cap B_r)}\|\na w_{\va,1}\|_{L^2(U\cap B_r)},
\end{align*}
which implies that
\be
\|\na w_{\va,1}\|_{L^2(U\cap B_r)}\leq Cr\|\na\Q_{\va}\|_{L^{\ift}(U\cap B_r)}\|\na\Q_{\va}\|_{L^2(U\cap B_r)}.\label{wva1est}
\ee
Consequently, $ w_{\va,2} $ satisfies 
$$
\left\{\begin{aligned}
-\Delta w_{\va,2}&=C_1|\na\Q_{\va}|^2&\text{ in }& U\cap B_r,\\
w_{\va,2}&=|\mathbf{Z}_{\va}|=0&\text{ on }&\pa U\cap B_r,\\
w_{\va,2}&=|\mathbf{Z}_{\va}|&\text{ on }&U\cap\pa B_r.
\end{aligned}\right.
$$
By the boundary Caccioppoli inequality and the maximum principle, we get
\be
\|\na w_{\va,2}\|_{L^2(U\cap B_{3r/4})}\leq Cr^{-1}\|w_{\va,2}\|_{L^2(U\cap B_r)}\leq Cr^{1/2}\|\mathbf{Z}_{\va}\|_{L^{\ift}(U\cap B_r)}.\label{wva2est}
\ee
By \eqref{distriY} and the maximum principle, we have $
|\mathbf{Z}_{\va}|\leq w_{\va}\text{ in }U\cap B_{r/2} $. This, together with \eqref{wtYva}, \eqref{wvay}, \eqref{wva1est} and \eqref{wva2est}, gives that if $ y\in U\cap B_{r/2} $, then
\be
\begin{aligned}
|(\Q_{\va}-\Q_{\va}^*)(y)|&\leq Cr^{-1}\|\Q_{\va}-\Q_{\va}^*\|_{L^{\ift}(U\cap B_r)}\dist(y,\pa U)\\
&\quad\quad+Cr^{-1/2}\|\na\Q_{\va}\|_{L^2(U\cap B_r)}\|\na\Q_{\va}\|_{L^{\ift}(U\cap B_r)}\dist(y,\pa U)\\
&\quad\quad+Cr^{1/4}\|\na\Q_{\va}\|_{L^2(U\cap B_r)}^{1/2}\|\na\Q_{\va}\|_{L^{\ift}(U\cap B_r)}^{3/2}\dist(y,\pa U),
\end{aligned}\label{normalde}
\ee
Since $ \Q_{\va}\in\cN $ on $ \pa U\cap B_r $, we have $ |\Q_{\va}-\Q_{\va}^*|=0 $ on $ \pa  U\cap B_r $ and hence the result direct follows from \eqref{normalde}.
\end{proof}

\begin{lem}\label{EnergyEstimate}
Let $ U\subset\R^3 $ be a bounded domain. There exist $ \delta_0>0 $ and $ C>0 $ depending only on $ \cA $ such that if $ \Q_{\va}\in C^{\ift}(U,\Ss_0) $ is a solution of $ -\va^2\Delta\Q_{\va}+\Psi(\Q_{\va})=0 $ in $ U $ and $
\|\dist(\Q_{\va},\cN)\|_{L^{\ift}(U)}<\delta_0 $, then
$$
-\Delta(e_{\va}(\Q_{\va}))(x)\leq Ce_{\va}^2(\Q_{\va})(x)\text{ for any }x\in U.
$$
\end{lem}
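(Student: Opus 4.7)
The plan is to establish a Bochner-type identity for $e_\va(\Q_\va)$ and then control the one non-positive term by Young's inequality, absorbing it with the help of Lemma \ref{fBcomparison}.

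First I would compute $\Delta e_\va(\Q_\va)$ by direct differentiation. Write $e_\va = \frac12|\na\Q_\va|^2 + \va^{-2}f_b(\Q_\va)$ and get
$$
\Delta e_\va(\Q_\va) = |D^2\Q_\va|^2 + \pa_k(\Q_\va)_{ij}\,\pa_k\Delta(\Q_\va)_{ij} + \va^{-2}(f_b)_{ij}(\Q_\va)\Delta(\Q_\va)_{ij} + \va^{-2}(f_b)_{ij,pq}(\Q_\va)\pa_k(\Q_\va)_{ij}\pa_k(\Q_\va)_{pq}.
$$
Then I would use the Euler--Lagrange equation $\va^2\Delta\Q_\va=\Psi(\Q_\va)$ together with two tracelessness cancellations that are the structural heart of the argument:
\begin{itemize}
\item Since $\tr(\pa_k\Q_\va)=0$, the $\delta_{ij}$-part of $\Psi$ drops out of the contraction $\pa_k\Psi_{ij}(\Q_\va)\pa_k(\Q_\va)_{ij}$, so $\pa_k(\Q_\va)_{ij}\pa_k\Delta(\Q_\va)_{ij}=\va^{-2}(f_b)_{ij,pq}(\Q_\va)\pa_k(\Q_\va)_{ij}\pa_k(\Q_\va)_{pq}$.
\item Since $\tr\Psi(\Q_\va)=0$ by construction, $(f_b)_{ij}(\Q_\va)\Psi_{ij}(\Q_\va)=|\Psi(\Q_\va)|^2$.
\end{itemize}
This yields the clean identity
$$
\Delta e_\va(\Q_\va) = |D^2\Q_\va|^2 + 2\va^{-2}(f_b)_{ij,pq}(\Q_\va)\pa_k(\Q_\va)_{ij}\pa_k(\Q_\va)_{pq} + \va^{-4}|\Psi(\Q_\va)|^2,
$$
in which the first and third terms are manifestly non-negative.

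The remaining step is to handle the Hessian term. Because $\dist(\Q_\va,\cN)<\delta_0$, the nearest point projection $\Q_\va^\ast=\varrho(\Q_\va)$ is well defined, and by Lemma \ref{minimalHessian}
$$
(f_b)_{ij,pq}(\Q_\va^\ast)\pa_k(\Q_\va)_{ij}\pa_k(\Q_\va)_{pq}\geq 0.
$$
A Taylor expansion of $(f_b)_{ij,pq}$ around $\Q_\va^\ast$ then gives
$$
2\va^{-2}(f_b)_{ij,pq}(\Q_\va)\pa_k(\Q_\va)_{ij}\pa_k(\Q_\va)_{pq} \geq -C\va^{-2}|\Q_\va-\Q_\va^\ast|\,|\na\Q_\va|^2,
$$
for a constant $C$ depending only on $\cA$. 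Hence
$$
-\Delta e_\va(\Q_\va) \leq C\va^{-2}|\Q_\va-\Q_\va^\ast|\,|\na\Q_\va|^2 - \va^{-4}|\Psi(\Q_\va)|^2.
$$

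The final step is a one-line Young's inequality with a small parameter $\eta>0$:
$$
C\va^{-2}|\Q_\va-\Q_\va^\ast|\,|\na\Q_\va|^2 \leq \eta\va^{-4}|\Q_\va-\Q_\va^\ast|^2 + \frac{C^2}{4\eta}|\na\Q_\va|^4.
$$
By Lemma \ref{fBcomparison} there exists $C_4>0$ with $|\Q_\va-\Q_\va^\ast|^2\leq C_4|\Psi(\Q_\va)|^2$, so choosing $\eta=1/(2C_4)$ makes the first term absorbable by the good term $\va^{-4}|\Psi(\Q_\va)|^2$. What remains is bounded by $C|\na\Q_\va|^4\leq 4C e_\va(\Q_\va)^2$, giving the desired estimate. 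The main (and only) subtle point is recognizing that the Lagrange-multiplier part of $\Psi$ vanishes under both contractions with traceless tensors, which is what allows the quadratic $|\Psi|^2$ term to appear with the right sign and absorb the Taylor remainder; everything else is routine.
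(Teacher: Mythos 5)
Your proof is correct and follows essentially the same route as the paper: compute $\Delta e_\va$ by differentiation, use the Euler--Lagrange equation together with the tracelessness of $\pa_k\Q_\va$ and of $\Psi$ to produce the good terms $|D^2\Q_\va|^2$ and $\va^{-4}|\Psi(\Q_\va)|^2$ and the Hessian term, then Taylor-expand the Hessian around $\Q_\va^*=\varrho(\Q_\va)$, invoke Lemma \ref{minimalHessian} for non-negativity of the leading term, and absorb the remainder via Young's inequality and the equivalence $\dist^2(\Q,\cN)\lesssim|\Psi(\Q)|^2$ from Lemma \ref{fBcomparison}. The only cosmetic difference is that you assemble the full Bochner-type identity at once (and keep $|D^2\Q_\va|^2$ visible before dropping it), whereas the paper treats $\Delta(\tfrac12|\na\Q_\va|^2)$ and $\va^{-2}\Delta f_b(\Q_\va)$ separately and tracks the Taylor remainder to one order higher; the content is identical.
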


\begin{proof}
Firstly, by \eqref{E-L} and the property that $ \tr(\pa_k\Q_{\va})=0 $, we have
\be
\begin{aligned}
-\f{1}{2}\Delta(|\na\Q_{\va}|^2)&=-\Delta(\pa_k(\Q_{\va})_{ij})\pa_k(\Q_{\va})_{ij}-\pa_{k}\pa_{\ell}(\Q_{\va})_{ij}\pa_{k}\pa_{\ell}(\Q_{\va})_{ij}\\
&\leq-\f{1}{\va^2}\pa_k((f_b)_{ij}(\Q_{\va}))\pa_{k}(\Q_{\va})_{ij},
\end{aligned}\label{naQde22}
\ee
Moreover, we can obtain
\be
\begin{aligned}
-\f{1}{\va^2}\Delta(f_b(\Q_{\va}))&=-\f{1}{\va^2}\pa_k((f_b)_{ij}(\Q_{\va})\pa_k(\Q_{\va})_{ij})\\
&=-\f{1}{\va^4}|\Psi(\Q_{\va})|^2-\f{1}{\va^2}\pa_k((f_b)_{ij}(\Q_{\va}))\pa_{k}(\Q_{\va})_{ij}.
\end{aligned}\label{fBde}
\ee
For $ 0<\delta_0<1 $ sufficiently small, if $ \dist(\Q_{\va},\cN)<\delta_0 $ in $ U $, then $ \Q_{\va}^*=\varrho(\Q_{\va}) $ is well defined. We can apply Taylor expansion of the function $ (f_b)_{ij,mn} $ and obtain
\begin{align*}
(f_b)_{ij,mn}(\Q_{\va})=(f_b)_{ij,mn}(\Q_{\va}^*)+(f_b)_{ij,mn,pq}(\Q_{\va}^*)(\Q_{\va}-\Q_{\va}^*)_{pq}+\mathcal{R}^{ijmn}(\Q_{\va},\Q_{\va}^*),
\end{align*}
where $ \mathcal{R}^{ijmn}(\Q_{\va},\Q_{\va}^*) $ is a remainder satisfying
\be
|\mathcal{R}^{ijmn}(\Q_{\va},\Q_{\va}^*)|\leq C|\Q_{\va}-\Q_{\va}^*|^2.\label{EstimatesR}
\ee
Direct calculations yield that
\begin{align*}
-\f{1}{\va^2}\pa_k((f_b)_{ij}(\Q_{\va}))\pa_k(\Q_{\va})_{ij}&=-\f{1}{\va^2}\((f_b)_{ij,mn}(\Q_{\va}^*)\)\pa_k(\Q_{\va})_{mn}\pa_k(\Q_{\va})_{ij}\\
&\quad-\f{1}{\va^2}((f_b)_{ij,mn,pq}(\Q_{\va}^*))(\Q_{\va}-\Q_{\va}^*)_{pq}\pa_k(\Q_{\va})_{mn}\pa_k(\Q_{\va})_{ij}\\
&\quad-\f{1}{\va^2}\mathcal{R}^{ijmn}(\Q_{\va},\Q_{\va}^*)\pa_k(\Q_{\va})_{mn}\pa_k(\Q_{\va})_{ij}\\
&=I_1+I_2+I_3.
\end{align*}
By using Lemma \ref{minimalHessian}, we have $ I_1\leq 0 $. In view of \eqref{EstimatesR} and Cauchy inequality \be
\delta a^2+\f{b^2}{4\delta}\geq ab,\label{Cauchy}
\ee
with $ a,b,\delta>0 $, we can obtain
$$
|I_2+I_3|\leq\f{C\delta}{\va^4}\dist^2(\Q_{\va},\cN)+\f{1}{\delta}|\na\Q_{\va}|^4.
$$
This, together with \eqref{uPsifQnon}, \eqref{naQde22} and \eqref{fBde}, implies that in $ U $,
\begin{align*}
-\Delta(e_{\va}(\Q_{\va}))\leq-\f{1}{\va^4}|\Psi(\Q_{\va})|^2+\f{C\delta}{\va^4}\dist^2(\Q_{\va},\cN)+\f{1}{\delta}|\na\Q_{\va}|^4\leq Ce_{\va}^2(\Q_{\va}),
\end{align*}
if we choose $ 0<\delta<1 $ sufficiently small.
\end{proof}

\begin{lem}\label{smallreg1}
Let $ 0<\va<1 $. There exist $ \delta_0,E_0>0 $ depending only on $ \cA $ such that if $ \Q_{\va}\in C^{\ift}(B_r,\Ss_0) $ is a solution of $ -\va^2\Delta\Q_{\va}+\Psi(\Q_{\va})=0 $ in $ B_r $, satisfying
\begin{align*}
\|\dist(\Q_{\va},\cN)\|_{L^{\ift}(B_r)}&<\delta_0\text{ and }\f{1}{r}\int_{B_r}e_{\va}(\Q_{\va})\ud x<E_0,
\end{align*}
then $ r^2\|e_{\va}(\Q_{\va})\|_{L^{\ift}(B_{r/2})}\leq C $, where $ C>0 $ depends only on $ \cA $.
\end{lem}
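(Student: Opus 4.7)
The strategy is the standard $\varepsilon$-regularity argument of Schoen-type: combine the Bochner-type subsolution bound on $e_\va(\Q_\va)$ (Lemma \ref{EnergyEstimate}), the monotonicity formula (Corollary \ref{Monotonicity}), and a rescaling/maximum argument to convert ``small scaled energy'' into a pointwise bound on the energy density.

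The plan is as follows. First, since $\|\dist(\Q_\va,\cN)\|_{L^\infty(B_r)}<\delta_0$, Lemma \ref{EnergyEstimate} applies and yields the differential inequality
\[
-\Delta(e_\va(\Q_\va))\le C_0\, e_\va^2(\Q_\va)\quad\text{in }B_r,
\]
with $C_0$ depending only on $\cA$. Second, for any $x_0\in B_{r/2}$ and any $0<\rho<r/2$, the ball $B_\rho(x_0)$ lies in $B_r$, so Corollary \ref{Monotonicity} (with $\om=B_r$) gives
\[
\frac{1}{\rho}\int_{B_\rho(x_0)}e_\va(\Q_\va)\,\ud x\le \frac{2}{r}\int_{B_r}e_\va(\Q_\va)\,\ud x<2E_0.
\]
This is the crucial ingredient that promotes the global small energy hypothesis to a small scaled energy bound on every interior ball.

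Next, I would introduce the auxiliary function $F(x):=(r-|x|)^2 e_\va(\Q_\va)(x)$ on $\ol{B_r}$, which vanishes on $\pa B_r$ and is continuous, hence attains its maximum at some $x_0\in B_r$. Set $\rho_0:=r-|x_0|$ and $e_0:=e_\va(\Q_\va)(x_0)$, so $F(x_0)=\rho_0^2 e_0$. If $F(x_0)\le M_0$ for a universal constant $M_0$ to be fixed, then for any $x\in B_{r/2}$ we have $(r/2)^2 e_\va(\Q_\va)(x)\le F(x)\le M_0$, giving the desired bound $r^2 e_\va(\Q_\va)\le 4M_0$ on $B_{r/2}$. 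It remains to exclude the case $F(x_0)>M_0$, which is where $E_0$ must be chosen small.

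Assume for contradiction that $\rho_0^2 e_0>M_0$, and set $\sigma:=e_0^{-1/2}$, so that $\sigma<\rho_0/\sqrt{M_0}$. Choosing $M_0\ge 16$ ensures $\sigma<\rho_0/4$, so $B_{2\sigma}(x_0)\subset B_{\rho_0/2}(x_0)\subset B_r$. By the defining maximality of $F$ at $x_0$, for every $y\in B_{\rho_0/2}(x_0)$ one has $(\rho_0/2)^2 e_\va(\Q_\va)(y)\le F(y)\le F(x_0)=\rho_0^2 e_0$, so $e_\va(\Q_\va)\le 4e_0=4\sigma^{-2}$ on $B_{2\sigma}(x_0)$. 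Plugging into the Bochner inequality gives
\[
-\Delta(e_\va(\Q_\va))\le 4C_0\sigma^{-2}\,e_\va(\Q_\va)\quad\text{in }B_{2\sigma}(x_0).
\]
Since $e_\va(\Q_\va)\ge 0$ and the coefficient $4C_0\sigma^{-2}$ times $(2\sigma)^2$ is the universal constant $16C_0$, the De Giorgi--Moser mean value inequality on $B_{2\sigma}(x_0)$ (with a constant $C_1=C_1(\cA)$) yields
\[
e_0=e_\va(\Q_\va)(x_0)\le C_1\,\sigma^{-3}\int_{B_\sigma(x_0)}e_\va(\Q_\va)\,\ud x.
\]
Combining with the monotonicity bound $\int_{B_\sigma(x_0)}e_\va(\Q_\va)\le 2E_0\,\sigma$ established above (valid since $\sigma<r/2$) gives $e_0\le 2C_1E_0\,\sigma^{-2}=2C_1E_0\,e_0$. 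Since $e_0>0$, this forces $1\le 2C_1E_0$. Fixing $E_0:=1/(4C_1)$ produces a contradiction, so $F(x_0)\le M_0$, and the lemma follows.

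The only delicate point is the mean value inequality step: one must invoke a version in which the multiplicative constant depends only on $KR^2$ where $K=4C_0\sigma^{-2}$ and $R=2\sigma$, so that $KR^2=16C_0$ is universal and the constant $C_1$ depends only on $\cA$. This is a standard fact for nonnegative subsolutions of linear elliptic equations on balls, so there is no substantive obstacle beyond organizing the scaling correctly. The interior and boundary cases in later applications will follow by the same template with minor modifications.
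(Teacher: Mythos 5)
Your strategy is the same as the paper's: a Schoen-type $\va$-regularity argument that maximizes a weighted energy density, rescales, and combines the Bochner inequality (Lemma~\ref{EnergyEstimate}), the mean-value inequality (Lemma~\ref{Hanarck}), and the monotonicity formula (Corollary~\ref{Monotonicity}). Your auxiliary function $F(x)=(r-|x|)^2 e_\va(\Q_\va)(x)$ maximized over $\ol{B_r}$ has the same extremal value, and essentially the same base point, as the paper's radial quantity $\sup_{0\le\rho\le r}(r-\rho)^2\sup_{B_\rho}e_\va(\Q_\va)$, so these are two presentations of one idea. The ``De Giorgi--Moser mean value inequality'' you invoke is exactly Lemma~\ref{Hanarck}; as you correctly note, after rescaling the coefficient satisfies $KR^2=16C_0$, a universal constant, so its constant depends only on $\cA$.

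There is, however, a gap in your monotonicity step. You assert that the bound $\int_{B_\sigma(x_0)}e_\va(\Q_\va)\le 2E_0\sigma$ ``established above'' is ``valid since $\sigma<r/2$''. But the bound was derived only under the hypothesis $x_0\in B_{r/2}$; the relevant requirement is not $\sigma<r/2$ but $\dist(x_0,\pa B_r)\ge r/2$, so that the monotonicity formula centred at $x_0$ can be pushed up to radius $r/2$. The maximizer $x_0$ of $F$ over $\ol{B_r}$ need not lie in $B_{r/2}$. If $\rho_0:=r-|x_0|$ is small, monotonicity at $x_0$ only reaches radius $\rho_0$ and gives $\sigma^{-1}\int_{B_\sigma(x_0)}e_\va(\Q_\va)\le (r/\rho_0)E_0$; substituting into your mean-value bound yields $\rho_0\le 2C_1 r E_0$ rather than the contradiction $1\le 2C_1E_0$, and the factor $r/\rho_0$ is uncontrolled. (The paper's own presentation, in the chain $\cdots\le\frac{2}{r}\int_{B_{r/2}(x_1)}w_\va\le 2E_0$, carries the same implicit and unjustified assumption $|x_1|\le r/2$.) A clean repair is to maximize instead $\wh{F}(x):=(3r/4-|x|)^2 e_\va(\Q_\va)(x)$ over $\ol{B_{3r/4}}$. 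Then the maximizer satisfies $|x_0|<3r/4$, hence $\dist(x_0,\pa B_r)\ge r/4$; the decay $e_\va(\Q_\va)\le 4e_0$ still holds on $B_{\rho_0/2}(x_0)$ because $B_{\rho_0/2}(x_0)\subset B_{3r/4}$; and monotonicity at $x_0$ goes up to radius $r/4$ with uniform constant, giving $\sigma^{-1}\int_{B_\sigma(x_0)}e_\va(\Q_\va)\le\frac{4}{r}\int_{B_r}e_\va(\Q_\va)<4E_0$ and the contradiction. Since $3r/4-|x|\ge r/4$ on $B_{r/2}$, the resulting bound $\wh{F}\le M_0$ yields $r^2 e_\va(\Q_\va)\le 16M_0$ there, as required.
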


\begin{proof}
Define $ w_{\va}:=e_{\va}(\Q_{\va}) $. Choose $ 0<r_1<r $ such that
$$
\sup_{0\leq\rho\leq r}\left\{(r-\rho)^2\sup_{B_{\rho}}w_{\va}\right\}=(r-r_1)^2\sup_{B_{r_1}}w_{\va}.
$$
Set $ \al_{\va}:=\sup_{B_{r_1}}w_{\va}=w_{\va}(x_1) $ with $ x_1\in\ol{B_{r_1}} $. If $ \al_{\va}(r-r_1)^2\leq 4 $, by the definition of $ r_1 $, we can choose $ \rho=r/2 $ and get $ (r/2)^2\sup_{B_{r/2}}w_{\va}\leq\al_{\va}(r-r_1)^2\leq 4 $, from which the result follows directly. For otherwise, we define $ \PP_{\va}(x):=\Q_{\va}(x_1+\al_{\va}^{-1/2}x) $ and $ w_{\va,1}:=e_{\ol{\va}}(\PP_{\va}) $, with $ \ol{\va}=\al_{\va}\va $. By simple calculations, we have $
w_{\va,1}(x)=\al_{\va}^{-1}w_{\va}(x_1+\al_{\va}^{-1/2}x) $ and $ w_{\va,1}(0)=1 $. In view of Lemma \ref{EnergyEstimate}, it follows that
\be
-\Delta w_{\va,1}(x)=-\al_{\va}^{-2}\Delta w_{\va}(x_1+\al_{\va}^{-1/2}x)\leq Cw_{\va,1}^2(x),\label{Bo2}
\ee
for any $ x $ such that $ |x_1+\al_{\va}^{-1/2}x|\leq r $. As a result, we obtain
\be
\sup_{B_1}w_{\va,1}\leq\f{1}{\al_{\va}}\sup_{B_{(r+r_1)/2}}w_{\va}\leq\f{\al_{\va}(r-r_1)^2}{\al_{\va}(r-(r_1+r)/2)}=4,\label{bound4}
\ee
where for the first inequality, we have used $ \al_{\va}(r-r_1)^2>4 $ and the property that
$$ 
|x_1+\al_{\va}^{-1/2}x|\leq r_1+\f{r-r_1}{2}=\f{r+r_1}{2}, 
$$
when $ |x|<\al_{\va}^{1/2}/2 $. For the second inequality of \eqref{bound4}, we have used the definition of $ r_1 $. Combined with \eqref{Bo2}, we have $
-\Delta w_{\va,1}\leq Cw_{\va,1} $ in $ B_1 $. Applying Lemma \ref{Hanarck}, we have
\be
1=w_{\va,1}(0)\leq C\int_{B_1}w_{\va,1}\ud x.\label{Low}
\ee
On the other hand, we note that $ \PP_{\ol{\va}} $ satisfies $ -\ol{\va}^2\Delta\PP_{\ol{\va}}+\Psi(\PP_{\ol{\va}})=0 $. By the change of variables and \eqref{Mo}, we deduce
\begin{align*}
\int_{B_1}w_{\va,1}\ud x&\leq\f{1}{(r-r_1)\al_{\va}^{1/2}/2}\int_{B_{(r-r_1)\al_{\va}^{1/2}/2}}w_{\va,1}\ud x\leq\f{2}{r}\int_{B_{r/2}(x_1)}w_{\va}\ud x\leq 2E_0,
\end{align*}
This, together with \eqref{Low}, implies $ 1\leq CE_0 $. If $ E_0 $ is sufficiently small, there is a contradiction and then we can complete the proof.
\end{proof}

\begin{lem}\label{smallreg2}
Let $ 0<\va<1 $. Assume that $ U $ is a bounded $ C^{2,1} $ domain with $ r_{U,2} $ and $ M_{U,2} $. There exist $ \delta_0>0 $ and $ E_0>0 $, depending only $ \cA,r_{U,2} $, and $ M_{U,2} $ such that the following property holds. If $ \Q_{\va}\in C^{\ift}(U\cap B_r(x_0),\Ss_0)\cap C^2(\ol{ U\cap B_r(x_0)},\Ss_0) $ is a solution of
$$
\left\{\begin{aligned}
-\va^2\Delta\Q_{\va}+\Psi(\Q_{\va})&=0&\text{ in }& U\cap B_r(x_0),\\
\Q_{\va}&=\Q_{b,\va}&\text{ on }&\pa U\cap B_r(x_0),
\end{aligned}\right.
$$
for some $ 0<r<r_{U,2} $ and $ \Q_{b,\va}\in C^2(\pa U\cap B_{2r}(x_0),\cN) $, satisfying
$$
\|\dist(\Q_{\va},\cN)\|_{L^{\ift}(U\cap B_{2r}(x_0))}<\delta_0,\text{ and }E_{\va}:=\sup_{x\in U\cap B_r(x_0)}\sup_{0<\rho<r}\f{1}{r}\int_{U\cap B_{\rho}(x)}e_{\va}(\Q_{\va})\ud y<E_0,
$$
then
$$
\|e_{\va}(\Q_{\va})\|_{L^{\ift}(U\cap B_{r/2}(x_0))}\leq C(b_{\va}^2+r^{-2}(\delta_{\va}^2+E_{\va})),
$$
where
\begin{align*}
b_{\va}&:=\||\na_{\pa U}\Q_{b,\va}|+r|D_{\pa U}^2\Q_{b,\va}|\|_{L^{\ift}(\pa U\cap B_{2r})},\\
\delta_{\va}&:=\|\dist(\Q_{\va},\cN)\|_{L^{\ift}(U\cap B_{2r})}
\end{align*}
and $ C>0 $ depends only on $ \cA,r_{U,2},M_{U,2} $.
\end{lem}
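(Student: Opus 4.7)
The argument will mirror the blow-up / contradiction strategy of Lemma \ref{smallreg1}, with an extra case analysis to handle the boundary. Abbreviate $w_\va := e_\va(\Q_\va)$ and $G_\va := b_\va^2 + r^{-2}(\delta_\va^2 + E_\va)$; the target inequality is $\sup_{U\cap B_{r/2}(x_0)} w_\va \leq C G_\va$. Up to translation assume $x_0=0$. Suppose for contradiction that $\sup_{U\cap B_{r/2}}w_\va \geq K G_\va$ for a large $K$ to be chosen. Following Lemma \ref{smallreg1}, pick $r_1\in[0,r]$ realizing $\sup_{\rho\in[0,r]}(r-\rho)^2\sup_{\overline{U\cap B_\rho}}w_\va$, and $x_1\in\overline{U\cap B_{r_1}}$ with $w_\va(x_1)=\al_\va:=\sup_{\overline{U\cap B_{r_1}}}w_\va$. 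The defining property and the assumption yield $\al_\va(r-r_1)^2 \geq (r/2)^2\sup_{U\cap B_{r/2}}w_\va \geq (K/4)r^2 G_\va$, hence in particular $(r-r_1)\al_\va^{1/2}/2 \geq \sqrt{K/16}\gg 1$.

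Blow up at $x_1$ with scale $\al_\va^{-1/2}$: set $\PP_\va(y):=\Q_\va(x_1+\al_\va^{-1/2}y)$, $\bar\va:=\al_\va^{1/2}\va$. Then $w_{\va,1}(y):=\al_\va^{-1}w_\va(x_1+\al_\va^{-1/2}y)=e_{\bar\va}(\PP_\va)$ satisfies $w_{\va,1}(0)=1$, $\sup_{B_R\cap\wt U}w_{\va,1}\leq 4$ with $R:=(r-r_1)\al_\va^{1/2}/2\gg 1$, where $\wt U:=\al_\va^{1/2}(U-x_1)$. Lemma \ref{EnergyEstimate} gives $-\Delta w_{\va,1}\leq Cw_{\va,1}^2\leq 4Cw_{\va,1}$ in $B_R\cap\wt U$. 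Now split on the rescaled boundary distance $d_1:=\al_\va^{1/2}\dist(x_1,\pa U)$. If $d_1\geq 1$, then $B_1(0)\subset\wt U$ and the interior argument of Lemma \ref{smallreg1} applies verbatim: an interior Harnack/mean-value yields $1\leq C\int_{B_1}w_{\va,1}$, and the monotonicity formula (Corollary \ref{Monotonicity}) combined with the change of variables $y=\al_\va^{1/2}(x-x_1)$ bounds this by the Morrey quantity $CE_\va$, contradicting the choice of $K$.

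If $d_1<1$, flatten $\pa U$ near $x_1$ using the $C^{2,1}$ chart from Definition \ref{DefnLocal} (the bounds $r_{U,2},M_{U,2}$ ensure the flattening has controlled derivatives after rescaling, since $\al_\va^{1/2}\geq r^{-1}$ under our blow-up assumption). On the (flattened) boundary piece $\wt\pa U\cap B_1$ we need a pointwise control of $w_{\va,1}$. Since $\Q_\va\in\cN$ on $\pa U$ we have $f_b(\Q_\va)=0$ there; the tangential part of $|\na\Q_\va|^2$ is $\leq b_\va^2$ by definition of $b_\va$. For the normal component use $\Q_\va^*=\varrho\circ\Q_\va=\Q_\va$ on $\pa U$, write $\pa_\nu\Q_\va=\pa_\nu(\Q_\va-\Q_\va^*)+\pa_\nu\Q_\va^*$, and apply Lemma \ref{BoundaryUse} to control $\pa_\nu(\Q_\va-\Q_\va^*)$ by $r^{-1}\delta_\va$ plus terms that are quadratic in $\|\na\Q_\va\|_{L^\infty}$ (which in the blow-up are absorbed into the already-bounded $\sup w_{\va,1}$). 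The tangential control of $\Q_\va^*$ is $\leq b_\va$; the normal component of $\na\Q_\va^*$ on $\pa U$ is estimated through the approximately-harmonic-map equation \eqref{longfor} satisfied by $\Q_\va^*$, together with boundary gradient bounds for elliptic systems, giving $|\pa_\nu\Q_\va^*|\leq C(b_\va+rb_\va^2)$. Consequently $\sup_{\wt\pa U\cap B_1}w_{\va,1}\leq C\al_\va^{-1}(b_\va^2+r^{-2}\delta_\va^2)\leq CG_\va/\al_\va$. Applying a boundary mean-value inequality to the nonnegative subsolution $w_{\va,1}$ on $B_1\cap\wt U$ (obtained from $-\Delta w_{\va,1}\leq 4Cw_{\va,1}$ by a standard comparison with the Dirichlet solution and the harmonic extension of the boundary trace), we get
\[
1=w_{\va,1}(0)\leq C\int_{B_1\cap\wt U}w_{\va,1}+C\sup_{\wt\pa U\cap B_1}w_{\va,1}\leq CE_\va+CG_\va/\al_\va\leq C'G_\va/\al_\va,
\]
where the first term was again rewritten via change of variables and the Morrey bound for $E_\va$. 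Since $\al_\va>K G_\va/4$, choosing $K$ large yields the desired contradiction.

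The main obstacle is the boundary case: combining the pointwise boundary trace estimates (which require Lemma \ref{BoundaryUse} and the $\Q_\va^*$-equation of Proposition \ref{DeltaQstarformula}) with a Harnack/mean-value inequality on a half-ball of the type $-\Delta w\leq Cw$, while making sure the quadratic-in-$\|\na\Q_\va\|_{L^\infty}$ remainder terms in Lemma \ref{BoundaryUse} can be absorbed (this uses that $\|\na\PP_\va\|_{L^\infty}^2=\al_\va^{-1}\|\na\Q_\va\|_{L^\infty}^2$ is already a priori controlled in the blow-up by $\sup w_{\va,1}\leq 4$, i.e., the nonlinear tail self-improves). A careful bookkeeping of the flattening map's Jacobian and its dependence on $r_{U,2},M_{U,2}$ is where most of the technical effort will lie.
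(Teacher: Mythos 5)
Your overall strategy mirrors the paper's: blow up at the point realizing $(r-\rho)^2\sup w_\va$, use the differential inequality of Lemma~\ref{EnergyEstimate}, the Harnack/mean-value inequality (Lemma~\ref{Hanarck}) in the interior, and boundary trace estimates from Lemma~\ref{BoundaryUse} together with the $\Q_\va^*$-equation for the boundary case. But your initial reduction has a genuine gap.

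You assume $\sup_{U\cap B_{r/2}}w_\va \geq K G_\va$ and deduce $(r-r_1)\al_\va^{1/2}/2 \geq \sqrt{K/16}$. This does not follow: $\al_\va(r-r_1)^2 \geq (K/4)r^2 G_\va$ only gives $(r-r_1)\al_\va^{1/2}/2 \geq (\sqrt K/4)\,r\sqrt{G_\va}$, and $r^2 G_\va = r^2 b_\va^2 + \delta_\va^2 + E_\va$ has no lower bound of order $1$ (indeed $\delta_\va<\delta_0$ and $E_\va<E_0$ are both small, and $r b_\va$ can be arbitrarily small). So your blow-up may fail to cover a unit ball, and the entire contradiction scheme — which hinges on running the Harnack and boundary estimates on $B_1$ and invoking $w_{\va,1}(0)=1$ there — collapses when $R=\al_\va^{1/2}(r-r_1)/2\lesssim 1$. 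That regime is not ruled out by your hypothesis.

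The paper handles this by working directly with the scale-invariant quantity $M_\va:=\sup_{\rho}(r-\rho)^2\sup_{U\cap B_\rho}(w_\va - C_0 b_\va^2)$ (note the $-C_0 b_\va^2$ subtraction, absent in your argument), rather than by contradiction, and splits on $\rho_2:=\min\{\dist(0,\pa U_1),\rho_1/10\}$. The case $\rho_2>1$ is ruled out by $E_0$ small; the case $\rho_2=\rho_1/10\leq 1$ (precisely the missing regime where $R$ may be small) is closed by applying Lemma~\ref{Hanarck} on $B_{\rho_2}$ and using the monotone Morrey bound $\rho^{-1}\int_{B_\rho}w_{\va,1}\leq E_\va$ to get $\rho_2^2\leq C E_\va$ and hence $M_\va\leq 4\rho_1^2\leq 400\rho_2^2\leq CE_\va$, with no contradiction needed; only in the last case ($\rho_2=\dist<\rho_1/10$) do the boundary estimates enter. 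Your interior argument can likely be salvaged by running the mean-value inequality on $B_R$ (or $B_{R/10}$) rather than on $B_1$, but as written the step "$R\gg 1$" is unjustified and the case $R\lesssim 1$ is unaddressed.
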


\begin{proof}
For simplicity, we assume $ x_0=0 $ and set $ w_{\va}:=e_{\va}(\Q_{\va}) $. Choosing $ 0<\delta_0<1 $ sufficiently small, by Lemma \ref{EnergyEstimate}, we have
\be
-\Delta w_{\va}\leq Cw_{\va}^2\text{ in } U\cap B_{2r},\label{BoB}
\ee
as $ \delta_{\va}<\delta_0 $. To complete the proof, we intend to show that
$$
\sup_{ U\cap B_{r/2}}w_{\va}\leq C(b_{\va}^2+r^{-2}(\delta_{\va}^2+E_{\va})).
$$
This directly follows from
\be
M_{\va}:=\sup_{0\leq\rho\leq r}\left\{(r-\rho)^2\sup_{ U\cap B_{r}}(w_{\va}-C_0b_{\va}^2)\right\}\leq C(E_{\va}+\delta_{\va}^2),\label{Mvaboun}
\ee
for some $ \delta_0,E_0>0 $, where $ C_0 $ is to be determined later. Set $ 0<r_1<r $ and $ x_1\in\ol{ U\cap B_{r_1}} $ such that $ M_{\va}=(r-r_1)^2\sup_{ U\cap B_{r_1}}(w_{\va}-C_0b_{\va}^2) $ and $ \al_{\va}:=w_{\va}(x_1)=\sup_{ U\cap B_{r_1}}w_{\va} $. As a result, we have
\begin{align}
\sup_{U\cap B_{(r-r_1)/2}(x_1)}w_{\va}&\leq\sup_{ U\cap B_{(r+r_1)/2}}w_{\va}\leq 4\al_{\va},\label{inftbounwva}\\
M_{\va}&=(r-r_1)^2(\al_{\va}-C_0b_{\va}^2).\label{Mvavalu}
\end{align}
Define $ r_2:=(r-r_1)/2 $, $ U_1:=\{y\in\R^3:x_1+\al_{\va}^{-1/2}y\in U\} $, and $
\rho_1:=\al_{\va}^{1/2}r_2 $. Let $
\PP_{\va}(y):=\Q_{\va}(x_1+\al_{\va}^{-1/2}y) $, and $ w_{\va,1}(y):=e_{\ol{\va}}(\PP_{\va})(y) $ for $ y\in U_1\cap B_{\rho_1} $ with $ \ol{\va}=\al_{\va}\va $. In view of the arguments in \eqref{bound4} and \eqref{inftbounwva}, we have
\be
\sup_{U_1\cap B_{\rho_1}}w_{\va,1}\leq 4,\label{wva4boun}
\ee
This, together with \eqref{BoB}, implies that
\be
-\Delta w_{\va,1}\leq Cw_{\va,1}\text{ in }U_1\cap B_{\rho_1}.\label{HarBa}
\ee
Moreover, by the change of variables, we also get
\be
\f{1}{\rho}\int_{U_1\cap B_{\rho}}w_{\va,1}\ud y\leq E_{\va}<E_0\text{ for any }0<\rho\leq\rho_1.\label{Boundenwva}
\ee
Set $ \rho_2=\min\{\dist(0,\pa U_1),\rho_1/10\} $. We claim that if $ E_0 $ is sufficiently small, then $ \rho_2\leq 1 $. Indeed, if $ \rho_2>1 $, we have $ \rho_1>10 $ and $ \dist(0,\pa U_1)>1 $. By using \eqref{Boundenwva}, Lemma \ref{Hanarck} and the fact that $ w_{\va,1}(0)=1 $, we obtain x
\begin{align*}
1\leq C\int_{B_1}w_{\va,1}\ud y\leq CE_0. 
\end{align*}
Choosing $ E_0 $ sufficiently small, it is a contradiction. If $ \rho_2=\rho_1/10 $, then $ B_{\rho_2}\subset U_1 $ and we can apply \eqref{Boundenwva} and Lemma \ref{Hanarck} again to get
$$
1=w_{\va,1}(0)\leq\f{C}{\rho_2^3}\int_{B_{\rho_2}}w_{\va,1}\ud y\leq\f{CE_{\va}}{\rho_2^2}.
$$
Consequently, we have $
M_{\va}\leq 4\rho_1^2\leq 400\rho_2^2\leq CE_{\va} $, which directly implies \eqref{Mvaboun}. For this reason, we assume 
\be
\rho_2=\dist(0,\pa U_1)<\f{\rho_1}{10}.\label{rho210rho1}
\ee
Also, we can assume that $ \al_{\va}\geq C_0b_{\va}^2 $, since for otherwise \eqref{Mvaboun} follows directly and we are done. Next, we define $ \PP_{\va}^*=\varrho(\PP_{\va}) $ and $ \Q_{\va}^*=\varrho(\Q_{\va}) $. Since $ 0<\delta_0<1 $ is sufficiently small, these two are well defined. We claim, for $ 0<\rho<\rho_1/10 $,
\be
\begin{aligned}
\|\na(\PP_{\va}-\PP_{\va}^*)\|_{L^{\ift}(\pa 
U_1\cap B_{\rho})}&\leq C\rho^{-1}\|\PP_{\va}-\PP_{\va}^*\|_{L^{\ift}(U_1\cap B_{5\rho})}\\
&\quad\quad+C\rho^{-1/2}\|\na\PP_{\va}\|_{L^2(U_1\cap B_{5\rho})}\|\na\PP_{\va}\|_{L^{\ift}(U_1\cap B_{5\rho})}\\
&\quad\quad+C\rho^{1/4}\|\na\PP_{\va}\|_{L^2(U_1\cap B_{5\rho})}^{1/2}\|\na\PP_{\va}\|_{L^{\ift}(U_1\cap B_{5\rho})}^{3/2}.
\end{aligned}\label{Boundaryuse2}
\ee
Set $ 0<s=\al_{\va}^{-1/2}\rho<r_2/10 $. By \eqref{rho210rho1}, we have
$$
\dist(x_1,\pa U)=\al_{\va}^{-1/2}\rho_2<\f{r_2}{10}\leq\f{r}{10}<\f{r_{U,2}}{10}.
$$
If $ 0<s<\dist(x_1,\pa U) $, then $ \pa U\cap B_s=\pa U_1\cap B_{\rho}=\emptyset $. For this case, there is nothing to prove. If $ s>\dist(x_1,\pa U) $, we can choose $ x_2\in\pa U $ such that $ |x_1-x_2|=\dist(x_1,\pa U) $. As a result, $
B_s(x_1)\subset B_{2s}(x_2)$ and $ B_{4s}(x_2)\subset B_{5s}(x_1) $. By applying Lemma \ref{BoundaryUse}, we have
\begin{align*}
\|\na(\Q_{\va}-\Q_{\va}^*)\|_{L^{\ift}(\pa U\cap B_s(x_1))}&\leq \|\na(\Q_{\va}-\Q_{\va}^*)\|_{L^{\ift}(\pa U\cap B_{2s}(x_2))}\\
&\leq Cs^{-1}\|\Q_{\va}-\Q_{\va}^*\|_{L^{\ift}(U\cap B_{5s}(x_1))}\\
&\quad\quad+Cs^{-1/2}\|\na\Q_{\va}\|_{L^2(U\cap B_{5s}(x_1))}\|\na\Q_{\va}\|_{L^{\ift}(U\cap B_{5s}(x_1))}\\
&\quad\quad+Cs^{1/4}\|\na\Q_{\va}\|_{L^2(U\cap B_{5s}(x_1))}^{1/2}\|\na\Q_{\va}\|_{L^{\ift}(U\cap B_{5s}(x_1))}^{3/2},
\end{align*}
which directly implies \eqref{Boundaryuse2} by the change of variables. For $ \PP_{\va}^* $, it satisfies $ \ol{\va}^2\Delta\PP_{\va}^*+\Psi(\PP_{\va}^*)=0 $. For $ 0<\delta_0<1 $ sufficiently small, we can use \eqref{Deltanablacontrol} to obtain
\be
|\Delta\PP_{\va}^*|\leq C|\na\PP_{\va}|^2.\label{DeltaPPva}
\ee
By using this, Lemma \ref{LempRe}, and arguments in the proof of \eqref{Boundaryuse2}, we have, if $ 0<\rho<\rho_1/10 $, then
\be
\begin{aligned}
\|\na\PP_{\va}^*\|_{L^{\ift}(\pa U_1\cap B_{\rho})}&\leq C\|\na_{\pa U_1}\PP_{\va}\|_{L^{\ift}(\pa U_1\cap B_{5\rho})}+C\rho\|D_{\pa U_1}^2\PP_{\va}\|_{L^{\ift}(\pa U_1\cap B_{5\rho})}\\
&\quad\quad+C\rho^{-3/2}\|\na\PP_{\va}\|_{L^2(U_1\cap B_{5\rho})}+C\rho^{1/4}\|\Delta\PP_{\va}\|_{L^4(U_1\cap B_{5\rho})}\\
&\leq C\al_{\va}^{-1/2}b_{\va}+C\rho^{-3/2}\|\na\PP_{\va}\|_{L^2(U_1\cap B_{5\rho})}\\
&\quad\quad+C\rho^{1/4}\|\na\PP_{\va}\|_{L^2(U_1\cap B_{5\rho})}^{1/2}\|\na\PP_{\va}\|_{L^{\ift}(U_1\cap B_{5\rho})}^{3/2}.
\end{aligned}
\ee
This, together with \eqref{wva4boun}, \eqref{Boundenwva}, \eqref{Boundaryuse2}, and the fact that $ \PP_{\va}\in\cN $ on $ \pa U_1 $, implies that
$$
\sup_{\pa U\cap B_{\rho}}w_{\va,1}\leq C_1\(\f{1}{C_0}+\f{(E_{\va}+\delta_{\va}^2)}{\rho^2}+\rho E_{\va}^{1/2}+E_{\va}^{1/2}\):=C_1C_{\va}(\rho)
$$
for any $ 0<\rho<\rho_1/10 $. Set
$$
w_{\va,2}=\left\{\begin{aligned}
&\max\left\{C_1C_{\va}(\rho),w_{\va,1}\right\}&\text{ in }&U_1\cap B_{\rho},\\
&C_1C_{\va}(\rho)&\text{ in }&B_{\rho}\backslash U_1.
\end{aligned}\right.
$$
Using \eqref{HarBa}, we have $ w_{\va,2}\in W^{1,\ift}(B_{\rho}) $ and $
-\Delta w_{\va,2}\leq Cw_{\va,2} $ in $ B_{\rho} $. By applying \eqref{Boundenwva}, Lemma \ref{Hanarck}, and the fact that $ w_{\va,1}(0)=1 $, we get
$$
1\leq\f{C}{\rho^3}\int_{B_{\rho}}w_{\va,2}\ud y\leq C_2C_{\va}(\rho).
$$
This implies that for any $ 0<\rho<\rho_1/10 $, $
1\leq C_2C_{\va}(\rho) $. Now, we choose $ E_0>0 $ and $ \delta_0>0 $ such that
$$
C_0>8C_2,\,\,E_0^{1/2}<\f{1}{8C_2},\,\,(E_0+\delta_0^2)^{4/3}<\f{1}{16C_2},\text{ and }\f{E_0^{1/2}(E_0+\delta_0^2)^{1/3}}{2}+E_0^{1/2}<\f{1}{4C_2},
$$
For $ \rho_3=(E_{\va}+\delta_{\va}^2)^{1/3} $, we have $
C_2C_{\va}(\rho_3)<3/8 $, which implies that $ \rho_1/10<\rho_3 $ and then
$$
1\leq C_2C_{\va}\(\f{\rho_1}{20}\)\leq \f{1}{8}+\f{C(E_{\va}+\delta_{\va}^2)}{\rho_1^2}+\f{1}{4}.
$$
As a result, $ M_{\va}\leq 4\rho_1^2\leq C(E_{\va}+\delta_{\va}^2)^{1/2} $, which completes the proof.
\end{proof}

\subsection{Higher order estimates}

In this subsection, we deal with the higher regularity estimates for solutions of \eqref{E-L}. The method in the proof is based on bootstrap arguments.

\begin{lem}\label{Ckestimate}
Let $ 0<\va<1 $. Assume that $ \Q_{\va}\in C^{\ift}(B_r,\Ss_0) $ is a solution of $ -\va^2\Delta\Q_{\va}+\Psi(\Q_{\va})=0 $ in $ B_r $ satisfying
\be
\|(|\tr \Y_{\va}|+|h_{\va}|+|\na\Q_{\va}|)\|_{L^{\ift}(B_r)}\leq C_0.\label{0deriva}
\ee
For any $ j\in\Z_{\geq 0} $ and $ 0<\rho<r $, there holds
$$
\|(|D^j(\tr\Y_{\va})|+|D^jh_{\va}|+|D^{j+1}\Q_{\va}|)\|_{L^{\ift}(B_{\rho})}\leq C(1+(r-\rho)^{-j}),
$$
where $ \Y_{\va},h_{\va} $ are given by \eqref{Xva}, \eqref{hva}, and $ C>0 $ is a constant depending only on $ \cA,C_0,j $. 
\end{lem}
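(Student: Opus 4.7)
The plan is to derive a closed system of PDEs for the three quantities $\Q_\va$, $h_\va$, and $\tr\Y_\va$ and then bootstrap by strong induction on $j$. First, using the defining relation \eqref{rstar} and Taylor-expanding $\Psi$ around $\cN$, the Euler--Lagrange equation \eqref{E-L} rewrites as
\[
\Delta\Q_\va = (a_4+4a_6r_*^2)\,h_\va\,\Q_\va + a_6'(\tr\Y_\va)\!\left(\Q_\va^2 - \tfrac{\tr\Q_\va^2}{3}\I\right) + \va^2 R_0(\Q_\va,h_\va),
\]
with $R_0$ polynomial with coefficients depending only on $\cA$, so that $\|\Delta\Q_\va\|_{L^\infty(B_r)}\leq C$ uniformly in $\va$ by \eqref{0deriva}. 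Next, applying $\Delta$ to $\tr\Q_\va^2$ and $\tr\Q_\va^3$, substituting from \eqref{E-L}, and using the Cayley--Hamilton identity $\tr\Q^4=\tfrac12(\tr\Q^2)^2$ valid for traceless $3\times 3$ matrices, one derives the Yukawa-type system
\begin{align*}
-\va^2\Delta h_\va + c_1\,h_\va &= -2|\na\Q_\va|^2 + \va^2\,G_1(\Q_\va,h_\va,\tr\Y_\va),\\
-\va^2\Delta(\tr\Y_\va) + c_2\,\tr\Y_\va &= -6\tr(\Q_\va\pa_k\Q_\va\pa_k\Q_\va) + \va^2\,G_2(\Q_\va,h_\va,\tr\Y_\va),
\end{align*}
where $c_1=4r_*^2(a_4+4a_6r_*^2)>0$ and $c_2=2a_6'r_*^4>0$ depend only on $\cA$, and $G_1, G_2$ are polynomial. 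The strict positivity of $c_1, c_2$ reflects the non-degeneracy of the Hessian of $f_b$ in directions normal to $\cN$, cf.\ \eqref{d2es}.

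The main argument is by strong induction on $j$. The base case $j=0$ is the hypothesis \eqref{0deriva}. For the inductive step, I fix $\rho<\rho'<r$ with $\rho'-\rho \sim (r-\rho)/j$ so that iterating on nested balls absorbs the $(r-\rho)^{-j}$ scaling, and I assume the conclusion on $B_{\rho'}$ for all indices up to $j-1$. Differentiating the $\Q_\va$-equation $j-1$ times yields $\Delta(\pa^\alpha\Q_\va) = \pa^\alpha F$ for $|\alpha|=j-1$, whose right-hand side is in $L^\infty(B_{\rho'})$ with a bound furnished by the induction hypothesis; interior $W^{2,p}$ estimates with $p$ large, together with Morrey embedding, then give $\|D^{j+1}\Q_\va\|_{L^\infty(B_\rho)}\leq C(1+(r-\rho)^{-(j+1)})$. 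Simultaneously, differentiating the Yukawa equations $j$ times, one has $-\va^2\Delta(\pa^\alpha h_\va) + c_1\pa^\alpha h_\va = \pa^\alpha[-2|\na\Q_\va|^2] + \va^2\pa^\alpha G_1$ for $|\alpha|=j$, and analogously for $\tr\Y_\va$. A cutoff-based maximum-principle argument for $-\va^2\Delta + c_i$ (whose positive zeroth-order term produces $L^\infty$ control uniform in $\va$) then bounds $\|D^j h_\va\|_{L^\infty(B_\rho)}$ and $\|D^j\tr\Y_\va\|_{L^\infty(B_\rho)}$ in terms of $\|D^{j+1}\Q_\va\|_{L^\infty(B_{\rho'})}$ through the gradient-squared source terms, plus lower-order quantities already controlled by induction.

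The principal obstacle is the coupling of the three unknowns: at each order, derivatives of $\Q_\va$ require derivatives of $(h_\va,\tr\Y_\va)$ through $F$, which themselves require higher derivatives of $\Q_\va$ through $|\na\Q_\va|^2$ and $\tr(\Q_\va\na\Q_\va\na\Q_\va)$. The key to unwinding this is the structure of the Yukawa equations, which express $h_\va, \tr\Y_\va$ essentially algebraically in terms of these gradient quantities up to an $\va^2$-small correction that can be absorbed order by order. The $\va$-independence of all constants ultimately comes from the strict positivity $c_1,c_2>0$ and the scale-invariant form of interior elliptic estimates for $\Delta\Q_\va = F$; the factor $(r-\rho)^{-j}$ arises naturally from the nested-ball iteration used to pass from induction hypothesis on $B_{\rho'}$ to conclusion on $B_\rho$ at each step.
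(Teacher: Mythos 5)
Your overall strategy matches the paper's: rewrite \eqref{E-L} to isolate $\Delta\Q_\va$ in terms of $h_\va$, $\tr\Y_\va$; derive Yukawa-type equations $-\va^2\Delta+c_i$ for $h_\va$ and $\tr\Y_\va$ with positive $c_i$ and gradient-squared sources; exploit the associated maximum principle to get $\va$-uniform bounds; and induct on the derivative order. You also correctly identify the principal obstacle: the three quantities are coupled at the same level of differentiability.

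However, there is a genuine gap in the elliptic step, and the resolution of the circularity is mischaracterized. You claim that from $\Delta(\partial^\alpha\Q_\va)=\partial^\alpha F$ with $|\alpha|=j-1$ and $\partial^\alpha F\in L^\infty$, interior $W^{2,p}$ estimates and Morrey embedding give $\|D^{j+1}\Q_\va\|_{L^\infty}$. This is false: $W^{2,p}$ estimates for $\Delta u=f$ with $f\in L^\infty$ only give $D^2u\in L^p$ for $p<\infty$ (equivalently $Du\in C^{0,\alpha}$); the top derivative lands in $\mathrm{BMO}$, not $L^\infty$. To bound the next $L^\infty$ derivative of $\Q_\va$ you must use either Schauder (requiring $C^{0,\alpha}$ of the source, hence $D^j$ of $h_\va,\tr\Y_\va$ — precisely what you don't have yet) or a Bernstein-type interpolation estimate $\|\nabla u\|_{L^\infty}^2\lesssim\|\Delta u\|_{L^\infty}\|u\|_{L^\infty}+\dist^{-2}\|u\|_{L^\infty}^2$ (the paper's Lemma \ref{Intp}), which again requires $D^{j+1}F$, hence $D^{j+1}(h_\va,\tr\Y_\va)$. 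In either route the circularity is real and cannot be sidestepped by your chosen off-by-one differentiation count.

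Related to this, your claim that the coupling is "absorbed order by order" via the $\va^2$-small Yukawa corrections does not capture the actual mechanism. The dangerous sources in the Yukawa equations — $|\nabla\Q_\va|^2$ and $\tr(\Q_\va\nabla\Q_\va\nabla\Q_\va)$ — are $O(1)$, not $O(\va^2)$, and their $j$-th derivatives require $D^{j+1}\Q_\va$, i.e., the very quantity you are simultaneously trying to bound. The paper resolves this by applying the Bernstein estimate and then Cauchy's inequality to isolate the top-order group $\Phi_\va$ with a tunable small prefactor $\delta$, yielding an inequality of the schematic form $\Phi_\va(s)\le\delta\Phi_\va(t)+C(1+(t-s)^{-1})D_\va(t,j)$, and then closes it by a geometric iteration over nested balls $\rho_{i+1}-\rho_i=\tfrac{2^i}{3^{i+1}}(t-s)$. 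This $\delta$-absorption (driven by Cauchy's inequality, not by smallness of $\va$) followed by the geometric iteration is the key missing step in your argument. Your nested-ball choice $\rho'-\rho\sim(r-\rho)/j$ is a different heuristic and does not by itself address the circular dependency at each fixed order.

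A minor computational point: your Yukawa coefficient $c_1=4r_*^2(a_4+4a_6r_*^2)$ differs from the paper's $2a_2=4r_*^2(a_4+2a_6r_*^2)$; the discrepancy arises from a cross term $4a_6(\tr\Y_\va)(\tr\Q_\va^2)$ in the $h_\va$-equation that the paper keeps on the right-hand side as a source. This is harmless (only positivity of $c_1$ matters), but worth noting.
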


\begin{proof}
To begin with, for $ 0<s\leq r $ and $ j\in\Z_{\geq 0} $, we define 
$$
\begin{aligned}
d_{\va}(s,j)&:=\left\|\(\sum_{\ell=0}^j|D^{\ell}(\tr\Y_{\va})|+\sum_{\ell=0}^j|D^{\ell}h_{\va}|+\sum_{\ell=0}^{j+1}|D^{\ell}\Q_{\va}|\)\right\|_{L^{\ift}(B_s)},\\
D_{\va}(s,j)&:=d_{\va}(s,j)+\sum_{\substack{\ell_1,\ell_2,...,\ell_p,\ell_1+\ell_2+\cdots+\ell_p=j+1,\\ p\geq 2,\,\,\forall q\geq 1,\,\,\ell_q\geq 1}}\prod_{k=1}^pd_{\va}(s,\ell_k),\\
\Phi_{\va}(s,j)&:=\|(|D^{j+1}(\tr\Y_{\va})|+|D^{j+1}h_{\va}|+|D^{j+2}\Q_{\va}|)\|_{L^{\ift}(B_{s})}.
\end{aligned}
$$
For $ \Q_{\va} $, using $ -\va^2\Delta\Q_{\va}+\Psi(\Q_{\va})=0 $, we have
\be
\Delta\Q_{\va}
=2a_6(\tr\Y_{\va})\Q_{\va}+a_6'(\tr\Y_{\va})\(\Q_{\va}^2-\f{1}{3}(\tr\Q_{\va}^2)\I\)+\f{a_2}{2r_*^2}h_{\va}\Q_{\va}.\label{Qeq}
\ee
For $ \Y_{\va} $, we can obtain 
$$
\va^2\Delta\Y_{\va}=3\Delta\Q_{\va}\Q_{\va}^2+2\pa_i\Q_{\va}\Q_{\va}\pa_i\Q_{\va}+2\Q_{\va}\pa_i\Q_{\va}\pa_i\Q_{\va}+2\pa_i\Q_{\va}\pa_i\Q_{\va}\Q_{\va}-r_*^2\Delta\Q_{\va}.
$$
Taking the trace on both sides and using \eqref{Qeq}, it follows that
\be
\begin{aligned}
\va^2\Delta(\tr\Y_{\va})&=6a_6\va^2(\tr\Y_{\va})^2+\f{a_6'}{2}(\va^2h_{\va}+2r_*^2)^2(\tr\Y_{\va})\\
&\quad\quad+\f{3a_2}{2r_*^2}\va^2h_{\va}(\tr\Y_{\va})+6\tr(\na\Q_{\va}\na\Q_{\va}\Q_{\va}).
\end{aligned}\label{Xeq}
\ee
Taking derivatives $ D^{j+1} $ on both sides of \eqref{Qeq}, we have, for $ 0<t\leq r $,
$$
\|\Delta(D^{j+1}\Q_{\va})\|_{L^{\ift}(B_{t})}\leq C(\|D^{j+1}(\tr\Y_{\va})\|_{L^{\ift}(B_{t})}+\|D^{j+1}h_{\va}\|_{L^{\ift}(B_{t})}+D_{\va}(t,j)).
$$
Consequently, for $ 0<s<t\leq r $ and $ \delta>0 $, we have
\be
\begin{aligned}
\|D^{j+2}\Q_{\va}\|_{L^{\ift}(B_s)}&\leq C(s-t)^{-1}\|D^{j+1}\Q_{\va}\|_{L^{\ift}(B_t)}+C\|\Delta(D^{j+1}\Q_{\va})\|_{L^{\ift}(B_t)}^{1/2}\|D^{j+1}\Q_{\va}\|_{L^{\ift}(B_t)}^{1/2}\\
&\leq C(1+(t-s)^{-1})D_{\va}(t,j)+\delta(\|D^{j+1}(\tr\Y_{\va})\|_{L^{\ift}(B_{t})}+\|D^{j+1}h_{\va}\|_{L^{\ift}(B_{t})})\\
&\leq C(1+(t-s)^{-1})D_{\va}(t,j)+\delta\Phi_{\va}(t,j),
\end{aligned}\label{QE1}
\ee
where for the first inequality, we have used Lemma \ref{Intp} and for the second inequality, we have used \eqref{Cauchy} and the fact $
\|D^{j+1}\Q_{\va}\|_{L^{\ift}(B_{t})}\leq D_{\va}(t,j) $. If we take derivatives $ D^j $ on both sides of \eqref{Xeq}, there holds
\begin{align*}
\|\Delta(D^j(\tr\Y_{\va}))\|_{L^{\ift}(B_{t})}\leq C\va^{-2}(\|D^j(\tr\Y_{\va})\|_{L^{\ift}(B_{t})}+\|D^jh_{\va}\|_{L^{\ift}(B_{t})}+D_{\va}(t,j))
\end{align*}
for $ 0<t<r $. Again by using Lemma \ref{Intp}, we have
\be
\|D^{j+1}(\tr\Y_{\va})\|_{L^{\ift}(B_{t})}\leq C(\va^{-1}+(t-s)^{-1})D_{\va}(t,j),\label{trXE}
\ee
for any $ 0<s<t\leq r $. Now we write the equation \eqref{Xeq} as the form
\be
-\va^2\Delta(\tr \Y_{\va})+2a_6'r_*^4(\tr\Y_{\va})=h_{\va,1}+\va^2h_{\va,2},\label{Xeq2}
\ee
where $ h_{\va,1} $ and $ h_{\va,2} $ are given by
\begin{align*}
h_{\va,1}&=-6\tr(\na\Q_{\va}\na\Q_{\va}\Q_{\va}),\\
h_{\va,2}&=-6a_6(\tr\Y_{\va})^2-\f{3a_2+4a_6'r_*^4}{2r_*^2}h_{\va}(\tr\Y_{\va})-\f{a_6'}{2}\va^2h_{\va}^2(\tr\Y_{\va})^2.
\end{align*}
By \eqref{QE1}, \eqref{trXE}, and direct computations, we have, for any $ \delta>0 $,
\begin{align*}
\|D^{j+1}h_{\va,1}\|_{L^{\ift}(B_{(s+t)/2})}&\leq C(1+(t-s)^{-1})D_{\va}(t,j)+\delta\Phi_s(t,j),\\
\|D^{j+1}h_{\va,2}\|_{L^{\ift}(B_{(s+t)/2})}&\leq C(\va^{-1}+(t-s)^{-1})D_{\va}(t,j)+C\|D^{j+1}h_{\va}\|_{L^{\ift}(B_{t})}.
\end{align*}
Combined with \eqref{trXE}, \eqref{Xeq2}, and Lemma \ref{corau}, we deduce
\be
\begin{aligned}
\|D^{j+1}(\tr\Y_{\va})\|_{L^{\ift}(B_{s})}&\leq C\|D^{j+1}(h_{\va,1}+\va^2h_{\va,2})\|_{L^{\ift}(B_{(s+t)/2})}\\
&\quad\quad+C\min\{1,\va(t-s)^{-1}\}\|D^{j+1}(\tr\Y_{\va})\|_{L^{\ift}(B_{(s+t)/2})}\\
&\leq\delta\Phi_{\va}(t,j)+C(1+(t-s)^{-1})D_{\va}(t,j)+C\va^2\|D^{j+1}h_{\va}\|_{L^{\ift}(B_{t})}.
\end{aligned}\label{trXz}
\ee
For the function $ h_{\va} $, by applying \eqref{Qeq}, we have
\be
\va^2\Delta h_{\va}=4a_6(\tr\Y_{\va})(\tr\Q_{\va}^2)+2a_6'\va^2(\tr\Y_{\va})^2+a_2r_*^{-2}h_{\va}(\va^2h_{\va}+2r_*^2)+2|\na\Q_{\va}|^2.\label{heq1}
\ee
Taking derivatives $ D^j $ on both sides of \eqref{heq1} and using Lemma \ref{Intp}, we get $ \|\Delta(D^jh_{\va})\|_{L^{\ift}(B_{t})}\leq C\va^{-2}D_{\va}(t,j) $, and then
\be
\|D^{j+1}h_{\va}\|_{L^{\ift}(B_{s})}\leq C(\va^{-1}+(t-s)^{-1})D_{\va}(t,j).\label{hvaes1}
\ee
This, together with \eqref{trXz}, imply that
\be
\|D^{j+1}(\tr\Y_{\va})\|_{L^{\ift}(B_{s})}\leq C(1+(t-s)^{-1})D_{\va}(t,j)+\delta\Phi_{\va}(t,j).\label{trX3}
\ee
Write the equation \eqref{heq1} in the form as
\be
-\va^2\Delta h_{\va}+2a_2h_{\va}=h_{\va,3}+\va^2h_{\va,4},\label{heq2}
\ee
where 
\begin{align*}
h_{\va,3}&=-4a_6(\tr\Y_{\va})(\tr\Q_{\va}^2)-2|\na\Q_{\va}|^2,\\
h_{\va,4}&=-2a_6'(\tr\Y_{\va})^2-\f{a_2}{r_*^2}h_{\va}^2.
\end{align*}
Using \eqref{QE1}, \eqref{hvaes1}, and \eqref{trX3}, for any $ 0<s<t\leq r $ and $ \delta>0 $, there hold
\begin{align*}
\|D^{j+1}h_{\va,3}\|_{L^{\ift}(B_{(s+t)/2})}&\leq C(1+(t-s)^{-1})D_{\va}(t,j)+\delta\Phi_{\va}(t,j),\\
\|D^{j+1}h_{\va,4}\|_{L^{\ift}(B_{(s+t)/2})}&\leq C(\va^{-1}+(t-s)^{-1})D_{\va}(t,j).
\end{align*}
Taking derivatives $ D^{j+1} $ on both sides of \eqref{heq2} and using Lemma \ref{au}, it follows that for any $ 0<s<t\leq r $ and $ \delta>0 $, we have
\begin{align*}
\|D^{j+1}h_{\va}\|_{L^{\ift}(B_{s})}&\leq C\|D^{j+1}(h_{\va,3}+\va^2h_{\va,4})\|_{L^{\ift}(B_{(s+t)/2})}\\
&\quad\quad+C\min\{1,\va(t-s)^{-1}\}\|D^{j+1}h_{\va}\|_{L^{\ift}(B_{(s+t)/2})}\\
&\leq C(1+(t-s)^{-1})D_{\va}(t,j)+\delta\Phi_{\va}(t,j).
\end{align*}
This, combined with \eqref{QE1} and \eqref{trX3}, implies that
$$
\Phi_{\va}(s)\leq\delta\Phi_{\va}(t)+C(1+(t-s)^{-1})D_{\va}(t,j)\text{ for any }\delta>0,
$$
where $ C>0 $ depends only on $ \cA $ and $ \delta $. For $ 0<s<t\leq r $, choosing $ \delta=1/2 $ and
\begin{align*}
\rho_0=s,\,\,\rho_{i+1}-\rho_i=\f{2^i}{3^{i+1}}(t-s).
\end{align*}
Now we have
$$
\Phi_{\va}(s)\leq\f{1}{2^i}\Phi_{\va}(\rho_i)+C(1+(t-s)^{-1})D_{\va}(s,j)\cdot\sum_{\ell=0}^{i-1}\(\f{3}{4}\)^{\ell}.
$$
Letting $ i\to+\ift $, we deduce
$$
\Phi_{\va}(s)\leq C(1+(t-s)^{-1})D_{\va}(t,j),
$$
which, together with \eqref{0deriva} and some interpolation inequalities, directly implies that
$$
d_{\va}(s,j+1)\leq C(1+(t-s)^{-1})D_{\va}(t,j).
$$
Consequently, the result follows by inductions on $ j $.
\end{proof}

\begin{cor}\label{BounCjcor}
Let $ 0<\va<1 $. Assume that $ U $ is a bounded Lipschitz domain with $ r_{U,0} $ and $ M_{U,0} $. If $ \Q_{\va}\in C^{\ift}(U\cap B_r(x_0),\Ss_0) $ is a solution of $ -\va^2\Delta\Q_{\va}+\Psi(\Q_{\va})=0 $ satisfying 
$$
\|(|\tr\Y_{\va}|+|h_{\va}|+|\na\Q_{\va}|)\|_{L^{\ift}(U\cap B_r(x_0))}\leq C_0
$$
for some $ 0<r<r_{U,0} $, $ x_0\in\pa U $, and $ C_0>0 $, then for any $ y\in U\cap B_{r/2}(x_0) $, there holds
\begin{align*}
(|D^j(\tr\Y_{\va})|+|D^jh_{\va}|+|D^{j+1}\Q_{\va}|)(y)\leq C\dist^{-j}(y,\pa U\cap B_r(x_0)),
\end{align*}
where $ C>0 $ depends only on $ \cA,r_0, $ and $ C_0 $.
\end{cor}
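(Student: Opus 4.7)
The plan is to reduce to the interior estimate of Lemma \ref{Ckestimate} by a simple localization argument. Fix $y \in U \cap B_{r/2}(x_0)$ and set $d := \dist(y, \pa U \cap B_r(x_0))$. The strategy is to exhibit a ball $B_d(y)$ sitting entirely inside the domain $U \cap B_r(x_0)$ on which the Euler-Lagrange equation holds, and then apply the interior estimate on that ball with inner radius $d/2$.

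First I would verify the inclusion $B_d(y) \subset U \cap B_r(x_0)$. Since $x_0 \in \pa U \cap B_r(x_0)$, we automatically get $d \leq |y - x_0| < r/2$. If $z \in \pa U$ realizes $\dist(y, \pa U)$, then $|z - x_0| \leq |z - y| + |y - x_0| < r/2 + r/2 = r$, so $z \in \pa U \cap B_r(x_0)$, which forces $d = \dist(y, \pa U)$. Hence $B_d(y) \subset U$, and for any $w \in B_d(y)$ one has $|w - x_0| \leq |w - y| + |y - x_0| < d + r/2 < r$, giving $B_d(y) \subset B_r(x_0)$.

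Next I would translate so that $y$ becomes the origin and apply Lemma \ref{Ckestimate} on the ball $B_d(y)$ with the choice of inner radius $\rho = d/2$. The hypothesis $\|(|\tr \Y_{\va}| + |h_{\va}| + |\na\Q_{\va}|)\|_{L^{\infty}(B_d(y))} \leq C_0$ is inherited from the assumption on $U \cap B_r(x_0)$, so the lemma yields
\[
\|(|D^j(\tr\Y_{\va})| + |D^j h_{\va}| + |D^{j+1}\Q_{\va}|)\|_{L^{\infty}(B_{d/2}(y))} \leq C\bigl(1 + (d/2)^{-j}\bigr),
\]
with $C$ depending only on $\cA$, $C_0$, and $j$. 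Since $d \leq r/2 \leq r_{U,0}/2$, the constant $1$ is absorbed into $d^{-j}$ up to a factor depending only on $r_{U,0}$ and $j$, yielding $C' d^{-j}$. Evaluating at the center $y$ gives the desired pointwise bound.

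The only subtlety, and the main (minor) obstacle, is confirming that the distance appearing in the conclusion, namely $\dist(y, \pa U \cap B_r(x_0))$, agrees with the full distance $\dist(y, \pa U)$, so that the ball $B_d(y)$ indeed avoids $\pa U$; this is exactly what Step 1 addresses using $y \in B_{r/2}(x_0)$ and $x_0 \in \pa U$. No boundary regularity of $U$ beyond being open is actually used at this level, although the hypothesis that $U$ is Lipschitz with $r_{U,0}$ enters through the constraint $r < r_{U,0}$ that appears in the statement.
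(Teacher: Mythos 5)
Your proof is correct and takes essentially the same route as the paper: pick the nearest boundary point, verify that the ball of radius $\dist(y,\pa U\cap B_r(x_0))$ about $y$ lies inside $U\cap B_r(x_0)$, and apply Lemma \ref{Ckestimate} with inner radius half that distance. The only difference is that you spell out the two inclusion checks (that the nearest point of $\pa U$ actually lands in $B_r(x_0)$, and that the ball fits inside $B_r(x_0)$), which the paper asserts without comment; your justification is correct and fills in that small omission.
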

\begin{proof}
For any $ y\in U\cap B_{r/2}(x_0) $, we choose $ y_0\in\pa U\cap B_r(x_0) $ such that $ |y-y_0|=\dist(y,\pa U\cap B_r(x_0)) $. Set $ \rho:=|y-y_0| $. We have $ B_{\rho}(y)\subset U\cap B_r(x_0) $. By using Lemma \ref{Ckestimate}, we deduce
\begin{align*}
(|D^j(\tr\Y_{\va})|+|D^jh_{\va}|+|D^{j+1}\Q_{\va}|)(y)&\leq \|(|D^j(\tr\Y_{\va})|+|D^jh_{\va}|+|D^{j+1}\Q_{\va}|)\|_{L^{\ift}(B_{\rho/2}(y))}\\
&\leq C(1+(\rho-\rho/2)^{-j})\leq C\rho^{-j},
\end{align*}
which directly implies the result.
\end{proof}

\begin{lem}\label{Cknearlabel}
Let $ 0<\va<1 $ and $ U $ be a bounded $ C^{3,1} $ domain with $ r_{U,3} $ and $ M_{U,3} $. Assume that $ \Q_{\va}\in C^{\ift}(U\cap B_r(x_0),\Ss_0)\cap C^3(\ol{U\cap B_r(x_0)},\Ss_0) $ is a solution of
$$
\left\{\begin{aligned}
-\va^2\Delta\Q_{\va}+\Psi(\Q_{\va})&=0&\text{ in }& U\cap B_r,\\
\Q_{\va}&=\Q_{b,\va}&\text{ on }&\pa  U\cap B_r,
\end{aligned}\right.
$$
for some $ 0<r<r_{U,3},x_0\in\pa U,C_0>0 $, and $ \Q_{b,\va}\in C^3(\pa U\cap B_r(x_0)) $, satisfying
$$
\|(|\na_{\pa U}\Q_{b,\va}|+|D_{\pa U}^2\Q_{b,\va}+|D_{\pa U}^3\Q_{b,\va}|)\|_{L^{\ift}(U\cap B_r(x_0))}\leq C_0.
$$
If there is $ C_1>0 $, such that
\be
\|(|\tr\Y_{\va}|+|h_{\va}|+|\na\Q_{\va}|)\|_{L^{\ift}(U\cap B_r(x_0))}\leq C_1,\label{assumptionC1hva}
\ee
then for any $ \mu\in(0,1) $ and $ j\in\Z_{\geq 0} $, there holds 
$$
|D^{j+2}\Q_{\va}(y)|\leq Cr^{-1+\mu}\dist^{-(j+\mu)}(y,\pa U)\text{ for any }y\in \pa U\cap B_{r/2}(x_0),
$$
where $ C>0 $ depends only on $ \cA,C_0,C_1,r_{U,3},M_{U,3},j $, and $ \mu $.
\end{lem}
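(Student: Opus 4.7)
The plan is to combine boundary Schauder-type regularity with the interior estimates of Lemma~\ref{Ckestimate} via a bootstrap argument, starting from the key observation that under the stated hypotheses $\|\Delta\Q_{\va}\|_{L^{\ift}(U\cap B_r(x_0))}$ is uniformly bounded in $\va$. Indeed, using the defining identity $a_2=2a_4r_*^2+4a_6r_*^4$ from \eqref{rstar} together with $\tr\Q_{\va}^2=2r_*^2+\va^2h_{\va}$ and $\tr\Q_{\va}^3=\va^2\tr\Y_{\va}$, the expression $\Psi(\Q_{\va})$ acquires an overall factor of $\va^2$ after cancellation, so $\Delta\Q_{\va}=\va^{-2}\Psi(\Q_{\va})$ is bounded in $L^{\ift}$ by a constant depending only on $\cA$ and $C_1$.

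I would then flatten $\pa U\cap B_r(x_0)$ by a $C^{3,1}$ diffeomorphism, reducing the problem to one on a half-ball $B_{r'}^+$ with $r'\sim r$, where the transformed map $\wt\Q_{\va}$ satisfies a uniformly elliptic second-order system with $C^{2,1}$ coefficients and $L^{\ift}$ right-hand side, together with Dirichlet data $\wt\Q_{b,\va}$ of controlled $C^3$ norm. Standard boundary $C^{1,\mu}$ regularity for the Dirichlet problem with $L^{\ift}$ source yields the base estimate $\|\wt\Q_{\va}\|_{C^{1,\mu}(\overline{B_{r/2}^+})}\leq Cr^{-\mu}$ for any $\mu\in(0,1)$, after the usual rescaling.

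Next, I would bootstrap up to $C^{3,\mu}$ regularity all the way to the boundary. Tangential differentiation produces linear elliptic systems for $\pa_t\wt\Q_{\va}$ whose right-hand sides are controlled by lower-order derivatives already estimated and whose Dirichlet data are tangential derivatives of $\wt\Q_{b,\va}$ bounded in $C^{2,\mu}$ and $C^{1,\mu}$ respectively. Normal derivatives are recovered from tangential ones and the equation itself, while the renormalized quantities $\tr\Y_{\va}$ and $h_{\va}$ are bootstrapped in parallel from their own equations \eqref{Xeq} and \eqref{heq1}, whose right-hand sides are $L^{\ift}$-bounded under the hypotheses. Two iterations of this scheme produce a uniform-in-$\va$ bound $\|\wt\Q_{\va}\|_{C^{3,\mu}(\overline{B_{r/2}^+})}\leq Cr^{-1+\mu}$.

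Finally, I would convert this boundary $C^{3,\mu}$ bound into the pointwise form asserted in the lemma by a scaling argument. For $y\in U\cap B_{r/2}(x_0)$ with $d:=\dist(y,\pa U)$, if $d\geq r/100$ then Corollary~\ref{BounCjcor} already gives a stronger bound. If $d<r/100$, pick $y_0\in\pa U$ nearest to $y$ and rescale by $z=(x-y_0)/d$: the rescaled map on the unit half-ball inherits a $C^{3,\mu}$ bound of order $(d/r)^{1-\mu}$, and interior derivative estimates for the corresponding elliptic system on the unit scale give control of $D^{j+2}$ by the $C^{3,\mu}$ seminorm times a factor $d^{\mu-j}$, which after rescaling back yields $|D^{j+2}\Q_{\va}(y)|\leq Cr^{-1+\mu}d^{-(j+\mu)}$. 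The main obstacle lies in the bootstrap step: one must simultaneously control $\tr\Y_{\va}$ and $h_{\va}$ up to the boundary through \eqref{Xeq}, \eqref{heq1}, while tracking the scaling factors carefully enough to produce $Cr^{-1+\mu}$ without letting any spurious power of $\va^{-1}$ enter the final estimate.
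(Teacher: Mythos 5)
Your opening observation is correct and matches the paper: combining $\tr\Q_\va^2 = 2r_*^2 + \va^2 h_\va$, $\tr\Q_\va^3 = \va^2\tr\Y_\va$, and \eqref{rstar} indeed shows that $\Delta\Q_\va = \va^{-2}\Psi(\Q_\va)$ is uniformly bounded in $L^\infty(U\cap B_r(x_0))$ under \eqref{assumptionC1hva}; this is precisely \eqref{Qeq}, and it drives both proofs. The base estimate $\|\wt\Q_\va\|_{C^{1,\mu}}\leq Cr^{-\mu}$ from boundary regularity with $L^\infty$ source is also fine.

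The gap is in the bootstrap to a \emph{uniform-in-$\va$} $C^{3,\mu}(\overline{B_{r/2}^+})$ bound. When you differentiate $\Delta\Q_\va$ tangentially, the right-hand side produces terms involving $\nabla(\tr\Y_\va)$ and $\nabla h_\va$. These quantities solve singularly perturbed equations of the form $-\va^2\Delta u + c u = f$ (see \eqref{Xeq}, \eqref{heq1}), which generically develop a boundary layer of width $O(\va)$ because nothing constrains the boundary trace of $\tr\Y_\va$ or $h_\va$ to match the interior profile (the hypotheses bound $\Q_{b,\va}$ in $C^3$, not the boundary values of $\tr\Y_\va$ or $h_\va$). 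Consequently $\nabla(\tr\Y_\va)$ and $\nabla h_\va$ are \emph{not} uniformly bounded near $\pa U$: the interior estimate (Lemma \ref{Ckestimate}, Corollary \ref{BounCjcor}) only gives blow-up of order $\dist^{-1}(\cdot,\pa U)$, and pointwise they can be $O(\va^{-1})$ in the layer. So $\pa_t(\va^{-2}\Psi(\Q_\va))$ is not in $L^\infty$ uniformly, no boundary $C^{2,\mu}$ (let alone $C^{3,\mu}$) bound can follow, and the claim ``two iterations produce $\|\wt\Q_\va\|_{C^{3,\mu}}\leq Cr^{-1+\mu}$'' is false. Indeed the lemma's own conclusion for $j\geq 1$ exhibits the blow-up $\dist^{-(j+\mu)}$, which is incompatible with a uniform $C^{3,\mu}$ bound.

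The paper's proof works around exactly this obstruction. After flattening, it sets $\U_\va = \pa_{\vv^{(i)}}\Q_\va$ for $i=1,2$ (tangential derivatives only) and decomposes $\U_\va = \W_\va + \V_\va$, where $\W_\va$ is the harmonic extension of the boundary trace. The harmonic part is controlled by the $C^3$ data via Lemma \ref{LempRe}, while $\V_\va$ vanishes on the boundary and satisfies $|\Delta\V_\va(y)| \leq C\dist^{-1}(y,\pa U)$ by the weighted interior estimate \eqref{DeltaQvaes}; Lemma \ref{Boundarymu} then gives $|\V_\va(y)|\leq Cr^{\mu-1}\dist^{1-\mu}(y,\pa U)$, and interpolation upgrades this to $|\nabla\V_\va(y)|\leq Cr^{\mu-1}\dist^{-\mu}(y,\pa U)$ — a weighted bound, never a Hölder bound up to the boundary. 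The pure normal derivative $\pa_{\vv^{(3)}}^2\Q_\va$ is recovered from the boundedness of $\Delta\Q_\va$ and the tangential second derivatives, and the higher cases $j\geq 1$ follow by induction using Corollary \ref{BounCjcor} plus the interpolation Lemma \ref{Intp}. This decomposition, avoiding any attempt to control $\tr\Y_\va$ and $h_\va$ to higher order near $\pa U$, is the missing idea in your proposal.
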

\begin{proof}
We will give the proof by an induction on $ j $. It is divided into several steps. \smallskip

\noindent
\underline{\textbf{Step 1.}} Proof of the case $ j=0 $. Since $ U $ is a bounded $ C^{3,1} $ domain, then under a translation, we can assume that $ x_0=0 $ and $ U\cap B_{r_0}=\bH_{\psi}\cap B_{r_0} $,
where
$$
\bH_{\psi}=\{(y_1,y_2,y_3)\in\R^3:(y_1,y_2)\in\R^2,\,\,y_3>\psi(y_1,y_2)\}
$$
and $ \psi $ satisfies \eqref{Dkest} for $ k=3 $. For simplicity, we define $
T_{\rho}:=T(\rho,\psi)=\bH_{\psi}\cap B_{\rho} $. For $ y\in T_{7r/8} $, we choose $ y_0\in\pa\bH_{\psi} $ such that $
|y-y_0|=\dist(y,\pa\bH_{\psi}) $. Defining $ \rho:=|y-y_0|/10 $, we have $ B_{\rho}(y)\subset B_r $. In view of \eqref{Qeq} and Lemma \ref{Ckestimate}, for any $ y\in T_{7r/8} $, we have
\be
\begin{aligned}
|\na(\Delta\Q_{\va})(y)|&\leq C\|(|\na\Q_{\va}|+|\na(\tr\Y_{\va})|+|\na h_{\va}|)\|_{L^{\ift}(B_{\rho/10}(y))}\\
&\leq C(1+(\rho-\rho/2)^{-1})\\
&\leq C\dist^{-1}(y,\pa\bH_{\psi}).
\end{aligned}\label{DeltaQvaes}
\ee
We can define a smooth orthonormal frame $ \{\vv^{(i)}\}_{i=1}^3 $ by
$$
\vv^{(1)}(y),\vv^{(2)}(y)\in T_{(y_1,y_2,\psi(y_1,y_2))}\pa\bH_{\psi}
$$
and $ \vv^{(3)}=\vv^{(1)}\times\vv^{(2)} $ for any $ y\in T_r $. By the definition of $ \{\vv^{(i)}\}_{i=1}^3 $, we get, for any $ u\in C^{\ift}(\bH_{\psi}\cap B_r) $,
\be
|\na u|^2=\sum_{i=1}^3|\pa_iu|^2=\sum_{i=1}^3|\pa_{\vv^{(i)}}u|^2,\label{naRenew}
\ee
and
\be
\Delta u=\sum_{i=1}^3\pa_i^2u=\sum_{i=1}^3\pa_{\vv^{(i)}}^2u.\label{DeltaRepr}
\ee
Now, setting $ i=1,2 $ and $ \U_{\va}=\pa_{\vv^{(i)}}\Q_{\va} $, we define $ \W_{\va} $ as a solution of
$$
\left\{\begin{aligned}
\Delta\W_{\va}&=0&\text{ in }&T_{7r/8},\\
\W_{\va}&=\U_{\va}&\text{ on }&\pa T_{7r/8}.
\end{aligned}\right.
$$
We also define $
\W_{\va,1}(y):=\U_{\va}(y_1,y_2,\psi(y_1,y_2)) $, $
\W_{\va,2}(y):=\W_{\va}(y)-\W_{\va,1}(y) $, and $
\V_{\va}(y):=\U_{\va}(y)-\W_{\va}(y) $ for $ y\in T_{7r/8} $. As a result,
\be
\left\{\begin{aligned}
\Delta\W_{\va,2}&=-\Delta\W_{\va,1}&\text{ in }&T_{7r/8},\\
\W_{\va,2}&=0&\text{ on }&\pa T_{7r/8}.
\end{aligned}\right.\label{Wva2equ}
\ee
By using Lemma \ref{LempRe} with $ p=4 $, we have
\be
\begin{aligned}
\|\na\W_{\va}\|_{L^{\ift}(T_{2r/3})}&\leq\|\na\W_{\va,1}\|_{L^{\ift}(T_{2r/3})}+\|\na\W_{\va,2}\|_{L^{\ift}(T_{2r/3})}\\
&\leq C(\|\na\W_{\va,1}\|_{L^{\ift}(T_{3r/4})}+r^{-3/2}\|\na\W_{\va,2}\|_{L^2(T_{3r/4})}+r^{1/4}\|\Delta\W_{\va,1}\|_{L^4(T_{3r/4})})\\
&\leq C(\|\na\W_{\va,1}\|_{L^{\ift}(T_{3r/4})}+r^{-3/2}\|\na\W_{\va,2}\|_{L^2(T_{3r/4})}+r\|\Delta\W_{\va,1}\|_{L^{\ift}(T_{3r/4})})\\
&\leq C(\||\na_{\pa\bH_{\psi}}\U_{\va}|+r|D_{\pa\bH_{\psi}}^2\U_{\va}|\|_{L^{\ift}(T_{7r/8})}+r^{-3/2}\|\na\W_{\va,2}\|_{L^2(T_{3r/4})}).
\end{aligned}\label{Wvaestimate}
\ee
In view of \eqref{Wva2equ}, we get
\begin{align*}
\|\na\W_{\va,2}\|_{L^2(T_{3r/4})}&\leq Cr^{3/2}(\|\Delta\W_{\va,2}\|_{L^2(T_{7r/8})}+r^{-1}\|\W_{\va,2}\|_{L^{\ift}(T_{7r/8})})\\
&\leq Cr^{3/2}(\|\Delta\W_{\va,1}\|_{L^2(T_{7r/8})}+r^{-1}\|\W_{\va}\|_{L^{\ift}(T_{7r/8})}+r^{-1}\|\W_{\va,1}\|_{L^{\ift}(T_{7r/8})})\\
&\leq Cr^{3/2}(\|\Delta\W_{\va,1}\|_{L^2(T_{7r/8})}+Cr^{-1}\|\U_{\va}\|_{L^{\ift}(T_{7r/8})}),
\end{align*}
where for the first inequality, we have used boundary Caccioppoli inequality, for the second inequality, we have used the definition of $ \W_{\va,2} $, and for the last inequality, we have used the maximum principle. Combined with \eqref{Wvaestimate}, we can obtain
\be
\begin{aligned}
\|\na\W_{\va}\|_{L^{\ift}(U\cap B_{2r/3})}&\leq Cr^{-1}\|\U_{\va}\|_{L^{\ift}(U\cap B_{7r/8})}+C\||\na_{\pa\bH_{\psi}}\U_{\va}|+r|D_{\pa\bH_{\psi}}^2\U_{\va}|\|_{L^{\ift}(U\cap B_{7r/8})}\\
&\leq Cr^{-1}.
\end{aligned}\label{Cr}
\ee
Again, by the maximum principle, we have
$$
\|\V_{\va}\|_{L^{\ift}(T_{7r/8})}\leq C\|\U_{\va}\|_{L^{\ift}(T_{7r/8})}\leq C.
$$
In view of \eqref{DeltaQvaes}, we obtain
\be
|\Delta\V_{\va}(y)|\leq|\Delta\U_{\va}(y)|\leq C\dist^{-1}(y,\pa\bH_{\psi})\leq Cr^{\mu}\dist^{-\mu-1}(y,\pa\bH_{\psi}),\label{DeltaVva}
\ee
for any $ y\in T_{7r/8} $ and $ \V_{\va}=0 $ on $ \pa T_{7r/8} $. Consequently, Lemma \ref{Boundarymu} implies
\be
\begin{aligned}
|\V_{\va}(y)|&\leq C(r^{-1}\|\V_{\va}\|_{L^{\ift}(\bH_{\psi}\cap B_r)}\dist(y,\pa \bH_{\psi})+r^{\mu}\dist^{1-\mu}(y,\pa\bH_{\psi}))\\
&\leq Cr^{\mu-1}\dist^{1-\mu}(y,\pa\bH_{\psi})
\end{aligned}\label{Vvaift}
\ee
for any $ y\in T_{2r/3} $. By using \eqref{DeltaVva}, \eqref{Vvaift} and applying Lemma \ref{Intp}, we have
\begin{align*}
|\na\V_{\va}(y)|^2&\leq C\|\V_{\va}\|_{L^{\ift}(B_{R_1/2}(y))}(\|\Delta\V_{\va}\|_{L^{\ift}(B_{R_1/2}(y))}+R_1^{-2}\|\V_{\va}\|_{L^{\ift}(B_{R_1/2}(y))})\\
&\leq Cr^{2\mu-2}\dist^{-2\mu}(y,\pa\bH_{\psi})
\end{align*}
for any $ y\in T_{r/2} $, where $ R_1=\dist(y,\pa\bH_{\psi})/2 $. Combining \eqref{Cr}, we deduce
$$
|\na\U_{\va}(y)|\leq Cr^{\mu-1}\dist^{-\mu}(y,\pa\bH_{\psi})\text{ for any }y\in T_{r/2}.
$$
By \eqref{naRenew}, for $ (i,k)\neq(3,3) $, there holds
$$
|\pa_{\vv^{(i)}}\pa_{\vv^{(k)}}\Q_{\va}(y)|\leq Cr^{\mu-1}\dist^{-\mu}(y,\pa\bH_{\psi})
$$
when $ y\in T_{r/2} $. For $ i=k=3 $, we note that by \eqref{Qeq} and \eqref{assumptionC1hva}, there holds $ |\Delta\Q_{\va}|\leq C $. This, together with \eqref{DeltaRepr}, implies that $ |\pa_{\vv^{(3)}}\pa_{\vv^{(3)}}\Q_{\va}| $.\smallskip

\noindent
\underline{\textbf{Step 2}.} Assume that the result is true for $ j $, we prove for the case $ j+1 $. By \eqref{Qeq} and \eqref{BounCjcor}, we have $
|\Delta(D^{j+2}\Q_{\va})(y)|\leq C\dist^{-(j+2)}(y,\pa\bH_{\psi}) $ for any $ y\in T_{2r/3} $. Now the result follows from Lemma \ref{Intp}.
\end{proof}

\begin{lem}\label{uvavvvainterior}
Let $ 0<\va<1 $ and $ \Q_{\va}\in C^{\ift}(B_r,\Ss_0) $ be a solution of $ -\va^2\Delta\Q_{\va}+\Psi(\Q_{\va})=0 $ in $ B_r $ with $ r>0 $. Assume that
$$
\|(|\tr\Y_{\va}|+|h_{\va}|+|\na\Q_{\va}|)\|_{L^{\ift}(B_r)}\leq C_0
$$
for some $ C_0>0 $. For any $ j\in\Z_{\geq 0} $ and $ 0<s<r $, there holds,
$$
\|(|D^ju_{\va}|+|D^jv_{\va}|)\|_{L^{\ift}(B_s)}\leq C(1+(r-s)^{-(j+2)}),
$$
where 
\be
\begin{aligned}
u_{\va}&:=h_{\va}-\f{12a_6}{a_2a_6'r_*^2}\tr(\na\Q_{\va}\na\Q_{\va}\Q_{\va})+\f{1}{a_2}|\na\Q_{\va}|^2,\\
v_{\va}&:=\tr\Y_{\va}+\f{3}{a_6'r_*^4}\tr(\na\Q_{\va}\na\Q_{\va}\Q_{\va}),
\end{aligned}\label{uvavvadef}
\ee
and $ C>0 $ depends only on $ \cA,C_0 $, and $ j $.
\end{lem}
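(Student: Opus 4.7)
The key observation motivating the definitions of $u_\va$ and $v_\va$ is that the governing PDEs for $\tr\Y_\va$ and $h_\va$ -- namely equations \eqref{Xeq2} and \eqref{heq2} -- are coercive with coefficients $2a_6'r_*^4$ and $2a_2$ respectively, but their right-hand sides contain the $O(1)$ (i.e., $\va$-independent) terms $-6\tr(\na\Q_\va\na\Q_\va\Q_\va)$ and $-4a_6(\tr\Y_\va)(\tr\Q_\va^2) - 2|\na\Q_\va|^2$. The coefficients appearing in $u_\va, v_\va$ are precisely those required to subtract off the leading-order algebraic solutions of these equations, turning the residual equations into ones with genuinely $O(\va^2)$ right-hand sides.

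The first main step is to \emph{derive coercive equations} for $v_\va$ and $u_\va$. Starting from \eqref{Xeq2}, substitute $\tr\Y_\va = v_\va - \frac{3}{a_6'r_*^4}\tr(\na\Q_\va\na\Q_\va\Q_\va)$ on both sides: the leading $6\tr(\na\Q_\va\na\Q_\va\Q_\va)$ on the right cancels against the contribution from $2a_6'r_*^4$ times the subtraction, while on the left one picks up the extra Laplacian term. The outcome is
\[
-\va^2 \Delta v_\va + 2a_6'r_*^4\, v_\va \;=\; \frac{3\va^2}{a_6'r_*^4}\,\Delta\bigl[\tr(\na\Q_\va\na\Q_\va\Q_\va)\bigr] + \va^2 R_{v,\va},
\]
where $R_{v,\va}$ is a polynomial expression in $\tr\Y_\va$, $h_\va$, and $\va^2 h_\va$ that is universally bounded under \eqref{0deriva}. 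Similarly, starting from \eqref{heq2}, use $\tr\Q_\va^2 = 2r_*^2 + \va^2 h_\va$ and the same substitution for $\tr\Y_\va$; the constant $\frac{12a_6}{a_2 a_6' r_*^2}$ in $u_\va$ is exactly what is needed to absorb $-\frac{24a_6}{a_6'r_*^2}\tr(\na\Q_\va\na\Q_\va\Q_\va)$ into $2a_2 u_\va$, and $\frac{1}{a_2}$ absorbs $-2|\na\Q_\va|^2$. This yields
\[
-\va^2 \Delta u_\va + 2a_2 u_\va \;=\; -8a_6 r_*^2\, v_\va + \frac{\va^2}{a_2}\Delta(|\na\Q_\va|^2) - \frac{12a_6\va^2}{a_2 a_6'r_*^2}\Delta\bigl[\tr(\na\Q_\va\na\Q_\va\Q_\va)\bigr] + \va^2 R_{u,\va},
\]
with $R_{u,\va}$ again bounded. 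Verifying that the coefficients align exactly to produce these cancellations is the main algebraic obstacle; it amounts to carefully expanding $\Psi(\Q_\va):\Q_\va$ and $\Psi(\Q_\va):\Q_\va^2$ and rearranging.

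The second step is to \emph{estimate $v_\va$ and then $u_\va$ in the base case $j=0$}. On any $B_\rho$ with $\rho < r$, the terms $\va^2\Delta[\tr(\na\Q_\va\na\Q_\va\Q_\va)]$ and $\va^2\Delta(|\na\Q_\va|^2)$ involve at most $\va^2 |D^3\Q_\va|$, which by Lemma \ref{Ckestimate} satisfies $\va^2|D^3\Q_\va| \le C(1+(r-\rho)^{-2})$ (since $\va < 1$, the factor $\va^2$ only helps). Applying the coercive-equation estimate of Lemma \ref{corau} to the equation for $v_\va$ with parameters $L=\va^2$ and positive zeroth-order coefficient, and choosing nested radii $s < \rho < t < r$, yields $\|v_\va\|_{L^\infty(B_s)} \le C(1+(r-s)^{-2})$. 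Feeding this into the coercive equation for $u_\va$ and applying Lemma \ref{corau} once more gives $\|u_\va\|_{L^\infty(B_s)} \le C(1+(r-s)^{-2})$.

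Finally, the \emph{induction on $j$} proceeds by differentiating both coercive equations. Applying $D^j$ to the equation for $v_\va$ produces
\[
-\va^2 \Delta(D^j v_\va) + 2a_6'r_*^4\, D^j v_\va \;=\; D^j(\va^2 R_{v,\va}) + \frac{3\va^2}{a_6'r_*^4}\,D^j\Delta\bigl[\tr(\na\Q_\va\na\Q_\va\Q_\va)\bigr],
\]
whose right-hand side, by Lemma \ref{Ckestimate} and the product rule, is controlled in $L^\infty(B_t)$ by $C(1+(r-t)^{-(j+2)})$. Passing from $B_t$ to $B_s$ via Lemma \ref{corau} (in which the prefactor $\min\{1,\va(t-s)^{-1}\}$ prevents any loss of powers of $\va$) and standard interpolation on a telescoping sequence of radii $s=\rho_0 < \rho_1 < \cdots < t$ (as in the proof of Lemma \ref{Ckestimate}) closes the bound $\|D^j v_\va\|_{L^\infty(B_s)} \le C(1+(r-s)^{-(j+2)})$. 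The estimate for $D^j u_\va$ then follows by the same scheme, treating the coupling term $-8a_6r_*^2 D^j v_\va$ as an already-controlled source. The principal technical obstacle throughout is the careful bookkeeping of radii and of the order of derivatives landing on $\Q_\va$ versus on the slowly varying factors $\tr\Y_\va, h_\va$; the arithmetic is entirely parallel to (but one order heavier than) that of Lemma \ref{Ckestimate}, and the exponent $j+2$ in the final bound is precisely the number of derivatives on $\Q_\va$ generated by applying $D^j$ to $\va^2\Delta$ acting on a cubic in $\na\Q_\va$ and $\Q_\va$.
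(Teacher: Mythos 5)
Your proposal is correct and mirrors the paper's proof: one rewrites $u_\va,v_\va$ as solutions of the coercive equations $-\va^2\Delta v_\va+2a_6'r_*^4 v_\va=\va^2g_\va$ and $-\va^2\Delta u_\va+2a_2 u_\va=\va^2f_\va$ (precisely \eqref{uvaeq}--\eqref{vvaeq}), bounds $D^j g_\va,D^j f_\va$ by Lemma \ref{Ckestimate}, applies Corollary \ref{corau} first to $v_\va$, and then feeds the resulting $v_\va$-bound into the $u_\va$-equation through the coupling term $-8a_6r_*^2v_\va$ (equivalently $v_{\va,1}=\va^{-2}v_\va$ in $f_\va$); the explicit induction on $j$ you set up is superfluous since Lemma \ref{Ckestimate} already provides the bounds at every order. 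One refinement you drop by discarding the factor $\va^2$ in your Step 2: because both right-hand sides carry an overall $\va^2$ and the boundary term in Corollary \ref{corau} also contributes $\min\{1,\va^2(r-s)^{-2}\}$, the paper obtains the stronger conclusion $\|D^ju_\va\|+\|D^jv_\va\|\le C\va^2(1+(r-s)^{-(j+2)})$, which is what actually produces the $\va_n^2$ in the remainder estimate \eqref{Reesti}; your argument proves the lemma as stated but would need to retain that factor to support the later use.
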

\begin{proof}
Define $ v_{\va,1}:=\va^{-2}v_{\va} $, we can rewrite \eqref{Xeq} and \eqref{heq1} as the following form
\begin{align}
-\va^2\Delta u_{\va}+2a_2u_{\va}&=\va^2f_{\va},\label{uvaeq}\\
-\va^2\Delta v_{\va}+2a_6'r_*^4v_{\va}&=\va^2g_{\va},\label{vvaeq} 
\end{align}
where
\begin{align*}
f_{\va}&:=\f{12a_6}{a_2a_6'r_*^2}\Delta(\tr(\na\Q_{\va}\na\Q_{\va}\Q_{\va}))-\f{1}{a_2}\Delta(|\na\Q_{\va}|^2)-4a_6\va^2v_{\va,1}h_{\va}-8a_6r_*^2v_{\va,1}-2a_6'(\tr\Y_{\va})^2-\f{a_2}{r_*^2}h_{\va}^2,\\
g_{\va}&:=-\f{3}{a_6'r_*^4}\Delta(\tr(\na\Q_{\va}\na\Q_{\va}\Q_{\va}))-\f{a_6'}{2}\va^2h_{\va}^2(\tr\Y_{\va})^2-6a_6(\tr\Y_{\va})^2-\f{3a_2+4a_6'r_*^4}{2r_*^2}h_{\va}(\tr\Y_{\va}).
\end{align*}
In view of \eqref{Qeq}, we have
\begin{align*}
f_{\va}&=c_1\tr(\pa_{ij}^2\Q_{\va}\pa_i\Q_{\va}\pa_j\Q_{\va})+\tr(\pa_{ij}^2\Q_{\va}\pa_{ij}^2\Q_{\va}(c_2\I+c_3\Q_{\va}))\\
&\quad\quad+\pa_i(\tr\Y_{\va})\tr(\pa_i\Q_{\va}(c_4\I+c_5\Q_{\va}^2+c_6\Q_{\va}^3))\\
&\quad\quad+(\tr\Y_{\va})\tr(\pa_i\Q_{\va}\pa_i\Q_{\va}(c_7\I+c_8\Q_{\va}+c_9\Q_{\va}^2))\\
&\quad\quad+\pa_ih_{\va}\tr(\pa_i\Q_{\va}(c_{10}\Q_{\va}+c_{11}\Q_{\va}^2))+h_{\va}\tr(\pa_i\Q_{\va}\pa_i\Q_{\va}(c_{12}\I+c_{13}\Q_{\va}))\\
&\quad\quad+c_{14}h_{\va}^2+c_{15}(\tr\Y_{\va})^2+c_{16}v_{\va,1}+c_{17}\va^2v_{\va,1}h_{\va},
\end{align*}
and
\begin{align*}
g_{\va}&=c_1'\tr(\pa_{ij}^2\Q_{\va}\pa_i\Q_{\va}\pa_j\Q_{\va})+c_2'\tr(\pa_{ij}^2\Q_{\va}\pa_{ij}^2\Q_{\va}\Q_{\va})\\
&\quad\quad+\pa_i(\tr\Y_{\va})(\tr(\pa_i\Q_{\va}(c_3'\Q_{\va}^2+c_4'\Q_{\va}^3)+c_5'\tr(\pa_i\Q_{\va}\Q_{\va})\tr(\Q_{\va}^2))\\
&\quad\quad+(\tr\Y_{\va})(\tr(\pa_i\Q_{\va}\pa_i\Q_{\va}(c_6'\Q_{\va}+c_7'\Q_{\va}^2))+c_8'\tr(\pa_i\Q_{\va}\Q_{\va})\tr(\pa_i\Q_{\va}\Q_{\va}))\\
&\quad\quad+c_9'\pa_ih_{\va}\tr(\pa_i\Q_{\va}\Q_{\va}^2)+c_{10}'h_{\va}\tr(\pa_i\Q_{\va}\pa_i\Q_{\va}\Q_{\va})\\
&\quad\quad+c_{11}'(\tr\Y_{\va})^2+c_{12}'h_{\va}(\tr\Y_{\va})+c_{13}'\va^2h_{\va}^2(\tr\Y_{\va})^2,
\end{align*}
where $ \{c_i\}_{i=1}^{17} $, $ \{c_i'\}_{i=1}^{13} $ are constants depending only on $ \cA $. In view of Lemma \ref{Ckestimate}, for any $ j\in\Z_{\geq 0} $, we have
\begin{align*}
\|D^jg_{\va}\|_{L^{\ift}(B_{(s+r)/2})}&\leq C(1+(r-s)^{-(j+2)}),\\
\|D^jv_{\va}\|_{L^{\ift}(B_{(s+r)/2})}&\leq C(1+(r-s)^{-j}).
\end{align*}
Taking $ D^j $ on both sides of \eqref{vvaeq} and applying Corollary \ref{corau}, we get
\begin{align*}
\|D^jv_{\va}\|_{L^{\ift}(B_s)}&\leq C\va^2\|D^jg_{\va}\|_{L^{\ift}(B_{(s+r)/2})}+C\min\{1,\va^2(r-s)^{-2}\}\|D^jv_{\va}\|_{L^{\ift}(B_{(s+r)/2})}\\
&\leq C\va^2(1+(r-s)^{-(j+2)}).
\end{align*}
This implies that $
\|D^jv_{\va,1}\|_{L^{\ift}(B_{(s+r)/2})}\leq C(1+(r-s)^{-(j+2)}) $. Again, by using Lemma \ref{Ckestimate}, there hold
\begin{align*}
\|D^jf_{\va}\|_{L^{\ift}(B_{(s+r)/2})}&\leq C(1+(r-s)^{-(j+2)}),\\
\|D^ju_{\va}\|_{L^{\ift}(B_{(s+r)/2})}&\leq C(1+(r-s)^{-j}).
\end{align*}
It follows from Corollary \ref{corau} that
\begin{align*}
\|D^ju_{\va}\|_{L^{\ift}(B_s)}&\leq C\va^2\|D^jf_{\va}\|_{L^{\ift}(B_{(s+r)/2})}+C\min\{1,\va^2(r-s)^{-2}\}\|D^ju_{\va}\|_{L^{\ift}(B_{(s+r)/2})}\\
&\leq C\va^2(1+(r-s)^{-(j+2)}),
\end{align*}
which completes the proof.
\end{proof}

\begin{lem}\label{lemexpdestim}
Let $ 0<\va<1 $ and $ U $ be a bounded $ C^{3,1} $ domain with $ r_{U,3} $ and $ M_{U,3} $. Assume that $ \Q_{\va}\in C^{\ift}(U,\Ss_0)\cap C^3(\ol{U},\Ss_0) $ is a solution of
$$
\left\{\begin{aligned}
-\va^2\Delta\Q_{\va}+\Psi(\Q_{\va})&=0&\text{ in }&U,\\
\Q_{\va}&=\Q_{b,\va}&\text{ on }&\pa U,
\end{aligned}\right.
$$
such that 
\begin{align*}
\|(|\na_{\pa U}\Q_{b,\va}|+|D_{\pa U}^2\Q_{b,\va}+|D_{\pa U}^3\Q_{b,\va}|)\|_{L^{\ift}(\pa U)}&\leq C_0,\\
\|(|\tr\Y_{\va}|+|h_{\va}|+|\na\Q_{\va}|)\|_{L^{\ift}(U)}&\leq C_1,
\end{align*}
for some $ C_0,C_1>0 $. For any $ B_r(x_0)\subset\subset U $, $ j\in\Z_{\geq 0} $, and $ 0<s<r $, there holds
$$
\|(|D^ju_{\va}|+|D^jv_{\va}|)\|_{L^{\ift}(B_s(x_0))}\leq\f{C}{(r-s)^j}\left\{\f{\va^2}{(r-s)}+\exp\(-\f{C'(r-s)}{\va}\)\right\},
$$
where $ C,C'>0 $ depend only on $ \cA,r_{U,3},M_{U,3},C_0,C_1 $, and $ j $.
\end{lem}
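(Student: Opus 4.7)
The plan is to exploit the singularly-perturbed structure of the scalar equations \eqref{uvaeq} and \eqref{vvaeq} derived in Lemma \ref{uvavvvainterior}: both $u_\va$ and $v_\va$ satisfy PDEs of the form $-\va^2\Delta w+\lambda_w w=\va^2 F_w$ with $\lambda_w>0$ and with right-hand sides whose nonlinear content can be rendered uniformly bounded by a bootstrap. Since $B_r(x_0)\subset\subset U$, the assumptions together with Lemmas \ref{Cknearlabel} and \ref{Ckestimate} give uniform $L^\infty$ bounds (in fact $C^\infty$ bounds independent of $\va$) on $\Q_\va,\nabla\Q_\va,D^2\Q_\va,h_\va,\tr\Y_\va$ on $\overline{B_r(x_0)}$. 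The subtlety is the term $-8a_6 r_*^{2}v_{\va,1}=-8a_6r_*^{2}\va^{-2}v_\va$ appearing in $f_\va$ (and the analogous $(\tr\Y_\va)^2$, $h_\va\tr\Y_\va$ terms in $g_\va$): after multiplication by $\va^2$, this is only $O(v_\va)$, not $O(\va^2)$, so the two equations are coupled. The resolution is to treat $v_\va$ first: in $\va^2 g_\va$ every occurrence of $v_\va$ already comes with a prefactor $\va^2$, so $\|\va^2 g_\va\|_{L^\infty(B_r(x_0))}\leq C\va^2$ once the a priori bound $\|v_\va\|_{L^\infty}\leq C$ from Lemma \ref{uvavvvainterior} is invoked; with $v_\va=O(\va^2)$ in hand, the same becomes true for $\va^2 f_\va$.

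For the case $j=0$, decompose $v_\va=v_\va^{\rm p}+v_\va^{\rm h}$ on $B_r(x_0)$, where $v_\va^{\rm p}$ solves \eqref{vvaeq} with zero Dirichlet data and $v_\va^{\rm h}$ solves the homogeneous equation with boundary values $v_\va^{\rm h}=v_\va$ on $\partial B_r(x_0)$. The maximum principle gives $\|v_\va^{\rm p}\|_{L^\infty(B_r(x_0))}\leq \frac{\va^2}{2a_6'r_*^4}\|g_\va\|_{L^\infty(B_r(x_0))}\leq C\va^2$. For the boundary-layer piece, introduce the radial supersolution
\[
\Phi(y):=\frac{r}{|y-x_0|}\cdot\frac{\sinh(\alpha|y-x_0|/\va)}{\sinh(\alpha r/\va)},\qquad 0<\alpha^2<2a_6'r_*^4.
\]
A direct computation (using $\Delta\Phi=(\alpha^2/\va^2)\Phi$ for this particular radial profile) gives $-\va^2\Delta\Phi+2a_6'r_*^4\Phi=(2a_6'r_*^4-\alpha^2)\Phi>0$, and
\[
\Phi(y)\leq C\exp\!\bigl(-\alpha(r-|y-x_0|)/\va\bigr)\quad\text{for }|y-x_0|\geq r/2,
\]
with an even smaller bound for $|y-x_0|<r/2$. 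Since $\|v_\va^{\rm h}\|_{L^\infty(\partial B_r(x_0))}\leq C$ by Lemma \ref{uvavvvainterior}, comparison with $\pm C\Phi$ yields $|v_\va^{\rm h}(y)|\leq C\exp(-c(r-|y-x_0|)/\va)$. An identical argument for $u_\va$, now legitimate because $v_\va=O(\va^2)+O(\exp(-c(r-|y-x_0|)/\va))$, completes the case $j=0$.

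For $j\geq 1$, I proceed by induction, applying the $j=0$ estimate on a chain of shrinking concentric balls $B_{s_k}(x_0)$ with $s_{k}=s+k(r-s)/j$. At each step, interior elliptic regularity for the operator $-\va^2\Delta+\lambda_w$ on the annular region between consecutive balls converts an $L^\infty$-bound into an $L^\infty$-bound on one additional derivative at the price of a factor $(s_{k+1}-s_k)^{-1}\sim j/(r-s)$, which can be made rigorous by repeatedly invoking Lemma \ref{Intp}-type interpolation. The Leibniz rule combined with the a priori derivative estimates for $\Q_\va,h_\va,\tr\Y_\va$ from Lemma \ref{Ckestimate} handles the nonlinear terms inside $f_\va,g_\va$ at each scale. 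After $j$ iterations one picks up the factor $(r-s)^{-j}$; crucially, the exponential factor $\exp(-c'(r-s)/\va)$ survives multiplication by any polynomial prefactor of the form $((r-s)/\va)^{N}$ after a harmless shrinking of $c'$.

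The main obstacle is two-fold: (i) cleanly decoupling $u_\va$ from $v_\va$ despite the $v_{\va,1}$ contributions to $f_\va$, which requires that one estimate $v_\va$ first and use its exponential/$O(\va^2)$ smallness as input for the $u_\va$-estimate; and (ii) propagating the exponential decay through the iterative derivative bounds without losing it to the polynomial prefactors. The radial barrier must be taken in the precise form above so that the $1/|y-x_0|$ singularity reproduces the Green-function structure of $-\va^2\Delta+\lambda$ in a three-dimensional ball and one genuinely obtains pure exponential decay, rather than only the weaker polynomial estimate $\min\{1,\va^2/(r-s)^2\}$ provided by Corollary \ref{corau}.
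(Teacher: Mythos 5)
Your plan reproduces the paper's proof almost step for step: both observe that \eqref{vvaeq} must be estimated before \eqref{uvaeq} (since the expanded form of $g_\va$ contains no $v_{\va,1}$, whereas $f_\va$ does), both feed the a priori interior derivative bounds from Lemmas \ref{Cknearlabel} and \ref{Ckestimate} into the forcing terms, and both iterate the resulting maximum-principle estimate over shrinking concentric balls. The only substantive divergence is your re-derivation of the boundary-layer estimate via the explicit radial supersolution $\Phi$. This is correct — a short computation confirms $\Delta\Phi=(\alpha/\va)^2\Phi$, and the monotonicity of $t\mapsto\sinh(t)/t$ gives $\Phi\leq 1$ together with $\Phi(\rho)\leq C\exp(-\alpha(r-\rho)/(2\va))$ uniformly in $\rho\in(0,r]$ and $\va$ — but it is unnecessary. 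Your closing remark that Corollary \ref{corau} supplies only the ``weaker polynomial estimate $\min\{1,\va^2/(r-s)^2\}$'' misreads that corollary, whose first displayed inequality is already the pure exponential bound
$$
|u(x)|\leq\f{1}{a}\|F\|_{L^{\ift}(B_r)}+2\exp\(-\sqrt{\f{a}{L}}\,\f{r-|x|}{2}\)\sup_{\pa B_r}\|u\|_{L^{\ift}(\pa B_r)};
$$
the polynomial form is a deliberately weakened consequence used elsewhere. So your barrier merely re-proves the tool the paper invokes.

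Two smaller imprecisions, neither fatal. First, the claim that $\Phi$ is ``even smaller'' for $|y-x_0|<r/2$ is not literally true: $\Phi$ is increasing in $|y-x_0|$ and plateaus near $\Phi(0^+)$ in the core, but this plateau is still dominated by $C\exp(-\alpha(r-\rho)/(2\va))$, so the bound you need still holds. Second, passing from the $v_\va$ estimate to the $u_\va$ estimate is not quite an ``identical argument'', because the forcing $\va^2 f_\va = c_{16}v_\va+\cdots$ is itself of size $O(1)$ near $\pa B_r$; one must first shrink to $B_{(s+r)/2}$, where $\|v_\va\|_{L^{\ift}}\leq C\va^2+C\exp(-c(r-s)/(2\va))$, before applying the maximum-principle estimate on $B_{(s+r)/2}$ to deduce the $u_\va$ bound on $B_s$ — exactly the two-ball scheme the paper uses. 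With those adjustments your argument closes.
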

\begin{proof}
For simplicity, under a translation, we assume that $ x_0=0 $. By some basic covering arguments and Lemma \ref{Cknearlabel}, we have that for any $ \mu\in(0,1) $ and $ j\in\Z_{\geq 0} $,
$$
\sum_{\ell=0}^{j+2}|D^{\ell}\Q_{\va}(y)|\leq C\dist^{-j-\mu}(y,\pa U),\quad y\in U.
$$
Since $ U $ is bounded and $ B_r\subset\subset U $, we have, $
\sum_{\ell=0}^{j+2}|D^{\ell}\Q_{\va}(y)|\leq C(r-s)^{-j-\mu} $ for any $ y\in B_s $ with $ 0<s<r $. This implies that
\be
\sum_{\ell=0}^{j+2}\|D^{\ell}\Q_{\va}\|_{L^{\ift}(B_s)}\leq C(r-s)^{-j-\mu}.\label{Dj2Qmu}
\ee
In view of Lemma \ref{Ckestimate}, we have
\be
\sum_{\ell=0}^{j}\|(|D^{\ell}(\tr\Y_{\va})|+|D^{\ell}h_{\va}|+|D^{\ell}u_{\va}|+|D^{\ell}v_{\va}|)\|_{L^{\ift}(B_s)}\leq C(r-s)^{-j}.\label{uvavvaCj}
\ee
Using \eqref{Dj2Qmu} and \eqref{uvavvaCj} with sufficiently small $ \mu\in(0,1) $, we get
\be
\|D^jv_{\va}\|_{L^{\ift}(B_{(s+r)/2})}\leq C(r-s)^{-j-1}.\label{Cjvvaest}
\ee
This, together with \eqref{vvaeq}, \eqref{uvavvaCj}, and Corollary \ref{corau}, implies that
$$
\sum_{\ell=0}^j\|D^{\ell}v_{\va}\|_{L^{\ift}(B_s)}\leq A_{\va}(r,s):=\f{C}{(r-s)^j}\left\{\f{\va^2}{(r-s)}+\exp\(-\f{r-s}{C\va}\)\right\}
$$
for any $ 0<s<r $. As a result, for any $ j\in\Z_{\geq 0} $, $
\sum_{\ell=0}^j\|D^{\ell}v_{\va,1}\|_{L^{\ift}(B_s)}\leq CA_{\va}(r,s)/\va^2 $. Imitating the proof of \eqref{Cjvvaest}, we obtain
$ \|D^ju_{\va,1}\|_{L^{\ift}(B_{(s+r)/2})}\leq CA_{\va}(r,s)/\va^2 $. This, together with \eqref{uvaeq}, \eqref{uvavvaCj}, and Corollary \ref{corau}, implies that
$ \|D^ju_{\va}\|_{L^{\ift}(B_s)}\leq CA_{\va}(r,s) $, which completes the proof.
\end{proof}

\section{Lower bound of energy in two dimensional balls}\label{LoweTwodi}

In this section, we aim to establish a lower bound on the sextic Landau-de Gennes energy within the unit ball $ B_1^2 $. 
Deriving such an estimate is a necessary step for the subsequent analysis. To obtain this, a more refined examination of the manifold $ \cN $ is required. In \cite{C15}, the author used the minimum energies of loops with some restrictions to characterize the topological obstacle of manifolds and gives some rough results for more general case. Motivated by this approach, we will consider more dedicated results for our special model \eqref{energy}.

\subsection{Free homotopy class}

Firstly, we give the definition of the free homotopy classes on topological space. Unlike the homotopy classes considered in the fundamental group, this definition does not impose any conditions on base points. 

\begin{defn}[Free homotopy class]\label{freehomodef}
Let $ \cX $ be a topological space. The free homotopy class of $ \cX $, defined by $ [\Ss^1,\cX] $, is the set of homotopy classes of continuous maps (loops) $ \Ss^1\to\cX $, with no conditions on base points. In particular, if $ \al_0,\al_1:\Ss^1\to\cX $ are two loops on $ \cX $, then $ \al_0 $ and $ \al_1 $ are in the same homotopy class of $ [\Ss^1,\cX] $, denoted by  $ \al_0\sim_{\cX}\al_1 $, if and only if there exists a continuous map $ H:[0,1]\times\Ss^1\to\cX $ such that $ H(0,\cdot)=\al_0(\cdot) $ and $ H(1,\cdot)=\al_1(\cdot) $. In $ [\Ss^1,\cX] $, we use $ [\al_0]_{\cX} $ to denote the class whose representative element is $ \al_0 $, i.e.,
$$
[\al_0]_{\cX}=\{\al\in C^0(\Ss^1,\cX):\al\sim_{\cX}\al_0\}.
$$
By this, $ [\al_1]_{\cX}=[\al_2]_X $ if and only if $ \al_1\sim_{\cX}\al_2 $. If $ [\al]_{\cX}=[x_0]_X $, for some point $ x_0\in\cX $, then we call $ [\al]_{\cX} $ is the trivial class in $ [\Ss^1,\cX] $ and $ \al $ is a trivial loop on $ \cX $. For otherwise, we replace the word trivial by non-trivial.
\end{defn}

\begin{rem}
For a loop $ \al\in C^0(\Ss^1,\cX) $, by identifying the point on $ \Ss^1 $ by $ \exp(i\theta) $ for $ \theta\in[0,2\pi] $ we have a parametrization of $ \al:[0,2\pi]\to\cX $ with $ \al(0)=\al(2\pi) $. For curves $ \ga,\ga_1,\ga_2\in C^0([0,2\pi],\cX) $ such that $ \ga_2(2\pi)=\ga_1(0) $, we use $ \wt{\ga} $ to denote the reverse of $ \ga $ and $ \ga_1*\ga_2 $ to represent the composition of $ \ga_1 $ and $ \ga_2 $. Precisely speaking, $ \wt{\ga}(\theta)=:\ga(2\pi-\theta) $ and
$$
(\ga_1*\ga_2)(\theta):=\left\{\begin{aligned}
&\ga_1(2\theta)&\text{ if }&\theta\in[0,\pi],\\
&\ga_2(2\theta-2\pi)&\text{ if }&\theta\in(\pi,2\pi].
\end{aligned}\right.
$$
For a point $ x_0\in\cX $, we denote $ x_0\in\ga $ if there is some $ \theta_0\in[0,2\pi] $ such that $ \ga(\theta_0)=x_0 $.
\end{rem}

\begin{rem}
Let $ \cX $ be a topological space and $ x_0\in\cX $. To distinguish between the (free) homotopy classes in $ [\Ss^1,\cX] $ and the fundamental group $ \pi_1(\cX,x_0) $, we use $ [\al]_{\cX,x_0} $ to denote an element in $ \pi_1(\cX,x_0) $ (here $ \al:\Ss^1\to\cX $ is a loop with $ x_0\in x_0 $) and $ \al_1\sim_{\cX,x_0}\al_2 $ to represent the relation $ [\al_1]_{\cX,x_0}=[\al_2]_{\cX,x_0} $ in $ \pi_1(\cX,x_0) $ for $ \al_1,\al_2\in C^0(\Ss^1,\cX) $ with $ x_0\in \al_1,\al_2 $. Moreover, we have $ [\al_1*\al_2]_{\cX,x_0}=[\al_1]_{\cX,x_0}[\al_2]_{\cX,x_0} $.
\end{rem}

\begin{defn}
Let $ G $ be a group and $ g_1,g_2\in G $. We call $ g_1 $ and $ g_2 $ are conjugate if there exists $ g\in G $ such that $ g_1=gg_2g^{-1} $. We denote this by $ g_1\stackrel{c}{\sim}g_2 $ in $ G $.
\end{defn}

\begin{lem}\label{LemmaHom}
Let $ \cX $ be a topological space and $ x_0\in\cX $. Assume that the natural map $ \Phi_{x_0}:\pi_1(\cX,x_0)\to[\Ss^1,\cX] $ is obtained by ignoring the base point, i.e., $ \Phi_{x_0}([\al]_{\cX,x_0})=[\al]_{\cX} $ for $ \al\in C^0(\Ss^1,\cX) $ with $ x_0\in\al $. $ \Phi_{x_0} $ satisfies the following properties.
\begin{enumerate}
\item $ \Phi_{x_0} $ is surjective if $ \cX $ is path-connected.
\item $ \Phi_{x_0}([\al_1]_{\cX,x_0})=\Phi_{x_0}([\al_2]_{\cX,x_0}) $ if and only if $
[\al_1]_{\cX,x_0}\stackrel{c}{\sim}[\al_2]_{\cX,x_0} $ in $ \pi_1(\cX,x_0) $.
\end{enumerate} 
Hence $ \Phi_{x_0} $ induces a bijective correspondence between $ [\Ss^1,\cX] $ and the set of conjugacy classes in $ \pi_1(\cX,x_0) $, if $ \cX $ is path-connected.
\end{lem}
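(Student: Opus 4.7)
The plan is to prove Lemma \ref{LemmaHom} by explicitly constructing the free homotopies from loop compositions involving a connecting path (for surjectivity and the reverse direction of the equivalence), and then extracting a connecting loop from a given free homotopy by tracking the basepoint (for the forward direction of the equivalence).

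For the first property, I will take an arbitrary loop $\al:\Ss^1\to\cX$ and, identifying $\Ss^1$ with $[0,2\pi]/\{0\sim 2\pi\}$, set $y_0=\al(0)$. Using the path-connectedness of $\cX$, I will pick a continuous path $\ga:[0,2\pi]\to\cX$ with $\ga(0)=x_0$ and $\ga(2\pi)=y_0$ and form the based loop $\al':=\ga*\al*\wt{\ga}\in C^0(\Ss^1,\cX)$ with $x_0\in\al'$. A free homotopy $H:[0,1]\times\Ss^1\to\cX$ from $\al'$ to $\al$ is then obtained by continuously shrinking the $\ga$ and $\wt{\ga}$ portions toward $y_0$: explicitly one can reparametrize so that on the $s$-slice the loop first traverses $\ga|_{[2\pi s,2\pi]}$, then $\al$, then $\wt{\ga}|_{[0,2\pi-2\pi s]}$. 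This gives $\Phi_{x_0}([\al']_{\cX,x_0})=[\al]_{\cX}$.

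For the second property, I tackle the easy direction first. If $[\al_1]_{\cX,x_0}\stackrel{c}{\sim}[\al_2]_{\cX,x_0}$ via some $[\beta]_{\cX,x_0}\in\pi_1(\cX,x_0)$, meaning $\al_1\sim_{\cX,x_0}\beta*\al_2*\wt{\beta}$, then in particular $\al_1\sim_{\cX}\beta*\al_2*\wt{\beta}$, and the same sliding-along-$\beta$ construction as above yields a free homotopy from $\beta*\al_2*\wt{\beta}$ to $\al_2$. Composing these two homotopies gives $[\al_1]_{\cX}=[\al_2]_{\cX}$. The harder direction is the converse: given a free homotopy $H:[0,1]\times\Ss^1\to\cX$ with $H(0,\cdot)=\al_1$ and $H(1,\cdot)=\al_2$, I will define the trace loop $\beta(t):=H(t,1)$ (using $1\in\Ss^1$ as the distinguished point and assuming without loss of generality $\al_1(1)=\al_2(1)=x_0$, which can be arranged by prepending and appending paths as in the surjectivity step). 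The loop $\beta$ is based at $x_0$, and one then shows $\al_1\sim_{\cX,x_0}\beta*\al_2*\wt{\beta}$ by the standard square-homotopy argument: reparametrize the square $[0,1]\times[0,2\pi]$ so that the boundary is traversed as $\beta$, then $\al_2$, then $\wt\beta$, and use $H$ restricted to this new parametrization as the based homotopy from $\al_1$.

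The main obstacle will be being careful about basepoints in the forward direction of the equivalence, since $\al_1$ and $\al_2$ may not visit $x_0$ at all, let alone at the same parameter; however, this is resolved cleanly by the observation that any loop is freely homotopic to a based loop (the first property), so we may assume both $\al_1$ and $\al_2$ are based at $x_0$ without loss of generality. Combining the two properties, $\Phi_{x_0}$ descends to a well-defined bijection between conjugacy classes in $\pi_1(\cX,x_0)$ and $[\Ss^1,\cX]$ when $\cX$ is path-connected, which is exactly the conclusion.
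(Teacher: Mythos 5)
Your proposal is correct and takes essentially the same approach as the paper: base-point change via a connecting path for surjectivity and the easy direction, and extraction of the trace loop $\beta(t)=H(t,s_0)$ from a free homotopy for the hard direction. The only cosmetic difference is that the paper invokes Hatcher's Theorem 1.19 for the identity $[\al_2]_{\cX,x_0}=[\al_0]_{\cX,x_0}[\al_1]_{\cX,x_0}[\al_0]_{\cX,x_0}^{-1}$, whereas you propose to reprove it directly with the square-reparametrization argument, which is exactly what that theorem's proof does anyway.
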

\begin{proof}
We can regard $ \pi_1(\cX,x_0) $ as the set of base point-preserving homotopy classes of continuous $ (\Ss^1,s_0)\to(\cX,x_0) $. To show that $ \Phi_{x_0} $ is surjective, let $ \al_1\in C^0(\Ss^1,\cX) $ and $ [\al_1]_{\cX}\in[\Ss^1,\cX] $. Choosing a point $ x_1\in\al_1 $, we can use the path-connectedness to get a curve $ \ga $ connecting $ x_0 $ and $ x_1 $. Consequently, the curve $ \ga*\al_1*\wt{\ga} $ is based at $ x_0 $ and $ [\ga*\al_1*\wt{\ga}]_{\cX}=[\al_1]_{\cX} $, since we can continuously moving the base point from $ x_0 $ to $ x_1 $ through the curve $ \ga $. For the second property, we assume that $ \al_1(s_0)=\al_2(s_0)=x_0 $. Since $ [\al_1]_{\cX}=[\al_2]_{\cX} $, there is a homotopy function $ H(\cdot,\cdot):[0,1]\times\Ss^1\to\cX $ from $ \al_1 $ to $ \al_2 $. Let $ \al_0(\theta)=H(\theta/(2\pi),s_0) $ be a loop. By Theorem 1.19 of \cite{H02}, we have, 
\be
[\al_2]_{\cX,x_0}=[\al_0]_{\cX,x_0}[\al_1]_{\cX,x_0}[\al_0]_{\cX,x_0}^{-1}.\label{c1c2}
\ee
On the other hand, for $ i=0,1,2 $, we set $ \al_i:\Ss^1\to\cX $ such that $
\al_i(0)=\al_i(2\pi)=x_0 $, satisfying \eqref{c1c2}. For $ \theta\in[0,2\pi] $, let $ H(\theta,\cdot):[0,2\pi]\to\cX $ be the curve $ H(\theta,s)=\al_0((1-s/(2\pi))\theta+s) $, i.e., a curve from $ \al_0(\theta) $ to $ \al_0(2\pi)=x_0 $. Observe that $ H(0,\cdot)=\al_0(\cdot) $ and $ H(2\pi,\cdot)=x_0 $. Now, $ G(t,\cdot)=H(2\pi t,\cdot)*\al_1(\cdot)*\wt{H}(2\pi t,\cdot) $ is a free homotopy with $
G(0,\cdot)=\al_0(\cdot)*\al_1(\cdot)*\wt{\al}_0(\cdot) $ and $ G(1,\cdot)=\al_1(\cdot) $. Therefore, $ \Phi_{x_0}([\al_1]_{\cX,x_0})=\Phi_{x_0}([\al_2]_{\cX,x_0}) $.
\end{proof}

For the rest of this subsection, we assume that $ \cX $ is a path-connected, compact, and smooth Riemannian manifold without boundary, which can be isometrically embedded into Euclidean space $ \R^k $ for some $ k\in\Z_+ $. For each $ \al\in C^0(\Ss^1,\cX) $, we define the energy of the homotopy class $ [\al]_{\cX} $ as
\be
\mathcal{E}([\al]_{\cX}):=\inf\left\{\f{1}{2}\int_{\Ss^1}|\beta'(\theta)|^2\ud\theta:\beta\in H^1(\Ss^1,\cX),\,\,[\beta]_{\cX}=[\al]_{\cX}\right\}.\label{ldaalbeta}
\ee
Since the embedding $ H^1(\Ss^1,\cX)\hookrightarrow C^0(\Ss^1,\cX) $ is compact and dense, the infimum in \eqref{ldaalbeta} can be achieved by similar arguments in Proposition \ref{ExistenceThm}. Each minimizer $ \beta $ is a geodesic and then $ |\beta'| $ is constant. 

For each $ \al\in C^0(\Ss^1,\cN) $ and $ x_0\in\cX $, we define
\begin{align*}
\cE_{x_0}^*([\al]_{\cX}):=\inf\left\{\sum_{i=1}^n\cE([\al_i]_{\cX}):\{[\al_i]_{\cX,x_0}\}_{i=1}^n\subset\pi_1(\cX,x_0),\,\,[\al]_{\cX}=[\al_1*\cdots*\al_n]_{\cX}\right\}.
\end{align*}
We see that the definition of $ \cE_{x_0}^*([\al]_{\cX}) $ does not depend on the choice of $ x_0 $. Indeed, for $ x_1\in\cX $, with $ x_1\neq x_0 $, since $ \cX $ is path-connected, there exists $ \ga\in C^0([0,2\pi],\cX) $ such that $ \ga(0)=x_1 $ and $ \ga(2\pi)=x_0 $. If $ \{[\al_i]_{\cX,x_0}\}_{i=1}^n\subset\pi_1(\cX,x_0) $ satisfies $ [\al_1*\cdots*\al_n]_{\cX}=[\al]_{\cX} $, we set $ \beta_i=\ga*\al_i*\wt{\ga} $ for $ i=1,2,...,n $ and get $ [\beta_i]_{\cX}=[\al_i]_{\cX} $. Moreover,
$$
[\beta_1*\cdots*\beta_n]_{\cX}=[\ga*\al_1*\cdots*\al_n*\wt{\ga}]_{\cX}=[\al]_{\cX},
$$
and we have $ \sum_{i=1}^n\cE([\beta_i]_{\cX})=\sum_{i=1}^n\cE([\al_i]_{\cX}) $, which implies $ \cE_{x_1}^*([\al]_{\cX})\leq\cE_{x_0}^*([\al]_{\cX}) $. Similarly we can obtain the reverse inequality and then $ \cE_{x_1}^*([\al]_{\cX})=\cE_{x_0}^*([\al]_{\cX}) $. For this reason, we will drop the subscription $ x_0 $ and use the notation $ \cE^*([\al]_{\cX}) $. Now we have the lemma as follows. 

\begin{lem}\label{singuenergessum}
Let $ \al\in C^0(\Ss^1,\cX) $ and $ x_0\in\cX $. The following properties hold.
\begin{enumerate}
\item $ 0\leq\cE^*([\al]_{\cX})\leq\cE([\al]_{\cX}) $. $ \al $ is a trivial loop, if and only if $ \cE^*([\al]_{\cX})=\cE([\al]_{\cX})=0 $.
\item If $ x_0\in\al $, then 
$$
\cE^*([\al]_{\cX})=\inf\left\{\sum_{i=1}^n\cE([\al]_{\cX}):\{[\al_i]_{\cX,x_0}\}_{i=1}^n\subset\pi_1(\cX,x_0),\,\,[\al]_{\cX,x_0}=\prod_{i=1}^n[\al_i]_{\cX,x_0}\right\}.
$$
\item If $ \{[\al_i]_{\cX,x_0}\}_{i=1}^n\subset\pi_1(\cX,x_0) $ with $ n\in\Z_+ $ and $ [\al]_{\cX}=[\al_1*\cdots*\al_n]_{\cX} $, then $ \cE^*([\al]_{\cX})\leq\sum_{i=1}^n\cE^*([\al_i]_{\cX}) $.
\end{enumerate}
\end{lem}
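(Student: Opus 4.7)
\smallskip

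The plan is to prove the three properties in the order stated, using Lemma \ref{LemmaHom} as the bridge between free and based homotopy and using the compactness of $\cX$ to obtain a positive energy gap for non-trivial free homotopy classes.

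For (1), the inequality $0\le\cE^*([\al]_\cX)$ is tautological, and $\cE^*([\al]_\cX)\le\cE([\al]_\cX)$ follows by picking any $\beta\in H^1(\Ss^1,\cX)$ with $[\beta]_\cX=[\al]_\cX$, setting $x_0=\beta(0)$, and using the singleton decomposition $n=1$, $\al_1=\beta$, which is admissible in the definition of $\cE_{x_0}^*=\cE^*$; infimizing over $\beta$ gives the claim. The direction ``$\al$ trivial $\Rightarrow\cE^*=\cE=0$'' is immediate from a constant competitor. The key step is the converse: I would show that there exists $e_0>0$ such that every non-trivial free homotopy class has energy at least $e_0$. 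Indeed, the variational problem defining $\cE([\beta]_\cX)$ is attained by a geodesic of constant speed, so length $=2\sqrt{\pi\cE([\beta]_\cX)}$, and when this length is less than the injectivity radius of $\cX$ the loop is contained in a geodesic ball and hence contractible. Consequently, for $\delta<e_0$ any decomposition with total energy below $\delta$ has every $\al_i$ trivial, forcing $[\al]_\cX$ trivial and then $\cE([\al]_\cX)=0$.

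For (2), the $\ge$ inequality is trivial since every based decomposition is a free decomposition. For $\le$, take an admissible free decomposition $[\al]_\cX=[\al_1*\cdots*\al_n]_\cX$ with the $\al_i$ all based at some $y_0$. Pick a path $\ga$ from $x_0$ to $y_0$ and set $\tilde\al_i:=\ga*\al_i*\wt\ga$, based at $x_0$; clearly $[\tilde\al_i]_\cX=[\al_i]_\cX$, so $\cE([\tilde\al_i]_\cX)=\cE([\al_i]_\cX)$. Now $\tilde\al_1*\cdots*\tilde\al_n$ and $\al_1*\cdots*\al_n$ are freely homotopic, hence freely homotopic to $\al$, and all pass through $x_0$. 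Applying the second property of Lemma \ref{LemmaHom} produces $c=[\beta]_{\cX,x_0}\in\pi_1(\cX,x_0)$ with $[\al]_{\cX,x_0}=c\,[\tilde\al_1]_{\cX,x_0}\cdots[\tilde\al_n]_{\cX,x_0}\,c^{-1}$. Distributing the conjugation and setting $\hat\al_i:=\beta*\tilde\al_i*\wt\beta$ yields loops based at $x_0$ with $[\al]_{\cX,x_0}=\prod_i[\hat\al_i]_{\cX,x_0}$ and $\cE([\hat\al_i]_\cX)=\cE([\al_i]_\cX)$, as required.

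For (3), fix $\epsilon>0$ and, for the given decomposition, base all $\al_i$ at a common $x_0\in\cX$. By (2), each $\al_i$ admits a based refinement $[\al_i]_{\cX,x_0}=\prod_{j=1}^{n_i}[\al_{i,j}]_{\cX,x_0}$ with $\sum_j\cE([\al_{i,j}]_\cX)\le\cE^*([\al_i]_\cX)+\epsilon/n$. Concatenating these refinements yields a single based decomposition $[\al]_{\cX,x_0}=\prod_{i,j}[\al_{i,j}]_{\cX,x_0}$; taking free classes and summing the energy estimates gives $\cE^*([\al]_\cX)\le\sum_i\cE^*([\al_i]_\cX)+\epsilon$, and $\epsilon\to0^+$ concludes the proof. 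The only delicate point in the whole argument is the conjugation manipulation in (2): one must verify carefully that conjugating by a fixed loop preserves the free homotopy class (and hence the energy $\cE$), which follows from the standard free homotopy $(\beta*\gamma*\wt\beta)\sim_\cX\gamma$ implemented by continuously shrinking $\beta$ along itself.
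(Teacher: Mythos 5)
Your proposal is correct and, for Properties (2) and (3), it follows essentially the same route as the paper: convert a free decomposition to a based one via the conjugacy criterion of Lemma \ref{LemmaHom}(2) (conjugating by a suitable loop $\beta$ with $c=[\beta]_{\cX,x_0}$, distributing the conjugation over the factors, and noting that conjugation preserves each free class and hence $\cE$), and then iterate with an $\epsilon/n$ refinement for (3). For Property (1), the paper's own proof is a one-line ``follows directly from the definition,'' so your argument is a genuine expansion rather than a deviation. What you add is the injectivity-radius/systole observation: since every minimizer of $\cE$ in a class is a constant-speed geodesic with $\mathrm{Length}=2\sqrt{\pi\cE}$, and a closed geodesic shorter than the injectivity radius lies in a convex ball and is null-homotopic, there is a uniform $e_0>0$ below which any class is trivial. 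This is slightly more than is strictly needed for the ``if and only if'' as stated --- the implication $\cE([\al]_\cX)=0\Rightarrow\al$ trivial already follows from the fact that the infimum defining $\cE$ is attained, so a zero-energy minimizer is constant --- but your gap argument proves the stronger assertion $\cE^*([\al]_\cX)>0$ for every non-trivial class, which is what is actually invoked later in the paper (e.g.\ to justify $\kappa_*=\cE^*(\rh_1)>0$). A cosmetic remark: in the definition of $\cE^*_{x_0}$ the $\al_i$ are already based at $x_0$, so in (2) you need not pass through an auxiliary base point $y_0$; that detour uses base-point independence and is harmless, but it can be skipped.
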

\begin{proof}
The first property follows directly from the definition. Next, we show the second one. If $ \{[\al_i]_{\cX,x_0}\}_{i=1}^n\subset\pi_1(\cX,x_0) $ satisfies $ [\al_1*\cdots*\al_n]_{\cX}=[\al]_{\cX} $, then it follows from Lemma \ref{LemmaHom} that $ [\al_1*\cdots*\al_n]_{\cX,x_0}=[\ga*\al*\wt{\ga}]_{\cX,x_0} $ for some $ [\ga]_{\cX,x_0}\in\pi_1(\cX,x_0) $. Choosing $ \beta_i=\ga*\al_i*\wt{\ga} $ for $ i=1,2,...,n $, we can obtain $ [\al_i]_{\cX}=[\beta_i]_{\cX} $, $ \prod_{i=1}^n[\beta_i]_{\cX,x_0}=[\al]_{\cX,x_0} $, and $ \sum_{i=1}^n\cE([\al_i]_{\cX})=\sum_{i=1}^n\cE([\beta_i]_{\cX}) $, this directly implies the result. For the third property, fix $ \va>0 $. For any $ \al_i $, by the definition of $ \cE^*(\cdot) $ and the second property, we can construct $ \{[\al_{ij}]_{\cX,x_0}\}_{j=1}^{m_i} $ such that $ [\al_i]_{\cX,x_0}=\prod_{j=1}^{m_i}[\al_{ij}]_{\cX,x_0} $ and $ \sum_{j=1}^{m_i}\cE([\al_{ij}]_{\cX})\leq\cE^*([\al_i]_{\cX})+\va/n $. Now we deduce that
$$
[(\al_{11}*\cdots*\al_{1m_1})*\cdots*(\al_{n1}*\cdots*\al_{nm_n})]_{\cX}=[\al]_{\cX}
$$
and then
$$
\cE^*([\al]_{\cX})\leq\sum_{i=1}^n\sum_{j=1}^{m_i}\cE([\al_{ij}]_{\cX})\leq\sum_{i=1}^n\cE^*([\al_i]_{\cX})+\va.
$$
Letting $ \va\to 0^+ $, the result holds.
\end{proof}

For $ 0<s<r $, we define the annulus by 
\be
A_{s,r}^2(x_0):=\{x\in\R^2:s<|x-x_0|<r\}.\label{Annulus}
\ee
If $ x_0=0 $, we will use the notation $ A_{s,r}^2 $ for simplicity. 

Assume that $ u\in H^1(A_{s,r}^2(x_0),\cX) $. We give the definition for the free homotopy class of $ u|_{\pa B_r^2(x_0)} $ in the following lemma. 

\begin{lem}\label{propHomt}
If $ 0<s<r $ and $ u\in H^1(A_{s,r}^2(x_0),\cX) $, then for a.e. $ t\in(s,r) $, $ u|_{\pa B_t^2(x_0)}\in C^0(\Ss^1,\cX) $ is a loop on $ \cX $ and are in the same free homotopy class. We call this the free homotopy class of $ u|_{\pa B_r^2(x_0)} $, denoted by $ [u|_{\pa B_r^2(x_0)}]_{\cX} $.
\end{lem}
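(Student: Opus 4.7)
The argument naturally divides into two parts.

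\emph{Step 1: continuity of the trace.} Working in polar coordinates $(t,\theta)$ centered at $x_0$, the finiteness of
\[
\int_{A_{s,r}^2(x_0)}|\nabla u|^2\,dx=\int_s^r\!\int_{\mathbb{S}^1}\bigl(|\partial_t u|^2+t^{-2}|\partial_\theta u|^2\bigr)t\,d\theta\,dt
\]
together with Fubini's theorem yields a full-measure set $E\subset(s,r)$ of radii for which $u|_{\partial B_t^2(x_0)}\in H^1(\mathbb{S}^1,\cX)$. The one-dimensional Sobolev embedding $H^1(\mathbb{S}^1)\hookrightarrow C^{0,1/2}(\mathbb{S}^1)$ then supplies the continuous representative, settling the first assertion.

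\emph{Step 2: invariance of the homotopy class.} Define $\phi\colon E\to[\mathbb{S}^1,\cX]$ by $\phi(t):=[u|_{\partial B_t^2(x_0)}]_\cX$. By Lemma \ref{LemmaHom}, the target $[\mathbb{S}^1,\cX]$ is in bijection with the countable set of conjugacy classes of $\pi_1(\cX)$ (countable since $\cX$ is a compact manifold), and I treat it as a discrete space. Two ingredients drive the argument: first, the continuous embedding $H^1(A_{s,r}^2(x_0))\hookrightarrow C^0\bigl((s,r);L^2(\mathbb{S}^1)\bigr)$ renders $t\mapsto u|_{\partial B_t^2(x_0)}$ continuous into $L^2(\mathbb{S}^1)$; second, for every $M>0$ the sublevel set
\[
E_M:=\bigl\{t\in E:\|u|_{\partial B_t^2(x_0)}\|_{H^1(\mathbb{S}^1)}\leq M\bigr\}
\]
has measure at least $(r-s)-CM^{-2}$ by Chebyshev, with $C$ depending only on the total energy. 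The Gagliardo--Nirenberg interpolation $\|v\|_{C^0(\mathbb{S}^1)}\leq C\|v\|_{L^2(\mathbb{S}^1)}^{1/2}\|v\|_{H^1(\mathbb{S}^1)}^{1/2}$, applied to $v=u|_{\partial B_{t_n}^2(x_0)}-u|_{\partial B_{t_0}^2(x_0)}$ along any sequence $t_n\to t_0$ with $t_n,t_0\in E_M$, upgrades the $L^2$ convergence above to $C^0$ convergence; by $C^0$-stability of free homotopy classes, $\phi(t_n)=\phi(t_0)$ for large $n$, so $\phi|_{E_M}$ is locally constant on $E_M$ in the subspace topology.

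\emph{Main obstacle.} Promoting local constancy of $\phi$ on each $E_M$ to global constancy on $E$ is the heart of the matter, since the $E_M$ are neither open nor connected in $(s,r)$. The plan is to invoke the Lebesgue density theorem: for large enough $M$, $\mathcal{L}^1$-a.e.\ point of $(s,r)$ is a density point of $E_M$, and at any such $t_0\in E$ the local constancy on $E_M$ upgrades to constancy of $\phi$ on a full-density Lebesgue neighborhood of $t_0$ within $E$. Passing $M\uparrow\infty$ and chaining these neighborhoods using the fact that the only connected open subsets of $(s,r)$ are intervals gives $\phi$ constant on a full-measure subset of $E$. A final application of the $L^2$-continuity combined with the Gagliardo--Nirenberg interpolation removes any exceptional values at individual points of $E$, yielding $\phi(t_1)=\phi(t_2)$ for all $t_1,t_2\in E$ and unambiguously defining the free homotopy class $[u|_{\partial B_r^2(x_0)}]_\cX$.
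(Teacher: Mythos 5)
Your Step 1 matches the paper exactly (Fubini plus the one-dimensional Sobolev embedding), and the quantitative ingredients you assemble in Step 2 — radial continuity into $L^2(\Ss^1)$, the Chebyshev sublevel sets $E_M$, and the Agmon/Gagliardo--Nirenberg interpolation promoting $L^2$-plus-bounded-$H^1$ convergence to $C^0$ convergence — are all correct and do establish that $\phi$ is sequentially continuous on each $E_M$. However, the paragraph you flag as the ``main obstacle'' is exactly where the proof breaks down, and the Lebesgue density argument you sketch does not close it.

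Two problems. First, a literal error: it is \emph{not} true that for large $M$, $\mathcal{L}^1$-a.e.\ point of $(s,r)$ is a density point of $E_M$; Lebesgue's theorem only gives that a.e.\ point \emph{of $E_M$} is a density point of $E_M$ (and a.e.\ point of the complement is a density point of the complement), so the set of density points of $E_M$ has measure $|E_M|$, not $(r-s)$. Second, and more seriously: since $[\Ss^1,\cX]$ is discrete, continuity of $\phi|_{E_M}$ only forces $\phi$ to be constant on each \emph{connected component} of $E_M$, and $E_M$ has no reason to be connected --- it can be a fat Cantor set. To chain across the gaps one needs a quantitative step size $\delta(M)$ (from the interpolation inequality, one finds $\delta(M)\sim M^{-2}$) to dominate the gap sizes, which are controlled only in total measure by $|E_M^c|\lesssim M^{-2}$. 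These two scales coincide, so the comparison hinges on the absolute constants --- and in fact tracking them produces a smallness condition on $\|\nabla u\|_{L^2(A)}$, not a theorem for arbitrary $u\in H^1$. ``Passing $M\uparrow\infty$ and chaining'' does not resolve this, because the admissible step size shrinks at the same rate as the bad set.

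The paper avoids the issue entirely by taking a genuinely different route: Lemma \ref{SmoothApproximation} (Schoen--Uhlenbeck) provides smooth maps $u_\delta\colon A_{s,r}^2\to\cX$ with $u_\delta\to u$ in $H^1$. Because each $u_\delta$ is \emph{continuous on the whole annulus with values in $\cX$}, all of its circle restrictions lie in a single free homotopy class $h_\delta$ --- the domain supplies the connectivity that your sublevel sets $E_M$ lack. Fubini and Sobolev then give $u_\delta|_{\partial B_t^2}\to u|_{\partial B_t^2}$ uniformly for a.e.\ $t$ (along a subsequence in $\delta$), whence $h_\delta=\phi(t)$ for $\delta$ small, for a.e.\ $t$ --- forcing $\phi$ to equal the common value $\lim_\delta h_\delta$ a.e. If you want to repair your argument, the cleanest fix is to import this idea: either invoke the smooth approximation lemma directly, or construct a continuous $\cX$-valued competitor on the annulus by radial mollification followed by nearest-point projection onto $\cX$ (valid on most of the annulus because $\operatorname{osc}$ is small there in $L^\infty$), and then chain through its circle restrictions rather than through $E_M$.
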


\begin{proof}
Under a translation, we assume that $ x_0=0 $. Since $ u\in H^1(A_{s,r}^2,\cX) $, by Fubini theorem, we obtain that for a.e. $ t\in(s,r) $, $ u|_{\pa B_t^2}\in H^1(\pa B_t^2,\cX) $. By Sobolev embedding theorem, for such $ t $, $ u|_{\pa B_t^2} $ is continuous and the free homotopy class of it is well defined. Using Lemma \ref{SmoothApproximation}, we can construct a smooth approximation $ u_{\delta}:A_{s,r}^2\to\cX $ such that $ u_{\delta}\to u $ in $ H^1(A_{s,r}^2,\R^k) $ as $ \delta\to 0^+ $, where $ \cX $ is isometrically embedded into $ \R^k $. By using Fubini and Sobolev embedding theorems again, we get that $ u_{\delta}\to u $ uniformly on $ \pa B_t^2 $ for a.e. $ t\in(s,r) $. As a result, for a.e. $ t\in(s,r) $, $ u|_{\pa B_t^2} $ belong to the same homotopy class.
\end{proof}

In view of the above lemma, for $ u\in H^1(A_{s,r}^2(x_0),\cX) $, $ \cE^*([u|_{\pa B_r^2(x_0)}]_{\cX}) $ is well defined. Furthermore, we have the following result.

\begin{lem}\label{Numbersum}
Assume that $ n\in\Z_+ $, $ \{s_i\}_{i=0}^n\cup\{r_i\}_{i=0}^n\subset\R_+ $, and $ \{x_i\}_{i=0}^n\subset\R^2 $ satisfy the following properties:
\begin{itemize}
\item $ 0<s_i<r_i $ for any $ i=0,1,2,...,n $ and $ \cup_{i=1}^nB_{r_i}^2(x_i)\subset B_{s_0}^2(x_0) $.
\item $ B_{r_i}^2(x_i)\cap B_{r_j}^2(x_j)=\emptyset $ for any $ i,j\in\{1,2,...,n\} $ such that $ i\neq j $.
\end{itemize} 
If $ u\in H^1(B_{r_0}^2(x_0)\backslash\cup_{i=1}^nB_{s_i}^2(x_i),\cX) $,
then $ \{\cE^*([u|_{\pa B_{r_i}^2(x_i)}]_{\cX})\}_{i=0}^n $ are well defined and the following inequality holds
\be
\cE^*([u|_{\pa B_{r_0}^2(x_0)}]_{\cX})\leq \sum_{i=1}^n\cE^*([u|_{\pa B_{r_i}^2(x_i)}]_{\cX}).\label{deg1}
\ee
In particular, if $ n=1 $, then the inequality in \eqref{deg1} will change to equality.
\end{lem}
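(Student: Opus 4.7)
The plan is as follows. First, well-definedness of each $\cE^*([u|_{\pa B_{r_i}^2(x_i)}]_{\cX})$ follows from Lemma \ref{propHomt}, applied to the annulus $A_{s_i,r_i}^2(x_i)$ for $i=1,\ldots,n$, and to an annular neighborhood of $\pa B_{r_0}^2(x_0)$ obtained from the inclusion $\bigcup_{i=1}^n B_{r_i}^2(x_i) \subset B_{s_0}^2(x_0)$. For the key inequality, consider the planar domain $\Omega := B_{r_0}^2(x_0) \setminus \overline{\bigcup_{i=1}^n B_{r_i}^2(x_i)}$. By choosing nearby admissible radii (using Lemma \ref{propHomt}, which tells us that for a.e. radius the restriction is continuous and lies in the same free homotopy class), and by invoking $H^1$-approximation by smooth maps (Lemma \ref{SmoothApproximation}), I may replace $u$ by a continuous representative on $\overline{\Omega}$ without changing any of the free homotopy classes $[u|_{\pa B_{r_i}^2(x_i)}]_{\cX}$ in the statement.

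Next, I would exploit the topology of $\Omega$. Fix a base point $p_0 \in \pa B_{r_0}^2(x_0)$, and for each $i=1,\ldots,n$ fix a point $p_i \in \pa B_{r_i}^2(x_i)$ together with a simple path $\gamma_i$ in $\overline{\Omega}$ from $p_0$ to $p_i$, chosen so that the $\gamma_i$'s are mutually disjoint except at $p_0$. Let $\al_0$ be the loop traversing $\pa B_{r_0}^2(x_0)$ once based at $p_0$, and let $\al_i$ be the loop traversing $\pa B_{r_i}^2(x_i)$ once based at $p_i$, with the boundary orientation inherited from $\Omega$. Set $\beta_i := \gamma_i * \al_i * \wt{\gamma_i}$, a loop in $\Omega$ based at $p_0$. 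Since $\Omega$ is a disk with $n$ holes, $\pi_1(\Omega,p_0)$ is free on $\beta_1,\ldots,\beta_n$, and the standard relation coming from the fact that $\pa\Omega$ bounds a $2$-chain in $\overline{\Omega}$ gives (after reindexing if necessary) the identity $[\al_0]_{\Omega,p_0} = [\beta_1 * \beta_2 * \cdots * \beta_n]_{\Omega,p_0}$ in $\pi_1(\Omega,p_0)$.

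Pushing forward by the continuous $u\colon \overline{\Omega}\to\cX$ yields $[u\circ \al_0]_{\cX,u(p_0)} = \prod_{i=1}^n [u\circ \beta_i]_{\cX,u(p_0)}$ in $\pi_1(\cX, u(p_0))$; in particular, at the level of free homotopy classes, $[u\circ\al_0]_{\cX} = [(u\circ\beta_1)*\cdots*(u\circ\beta_n)]_{\cX}$. Now $[u\circ\al_0]_{\cX} = [u|_{\pa B_{r_0}^2(x_0)}]_{\cX}$, and since $u\circ\beta_i$ is conjugate to $u\circ\al_i$ in $\pi_1(\cX, u(p_0))$, the second property of Lemma \ref{LemmaHom} gives $[u\circ\beta_i]_{\cX} = [u\circ\al_i]_{\cX} = [u|_{\pa B_{r_i}^2(x_i)}]_{\cX}$. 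The inequality \eqref{deg1} then follows immediately from property (3) of Lemma \ref{singuenergessum}.

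For the equality when $n=1$, the domain $\Omega$ is an annulus that deformation retracts onto either boundary circle, so consistently oriented traversals of $\pa B_{r_0}^2(x_0)$ and $\pa B_{r_1}^2(x_1)$ determine the same free homotopy class in $\cX$ via $u$, whence their values of $\cE^*$ agree. The main obstacle I anticipate is the approximation step in the first paragraph: one must carefully reconcile the a.e.-in-radius framework of Lemma \ref{propHomt} with the need for a continuous representative realizing all $n+1$ loops simultaneously, which requires invoking Fubini together with the smooth-approximation lemma on the various annular neighborhoods and then selecting a common set of good radii.
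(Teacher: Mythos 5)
Your proof is correct and follows essentially the same route as the paper's: reduce to a continuous representative via Lemma \ref{propHomt} and Lemma \ref{SmoothApproximation}, join the boundary circles to a common base point by paths, invoke the standard $\pi_1$-relation for a disk with holes pushed forward by $u$, and conclude by property (3) of Lemma \ref{singuenergessum}, with the deformation-retract argument for the $n=1$ equality. The only cosmetic difference is that you spell out the base-point bookkeeping in slightly more detail than the paper does.
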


\begin{proof}
We denote $ U:=B_{r_0}^2(x_0)\backslash\cup_{i=1}^nB_{s_i}^2(x_i) $. In view of Lemma \ref{propHomt} and \ref{SmoothApproximation}, we can assume that $ u\in (H^1\cap C^{\ift}(U))\cap C^0(\ol{U}) $. If $ u|_{\pa B_r^2(x_0)} $ is trivial, we have $ \cE^*([u|_{\pa B_r^2(x_0)}]_{\cX})=0 $, and then there is nothing to prove. Now we assume that $ u|_{\pa B_r^2(x_0)} $ is non-trivial. Choose $ \wh{x}_0 $, $ \{y_i\}_{i=1}^n $, and curves $ \{c_i\}_{i=1}^n\subset C^0([0,2\pi],U) $ such that $ c_i(0)=\wh{x}_0 $ and $ c_i(2\pi)=y_i $ for any $ i=1,2,...,n $. Set $ \{\al_i\}_{i=1}^n:=\{u|_{c_i}\}_{i=1}^n $. We have,
$$
[(\al_1*u|_{\pa B_{r_1}^2(x_1)}*\wt{\al}_1)*\cdots*(\al_n*u|_{\pa B_{r_n}^2(x_n)}*\wt{\al}_n)]_{\cX}=[u|_{\pa B_{r_0}^2(x_0)}]_{\cX}
$$
since $ u $ itself is the free homotopy function. By Lemma \ref{singuenergessum}, we have
$$
\cE^*([u|_{\pa B_{r_0}^2(x_0)}]_{\cX})\leq\sum_{i=1}^n\cE^*([\al_i*u|_{\pa B_{r_i}^2(x_i)}*\wt{\al}_i]_{\cX})=\sum_{i=1}^n\cE^*([u|_{\pa B_{r_i}^2(x_i)}]_{\cX}).
$$
Finally, when $ n=1 $, we have that $ u $ is a free homotopy function of $ u|_{\pa B_{r_1}^2(x_1)} $ and $ u|_{\pa B_{r_0}^2(x_0)} $, which implies that $
\cE^*([u|_{\pa B_{r_0}^2(x_0)}]_{\cX})=\cE^*([u|_{\pa B_{r_1}^2(x_1)}]_{\cX}) $.
\end{proof}

\begin{defn}
For any $ V\subset\subset\R^2 $, the radius of $ V $ is defined by
$$
\op{rad}(V):=\inf\left\{\sum_{i=1}^nr_i:\ol{V}\subset\bigcup_{i=1}^nB_{r_i}^2(x_i)\right\}.
$$
\end{defn}

\begin{lem}\label{radlem}
For the radius defined as above, the following properties hold.
\begin{enumerate}
\item $ \op{rad}(V)\leq\op{rad}(\ol{V}) $.
\item For a bounded set $ V $, $ \op{rad}(V)\leq\diam(\pa V) $.
\item For bounded sets $ \{V_i\}_{i=1}^n $, $ \op{rad}\(\cup_{i=1}^nV_i\)\leq\sum_{i=1}^n\op{rad}(V_i) $. 
\end{enumerate}
\end{lem}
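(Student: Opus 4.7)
The plan is to verify the three properties directly from the definition of $\op{rad}$, treated in the order (1), (3), (2).

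Property (1) will be essentially tautological: since $\overline{\overline{V}} = \overline{V}$, the collections of admissible covers in the definitions of $\op{rad}(V)$ and $\op{rad}(\overline{V})$ coincide exactly, so the two infima agree and the desired inequality follows at once. For (3) I plan to use a standard concatenation argument: given $\eta > 0$, choose for each $i = 1, \ldots, n$ a finite cover of $\overline{V_i}$ with total radius at most $\op{rad}(V_i) + \eta/n$; since $\overline{\cup_{i=1}^n V_i} = \cup_{i=1}^n \overline{V_i}$, concatenating these covers gives an admissible cover of $\cup_i V_i$ with total radius at most $\sum_{i=1}^n \op{rad}(V_i) + \eta$, and letting $\eta \to 0^+$ will close the argument.

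The main step is (2). Assuming $V \neq \emptyset$ (the empty case is trivial), the boundedness of $V$ forces $\partial V \neq \emptyset$; I will fix a base point $p_0 \in \partial V$ and set $d := \diam(\partial V)$. The key reduction is to establish the inclusion $\overline{V} \subset \overline{B_d^2(p_0)}$, which will immediately yield $\op{rad}(V) \leq d + \delta$ for every $\delta > 0$ via the single-ball cover $\{B_{d+\delta}^2(p_0)\}$, and the desired conclusion will follow by sending $\delta \to 0^+$. To establish this inclusion, I will fix $x \in \overline{V}$ with $x \neq p_0$ and consider the ray $\gamma$ emanating from $p_0$ through $x$; since $\overline{V}$ is compact, $\gamma \cap \overline{V}$ is also compact and thus admits a point $q$ of maximal distance to $p_0$ along $\gamma$, for which in particular $|p_0 - x| \leq |p_0 - q|$.

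The main (and essentially only) subtle point, which I expect to be the principal obstacle, is verifying that this extremal $q$ actually lies in $\partial V$. The argument I have in mind is: if $q$ were in the interior of $V$, then a small push of $q$ along $\gamma$ in the direction away from $p_0$ would still lie in $V \subset \overline{V}$, contradicting the maximality of $q$; hence $q \in \overline{V} \setminus V^{\circ} = \partial V$. Since both $p_0$ and $q$ then lie in $\partial V$, one obtains $|p_0 - x| \leq |p_0 - q| \leq \diam(\partial V) = d$, completing the proof of (2).
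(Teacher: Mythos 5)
Your proof is correct and follows essentially the same route as the paper's. Properties (1) and (3) are handled identically (the paper reduces (3) to the case $n=2$ by induction, but the concatenation argument is the same). For (2) the paper simply cites the identity $\diam(\ol{V})=\diam(\pa V)$ together with the trivial one-ball cover $\ol{V}\subset B_{\diam(\ol{V})+\delta}^2(p_0)$; your ray argument is a self-contained derivation of exactly that fact — showing that the farthest point of $\ol{V}$ along any ray from $p_0\in\pa V$ must itself lie on $\pa V$ — so you have filled in the omitted details rather than taken a different path.
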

\begin{proof}
The first property follows directly from the definition and the second is the consequence of $ \diam(\ol{V})=\diam(\pa V) $. For the third one, by inductive arguments, it suffices to show the case of $ n=2 $. For $ \va>0 $ and $ i=1,2 $, we choose $ \{B_{r_{ij}}^2(x_{ij})\}_{j=1}^{n_i} $, such that $ \ol{V_i}\subset\cup_{j=1}^{n_i}B_{r_{ij}}^2(x_{ij}) $, and $
\sum_{j=1}^{n_i}r_{ij}\leq\op{rad}(V_i)+\va/2 $. Now we have $ \ol{V_1\cup V_2}\subset\cup_{i=1}^2\cup_{j=1}^{n_i}B_{r_{ij}}^2(x_{ij}) $ and
$$
\op{rad}(V_1\cup V_2)\leq\sum_{i=1}^2\sum_{j=1}^{n_i}r_{ij}\leq\op{rad}(V_1)+\op{rad}(V_2)+\va.
$$
Letting $ \va\to 0^+ $, we complete the proof.
\end{proof}

\begin{prop}\label{Plower}
Let $ \w\subset B_r^2 $ with $ r>0 $ such that $ \dist(\w,\pa B_r^2)\geq 2\lda r $ for some $ \lda\geq 19/40 $. For any $ u\in H^1(B_r^2\backslash\w,\cX) $, there holds
$$
\f{1}{2}\int_{B_r^2\backslash\w}|\na u|^2\ud\HH^2\geq\cE^*([u|_{\pa B_r^2}]_{\cX})\log\f{\lda r}{\op{rad}(\w)}.
$$
\end{prop}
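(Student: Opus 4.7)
The plan is to combine a Sandier-type ball-growing construction with the subadditivity of $\cE^*$ supplied by Lemma \ref{Numbersum}. I fix $\va>0$ small and begin from a pairwise disjoint finite cover $\{\ol{B_{r_i^{(0)}}^2(x_i^{(0)})}\}_{i=1}^{n_0}$ of $\ol{\omega}$ with $\omega\subset\op{int}\bigcup_i B_{r_i^{(0)}}^2(x_i^{(0)})$ and $\sum_i r_i^{(0)}<\op{rad}(\omega)+\va$. Such a cover is produced from any near-optimal covering by iteratively merging overlapping balls: two overlapping balls of radii $a,b$ combine into a single ball of radius $a+b$ whose center lies on the segment joining the old centers, an operation that preserves coverage of $\omega$ and the total sum of radii. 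I then run the growth flow: between merges grow each ball by a common homothety with its center fixed, so $\rho_j(t)=\rho_j(t_0)e^{t-t_0}$, and at each contact time merge the two touching balls $B_{\rho_1}^2(z_1),B_{\rho_2}^2(z_2)$ into one $B_{\rho_1+\rho_2}^2(z)$ with $z\in[z_1,z_2]$ chosen so that the new ball contains both (possible whenever $|z_1-z_2|\leq\rho_1+\rho_2$). This produces a disjoint family $\mathcal{B}(t)=\{B_{\rho_j(t)}^2(y_j(t))\}_j$ still covering $\omega$, with $\sum_j\rho_j(t)=\rho_*(t):=(\op{rad}(\omega)+\va)e^t$.

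On any merge-free interval $[t_0,t_1]$, the annulus $B_{\rho_j(t_1)}^2(y_j(t_0))\setminus B_{\rho_j(t_0)}^2(y_j(t_0))$ is disjoint from $\omega$ and, by Lemma \ref{propHomt}, every circle inside carries the same free homotopy class as $u|_{\pa B_j(t_0)}$. Parametrizing $\pa B_\sigma^2(y_j(t_0))$ by angle $\theta$ via $\ga_\sigma(\theta)=y_j(t_0)+\sigma(\cos\theta,\sin\theta)$ and invoking the definition of $\cE$ together with Lemma \ref{singuenergessum}(1) yields, for a.e.~$\sigma\in[\rho_j(t_0),\rho_j(t_1)]$,
\[
\f{1}{2}\int_{\pa B_\sigma^2(y_j(t_0))}|\na u|^2\,\ud\HH^1\;\geq\;\f{1}{2\sigma}\int_0^{2\pi}|\pa_\theta(u\circ\ga_\sigma)|^2\,\ud\theta\;\geq\;\f{\cE^*([u|_{\pa B_\sigma^2(y_j(t_0))}]_{\cX})}{\sigma}.
\]
Coarea integration in $\sigma$ then gives
\[
\f{1}{2}\int_{B_{\rho_j(t_1)}^2(y_j(t_0))\setminus B_{\rho_j(t_0)}^2(y_j(t_0))}|\na u|^2\;\geq\;\cE^*([u|_{\pa B_j(t_0)}]_{\cX})\,(t_1-t_0).
\]
Summing over $j$ and over the finitely many merge-free intervals inside $[0,T]$, and applying Lemma \ref{Numbersum} at each $t$ (with $s_0=r-\eta$, $s_i=\rho_j(t)-\eta$, and sending $\eta\to 0^+$) to obtain $\sum_j\cE^*([u|_{\pa B_j(t)}]_{\cX})\geq\cE^*([u|_{\pa B_r^2}]_{\cX})$ pointwise in $t$, one arrives at
\[
\f{1}{2}\int_{B_r^2\setminus\omega}|\na u|^2\;\geq\;T\cdot\cE^*([u|_{\pa B_r^2}]_{\cX}).
\]

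Finally, $T$ is the largest time for which $\mathcal{B}(t)$ lies strictly inside $B_r^2$. An induction on merges shows that every $y_j(t)$ remains in the closed convex hull of $\{x_i^{(0)}\}_i\subset B_{(1-2\lambda)r+\op{rad}(\omega)+\va}^2$, hence $|y_j(t)|+\rho_j(t)\leq(1-2\lambda)r+\op{rad}(\omega)+\va+\rho_*(t)$. The conclusion is vacuous unless $\op{rad}(\omega)<\lambda r$, so I pick $\va$ small and set $T=\log\f{\lambda r}{\op{rad}(\omega)+\va}$; the hypothesis $\lambda\geq 19/40$ then comfortably delivers $(1-2\lambda)r+\op{rad}(\omega)+\va+\lambda r<r$, the strict containment needed to invoke Lemma \ref{Numbersum}. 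Sending $\va\to 0^+$ produces the stated inequality.

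The main obstacle is the ball-growth bookkeeping: verifying disjointness, $\omega$-coverage and the identity $\sum_j\rho_j(t)=\rho_*(t)$ through every merge, together with the strict $B_r^2$-containment up to time $T$ via convex-hull control of the evolving centers. A secondary subtlety is that Lemma \ref{Numbersum} requires $s_i<r_i$, so its pointwise-in-$t$ application must be resolved through the small-$\eta$ limit.
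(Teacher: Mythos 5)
Your proof is correct and follows the same Sandier-type ball-growing strategy as the paper, but with a noticeably different accounting. The paper maintains, for each ball $B^2(t;i)$, a ``seed size'' $\mu_i(t)$ and the invariant $\log(r_i(t)/\mu_i(t))=\al(t)$, proves inductively the single-ball lower bound $\tfrac12\int_{B^2(t;i)\setminus\w}|\na u|^2\geq\cE^*([u|_{\pa B_i(t)}]_{\cX})\al(t)$, and only invokes Lemma~\ref{Numbersum} at each merge and once more at the very end (with $n=1$). You instead dispense with seed sizes entirely: the only conserved quantity you track is $\sum_j\rho_j(t)$, which is preserved under the radius-additive merge and grows by the common factor $e^{t-t_0}$ in merge-free intervals; then, on each merge-free interval, you factor out $(t_1-t_0)$ from the per-ball annular contributions and apply Lemma~\ref{Numbersum} pointwise in $t$ to get $\sum_j\cE^*\geq\cE^*(\text{big circle})$ directly. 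This is arguably cleaner (it also avoids the paper's ``one ball condition'' case analysis and the midpoint-center issue there, since your weighted merge center $z=z_1+\rho_2\hat u$ genuinely makes $B_{\rho_1+\rho_2}^2(z)$ contain both balls, whereas the paper's midpoint claim needs $r_1=r_2$ to give $r_{ij}\leq r_i+r_j$). The price is that you get only the global lower bound, not the per-ball accumulation the paper's third property provides — but that global bound is all the proposition requires.

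Two small remarks. First, $\sum_j\rho_j(t)=\rho_*(t)$ should be $\leq$, since the initial sum is strictly less than $\op{rad}(\w)+\va$; this does not affect anything downstream. Second, when applying Lemma~\ref{Numbersum} at a fixed $t$ you correctly note the need for $\w\subset\bigcup_j B_{\rho_j(t)-\eta}^2(y_j(t))$ and $\bigcup_j B_{\rho_j(t)}^2(y_j(t))\subset B_{r-\eta}^2$ for some $\eta>0$; these follow, respectively, from compactness of $\ol\w$ in the union of open initial balls together with strict radius growth, and from your convex-hull bound on centers combined with $\rho_j(T)\leq\rho_*(T)=\lda r$ and $\op{rad}(\w)+\va<\lda r$. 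You could also remark explicitly that after a pairwise merge the new ball may immediately overlap a third, in which case a further merge occurs before the flow resumes; your description ``merge the two touching balls'' implicitly assumes this but it is worth saying.
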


\begin{lem}\label{lemmalowers}
Assume that $ 0<s<r $, and $ u\in H^1(A_{s,r}^2,\cX) $, then
$$
\f{1}{2}\int_{A_{s,r}^2}|\na u|^2\ud\HH^2\geq\cE^*([u|_{\pa B_r^2}]_{\cX})\log\f{r}{s}.
$$
\end{lem}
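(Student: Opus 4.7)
The plan is to pass to polar coordinates on the annulus, discard the radial component of $|\nabla u|^2$, and then bound the tangential energy on each circle by the free homotopy energy.

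Specifically, writing $(\rho,\theta) \in (s,r) \times [0,2\pi]$ for polar coordinates, the Jacobian $\rho\,d\rho\,d\theta$ and the identity $|\nabla u|^2 = |\partial_\rho u|^2 + \rho^{-2}|\partial_\theta u|^2$ give
\begin{equation*}
\frac{1}{2}\int_{A_{s,r}^2}|\nabla u|^2\,d\HH^2 \;\geq\; \frac{1}{2}\int_s^r \frac{1}{\rho}\left(\int_0^{2\pi}|\partial_\theta u(\rho,\theta)|^2\,d\theta\right)d\rho.
\end{equation*}
By Fubini's theorem, for a.e.\ $\rho\in(s,r)$ the restriction $u|_{\partial B_\rho^2}$ belongs to $H^1(\Ss^1,\cX)$, and by Lemma \ref{propHomt} all such loops lie in the common free homotopy class $[u|_{\partial B_r^2}]_{\cX}$.

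Next I would invoke the definition of $\cE$ in \eqref{ldaalbeta}: treating $u|_{\partial B_\rho^2}$ itself as an admissible competitor (reparametrized by the angular variable $\theta \in \Ss^1$), one has
\begin{equation*}
\cE\bigl([u|_{\partial B_\rho^2}]_{\cX}\bigr) \;\leq\; \frac{1}{2}\int_0^{2\pi}|\partial_\theta u(\rho,\theta)|^2\,d\theta
\end{equation*}
for a.e.\ $\rho\in(s,r)$. Combined with the first property of Lemma \ref{singuenergessum}, which gives $\cE^* \leq \cE$, and with the fact that the free homotopy class is independent of $\rho$, this yields the pointwise lower bound
\begin{equation*}
\frac{1}{2}\int_0^{2\pi}|\partial_\theta u(\rho,\theta)|^2\,d\theta \;\geq\; \cE^*\bigl([u|_{\partial B_r^2}]_{\cX}\bigr)\quad\text{for a.e. }\rho\in(s,r).
\end{equation*}

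Finally, substituting this into the polar-coordinate inequality and integrating $\rho^{-1}$ over $(s,r)$ produces the factor $\log(r/s)$, completing the proof. There is no substantive obstacle here: the argument is a direct Fubini-plus-definition calculation, the only subtlety being to ensure that the a.e.-defined trace $u|_{\partial B_\rho^2}$ lies in the correct free homotopy class, which is exactly the content of Lemma \ref{propHomt}.
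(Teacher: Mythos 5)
Your proof is correct and follows exactly the same route as the paper: polar coordinates, discard the radial term, bound the angular energy on each circle by $\cE^*$ of the fixed free homotopy class via the definition of $\cE$ and $\cE^*\leq\cE$, then integrate $\rho^{-1}$. The only cosmetic difference is that the paper first reduces to smooth $u$ via the approximation Lemma \ref{SmoothApproximation}, whereas you work directly with a.e.\ traces via Lemma \ref{propHomt}; both handle the regularity issue equivalently.
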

\begin{proof}
By Lemma \ref{SmoothApproximation}, without loss of generality, we can assume that $ u $ is smooth. Using polar coordinates and the first property of Lemma \ref{singuenergessum}, we have
\begin{align*}
\f{1}{2}\int_{A_{s,r}^2}|\na u|^2\ud\HH^2&=\int_s^r\int_{\Ss^1}\(\rho|\pa_{\rho}u|^2+\f{1}{\rho}|\pa_{\theta}u|^2\)\ud\theta\ud\rho\geq\cE^*([u|_{\pa B_r^2}]_{\cX})\int_s^r\f{\ud\rho}{\rho},
\end{align*}
which directly implies the result.
\end{proof}

\begin{proof}[Proof of Proposition \ref{Plower}]
By a scaling argument, we can assume that $ r=1 $. For any $ 0<\eta<1/40 $, by the definition of radius for an arbitrary set, we obtain $ \{B_{r_i}^2(x_i)\}_{i=1}^n $ such that
\be
\ol{\w}\subset\bigcup_{i=1}^nB_{r_i}^2(x_i)\text{ and }\sum_{i=1}^nr_i\leq\op{rad}(\w)+\eta.\label{etasmall}
\ee
Moreover, by $ \lda\geq 19/40 $, we have $ \ol{\w}\subset\ol{B_{1/20}^2} $. For this reason, we assume $ x_i\in\ol{B_{1/20}^2} $ and $ 0<r_i\leq 1/10 $ for any $ i=1,2,...,n $. Without loss of generality, we can assume that $ \{\ol{B_{r_i}^2(x_i)}\}_{i=1}^n $ are mutually disjoint. If not, for example, $ \ol{B_{r_i}^2(x_i)}\cap \ol{B_{r_j}^2(x_j)}\neq\emptyset $, we construct a ball $ B_{r_{ij}}^2(x_{ij}) $ such that $
x_{ij}=(x_i+x_j)/2\in\ol{B_{1/20}^2} $, $ r_{ij}\leq r_i+r_j $, and $ B_{r_i}^2(x_i)\cup B_{r_j}^2(x_j)\subset B_{r_{ij}}^2(x_{ij}) $. If $ \ol{B_{r_{ij}}^2(x_{ij})} $ intersects $ \ol{B_{r_k}^2(x_k)} $ for another $ k\neq i,j $, we construct $ B_{r_{ijk}}^2(x_{ijk}) $ such that $ x_{ijk}=(x_{ij}+x_k)/2\in\ol{B_{1/20}^2} $, $ r_{ijk}\leq r_i+r_j+r_k $ and $ B_{r_{ij}}^2(x_{ij})\cup B_{r_k}^2(x_k)\subset B_{r_{ijk}}^2(x_{ijk}) $. Repeating this procedure, we finally get a ball $ B_r^2(x) $ that contains balls $ \{B_{r_i}^2(x_i)\}_{i=1}^{\ell} $ (relabeled) such that $ x\in\ol{B_{1/20}^2} $, $ r\leq\sum_{i=1}^{\ell}r_i $, and $ \ol{B_r^2(x)}\cap(\cup_{i=\ell+1}^n\ol{B_{r_i}^2(x_i)})=\emptyset $. In view of \eqref{etasmall} and the fact that $ \ol{\w}\subset\ol{B_{1/20}^2} $, we have that $
r\leq\op{rad}(\w)+\eta<1/10 $. Replacing $ \{B_{r_i}^2(x_i)\}_{i=1}^n $ by $ \{B_r^2(x)\}\cup\{B_{r_i}^2(x_i)\}_{i=\ell+1}^n $, we have $ \ol{\w}\subset B_r^2(x)\cup(\cup_{i=\ell+1}^nB_{r_i}^2(x_i)) $ and $ r+\sum_{i=\ell+1}^nr_i\leq\op{rad}(\w)+\eta $. Now we can continue this procedure for finite times such that all the closures of balls in the construction are mutually disjoint. Moreover, through some mild modifications, we can also assume that $
\w\cap(\cup_{i=1}^n\pa B_{r_i}^2(x_i))=\emptyset $.

Since $ \{\ol{B_{r_i}^2(x_i)}\}_{i=1}^n $ are mutually disjoint, for any $ i\in\{1,2,...,n\} $, there is a ball $ B_r^2(x_i) $ such that $ r_i<r $ and $ B_r^2(x_i)\cap(\cup_{1\leq j\leq n,j\neq i}B_{r_j}^2(x_j))=\emptyset $. By Lemma \ref{Numbersum}, $ \{\cE(u|_{\pa B_{r_i}^2(x_i)})\}_{i=1}^n $ are well defined. We call that $ \{B_{r_i}^2(x_i)\}_{i=1}^n $ is our family of balls at time $ t=0 $. Next, we proceed to define the family $ \{B^2(t;i)\}_{i=1}^{n(t)} $ for $ t>0 $. We will denote by $ r_i(t) $ the radius of $ B^2(t;i) $, by $ x_i(t) $ the center of $ B^2(t;i) $, and define in
addition the seed size $ \mu_i(t) $ of $ B^2(t;i) $. Initially, $ \mu_i(0)=r_i(0)=r_i $. Suppose that the family is defined at some time $ t>0 $, and that the
quantity $ \log(r_i(t)/\mu_i(0)) $
are the same for all the balls $ B^2(t;i) $, call it $ \al(t) $. Three
cases will occur.\smallskip

\noindent
\underline{\textbf{Case 1.}} The family $ \{B^2(t;i)\}_{i=1}^{n(t)} $ satisfies ``one ball condition", i.e., $ n(t)\geq 2 $ and there exists a $ B_r^2(x) $ such that $ x\in\ol{B_{1/20}^2} $, $ 0<r\leq 3/20 $, satisfying $ \cup_{i=1}^{n(t)}\ol{B^2(t;i)}\subset B_r^2(x)$ and $ 0<r\leq\sum_{i=1}^{n(t)}r_i(t) $. For this case, we refine the family $ \{B^2(t;i)\}_{i=1}^{n(t)} $ to the one ball $ B_r^2(x) $. The seed size $ \mu $ of $ B_r^2(x) $ is defined to be such that
\be
\log\f{r}{\mu}=\log\f{r_i(t)}{\mu_i(t)}=\al(t)\text{ for any }i=1,2,...,n(t).\label{seedsizegiven}
\ee
Here we have to show that such $ \mu $ exists. Firstly we fix $ 1\leq i_0\leq n(t) $. This implies $ r\geq r_{i_0}(t) $ and thus, as $ s $ increases from $ \mu_{i_0}(t) $ to $ r $, $ \log(r/s) $ continuously decreases from some number greater than $ \log(r_{i_0}(t)/\mu_{i_0}(t)) $ to zero. There must be a suitable $ s=\mu $ in the interval $ [\mu_{i_0}(t),r] $ satisfying \eqref{seedsizegiven}.\smallskip

\noindent
\underline{\textbf{Case 2.}} $ \{B^2(t;i)\}_{i=1}^{n(t)} $ does not satisfy the ``one ball condition" such that $ n(t)\geq 2 $, 
$$
\ol{B^2(t;i)}\cap\ol{B^2(t;j)}=\emptyset\text{ for any }1\leq i,j\leq n(t),\,\,i\neq j,
$$ 
or $ n(t)=1 $. In this case, as time increases, we leave $ \{\mu_i(t)\}_{i=1}^{n(t)} $ remaining constant and expand each ball without changing their centers in a way such that the quantity $ \al(t)=\log(r_i(t)/\mu_i(t)) $ remains the same for all the balls, and is a continuous increasing function of $ t $. We stop expanding either when there are two balls $ \ol{B^2(t;i)}\cap\ol{B^2(t;j)}\neq\emptyset $ with $ i\neq j $ or one of closures of balls in the family intersects $ \pa B_1^2 $. Note that when we are expanding the ball, the first case will not occur by the definition. \smallskip

\noindent
\underline{\textbf{Case 3.}} $ \{B^2(t;i)\}_{i=1}^{n(t)} $ does not satisfy the ``one ball condition" and there are some $ i\neq j $ such that $ \ol{B^2(t;i)}\cap\ol{B^2(t;j)}\neq\emptyset $. For this case, we construct $ B_{r_{ij}}^2(x_{ij}) $ such that $
x_{ij}=(x_i(t)+x_j(t))/2\in\ol{B_{1/20}^2} $, $ r_{ij}\leq r_i(t)+r_j(t) $, and $ B^2(t;i)\cup B^2(t;j)\subset B_{r_{ij}}^2(x_{ij}) $. If $ \ol{B_{r_{ij}}^2(x_{ij})} $ intersects $ \ol{B^2(t;k)} $ for some $ k\neq i,j $, we enlarge it again to the ball $ B_{r_{ijk}}^2(x_{ijk}) $ such that $ x_{ijk}=(x_{ij}+x_k(t))/2\in \ol{B_{1/20}^2} $, $ r_{ijk}\leq r_i(t)+r_j(t)+r_k(t) $, and $ B_{r_{ij}}^2(x_{ij})\cup B^2(t;k)\subset B_{r_{ijk}}^2(x_{ijk}) $. Repeating such procedure, we can finally get a ball $ B_r^2(x) $ that contains balls $ \{B^2(t;i)\}_{i=1}^{\ell} $, (relabeled), $ x\in\ol{B_{1/20}^2} $ and $ r\leq\sum_{i=1}^{\ell}r_i(t) $. We claim, when we are constructing larger balls, the procedure will never produce a ball with $ r>3/20 $. Indeed, for this time $ t $ before the enlarging step, we have $ \ol{B^2(t;i)}\cap \ol{B^2(t;j)}=\emptyset $ for any $ i\neq j $ and $ \{x_i(t)\}_{i=1}^{n(t)}\subset \ol{B_{1/20}^2} $. Since $ n(t)\geq 2 $, this implies that $
\cup_{i=1}^{n(t)}\ol{B^2(t;i)}\subset B_{1/10}^2 $. If there produces a ball $ B_r^2(x) $ such that $ r>3/20 $, then by the fact $ x\in\ol{B_{1/20}^2} $, we have $ \cup_{i=1}^{n(t)}B^2(t;i)\subset B_{3/20}^2(x) $, which is a contradiction to the assumption that $ \{B^2(t;i)\}_{i=1}^{n(t)} $ does not satisfy ``one ball condition". After the enlarging procedure, we refine the family at the time $ t $ to be the original family less $ \{B^2(t;i)\}_{i=1}^{\ell} $,
plus the ball $ B_r^2(x) $ we just constructed. The seed size $ \mu $ of $ B_r^2(x) $ is given by \eqref{seedsizegiven} for $ i=1,2,...,\ell $ and the existence of such $ \mu $ is analogous to the first case.

For the constructions above, we have the following properties.
\begin{enumerate}
\item $ \ol{\w}\subset\cup_{i=1}^{n(t)}B^2(t;i) $.
\item The seed sizes satisfy
\be
\mu_i(t)\leq\sum_{j:B^2(0;j)\subset B^2(t;i)}r_j(0)\text{ for any }i=1,2,...,n(t).\label{muit}
\ee
\item For any $ B^2(t;i) $ in the family,
$$
\f{1}{2}\int_{B^2(t;i)\backslash\w}|\na u|^2\ud\HH^2\geq\cE^*([u|_{\pa B_{r_i(t)}^2(x_i(t))}]_{\cX})\al(t),
$$
where $ \al(t)=\log(r_i(t)/\mu_i(t)) $. 
\end{enumerate}

Here, we note $ \{\cE^*([u|_{\pa B_{r_i(t)}^2(x_i(t))}]_{\cX})\}_{i=1}^{n(t)} $ are well defined by the first property. Moreover, $ \al(t) $ does not depend on $ i $, which is ensured by the constructions. The first property is trivial. For the second property, it is easy to see that this is true for $ t=0 $ and remains true under the procedure of Case 2. So we only need to show that \eqref{muit} remains true under the procedure of Case 1 and Case 3 above. Since the proof is almost the same, we focus on the constructions in Case 3 and the other follows analogously. We assume that for the time $ t $, $ \{B^2(t;i)\}_{i=1}^{n(t)} $ satisfies \eqref{muit} and $ \{B^2(t;i)\}_{i=1}^{\ell} $ are merged to $ B_r^2(x) $ with seed size $ \mu $, we have that \eqref{seedsizegiven} is true. By using the property of the function $ \log(\cdot) $, we obtain 
$$
\al(t)=\log\f{r}{\mu}\geq\log\f{\sum_{i=1}^{\ell}r_i(t)}{\sum_{i=1}^{\ell}\mu_i(t)}.
$$
From the construction $ B_r^2(x) $, we have that $ r\leq\sum_{i=1}^{\ell}r_i(t) $, and then $ \mu\leq\sum_{i=1}^{\ell}\mu_i(t) $. By the assumption on $ \{B^2(t;i)\}_{i=1}^{n(t)} $, we deduce
$$
\mu\leq\sum_{i=1}^{\ell}\mu_i(t)\leq\sum_{i=1}^{\ell}\sum_{j:B^2(0;j)\subset B^2(t;i)}r_j(0)=\sum_{j:B^2(0;j)\subset B_r^2(x)}r_j(0),
$$
which completes the proof. Now let us prove the third property. Like the proof of the second one, we still only consider Case 3. Similarly, we assume that $ \{B^2(t;i)\}_{i=1}^{\ell} $ are merged to $ B_r^2(x) $ with seed size $ \mu $. We use the inductive arguments, i.e., we suppose that the third property holds for $ B^2(t;i) $ with $ i=1,2,...,\ell $. By using Lemma \ref{lemmalowers}, we have
\be
\begin{aligned}
\f{1}{2}\int_{B_r^2(x)\backslash\w}|\na u|^2\ud\HH^2&\geq\sum_{i=1}^{\ell}\f{1}{2}\int_{B^2(t;i)\backslash\w}|\na u|^2\ud\HH^2\\
&\geq\sum_{i=1}^{\ell}\cE^*([u|_{\pa B_{r_i(t)}^2(x_i(t))}]_{\cX})\log\f{r_i(t)}{\mu_i(t)}\\
&\geq\cE^*([u|_{\pa B_r^2(x)}]_{\cX})\al(t),
\end{aligned}\label{largein}
\ee
where for the second inequality, we have used the inductive assumption, and for the last inequality, we have used Lemma \ref{Numbersum}. When we are expanding the radius of $ B_r^2(x) $ to some $ R>r $ in the procedure of Case 2, there holds
\begin{align*}
\f{1}{2}\int_{B_R^2(x)\backslash\w}|\na u|^2\ud\HH^2&=\f{1}{2}\int_{A_{r,R}^2(x)}|\na u|^2\ud\HH^2+\f{1}{2}\int_{B_r^2(x)}|\na u|^2\ud\HH^2\\
&\geq\cE^*([u|_{\pa B_r^2(x)}]_{\cX})\log\f{R}{r}+\cE^*([u|_{\pa B_r^2(x)}]_{\cX})\log\f{r}{\mu}\\
&=\cE^*([u|_{\pa B_R^2(x)}]_{\cX})\log\f{R}{\mu},
\end{align*}
where for the inequality and last equality, we have used Lemma \ref{lemmalowers} again. Now we have completed the proof of the third property.

In the constructions for the family of balls, for finite time, there is only one ball in the collection and the procedure ends when the closure of this ball is expanded to touch the boundary of $ B_1^2 $. At this time, we denote this ball $ B_{r(T)}^2(x(T)) $ with seed size $ \mu(T) $. By the second property, we have 
\be
\mu(T)\leq\sum_{i=1}^{n(0)}r_i(0)=\sum_{i=1}^nr_i\leq\op{rad}(\w)+\eta.\label{muTeta}
\ee
Obviously, here $ r(T)>\lda $. Also, we observe that for sufficiently small $ \delta>0 $, $ x(t)=x(T) $ when $ t\in[T-\delta,T] $, $ r(t)<r(T) $ and $
\w\subset B_{r(t)}^2(x(t))\subset B_{r(T)}^2(x(T))\subset B_1^2 $. It follows from the third property that
\begin{align*}
\f{1}{2}\int_{B_1^2\backslash\w}|\na u|^2\ud\HH^2&\geq\f{1}{2}\int_{B_{r(T-\delta/2)}^2(x(T))\backslash\w}|\na u|^2\ud\HH^2\\
&\geq\cE^*([u|_{\pa B_{r(T-\delta/2)}^2(x(T))}]_{\cX})\log\f{r(T-\delta/2)}{\mu(T)}\\
&\geq\cE^*([u|_{\pa B_1^2}]_{\cX})\log\f{r(T-\delta/2)}{\op{rad}(\w)+\eta},
\end{align*}
where for the last inequality, we have used the Lemma \ref{Numbersum} for $ n=1 $ and \eqref{muTeta}. Since $ r(T)>\lda $ and $ r(t) $ is continuous, we can let $ \eta\to 0^+ $ and $ \delta\to 0^+ $ and complete the proof.
\end{proof}

\subsection{Some geometric results of the manifold \texorpdfstring{$ \cP $}{} and \texorpdfstring{$ \cN $}{}}

In this subsection, we present some geometric properties of $ \cP $ and $ \cN $. For a path-connected topological space $ \cX $, sometimes we use $ \pi_1(\cX) $ to denote the fundamental group of it and drop the base point for simplicity.

\begin{lem}\label{N0fgroup}
$ \pi_1(\cP)\cong\Z_2=\{\ol{0},\ol{1}\} $ and $ \Ss^2 $ is the universal covering space of $ \cP $ with the covering map
\be
p_0:\Ss^2\to\cN,\quad p_0(\n)=r_*\n\n,\,\,\n\in\Ss^2.\label{nnp0}
\ee
In particular, for a fixed point $ \PP_0\in\cP $ and a loop $ \A\in C^0(\Ss^1,\cP) $ with $ \PP_0\in\A $, $ [\A]_{\cP,\PP_0}=[\PP_0]_{\cP,\PP_0} $ if and only if $ \A=p_0(\n) $, where $ \n\in C^0(\Ss^1,\Ss^2) $.
\end{lem}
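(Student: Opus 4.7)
The plan is to recognize that, up to the positive scaling factor $r_*$, the set $\cP$ is the real projective plane $\R\PP^2$ with $p_0$ playing the role of the standard quotient by the antipodal action. With this identification the lemma reduces to well known covering space facts, and the job is mainly to verify that the matrix valued map $p_0$ really is a smooth double cover and then to spell out the path/homotopy lifting argument.

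First I would check that $p_0:\Ss^2\to\cP$ is a smooth $2$-to-$1$ covering. The key algebraic input is that $r_*\n\n=r_*\m\m$ for $\n,\m\in\Ss^2$ forces $\m=\pm\n$, since the rank one symmetric matrix $\n\n$ determines the line $\R\n$. For any $\m\in\Ss^2$ the two open hemispheres $U_{\pm}(\m):=\{\n\in\Ss^2:\pm\,\n\cdot\m>0\}$ are disjoint, each mapped by $p_0$ diffeomorphically onto the common image $V(\m):=p_0(U_+(\m))$, and $p_0^{-1}(V(\m))=U_+(\m)\sqcup U_-(\m)$. This gives the sheeted structure near $r_*\m\m$. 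Since $\Ss^2$ is simply connected, it is the universal cover of $\cP$ and $\pi_1(\cP)$ is isomorphic to the deck transformation group, which is $\{\op{id},-\op{id}\}\cong\Z_2$.

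For the ``in particular'' claim, fix $\n_0\in p_0^{-1}(\PP_0)$ and let $\A\in C^0(\Ss^1,\cP)$ with $\A(s_0)=\PP_0$. By the path lifting property for covering maps, the parametrized loop $[0,2\pi]\to\cP$ coming from $\A$ has a unique continuous lift $\tilde\n:[0,2\pi]\to\Ss^2$ with $\tilde\n(0)=\n_0$ and $p_0\circ\tilde\n=\A$. Because $p_0(\tilde\n(2\pi))=\PP_0$, the endpoint lies in the discrete fiber $\{\pm\n_0\}$. If $\tilde\n(2\pi)=\n_0$, then $\tilde\n$ descends to a continuous map $\n:\Ss^1\to\Ss^2$ with $p_0\circ\n=\A$. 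Conversely, if any such $\n$ exists, then by the uniqueness of path lifting (starting from $\n(s_0)\in p_0^{-1}(\PP_0)$) the lift must close up.

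Finally, I would identify the closed lift condition with triviality of $[\A]_{\cP,\PP_0}$. If $\tilde\n$ closes up, the induced $\n:\Ss^1\to\Ss^2$ is null homotopic since $\pi_1(\Ss^2)=0$, and composing the null homotopy with $p_0$ yields a based null homotopy of $\A$ in $\cP$. Conversely, a based null homotopy $H:[0,2\pi]\times[0,1]\to\cP$ of $\A$ to $\PP_0$ lifts via the homotopy lifting property to $\tilde H:[0,2\pi]\times[0,1]\to\Ss^2$ with $\tilde H(\cdot,0)=\tilde\n$; the restrictions $\tilde H(0,\cdot)$, $\tilde H(2\pi,\cdot)$ and $\tilde H(\cdot,1)$ take values in the discrete fibers of $p_0$ and hence are constant, and comparing at corners forces $\tilde\n(2\pi)=\tilde H(2\pi,1)=\tilde H(0,1)=\n_0$. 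The bulk of the work is the covering map verification in the first step; the rest is a standard application of the path and homotopy lifting lemmas and needs no input specific to the Landau--de Gennes setting.
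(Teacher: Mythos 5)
Your proof is correct and follows essentially the same approach as the paper: identify $\cP$ with $\R\PP^2$ (up to the scaling factor $r_*$), recognize $p_0$ as the standard antipodal quotient $\Ss^2\to\R\PP^2$, and invoke covering space theory. The paper's proof is a two-sentence appeal to the standard facts about $\R\PP^2$, whereas you carefully verify the $2$-to-$1$ covering structure via the hemispheres $U_{\pm}(\m)$ and spell out the path and homotopy lifting arguments for the ``in particular'' clause, which the paper leaves implicit; this is fully consistent with what the paper intends.
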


\begin{proof}
By the definition of \eqref{N0}, we have that $ \pi_1(\cP)\cong\pi_1(\R\PP^2)\cong\Z_2 $. Moreover, by the construction of $ \R\PP^2 $, $ \Ss^2 $ is the universal covering space of $ \cP $ and one can easily check that \eqref{nnp0} gives the covering map.
\end{proof}

Since $ \#\pi_1(\cP)=2 $, then for any point $ \PP_0\in\cP $, there is only one isomorphism from $ \pi_1(\cP,\PP_0) $ to $ \Z_2 $. It is of no ambiguity to identify $ \pi_1(\cP) $ with $ \Z_2 $, i.e., for a loop $ \mathbf{A}\in C^0(\Ss^1,\cP) $ with $ \PP_0\in\A $, we will use the convention throughout this paper that $
[\A]_{\cP,\PP_0}:=\ol{0} $ when $ \A\sim_{\cP,\PP_0}\PP_0 $ and $ [\A]_{\cP,\PP_0}:=\ol{1} $ for otherwise. 

\begin{prop}\label{freeN0}
Let $ \A,\B\in C^0(\Ss^1,\cP) $ be two loops on $ \cP $. Assume that $ \PP_1,\PP_2 $ such that $ \PP_1\in\A $ and $ \PP_2\in\B $. There holds, $ [\A]_{\cP}=[\B]_{\cP} $ if and only if $ [\A]_{\cP,\PP_1}=[\B]_{\cP,\PP_2} $ in $ \Z_2 $. In particular, $ \A $ is a trivial loop if and only if $ [\A]_{\cP,\PP_1}=\ol{0} $. Moreover, there are two classes in $ [\Ss^1,\cP] $, denoted by $ \rh_0 $ and $ \rh_1 $ corresponding the sets of to trivial loops and non-trivial loops respectively.
\end{prop}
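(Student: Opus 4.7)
The plan is to derive the proposition as a direct consequence of Lemma \ref{LemmaHom} and Lemma \ref{N0fgroup}, with the decisive observation being that $\pi_1(\cP) \cong \Z_2$ is abelian. Since $\cP$ is path-connected (being homeomorphic to $\R\PP^2$), Lemma \ref{LemmaHom} applies with $\cX=\cP$ and any base point $\PP_0$, giving a surjection $\Phi_{\PP_0}:\pi_1(\cP,\PP_0)\to[\Ss^1,\cP]$ whose fibres are conjugacy classes in $\pi_1(\cP,\PP_0)$. Because $\pi_1(\cP,\PP_0)\cong\Z_2$ is abelian, each conjugacy class is a singleton, so $\Phi_{\PP_0}$ is actually a bijection. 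In particular, $|[\Ss^1,\cP]|=2$, and setting $\rh_0:=\Phi_{\PP_0}(\ol{0})$ and $\rh_1:=\Phi_{\PP_0}(\ol{1})$ yields the two announced classes, with $\rh_0$ corresponding to constant (hence trivial) loops.

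Next I would handle the matter of comparing loops based at different points $\PP_1,\PP_2$. Pick any continuous path $\ga\in C^0([0,2\pi],\cP)$ from $\PP_1$ to $\PP_2$, and form the standard change-of-basepoint isomorphism $\Psi_\ga:\pi_1(\cP,\PP_2)\to\pi_1(\cP,\PP_1)$ sending $[\al]_{\cP,\PP_2}$ to $[\ga*\al*\wt{\ga}]_{\cP,\PP_1}$. Because the target group $\Z_2$ has no non-trivial automorphisms, $\Psi_\ga$ is in fact independent of the choice of $\ga$ and agrees with the canonical identification of both $\pi_1(\cP,\PP_1)$ and $\pi_1(\cP,\PP_2)$ with $\Z_2$ that was fixed just after Lemma \ref{N0fgroup}. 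This shows that the elements $\ol{0},\ol{1}$ are unambiguous labels independently of the base point.

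To conclude, I would combine these two ingredients. Given loops $\A,\B$ with $\PP_1\in\A$ and $\PP_2\in\B$, I would replace $\B$ by $\B':=\ga*\B*\wt{\ga}$, which satisfies $\B\sim_\cP\B'$ (since one can slide the basepoint along $\ga$) and $[\B']_{\cP,\PP_1}=\Psi_\ga([\B]_{\cP,\PP_2})$. Using the bijectivity of $\Phi_{\PP_1}$ from Step 1, the chain
\[
[\A]_\cP=[\B]_\cP \;\Longleftrightarrow\; [\A]_\cP=[\B']_\cP \;\Longleftrightarrow\; [\A]_{\cP,\PP_1}=[\B']_{\cP,\PP_1} \;\Longleftrightarrow\; [\A]_{\cP,\PP_1}=[\B]_{\cP,\PP_2}
\]
then establishes the desired equivalence, the last step being the canonical identification from Step 2. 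The "in particular" assertion is the special case in which $\B$ is a constant loop, so that $[\B]_{\cP,\PP_2}=\ol{0}$.

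I do not anticipate a serious obstacle: essentially everything is packaged in Lemmas \ref{LemmaHom} and \ref{N0fgroup}, and the argument reduces to the abelianness of $\Z_2$. The only point requiring mild care is verifying that the identification between $\pi_1(\cP,\PP_1)$ and $\pi_1(\cP,\PP_2)$ implicit in the statement does not depend on the path $\ga$ used to transport basepoints; this is where the uniqueness of the automorphism of $\Z_2$ plays its role.
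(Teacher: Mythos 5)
Your proof is correct and takes essentially the same route as the paper: both rely on Lemma \ref{LemmaHom} together with $\pi_1(\cP)\cong\Z_2$ being abelian with trivial automorphism group, so that conjugacy classes are singletons and change-of-basepoint identifications are canonical. The only cosmetic difference is that the paper routes both loops through a common third basepoint $\PP_0$ via two curves, while you transport one basepoint directly to the other along a single path.
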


\begin{rem}
Firstly, we note that $ [\A]_{\cP,\PP_1},[\B]_{\cP,\PP_2} $ do not depend on the choices of $ \PP_1,\PP_2 $. Take the loop $ \A $ for example. For $ \wh{\PP}_1\in\A $ and $ \wh{\PP}_1\neq\PP_1 $, by using the fact that $ \cP $ is path-connected, there is a curve $ \CC:[0,2\pi]\to\cP $ such that $ \CC(0)=\wh{\PP}_1 $ and $ \CC(2\pi)=\PP_1 $. It induces an isomorphism $ \tau_{\cP,\CC}:\pi_1(\cP,\PP_1)\to \pi_1(\cP,\wh{\PP}_1) $, such that $ \tau_{\cP,\CC}([\mathbf{L}]_{\cP,\PP_1})=[\CC*\mathbf{L}*\wt{\CC}]_{\cN,\wh{\PP}_1} $ for any loop $ \mathbf{L}\in C^0(\Ss^1,\cP) $ based on $ \PP_1 $. Since $ \tau_{\cP,\CC} $ is the automorphism of $ \Z_2 $, we have $ [\A]_{\cP,\wh{\PP}_1}=[\A]_{\cP,\PP_1} $.
\end{rem}

\begin{proof}[Proof of Proposition \ref{freeN0}]
Choose another point $ \PP_0\in\cP $ and construct two curves $ \CC_1,\CC_2:[0,2\pi]\to\cP $ such that $ \CC_1(0)=\PP_0 $, $ \CC_1(2\pi)=\PP_1 $, $ \CC_2(0)=\PP_0 $ and $ \CC_2(2\pi)=\PP_2 $. Define $ \{\tau_{\cP,\CC_{\ell}}\}_{\ell=1}^2 $ by $ \tau_{\cP,\CC_{\ell}}:\pi_1(\cP,\PP_{\ell})\to \pi_1(\cP,\PP_0) $ for $ \ell=1,2 $, where $ \tau_{\cP,\CC_{\ell}}([\mathbf{L}_{\ell}]_{\cP,\PP_{\ell}})=[\CC_{\ell}*\mathbf{L}_{\ell}*\wt{\CC}_{\ell}]_{\cP,\PP_0} $ for $ \mathbf{L}_{\ell}\in C^0(\Ss^1,\cP) $ based on $ \PP_{\ell} $. In view of Lemma \ref{LemmaHom}, we have
$$
[\A]_{\cP}=[\B]_{\cP}\Leftrightarrow[\CC_1*\A*\wt{\CC}_1]_{\cP}=[\CC_2*\B*\wt{\CC}_2]_{\cP}\Leftrightarrow\tau_{\cP,\CC_1}([\A]_{\cP,\PP_1})\stackrel{c}{\sim}\tau_{\cP,\CC_2}([\B]_{\cP,\PP_2})\text{ in }\Z_2.
$$
By the property that $ \{\tau_{\cP,\CC_{\ell}}\}_{\ell=1}^2 $ are both automorphisms of $ \Z_2 $, we have $ [\A]_{\cP}=[\B]_{\cP} $ if and only if $ [\A]_{\cP,\PP_1}=[\B]_{\cP,\PP_2} $.
\end{proof}

Now, we consider the manifold $ \M $, which is a covering space of $ \cN $.

\begin{lem}\label{MSO(3)}
$ \M $ is diffeomorphic to $ \mathrm{SO}(3) $ and $ \R\PP^3 $.
\end{lem}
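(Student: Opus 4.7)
The plan is to exhibit an explicit diffeomorphism $\Phi : \M \to \mathrm{SO}(3)$ by completing each orthonormal pair to a right-handed orthonormal frame, and then invoke the classical identification $\mathrm{SO}(3) \cong \R\PP^3$.

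First, I would define $\Phi : \M \to \mathrm{SO}(3)$ by
\[
\Phi(\n,\m) := (\n \mid \m \mid \n \times \m),
\]
the matrix whose three columns are $\n$, $\m$, and $\p := \n \times \m$. For any $(\n,\m) \in \M$ one has $|\n|=|\m|=1$ and $\n \cdot \m = 0$, so $\{\n,\m,\p\}$ is an orthonormal basis of $\R^3$, giving $\Phi(\n,\m)^\T \Phi(\n,\m) = \I$; moreover $\det \Phi(\n,\m) = (\n \times \m) \cdot \p = 1$, so $\Phi$ lands in $\mathrm{SO}(3)$. Smoothness is obvious since $\Phi$ is the restriction of a polynomial map $\R^6 \to \MM^{3 \times 3}$ to the smooth submanifold $\M$.

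Next I would check that $\Phi$ is bijective with smooth inverse. Given $\mathbf{R} \in \mathrm{SO}(3)$ with columns $(\uu,\vv,\w)$, the pair $(\uu,\vv)$ lies in $\M$; because $\det \mathbf{R} = 1$ and the first two columns of an orthogonal matrix determine the third up to sign, one must have $\w = \uu \times \vv$. Therefore $\Phi^{-1}(\mathbf{R}) = (\uu, \vv)$, which is plainly smooth (it is the restriction of two coordinate projections $\MM^{3\times 3} \to \R^3$). Thus $\Phi$ is a diffeomorphism. This identification is essentially the content of the Stiefel manifold isomorphism $V_2(\R^3) \cong \mathrm{SO}(3)$, specialized to oriented orthonormal $2$-frames.

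Finally, for the identification $\mathrm{SO}(3) \cong \R\PP^3$ I would appeal to the classical double cover by unit quaternions: the map $\Ss^3 \to \mathrm{SO}(3)$ sending a unit quaternion $q$ to the rotation $\vv \mapsto q\vv q^{-1}$ is a smooth surjective two-to-one map identifying $q$ with $-q$, and the quotient $\Ss^3 / \{\pm 1\}$ is by definition $\R\PP^3$ with its standard smooth structure. Since this is a textbook fact (see any standard reference on Lie groups, e.g.\ Hall's \emph{Lie Groups, Lie Algebras, and Representations}), I would simply cite it rather than reprove it. Composing with $\Phi^{-1}$ then yields the desired diffeomorphism $\M \cong \R\PP^3$. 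There is no real obstacle here; the main point is just to record the explicit form of $\Phi$, since the loop-lifting arguments in the subsequent free-homotopy analysis will rely on it.
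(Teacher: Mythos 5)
Your proof is correct and takes essentially the same route as the paper: the map $\Phi(\n,\m)=(\n\mid\m\mid\n\times\m)$ is exactly the paper's diffeomorphism $\iota$, and the identification $\mathrm{SO}(3)\cong\R\PP^3$ is the same quaternion double-cover argument. The one small divergence is that the paper writes out the quaternion conjugation map as an explicit $3\times3$ matrix (their \eqref{DifRP3SO3}) rather than citing it, because that explicit formula is reused later in Remark \ref{SO3rem} (to compute the pullback metric and characterize geodesics on $\mathrm{SO}(3)$) and in the proof of Lemma \ref{FG}, so merely citing the textbook fact would leave a gap in the subsequent development.
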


\begin{proof}
We see that $ \M $ and $ \mathrm{SO}(3) $ are diffeomorphic, since the map $
\iota:\M\to\mathrm{SO}(3) $ defined by $ \iota((\n,\m))=(\n,\m,\n\times\m) $ is a smooth diffeomorphism from $ \M $ to $ \mathrm{SO}(3) $. Here, $ (\n,\m,\n\times\m) $ denotes the orthogonal matrix with three columns formed by $ \n,\m $ and $ \n\times\m $ respectively. To complete the proof, we will establish a diffeomorphism from $ \R\PP^3 $ to $ \mathrm{SO}(3) $. Firstly, we have
$ \R\PP^3=\{\p\p\in\MM^{4\times 4}:\p\in\Ss^3\} $. For any $ \p=(a_1,a_2,a_3,a_4)^{\T}\in\Ss^3 $, we define a map $ \sg:\R\PP^3\to\mathrm{SO}(3) $ by
\be
\sg (\p\p)=\(\begin{matrix}
a_1^2+a_2^2-a_3^2-a_4^2&2(a_2a_3-a_1a_4)&2(a_1a_3+a_2a_4)\\
2(a_2a_3+a_1a_4)&a_1^2-a_2^2+a_3^2-a_4^2&2(a_3a_4-a_1a_2)\\
2(a_2a_4-a_1a_3)&2(a_1a_2+a_3a_4)&a_1^2-a_2^2-a_3^2+a_4^2\\
\end{matrix}\).\label{DifRP3SO3}
\ee
Simple calculations imply that $ \sg  $ is a smooth homeomorphism. Meanwhile, the reverse $ \sg ^{-1} $ is also smooth. Indeed, in constructing $ \sg  $, we first represent $ \p=a_1+a_2\ii+a_3\jj+a_4\kk $ with $ \sum_{i=1}^4a_i^2=1 $ as an element in the quaternion field
$$
\mathbb{H}=\{a+b\ii+c\jj+d\kk:a,b,c,d\in\R,\,\,\ii^2=\jj^2=\kk^2=\ii\jj\kk=-1\}.
$$
The matrix in \eqref{DifRP3SO3} corresponds to the linear map
$$
(a_1+a_2\ii+a_3\jj+a_4\kk)(\cdot)(a_1+a_2\ii+a_3\jj+a_4\kk)^{-1}:\op{span}\{\ii,\jj,\kk\}\to\op{span}\{\ii,\jj,\kk\}.
$$
To be precise, we have $ \p(\ii,\jj,\kk)\p^{-1}=(\ii,\jj,\kk)(\sg(\p\p)) $.
\end{proof}

\begin{rem}\label{SO3rem}
In view of Lemma \ref{MSO(3)} and formula \eqref{DifRP3SO3}, it can be seen that $ \Ss^3 $ is a covering space of $ \mathrm{SO}(3) $ with order two and the covering map $ \psi:\Ss^3\to \mathrm{SO}(3) $ is given by $ \psi(\p)=(\uu^{(1)},\uu^{(2)},\uu^{(3)}) $,
where $ \p=(a_1,a_2,a_3,a_4)^{\T}\in\Ss^3 $ and
\begin{align*}
\uu^{(1)}&=(a_1^2+a_2^2-a_3^2-a_4^2,2(a_2a_3+a_1a_4),2(a_2a_4-a_1a_3))^{\T},\\
\uu^{(2)}&=(2(a_2a_3-a_1a_4),a_1^2-a_2^2+a_3^2-a_4^2,2(a_1a_2+a_3a_4))^{\T},\\
\uu^{(3)}&=(2(a_1a_3+a_2a_4),2(a_3a_4-a_1a_2),a_1^2-a_2^2-a_3^2+a_4^2)^{\T}.
\end{align*}
Choose $ \va>0 $ and $ \ga:(-\va,\va)\to\Ss^3 $ such that $ \ga(s)=(\ga_1(s),\ga_2(s),\ga_3(s),\ga_4(s))^{\T} $ with $ \sum_{i=1}^4\ga_i^2(s)=1 $ for $ s\in(-\va,\va) $. Simple calculations yield that
\be
\left|\(\left.\f{\ud}{\ud t}\right|_{t=0}\psi(\ga)\)\right|^2=\sum_{i=1}^3|(\uu_i(\ga))'(0)|^2=2|\ga'(0)|^2.\label{TwoDeri1}
\ee
Due to the arbitrariness of the choice of $ \ga $, we deduce that
\be
|\ud\psi(\p)\vv|^2=2|\vv|^2\text{ for any }\p\in\Ss^3\text{ and }\vv\in T_{\p}\Ss^3.\label{TwoDeri}
\ee
This implies that the pull-back metric induced by $ \psi $ coincides with the fundamental form of $ \mathrm{SO}(3) $ up to a constant. Consequently, the Levi-Civita connections for the two metrics are the same since the Christoffel symbols remain the same under a multiplication of a constant number. Therefore, we conclude that $ \PP:\Ss^1\to\mathrm{SO}(3) $ is a geodesic if and only if it can be expressed as $ \PP=\psi(\p) $, where $ \p:[0,2\pi]\to\Ss^3 $ is a geodesic curve in $ \Ss^3 $ such that $ \p(0)=\pm\p(2\pi) $.
\end{rem}

Based on the results above, we can discuss some geometric properties of $ \cN $. To begin with, we give the fundamental group of it.

\begin{lem}\label{FG}
$ \pi_1(\cN)\cong Q_8 $ and $ \Ss^3 $ is the universal covering space of $ \cN $. Here $ Q_8 $ is the quaternion group, i.e., $ Q_8=\{\pm 1,\pm\ii,\pm\jj,\pm\kk\} $, where $ \ii,\jj,\kk $ satisfy the relations $ \ii^2=\jj^2=\kk^2=\ii\jj\kk=-1 $.
\end{lem}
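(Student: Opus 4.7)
The plan is to exhibit $\Ss^3$ as an $8$-fold covering space of $\cN$ with deck transformation group isomorphic to $Q_8$, and then invoke the simply-connectedness of $\Ss^3$ to conclude. The key ingredients are the double cover $\psi:\Ss^3\to\mathrm{SO}(3)$ of Remark \ref{SO3rem}, the diffeomorphism $\iota:\M\to\mathrm{SO}(3)$ of Lemma \ref{MSO(3)}, and the natural projection $\pi:\M\to\cN$, $(\n,\m)\mapsto r_*(\n\n-\m\m)$.

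First I would show that $\pi$ is a covering of order $4$. Every $\Q\in\cN$ has three simple eigenvalues $r_*,0,-r_*$ (Lemma \ref{minimalp}), so the one-dimensional eigenspaces associated with $\pm r_*$ are uniquely determined by $\Q$; hence the preimage $\pi^{-1}(r_*(\n\n-\m\m))$ consists exactly of the four points $(\pm\n,\pm\m)$, and $\Z_2\times\Z_2$ acts freely on $\M$ by $(\n,\m)\mapsto(\pm\n,\pm\m)$ with quotient $\cN$. Composing with $\iota^{-1}\circ\psi$ yields a smooth surjection
\[
\Phi:\Ss^3\to\cN,\qquad\Phi(\p)=r_*\bigl(\uu^{(1)}\uu^{(1)}-\uu^{(2)}\uu^{(2)}\bigr),
\]
where $\uu^{(1)},\uu^{(2)},\uu^{(3)}$ are the columns of $\psi(\p)$. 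Both $\psi$ and $\pi$ are local diffeomorphisms, so $\Phi$ is as well; being a surjective local diffeomorphism from a compact connected $3$-manifold, it is a covering map, necessarily of order $2\cdot 4=8$.

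Second, I would identify the deck group explicitly via right multiplication. From the defining identity $\p(\ii,\jj,\kk)\p^{-1}=(\ii,\jj,\kk)\psi(\p)$ of Remark \ref{SO3rem} one reads off the homomorphism property $\psi(\p\q)=\psi(\p)\psi(\q)$. Hence the map $\p\mapsto\p\q$ descends to a self-map of $\cN$ via $\Phi$ iff
\[
\psi(\q)\,\diag\{1,-1,0\}\,\psi(\q)^{\T}=\diag\{1,-1,0\},
\]
i.e., iff $\psi(\q)$ lies in the stabilizer of $\diag\{1,-1,0\}$ under $\mathrm{SO}(3)$-conjugation. This stabilizer is the Klein four-subgroup
\[
D_2=\{\I,\diag\{1,-1,-1\},\diag\{-1,1,-1\},\diag\{-1,-1,1\}\}\subset\mathrm{SO}(3),
\]
whose preimage under $\psi$ is computed directly from the formula for $\psi$ to be $\{\pm 1,\pm\ii,\pm\jj,\pm\kk\}=Q_8$. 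Right multiplication by $Q_8$ therefore gives $8$ fixed-point-free deck transformations, and since the fibers of $\Phi$ have exactly $8$ elements, these exhaust the deck group. Because $\Ss^3$ is simply connected, $\Phi:\Ss^3\to\cN$ is the universal cover, and $\pi_1(\cN)\cong\op{Deck}(\Phi)\cong Q_8$.

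The main piece of bookkeeping will be keeping left and right multiplications on $\Ss^3$ straight: left multiplication by $\q$ on $\Ss^3$ conjugates $\psi(\p)\diag\{1,-1,0\}\psi(\p)^{\T}$ by $\psi(\q)$, which fails to preserve $\Phi(\p)$ for varying $\p$ unless $\psi(\q)=\I$; one must use right multiplication, since only then does $\psi(\q)$ get sandwiched inside the orthogonal conjugation and yield the clean stabilizer condition above. Once this is settled, the identifications $\psi^{-1}(\pm\I)=\{\pm 1\}$, $\psi^{-1}(\diag\{1,-1,-1\})=\{\pm\ii\}$, $\psi^{-1}(\diag\{-1,1,-1\})=\{\pm\jj\}$, $\psi^{-1}(\diag\{-1,-1,1\})=\{\pm\kk\}$ fall out immediately from the explicit formula in Remark \ref{SO3rem}, and the proof is complete.
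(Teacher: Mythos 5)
Your proposal is correct and follows essentially the same route as the paper: factor the covering $\Ss^3\to\cN$ through $\M\cong\mathrm{SO}(3)$ as a double cover composed with a $4$-fold cover, then realize the deck group as $Q_8$ acting on unit quaternions by right multiplication. Your explicit identification of the deck transformations via the stabilizer of $\diag\{1,-1,0\}$ in $\mathrm{SO}(3)$ is a clean way to justify the compatibility $p\circ R_g=p$, which the paper asserts without detail (note only the small slip $\psi^{-1}(\pm\I)$, which should read $\psi^{-1}(\I)$ since $-\I\notin\mathrm{SO}(3)$).
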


\begin{proof}[Proof of Lemma \ref{FG}]
Firstly, we have that $ \M $ is a covering space of $ \cN $ with order four. Indeed, the covering map $ p_1 $ is defined by 
$$
p_1:\M\to\cN,\quad (\n,\m)\mapsto r_*(\n\n-\m\m).
$$
Moreover, the set of preimages for $ r_*(\n\n-\m\m)\in\cN $ is given by
$$
p_1^{-1}(r_*(\n\n-\m\m))=\{(\n,\m),(\n,-\m),(-\n,\m),(-\n,-\m)\}.
$$
Applying Lemma \ref{MSO(3)}, $ \M $ is diffeomorphic to $ \mathrm{SO}(3) $ and $ \R\PP^3 $. This implies that $ \Ss^3 $ is a covering space of $ \M $ with order two. In view of Remark \ref{SO3rem}, the covering map is given by
$$
p_2:\Ss^3\to\M,\quad\vv=(a_1,a_2,a_3,a_4)^{\T}\to(\n,\m),
$$
where
\be
\begin{aligned}
\n&=(a_1^2+a_2^2-a_3^2-a_4^2,2(a_2a_3+a_1a_4),2(a_2a_4-a_1a_3))^{\T},\\
\m&=(2(a_2a_3-a_1a_4),a_1^2-a_2^2+a_3^2-a_4^2,2(a_1a_2+a_3a_4))^{\T}.
\end{aligned}\label{nmdef}
\ee
By using the fact that the coverings above are all with finite order, we have that the composition $ p=p_1\circ p_2 $ is a covering map from $ \Ss^3 $ to $ \cN $. This implies that $ \Ss^3 $ is the universal covering space of $ \cN $ with order eight. Precisely speaking, we obtain 
\be
p:\Ss^3\to\cN,\quad\vv=(a_1,a_2,a_3,a_4)^{\T}\to r_*(\n\n-\m\m)\label{universal}
\ee
is a covering map, where $ \n,\m $ are given by \eqref{nmdef}. Since $ \pi_1(\Ss^3)=\{0\} $, in view of Proposition 1.32, 1.39 in \cite{H02}, $ \pi_1(\cN)\cong G(\Ss^3,\cN) $ and $ \#\pi_1(\cN)=8 $, where $ G(\Ss^3,\cN) $ is the deck transformation group of the covering space $ \Ss^3\to\cN $. Consider the action of $ Q_8 $ on $ \Ss^3 $, which is characterized as the set of unit quaternions by right multiplication $ R_{g} $ with $ g\in Q_8 $. Taking $ \ii $ as an example, for any $ \vv\in\Ss^3 $ represented by $ \vv=a_1+a_2\ii+a_3\jj+a_4\kk $, with $ \sum_{i=1}^4a_i^2=1 $, 
\be
R_{\ii}(\vv)=(a_1+a_2\ii+a_3\jj+a_4\kk)\ii=-a_2+a_1\ii+a_4\jj-a_3\kk.\label{right}
\ee
We deduce that $ R_g(g\in Q_8) $ is compatible with $ p:\Ss^3\to\cN $, i.e., $ p\circ R_g=p $. Meanwhile, $ R_g $ is a diffeomorphism from $ \Ss^3 $ to itself. This implies that $ Q_8 $ can be regard as a subgroup of $ G(\Ss^3,\cN) $. By the previous analysis, $ \#G(\Ss^3,\cN)=8 $ and then $ 
\pi_1(\cN)\cong G(\Ss^3,\cN)\cong Q_8 $. Indeed, by the results given as above, we have $ \Ss^3/Q_8\cong\cN $, where the notation ``$ \cong $'' here means the homeomorphism and the quotient relations are defined under the group actions by right multiplication like \eqref{right}.
\end{proof}

Recall the projections $ \{\varrho_i\}_{i=1}^2 $ given in \eqref{rho1rho2rho}, we have that they are both well defined on $ \cN $, that is,
\begin{align*}
\varrho_1&:\cN\to\cP,\quad r_*(\n\n-\m\m)\mapsto r_*\n\n,\\
\varrho_2&:\cN\to\cP,\quad r_*(\n\n-\m\m)\mapsto r_*\m\m.
\end{align*}
Obviously, $ \varrho_1 $ and $ \varrho_2 $ are smooth maps restricting on $ \cN $, so they induce homomorphisms from $ \pi_1(\cN) $ to $ \pi_1(\cP) $. If we fix a point $ \X\in\cN $, then for $ \ell=1,2 $,
$$
(\varrho_{\ell})_{\X,*}:\pi_1(\cN,\X)\to\pi_1(\cP,\varrho_{\ell}(\X)),
$$
where $ (\varrho_{\ell})_{\X,*}([\mathbf{L}]_{\cN,\X})=[\varrho_{\ell}(\mathbf{L})]_{\cP,\varrho_{\ell}(\X)} $ with $ \mathbf{L}\in C^0(\Ss^1,\cN) $ based on $ \X $. 

Firstly, we choose the base point $
\X_0=r_*(\e^{(1)}\e^{(1)}-\e^{(2)}\e^{(2)})\in\cN $, then $ \{(\varrho_{\ell})_{\X_0,*}\}_{\ell=1}^2 $ are homomorphisms as follows. For any $ \ell=1,2 $,
\be
(\varrho_{\ell})_{\X_0,*}:\pi_1(\cN,\X_0)\to\pi_1(\cP,r_*\e^{(\ell)}\e^{(\ell)}),\label{p1p2hom}
\ee
where $
(\varrho_{\ell})_{\X_0,*}([\mathbf{L}]_{\cN,\X_0})=[\varrho_{\ell}(\mathbf{L})]_{\cP,r_*\e^{(\ell)}\e^{(\ell)}} $ with $ \mathbf{L}\in C^0(\Ss^1,\cN) $ based on $ \X_0 $. We claim that for $ \ell=1,2 $, $ (\varrho_{\ell})_{\X_0,*} $ defined above are surjective. In fact, taking $ \varrho_1 $ for example, we choose $
\LL(\theta)=r_*(\n(\theta)\n(\theta)-\e^{(2)}\e^{(2)}) $, where $ \n(\theta)=(\cos(\theta/2),0,\sin(\theta/2))^{\T} $. Obviously $ \LL\in C^0(\Ss^1,\cN) $ and $ \LL(0)=\LL(2\pi)=\X_0 $. In view of Lemma \ref{N0fgroup}, we have 
$$ 
(\varrho_1)_{\X_0,*}([\LL]_{\cN,\X_0})=[r_*\n\n]_{\cP,r_*\e^{(1)}\e^{(1)}}=\ol{1},
$$ 
and then $ (\varrho_1)_{\X_0,*} $ is surjective. Thus the claim is true. Therefore, by theorem of isomorphism, we obtain 
\be
\pi_1(\cN,\X_0)/\ker((\varrho_{\ell})_{\X_0,*})\cong\pi_1(\cP,r_*\e^{(\ell)}\e^{(\ell)}),\quad \ell=1,2.\label{Isothm}
\ee
By this, we have the following result.

\begin{lem}\label{ij}
Assume that $ \{(\varrho_1)_{\X_0,*}\}_{\ell=1}^2 $ are homomorphisms given by \eqref{p1p2hom}. There hold $ \#\ker((\varrho_1)_{\X_0,*})=\#\ker((\varrho_2)_{\X_0,*})=4 $, and $ \ker((\varrho_1)_{\X_0,*})\neq\ker((\varrho_2)_{\X_0,*}) $. Moreover,
\begin{align*}
\ker((\varrho_1)_{\X_0,*})&=\{[\X_0]_{\cN,\X_0},[\A^{(0)}]_{\cN,\X_0},[\A^{(0)}]_{\cN,\X_0}^2,[\A^{(0)}]_{\cN,\X_0}^3\},\\
\ker((\varrho_2)_{\X_0,*})&=\{[\X_0]_{\cN,\X_0},[\B^{(0)}]_{\cN,\X_0},[\B^{(0)}]_{\cN,\X_0}^2,[\B^{(0)}]_{\cN,\X_0}^3\},
\end{align*}
where for $ \theta\in[0,2\pi] $,
\be
\begin{aligned}
\A^{(0)}(\theta)&=r_*(\e^{(1)}\e^{(1)}-\n(\theta)\n(\theta)),\\
\B^{(0)}(\theta)&=r_*(\m(\theta)\m(\theta)-\e^{(2)}\e^{(2)})\label{ABtheta}
\end{aligned}
\ee
with $ \n(\theta)=(0,\cos(\theta/2),\sin(\theta/2))^{\T} $ and $ \m(\theta)=(\cos(\theta/2),0,\sin(\theta/2))^{\T} $. In particular, $ [\A^{(0)}]_{\cN,\X_0}^2=[\B^{(0)}]_{\cN,\X_0}^2 $.
\end{lem}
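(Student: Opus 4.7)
The plan is to handle the three assertions in order, exploiting the identification $\pi_1(\cN,\X_0)\cong Q_8$ established in Lemma~\ref{FG}. For the cardinalities I would appeal directly to \eqref{Isothm}: since $\pi_1(\cN,\X_0)$ has order eight and $\pi_1(\cP)$ has order two, each quotient $\pi_1(\cN,\X_0)/\ker((\varrho_\ell)_{\X_0,*})$ has order two, forcing $\#\ker((\varrho_\ell)_{\X_0,*})=4$.

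Next, to establish the memberships and the distinctness of the two kernels, I would read off from \eqref{ABtheta} that $\varrho_1\circ\A^{(0)}$ is the constant loop at $r_*\e^{(1)}\e^{(1)}$ and $\varrho_2\circ\B^{(0)}$ is constant at $r_*\e^{(2)}\e^{(2)}$, giving directly the inclusions $[\A^{(0)}]_{\cN,\X_0}\in\ker((\varrho_1)_{\X_0,*})$ and $[\B^{(0)}]_{\cN,\X_0}\in\ker((\varrho_2)_{\X_0,*})$. For the opposite non-memberships I would examine $\varrho_2\circ\A^{(0)}:\theta\mapsto r_*\n(\theta)\n(\theta)$ and $\varrho_1\circ\B^{(0)}:\theta\mapsto r_*\m(\theta)\m(\theta)$ in $\cP$, and invoke Lemma~\ref{N0fgroup}: the only candidate lifts $\pm\n(\theta)$ and $\pm\m(\theta)$ of these loops under the covering $p_0$ fail to close because $\n(2\pi)=-\n(0)$ and $\m(2\pi)=-\m(0)$, so neither loop is trivial in $\cP$. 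This yields $[\A^{(0)}]_{\cN,\X_0}\notin\ker((\varrho_2)_{\X_0,*})$ and $[\B^{(0)}]_{\cN,\X_0}\notin\ker((\varrho_1)_{\X_0,*})$, in particular $\ker((\varrho_1)_{\X_0,*})\neq\ker((\varrho_2)_{\X_0,*})$.

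For the explicit descriptions of the kernels I plan to argue indirectly via the structure of $Q_8$, using that $Q_8$ possesses a unique element of order two (and hence a unique subgroup of order two, and every subgroup of order four is cyclic). Since the two order-four kernels are distinct subgroups of a group of order eight, their intersection has order two and therefore consists of $[\X_0]_{\cN,\X_0}$ together with the unique involution of $\pi_1(\cN,\X_0)$. Now $(\varrho_2)_{\X_0,*}([\A^{(0)}]_{\cN,\X_0}^2)=((\varrho_2)_{\X_0,*}([\A^{(0)}]_{\cN,\X_0}))^2=\overline{0}$ in $\Z_2$, so $[\A^{(0)}]_{\cN,\X_0}^2$ lies in the intersection; were this square equal to $[\X_0]_{\cN,\X_0}$, then $[\A^{(0)}]_{\cN,\X_0}$ itself would be of order at most two and hence lie in the intersection, contradicting the previous paragraph. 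Consequently $[\A^{(0)}]_{\cN,\X_0}^2$ is the unique involution, $[\A^{(0)}]_{\cN,\X_0}$ has order exactly four, and since $\ker((\varrho_1)_{\X_0,*})$ is cyclic of order four it is generated by $[\A^{(0)}]_{\cN,\X_0}$, yielding the first enumeration. The identical reasoning applied to $\B^{(0)}$ produces the second enumeration and simultaneously the equality $[\A^{(0)}]_{\cN,\X_0}^2=[\B^{(0)}]_{\cN,\X_0}^2$, since both squares coincide with that same involution.

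The main obstacle I anticipate is the passage from ``$[\A^{(0)}]_{\cN,\X_0}$ is a nontrivial element of $\ker((\varrho_1)_{\X_0,*})$'' to ``it generates that kernel'', i.e.\ excluding that it has order two; this is precisely where the specific group-theoretic structure of $Q_8$ enters essentially. A more computational alternative would lift $\A^{(0)}$ and $\B^{(0)}$ explicitly to the universal cover $p:\Ss^3\to\cN$ of \eqref{universal} and read off the endpoints as quaternions, but the indirect argument above avoids the bookkeeping and seems cleaner.
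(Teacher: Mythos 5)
Your proof is correct, and it follows the same overall strategy as the paper's: identify $\pi_1(\cN,\X_0)$ with $Q_8$, use \eqref{Isothm} for the cardinality of the kernels, read off the images of $\A^{(0)}$ and $\B^{(0)}$ under $\varrho_1,\varrho_2$ exactly as in \eqref{AB10}, and exploit the fact that $-1$ is the unique order-two element of $Q_8$. The one place you take a slightly different route is in showing $[\A^{(0)}]_{\cN,\X_0}$ has order four: the paper argues directly that $(\varrho_\ell)_{\X_0,*}(\vp_{\X_0}(-1))=(\varrho_\ell)_{\X_0,*}(\vp_{\X_0}(\ii))^2=\ol{0}$ for $\ell=1,2$, so $[\A^{(0)}]_{\cN,\X_0}\neq\vp_{\X_0}(\pm 1)$ (one of which maps to $\ol{1}$ under $\varrho_2$) and hence has order four, whereas you instead observe that the two order-four kernels intersect in the unique order-two subgroup $\{1,-1\}$, place $[\A^{(0)}]_{\cN,\X_0}^2$ in that intersection, and rule out $[\A^{(0)}]_{\cN,\X_0}^2=1$ by noting it would force $[\A^{(0)}]_{\cN,\X_0}$ itself into the intersection. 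Both hinge on the same two facts — images of squares in $\Z_2$ vanish, and $-1$ is the only involution — and are of comparable length; the paper's version is marginally more direct because it avoids introducing the lattice-of-subgroups detour, while yours makes the role of the intersection $\ker((\varrho_1)_{\X_0,*})\cap\ker((\varrho_2)_{\X_0,*})=\{\vp_{\X_0}(\pm 1)\}$ explicit, which could be seen as slightly more illuminating structurally.
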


\begin{proof}
By Lemma \ref{FG}, we have an isomorphism $ \vp_{\X_0}:Q_8\to\pi_1(\cN,\X_0) $. This, together with Lemma \ref{N0fgroup} and \eqref{Isothm}, implies that $ \# Q_8=\#\Z_2\cdot\#\ker((\varrho_{\ell})_{\X_0,*}) $ for $ \ell=1,2 $, and then $ \#\ker((\varrho_1)_{\X_0,*})=\#\ker((\varrho_2)_{\X_0,*})=4 $. Obviously, by using Lemma \ref{N0fgroup}, we have, for $ \A^{(0)} $ and $ \B^{(0)} $ given in \eqref{ABtheta},
\be
\begin{aligned}
[\varrho_1(\A^{(0)})]_{\cP,r_*\e^{(1)}\e^{(1)}}&=\ol{0},\,\,[\varrho_2(\A^{(0)})]_{\cP,r_*\e^{(2)}\e^{(2)}}=\ol{1},\\
[\varrho_1(\B^{(0)})]_{\cP,r_*\e^{(1)}\e^{(1)}}&=\ol{1},\,\,[\varrho_2(\B^{(0)})]_{\cP,r_*\e^{(2)}\e^{(2)}}=\ol{0}.
\end{aligned}\label{AB10}
\ee
On the other hand, since we identify $ \pi_1(\cP) $ with $ \Z_2 $, we deduce that
$$
(\varrho_{\ell})_{\X_0,*}(\vp_{\X_0}(-1))=(\varrho_{\ell})_{\X_0,*}(\vp_{\X_0}(\ii^2))=(\varrho_{\ell})_{\X_0,*}(\vp_{\X_0}(\ii)^2)=\ol{0}
$$
for any $ \ell=1,2 $. Combined with \eqref{AB10}, we have $ [\A^{(0)}]_{\cN,\X_0} $, $ [\B^{(0)}]_{\cN,\X_0}\neq\vp_{\X_0}(-1) $, and then $ [\A^{(0)}]_{\cN,\X_0} $, $ [\B^{(0)}]_{\cN,\X_0} $ are all elements with order four. Since $ -1 $ is the only one element in $ Q_8 $ of order two, we get that $ [\A^{(0)}]_{\cN,\X_0}^2=[\B^{(0)}]_{\cN,\X_0}^2=\vp_{\X_0}(-1) $ and can complete the proof. 
\end{proof}

Without loss of generality, for the isomorphism $ \vp_{\X_0} $ defined in the proof of Lemma \ref{ij}, we can further set that
\be
\vp_{\X_0}^{-1}([\A^{(0)}]_{\cN,\X_0})=\ii\text{ and }\vp_{\X_0}^{-1}([\B^{(0)}]_{\cN,\X_0})=\jj,\label{vpP0}
\ee
where $ \A^{(0)} $ and $ \B^{(0)} $ are given by \eqref{ABtheta}. Since $ Q_8 $ is generated by $ \ii $ and $ \jj $, the isomorphism $ \vp_{\X_0} $ is uniquely determined by \eqref{vpP0}. For another point $ \X_1\in\cN $, denoted by $ \X_1=r_*(\n^{(1)}\n^{(1)}-\n^{(2)}\n^{(2)}) $, where $ (\n^{(1)},\n^{(2)})\in\M $, we obtain $ \U\in\mathrm{SO}(3) $ such that $
\U\e^{(1)}=\n^{(1)} $ and $ \U\e^{(2)}=\n^{(2)} $. Here $ \U $ is uniquely determined by $ (\n^{(1)},\n^{(2)}) $. Choose
\begin{align*}
\A^{(1)}(\theta)&=\U\A(\theta)\U^{\T}=r_*(\n^{(1)}\n^{(1)}-(\U\n(\theta))(\U\n(\theta))),\\
\B^{(1)}(\theta)&=\U\B(\theta)\U^{\T}=r_*((\U\m(\theta))(\U\m(\theta))-\n^{(2)}\n^{(2)}),
\end{align*}
being two loops based on $ \X_1 $. Now we can define an isomorphism $ \vp_{\X_1}:Q_8\to\pi_1(\cN,\X_1) $ generated by the relations
\be
\vp_{\X_1}^{-1}([\A^{(1)}]_{\cN,\X_1})=\ii\text{ and }\vp_{\X_1}^{-1}([\B^{(1)}]_{\cN,\X_1})=\jj.\label{vpP1}
\ee
Moreover, in view of the proof of Lemma \ref{ij}, we have
\be
\begin{aligned}
\ker((\varrho_1)_{\X_1,*})&=\{[\X_1]_{\cN,\X_1},[\A^{(1)}]_{\cN,\X_1},[\A^{(1)}]_{\cN,\X_1}^2,[\A^{(1)}]_{\cN,\X_1}^3\},\\
\ker((\varrho_2)_{\X_1,*})&=\{[\X_1]_{\cN,\X_1},[\B^{(1)}]_{\cN,\X_1},[\B^{(1)}]_{\cN,\X_1}^2,[\B^{(1)}]_{\cN,\X_1}^3\}.
\end{aligned}\label{kerP1}
\ee
and
\be
\begin{aligned}
\pi_1&(\cN,\X_1)=\{[\X_1]_{\cN,\X_1},[\A^{(1)}]_{\cN,\X_1},[\A^{(1)}]_{\cN,\X_1}^2,[\A^{(1)}]_{\cN,\X_1}^3,\\
&\quad\quad[\B^{(1)}]_{\cN,\X_1},[\B^{(1)}]_{\cN,\X_1}^3,[\A^{(1)}]_{\cN,\X_1}[\B^{(1)}]_{\cN,\X_1},[\B^{(1)}]_{\cN,\X_1}^{-1}[\A^{(1)}]_{\cN,\X_1}^{-1}\}   
\end{aligned}.\label{GroupEl}
\ee
Since $ \cN $ is path-connected, there exists a curve $ \CC\in C^0([0,2\pi],\cN) $ connecting $ \X_0 $ and $ \X_1 $, i.e., $ \CC(0)=\X_1 $ and $ \CC(2\pi)=\X_0 $. It induces an isomorphism $
\tau_{\cN,\CC}:\pi_1(\cN,\X_0)\to \pi_1(\cN,\X_1) $, such that $ \tau_{\cN,\CC}([\mathbf{L}]_{\cN,\X_0}):=[\CC*\mathbf{L}*\wt{\CC}]_{\cN,\X_1} $ for $ \mathbf{L}\in C^0(\Ss^1,\cN) $ based on $ \X_0 $. Using $ \vp_{\X_0},\vp_{\X_1} $ and $ \tau_{\cN,\CC} $, we can construct an automorphism of $ Q_8 $ by $
\vp_{\X_1}^{-1}\circ\tau_{\cN,\CC}\circ\vp_{\X_0}:Q_8\to Q_8 $. For this automorphism, we have the following result.

\begin{lem}\label{conjulem}
$ \vp_{\X_1}^{-1}\circ\tau_{\cN,\CC}\circ\vp_{\X_0} $ preserves the conjugacy classes of $ Q_8 $.
\end{lem}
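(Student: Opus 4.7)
The plan is to show that $\vp_{\X_1}^{-1}\circ\tau_{\cN,\CC}\circ\vp_{\X_0}$ is actually an inner automorphism of $Q_8$, which immediately implies that every conjugacy class is preserved. The argument will proceed in three stages: a reduction that lets us pick a convenient curve, explicit verification for that curve, and transfer back.

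\textbf{Stage 1 (reduction to a single curve).} For any two curves $\CC,\CC'\in C^0([0,2\pi],\cN)$ connecting $\X_1$ to $\X_0$, a direct manipulation using the formula $\tau_{\cN,\CC}([\mathbf{L}]_{\cN,\X_0})=[\CC*\mathbf{L}*\wt{\CC}]_{\cN,\X_1}$ shows that $\tau_{\cN,\CC}\circ\tau_{\cN,\CC'}^{-1}$ is the inner automorphism of $\pi_1(\cN,\X_1)$ given by conjugation by $[\CC*\wt{\CC}']_{\cN,\X_1}$. Transported through $\vp_{\X_1}$, the two automorphisms $\vp_{\X_1}^{-1}\circ\tau_{\cN,\CC}\circ\vp_{\X_0}$ and $\vp_{\X_1}^{-1}\circ\tau_{\cN,\CC'}\circ\vp_{\X_0}$ of $Q_8$ therefore differ by an inner automorphism. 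Hence it is enough to verify the lemma for one judiciously chosen curve $\CC_0$.

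\textbf{Stage 2 (a canonical $\CC_0$).} Using the path-connectedness of $\mathrm{SO}(3)$, I will select a continuous path $\U(\cdot):[0,1]\to\mathrm{SO}(3)$ with $\U(0)=\I$ and $\U(1)=\U$ and set $\CC_0(s):=\U(1-s/(2\pi))\X_0\U(1-s/(2\pi))^{\T}$, so that $\CC_0(0)=\X_1$ and $\CC_0(2\pi)=\X_0$. The map
$$
H:[0,1]\times\Ss^1\to\cN,\qquad(t,\theta)\mapsto\U(t)\A^{(0)}(\theta)\U(t)^{\T}
$$
is a free homotopy from $\A^{(0)}$ to $\A^{(1)}$ whose trace of the base point is precisely the reverse curve $\wt{\CC}_0$. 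The standard identification of free versus base-point-preserving homotopy then yields $[\A^{(1)}]_{\cN,\X_1}=[\CC_0*\A^{(0)}*\wt{\CC}_0]_{\cN,\X_1}$, i.e. $\tau_{\cN,\CC_0}([\A^{(0)}]_{\cN,\X_0})=[\A^{(1)}]_{\cN,\X_1}$. The identical construction with $\B^{(0)},\B^{(1)}$ gives $\tau_{\cN,\CC_0}([\B^{(0)}]_{\cN,\X_0})=[\B^{(1)}]_{\cN,\X_1}$.

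\textbf{Stage 3 (conclusion).} Combined with the defining relations \eqref{vpP0} and \eqref{vpP1}, Stage 2 shows that $\vp_{\X_1}^{-1}\circ\tau_{\cN,\CC_0}\circ\vp_{\X_0}$ fixes both $\ii$ and $\jj$; since $Q_8$ is generated by $\ii$ and $\jj$, this automorphism must be the identity of $Q_8$, which trivially preserves conjugacy classes. Applying Stage 1 to an arbitrary $\CC$, the composition $\vp_{\X_1}^{-1}\circ\tau_{\cN,\CC}\circ\vp_{\X_0}$ differs from the identity by an inner automorphism of $Q_8$ and is therefore itself inner, hence conjugacy-class preserving. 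The main technical subtlety to watch is the bookkeeping in Stage 2: we must verify the direction and reparametrization conventions in the free-to-pointed homotopy correspondence so that the curve traced by the base point of $H$ really matches $\wt{\CC}_0$ with the correct orientation. Once this matching is set up, the algebraic payoff in Stage 3 is immediate.
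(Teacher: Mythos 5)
Your proof is correct, and it takes a genuinely different route from the paper's. The paper proceeds purely group-theoretically: it notes that any automorphism of $Q_8$ fixes $\{1\}$ and $\{-1\}$, and then uses the projections $\varrho_1,\varrho_2:\cN\to\cP$ together with the explicit kernels $\ker((\varrho_\ell)_{\X_1,*})$ from \eqref{kerP1} to pin down the image of $\ii$ and $\jj$ inside $\{\pm\ii\}$ and $\{\pm\jj\}$ respectively. Your argument is more geometric: you first reduce to a single canonical curve $\CC_0$ by the standard observation that changing $\CC$ alters $\tau_{\cN,\CC}$ by an inner automorphism, then exploit the transitive $\mathrm{SO}(3)$ action on $\cN$ to realize $\CC_0$ as the base-point trace of the explicit free homotopy $H(t,\theta)=\U(t)\A^{(0)}(\theta)\U(t)^{\T}$, which directly yields $\tau_{\cN,\CC_0}([\A^{(0)}]_{\cN,\X_0})=[\A^{(1)}]_{\cN,\X_1}$ and likewise for $\B^{(0)},\B^{(1)}$, so that $\vp_{\X_1}^{-1}\circ\tau_{\cN,\CC_0}\circ\vp_{\X_0}$ is literally the identity on the generators $\ii,\jj$. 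Your route actually proves the slightly stronger statement that the automorphism is inner (which for $Q_8$ is equivalent to class-preserving, but in general is stronger), and it makes the geometric reason for the lemma transparent; the paper's route is more robust in that it only uses the algebra of the projections $\varrho_1,\varrho_2$ and would survive in situations where the vacuum manifold does not carry a nice transitive group action. The one place to be careful in your argument is exactly where you flagged it: the conventions in the free-to-pointed homotopy correspondence, namely that the trace of the base point under $H$ is $\wt{\CC}_0$ (from $\X_0$ to $\X_1$) and hence $[\A^{(1)}]_{\cN,\X_1}=[\CC_0*\A^{(0)}*\wt{\CC}_0]_{\cN,\X_1}$ with the paper's concatenation convention $(\ga_1*\ga_2)$ meaning ``first $\ga_1$, then $\ga_2$''. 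With that convention spelled out, the proof goes through.
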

\begin{proof}
There are five conjugacy classes of $ Q_8 $ given by $ \{1\} $, $ \{-1\} $, $ \{\pm\ii\} $, $ \{\pm\jj\} $, and $ \{\pm\kk\} $. Since $ -1 $ is the unique element of order two in $ Q_8 $ and $ 1 $ is the identity, we have, $ \vp_{\X_1}^{-1}\circ\tau_{\cN,\CC}\circ\vp_{\X_0} $ preserves $ \{1\} $ and $ \{-1\} $. Moreover, by $ \ii\jj=\kk $, we only need to show that for any $ g\in\{\ii,\jj\} $,
\be
(\vp_{\X_1}^{-1}\circ\tau_{\cN,\CC}\circ\vp_{\X_0})(\{\pm g\})=\{\pm g\}.\label{iijj}
\ee
For any $ \ell=1,2 $, $ \varrho_{\ell}(\CC) $ is a curve on $ \cP $ with $ \varrho_{\ell}(\CC)(0)=\varrho_{\ell}(\X_1) $, $ \varrho_{\ell}(\CC)(2\pi)=\varrho_{\ell}(\X_0) $. We claim that
\be
\begin{aligned}
&(\varrho_{\ell})_{\X_0,*}([\A]_{\cN,\X_0})=\ol{1}\text{ in }\pi_1(\cP,\varrho_{\ell}(\X_0))\\
&\quad\quad\text{ if and only if }(\varrho_{\ell})_{\X_1,*}([\CC*\A*\wt{\CC}]_{\cN,\X_1})=\ol{1}\text{ in }\pi_1(\cP,\varrho_{\ell}(\X_1)),
\end{aligned}\label{iffloop}
\ee
with $ \ell=1,2 $. Indeed, since $ \pi_1(\cP)=\Z_2 $, the map $
\tau_{\cP,\rho_{\ell}(\CC)}:\pi_1(\cP,\varrho_{\ell}(\X_0))\to\pi_1(\cP,\varrho_{\ell}(\X_1)) $ defined by $ \tau_{\cP,\rho_{\ell}(\CC)}([\mathbf{L}_{\ell}]_{\cP,\varrho_{\ell}(\X_0)}):=[\varrho_{\ell}(\CC)*\mathbf{L}_{\ell}*\varrho_{\ell}(\wt{\CC})]_{\cP,\varrho_{\ell}(\X_1)} $ for $ \mathbf{L}_{\ell}\in C^0(\Ss^1,\cP) $ based on $ \varrho_{\ell}(\X_0) $ with $ \ell=1,2 $ are both automorphisms of $ \Z_2 $. This implies that
$$
[\mathbf{L}_{\ell}]_{\cP,\varrho_{\ell}(\X_0)}=\ol{1}\text{ if and only if }[\varrho_{\ell}(\CC)*\mathbf{L}_{\ell}*\varrho_{\ell}(\wt{\CC})]_{\cP,\varrho_{\ell}(\X_1)}=\ol{1}.
$$
This, together with the fact that
$$
(\varrho_{\ell})_{\X_0,*}([\CC*\mathbf{L}*\wt{\CC}]_{\cN,\X_1})=[\varrho_{\ell}(\CC)*\varrho_{\ell}(\mathbf{L})*\varrho_{\ell}(\wt{\CC})]_{\cP,\varrho_{\ell}(\X_1)},
$$
for any $ \mathbf{L}\in C^0(\Ss^1,\cN) $, implies \eqref{iffloop}. Combining \eqref{AB10}, \eqref{vpP0} and \eqref{iffloop}, we have 
\begin{align*}
((\varrho_1)_{\X_1,*}\circ\tau_{\cN,\CC}\circ\vp_{\X_0})(\ii)&=\ol{0},\,\,((\varrho_2)_{\X_1,*}\circ\tau_{\cN,\CC}\circ\vp_{\X_0})(\ii)=\ol{1}\\
((\varrho_1)_{\X_1,*}\circ\tau_{\cN,\CC}\circ\vp_{\X_0})(\jj)&=\ol{1},\,\,((\varrho_2)_{\X_1,*}\circ\tau_{\cN,\CC}\circ\vp_{\X_0})(\jj)=\ol{0}.
\end{align*}
This implies that $ (\tau_{\cN,\CC}\circ\vp_{\X_0})(\ii)\in\ker((\varrho_1)_{\X_1,*}) $ and $ (\tau_{\cN,\CC}\circ\vp_{\X_0})(\jj)\in\ker((\varrho_2)_{\X_1,*}) $. In view of \eqref{kerP1} and the fact that $ \{(\tau_{\cN,\CC}\circ\vp_{\X_0})(g)\}_{g\in\{\ii,\jj\}} $ are two elements with order four, we have that $
(\tau_{\cN,\CC}\circ\vp_{\X_0})(\ii)\in\{[\A^{(1)}]_{\cN,\X_1},[\A^{(1)}]_{\cN,\X_1}^3\} $ and $
(\tau_{\cN,\CC}\circ\vp_{\X_0})(\jj)\in\{[\B^{(1)}]_{\cN,\X_1},[\B^{(1)}]_{\cN,\X_1}^3\} $. Using \eqref{vpP1}, we deduce $ (\vp_{\X_1}^{-1}\circ\tau_{\cN,\CC}\circ\vp_{\X_0})(g)\in\{\pm g\} $, for $ g\in\{\ii,\jj\} $, which directly implies \eqref{iijj} and completes the proof.
\end{proof}

\begin{cor}\label{C1C2P1P2}
Let $ \X_1,\X_2\in\cN $ be two points. Assume that $ \A\in C^0(\Ss^1,\cN) $ is a loop and $ \X_1,\X_2\in\A $. There hold, $ \vp_{\X_1}^{-1}([\A]_{\cN,\X_1}) $ and $ \vp_{\X_2}^{-1}([\A]_{\cN,\X_2}) $ are in the same conjugacy class in $ Q_8 $.
\end{cor}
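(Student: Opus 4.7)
The plan is to reduce the assertion to an extension of Lemma \ref{conjulem} in which the reference point $\X_0$ is replaced by $\X_1$, applied to a specific connecting curve supplied by the loop $\A$ itself. First, I would exploit that both $\X_1$ and $\X_2$ lie on $\A$ to parametrize $\A:[0,2\pi]\to\cN$ so that $\A(\theta_1)=\X_1$ and $\A(\theta_2)=\X_2$ for some $0\le\theta_1<\theta_2\le 2\pi$, and split $\A$ into two arcs $\A_1$ from $\X_1$ to $\X_2$ and $\A_2$ from $\X_2$ to $\X_1$. Up to reparametrization, $\A$ as a loop based at $\X_1$ equals $\A_1*\A_2$, whereas $\A$ as a loop based at $\X_2$ equals $\A_2*\A_1$. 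A direct computation gives
\begin{equation*}
[\A_2*\A_1]_{\cN,\X_2}=[\wt{\A}_1*(\A_1*\A_2)*\A_1]_{\cN,\X_2}=\tau_{\cN,\wt{\A}_1}([\A]_{\cN,\X_1}),
\end{equation*}
so that the change-of-base-point isomorphism $\tau_{\cN,\wt{\A}_1}:\pi_1(\cN,\X_1)\to\pi_1(\cN,\X_2)$ induced by the arc $\wt{\A}_1$ sends $[\A]_{\cN,\X_1}$ to $[\A]_{\cN,\X_2}$.

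Next, I would establish the natural generalization of Lemma \ref{conjulem}: for any curve $\mathbf{D}\in C^0([0,2\pi],\cN)$ with $\mathbf{D}(0)=\X_2$ and $\mathbf{D}(2\pi)=\X_1$, the automorphism $\vp_{\X_2}^{-1}\circ\tau_{\cN,\mathbf{D}}\circ\vp_{\X_1}$ of $Q_8$ preserves conjugacy classes. The argument is a verbatim transcription of the proof of Lemma \ref{conjulem}: the classes $\{1\}$ and $\{-1\}$ are automatically preserved since $1$ is the identity and $-1$ is the unique element of order two in $Q_8$; for $g\in\{\ii,\jj\}$, the push-forward relation
\begin{equation*}
(\varrho_\ell)_{\X_2,*}\circ\tau_{\cN,\mathbf{D}}=\tau_{\cP,\varrho_\ell(\mathbf{D})}\circ(\varrho_\ell)_{\X_1,*},\quad \ell=1,2,
\end{equation*}
combined with the fact that $\mathrm{Aut}(\Z_2)$ is trivial, shows that membership in $\ker((\varrho_\ell)_{\cdot,*})$ is preserved; the characterization \eqref{kerP1} (rewritten for the base point $\X_2$) together with the defining relations \eqref{vpP1} for $\vp_{\X_2}$ then forces the image of $g$ to lie in $\{\pm g\}$.

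Applying this extension with $\mathbf{D}=\wt{\A}_1$ and combining with the first step yields that $\vp_{\X_2}^{-1}([\A]_{\cN,\X_2})=\vp_{\X_2}^{-1}(\tau_{\cN,\wt{\A}_1}([\A]_{\cN,\X_1}))$ is conjugate in $Q_8$ to $\vp_{\X_1}^{-1}([\A]_{\cN,\X_1})$, which is the claim. I expect the main (minor) obstacle to be the bookkeeping in the generalization of Lemma \ref{conjulem}: one must verify that the construction of $\vp_{\X_i}$ from the distinguished loops $\A^{(i)},\B^{(i)}$, tied to the transitive $\mathrm{SO}(3)$-action on $\cN$, cooperates with the path $\mathbf{D}$ so that the kernel-based argument goes through. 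Since that argument uses only the triviality of $\mathrm{Aut}(\Z_2)$ and the orders of the relevant elements of $Q_8$, no new structural input beyond what is already in the preceding lemmas is needed.
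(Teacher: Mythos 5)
Your proof is correct, but it takes a different route from the paper's. The paper routes everything through the fixed reference point $\X_0$: it constructs two auxiliary curves $\CC_1,\CC_2$ from $\X_1,\X_2$ to $\X_0$ via path-connectedness, converts $[\wt{\CC}_1*\A*\CC_1]_{\cN}=[\wt{\CC}_2*\A*\CC_2]_{\cN}$ into a conjugacy relation in $\pi_1(\cN,\X_0)$ via Lemma~\ref{LemmaHom}, and then applies Lemma~\ref{conjulem} twice (once per $\tau_{\cN,\CC_\ell}^{-1}$) to transport that conjugacy to $Q_8$. Your approach instead uses the arc $\A_1$ of $\A$ itself between $\X_1$ and $\X_2$ as the connecting path, so you get the sharper, concrete identity $\tau_{\cN,\wt{\A}_1}([\A]_{\cN,\X_1})=[\A]_{\cN,\X_2}$ (an exact equality, not just a conjugacy), and then you need a generalization of Lemma~\ref{conjulem} to an arbitrary pair of base points rather than a pair anchored at $\X_0$. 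The generalization you invoke does hold — the proof of Lemma~\ref{conjulem} uses only the triviality of $\operatorname{Aut}(\Z_2)$, the characterization \eqref{kerP1} of the kernels of $(\varrho_\ell)_{\X,*}$, the defining relations \eqref{vpP1} for $\vp_{\X}$, and the order structure of $Q_8$, none of which single out $\X_0$ — but you do have to state and justify it, which is precisely the extra bookkeeping you flagged. What the paper buys by anchoring at $\X_0$ is that it can cite Lemma~\ref{conjulem} as stated without re-deriving it; what your route buys is that the change of base point is produced explicitly from $\A$ itself, avoiding Lemma~\ref{LemmaHom} and the auxiliary curves entirely. Both arguments close the gap cleanly.
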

\begin{proof}
By the path-connectness of $ \cN $, we can construct two curves $ \CC_1 $ and $ \CC_2 $ such that $ \CC_{\ell}(0)=\X_{\ell} $ and $ \CC_{\ell}(2\pi)=\X_0 $ for any $ \ell=1,2 $. There are two isomorphisms $
\tau_{\cN,\CC_{\ell}}:\pi_1(\cN,\X_0)\to\pi_1(\cN,\X_{\ell}) $ given by $ \tau_{\cN,\CC_{\ell}}([\mathbf{L}]_{\cN,\X_0})=[\CC_{\ell}*\mathbf{L}*\wt{\CC}_{\ell}]_{\cN,\X_{\ell}} $ for $ \mathbf{L}\in C^0(\Ss^1,\cN) $ based on $ \X_0 $ and $ \ell=1,2 $. By Lemma \ref{LemmaHom}, we have
\begin{align*}
&[\wt{\CC}_1*\A*\CC_1]_{\cN}=[\wt{\CC}_2*\A*\CC_2]_{\cN}=[\A]_{\cN},\\
\Rightarrow&[\wt{\CC}_1*\A*\CC_1]_{\cN,\X_0}\stackrel{c}{\sim}[\wt{\CC}_2*\A*\CC_2]_{\cN,\X_0}\text{ in }\pi_1(\cN,\X_0),\\
\Rightarrow&(\vp_{\X_0}^{-1}\circ\tau_{\cN,\CC_1}^{-1}\circ\vp_{\X_1})(\vp_{\X_1}^{-1}([\A]_{\cN,\X_1}))\stackrel{c}{\sim}(\vp_{\X_0}^{-1}\circ\tau_{\cN,\CC_2}^{-1}\circ\vp_{\X_2})(\vp_{\X_2}^{-1}([\A]_{\cN,\X_2}))\text{ in }Q_8.
\end{align*}
Using Lemma \ref{conjulem}, we have $ \{\vp_{\X_0}^{-1}\circ\tau_{\cN,\CC_{\ell}}^{-1}\circ\vp_{\X_{\ell}}\}_{\ell=1}^2 $ preserve the conjugacy classes in $ Q_8 $. This implies that $ \vp_{\X_1}^{-1}([\A]_{\cN,\X_1})\stackrel{c}{\sim}\vp_{\X_2}^{-1}([\A]_{\cN,\X_2}) $ in $ Q_8 $, which completes the proof.
\end{proof}

Now we can give a characterization of $ [\Ss^1,\cN] $.

\begin{prop}\label{freeNprop}
Let $ \A,\B\in C^0(\Ss^1,\cN) $ be two loops. There holds, $ [\A]_{\cN}=[\B]_{\cN} $ if and only if for two points $ \X_1\in\A $ and $ \X_2\in\B $, the two elements $ \vp_{\X_1}^{-1}([\A]_{\cN,\X_1}) $ and $ \vp_{\X_2}^{-1}([\B]_{\cN,\X_2}) $ are in the same conjugacy classes of $ Q_8 $. In particular, If $ \A,\B $ are both non-trivial loops, then $ [\A]_{\cN}=[\B]_{\cN} $ if and only if $ [\varrho_{\ell}(\A)]_{\cP}=[\varrho_{\ell}(\B)]_{\cP} $ for any $ \ell=1,2 $.
\end{prop}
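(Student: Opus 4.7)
The plan is to translate the free homotopy question on $\cN$ into a conjugacy question in $Q_8$ via Lemma \ref{LemmaHom} and the isomorphisms $\vp_{\X}$, and then, in the non-trivial case, to distinguish the four non-trivial conjugacy classes of $Q_8$ by their images under $(\varrho_1)_{*}$ and $(\varrho_2)_{*}$, which take values in the abelian group $\Z_2$.

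For the first assertion, fix a base point $\X_0\in\cN$ and for $i=1,2$ choose a curve $\CC_i\in C^0([0,2\pi],\cN)$ from $\X_0$ to $\X_i$. The loops $\A_0:=\wt{\CC}_1*\A*\CC_1$ and $\B_0:=\wt{\CC}_2*\B*\CC_2$ are based at $\X_0$ and freely homotopic in $\cN$ to $\A$ and $\B$ respectively, so $[\A]_{\cN}=[\B]_{\cN}$ iff $[\A_0]_{\cN}=[\B_0]_{\cN}$. By Lemma \ref{LemmaHom} this last equality is equivalent to $\vp_{\X_0}^{-1}([\A_0]_{\cN,\X_0})$ and $\vp_{\X_0}^{-1}([\B_0]_{\cN,\X_0})$ being conjugate in $Q_8$. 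Since $\A_0$ passes through both $\X_0$ and $\X_1$, Corollary \ref{C1C2P1P2} shows that $\vp_{\X_0}^{-1}([\A_0]_{\cN,\X_0})$ and $\vp_{\X_1}^{-1}([\A_0]_{\cN,\X_1})$ lie in the same conjugacy class of $Q_8$; and regarding $\A_0$ as a loop based at $\X_1$ yields $[\A_0]_{\cN,\X_1}=[\A]_{\cN,\X_1}$, since the extra pieces $\CC_1*\wt{\CC}_1$ are null-homotopic at $\X_1$. The same reasoning applied to $\B_0$ gives the first assertion.

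For the second assertion, recall that $Q_8$ has exactly five conjugacy classes $\{1\},\{-1\},\{\pm\ii\},\{\pm\jj\},\{\pm\kk\}$, and only the first of these corresponds to trivial loops. Since $\Z_2$ is abelian, the homomorphism $(\varrho_\ell)_{\X_0,*}\circ\vp_{\X_0}\colon Q_8\to\Z_2$ factors through the abelianization and descends to a well-defined map on conjugacy classes; by Proposition \ref{freeN0} this map computes $[\varrho_\ell(\A)]_{\cP}$ for any loop $\A$. Using the identifications $\vp_{\X_0}^{-1}([\A^{(0)}]_{\cN,\X_0})=\ii$ and $\vp_{\X_0}^{-1}([\B^{(0)}]_{\cN,\X_0})=\jj$ from \eqref{vpP0}, the values in \eqref{AB10}, and the relation $(\varrho_\ell)_{\X_0,*}(\kk)=(\varrho_\ell)_{\X_0,*}(\ii)+(\varrho_\ell)_{\X_0,*}(\jj)$ in $\Z_2$, we obtain the assignment
\begin{equation*}
\{-1\}\mapsto(\bar 0,\bar 0),\quad\{\pm\ii\}\mapsto(\bar 0,\bar 1),\quad\{\pm\jj\}\mapsto(\bar 1,\bar 0),\quad\{\pm\kk\}\mapsto(\bar 1,\bar 1),
\end{equation*}
whose four values are pairwise distinct. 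Hence for non-trivial $\A,\B$ the equality $[\varrho_\ell(\A)]_{\cP}=[\varrho_\ell(\B)]_{\cP}$ for both $\ell=1,2$ is equivalent to the equality of their conjugacy classes in $Q_8$, which by the first assertion is equivalent to $[\A]_{\cN}=[\B]_{\cN}$.

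The main technical obstacle is the careful bookkeeping in the first assertion, namely ensuring that the change of base point and the reduction from free to based loops are compatible with the identifications $\vp_{\X_i}$; this is precisely what Corollary \ref{C1C2P1P2} and Lemma \ref{conjulem} deliver, guaranteeing that the conjugacy class of $\vp_{\X}^{-1}([\A]_{\cN,\X})$ in $Q_8$ is a genuine invariant of $[\A]_{\cN}$. Once this is in place, the second assertion reduces to a finite verification in $Q_8$, exploiting the fact that the two $\Z_2$-valued invariants together detect exactly the four non-trivial conjugacy classes.
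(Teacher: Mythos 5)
Your proposal is correct and takes essentially the same route as the paper: reduce free homotopy to conjugacy at a common base point via Lemma \ref{LemmaHom}, transfer to $\X_1,\X_2$ via Corollary \ref{C1C2P1P2} (which the paper expands through Lemma \ref{conjulem} and the explicit isomorphisms $\tau_{\cN,\CC_\ell}$), and for the second assertion distinguish the four non-trivial conjugacy classes of $Q_8$ by the pair of $\Z_2$-valued invariants coming from $(\varrho_1)_*,(\varrho_2)_*$, exactly as the paper does through Corollary \ref{homcor}. One minor slip: with your curves $\CC_i$ running from $\X_0$ to $\X_i$, the paper's convention for $*$ (first $\ga_1$, then $\ga_2$) makes $\wt{\CC}_i*\A*\CC_i$ ill-defined — you want $\CC_i*\A*\wt{\CC}_i$ (or reverse the orientation of $\CC_i$) — but this is cosmetic and does not affect the argument.
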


\begin{proof}
In view of Corollary \ref{C1C2P1P2}, the conjugacy classes of $ Q_8 $ containing $ \vp_{\X_1}^{-1}([\A]_{\cN,\X_1}) $ and $ \vp_{\X_2}^{-1}([\B]_{\cN,\X_2}) $ are independent of the choices of $ \X_1 $ and $ \X_2 $, which implies that the result given in the proposition is well defined. Since $ \cN $ is path-connected, there are two curves $ \CC_1 $ and $ \CC_2 $ such that $ \CC_{\ell}(0)=\X_{\ell} $ and $ \CC_{\ell}(2\pi)=\X_0 $ for $ \ell=1,2 $. Consequently, we have
\begin{align*}
&[\A]_{\cN}=[\B]_{\cN}\Leftrightarrow[\wt{\CC}_1*\A*\CC_1]_{\cN}=[\wt{\CC}_2*\B*\CC_2]_{\cN},\\
\Leftrightarrow& [\wt{\CC}_1*\A*\CC_1]_{\cN,\X_0}\stackrel{c}{\sim}[\wt{\CC}_2*\B*\CC_2]_{\cN,\X_0}\text{ in }\pi_1(\cN,\X_0),\\
\Leftrightarrow&(\vp_{\X_0}^{-1}\circ\tau_{\cN,\CC_1}^{-1}\circ\vp_{\X_1})(\vp_{\X_1}^{-1}([\A]_{\cN,\X_1}))\stackrel{c}{\sim}(\vp_{\X_0}^{-1}\circ\tau_{\cN,\CC_2}^{-1}\circ\vp_{\X_2})(\vp_{\X_2}^{-1}([\B]_{\cN,\X_2}))\text{ in }Q_8,
\end{align*}
where for the second ``$ \Leftrightarrow $", we have used Lemma \ref{LemmaHom}. Now, by Lemma \ref{conjulem}, $ [\A]_{\cN}=[\B]_{\cN} $ if and only if $ \vp_{\X_1}^{-1}([\A]_{\cN,\X_1})\stackrel{c}{\sim}\vp_{\X_2}^{-1}([\B]_{\cN,\X_2}) $ in $ Q_8 $. Moreover, by Proposition \ref{freeN0}, the construction of $ \vp_{\X_{\ell}} $ with $ \ell=1,2 $, \eqref{kerP1} and \eqref{GroupEl}, we have that $ [\A]_{\cN}=[\B]_{\cN} $ if and only if $ [\varrho_{\ell}(\A)]_{\cP}=[\varrho_{\ell}(\B)]_{\cP} $ for any $ \ell=1,2 $.
\end{proof}

\begin{cor}\label{homcor}
There are five elements in the set $ [\Ss^1,\cN] $, namely,
$$
[\Ss^1,\cN]=\{\rH_0,\rH_1,\rH_3,\rH_4,\rH_2\},
$$
where $ \rH_0 $ is the class containing trivial loops. For a loop $ \LL\in C^0(\Ss^1,\cN) $ and a point $ \X_1\in\LL $, 
\begin{align*}
[\LL]_{\cN}&=\rH_0\text{ if and only if }\vp_{\X_1}^{-1}([\LL]_{\cN,\X_1})=1,\\
[\LL]_{\cN}&=\rH_1\text{ if and only if }\vp_{\X_1}^{-1}([\LL]_{\cN,\X_1})\in\{\pm\ii\},\\
[\LL]_{\cN}&=\rH_2\text{ if and only if }\vp_{\X_1}^{-1}([\LL]_{\cN,\X_1})\in\{\pm\jj\},\\
[\LL]_{\cN}&=\rH_3\text{ if and only if }\vp_{\X_1}^{-1}([\LL]_{\cN,\X_1})\in\{\pm\kk\},\\
[\LL]_{\cN}&=\rH_4\text{ if and only if }\vp_{\X_1}^{-1}([\LL]_{\cN,\X_1})=-1.
\end{align*}
Moreover, if $ [\LL]_{\cN}\neq\rH_0 $, there is the characterization as follows.
\begin{align*}
[\LL]_{\cN}&=\rH_1\text{ if and only if }[\varrho_1(\LL)]_{\cP}=\rh_0\text{ and }[\varrho_2(\LL)]_{\cP}=\rh_1,\\
[\LL]_{\cN}&=\rH_2\text{ if and only if }[\varrho_1(\LL)]_{\cP}=\rh_1\text{ and }[\varrho_2(\LL)]_{\cP}=\rh_0,\\
[\LL]_{\cN}&=\rH_3\text{ if and only if }[\varrho_1(\LL)]_{\cP}=\rh_1\text{ and }[\varrho_2(\LL)]_{\cP}=\rh_1,\\
[\LL]_{\cN}&=\rH_4\text{ if and only if }[\varrho_1(\LL)]_{\cP}=\rh_0\text{ and }[\varrho_2(\LL)]_{\cP}=\rh_0,
\end{align*}
where $ [\Ss^1,\cP]=\{\rh_0,\rh_1\} $.
\end{cor}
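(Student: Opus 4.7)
The plan is to read off the corollary as a direct bookkeeping consequence of the machinery built in this subsection, so no new ideas are needed beyond reorganizing. By Lemma \ref{LemmaHom}, since $\cN$ is path-connected (it is the image of the connected covering $\Ss^3$ under \eqref{universal}), the map $\Phi_{\X_1}:\pi_1(\cN,\X_1)\to[\Ss^1,\cN]$ descends to a bijection between $[\Ss^1,\cN]$ and the set of conjugacy classes of $\pi_1(\cN,\X_1)$. Lemma \ref{FG} combined with the isomorphism $\vp_{\X_1}:Q_8\to\pi_1(\cN,\X_1)$ transports this to a bijection between $[\Ss^1,\cN]$ and the conjugacy classes of $Q_8$. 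Since the five conjugacy classes of $Q_8$ are exactly $\{1\},\{-1\},\{\pm\ii\},\{\pm\jj\},\{\pm\kk\}$, this immediately gives $\#[\Ss^1,\cN]=5$.

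Next I would label the classes: set $\rH_0,\rH_1,\rH_2,\rH_3,\rH_4$ to be the free homotopy classes corresponding under the above bijection to $\{1\},\{\pm\ii\},\{\pm\jj\},\{\pm\kk\},\{-1\}$ respectively; clearly $\rH_0$ is the class of trivial loops (constant loops are sent by $\vp_{\X_1}^{-1}$ to the identity). The membership characterization of a loop $\LL$ via $\vp_{\X_1}^{-1}([\LL]_{\cN,\X_1})$ is then an immediate consequence of Proposition \ref{freeNprop}, with the key well-definedness (independence of $\X_1\in\LL$) already established in Corollary \ref{C1C2P1P2}. So items 1 and 2 require essentially no new computation.

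For the $\varrho_\ell$-characterization of the non-trivial classes, I would use the homomorphisms $(\varrho_\ell)_{\X_1,*}$ from \eqref{p1p2hom} and pull them back through $\vp_{\X_1}$ to obtain surjections $Q_8\to\Z_2$ whose kernels, by \eqref{kerP1} together with \eqref{vpP1}, are $\{\pm 1,\pm\ii\}$ for $\ell=1$ and $\{\pm 1,\pm\jj\}$ for $\ell=2$. Evaluating on each of the four non-identity conjugacy classes then produces a table:
\begin{align*}
\{\pm\ii\}&\mapsto(\ol 0,\ol 1),\quad \{\pm\jj\}\mapsto(\ol 1,\ol 0),\\
\{\pm\kk\}&\mapsto(\ol 1,\ol 1),\quad \{-1\}\mapsto(\ol 0,\ol 0),
\end{align*}
in $\Z_2\times\Z_2$. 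Applying Proposition \ref{freeN0} to read $[\varrho_\ell(\LL)]_\cP$ off from its base-point version gives exactly the four stated equivalences.

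I do not expect a genuine obstacle: everything reduces to packaging Lemma \ref{LemmaHom}, Lemma \ref{FG}, Corollary \ref{C1C2P1P2}, Proposition \ref{freeNprop}, and the explicit kernels in \eqref{kerP1}. The only point that requires a moment's care is verifying that the last table entries really hold for both signs in each conjugacy class (e.g.\ that $-\ii$ lies in $\ker((\varrho_1)_{\X_1,*}\circ\vp_{\X_1})$ but not in the second kernel), which is automatic since each kernel is a normal subgroup of index two and hence a union of conjugacy classes of $Q_8$.
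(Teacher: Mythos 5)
Your proposal is correct and follows the same route as the paper, which dismisses the proof in a single sentence citing Lemma \ref{LemmaHom}, Proposition \ref{freeN0}, and Proposition \ref{freeNprop}; you have simply filled in the bookkeeping (the five conjugacy classes of $Q_8$, the kernels $\{\pm 1,\pm\ii\}$ and $\{\pm 1,\pm\jj\}$ from \eqref{kerP1} and \eqref{vpP1}, and the resulting $\Z_2\times\Z_2$ table) that the paper leaves implicit. Your closing observation that the kernels, being index-two normal subgroups, are automatically unions of conjugacy classes is a clean way to confirm the table is well-defined on the level of free homotopy classes.
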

\begin{proof}
This is directly consequence from Lemma \ref{LemmaHom}, Proposition \ref{freeN0} and \ref{freeNprop}.
\end{proof}

Now, let us consider the map $ T:\cN\to\cN $ defined as follows.
\be
T(r_*(\n\n-\m\m))=r_*(\m\m-\n\n).\label{Tdef}
\ee
Obviously $ T $ is an diffeomorphism, preserving the metric on $ \cN $ and induces a map $ T_* $ from $ [\Ss^1,\cN] $ to itself by $ T_*([\mathbf{L}]_{\cN})=[T(\mathbf{L})]_{\cN} $ for $ \mathbf{L}\in C^0(\Ss^1,\cN) $. Now we have the following result on $ T_* $.

\begin{lem}\label{fij}
$ T_* $ is a permutation on $ [\Ss^1,\cN] $ such that $ T_*(\rH_0)=\rH_0 $, $ T_*(\rH_1)=\rH_2 $, $ T_*(\rH_2)=\rH_1 $, $ T_*(\rH_3)=\rH_3 $, and $ T_*(\rH_4)=\rH_4 $.
\end{lem}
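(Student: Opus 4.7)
The plan is to exploit the simple observation that $T$ swaps the two projections: directly from the definition of $T$ in \eqref{Tdef} and of $\varrho_1,\varrho_2$, one checks that $\varrho_1\circ T=\varrho_2$ and $\varrho_2\circ T=\varrho_1$ on $\cN$. Since $T$ is a homeomorphism of $\cN$, it induces a well-defined map $T_*$ on $[\Ss^1,\cN]$, and this map is a bijection (with inverse induced by $T^{-1}=T$), so $T_*$ is automatically a permutation of the five-element set $[\Ss^1,\cN]$ identified in Corollary \ref{homcor}. It therefore suffices to compute $T_*$ on each class.

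First I would dispatch $\rH_0$: since $T$ is continuous, the image of a trivial loop under $T$ is trivial, so $T_*(\rH_0)=\rH_0$. For $\rH_1,\rH_2,\rH_3$, which are the non-trivial classes whose projections are not all trivial, I would invoke the second characterization in Corollary \ref{homcor} (in terms of the pair $([\varrho_1(\LL)]_{\cP},[\varrho_2(\LL)]_{\cP})$). Given a loop $\LL\in C^0(\Ss^1,\cN)$, we have $\varrho_1(T(\LL))=\varrho_2(\LL)$ and $\varrho_2(T(\LL))=\varrho_1(\LL)$, so the pair attached to $T(\LL)$ is the pair attached to $\LL$ with its two entries swapped. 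Thus the pair $(\rh_0,\rh_1)$ is sent to $(\rh_1,\rh_0)$ and vice versa, giving $T_*(\rH_1)=\rH_2$ and $T_*(\rH_2)=\rH_1$; the pair $(\rh_1,\rh_1)$ is fixed, giving $T_*(\rH_3)=\rH_3$.

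The remaining case $\rH_4$ is the only one that requires a small additional argument, because the projection-pair characterization in Corollary \ref{homcor} does not separate $\rH_0$ and $\rH_4$: both have projection pair $(\rh_0,\rh_0)$. However, $T$ is a homeomorphism, so it maps non-trivial loops to non-trivial loops; since every representative of $\rH_4$ is non-trivial, $T_*(\rH_4)\neq\rH_0$. Combined with the fact that $T_*$ preserves the projection pair $(\rh_0,\rh_0)$, this forces $T_*(\rH_4)=\rH_4$. I do not anticipate any serious obstacle: the argument is essentially a bookkeeping exercise, and the only subtle point is the use of homeomorphism-invariance of triviality to distinguish $\rH_0$ from $\rH_4$, since the two coarser invariants $[\varrho_1(\cdot)]_{\cP}$ and $[\varrho_2(\cdot)]_{\cP}$ do not see this distinction.
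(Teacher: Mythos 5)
Your proposal is correct and follows essentially the same route as the paper's proof: first establish that $T_*$ sends trivial loops to trivial loops (and, using $T^{-1}=T$, conversely), then use the identity $\varrho_1\circ T=\varrho_2$ (equivalently $\varrho_2=\varrho_1\circ T$, as the paper writes it) together with the projection-pair characterization of Corollary \ref{homcor} to pin down the action on the non-trivial classes. Your explicit remark about why the projection pair alone fails to separate $\rH_0$ from $\rH_4$, and how the homeomorphism-invariance of triviality resolves this, is a helpful clarification of a step the paper leaves implicit when it says "the other results... directly follow from Corollary \ref{homcor}."
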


\begin{proof}
We firstly claim that $ \LL\in C^0(\Ss^1,\cN) $, $ T_*([\LL]_{\cN})=\rH_0 $ if and only if $ [\LL]_{\cN}=\rH_0 $. Indeed, if $ [\LL]_{\cN}=\rH_0 $, then there exists $ H:[0,1]\times\Ss^1\to\cN $ such that $ H(0,\cdot)=\LL(\cdot) $ and $ H(1,\cdot)=\X_0 $, where $ \X_0\in\cN $ is a point. $ T\circ H $ induces a free homotopy from $ T(\LL) $ to $ T(\X_0) $. For the other side, we can replace $ T $ by $ T^{-1} $. Moreover, we have $ \varrho_2=\varrho_1\circ T $. The other results in the lemma directly follows from Corollary \ref{homcor}.
\end{proof}

We see that $ \cP $ and $ \cN $ are path-connected, compact and smooth Riemannian manifolds without boundary, isometrically embedded into $ \MM^{3\times 3} $. We can define
\be
\kappa_*:=\cE^*(\rh_1).\label{kappastardefn}
\ee
In view of Lemma \ref{singuenergessum}, we have $ \kappa_*>0 $. Since there are only two different free homotopy classes in $ [\Ss^1,\cP] $, we deduce from the definition of $ \cE^*(\cdot) $ that $ \kappa_*=\cE^*(\rh_1)=\cE(\rh_1) $. Furthermore, the following lemma give the explicit value of it.

\begin{lem}[\cite{C15}, Lemma 3.6]\label{kappa}
$ \PP_*\in H^1(\Ss^1,\cN) $ is a minimizer for $ \cE(\rh_1) $ if and only if it can be represented by $ \PP_*=r_*\n\n $, where $ \n $ is a half of big circle on $ \Ss^2 $, for example, $ \n(\theta)=(\cos(\theta/2),\sin(\theta/2),0)^{\T} $ for $ 0\leq\theta\leq 2\pi $. In particular, 
$$
\kappa_*=\f{1}{2}\int_{\Ss^1}|\PP_*'(\theta)|\ud\theta=\f{\pi}{2}r_*^2.
$$
\end{lem}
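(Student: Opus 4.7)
The plan is to reduce the computation of $\cE(\rh_1)$ to a classical geodesic problem on $\Ss^2$ by using the universal covering map $p_0:\Ss^2\to\cP$, $p_0(\n)=r_*\n\n$, from Lemma \ref{N0fgroup}. Given $\PP\in H^1(\Ss^1,\cP)$ with $[\PP]_{\cP}=\rh_1$, I would first lift $\PP$ to a curve $\n:[0,2\pi]\to\Ss^2$ so that $\PP(\theta)=r_*\n(\theta)\n(\theta)$. For $H^1$ maps this can be justified by smoothly approximating $\PP$ (via Lemma \ref{SmoothApproximation}) and using the standard path-lifting property of the covering $p_0$; the approximating lifts converge in $H^1$ to a limit $\n\in H^1((0,2\pi),\Ss^2)$. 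Non-triviality of $[\PP]_{\cP}$ combined with the identification $\pi_1(\cP)\cong\Z_2$ forces $\n(2\pi)=-\n(0)$ (otherwise the lift would close up and $\PP$ would be null-homotopic by Lemma \ref{N0fgroup}).

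Next I would translate the energy into the lifted variable. Since $|\n|\equiv 1$ forces $\n\cdot\n'=0$ a.e., a direct computation gives
\begin{equation*}
|\PP'|^2=r_*^2|\n'\n+\n\n'|^2=2r_*^2|\n'|^2,
\end{equation*}
so that
\begin{equation*}
\f{1}{2}\int_{\Ss^1}|\PP'(\theta)|^2\ud\theta=r_*^2\int_0^{2\pi}|\n'(\theta)|^2\ud\theta.
\end{equation*}

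The crucial step is a sharp lower bound for $\int_0^{2\pi}|\n'|^2$. Since $\n$ connects antipodal points on $\Ss^2$, its length satisfies $\int_0^{2\pi}|\n'(\theta)|\ud\theta\geq\dist_{\Ss^2}(\n(0),-\n(0))=\pi$. Cauchy--Schwarz then yields
\begin{equation*}
\int_0^{2\pi}|\n'(\theta)|^2\ud\theta\geq\f{1}{2\pi}\(\int_0^{2\pi}|\n'(\theta)|\ud\theta\)^2\geq\f{\pi}{2},
\end{equation*}
and multiplying by $r_*^2$ gives $\cE(\rh_1)\geq\pi r_*^2/2$. Conversely, the curve $\PP_*(\theta)=r_*\n(\theta)\n(\theta)$ with $\n(\theta)=(\cos(\theta/2),\sin(\theta/2),0)^{\T}$ lifts to a half great circle on $\Ss^2$, so $|\n'|\equiv 1/2$, $\int|\n'|=\pi$, and the two inequalities above are saturated, realizing the value $\pi r_*^2/2$. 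Finally, a minimizer $\PP_*$ must saturate both inequalities, so Cauchy--Schwarz forces $|\n'|$ constant and the length bound forces $\n$ to trace a length-$\pi$ geodesic on $\Ss^2$, i.e., a half great circle.

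The main technical obstacle is the existence and $H^1$-regularity of the lift $\n$; everything else is a routine computation and a Cauchy--Schwarz/geodesic-length argument. One clean way around the lifting issue is to work directly with smooth approximations $\PP_k\to\PP$ in $H^1$, lift each $\PP_k$ (which is immediate since $p_0$ is a smooth covering), and pass to the limit using uniform equicontinuity and lower semicontinuity of the Dirichlet energy; the antipodal endpoint condition is preserved in the limit by uniform convergence on $\Ss^1$.
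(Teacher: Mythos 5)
Your proof is correct and proceeds by the natural argument: lift $\PP\in H^1(\Ss^1,\cP)$ through the double cover $p_0:\Ss^2\to\cP$ to an $H^1$ path $\n$ joining antipodal points, use $|\PP'|^2=2r_*^2|\n'|^2$, and then combine the spherical distance bound $\int|\n'|\geq\pi$ with Cauchy--Schwarz, tracking the equality case to characterize minimizers as half great circles. The paper itself does not prove this lemma but cites Canevari's 2015 paper, where the argument is exactly this lifting-plus-geodesic computation, so your approach matches. One small remark: since $\Ss^1$ is one-dimensional, $H^1(\Ss^1,\cP)\hookrightarrow C^{0,1/2}(\Ss^1,\cP)$, so the lift exists by ordinary path lifting for the continuous representative and is $H^1$ because $p_0$ is a local diffeomorphism; the detour through smooth approximation in your last paragraph is therefore unnecessary, though harmless. (Also note the lemma statement in the paper has two typos -- it should read $\cP$ rather than $\cN$ and $|\PP_*'(\theta)|^2$ rather than $|\PP_*'(\theta)|$ -- which your proof implicitly corrects.)
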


On $ \cN $, we have that the map $ T $ defined in \eqref{Tdef} preserves the metric of $ \cN $. By Lemma \ref{fij}, we deduce $ \cE(\rH_1)=\cE(\rH_2) $. Precisely, we have the following result on the value of $ \{\cE(\rH_i)\}_{i=0}^4 $ and $ \{\cE^*(\rH_i)\}_{i=0}^4 $.

\begin{lem}\label{kappa123}
For $ \cE,\cE^*:[\Ss^1,\cN]\to\R $, the following properties hold.
\begin{enumerate}
\item $ \cE(\rH_0)=0 $, $ \cE(\rH_1)=\cE(\rH_2)=\kappa_* $, $ 2\kappa_*<\cE(\rH_3)<4\kappa_* $ and $ \cE(\rH_4)=4\kappa_* $. 
\item $ \cE^*(\rH_0)=0 $, $ \cE^*(\rH_1)=\cE^*(\rH_2)=\kappa_* $ and $ \cE^*(\rH_3)=\cE^*(\rH_4)=2\kappa_* $.
\end{enumerate}
\end{lem}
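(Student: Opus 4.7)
The backbone of the proof is a pointwise inequality between $|\LL'|^2$ and the sum $|\varrho_1(\LL)'|^2 + |\varrho_2(\LL)'|^2$. For a smooth loop $\LL = r_*(\n\n - \m\m)$ on $\cN$, I would set $\p = \n\times\m$ and decompose $\n' = \alpha\m + \beta\p$, $\m' = -\alpha\n + \gamma\p$ in the moving frame. A short computation using that $\{\m\n + \n\m, \p\n + \n\p, \p\m + \m\p\}$ is an orthogonal set in $\Ss_0$ yields
\[
|\LL'|^2 = 2r_*^2(4\alpha^2 + \beta^2 + \gamma^2),\qquad |\varrho_1(\LL)'|^2 + |\varrho_2(\LL)'|^2 = 2r_*^2(2\alpha^2 + \beta^2 + \gamma^2),
\]
so $|\LL'|^2 \geq |\varrho_1(\LL)'|^2 + |\varrho_2(\LL)'|^2$ with equality iff $\alpha = -\n\cdot\m' \equiv 0$. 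Integrating and passing to infima over each free homotopy class gives the basic inequality $\cE([\LL]_\cN) \geq \cE([\varrho_1(\LL)]_\cP) + \cE([\varrho_2(\LL)]_\cP)$, and the cases $\cE(\rH_0) = \cE^*(\rH_0) = 0$ are immediate from a constant loop.

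For $\rH_1$ and $\rH_2$, Corollary \ref{homcor} tells us exactly one of the two projections lies in $\rh_1$ and the other in $\rh_0$, so the basic inequality together with $\cE(\rh_1) = \kappa_*$ from Lemma \ref{kappa} gives $\cE(\LL)\geq \kappa_*$; equality is realised by $\A^{(0)}$ (respectively $\B^{(0)}$), for which a direct calculation produces exactly $\kappa_*$. For $\rH_4$ the basic inequality is useless since both projections are null-homotopic, so I would instead lift $\LL$ along the universal covering $p\colon \Ss^3\to\cN$ to a path $\vv$ with $\vv(2\pi) = -\vv(0)$. Expressing the induced norm in a left-invariant frame $(\omega_1,\omega_2,\omega_3)$ on $\Ss^3$, one finds $|\LL'|^2 = 2r_*^2(\omega_1^2 + \omega_2^2 + 4\omega_3^2) \geq 8r_*^2 |\vv'|^2$, so the $\cN$-length of $\LL$ is at least $2\sqrt{2}\,r_*\, d_{\Ss^3}(\vv(0), -\vv(0)) = 2\sqrt{2}\pi r_*$; Cauchy--Schwarz then yields $\cE(\LL)\geq L^2/(4\pi) = 4\kappa_*$, and the matching upper bound is realised by traversing $\A^{(0)}$ twice.

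The delicate case is $\rH_3$, where the basic inequality only gives $\cE(\rH_3)\geq 2\kappa_*$. For the strict lower bound I would use compactness of $\cN$ and lower semicontinuity of $\cE$ on $H^1(\Ss^1,\cN)$ to extract a minimising closed geodesic $\LL^*$ in the class; equality $\cE(\LL^*) = 2\kappa_*$ would force both $\alpha\equiv 0$ and each of $\varrho_1(\LL^*), \varrho_2(\LL^*)$ to be a minimising semicircle in $\rh_1$. Writing the candidate semicircles as $\n(\theta) = \cos(\theta/2)\uu_1 + \sin(\theta/2)\uu_2$ and $\m(\theta) = \cos(\theta/2)\vv_1 + \sin(\theta/2)\vv_2$ with orthonormal pairs and imposing simultaneously $\n\perp\m$ and $\n\cdot\m'\equiv 0$ then translates into $\{\uu_1, \uu_2, \vv_1, \vv_2\}$ being pairwise orthogonal in $\R^3$, which is impossible, giving $\cE(\rH_3) > 2\kappa_*$. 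For the strict upper bound $\cE(\rH_3) < 4\kappa_*$, the concatenation $\A^{(0)}*\B^{(0)}$ lies in $\rH_3$ (since $\ii\jj = \kk$) and has constant-speed energy exactly $4\kappa_*$, but has a corner at $\X_0$ since $\A^{(0)}{}'(0) \neq \B^{(0)}{}'(0)$; a local geodesic shortcut around the corner produces a strictly shorter curve in the same class, hence strictly smaller energy.

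Finally, for the $\cE^*$ values, Lemma \ref{singuenergessum}(2) reduces the computation to decompositions in $\pi_1(\cN, \X_0) = Q_8$. Upper bounds follow from property (3) applied to $\kk = \ii\jj$ and $-1 = \ii^2$: $\cE^*(\rH_3) \leq \cE^*(\rH_1) + \cE^*(\rH_2) = 2\kappa_*$ and $\cE^*(\rH_4) \leq 2\cE^*(\rH_1) = 2\kappa_*$, where $\cE^*(\rH_1) = \cE^*(\rH_2) = \kappa_*$ is inherited from $\cE^*\leq \cE$ together with the trivial lower bound that any nontrivial factor in a $Q_8$-decomposition contributes $\cE \geq \kappa_*$. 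Matching lower bounds come from pushing any $Q_8$-decomposition through $(\varrho_1,\varrho_2)_*\colon Q_8\to Q_8/\{\pm 1\} \cong \Z_2\times\Z_2$ and a short parity analysis: the required parity pattern for $\kk$ or $-1$ can only be achieved by at least two factors with nontrivial class (each contributing $\geq \kappa_*$) or by an odd number of $-1$-factors (each contributing $4\kappa_*$), so in all cases the sum is at least $2\kappa_*$. I expect the main obstacle of the whole proof to be the strict inequality $\cE(\rH_3) > 2\kappa_*$, since the weak bound falls out for free but upgrading it requires the rigidity analysis and the explicit geometric impossibility in $\R^3$.
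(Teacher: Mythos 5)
Your proposal is essentially correct and follows the same basic structure as the paper's proof (frame decomposition of $|\LL'|^2$, comparison with $\varrho_1$, $\varrho_2$, Lemma \ref{kappa} for $\kappa_*$, and $Q_8$-decomposition analysis for $\cE^*$), but it differs in two places. For $\cE(\rH_4)=4\kappa_*$, you pass to the universal cover $\Ss^3$ directly and combine the length bound $d_{\Ss^3}(\vv(0),-\vv(0))=\pi$ with Cauchy--Schwarz, whereas the paper first descends to a non-trivial loop on $\mathrm{SO}(3)$ and invokes the value $2\pi$ for the minimal Dirichlet energy of such a loop; both are valid and the content is the same, though you should be careful with the normalization of the frame components $\omega_i$ (your intermediate identity $|\LL'|^2=2r_*^2(\omega_1^2+\omega_2^2+4\omega_3^2)$ is consistent with $|\vv'|^2=(\omega_1^2+\omega_2^2+\omega_3^2)/4$, not with $\omega_i$ being the literal components of $\vv'$ in an orthonormal left-invariant frame — the final inequality $|\LL'|^2\geq 8r_*^2|\vv'|^2$ is correct either way). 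For $\cE(\rH_3)<4\kappa_*$, you use a corner-shortening argument on $\A^{(0)}*\B^{(0)}$, while the paper chooses the competitor $\LL_2=r_*(\n^{(2)}\n^{(2)}-\m^{(2)}\m^{(2)})$ (simultaneously rotating $\n$ and $\m$) and verifies it fails the geodesic equation \eqref{Harmonicmapequation}; again both are sound, and the corner-shortening route is arguably more elementary since it avoids the second-order Euler--Lagrange computation. Stating the ``basic inequality'' $\cE([\LL]_\cN)\geq\cE([\varrho_1(\LL)]_\cP)+\cE([\varrho_2(\LL)]_\cP)$ explicitly is a nice organizing device that the paper handles only implicitly through \eqref{LengthofB}. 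A minor imprecision: in the $\cE^*$ lower bound analysis, your stated dichotomy (``at least two factors with nontrivial class'' or ``an odd number of $-1$-factors'') does not literally cover the case of a single $\pm\kk$-factor, which must be treated separately (it contributes $\cE(\rH_3)>2\kappa_*$ on its own, as the paper does), but this is easily repaired and the conclusion $\cE^*(\rH_3)=\cE^*(\rH_4)=2\kappa_*$ stands.
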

\begin{proof}
Firstly, by Lemma \ref{singuenergessum}, we have $ \cE(\rH_0)=\cE^*(\rH_0)=0 $. For a loop on $ \cN $, we represent $ \LL(\theta)=r_*(\n(\theta)\n(\theta)-\m(\theta)\m(\theta)) $. Here $ \theta\in[0,2\pi] $ with $ \n,\m\in C^0(\Ss^1,\Ss^2) $, $ \n\cdot\m=0 $ and $ \LL(0)=\LL(2\pi) $, that is,
$$
r_*(\n(0)\n(0)-\n(2\pi)\n(2\pi))=r_*(\m(0)\m(0)-\m(2\pi)\m(2\pi)).
$$
It implies that
\begin{align*}
0&\leq|\n(0)\n(0)-\n(2\pi)\n(2\pi)|^2\\
&=(\n(0)\n(0)-\n(2\pi)\n(2\pi)):(\m(0)\m(0)-\m(2\pi)\m(2\pi))\\
&=-(\n(0)\cdot\m(2\pi))^2-(\m(0)\cdot\n(2\pi))^2\leq 0.
\end{align*}
By this, we have $ \n(0)\n(0)=\n(2\pi)\n(2\pi) $ and $ \m(0)\m(0)=\m(2\pi)\m(2\pi) $. As a result, $ r_*\n(\cdot)\n(\cdot) $ and $ r_*\m(\cdot)\m(\cdot) $ are two loops on $ \cP $. By simple calculations,
\be
\f{1}{2}\int_0^{2\pi}|\LL'(\theta)|^2\ud\theta=\int_0^{2\pi}(|\n'(\theta)|^2+|\m'(\theta)|^2+2(\n'(\theta)\cdot\m(\theta))^2)\ud\theta.\label{LengthofB}
\ee

\underline{$ \cE(\rH_1)=\kappa_* $.} In view of Lemma \ref{N0fgroup}, Proposition \ref{freeN0} and Corollary \ref{homcor}, we construct a loop $ [\LL_1]_{\cN}=\rH_1 $, where $
\LL_1=r_*(\e^{(1)}\e^{(1)}-\n^{(1)}\n^{(1)}) $, with $ \n^{(1)}(\theta)=(0,\cos(\theta/2),\sin(\theta/2))^{\T} $. Obviously,
\be
\f{1}{2}\int_{0}^{2\pi}|\LL_1'(\theta)|^2\ud\theta=\kappa_*,\label{Aenergy}
\ee
and then $ \cE(\rH_1)\leq\kappa_* $. On the other hand, if $ [\LL]_{\cN}=\rH_1 $, then it follows from Corollary \ref{homcor} that $ [r_*\n\n]_{\cP}=\rh_0 $ and $ [r_*\m\m]_{\cP}=\rh_1 $. By Lemma \ref{kappa} and \eqref{LengthofB},
$$
\f{1}{2}\int_0^{2\pi}|\LL'(\theta)|^2\ud\theta\geq\int_0^{2\pi}|\m'(\theta)|^2\ud\theta\geq\kappa_*.
$$
By the arbitrariness of $ \LL $, we get $ \cE(\rH_1)\geq\kappa_* $. This, together with \eqref{Aenergy}, implies that $ \cE(\rH_1)=\kappa_* $. In particular, $ \LL_1 $ is a minimizer of $ \cE(\rH_1) $.\smallskip

\underline{$ 2\kappa_*<\cE(\rH_3)<4\kappa_* $.} If $ [\LL]_{\cN}=\rH_3 $, then by Corollary \ref{homcor} again, we have $ [r_*\n\n]_{\cP}=[r_*\m\m]_{\cP}=\rh_1 $. Combined with Lemma \ref{kappa} and \eqref{LengthofB}, we get
$$
\f{1}{2}\int_0^{2\pi}|\LL'(\theta)|^2\ud\theta\geq\int_0^{2\pi}(|\n'(\theta)|^2+|\m'(\theta)|^2)\ud\theta\geq 2\kappa_*.
$$
If $ \cE(\rH_3)=2\kappa_* $, Lemma \ref{kappa} implies that $ \n(\theta) $ and $ \m(\theta) $ are both half of big circles in $ \Ss^2 $ and $ \m'(\theta)\cdot\n(\theta)=0 $ for any $ \theta\in[0,2\pi] $. By simple calculations, we have $ \m'(\theta)=0 $, which is a contradiction, so $ \cE(\rH_3)>2\kappa_* $. Now we have to show that $ \cE(\rH_3)<4\kappa_* $. Choose $ \n^{(2)}(\theta)=(\cos(\theta/2),\sin(\theta/2),0)^{\T} $, $ \m^{(2)}(\theta)=(-\sin(\theta/2),\cos(\theta/2),0)^{\T} $ with $ \theta\in[0,2\pi] $ and
\be
\LL_2=r_*(\n^{(2)}\n^{(2)}-\m^{(2)}\m^{(2)}).\label{Bnm}
\ee
Using Proposition \ref{freeN0} and Corollary \ref{homcor}, we obtain that $ [\LL]_{\cN}=\rH_3 $. Also,
$$
\f{1}{2}\int_{0}^{2\pi}|\LL_2'(\theta)|^2\ud\theta=4\kappa_*.
$$
By this, we have that $ \cE^*(\rH_3)\leq 4\kappa_* $ and then we only need to prove that $ \LL_2 $ given in \eqref{Bnm} is not a minimizer of $ \cE^*(\rH_3) $. If $ \LL_2 $ is a minimizer, we claim that
\be
\LL_2''(\theta)=\Pi_{\cN}(\LL_2'(\theta),\LL_2'(\theta))(\LL_2(\theta)),\label{Harmonicmapequation}
\ee
where $ \Pi_{\cN} $ is the second fundamental form of $ \cN $ given by \eqref{SecondFundamentalForm}. Indeed, for any $ \PP\in C_c^{\ift}(\Ss^1,\cN) $, we have $ \varrho(\LL_2(\theta)+s\PP(\theta)) $ makes sense for any $ \theta\in[0,2\pi] $, if $ s>0 $ is sufficiently small. Moreover, $ [\LL_2+s\PP]_{\cN}=\rH_3 $, since $ \varrho(\LL_2(\theta)+s\PP(\theta)) $ itself is a free homotopy function when $ s\to 0 $. By the minimizing property, we have
$$
\left.\f{\ud}{\ud s}\right|_{s=0}\int_{0}^{2\pi}|(\varrho(\LL_2(\theta)+s\PP(\theta)))'|^2\ud\theta=0
$$
and the claim follows directly from the arguments in Proposition 1.3.1 of \cite{LW08}. By simple computations, we get that $ \LL_2 $ does not satisfy the equation \eqref{Harmonicmapequation}, which means that $ \cE(\rH_3)\neq 4\kappa_* $ and then $ \cE(\rH_3)<4\kappa_* $. \smallskip

\underline{$ \cE(\rH_4)=4\kappa_* $.} Set $ \n^{(3)}(\theta)=(0,\cos\theta,\sin\theta)^{\T} $ for $ \theta\in[0,2\pi] $, we claim that $
\LL_3=r_*(\e^{(1)}\e^{(1)}-\n^{(3)}\n^{(3)}) $ is a minimizer of $ \cE(\rH_4) $. Firstly, we note that $ \LL_3=\LL_1*\LL_1 $. By Proposition \ref{freeNprop}, $ \LL_3 $ is a non-trivial loop. Using Corollary \ref{homcor}, we have that $ [\LL_3]_{\cN}=\rH_4 $. By simple calculations,
$$
\f{1}{2}\int_0^{2\pi}|\LL_3'(\theta)|^2\ud\theta=4\kappa_*
$$
and then $ \cE(\rH_4)\leq 4\kappa_* $. Next, we will show that $ \cE(\rH_4)\geq 4\kappa_* $. Assume that $ \cE(\rH_4) $ is achieved for the loop $ \LL(\theta)=r_*(\n(\theta)\n(\theta)-\m(\theta)\m(\theta)) $ with $ \theta\in[0,2\pi] $. Since $ [\LL]_{\cN}=\rH_4 $, it can be deduced from Corollary \ref{homcor} that $ [\varrho_1(\LL)]_{\cP}=[\varrho_2(\LL)]_{\cP}=\rh_0 $. This implies that $ \n $ and $ \m $ are two loops on $ \Ss^2 $. Suppose that $ \n'=\al\m+\beta\p $ and $ \m'=-\al\n+\ga\p $ with $ \p=\n\times\m $ and $ \al,\beta,\ga:[0,2\pi]\to\R $. In view of \eqref{LengthofB}, we get
\begin{align*}
\f{1}{2r_*^2}\int_{0}^{2\pi}|\LL'(\theta)|^2\ud\theta&=\int_{0}^{2\pi}(4\al^2(\theta)+\beta^2(\theta)+\ga^2(\theta))\ud\theta\\
&\geq\f{1}{2}\int_{0}^{2\pi}2(\al^2(\theta)+\beta^2(\theta)+\ga^2(\theta))\ud\theta\\
&=\f{1}{2}\int_{0}^{2\pi}(|\n'(\theta)|^2+|\m'(\theta)|^2+|\p'(\theta)|^2)\ud\theta.
\end{align*}
By the proof of Lemma \ref{FG}, $ \mathrm{SO}(3) $ is a covering space of $ \cN $. Since $ [\LL]_{\cN}=\rH_4 $, $ (\n,\m,\p) $ is a non-trivial loop on $ \mathrm{SO}(3) $. In view of Lemma \ref{MSO(3)}, $ \pi_1(\mathrm{SO}(3))\cong\pi_1(\R\PP^3)\cong\Z_2 $. Like $ \cP $, there are also only two free homotopy classes of $ \mathrm{SO}(3) $. As a result, we only need to show that
\be
\inf\left\{\f{1}{2}\int_{\Ss^1}|\PP'(\theta)|^2\ud\theta:\PP\in H^1(\Ss^1,\mathrm{SO}(3))\text{ is non-trivial}\right\}=2\pi.\label{2pi}
\ee
In view of Remark \ref{SO3rem}, the above minimum is achieved by a geodesics $ (\n,\m,\p)=\psi(\vv)\in H^1(\Ss^1,\mathrm{SO}(3)) $ and $ \vv:[0,2\pi]\to\Ss^3 $ is a half of big circle of $ \Ss^3 $. For simplicity, we can choose $ \vv(\theta)=(\cos(\theta/2),\sin(\theta/2),0,0)^{\T} $ with $ \theta\in[0,2\pi] $. This gives 
$$
(\n,\m,\p)=\(\begin{matrix}
1&0&0\\
0&\cos\theta&-\sin\theta\\
0&\sin\theta&\cos\theta\\
\end{matrix}\),
$$
which directly implies \eqref{2pi} and completes the proof of the first property.

Next, we give the explicit value of $ \cE^*(\cdot) $ on $ [\Ss^1,\cN] $. Fix $ \X_0\in\cN $. Let $ \LL\in C^0(\Ss^1,\cN) $ such that $ \X_0\in\LL $ and $ [\LL]_{\cN}\neq\rH_0 $. For any $ \{[\LL_{\ell}]_{\cN,\X_0}\}_{\ell=1}^n\subset\pi_1(\cN,\X_0) $ such that $ [\LL]_{\cN,\X_0}=\prod_{\ell=1}^n[\LL_{\ell}]_{\cN,\X_0} $, we have 
$$
\vp_{\X_0}^{-1}([\LL]_{\cN,\X_0})=\prod_{\ell=1}^n\vp_{\X_0}^{-1}([\LL_\ell]_{\cN,\X_0})\text{ in }Q_8.
$$

\underline{$ \cE(\rH_1)=\cE(\rH_2)=\kappa_* $.} Assume that $ [\LL]_{\cN}=\rH_1,\rH_2 $. As a result, $ \vp_{\X_0}^{-1}([\LL]_{\cN,\X_0})\in\{\pm\ii,\pm\jj\} $ by Corollary \ref{homcor}. This implies that there is $ \ell_0\in\{1,2,...,n\} $ such that $ \vp_{\X_0}^{-1}([\LL_{\ell_0}]_{\cN,\X_0})\neq 1 $. By the first property, we get $ \cE([\LL_{\ell_0}]_{\cN})\geq\kappa_* $ and then $ \sum_{\ell=1}^n\cE([\LL_{\ell}]_{\cN})\geq\kappa_* $. In view of the second property of Lemma \ref{singuenergessum}, we obtain that $ \cE^*([\LL]_{\cN})\geq\kappa_* $. On the other hand, by the first property of Lemma \ref{singuenergessum}, we have that $ \cE^*([\LL]_{\cN})\leq\cE([\LL]_{\cN})=\kappa_* $. Therefore, $ \cE^*([\LL]_{\cN})=\kappa_* $.

\underline{$ \cE(\rH_3)=2\kappa_* $.} We set $ [\LL]_{\cN}=\rH_3 $. Using Corollary \ref{homcor} again, we obtain that $ \vp_{\X_0}^{-1}([\LL]_{\cN,\X_0})\in\{\pm\kk\} $. Consequently, either there exists some $ \ell_0\in\{1,2,...,n\} $ such that $ \vp_{\X_0}^{-1}([\LL_{\ell_0}]_{\cN,\X_0})\in\{\pm\kk\} $ or there are $ \ell_1,\ell_2\in\{1,2,...,n\} $ such that $ \vp_{\X_0}^{-1}([\LL_{\ell_p}]_{\cN,\X_0})\neq 1 $ for $ p=1,2 $. By the second property, $ \cE(\rH_3)>2\kappa_* $. Applying Lemma \ref{singuenergessum} again, we have that for each of the cases above, there holds $ \sum_{\ell=1}^n\cE([\LL_{\ell}]_{\cN})\geq2\kappa_* $ and consequently $ \cE^*([\LL]_{\cN})\geq 2\kappa_* $. On the other hand, without loss of generality, we assume that $ \vp_{\X_0}^{-1}([\LL]_{\cN,\X_0})=\kk $. Choose $ \{[\LL_{\ell}]_{\cN,\X_0}\}_{\ell}^2 $ such that $ \vp_{\X_0}^{-1}([\LL_1]_{\cN,\X_0})=\ii $ and $ \vp_{\X_0}^{-1}([\LL_2]_{\cN,\X_0})=\jj $. From the construction, we deduce that $ [\LL]_{\cN,\X_0}=[\LL_1]_{\cN,\X_0}[\LL_2]_{\cN,\X_0} $ and then $ [\LL]_{\cN}=[\LL_1*\LL_2]_{\cN} $. In view of Corollary \ref{homcor}, $ \cE([\LL_{\ell}]_{\cN})=\kappa_* $ for $ \ell=1,2 $. This implies that $ \cE^*([\LL]_{\cN})\leq \sum_{\ell=1}^2\cE([\LL_{\ell}]_{\cN})=2\kappa_* $ and completes the proof of this case.\smallskip

\underline{$ \cE(\rH_4)=2\kappa_* $.} The arguments is almost the same as the proof $ \cE(\rH_3)=2\kappa_* $ and we omit it for simplicity.
\end{proof}

\subsection{Lower energy bound for Landau-de Gennes model}

Let $ \Q\in H^1(B_r^2,\Ss_0) $ with $ r>0 $. For a subset $ U\subset B_r^2 $, we define
$$
\phi_0(\Q,U):=\essinf_{U}(\phi_0\circ\Q)=\essinf_{U}(\min\{(\phi_1\circ\Q),(\phi_2\circ\Q)\}).
$$
If $ \Q\in C^0(\ol{B_1^2},\Ss_0) $ and $ \phi_0(\Q,A_{s,r}^2)>0 $, then $ \Q(x)\in\Ss_0\backslash(\cC_1\cup\cC_2) $ for any $ x\in\ol{A_{s,r}^2} $.

\begin{prop}\label{lowerboundprop}
Let $ 0<\va<1 $ and $ A:=A_{1/80,1}^2 $. Assume that $ C_0>0 $ is a positive constant. There exists $ C>0 $ depending only on $ \cA $ and $ C_0 $ such that if $ \Q\in H^1(B_1^2,\Ss_0) $ satisfies 
\be
\phi_0(\Q,A)>0\text{ and }\|\Q\|_{L^{\ift}(B_1^2)}\leq C_0,\label{phigeq}
\ee
then
$$
E_{\va}(\Q,B_1^2)\geq\cE^*([\varrho\circ\Q|_{\pa B_1^2}]_{\cN})\phi_0^2(\Q,A)\log\f{1}{\va}-C.
$$
\end{prop}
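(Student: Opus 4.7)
The strategy is a Jerrard--Sandier type lower bound, adapted to the sextic Landau--de Gennes energy by combining the nearest-point projection $\varrho$ to $\cN$, the regularized eigenvalue-gap function $\phi_\tau$ from Section~\ref{Preliminaries}, and the topological lower bound of Proposition~\ref{Plower}. Write $\mu:=\phi_0(\Q,A)$ and $\Gamma:=\cE^*([\varrho\circ\Q|_{\pa B_1^2}]_\cN)$. If $\Gamma=0$ the statement is trivial (the right-hand side is nonpositive once $C$ is chosen large). Otherwise, I may assume $E_\va(\Q,B_1^2)\le 2\mu^2\Gamma\log\f{1}{\va}$, for otherwise the desired inequality already holds. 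Because $\phi_0$ is Lipschitz and $\|\Q\|_{L^\infty(B_1^2)}\le C_0$, the quantity $\mu$ is bounded by a constant depending only on $\cA$ and $C_0$, which I will use tacitly.

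Fix a small parameter $\eta\in(0,1/2)$ to be chosen, and introduce the bad set
\[
W_\eta:=\{x\in B_1^2:\phi_0(\Q(x))\le(1-\eta)\mu\}.
\]
By hypothesis $\phi_0\circ\Q\ge\mu$ a.e.\ on $A$, so $W_\eta\subset B_{1/80}^2$; in particular $\dist(W_\eta,\pa B_1^2)\ge 79/80$. Lemma~\ref{fBl} gives $f_b(\Q)\ge c_0(1-(1-\eta)\mu)^2=:c_{\mu,\eta}>0$ on $W_\eta$, so $|W_\eta|\le\f{\va^2}{c_{\mu,\eta}}E_\va(\Q,B_1^2)\le C\mu^2\Gamma c_{\mu,\eta}^{-1}\va^2\log\f{1}{\va}$. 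Covering $W_\eta$ by a grid of axis-parallel squares of side $\va$ yields a ball cover with $\op{rad}(W_\eta)\le C\mu^2 c_{\mu,\eta}^{-1}\va\log\f{1}{\va}$. Apply Proposition~\ref{Plower} with $r=1$ and $\lda=19/40$ to the map $\varrho\circ\Q:B_1^2\setminus W_\eta\to\cN$:
\[
\f{1}{2}\int_{B_1^2\setminus W_\eta}|\na(\varrho\circ\Q)|^2\ud x\ge\Gamma\(\log\f{1}{\va}-\log\log\f{1}{\va}-C(\eta,\mu,\cA,C_0)\).
\]
On $B_1^2\setminus W_\eta$ we have $\phi_0\circ\Q\ge(1-\eta)\mu$; choosing $\tau=\tau(\eta)$ small so that $\phi_\tau\circ\Q\ge(1-\eta)^2\mu$ there (permitted by Lemma~\ref{Lipphi4}), Lemma~\ref{TechLemma1} gives $|\na\Q|^2\ge(1-\eta)^4\mu^2|\na(\varrho\circ\Q)|^2$ a.e.\ on that set. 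Integrating and discarding the (nonnegative) bulk contribution outside $W_\eta$ produces
\[
E_\va(\Q,B_1^2)\ge(1-\eta)^4\mu^2\Gamma\log\f{1}{\va}-C(\eta,\mu,\cA,C_0)\(1+\log\log\f{1}{\va}\).
\]

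The main obstacle is to absorb both the prefactor loss $(1-(1-\eta)^4)\mu^2\Gamma\log\f{1}{\va}$ and the residual $\mu^2\Gamma\log\log\f{1}{\va}$ into a constant $C=C(\cA,C_0)$ that is uniform in $\mu$ and $\va$; the static grid covering used above is intrinsically too crude. To fix this, I plan to replace that step by a Sandier--Jerrard ball-growth scheme: start from disjoint closed balls of radius $c_1\va$ centered at a maximal $\va$-separated net of $\{\phi_\tau\circ\Q\le(1-\eta)\mu\}$, grow them continuously while merging whenever closures meet, and track the free homotopy class of $\varrho\circ\Q$ on the boundary of each ball (well-defined by Lemma~\ref{propHomt} and Corollary~\ref{homcor}, and additive under merging by Lemma~\ref{Numbersum}). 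During an annular growth $r\mapsto r+\ud r$ across a ball of total homotopy content $\ge 1$, Lemma~\ref{lemmalowers} contributes an elastic increment $\ge\Gamma\,\ud r/r$, and the cumulative integral produces $\Gamma\log\f{1}{\va}-O(1)$ without any $\log\log$ loss, while the bulk energy on the initial small balls is bounded by a constant absorbed into $C$. Finally, letting $\eta\to 0$ at an appropriate rate yields the clean coefficient $\mu^2\Gamma$. Uniformity in $\mu$ is guaranteed because $\mu\mapsto\mu^2\log\f{1}{\mu}$ is bounded on $(0,C(C_0)]$ and the claim is vacuous in the limit $\mu\to 0^+$; this legitimizes allowing the constants to degenerate polynomially in $1/\mu$ during the intermediate estimates.
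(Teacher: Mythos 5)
The overall architecture you identify is the right one: pass through $\varrho\circ\Q$, control $|\na\Q|^2$ from below by $(\phi_\tau\circ\Q)^2|\na(\varrho\circ\Q)|^2$ (Lemma~\ref{TechLemma1}), and then invoke the topological lower bound of Proposition~\ref{Plower}. You also correctly diagnose that the ``cover $W_\eta$ by $\va$-cubes'' step is too crude. However, the fix you propose does not close the gap, and the real mechanism in the paper is missing.

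The paper proves Lemma~\ref{smoothlower} by a \emph{coarea/level-set} argument rather than by freezing a single threshold $(1-\eta)\mu$. Writing $\om_{t,<}^{\tau}=\{\phi_\tau\circ\Q< t\}$, it integrates over all levels $t\in(0,\phi_\tau^{(0)})$ the lower bound
$$E_{\va}(\Q)\ \gtrsim\ \int_0^{\phi_\tau^{(0)}}\Big(\tfrac{C}{\va}\,(y_\tau(t))^{1/2}\,\op{rad}(\om_{t,<}^{\tau})-2e_*\,t\log\op{rad}(\om_{t,<}^{\tau})\Big)\,\ud t,$$
where the first term is extracted from $|\na(\phi_\tau\circ\Q)|\cdot f_b^{1/2}(\Q)$ via Cauchy--Schwarz and the coarea formula (so $\op{rad}$ is controlled through $\HH^1(\Ga_t^{(\tau)})$, not through $|W_\eta|$), and the second comes from Proposition~\ref{Plower}. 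Optimizing the bracket pointwise in $s=\op{rad}$ gives $s_*\sim\va\,t/(y_\tau(t))^{1/2}$, which after integrating over $t$ yields the exact coefficient $\mu^2\Gamma\log\f{1}{\va}$ with an $O(1)$ error and \emph{no} $\log\log$ loss, \emph{no} $(1-\eta)^4$ prefactor loss; one lets $\tau\to 0^+$ only at the very end.

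By contrast, your ball-growth repair still inherits the $\log\log$ defect: to cover $W_\eta$ you must start from $\sim\log\f{1}{\va}$ disjoint balls of radius $\va$, so the total seed size entering the growth is $\sim\va\log\f{1}{\va}$, and the cumulative radial integral again gives $\Gamma\log\f{1}{\va\log(1/\va)}=\Gamma\log\f{1}{\va}-\Gamma\log\log\f{1}{\va}$. The coarea coupling between bulk and elastic energy is precisely what avoids this, and is the missing idea. Also, the ``let $\eta\to 0$ at an appropriate rate'' step is underdetermined: if $\mu$ is close to $1$, then $c_{\mu,\eta}\sim\eta^2$, so $|W_\eta|\lesssim\eta^{-2}\va^2\log\f{1}{\va}$ blows up as $\eta\to 0$, and there is no rate that simultaneously removes the $(1-\eta)^4$ loss and keeps the covering radius under control --- another manifestation of the same coupling you did not encode. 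Finally, you skip the mollification step in the paper's proof of Proposition~\ref{lowerboundprop} (needed because $\Q$ is only $H^1$ and $\phi_0(\Q,A)$ is an essential infimum), which is also a nontrivial part of the argument.
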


Before the proof of this proposition, we firstly give a corollary as follows.

\begin{cor}\label{lowerboundcor}
Let $ \va,r>0 $ be such that $ 0<\va<r/80 $. Given a map $ \Q\in H^1(B_r^2,\Ss_0) $ such that $ \Q|_{\pa B_r^2}\in H^1(\pa B_r^2,\Ss_0) $, $ \|\Q\|_{L^{\ift}(B_r^2)}\leq C_0 $ for some positive constant $ C_0>0 $ and
$$
\phi_0(\Q,\pa B_r^2):=\essinf_{\pa B_r^2}(\phi_0\circ\Q)>0.
$$
There holds
$$
E_{\va}(\Q,B_r^2)+(4\log 5)E_{\va}(\Q,\pa B_r^2)\geq\cE^*([\varrho\circ\Q|_{\pa B_r^2}]_{\cN})\phi_0^2(\Q,\pa B_r^2)\log\f{r}{\va}-C,
$$
where $ C>0 $ depends only on $ \cA,C_0 $.
\end{cor}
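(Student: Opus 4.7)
The plan is to derive Corollary~\ref{lowerboundcor} from Proposition~\ref{lowerboundprop} by constructing a comparison map on $B_r^2$ that satisfies the hypothesis $\phi_0 > 0$ on the full annulus $A_{r/80,r}^2$ required by the proposition, while keeping the boundary data of $\Q$ (and hence the free homotopy class of $\varrho \circ \Q|_{\pa B_r^2}$) unchanged. The extra energy cost of this modification should be controllable by a bounded multiple of $E_\va(\Q, \pa B_r^2)$.

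Concretely, after the rescaling $y = x/r$, which transforms the problem onto $B_1^2$ with effective parameter $\va' = \va/r < 1/80$, I would define
$$
\wt{\Q}(y) := \begin{cases} \Q(80 y), & |y| \leq 1/80, \\ \Q(y/|y|), & 1/80 < |y| \leq 1, \end{cases}
$$
where $\Q$ is identified with its rescaled version. The two pieces agree on $\pa B_{1/80}^2$ (since $|y|=1/80$ gives $80 y = y/|y|$), so $\wt{\Q} \in H^1(B_1^2, \Ss_0)$. On the outer annulus $A_{1/80,1}^2$ the map $\wt{\Q}$ is constant along each radial ray, so $\phi_0(\wt{\Q}, A_{1/80,1}^2) = \phi_0(\Q, \pa B_r^2) > 0$; moreover $\wt{\Q}|_{\pa B_1^2} = \Q|_{\pa B_r^2}$ after identification, $\|\wt{\Q}\|_{L^\infty} \leq C_0$, and the continuity of $\varrho$ on $\Ss_0 \setminus (\cC_1 \cup \cC_2)$ implies $[\varrho \circ \wt{\Q}|_{\pa B_1^2}]_\cN = [\varrho \circ \Q|_{\pa B_r^2}]_\cN$. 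Hence Proposition~\ref{lowerboundprop} applies and yields
$$
E_{\va'}(\wt{\Q}, B_1^2) \geq \cE^*([\varrho \circ \Q|_{\pa B_r^2}]_\cN)\, \phi_0^2(\Q, \pa B_r^2) \log(r/\va) - C.
$$

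To finish I would bound $E_{\va'}(\wt{\Q}, B_1^2)$ from above in terms of $E_\va(\Q, B_r^2)$ and $E_\va(\Q, \pa B_r^2)$. On the inner disk $B_{1/80}^2$, two-dimensional scale invariance of the Dirichlet integral (via the change of variables $z = 80 y$) together with the $80^{-2}$ suppression of the potential term gives $E_{\va'}(\wt{\Q}, B_{1/80}^2) \leq E_\va(\Q, B_r^2)$. On the annulus $A_{1/80,1}^2$, in polar coordinates $|\na \wt{\Q}|^2 = \rho^{-2}|\pa_\theta \Q|^2(e^{i\theta})$ and $f_b \circ \wt{\Q}$ is independent of $\rho$, so integration produces the logarithmic factor $\int_{1/80}^1 \rho^{-1}\,d\rho = \log 80$ for the Dirichlet part and $\int_{1/80}^1 \rho\,d\rho \leq 1/2$ for the potential part. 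Unscaling to $B_r^2$ and combining with the lower bound from Proposition~\ref{lowerboundprop} yields the stated inequality.

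The principal technical obstacle is the constant-tracking in the last step: the direct polar integration produces the coefficient $\log 80 + 1/2$, and under the unscaling $y = x/r$ both the Dirichlet and potential pieces of $E_\va(\Q, \pa B_r^2)$ pick up an extra factor of $r$ that must be absorbed. Showing that the composite coefficient fits inside $4\log 5$ for the admissible range of $r$ is a careful bookkeeping task rather than a substantive geometric difficulty; the essential geometric content, namely that the radial extension preserves both $\phi_0$ on the outer annulus and the free homotopy class of $\varrho \circ \Q|_{\pa B_r^2}$, is immediate from the construction.
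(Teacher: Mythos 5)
Your overall approach is exactly the one the paper uses: rescale to $B_1^2$ with $\va' = \va/r$, glue a shrunk copy of $\Q$ on $B_{1/80}^2$ to the $0$-homogeneous extension of $\Q|_{\pa B_r^2}$ on the outer annulus, observe that this preserves both $\phi_0$ on $A_{1/80,1}^2$ and the free homotopy class, apply Proposition~\ref{lowerboundprop}, and then bound the competitor's energy from above. (The paper uses $\ol\va = 80\va/r$ rather than $\va' = \va/r$, which costs an extra $-\cE^*\phi_0^2\log 80$ that gets absorbed into $C$; this is immaterial.) The energy split, the $2$-D conformal invariance of the Dirichlet term on the inner disk, and the polar-coordinate computation on the outer annulus all match.

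However, your final paragraph contains a genuine gap, and it is not merely bookkeeping. Once you unscale the outer-annulus contribution back to $B_r^2$, the Dirichlet part becomes
$$
\f{\log 80}{2}\int_0^{2\pi}|\pa_\theta\Q(r,\theta)|^2\,\ud\theta
= r\log 80\cdot\Bigl(\f{1}{2}\int_{\pa B_r^2}|\na_{\pa B_r^2}\Q|^2\,\ud\HH^1\Bigr),
$$
so the coefficient in front of $E_\va(\Q,\pa B_r^2)$ is $r\log 80$, \emph{not} an absolute constant. Since the corollary's hypotheses allow arbitrarily large $r$, the factor $r\log 80$ cannot be fit ``inside $4\log 5$'': it exceeds $4\log 5$ as soon as $r > 4\log 5/\log 80 \approx 1.47$. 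Indeed, the inequality \emph{must} carry an $r$ in front of the boundary term, as one sees from dimensional analysis: under $\va\mapsto t\va$, $r\mapsto tr$, $\Q\mapsto\Q(\cdot/t)$, both $E_\va(\Q,B_r^2)$ and the right-hand side $\cE^*\phi_0^2\log(r/\va)-C$ are invariant, but $E_\va(\Q,\pa B_r^2)\mapsto t^{-1}E_\va(\Q,\pa B_r^2)$; so the coefficient of the boundary term must scale like $r$ for the inequality to be consistent. The statement of the corollary appears to have a typo: the coefficient should be $(4\log 5)\,r$ or $Cr$ rather than $4\log 5$. This is also what the paper's displayed chain actually produces (note $\int_{1/80}^1\!\f{r}{s}\,\ud s = r\log 80$, so the final ``$=$'' does not hold), and is consistent with every application of the corollary in the paper, e.g.\ the use in the proof of Lemma~\ref{Luckhauslemma} takes the form $E_{\ol\va}(\U_{\ol\va},B_{h(\ol\va)}^2)+h(\ol\va)E_{\ol\va}(\U_{\ol\va},\pa B_{h(\ol\va)}^2)\geq\cdots$, with $h(\ol\va)$ in the role of $r$. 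You should prove the statement with a coefficient $Cr$, not try to hide the $r$.
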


\begin{proof}
Set $ \ol{\va}=80\va/r\in(0,1) $ and the map $ \PP\in H^1(B_1^2,\Ss_0) $ defined by
$$
\PP(x)=\left\{\begin{aligned}
&\Q(rx/|x|)&\text{ if }&x\in A,\\
&\Q(80rx)&\text{ if }&x\in\ol{B_{1/80}^2}.
\end{aligned}\right.
$$
Since $ \PP $ is homogeneous in $ A $, it follows that $ \phi_0(\PP,A)=\phi_0(\Q,\pa B_r^2) $. By simple calculations,
\begin{align*}
E_{\ol{\va}}(\PP,B_1^2)&= E_{\ol{\va}}(\PP,B_{1/80}^2)+\int_{1/80}^1 E_{\ol{\va}}(\PP,\pa B_s^2)\ud s\\
&\leq E_{\va}(\Q,B_r^2)+\int_{1/80}^1\int_{0}^{2\pi}\(\f{1}{s^2}|\pa_{\theta}\Q(r,\theta)|^2+\f{1}{\ol{\va}^2}f_b(\Q(r,\theta))\)\ud\theta\ud s\\
&\leq E_{\va}(\Q,B_r^2)+\(\int_{1/80}^1\f{r}{s}\ud s\)\int_{\pa B_r^2}\(\f{1}{2}|\na_{\pa B_r^2}\Q|^2+\f{1}{\va^2}f_b(\Q)\)\ud\HH^1\\
&=E_{\va}(\Q,B_r^2)+(4\log 5)E_{\va}(\Q,\pa B_r^2),
\end{align*}
where for the first inequality, we have used the change of variables. Using Proposition \ref{lowerboundprop}, we can complete the proof.
\end{proof}

\begin{lem}\label{smoothlower}
Let $ 0<\va<1 $ and $ A_*:=A_{\mu,1-\mu}^2 $ with $ \mu=1/40 $. Assume that $ C_0>0 $ is a positive constant. There exists $ C>0 $ depending only on $ \cA $ and $ C_0 $ such that if $ \Q\in C^{\ift}(B_{1-\mu}^2,\Ss_0)\cap C^0(\ol{B_{1-\mu}^2},\Ss_0) $ satisfies
\be
\phi_0(\Q,A_*)>0\text{ and }\|\Q\|_{L^{\ift}(B_{1-\mu}^2)}\leq C_0,\label{Awtgeq}
\ee
then 
\be
E_{\va}(\Q,B_{1-\mu}^2)\geq\cE^*([\varrho\circ\Q|_{B_{1-\mu}^2}]_{\cN})\phi_0^2(\Q,A_*)\log\f{1}{\va}-C.\label{Eva1mueslow}
\ee
\end{lem}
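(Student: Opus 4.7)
Write $\alpha:=\phi_0(\Q,A_*)$ and $H:=[\varrho\circ\Q|_{\partial B_{1-\mu}^2}]_\cN$; by Lemma \ref{propHomt} the class $H$ is well-defined and equals $[\varrho\circ\Q|_{\partial B_r^2}]_\cN$ for a.e.\ $r\in(\mu,1-\mu)$. Fix small parameters $\tau,\gamma\in(0,1/2)$ (to be optimized at the end) and set $\beta:=(1-\gamma)\tfrac{6-5\tau}{6}\alpha$. Applying Lemma \ref{TechLemma1} yields the pointwise inequality
\[
|\nabla\Q|^2\geq\tfrac{r_*^2}{18}|\nabla(\phi_{\tau}\circ\Q)|^2+(\phi_{\tau}\circ\Q)^2|\nabla(\varrho\circ\Q)|^2\quad\HH^2\text{-a.e. in }B_{1-\mu}^2,
\]
while Lemma \ref{Lipphi4} gives $\phi_{\tau}\circ\Q\geq\tfrac{6-5\tau}{6}\alpha>\beta$ on $A_*$. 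Consequently the open set
\[
\w:=\{x\in B_{1-\mu}^2:(\phi_{\tau}\circ\Q)(x)<\beta\}
\]
is contained in $B_\mu^2$, so $\dist(\w,\partial B_{1-\mu}^2)\geq 1-2\mu=19/20\geq 2\cdot(19/40)(1-\mu)$, placing us in the setting of Proposition \ref{Plower} with $\lambda=19/40$.

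\textbf{Topological and potential lower bounds.} Since $\phi_{\tau}\circ\Q\geq\beta>0$ on $B_{1-\mu}^2\setminus\w$, the projection $\varrho\circ\Q$ is smoothly defined there and takes values in $\cN$. Proposition \ref{Plower} yields
\[
\tfrac{1}{2}\int_{B_{1-\mu}^2\setminus\w}|\nabla(\varrho\circ\Q)|^2\geq\cE^*(H)\log\frac{(19/40)(1-\mu)}{\op{rad}(\w)},
\]
and combining with the pointwise splitting and $(\phi_{\tau}\circ\Q)^2\geq\beta^2$ off $\w$,
\[
E_\va(\Q,B_{1-\mu}^2)\geq\beta^2\cE^*(H)\log\frac{(19/40)(1-\mu)}{\op{rad}(\w)}+\frac{1}{\va^2}\int_\w f_b(\Q).
\]
On $\w$ we have $\tfrac{6}{6-5\tau}\phi_{\tau}\circ\Q<(1-\gamma)\alpha$, whence Lemma \ref{fBl} gives $f_b(\Q)\geq c_0(1-(1-\gamma)\alpha)^2$ provided $\alpha\leq 1$; in the complementary regime $1<\alpha\leq 2C_0/r_*$ (which is forced by $\|\Q\|_{L^\infty}\leq C_0$), a parallel coercive bound with a constant depending only on $\cA,C_0,\gamma$ follows from the $(1-\phi_{\tau})^2$ branch of the maximum in Lemma \ref{fBl}. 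In either case $|\w|\leq C\va^2 E_\va(\Q,B_{1-\mu}^2)$ with $C=C(\cA,C_0,\gamma)$.

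\textbf{Main obstacle and conclusion.} The technical heart of the argument is upgrading the measure bound $|\w|\leq C\va^2 E_\va$ to a radius bound of the form $\op{rad}(\w)\leq C\va$, which is the essence of the Jerrard--Sandier mechanism. I plan to carry this out by a ball-construction argument modelled on the proof of Proposition \ref{Plower}: to each connected component $\w_k$ of $\w$ associate an initial ball of radius $\rho_k:=\max\{\va,\diam(\w_k)\}$, iteratively merge overlapping balls via the Case~1--3 procedure from that proof, and bound the initial radius sum by $C\va$ using the fact that each seed ball absorbs potential mass $\gtrsim c_0(\cA,C_0,\gamma)\va^2$, while the contribution of large components is controlled via the planar co-area formula applied to the Lipschitz function $\phi_{\tau}\circ\Q$ together with the Dirichlet bound $\int|\nabla(\phi_{\tau}\circ\Q)|^2\leq C E_\va$ coming from the first term of the pointwise splitting. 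With $\op{rad}(\w)\leq C\va$ in hand,
\[
E_\va(\Q,B_{1-\mu}^2)\geq\beta^2\cE^*(H)\log(1/\va)-C(\cA,C_0,\tau,\gamma).
\]
A final limit $\tau,\gamma\to 0^+$ along a sequence on which this additive constant remains uniformly bounded (which is possible because its $\tau,\gamma$-dependence enters only logarithmically through $\log(1/\gamma)$) gives $\beta^2\to\alpha^2$ and delivers \eqref{Eva1mueslow} with $C=C(\cA,C_0)$. The main obstacle is therefore the radius-control step, which requires a careful interplay between the potential coercivity on $\w$, the covering/merging procedure, and the co-area estimate for $\phi_{\tau}\circ\Q$.
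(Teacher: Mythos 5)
You correctly identify all the main ingredients (the pointwise splitting from Lemma \ref{TechLemma1}, the topological lower bound from Proposition \ref{Plower}, and the potential coercivity from Lemma \ref{fBl}), but the argument as sketched has a genuine gap precisely at what you call its ``technical heart,'' and moreover the paper's proof takes a different route that avoids that difficulty altogether.

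\textbf{Gap in the radius-control step.} You propose seed balls of radius $\rho_k=\max\{\va,\diam(\w_k)\}$ and claim each seed absorbs potential mass $\gtrsim\va^2$. This is not so: a component with $\diam(\w_k)<\va$ has $|\w_k|\ll\va^2$, so the potential energy $\va^{-2}\int_{\w_k}f_b$ it supplies can be arbitrarily small, while its seed radius is still $\va$. If the sublevel set $\w$ has $N$ such tiny components, the initial radius sum is at least $N\va$ and nothing in your sketch bounds $N$. The area bound $|\w|\leq C\va^2 E_\va$ also does not control $\op{rad}(\w)$: a thin annulus has small area but large $\op{rad}$. A correct ball-construction needs the seed radii chosen so that the accumulated radius is controlled by the potential energy already absorbed (which is why Sandier/Jerrard couple the co-area formula with the potential at every scale, not after the fact). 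There is also a secondary issue at the end: if your additive constant genuinely grows like $\log(1/\gamma)$ then you cannot send $\gamma\to0^+$ while keeping it bounded; fixing $\gamma$ leaves the factor $(1-\gamma)^2<1$ in front of $\alpha^2$, which gives a strictly weaker constant than claimed.

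\textbf{How the paper avoids this.} Rather than fixing one threshold $\beta$ and trying to estimate $\op{rad}(\w)$, the paper integrates over all thresholds $t\in(0,\phi_\tau^{(0)})$. After Cauchy--Schwarz and the co-area formula, the potential part contributes $\int_0^{\phi_\tau^{(0)}}\frac{C}{\va}(y_\tau(t))^{1/2}\op{rad}(\om_{t,<}^{\tau})\,dt$, while writing the Dirichlet part as $\int_0^\infty t\Theta(t)\,dt$ and applying Proposition \ref{Plower} level-by-level gives $\Theta(t)\gtrsim -2e_*\log\op{rad}(\om_{t,<}^{\tau})$. The integrand at each $t$ is then of the form $g(s)=\tfrac{C_1}{\va}(y_\tau(t))^{1/2}s-2e_*t\log s$ with $s=\op{rad}(\om_{t,<}^{\tau})$ unknown; one simply minimizes $g$ over $s>0$ and plugs the minimizer $s_*\sim\va t$ back in. This produces the $\log(1/\va)$ rate with no radius bound ever established, and the remaining $t$-integral is shown to be uniformly bounded in $\tau$ (that is the claim $\int_0^{\phi_\tau^{(0)}}t|\log(y_\tau(t)^{1/2})|\,dt\leq C_3$). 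Your single-threshold idea is not irreparably wrong in principle, but it forces you to prove $\op{rad}(\w)\lesssim\va$, which is strictly harder than what the lemma needs, and your current sketch of that step does not work.
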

\begin{proof}
Due to \eqref{Awtgeq}, $ [\varrho\circ\Q|_{\pa B_{1-\mu}^2}]_{\cN} $ is well defined. We assume that 
\be
[\varrho\circ\Q|_{\pa B_{1-\mu}^2}]_{\cN}\neq\rH_0,\label{nonH0ass}
\ee
since for otherwise, there is nothing to prove. We claim that $ \phi_0(\Q,\ol{B_{1-\mu}^2})=0 $. Indeed, if not, $ \varrho\circ\Q:\ol{B_{1-\mu}^2}\to\cN $ is continuous in $ \ol{B_{1-\mu}^2} $ and is a free homotopy function from $ \varrho\circ\Q|_{\pa B_{1-\mu}^2} $ to a constant map, which is a contradiction to the assumption \eqref{nonH0ass}. By the first property of Lemma \ref{Lipphi4}, we have
\be
\phi_{\tau}^{(0)}:=\min_{\ol{A_*}}(\phi_{\tau}\circ\Q)\geq \f{6-5\tau}{6}\phi_0(\Q,A_*)>0\text{ and }\min_{\ol{B_{1-\mu}^2}}(\phi_{\tau}\circ\Q)\leq\min_{\ol{B_{1-\mu}^2}}(\phi_0\circ\Q)=0,\label{phitau0}
\ee
for any $ \tau\in(0,1/2) $. Next, we show that
\be
E_{\va}(\Q,B_{1-\mu}^2)\geq\(\f{6-5\tau}{6}\)^2\cE^*([\varrho\circ\Q|_{B_{1-\mu}^2}]_{\cN})\phi_0^2(\Q,A_*)\log\f{1}{\va}-C,\label{tauEva}
\ee
where $ C>0 $ is a constant depending only on $ \cA $ and $ C_0 $. \eqref{Eva1mueslow} directly follows by letting $ \tau\to 0^+ $. In the following arguments, we fix some $ \tau\in(0,1/2) $. For each $ t>0 $, we define
$$
\om_{t,\gtrless}^{(\tau)}:=\{x\in B_{1-\mu}^2:\phi_{\tau}\circ\Q(x)\gtrless t\}\text{ and }
\Ga_t^{(\tau)}:=\pa\om_t^{(i)}\cap B_{2\mu}^2.
$$
In view of \eqref{phitau0}, if $ t\in(0,\phi_{\tau}^{(0)}) $, then sets $ \om_t^{(\tau),\gtrless} $, and $ \Ga_t^{(\tau)} $ are all non-empty. Using Lemma \ref{smoothLda} and \ref{Lipphi4}, $ \phi_{\tau}\circ\Q $ is $ C^2 $ in $ \om_0^{(\tau)} $. By Morse-Sard lemma, for a.e. $ t\in(0,\phi_{\tau}^{(0)}) $, every connected component of $ \Ga_t^{(\tau)} $ is a $ C^2 $ regular curve. Meanwhile, for such $ t $, we note that the number of the connected components must be finite. If not, we assume that the result fails for some $ t_0 $ and $ \Ga_{t_0}^{(\tau)} $. By the definition of $ \phi_{\tau}^{(0)} $, we have $ \Ga_{t_0}^{(\tau)}\subset\ol{B_{\mu}^2} $. Assume that $
\Ga_{t_0}^{(\tau)}=\cup_{i\in\Z_+}\Ga_{t_0}^{(\tau)}(i) $, where $ \{\Ga_{t_0}^{(\tau)}(i)\}_{i\in\Z_+} $ is the collection of connected components of $ \Ga_{t_0}^{(\tau)} $. Now we can choose a sequence $ \{x_i\}_{i\in\Z_+} $, such that $ x_i\in\Ga_{t_0}^{(\tau)}(i) $ and $ x_i\to x_0\in\ol{B_{\mu}^2} $. Since $ \Ga_{t_0}^{(\tau)} $ is closed, $ x_0\in\Ga_{t_0}^{(\tau)} $, so $ x_0\in\Ga_{t_0}^{(\tau)}(j) $ for some $ j\in\Z_+ $. On the other hand, the implicit function theorem implies that there exists a neighborhood of $ x_0 $, denoted by $ U\subset B_{2\mu}^2 $, such that $ U\cap\Ga_{t_0}^{(\tau)}=U\cap\Ga_{t_0}^{(\tau)}(j) $, which is a contradiction to the choice of $ x_0 $. We also note that for a.e. $ t\in(0,\phi_{\tau}^{(0)}) $, $
\pa\om_{t,<}^{\tau}=\Ga_t^{(\tau)}\subset\{x\in B_{1-\mu}^2:\phi_{\tau}\circ\Q(x)=t\} $. Now for such $ t $, assume that $ \{\Ga_{t}^{(\tau)}(i)\}_{i=1}^{n_t} $ is the collection of connected components of $ \Ga_{t}^{(\tau)} $, we can use Lemma \ref{radlem} to obtain
\be
\HH^1(\Ga_t^{(\tau)})\geq\sum_{i=1}^{n_t}\diam(\Ga_{t}^{(\tau)}(i))\geq\sum_{i=1}^{n_t}\op{rad}(\Ga_{t}^{(\tau)}(i))\geq\op{rad}(\om_{t,<}^{\tau})\label{Gatradt}
\ee
Moreover, for any $ t\in(0,\phi_{\tau}^{(0)}) $, $ \varrho\circ\Q $ is well defined in $ \om_{t,>}^{\tau} $, so we define
$$
\Theta(t):=\int_{\om_{t,>}^{\tau}}|\na(\varrho\circ\Q)|^2\ud\HH^2.
$$
Now we deduce
\be
\begin{aligned}
E_{\va}(\Q,B_{1-\mu}^2)&\geq\int_{B_{1-\mu}^2}\(\f{r_*^2}{36}|\na(\phi_{\tau}\circ\Q)|^2+\f{1}{\va^2}f_b(\Q)\)\ud\HH^2+\int_{B_{1-\mu}^2}\f{1}{2}(\phi_{\tau}\circ\Q)^2|\na(\varrho\circ\Q)|^2\ud\HH^2\\
&\geq\int_{B_{1-\mu}^2}\f{r_*}{3\va}|\na(\phi_{\tau}\circ\Q)|(f_b(\Q))^{1/2}\ud\HH^2+\int_0^{\ift}t\Theta(t)\ud t\\
&\geq\int_0^{\phi_{\tau}^{(0)}}\int_{\Ga_t^{(\tau)}}\f{C}{\va}(f_b(\Q))^{1/2}\ud\HH^1\ud t+\int_0^{\ift}t\Theta(t)\ud t,
\end{aligned}\label{EvaEs}
\ee
where for the first inequality, we have used \eqref{naQdetau}. For the second inequality of \eqref{EvaEs}, we have used the Cauchy inequality and the formula
\begin{align*}
\int_{B_{1-\mu}^2}(\phi_{\tau}\circ\Q)^2&|\na(\varrho\circ\Q)|^2\ud\HH^2=\int_{B_{1-\mu}^2}\int_{0}^{(\phi_{\tau}\circ\Q)(x)}2t|\na(\varrho\circ\Q)|^2\ud t\ud\HH^2\\
&=\int_0^{\ift}2t\(\int_{\{x\in B_{1-\mu}^2:(\phi_{\tau}\circ\Q)(x)>t>0\}}|\na(\varrho\circ\Q)|^2\ud\HH^2\)\ud t\\
&=\int_0^{\ift}2t\(\int_{\om_{t,>}^{\tau}}|\na(\varrho\circ\Q)|^2\ud\HH^2\)\ud t=2\int_0^{\ift}t\Theta(t)\ud t.
\end{align*}
For the last inequality of \eqref{EvaEs}, we have used the property that $ \phi_{\tau}\circ\Q $ is Lipschitz (see Lemma \ref{Lipphi4}) and the coarea formula. Equation \eqref{1phi} implies that on $ \Ga_t^{(0)} $, $
f_b(\Q)\geq y_{\tau}(t) $, where
\begin{align*}
y_{\tau}(t):=\left\{\begin{aligned}
&\(1-\f{6t}{6-5\tau}\)^2&\text{ if }&t\in\[0,\f{12-10\tau}{12-5\tau}\],\\
&(1-t)^2&\text{ if }&t\in\(\f{12-10\tau}{12-5\tau},+\ift\).
\end{aligned}\right.
\end{align*}
This, together with \eqref{Gatradt} and \eqref{EvaEs}, implies that 
\be
\begin{aligned}
E_{\va}(\Q,B_{1-\mu}^2)&\geq\int_0^{\phi_{\tau}^{(0)}}\int_{\Ga_t^{(\tau)}}\f{C}{\va}(y_{\tau}(t))^{1/2}\ud\HH^1\ud t+\int_0^{\phi_{\tau}^{(0)}}t\Theta(t)\ud t\\
&\geq\int_0^{\phi_{\tau}^{(0)}}\(\f{C}{\va}(y_{\tau}(t))^{1/2}\op{rad}(\om_{t,<}^{\tau})+t\Theta(t)\)\ud t.
\end{aligned}\label{Thetapsi}
\ee
For a.e. $ t\in(0,\phi_{\tau}^{(0)}) $, by \eqref{Awtgeq}, we have $ \om_{t,<}^{\tau}\subset\ol{B_{\mu}^2} $, so $ \dist(\om_{t,<}^{\tau},\pa B_{1-\mu}^2)\geq 2\lda(1-\mu) $, with $ \lda=(1-2\mu)/(2(1-\mu))=19/39>19/40 $. Therefore, by applying Lemma \ref{Plower} to the manifold $ \cN $, we have
\be
\begin{aligned}
\Theta(t)&\geq\int_{B_{1-\mu}^2\backslash\om_{t,<}^{\tau}}|\na(\varrho\circ\Q)|^2\ud\HH^2\\
&\geq 4\kappa_*([\varrho\circ\Q|_{B_{1-\mu}^2}]_{\cN})(\log(\beta(1-\mu))-\log(\op{rad}(\om_{t,<}^{\tau})))\\
&\geq 2e_*\log\f{19}{20}-2e_*\log(\op{rad}(\om_{t,<}^{\tau})),
\end{aligned}\label{Thetalow}
\ee
where we denote $ e_*=\cE^*([\varrho\circ\Q|_{B_{1-\mu}^2}]_{\cN}) $ and use the fact that $ e_*\leq 2\kappa_* $ (by Lemma \ref{kappa123}). This, together with \eqref{Thetapsi}, implies that
$$
E_{\va}(\Q,B_{1-\mu}^2)\geq\int_0^{\phi_{\tau}^{(0)}}\(\f{C_1}{\va}(y_{\tau}(t))^{1/2}\op{rad}(\om_{t,<}^{\tau})-2e_*t\log(\op{rad}(\om_{t,<}^{\tau}))\)\ud t-C_2,
$$
where $ C_2>0 $ depends only on $ \cA $ and $ C_0 $. An easy analysis shows that if $ t\neq 1,1-5\tau/6 $, the function $
g(s)=C_1\va^{-1}(y_{\tau}(t))^{1/2}s-2e_*t\log s $ with $ s\in\R_+ $ has a unique minimizer $ s_* $, which is given by $ g'(s_*)=0 $, i.e., $ s_*=2e_*\va t/(C_1(y_{\tau}(t))^{1/2}) $. As a consequence, we obtain that
\be
\begin{aligned}
E_{\va}(\Q,B_{1-\mu}^2)&\geq\int_0^{\phi_{\tau}^{(0)}}\(\f{C_1}{\va}(y_{\tau}(t))^{1/2}s_*-2e_*t\log s_*\)\ud t-C_2\\
&=-2e_*\int_0^{\phi_{\tau}^{(0)}}\(t\log\va-t+t\log\f{e_*t}{C_1(y_{\tau}(t))^{1/2}}\)\ud t-C_2\\
&\geq-e_*(\phi_{\tau}^{(0)})^2\log\va-e_*(\phi_{\tau}^{(0)})^2-2e_*\int_0^{\phi_{\tau}^{(0)}}t|\log((y_{\tau}(t))^{1/2})|\ud t\\
&\quad-2e_*\int_{0}^{\phi_{\tau}^{(0)}}t(\log(e_*t)-\log C_1)\ud t-C_2.
\end{aligned}\label{EvaC2}
\ee
We claim that $ \int_0^{\phi_{\tau}^{(0)}}t|\log((y_{\tau}(t))^{1/2})|\ud t\leq C_3 $, where $ C_3 $ is independent of $ \tau $. This, together with \eqref{phitau0} and \eqref{EvaC2}, directly implies \eqref{tauEva}. Indeed, choosing $ M_{\tau}=\max\{\phi_{\tau}^{(0)},1\} $, we have 
$ 0\leq M_{\tau}\leq\max\{\sqrt{C_0/2}r_*^{-1},1\} $ and the claim follows from
\begin{align*}
\int_0^{\phi_{\tau}^{(0)}}t|\log((y_{\tau}(t))^{1/2})|\ud t&\leq\int_0^{M_{\tau}}t|\log((y_{\tau}(t))^{1/2})|\ud t\\
&\leq\int_0^{M_{\tau}}t\(\log\left|1-\f{6t}{6-5\tau}\right|+\log|1-t|\)\ud t\leq C_3.
\end{align*}
Now, we can complete the proof.
\end{proof}

\begin{proof}[Proof of Proposition \ref{lowerboundprop}] 
For $ \Q\in H^1(B_1^2,\Ss_0) $, we still assume that $ [\varrho\circ\Q|_{\pa B_1}^2]_{\cN}\neq\rH_0 $ and for otherwise, there is nothing to prove. By standard extension theorem, we can construct $ \PP\in H^1(\R^2,\Ss_0) $ such that $ \supp\PP\subset B_2^2 $, $ \PP|_{B_1^2}=\Q|_{B_1^2} $, $
\|\PP\|_{H^1(B_2^2)}\leq C\|\Q\|_{H^2(B_1^2)} $ and $ \|\PP\|_{L^{\ift}(B_2^2)}\leq C $. Fix $ \chi\in C_c^{\ift}(B_{1/2}^2) $ such that $ \chi\geq 0 $ and $ \int_{\R^2}\chi\ud x=1 $. For $ \delta>0 $, define
$$
\Q^{\delta}(x)=\PP*\chi^{\delta}(x)=\int_{\R^2}\PP(x-y)\chi^{\delta}(y)\ud y,
$$
where $ \chi^{\delta}(y)=\delta^{-2}\chi(y/\delta) $. For such $ \Q^{\delta} $, there hold the following properties.
\begin{enumerate}
\item $ \lim_{\delta\to 0^+}E_{\va}(\Q^{\delta},B_1^2)=E_{\va}(\Q,B_1^2) $.
\item Recall that $ A_*=A_{1/40,39/40}^2 $, if $ \delta>0 $ is sufficiently small, then 
\be
\phi_0(\Q^{\delta},A_*)\geq\f{1}{2}\phi_0(\Q,A),\label{Qdeltaphii}
\ee
and $ [\varrho\circ\Q|_{\pa B_1^2}]_{\cN}=[\varrho\circ\Q^{\delta}|_{\pa B_{39/40}^2}]_{\cN} $.
\item There exists $ \{\delta_n\}_{n=1}^{+\ift} $, such that $ \delta_n\to 0^+ $ as $ n\to+\ift $ and
\be
\lim_{n\to+\ift}\phi_0(\Q^{\delta},A_*)\geq \phi_0(\Q,A).\label{bigerlim}
\ee
\end{enumerate}

Firstly, by simple calculations, we have
\be
\lim_{\delta\to 0^+}\|\Q^{\delta}-\Q\|_{H^1(B_1^2)}=0.\label{H1QdeltaA}
\ee
Using Sobolev embedding theorem, we get $ \Q^{\delta}\to\Q $ in $ L^6(B_1^2,\Ss_0) $ as $ \delta\to 0^+ $ and the first property holds. For the second one, if \eqref{Qdeltaphii} is true, then $ \varrho\circ\Q^{\delta} $ is well define in $ A_* $. By Lemma \ref{varrhop} and \eqref{H1QdeltaA}, we have $
\lim_{\delta\to 0^+}\|\varrho\circ\Q^{\delta}-\varrho\circ\Q\|_{H^1(A_*)}=0 $. $ [\varrho\circ\Q|_{\pa B_1^2}]_{\cN}=[\varrho\circ\Q^{\delta}|_{\pa B_{39/40}^2}]_{\cN} $ follows from Lemma \ref{propHomt} and its proof. Next, we show \eqref{Qdeltaphii}. For any $ x\in A_* $ and $ 0<\delta<1/100 $, we have $ B_{\delta}^2(x)\subset A $, which implies that $
\phi_0(\Q,A)\leq\essinf_{B_{\delta}^2(x)}(\phi_0\circ\Q) $. Combined with Lemma \ref{Lipphi0lem}, for any $ x\in A_* $, there holds,
\begin{align*}
\dist((\phi_0\circ\Q^{\delta})(x)&,[\phi_0(\Q,A),+\ift))\leq\dist\((\phi_0\circ\Q^{\delta})(x),\essinf_{B_{\delta}^2(x)}(\phi_0\circ\Q)\)\\
&\leq\dashint_{B_{\delta}^2(x)}|(\phi_0\circ\Q^{\delta})(x)-(\phi_0\circ\Q)(y)|\ud\HH^2(y)\\
&\leq C\dashint_{B_{\delta}^2(x)}|\Q^{\delta}(x)-\Q(y)|\ud\HH^2(y).
\end{align*}
Note for such $ B_{\delta}^2(x) $, we have
$$
\Q^{\delta}(x)=\int_{\R^2}\PP(y)\chi^{\delta}(x-y)\ud y=\int_{B_{\delta}^2(x)}\Q(y)\chi^{\delta}(x-y)\ud y.
$$
Now, using generalized Poincar\'{e} inequality and H\"{o}lder inequality, we obtain
\begin{align*}
\int_{B_{\delta}^2(x)}|\Q^{\delta}(x)-\Q(y)|\,d\HH^2(y)\leq C\delta\|\na\Q\|_{L^1(B_{\delta}^2(x))}\leq C\delta^2\|\na\Q\|_{L^2(B_{\delta}^2(x))}.
\end{align*}
Combining the calculations above, we have
\be
\lim_{\delta\to 0}\sup_{x\in A_*}\dist((\phi_0\circ\Q^{\delta})(x),[\phi_0(\Q,A),+\ift))\to 0.\label{phi0Qdeltaphi012}
\ee
which directly implies \eqref{Qdeltaphii}. For the third property, we can firstly choose a subsequence of $ \Q^{\delta} $ (still denoted by $ \Q^{\delta} $), such that $ \Q^{\delta}\to\Q $ a.e. in $ B_1^2 $ and
$$
\lim_{\delta\to 0^+}\phi_0(\Q^{\delta},A_*)\leq\phi_0(\Q,A_*)\leq\phi_0(\Q,A).
$$
Combined with \eqref{phi0Qdeltaphi012}, we obtain \eqref{bigerlim}. In view of Lemma \ref{smoothlower} and \eqref{Qdeltaphii}, we have
\begin{align*}
E_{\va}(\Q^{\delta},B_{39/40}^2)&\geq\cE^*([\varrho\circ\Q|_{\pa B_{1-\mu}^2}]_{\cN})\phi_0^2(\Q^{\delta},A_*)\log\f{1}{\va}-C\\
&\geq\cE^*([\varrho\circ\Q|_{\pa B_1^2}]_{\cN})\phi_0^2(\Q^{\delta},A_*)\log\f{1}{\va}-C.
\end{align*}
Choose $ \delta=\delta_n $ in the third property above, and let $ n\to+\ift $, we can use the first property and \eqref{bigerlim} to get
\begin{align*}
E_{\va}(\Q^{\delta},B_1^2)\geq E_{\va}(\Q^{\delta},B_{39/40}^2)\geq\cE^*([\varrho\circ\Q|_{\pa B_1^2}]_{\cN})\phi_0^2(\Q^{\delta},A_*)\log\f{1}{\va}-C,
\end{align*}
as desired.
\end{proof}

\section{Some technical lemmas}\label{Technical lemmas}

\subsection{Extension theory}

In this subsection, we focus on extension results within the unit ball, specifically aiming to find a function $ \PP $ that satisfies certain conditions. Precisely, for $ \PP_b\in H^1(\pa B_r^k,\cN) $, we intend to construct $ \PP\in H^1(B_r^k,\cN) $ such that $ \PP|_{\pa B_r^k}=\PP_b $. We also want to control the energy of $ \PP $ to some extent based on the norm of $ \PP_b $.

The approach used here is similar to those used in considering the unaxial limit, but with some differences. Firstly, due to the change in the manifold $ \cN $, we require a detailed characterization and understanding of the covering space. In contrast to the approach presented in \cite{HL22}, where only $ \M $ is considered as the covering space for $ \cN $, we need to consider its universal cover, $ \Ss^3 $ (as demonstrated in Lemma \ref{FG}, for example).

Secondly, when dealing with two-dimensional case, careful consideration must be given to the free homotopy classes of the boundary values. This is necessary because the fundamental group of $ \cN $ is more complicated compared to $ \R\PP^2 $.

\begin{lem}\label{k3}
Let $ k\in\Z_{\geq 3} $. There exists $ C>0 $, depending only on $ \cA $ and $ k $ such that for any $ r>0 $ and $ \PP_b\in H^1(\pa B_r^k,\cN) $, there is $ \PP\in H^1(B_r^k,\cN) $ satisfying $ \PP=\PP_b $ on $ \pa B_r^k $ and
$$
\|\na\PP\|_{L^2(B_r^k)}^2\leq Cr^{k/2-1/2}\|\na_{\pa B_r^k}\PP_b\|_{L^2(\pa B_r^k)}.
$$    
\end{lem}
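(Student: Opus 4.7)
The plan is to rescale to the unit ball, lift $\PP_b$ through the universal covering $p:\Ss^3\to\cN$ from Lemma \ref{FG}, and build the extension at the level of $\Ss^3$, where standard $\R^4$-valued tools apply.

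First I would rescale. Setting $\Q_b(\omega):=\PP_b(r\omega)$ on $\Ss^{k-1}$, the identities $\|\na_{\pa B_r^k}\PP_b\|_{L^2(\pa B_r^k)}^2=r^{k-3}\|\na_{\Ss^{k-1}}\Q_b\|_{L^2(\Ss^{k-1})}^2$ and $\|\na\PP\|_{L^2(B_r^k)}^2=r^{k-2}\|\na\wt\PP\|_{L^2(B_1^k)}^2$ with $\wt\PP(y):=\PP(ry)$ reduce the task to producing, for any $\Q_b\in H^1(\Ss^{k-1},\cN)$, a map $\wt\PP\in H^1(B_1^k,\cN)$ with trace $\Q_b$ satisfying $\|\na\wt\PP\|_{L^2(B_1^k)}^2\leq C\|\na\Q_b\|_{L^2(\Ss^{k-1})}$. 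Because $k-1\geq 2$, $\Ss^{k-1}$ is simply connected. Combining Bethuel's density of smooth maps in $H^1(\Ss^{k-1},\cN)$ with the homotopy lifting property of $p$ yields a lift $\wh\Q_b\in H^1(\Ss^{k-1},\Ss^3)$ with $p\circ\wh\Q_b=\Q_b$, and \eqref{TwoDeri} gives $\|\na\wh\Q_b\|_{L^2}^2=\f12\|\na\Q_b\|_{L^2}^2$.

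Setting $E:=\|\na\wh\Q_b\|_{L^2(\Ss^{k-1})}$, I would split into two regimes. In the small-energy regime $E\leq\delta$ for a threshold $\delta=\delta(k)>0$ fixed later, view $\wh\Q_b$ as $\R^4$-valued and take its harmonic extension $u:B_1^k\to\R^4$. Dirichlet minimality against the radial extension gives $\|\na u\|_{L^2}^2\leq (k-2)^{-1}E^2$, and the combination of Poincar\'e on $\Ss^{k-1}$ with interior estimates for harmonic functions yields $|u|\geq 1/2$ in $B_1^k$ once $\delta$ is small enough; then $\wh\Q:=u/|u|\in H^1(B_1^k,\Ss^3)$ satisfies $\|\na\wh\Q\|_{L^2}^2\leq C\|\na u\|_{L^2}^2\leq CE^2\leq C\delta\cdot E$. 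Composing with $p$ gives the required extension. In the large-energy regime $E>\delta$, I would instead construct an $H^1$ extension of uniformly bounded energy: approximate $\wh\Q_b$ in $H^1$ by a smooth map, then interpolate on $B_1\setminus B_{1/2}$ between this smooth approximation on $\pa B_1$ and a fixed constant $\X_0\in\Ss^3$ on $\pa B_{1/2}$ via a measurable selection of short geodesic paths in $\Ss^3$, and fill $B_{1/2}$ with $\X_0$. The energy of this extension is bounded by a universal constant $C$, and since $E>\delta$ we have $C\leq(C/\delta)E$.

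Combining the two regimes, composing with $p$, and rescaling yields the stated bound. The main obstacle is the large-energy regime, where one must construct an $H^1$ extension whose gradient energy is bounded uniformly in the $H^{1/2}$ boundary seminorm. This exploits the simply-connectedness of $\Ss^{k-1}$ for $k\geq 3$ to enable the lift, the vanishing $\pi_2(\Ss^3)=0$ to remove the topological obstruction in the base case $k=3$, and a Luckhaus-style dyadic shell construction to control the interpolation for $k\geq 4$, where possible $H^{1/2}$ defects must be isolated to a lower-dimensional singular set and resolved using the bounded diameter of $\Ss^3$.
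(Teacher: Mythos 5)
Your overall architecture — rescale to the unit ball, lift through the universal covering $p:\Ss^3\to\cN$ of Lemma \ref{FG} using simple connectedness of $\Ss^{k-1}$, extend the lift, then compose with $p$ — matches the paper's. The rescaling bookkeeping is also correct. Where you part ways with the paper, and where the proposal breaks, is the extension of the $\Ss^3$-valued lift. The paper handles this in one uniform step via a Hardt--Kinderlehrer--Lin projection (Lemma \ref{Extlemball}): take the $\R^4$-valued harmonic extension $u$ of the lifted boundary data, and then, using a Fubini average over projection centers $y\in B_{1/4}^3$, find a generic $y_0$ with $\int_{B_1^k}|\na(p_{y_0}\circ u)|^2\leq C\int_{B_1^k}|\na u|^2$, and set $\wh\Q:=(p_{y_0}|_{\Ss^3})^{-1}\circ p_{y_0}\circ u$. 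Combined with the estimate $\|\na u\|_{L^2}^2\leq C\min\{E^2,E\}$ (proved from \eqref{videntity}--\eqref{navnnstar} and Poincar\'e), this gives the target bound $\|\na\wh\Q\|_{L^2}^2\leq CE$ for all energy levels with no threshold. Your two-regime argument instead has gaps in both branches.

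In the small-energy branch, the assertion that $|u|\geq 1/2$ throughout $B_1^k$ does not follow from ``Poincar\'e on $\Ss^{k-1}$ combined with interior estimates for harmonic functions.'' Interior estimates give $|u(y)-\ol{\wh\Q_b}|\leq CE$ only on $\{|y|\leq 1-\delta_0\}$; they are silent on the collar $\{1-\delta_0<|y|<1\}$. Since $k-1\geq 2$, $H^1(\Ss^{k-1})$ does not embed into $C^0(\Ss^{k-1})$, so even for small $E$ the boundary data can oscillate on arbitrarily small scales, and the global Poincar\'e inequality provides no control of $u$ at Poisson-kernel scales near $\pa B_1^k$. A version of this claim might be rescuable with a scale-adapted local Poincar\'e estimate, but as written the argument is incomplete precisely where it matters.

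The large-energy branch is wrong as stated. You claim to produce an $H^1(B_1^k,\Ss^3)$ extension with energy bounded by a universal constant $C$, but the trace theorem forces $\|\na\wt\PP\|_{L^2(B_1^k)}^2\geq c\,[\wh\Q_b]_{H^{1/2}(\Ss^{k-1})}^2$ for \emph{any} extension. Boundary data that oscillate like $(\cos(N\omega_1),\sin(N\omega_1),0,0)$ on a coordinate patch have $[\wh\Q_b]_{H^{1/2}}^2\sim N$ and $E=\|\na\wh\Q_b\|_{L^2}\sim N$, so the minimal extension energy grows like $E$ rather than staying bounded. This is exactly why the correct large-$E$ estimate reads $\|\na\wt\PP\|_{L^2}^2\leq CE$ and not $\leq C$; the interpolation bound $[\wh\Q_b]_{H^{1/2}}^2\leq [\wh\Q_b]_{H^1}\|\wh\Q_b\|_{L^2}\leq CE$ shows these are consistent, but your construction targets the wrong estimate. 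A separate flaw: geodesic interpolation from $\wh\Q_b(\omega)$ to a fixed $\X_0\in\Ss^3$ is not $H^1$ in $\omega$ in general, since the geodesic depends non-Lipschitzly on its endpoint near the cut locus of $\X_0$.

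The HKL projection-with-generic-center circumvents both issues simultaneously: it produces an $\Ss^3$-valued map from the harmonic extension regardless of where $u$ vanishes, and it inherits the single inequality $\|\na u\|_{L^2}^2\leq CE$, so no case split is needed. I would recommend absorbing the proof of Lemma \ref{Extlemball}, which is exactly the missing piece.
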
 

\begin{lem}\label{k2trivial}
There exists $ C>0 $, depending only on $ \cA $ such that for any $ r>0 $ and $ \PP_b\in H^1(\pa B_r^2,\cN) $ with $ [\PP_b]_{\cN}=\rH_0 $, there is $ \PP\in H^1(B_r^2,\cN) $ satisfying $ \PP=\PP_b $ on $ \pa B_r^2 $ and
$$
\|\na\PP\|_{L^2(B_r^2)}^2\leq Cr\|\na_{\pa B_r^2}\PP_b\|_{L^2(\pa B_r^2)}^2.
$$
\end{lem}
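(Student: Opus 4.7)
The strategy is to reduce the problem to an extension question on the universal cover $\Ss^3$ (Lemma \ref{FG}) via lifting. Since $\PP_b \in H^1(\pa B_r^2,\cN)$ is continuous by Sobolev embedding on $\pa B_r^2 \cong \Ss^1_r$, and since $[\PP_b]_{\cN}=\rH_0$ translates via Corollary \ref{homcor} into the triviality of the based class $[\PP_b]_{\cN,\X_1} \in \pi_1(\cN,\X_1)\cong Q_8$ at any $\X_1$ on the loop, standard covering-space theory provides a unique continuous closed lift $\wt{\PP}_b:\pa B_r^2 \to \Ss^3$ with $p\circ\wt{\PP}_b = \PP_b$ (where $p$ is the cover from Lemma \ref{FG}). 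Combining Remark \ref{SO3rem} for $p_2$ with a direct computation of $dp_1$ on $\M\to\cN$, the differential $dp$ has uniform two-sided bounds depending only on $\cA$, giving $\|\na_{\pa B_r^2}\wt{\PP}_b\|_{L^2(\pa B_r^2)}^2 \leq C\|\na_{\pa B_r^2}\PP_b\|_{L^2(\pa B_r^2)}^2$. Once $\wt{\PP}\in H^1(B_r^2,\Ss^3)$ extending $\wt{\PP}_b$ is produced with $\|\na\wt{\PP}\|_{L^2(B_r^2)}^2 \leq Cr\|\na_{\pa B_r^2}\wt{\PP}_b\|_{L^2}^2$, the map $\PP:=p\circ\wt{\PP}$ will answer the lemma.

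The heart of the proof is the $\Ss^3$-valued extension, for which I split on the Euclidean oscillation $\osc_{\R^4}(\wt{\PP}_b)$. In the \emph{small-oscillation regime} $\osc_{\R^4}(\wt{\PP}_b)<1/2$, fix $p_0=\wt{\PP}_b(\theta_0)\in\Ss^3$; then $\wt{\PP}_b(\pa B_r^2)\subset B_{1/2}^4(p_0)$, and the harmonic extension $\V:B_r^2\to\R^4$ also lies in this convex set by the maximum principle applied to $|\V-p_0|^2$. Hence $|\V|\geq|p_0|-1/2=1/2$ throughout, and $\wt{\PP}:=\V/|\V|$ is a well-defined map into $\Ss^3$ matching the boundary, with $\|\na\wt{\PP}\|_{L^2(B_r^2)}^2 \leq 16\|\na\V\|_{L^2}^2 \leq 16r\|\na_{\pa B_r^2}\wt{\PP}_b\|_{L^2}^2$ by the standard Fourier bound on harmonic extensions.

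In the \emph{large-oscillation regime} $\osc_{\R^4}(\wt{\PP}_b)\geq 1/2$, the Sobolev trace inequality $\osc_{\R^4}(\wt{\PP}_b)\leq C\sqrt{r}\|\na_{\pa B_r^2}\wt{\PP}_b\|_{L^2}$ yields the lower bound $r\|\na_{\pa B_r^2}\wt{\PP}_b\|_{L^2}^2 \geq c > 0$, which will absorb additive constants later. Form $\V$ as before and, for $a\in B_{1/4}^4$, consider the translated radial projections $F_a:=(\V-a)/|\V-a|$. The image $\V(B_r^2)$ is $2$-rectifiable and hence $\HH^4$-negligible in $\R^4$, so $F_a$ is well-defined a.e.; Fubini combined with the integrable $4$D singularity $\int_{B_{1/4}^4}|y-a|^{-2}\,da\leq C$ for $y\in B_1^4$ produces $a_*\in B_{1/4}^4$ with $\|\na F_{a_*}\|_{L^2(B_r^2)}^2 \leq C\|\na\V\|_{L^2}^2 \leq Cr\|\na_{\pa B_r^2}\wt{\PP}_b\|_{L^2}^2$. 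To correct the boundary mismatch $F_{a_*}|_{\pa B_r^2}\neq\wt{\PP}_b$, I set $\wt{\PP}(x):=F_{a_*}(2x)$ on $B_{r/2}^2$ and $\wt{\PP}(\rho,\theta):=h_{\eta(\rho)}(\theta)$ on $A_{r/2,r}^2$, with $h_t(\theta):=(\wt{\PP}_b(\theta)-ta_*)/|\wt{\PP}_b(\theta)-ta_*|$ and smooth $\eta$ satisfying $\eta(r)=0$, $\eta(r/2)=1$; since $|\wt{\PP}_b-ta_*|\geq 3/4$ the homotopy is smooth with $|\pa_t h_t|\leq C|a_*|$ and $|\pa_\theta h_t|\leq C|\pa_\theta\wt{\PP}_b|$, so a direct computation bounds the annulus energy by $Cr\|\na_{\pa B_r^2}\wt{\PP}_b\|_{L^2}^2 + C|a_*|^2$; the additive $C|a_*|^2\leq C$ is absorbed by the lower bound $r\|\na_{\pa B_r^2}\wt{\PP}_b\|^2\geq c$.

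The main obstacle is precisely this boundary correction in the large-oscillation case: the translated projection $F_{a_*}$ has the right interior energy but the wrong boundary trace, and the annular interpolation introduces an additive cost of order $|a_*|^2$ that only admits absorption against a lower bound coming from non-trivial oscillation, forcing the case split. Once the two regimes are combined, the two-sided differential bound on $p$ gives $\|\na\PP\|_{L^2(B_r^2)}^2 = \|\na(p\circ\wt{\PP})\|_{L^2(B_r^2)}^2 \leq C\|\na\wt{\PP}\|_{L^2(B_r^2)}^2 \leq Cr\|\na_{\pa B_r^2}\PP_b\|_{L^2(\pa B_r^2)}^2$ together with $\PP|_{\pa B_r^2}=p\circ\wt{\PP}_b=\PP_b$, completing the proof.
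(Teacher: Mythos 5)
Your proof is correct, and the first step (lifting $\PP_b$ to $\Ss^3$ using that the universal cover $p:\Ss^3\to\cN$ has $C^{-1}\le|\mathrm{d}p|\le C$) coincides with the paper's. The extension step, however, takes a genuinely different and more laborious route. The paper simply cites the construction in Lemma~\ref{Extlemball} with the estimate~\eqref{un2} (whose proof goes through unchanged when $k=2$, since \eqref{navnnstar} and the Poincar\'e step do not need $k\ge 3$). The key feature of that construction, which you did not use, is that the final map is $\uu=(p_{y_0}|_{\Ss^\ell})^{-1}\circ p_{y_0}\circ\vv$ rather than $p_{y_0}\circ\vv$: because $p_{y_0}|_{\Ss^\ell}:\Ss^\ell\to\Ss^\ell$ is a bilipschitz homeomorphism for $|y_0|<1/4$, the composition restores the boundary trace automatically, since on $\pa B_r^2$ the harmonic extension $\vv$ equals $\wt\PP_b\in\Ss^\ell$ and so $(p_{y_0}|_{\Ss^\ell})^{-1}\circ p_{y_0}\circ\wt\PP_b=\wt\PP_b$. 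No boundary mismatch arises, no annular repair is needed, and no additive constant appears. You instead use the raw projection $F_{a_*}=p_{a_*}\circ\V$, which forces the annular interpolation through $h_t=(\wt\PP_b-ta_*)/|\wt\PP_b-ta_*|$, which in turn injects an additive $C|a_*|^2$ into the energy that is only absorbable once $r\|\na_{\pa B_r^2}\wt\PP_b\|_{L^2}^2\ge c>0$, forcing the dichotomy on oscillation (with the small-oscillation case handled separately via $\V/|\V|$). Your argument is complete and sound — the Fubini choice of $a_*$ exploits the integrability of $|b|^{-2}$ in $\R^4$, the bounds $|\pa_t h_t|\le C|a_*|$ and $|\pa_\theta h_t|\le C|\pa_\theta\wt\PP_b|$ are correct, and the Fourier bound $\|\na\V\|_{L^2(B_r^2)}^2\le r\|\na_{\pa B_r^2}\wt\PP_b\|_{L^2}^2$ is exact — but composing with $(p_{y_0}|_{\Ss^\ell})^{-1}$ is what the paper's off-the-shelf lemma provides, and it eliminates the case split entirely.
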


Roughly speaking, the proof of Lemma \ref{k3} and \ref{k2trivial} follows from two steps. The firs step is the lift to the universal covering space $ \Ss^3 $ and the second step is to extend the lifting maps. To conduct the second procedure, we need the following result, which is firstly given by \cite{HKL86} for $ \Ss^2 $-valued maps.

\begin{lem}\label{Extlemball}
Given $ k\in\Z_{\geq 3} $, $ \ell\in\Z_{\geq 2} $ and $ \n\in H^1(\pa B_r^k,\Ss^{\ell}) $, then there exists $ \uu\in H^1(B_r^k,\Ss^{\ell}) $ such that $ \uu=\n $ on $ \pa B_r^k $ and
\begin{align}
\|\na\uu\|_{L^2(B_r^k)}^2&\leq Cr^{k/2-1/2}\|\na_{\pa B_r^k}\n\|_{L^2(\pa B_r^k)},\label{un1}\\
\|\na\uu\|_{L^2(B_r^k)}^2&\leq Cr\|\na_{\pa B_r^k}\n\|_{L^2(\pa B_r^k)}^2,\label{un2}
\end{align}
where $ C>0 $ depends only on $ k $ and $ \ell $.
\end{lem}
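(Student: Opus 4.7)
The plan is to first reduce to $r=1$ by scaling. If one sets $\n_1(y):=\n(ry)$ and $\uu_1(y):=\uu(ry)$ for $y\in B_1^k$, an elementary computation shows $\|\na\uu\|_{L^2(B_r^k)}^2=r^{k-2}\|\na\uu_1\|_{L^2(B_1^k)}^2$ and $\|\na\n\|_{L^2(\pa B_r^k)}=r^{(k-3)/2}\|\na\n_1\|_{L^2(\pa B_1^k)}$, so both \eqref{un1} and \eqref{un2} are homogeneous of the same degree in $r$. It therefore suffices to produce $\uu\in H^1(B_1^k,\Ss^\ell)$ with $\uu|_{\pa B_1^k}=\n$ satisfying the unified bound
$$ \|\na\uu\|_{L^2(B_1^k)}^2 \leq C\min\bigl\{\|\na\n\|_{L^2(\pa B_1^k)},\,\|\na\n\|_{L^2(\pa B_1^k)}^2\bigr\}. $$

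I would first extend $\n$ to a harmonic map $\tilde\n:B_1^k\to\R^{\ell+1}$ (component-wise Poisson integral) with $\tilde\n|_{\pa B_1^k}=\n$, so that $|\tilde\n|\leq 1$ by the maximum principle and
$$ \|\na\tilde\n\|_{L^2(B_1^k)}^2 \leq C[\n]_{\dot H^{1/2}(\pa B_1^k)}^2. $$
The $\dot H^{1/2}$-seminorm of $\n$ admits two simultaneous estimates: the trivial embedding $\dot H^1\hookrightarrow\dot H^{1/2}$ gives $[\n]_{\dot H^{1/2}}^2\leq C\|\na\n\|_{L^2}^2$, while the classical interpolation $[\n]_{\dot H^{1/2}}^2\leq C\|\n\|_{L^2}\|\na\n\|_{L^2}$ combined with $\|\n\|_{L^2(\pa B_1^k)}\leq C$ (from $|\n|\equiv 1$) gives $[\n]_{\dot H^{1/2}}^2\leq C\|\na\n\|_{L^2}$. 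Thus a single harmonic extension already satisfies both desired bounds.

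The second step is to project $\tilde\n$ back to $\Ss^\ell$ without altering its boundary trace, using the Hardt--Kinderlehrer--Lin averaging trick. For each $a\in B_{1/4}^{\ell+1}$, define $\vv_a(x):=(\tilde\n(x)-a)/|\tilde\n(x)-a|$; then $|\na\vv_a|\leq 2|\na\tilde\n|/|\tilde\n-a|$ pointwise, so Fubini and the bound $\int_{B_{1/4}^{\ell+1}}|y-a|^{-2}\,da\leq C$ (which is the only place the hypothesis $\ell\geq 2$ enters, guaranteeing $\ell+1\geq 3$ and integrability) yield
$$ \int_{B_{1/4}^{\ell+1}}\|\na\vv_a\|_{L^2(B_1^k)}^2\,da \leq C\|\na\tilde\n\|_{L^2(B_1^k)}^2. $$
Selecting some $a_*\in B_{1/4}^{\ell+1}$ for which $\|\na\vv_{a_*}\|_{L^2}^2\leq C\|\na\tilde\n\|_{L^2}^2$ and $\tilde\n^{-1}(a_*)$ has measure zero (a.e.\ $a$ qualifies), one obtains $\vv_{a_*}\in H^1(B_1^k,\Ss^\ell)$. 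Since $|a_*|\leq 1/4<1$, every ray from $a_*$ meets $\Ss^\ell$ in exactly one point, so the map $\Psi_{a_*}:\Ss^\ell\to\Ss^\ell$ defined by $\Psi_{a_*}(p)=(p-a_*)/|p-a_*|$ is a smooth diffeomorphism whose inverse (solved from a quadratic equation along each ray) is Lipschitz with a constant depending only on $\ell$. Setting $\uu:=\Psi_{a_*}^{-1}\circ\vv_{a_*}$ gives $\uu|_{\pa B_1^k}=\Psi_{a_*}^{-1}\circ\Psi_{a_*}\circ\n=\n$ and $\|\na\uu\|_{L^2(B_1^k)}^2\leq C\|\na\vv_{a_*}\|_{L^2(B_1^k)}^2\leq C\|\na\tilde\n\|_{L^2(B_1^k)}^2$, whence both \eqref{un1} and \eqref{un2} follow from the bounds in the previous paragraph.

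The main obstacle will be the projection step: one must verify carefully via Fubini that $\vv_a$ is an admissible $H^1(B_1^k,\Ss^\ell)$ map for a.e.\ $a$, that the averaged integrability really holds (this is where $\ell\geq 2$ is indispensable), and that the inverse map $\Psi_{a_*}^{-1}$ has a Lipschitz constant controlled only by $\ell$ so that composing with it does not inflate the energy. The condition $k\geq 3$ enters separately, in guaranteeing that the harmonic extension of $H^{1/2}$ boundary data lies in $H^1$ and, should one wish to avoid the harmonic-extension route by instead using the radial extension $\uu(x)=\n(x/|x|)$, in ensuring the integral $\int_0^1 r^{k-3}\,dr<\infty$; the harmonic-plus-projection scheme described above is cleaner and yields the minimum-bound directly.
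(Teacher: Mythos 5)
Your proof is correct and follows the same overall template as the paper's: build a harmonic extension into $\R^{\ell+1}$, control its Dirichlet energy by two simultaneous bounds, then project onto $\Ss^\ell$ via the Hardt--Kinderlehrer--Lin averaging trick, selecting a good center $a_*$ for which the projected map stays in $H^1$ and the composition with $\Psi_{a_*}^{-1}$ recovers the original boundary trace. The scaling reduction to $r=1$ is a valid simplification, and you correctly identify that the projection step is where $\ell\geq 2$ is used (the paper writes $B_{1/4}^\ell$ and $B_1^\ell$ where it should write $B_{1/4}^{\ell+1}$ and $B_1^{\ell+1}$, since $\Ss^\ell\subset\R^{\ell+1}$; your reading is the correct one, and also explains why the stated $\ell\geq 2$ is precisely the right threshold).

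The one genuine difference is in how you bound the energy of the harmonic extension. You invoke the trace-theorem inequality $\|\na\tilde\n\|_{L^2(B_1^k)}^2\leq C[\n]_{\dot H^{1/2}(\pa B_1^k)}^2$ together with the interpolation bound $[\n]_{\dot H^{1/2}}^2\leq C\|\n\|_{L^2}\|\na\n\|_{L^2}$ and the Poincar\'e-type bound $[\n]_{\dot H^{1/2}}^2\leq C\|\na\n\|_{L^2}^2$. These are all true (most transparently via the spectral decomposition of the spherical Laplacian and Cauchy--Schwarz), but they bring in fractional Sobolev machinery that needs to be stated carefully on a closed manifold. The paper avoids this entirely: it tests the harmonic equation by $x\cdot\na\vv$ to obtain a Pohozaev-type identity which directly controls $\|\pa_\nu\vv\|_{L^2(\pa B_r^k)}$ by $\|\na_{\pa B_r^k}\n\|_{L^2}$ (this is the only place $k\geq 3$ enters, since the coefficient $k-2$ in the identity must be nonnegative), and then integrates the divergence identity $|\na\vv|^2=\op{div}((\vv-\n_*)\na\vv)$ with $\n_*$ the boundary average, applying Cauchy--Schwarz plus Poincar\'e on the sphere and $|\n_*|\leq 1$ to get both estimates simultaneously from $\|\n-\n_*\|_{L^2}\leq C\min\{r\|\na\n\|_{L^2},r^{k/2-1/2}\}$. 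The paper's route is self-contained within integration by parts and $L^2$-Poincar\'e; yours is perhaps more conceptual (it isolates the $\dot H^{1/2}$ quantity that truly governs the extension energy), but if you go that way you should explicitly justify the $\dot H^{1/2}\to\dot H^1$ bound on the ball (e.g.\ by expanding in spherical harmonics), since the naive trace theorem gives only the full $H^{1/2}$ norm, and one then needs a Poincar\'e step in $H^{1/2}$ to pass to the seminorm.
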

\begin{proof}
In Lemma 2.3 of \cite{HKL86} such result is given for $ k=3 $ and $ \ell=2 $. In \cite{HL87}, a more general case was given in Theorem 6.1. Here we combine both of them and present a proof here for completeness. For given $ \n\in H^1(\pa B_r^k,\Ss^{\ell}) $, we consider the Dirichlet problem
\be
\left\{\begin{aligned}
-\Delta\vv&=0&\text{ in }&B_r^k\\
\vv&=\n&\text{ on }&\pa B_r^k,
\end{aligned}\right.\label{existencevnfunction}
\ee
where $ \vv:B_r^k\to\R^{\ell+1} $.
The existence of such $ \vv $ is ensured by Poisson integral formula. Testing the equation \eqref{existencevnfunction} by $ x\cdot\na\vv $ and applying the integration by parts, we can obtain
$$
0=\int_{B_r^k}-\Delta\vv(x\cdot\na\vv)\ud x=\int_{B_r^k}|\na\vv|^2\ud\HH^k-r\int_{\pa B_r^k}\left|\f{x_i}{r}\pa_i\vv\right|^2\ud\HH^{k-1}+\int_{B_r^k}\pa_i\vv\cdot(x_j\pa_{i}\pa_{j}\vv)\ud\HH^k,
$$
and
$$
2\int_{B_r^k}\pa_i\vv\cdot(x_j\pa_{i}\pa_{j}\vv)\ud\HH^k=-k\int_{B_r^k}|\na\vv|^2\ud\HH^k+r\int_{\pa B_r^k}|\na\vv|^2\ud\HH^{k-1}.
$$
As a result, we have
\be
r\|\na_{\pa B_r^k}\n\|_{L^2(\pa B_r^k)}^2=(k-2)\|\na\vv\|_{L^2(B_r^k)}^2+r^{-1}\|x_i\pa_i\vv\|_{L^2(\pa B_r^k)}^2.\label{videntity}
\ee
Integrating the equality $ |\na\vv|^2=\op{div}((\vv-\n_*)\na\vv) $, where $ \n_*=\dashint_{\pa B_r^k}\n $, there holds
\be
\|\na\vv\|_{L^2(\pa B_r^k)}^2=\int_{\pa B_r^k}(\n-\n_*)\(\na\vv\cdot\f{x}{r}\)\ud\HH^{k-1}\leq\|\n-\n_*\|_{L^2(\pa B_r^k)}\|\na_{\pa B_r^k}\n\|_{L^2(\pa B_r^k)},\label{navnnstar}
\ee
where for the last inequality we have used Cauchy inequality and \eqref{videntity}. Poincar\'{e} inequality and the fact that $ |\n_*|\leq 1 $ imply that
$$
\|\n-\n_*\|_{L^2(\pa B_r^k)}\leq C\min\{r\|\na_{\pa B_r^k}\n\|_{L^2(\pa B_k)},r^{k/2-1/2}\}.
$$
Combining \eqref{navnnstar}, we can obtain
$$
\|\na\vv\|_{L^2(B_r^k)}^2\leq C\min\{r\|\na_{\pa B_r^k}\n\|_{L^2(\pa B_r^k)}^2,r^{k/2-1/2}\|\na_{\pa B_r^k}\n\|_{L^2(\pa B_r^k)}\},
$$
Define a projection $ p_y:\R^{\ell}\to\R^{\ell} $ such that 
$ p_y(w)=(w-y)/|w-y| $ with $ w\in\R^{\ell} $ and $ y\in B_{1/4}^{\ell} $. Since $ \vv $ is harmonic, it is smooth in $ B_r^k $. By using Sard theorem, we have, $ p_y\circ\vv\in H^1(B_r^k,\Ss^{\ell}) $ for a.e. $ y\in B_{1/4}^{\ell} $. In view of the observation $ -\Delta(|\vv|^2)=-|\na\vv|^2\leq 0 $ in $ B_r $ and $ |\vv|=|\n|=1 $ on $ \pa B_r $, we can use maximum principle to get that $ \|\vv\|_{L^{\ift}(B_r)}\leq 1 $. Applying Fubini theorem, we obtain 
\begin{align*}
\int_{B_{1/4}^{\ell}}\int_{B_r^k}|\na (p_y\circ\vv)(x)|^2&\ud\HH^k(x)\ud\HH^{\ell}(y)\leq C\int_{B_r^k}|\na \vv(x)|^2\int_{B_{1/4}^{\ell}}\f{1}{|\vv(x)-y|^2}\ud\HH^{\ell}(y)\ud\HH^k(x)\\
&\leq C\int_{B_r^k}|\na\vv(x)|^2\(\int_{B_1^{\ell}}\f{1}{|y|^2}\ud\HH^{\ell}(y)\)\ud\HH^k(x)\\
&=C_0\int_{B_r^k}|\na\vv(x)|^2\ud\HH^k(x).
\end{align*}
Thus we may choose $ y_0\in B_{1/4}^{\ell} $ so that 
$$ 
\int_{B_r^k}|\na(p_{y_0}\circ\vv)|^2\ud\HH^k\leq 4C_0\int_{B_r^k}|\na\vv|^2\ud\HH^k.
$$
Letting $ \uu=(p_{y_0}|_{\Ss^{\ell}})^{-1}\circ p_{y_0}\circ\vv $, we conclude that $ \uu|_{\pa B_r^k}=\vv|_{\pa B_r^k} $ and 
$$
\int_{B_r^k}|\na\uu|^2\ud\HH^k\leq\|D((p_{y_0}|_{\Ss^{\ell}})^{-1})\|_{\ift}^2 \int_{B_r^k}|\na(p_{y_0}\circ\vv)|^2\ud\HH^k\leq C\int_{B_r^k}|\na\vv|^2\ud\HH^k,
$$
where we have used the fact that when $ y\in B_{1/4}^{\ell} $, $ p_{y} $ is invertible on $ \Ss^{\ell} $ and the inverse of it is Lipschitz continuous.
\end{proof}

Now we can give the proof of Lemma \ref{k3} and \ref{k2trivial}.

\begin{proof}[Proof of Lemma \ref{k3}]
For $ k\in\Z_{\geq 3} $, it can be seen that $ \pa B_r^k $ is simply connected and then we can use Lemma \ref{weakApproximation} to find a sequence $ \{\PP_b^{(j)}\}_{j\in\Z_+}\subset C^{\ift}(\pa B_r^k,\cN) $ such that $ \PP_b^{(j)}\wc\PP_b $ weakly in $ H^1(\pa B_r^k,\Ss_0) $ as $ j\to+\ift $. By standard lifting theorem (see Proposition 1.33 in \cite{H02}) and the fact that $ \Ss^3 $ is the universal covering space of $ \cN $ (Lemma \ref{FG}), we have $ \{\vv^{(j)}\}_{j\in\Z_+}\subset C^{\ift}(\pa B_r^k,\Ss^3) $ such that $
p(\vv^{(j)})=\PP_b^{(j)}=r_*(\n^{(j)}\n^{(j)}-\m^{(j)}\m^{(j)}) $, where $ p:\Ss^3\to\cN $, $ \{\n^{(j)}\}_{j\in\Z_+} $, and $ \{\m^{(j)}\}_{j\in\Z_+} $ are given by \eqref{universal}. Assuming that $ \vv^{(j)}=(a_1^{(j)},a_2^{(j)},a_3^{(j)},a_4^{(j)}) $, we have
\be
\begin{aligned}
|\pa_i\PP_b^{(j)}|^2&=r_*^2(|\pa_i\n^{(j)}|^2+|\pa_i\m^{(j)}|^2+|\pa_i\n^{(j)}\cdot\m^{(j)}|)\\
&\geq \f{r_*^2}{4}(4|\pa_i\n^{(j)}|^2+4|\pa_i\n^{(j)}|^2)\\
&\geq \f{r_*^2}{4}(2|\pa_i\n^{(j)}|^2+2|\pa_i\n^{(j)}|^2+2|\pa_i(\n^{(j)}\times\m^{(j)})|^2)\\
&\geq r_*^2(|\pa_ia_1^{(j)}|^2+|\pa_ia_2^{(j)}|^2+|\pa_ia_3^{(j)}|^2+|\pa_ia_4^{(j)}|^2)\\
&=r_*|\pa_i\vv^{(j)}|^2,
\end{aligned}\label{paigpaiv}
\ee
for any $ i=1,2,...,k $, where for the second inequality we have used Cauchy inequality and for the last inequality we have used \eqref{TwoDeri1} and \eqref{TwoDeri}. Since $ \{\PP_b^{(j)}\}_{j\in\Z_+} $ is convergent weakly in $ H^1(\pa B_r^k,\Ss_0) $, we have $
\sup_{j\in\Z_+}\|\PP_b^{(j)}\|_{H^1(\pa B_r^k,\cN)}<+\ift $. Consequently, $ \|\vv^{(j)}\|_{H^1(\pa B_r^k,\Ss^3)} $ is also uniformly bounded with respect to $ j $. By passing a subsequence, we assume that as $ j\to+\ift $, $ \vv^{(j)}\wc\vv $ weakly in $ H^1(\pa B_r^k,\R^4) $ and
$ \vv^{(j)}\to\vv $ strongly in $ L^2(\pa B_r^k,\R^4) $ for some $ \vv\in H^1(\pa B_r^k,\Ss^3) $. Define $ \wh{\PP}_b=p(\vv) $. In view of \eqref{universal}, one can deduce through simple calculations that $ \PP_b^{(j)}\wc\wh{\PP}_b $ weakly in $ H^1(\pa B_r^k,\Ss_0) $ and $ \PP_b^{(j)}\to\wh{\PP}_b $ strongly in $ L^2(\pa B_r^k,\Ss_0) $ as $ j\to+\ift $. As a result, we have $ \wh{\PP}_b=\PP_b $, $ \HH^{k-1} $-a.e. on $ \pa B_r^k $ and $ \vv\in H^1(\pa B_r^k,\Ss^3) $ satisfies $ \PP_b=p(\vv) $ and
\be
C_1\|\na_{\pa B_r^k}\PP_b\|_{L^2(\pa B_r^k)}\leq \|\na_{\pa B_r^k}\vv\|_{L^2(\pa B_r^k)}\leq C_2\|\na_{\pa B_r^k}\PP_b\|_{L^2(\pa B_r^k)},\label{vgrela}
\ee
where the inequality above follows from direct computations and \eqref{paigpaiv}. In view of Lemma \ref{Extlemball}, we can obtain $ \uu\in H^1(B_r^k,\Ss^3) $ such that $ \uu=\vv $ on $ \pa B_r^k $ and
\be
\|\na\uu\|_{L^2(B_r^k)}\leq Cr^{k/2-1/2}\|\na_{\pa B_r^k}\vv\|_{L^2(\pa B_r^k)}.\label{uvrela}
\ee
Now we define $ \PP=p(\uu) $. Similar to the derivation or \eqref{vgrela}, one can obtain 
$$
\|\na\PP\|_{L^2(B_r^k)}\leq C\|\na\uu\|_{L^2(B_r^k)}\leq Cr^{k/2-1/2}\|\na_{\pa B_r^k}\vv\|_{L^2(\pa B_r^k)}\leq Cr^{k/2-1/2}\|\na_{\pa B_r^k}\PP_b\|_{L^2(\pa B_r^k)},
$$
where for the second inequality we have used \eqref{uvrela}.
\end{proof}

\begin{proof}[Proof of Lemma \ref{k2trivial}]
Since $ H^1(\Ss^1,\cN)\hookrightarrow C^{0,1/2}(\Ss^1,\cN) $, we can easily lift $ \PP_b $ to a $ \Ss^3 $-value map, i.e., we can obtain $ \vv:\pa B_1^2\to\Ss^3 $ such that $ p(\vv)=\PP_b $. Since this covering map is locally a diffeomorphism, we obtain that $ \vv\in H^1(\pa B_1^2,\Ss^3) $ by the fact that $ \PP_b\in H^1(\pa B_1^2,\cN) $ and chain rules. Next, we can use almost the same arguments of extension in the proof Lemma \ref{k3} to complete the proof. Note that here we need to use \eqref{un2} instead of \eqref{un1} when using the extension Lemma \ref{Extlemball}. 
\end{proof}

\begin{lem}\label{k2non-trivial}
There exists a constant $ C>0 $ depending only on $ \cA $ such that for any $ 0<\va<r $, and any $ \PP_b\in H^1(\pa B_r^2,\cN) $ with $ [\PP_b]_{\cN}\neq\rH_0 $, there exists $ \PP_{\va}\in H^1(B_r^2,\Ss_0) $ such that $ \PP_{\va}=\PP_b $ on $ \pa B_r^2 $ and
$$
E_{\va}(\PP_{\va},B_r^2)\leq\cE^*([\PP_b]_{\cN})\log\f{r}{\va}+C(r\|\na_{\pa B_r^2}\PP_b\|_{L^2(\pa B_r^2)}^2+1).
$$
\end{lem}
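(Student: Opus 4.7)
The plan is a three-scale construction on $B_r^2$: an outer collar $A_{r/2,r}^2$ on which we interpolate within $\cN$ from $\PP_b$ to a special ``concatenation'' representative, an inner zone of $n\le 2$ small vortex disks of radius $\va$ around points $y_1,\ldots,y_n\in B_{r/4}^2$ where the order parameter is allowed to leave $\cN$, and a middle $\cN$-valued region gluing them together. The vortex cores contribute bounded bulk energy, the annular shells around the vortices carry the main logarithmic cost $\cE^*([\PP_b]_{\cN})\log(r/\va)+O(1)$, and the remaining pieces contribute $O(r\|\na_{\pa B_r^2}\PP_b\|_{L^2}^2+1)$.

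First I would fix $\epsilon=1$ and invoke Lemma \ref{singuenergessum}(2) together with Corollary \ref{homcor} and Lemma \ref{kappa123}: since $\cE^*([\PP_b]_\cN)\le 2\kappa_*$, I can pick a point $\PP_0\in\cN$ lying on $\PP_b$, a short connecting path $C$, and at most $n\le 2$ smooth geodesic loops $\al_1,\ldots,\al_n$ based at $\PP_0$ such that $[\wt C*\al_1*\cdots*\al_n*C]_\cN=[\PP_b]_\cN$ and $\sum_i \cE([\al_i])\le\cE^*([\PP_b]_\cN)+1$. Define $\wt{\PP}_b$ on $\pa B_{r/2}^2$ by traversing each $\al_i$ on the arc $\theta\in[2\pi(i-1)/n,2\pi i/n]$; then $[\wt{\PP}_b]_\cN=[\PP_b]_\cN$. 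In the outer collar $A_{r/2,r}^2$ I build the homotopy $H\in H^1(A_{r/2,r}^2,\cN)$ by lifting $\PP_b$ and $\wt{\PP}_b$ to paths in the universal cover $\Ss^3$ (Lemma \ref{FG}), linearly interpolating in $\R^4$ (the segment avoids the origin since both endpoints lie on $\Ss^3$), renormalizing onto $\Ss^3$, and pushing down via the covering map $p$. Using the comparability of gradients \eqref{paigpaiv}--\eqref{TwoDeri} between $\cN$ and $\Ss^3$ together with a Poincar\'e-type bound on the annulus, this yields
$\int_{A_{r/2,r}^2}|\na H|^2\,\ud x\le C(r\|\na_{\pa B_r^2}\PP_b\|_{L^2}^2+\sum_i\cE([\al_i])+1)$, and the bulk term vanishes identically.

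Inside $B_{r/2}^2$ I would fix $\rho:=r/(100n)$ and place points $y_1,\ldots,y_n\in B_{r/4}^2$ with mutual distances of order $r$. On each annulus $A_{\va,\rho}(y_i)$ set $\PP_\va(x):=\al_i(\arg(x-y_i))$; a polar-coordinate calculation gives $\tfrac12\int_{A_{\va,\rho}(y_i)}|\na\PP_\va|^2=\cE([\al_i])\log(\rho/\va)$ with no bulk cost. Inside each core $B_\va(y_i)$, extend the boundary trace by linear interpolation to its spatial mean (valid since $\Ss_0$ is a vector space); by scaling both Dirichlet and bulk contributions are $O(1)$. On the remaining middle region $U:=B_{r/2}^2\setminus\bigcup_i\ol{B_\rho(y_i)}$ the prescribed data is $\wt{\PP}_b$ on $\pa B_{r/2}^2$ and $\al_i(\arg(\cdot-y_i))$ on each $\pa B_\rho(y_i)$. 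I cut $U$ by $n-1$ disjoint smooth arcs joining consecutive $\pa B_\rho(y_i)$ radially and one arc linking to the outer circle, producing a simply connected $\wt U$; because $\wt{\PP}_b$ was \emph{constructed} as $\al_1*\cdots*\al_n$, the boundary loop of $\wt U$ spells out $(\al_1\cdots\al_n)\cdot(\al_n^{-1}\cdots\al_1^{-1})$ in $\pi_1(\cN,\PP_0)$, which is the identity, hence has trivial free homotopy class. An application of Lemma \ref{k2trivial} (after bi-Lipschitz identification of $\wt U$ with $B_r^2$) produces the required $\cN$-valued extension with Dirichlet energy bounded by $C(1+r\|\na_{\pa B_r^2}\PP_b\|_{L^2}^2)$. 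Summing the three contributions and using $\log(\rho/\va)=\log(r/\va)+O(1)$ together with $\sum_i\cE([\al_i])\le\cE^*([\PP_b]_\cN)+1$ yields the announced bound.

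The main obstacle is the middle-region step: one must verify that after inserting the slits, the induced boundary word in $\pi_1(\cN,\PP_0)\cong Q_8$ collapses to the identity, which requires carefully choosing the slit paths so that parallel transport of the base point along each slit identifies the natural base point of $\al_i(\arg(\cdot-y_i))$ with $\PP_0$ with the correct orientation, and orienting the inner circles as $-\pa B_\rho(y_i)$ so that the product order matches the decomposition of $\wt{\PP}_b$. This is a combinatorial bookkeeping argument rather than an estimate, but it uses the full noncommutative structure of $\pi_1(\cN)$ from Lemma \ref{FG} (so it could not have been handled by the universal-cover lifting alone, as in Lemma \ref{k2trivial}). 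A subsidiary point is the controlled lift to $\Ss^3$ in the collar: because $p:\Ss^3\to\cN$ is a local isometry up to the constant factor in \eqref{TwoDeri}, the linear-then-renormalize interpolation transfers quantitatively to $\cN$ without loss in the $\log(r/\va)$ coefficient.
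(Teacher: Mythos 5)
Your construction is close in spirit to the paper's — both use a two-vortex decomposition with geodesic loop profiles from Lemma \ref{kappa123}, a core/annulus split contributing $O(1)$ and $\cE^*([\PP_b]_{\cN})\log(r/\va)$ respectively, and an application of Lemma \ref{k2trivial} to a simply connected remainder. But there is a genuine gap in your outer collar step.

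You propose to interpolate in $A_{r/2,r}^2$ from $\PP_b$ to a standard representative $\wt{\PP}_b$ by lifting both to $\Ss^3$, linearly interpolating in $\R^4$, renormalizing onto $\Ss^3$, and pushing down. You justify this by asserting that ``the segment avoids the origin since both endpoints lie on $\Ss^3$.'' This is false: two antipodal points of $\Ss^3$ have a chord through the origin, and more generally the chord can pass arbitrarily close to it. There is no a priori control ruling out $\wh{\PP}_b(\theta)$ and $\wh{\wt{\PP}}_b(\theta)$ from being antipodal or nearly so for some $\theta$ (the two lifts share a deck transformation but can otherwise be very different paths). Where the chord approaches the origin the renormalized interpolation fails to be $H^1$, or at best has uncontrolled Dirichlet energy, so the claimed bound $C\bigl(r\|\na_{\pa B_r^2}\PP_b\|_{L^2}^2 + \sum_i\cE([\al_i]) + 1\bigr)$ does not follow. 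To salvage this you would need to show the lifts are uniformly bounded away from antipodal, which is not automatic, or use a quantitative homotopy lemma (the paper's Lemma \ref{Luckhaus1} also does not directly give this, since the $L^2$-distance term is not controlled by the boundary energies alone).

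The paper sidesteps the collar entirely: it never homotopes $\PP_b$ to a standard loop. Instead, it places $\PP_b$ directly as boundary data on the outer circle $\pa B_1^2$, removes two fixed quarter-disks around $(\pm 1/2, 0)$, cuts the resulting region $D_0$ by three radial arcs $c_1, c_2, c_3$, and identifies the slit region bilipschitz-ly with a nine-gon $D_1$. On the nine sides it assigns, in order, a connecting curve $\CC_1$, a vortex profile, $\wt\CC_1$, a connecting curve $\CC_2$, a second vortex profile, $\wt\CC_2$, a connecting curve $\CC_3$, the actual datum $\PP_b$, and $\wt\CC_3$ — where the $\CC_\ell$ join $\X_0$ to the endpoints of the adjacent pieces. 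With the vortex profiles chosen (and reversed if necessary) so that the induced word in $\pi_1(\cN,\X_0)\cong Q_8$ is the identity, the boundary datum on $\pa D_1$ has trivial free homotopy class, and Lemma \ref{k2trivial} applies directly. This gives the required energy bound with no need for a separate quantitative homotopy in a collar. Your middle-region slitting step is essentially the same idea, but is invoked \emph{after} the unneeded and unjustified collar; if you drop the collar and run your slitting argument on $B_r^2\setminus(\text{two disks})$ with $\PP_b$ on the outer boundary, you arrive at the paper's construction.
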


\begin{proof}
By scaling, we can assume that $ r=1 $. We also assume that $ 0<\va<1/4 $ since for otherwise, the function $ \PP_{\va/4} $ is what we need. For later use, we define $ \eta_{\va} $ as a truncation function such that $ \eta_{\va}(\rho)=1 $ if $ \rho\geq\va $ and $ \eta_{\va}(\rho)=\rho/\va $ if $ 0\leq\rho<\va $. We also choose $ \{\PP^{(\ell)}\}_{\ell=0}^2\in H^1(\Ss^1,\cN) $ such that $ [\PP^{(\ell)}]_{\cN}=\rH_{\ell} $ for any $ \ell=0,1,2 $, and $ \{\PP^{(\ell)}\}_{\ell=1}^2 $ are minimizers of $ \{\cE(\rH_{\ell})\}_{\ell=1}^2 $. In the proof, for two point $ x,y\in\R^2 $, we use $ \ol{xy} $ to denote the directed line segment from $ x $ to $ y $. Define $ D_0:=B_1^2\backslash(\ol{B_{1/4}^2(x_1)}\cup\ol{B_{1/4}^2(x_2)}) $, where $ x_1=(1/2,0) $ and $ x_2=(-1/2,0) $. For $ \ell_1=0,1,2 $, $ \ell_2=1,2 $, define $ \PP^{(\ell_1\ell_2)}(x):=\PP^{(\ell_1)}(4(x-x_{\ell_2})) $ for $ x\in B_{1/4}^2(x_{\ell_2}) $. Define $ \W_{\va}^{(\ell_1\ell_2)}:B_{1/4}^2(x_{\ell_2})\to\Ss_0 $ such that 
$$
\W_{\va}^{(\ell_1\ell_2)}(x):=\eta_{\va}(|x-x_{\ell_2}|)\PP^{(\ell_1)}\(\f{x-x_{\ell_2}}{|x-x_{\ell_2}|}\).
$$
for $ \ell_1,\ell_2=1,2 $.
This implies $ \W_{\va}^{(\ell_1\ell_2)}|_{B_{1/4}^2(x_{\ell_2})}=\PP^{(\ell_1\ell_2)} $ for any $ \ell_1,\ell_2=1,2 $. Applying Lemma \ref{k2trivial}, we can construct $ \{\W_{\va}^{(0\ell)}\}_{\ell=1}^2\subset H^1(B_{1/4}^2(x_{\ell}),\cN) $ such that 
$ \W_{\va}^{(0\ell)}|_{\pa B_{1/4}^2(x_{\ell})}=\PP^{(0\ell)} $ for $ \ell=1,2 $ and
\be
E_{\va}(\W_{\va}^{(0\ell)},B_{1/4}^2(x_{\ell}))\leq C\|\na_{\pa B_{1/4}^2(x_{\ell})}\PP^{(0\ell)}\|_{L^2(\pa B_{1/4}^2(x_{\ell}))}\leq C.\label{Uvaell}
\ee
For $ \ell_1,\ell_2=1,2 $, and $ x\in B_{1/4}^2(x_{\ell_2}) $, we have $ x=x_{\ell_2}+(\rho\cos\theta,\rho\sin\theta) $ and
\begin{align*}
|\na\W_{\va}^{(\ell_1\ell_2)}(x)|^2&=|\pa_{\rho}\W_{\va}^{(\ell_1\ell_2)}(x)|^2+\rho^{-2}|\pa_{\theta}\W_{\va}^{(\ell_1\ell_2)}(x)|^2\leq\left\{\begin{aligned}
&C\va^{-2}&\text{ if }&\rho\leq\va,\\
&\rho^{-2}|(\PP^{(\ell_1)})'(\theta)|^2&\text{ if }&\rho>\va.
\end{aligned}\right.
\end{align*}
This, together with $ \W_{\va}^{(\ell_1\ell_2)}(x)\in\cN $ when $ x\in A_{\va,1/4}^2(x_{\ell_2}) $ for $ \ell_1,\ell_2=1,2 $, implies that
\be
\begin{aligned}
E_{\va}(\W_{\va}^{(\ell_1\ell_2)},B_{1/4}^2(x_{\ell_2}))&=E_{\va}(\W_{\va}^{(\ell_1\ell_2)},B_{\va}^2(x_{\ell_2}))+E_{\va}(\W_{\va}^{(\ell_1\ell_2)},A_{\va,1/4}^2(x_{\ell_2}))\\
&\leq\f{1}{2}\int_{\va}^{1/4}\f{\ud\rho}{\rho}\int_0^{2\pi}|(\PP^{(\ell_1)})'(\theta)|^2\ud\theta+\int_{B_{\va}^2(x_{\ell_2})}\f{C}{\va^2}\ud\HH^2\\
&\leq\kappa_*\log\f{1}{\va}+C,
\end{aligned}\label{Wvaell}
\ee
where for the first inequality, we have use the fact that $ \{\W_{\va}^{(\ell_1\ell_2)}\}_{\ell_1,\ell_2=1}^2 $ are bounded and for the second inequality we have used the definition of $ \{\PP^{(\ell)}\}_{\ell=1}^2 $, $ \cE^*(\rH_1)=\cE^*(\rH_2)=\kappa_* $ (see Lemma \ref{kappa123}). 

Choose $ \wh{x}_1=(1/4,0) $, $ \wh{x}_2=(-1/4,0) $, $ \wh{x}_3=(0,1) $, $ \al_1=\pa B_{1/4}^2(x_1) $, $ \al_2=\pa B_{1/4}^2(x_2) $, and $ \al_3=\pa B_1^2 $. Here $ \al_1,\al_3 $ are clockwise, $ \al_2 $ is anticlockwise. Define $ c_i=\ol{0\wh{x}_{\ell}} $ for $ \ell=1,2,3 $. We also choose a point $ \X_0\in\cN\backslash(\cup_{\ell=0}^2\PP^{(\ell)}(\Ss^1)) $. Define $ \{y_{\ell}\}_{\ell=1}^9\subset\R^2 $ such that $ y_{\ell}=(\cos(2\ell\pi/9),\sin(2\ell\pi/9)) $ and $ D_1=\{\sum_{\ell=1}^9t_{\ell}y_{\ell}:0<t_{\ell}<1\} $ is the interior of the convex hull of $ \{y_{\ell}\}_{\ell=1}^9 $. Also, for convenience, we have the convention $ y_{\ell}=y_{\ell_0} $ for $ \ell_0\equiv\ell $ mod $ 9 $, with $ \ell_0=1,2,...,9 $ and $ \ell\in\Z_{\geq 10}\cup\Z_{\leq 0} $. Observe that there exists $ \Phi:D_1\to D_0\backslash(\cup_{\ell=1}^3c_i) $, which is bilipschitz by identify $ \ol{y_1y_9} $ with $ \ol{y_2y_3} $, $ \ol{y_4y_3} $ with $ \ol{y_5y_6} $, and $ \ol{y_6y_7} $ with $ \ol{y_9y_8} $. Precisely, $ \Phi $ can be extended to $ \Phi_*:\ol{D_1}\to\ol{D_0} $ such that $ \Phi_*|_{D_1}=\Phi|_{D_1} $, $ \Phi(\ol{y_9y_1})=c_1 $, $ \Phi(\ol{y_1y_2})=\al_1 $, $ \Phi(\ol{y_2y_3})=\wt{c}_1 $, $ \Phi(\ol{y_3y_4})=c_2 $, $ \Phi(\ol{y_4y_5})=\al_2 $, $ \Phi(\ol{y_5y_6})=\wt{c}_2 $, $ \Phi(\ol{y_6y_7})=c_3 $, $ \Phi(\ol{y_7y_8})=\al_3 $, and $ \Phi(\ol{y_8y_9})=\wt{c}_3 $. We intend to assign the boundary datum on $ \pa D_1 $, denoted by $ \wh{\PP}_b $. Firstly, let $ \wh{\PP}_b=\PP_b $ on $ \ol{y_7y_8} $. Next, we set 
\begin{align*}
\wh{\PP}_b:=\left\{\begin{aligned}
\PP^{(11)}\text{ on }\ol{y_1y_2}\text{ and }\PP^{(02)}\text{ on }\ol{y_4y_5}&\text{ if }[\PP_b]_{\cN}=\rH_1,\\
\PP^{(01)}\text{ on }\ol{y_1y_2}\text{ and }\PP^{(22)}\text{ on }\ol{y_4y_5}&\text{ if }[\PP_b]_{\cN}=\rH_2,\\
\PP^{(11)}\text{ on }\ol{y_1y_2}\text{ and }\PP^{(22)}\text{ on }\ol{y_4y_5}&\text{ if }[\PP_b]_{\cN}=\rH_3,\\
\PP^{(11)}\text{ on }\ol{y_1y_2}\text{ and }\PP^{(12)}\text{ on }\ol{y_4y_5}&\text{ if }[\PP_b]_{\cN}=\rH_4.
\end{aligned}\right.
\end{align*}
Since $ \cN $ is path-connected, we find $ \CC_{\ell}\in C^{\infty}(\ol{y_{3\ell-3}y_{3\ell-2}},\cN) $ for $ \ell=1,2,3 $, such that $ \CC_1(y_9)=\X_0 $, $ \CC_1(y_1)=\wh{\PP}_b(y_1) $, $ \CC_2(y_3)=\X_0 $, $ \CC_2(y_4)=\wh{\PP}_b(y_4) $, $ \CC_3(y_6)=\X_0 $ and $ \CC_3(y_7)=\wh{\PP}_b(y_7) $. We can set $ \wh{\PP}_b=\CC_{\ell} $ on $ \ol{y_{3\ell-3}y_{3\ell-2}} $ and $ \wh{\PP}_b=\wt{\CC}_{\ell} $ on $ \ol{y_{3\ell-1}y_{3\ell}} $ for any $ \ell=1,2,3 $. On $ \pa D_1 $, if $ [\PP_b]_{\cN}=\rH_1 $, then $ \wh{\PP}_b=\LL_1*\LL_2*\LL_3 $ for $ \LL_{\ell}=\CC_{\ell}*(\wh{\PP}_b|_{\ol{y_{3\ell-2}y_{3\ell-1}}})*\wt{\CC}_{\ell} $ with $ \ell=1,2,3 $. By Corollary \ref{homcor}, we have 
\begin{align*}
&\vp_{\X_0}^{-1}([\LL_1]_{\cN,\X_0}),\,\,\vp_{\X_0}^{-1}([\LL_3]_{\cN,\X_0})\in\{\pm \ii\},\,\,\vp_{\X_0}^{-1}([\LL_2]_{\cN,\X_0})=1\text{ if }[\PP_b]_{\cN}=\rH_1,\\
&\vp_{\X_0}^{-1}([\LL_2]_{\cN,\X_0}),\,\,\vp_{\X_0}^{-1}([\LL_3]_{\cN,\X_0})\in\{\pm \jj\},\,\,\vp_{\X_0}^{-1}([\LL_1]_{\cN,\X_0})=1\text{ if }[\PP_b]_{\cN}=\rH_2,\\
&\vp_{\X_0}^{-1}([\LL_1]_{\cN,\X_0})\in\{\pm \ii\},\,\,\vp_{\X_0}^{-1}([\LL_2]_{\cN,\X_0})\in\{\pm \jj\},\vp_{\X_0}^{-1}([\LL_2]_{\cN,\X_0})\in\{\pm\kk\}\text{ if }[\PP_b]_{\cN}=\rH_3,\\
&\vp_{\X_0}^{-1}([\LL_1]_{\cN,\X_0}),\,\,\vp_{\X_0}^{-1}([\LL_2]_{\cN,\X_0})\in\{\pm \ii\},\,\,\vp_{\X_0}^{-1}([\LL_3]_{\cN,\X_0})=-1\text{ if }[\PP_b]_{\cN}=\rH_4,
\end{align*}
By substituting $ \{\wh{\PP}_b|_{\ol{y_{3\ell-2}y_{3\ell-1}}}\}_{\ell=1}^3 $ with the reverse of it if necessary, we can obtain $ \vp_{\X_0}^{-1}([\wh{\PP}_b]_{\cN,\X_0})=1 $. As a result, we can apply Lemma \ref{k2trivial} to find $ \V_{\va} $ such that $ \V_{\va}|_{\pa D_1}=\wh{\PP}_b $ and
\begin{align*}
\int_{D_1}|\na\V_{\va}|^2\ud\HH^2\leq C\|\na_{\pa D_1}\wh{\PP}_b\|_{L^2(\pa D_1)}^2\leq C.
\end{align*}
We note that by the choice of boundary value above, $ \V_{\va}(\Phi_*^{-1}(x)) $ is well defined for $ x\in D_0 $. If $ [\PP_b]_{\cN}=\rH_1 $, let
$$
\PP_{\va}(x):=\left\{\begin{aligned}
&\W_{\va}^{(11)}(x)&\text{ if }&x\in B_{1/4}^2(x_1),\\
&\W_{\va}^{(02)}(x)&\text{ if }&x\in B_{1/4}^2(x_2),\\
&\V_{\va}(\Phi_*^{-1}(x))&\text{ if }&x\in D_0.
\end{aligned}\right.
$$
If $ [\PP_b]_{\cN}=\rH_2 $, let
$$
\PP_{\va}(x):=\left\{\begin{aligned}
&\W_{\va}^{(01)}(x)&\text{ if }&x\in B_{1/4}^2(x_1),\\
&\W_{\va}^{(22)}(x)&\text{ if }&x\in B_{1/4}^2(x_2),\\
&\V_{\va}(\Phi_*^{-1}(x))&\text{ if }&x\in D_0.
\end{aligned}\right.
$$
If $ [\PP_b]_{\cN}=\rH_3 $, let
$$
\PP_{\va}(x):=\left\{\begin{aligned}
&\W_{\va}^{(11)}(x)&\text{ if }&x\in B_{1/4}^2(x_1),\\
&\W_{\va}^{(22)}(x)&\text{ if }&x\in B_{1/4}^2(x_2),\\
&\V_{\va}(\Phi_*^{-1}(x))&\text{ if }&x\in D_0.
\end{aligned}\right.
$$
If $ [\PP_b]_{\cN}=\rH_3 $, let
$$
\PP_{\va}(x):=\left\{\begin{aligned}
&\W_{\va}^{(11)}(x)&\text{ if }&x\in B_{1/4}^2(x_1),\\
&\W_{\va}^{(12)}(x)&\text{ if }&x\in B_{1/4}^2(x_2),\\
&\V_{\va}(\Phi_*^{-1}(x))&\text{ if }&x\in D_0.
\end{aligned}\right.
$$
Now, we have $ \PP_{\va}\in H^1(B_1^2,\Ss_0) $ and $ \PP|_{\pa B_1^2}=\PP_b $. By using \eqref{Uvaell} and \eqref{Wvaell}, we get
$$
E_{\va}(\PP_{\va},B_1^2)\leq \cE^*([\PP_b]_{\cN})\log\f{1}{\va}+C(\|\na_{\pa B_1^2}\PP_b\|_{L^2(\pa B_1^2)}+1),
$$
which completes the proof.
\end{proof}

Next, we prove an extension result on a cylinder, in dimension three. Given positive numbers $ L $ and $ r $, set $ \Lda_{r,L}=B_r^2\times(-L,L) $ and $ \Ga_{r,L}=\pa B_r^2\times(-L,L) $. Let $ \PP_b\in H^1(\Ga_{r,L},\cN) $ be a boundary datum, which is only defined on the lateral surface of the cylinder. 

\begin{lem}\label{cylinderex}
There exists a constant $ C>0 $ depending only on $ \cA $ such that the following properties hold. For any $ 0<\va<r $ and any $ \PP_b\in H^1(\Ga_{r,L},\cN) $ with non-trivial free homotopy class, there exists $ \PP_{\va}\in H^1(\Lda_{r,L},\Ss_0) $ such that $ \PP_{\va}=\PP_b $ on $ \Ga_{r,L} $. Moreover, it satisfies
$$
E_{\va}(\PP_{\va},\Lda_{r,L})\leq CL\(\f{L}{r}+\f{r}{L}\)\|\na_{\Ga_{r,L}}\PP_b\|_{L^2(\Ga)}^2+2\cE^*([\PP_b|_{\Ga_{r,L}}]_{\cN})L\log\f{r}{\va}+CL,
$$
and for $ z\in\{-L,L\} $,
$$
E_{\va}(\PP_{\va},B_r^2\times\{z\})\leq C\(\f{L}{r}+\f{r}{L}\)\|\na_{\Ga_{r,L}}\PP_b\|_{L^2(\Ga)}^2+\cE^*([\PP_b|_{\Ga_{r,L}}]_{\cN})\log\f{r}{\va}+C.
$$
\end{lem}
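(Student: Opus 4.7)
The plan is to apply Lemma~\ref{k2non-trivial} slice-by-slice in the $z$-direction. By Fubini, $\PP_b(\cdot,z)\in H^1(\pa B_r^2,\cN)$ for a.e.\ $z\in(-L,L)$, and (analogously to Lemma~\ref{propHomt}) these slice loops all share the common nontrivial free homotopy class $[\PP_b|_{\Ga_{r,L}}]_\cN$. Lemma~\ref{k2non-trivial} applied to each slice produces $\PP_\va(\cdot,z)\in H^1(B_r^2,\Ss_0)$ extending $\PP_b(\cdot,z)$ with
\[
E_\va(\PP_\va(\cdot,z),B_r^2)\leq\cE^*([\PP_b|_{\Ga_{r,L}}]_\cN)\log(r/\va)+C(r\|\na_{\pa B_r^2}\PP_b(\cdot,z)\|_{L^2(\pa B_r^2)}^2+1).
\]

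To make the resulting map lie in $H^1(\Lda_{r,L},\Ss_0)$, I will run the Lemma~\ref{k2non-trivial} construction with all auxiliary data---the defect points, the target loops, the connecting curves to a base point in $\cN$, the bilipschitz identification of the exterior region, and the underlying harmonic-extension operator of Lemma~\ref{k2trivial} (performed after lifting to $\Ss^3$ via the universal cover of Lemma~\ref{FG})---chosen independent of $z$. Under this convention the only $\PP_b$-dependent part of $\PP_\va(\cdot,z)$ is a single boundary-value harmonic extension, whose Dirichlet datum equals $\PP_b(\cdot,z)$ on one arc of $\pa B_r^2$ and is fixed elsewhere. Commuting $\pa_z$ with the harmonic extension and invoking the scaling estimate $\|u\|_{L^2(B_r^2)}^2\leq Cr\|u\|_{L^2(\pa B_r^2)}^2$ for harmonic $u$ on the disk yields
\[
\|\pa_z\PP_\va(\cdot,z)\|_{L^2(B_r^2)}^2\leq Cr\|\pa_z\PP_b(\cdot,z)\|_{L^2(\pa B_r^2)}^2.
\]
Integrating the slice bound in $z$ and adding the vertical contribution gives
\[
E_\va(\PP_\va,\Lda_{r,L})\leq 2L\cE^*([\PP_b|_{\Ga_{r,L}}]_\cN)\log(r/\va)+Cr\|\na_{\Ga_{r,L}}\PP_b\|_{L^2(\Ga_{r,L})}^2+2CL,
\]
which is stronger than the claimed global bound since $r\leq L^2/r+r=L(L/r+r/L)$.

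For the slice bound at $z=\pm L$, the raw trace $\PP_b(\cdot,\pm L)$ is only in $H^{1/2}$, so Lemma~\ref{k2non-trivial} cannot be applied to it directly. I address this with a boundary-layer modification of thickness $\delta=rL/(r+L)$: first reflect $\PP_b$ across $z=\pm L$ to extend it to $\Ga_{r,L+\delta}$, run the slicewise procedure on this enlarged cylinder, and use Fubini to select an auxiliary slice $z^*\in(L,L+\delta)$ obeying $\|\na_{\pa B_r^2}\PP_b(\cdot,z^*)\|_{L^2(\pa B_r^2)}^2\leq\delta^{-1}\|\na_{\Ga_{r,L+\delta}}\PP_b\|_{L^2(\Ga_{r,L+\delta})}^2$; then reshape the construction in $z\in[L-\delta,L]$ so that the top slice $\PP_\va(\cdot,L)$ equals the planar extension built from $\PP_b(\cdot,z^*)$ while still matching $\PP_b$ on the lateral boundary. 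The inequality $r/\delta=1+r/L\leq 2(L/r+r/L)$ gives the claimed slice estimate, and the reshaped layer contributes only $O(L)$ to the total energy.

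The main obstacle is the slice-at-$z=\pm L$ analysis: the slicewise construction yields per-$z$ estimates sufficient for the global bound, but controlling the trace slice at $z=\pm L$ requires the delicate layer-reshaping above to bypass the $H^{1/2}$ trace issue while preserving both the lateral matching condition $\PP_\va=\PP_b$ on $\Ga_{r,L}$ and the constant $L/r+r/L$; verifying the interpolation stays close enough to $\cN$ for $\varrho$ to be defined is the technically delicate step.
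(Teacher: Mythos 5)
The paper's proof does not extend Lemma \ref{k2non-trivial} slice by slice. It selects a single good height $z_0\in[-L/4,L/4]$ by an averaging (Fubini) argument, applies Lemma \ref{k2non-trivial} \emph{once} to $\PP_b(\cdot,z_0)$ to produce the singular core on $B_{r/2}^2\times(-L,L)$ (constant in $z$), and interpolates on the outer shell $A_{r/2,r}^2\times[-L,L]$ by a ``sweeping'' map that carries the lateral datum $\PP_b$ toward the level $z_0$. This deliberately sidesteps every difficulty your proposal faces.

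The slicewise plan has a genuine gap, independent of how carefully it is executed. You claim one can run the Lemma \ref{k2non-trivial} construction ``with all auxiliary data\,---\,\ldots the connecting curves to a base point in $\cN$\ldots\,---\,chosen independent of $z$.'' This is internally inconsistent: in the proof of Lemma \ref{k2non-trivial} the curves $\CC_\ell$ are constrained to satisfy $\CC_3(y_7)=\wh{\PP}_b(y_7)=\PP_b(y_7,z)$ (and similarly at the other matching points); their endpoints track the boundary datum, which varies with $z$. Fixing the $\CC_\ell$ destroys the boundary-matching, while letting them vary reintroduces $z$-dependence with no canonical $H^1$-in-$z$ selection (there is no continuous\,---\,let alone $H^1$\,---\,way to choose a path from a fixed base point to a moving endpoint on a non-contractible manifold $\cN$, and the $H^1(\Ga_{r,L})$ regularity of $\PP_b$ does not even give pointwise continuity in $z$). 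A second problem is the nonlinear projection $p_{y_0}$ in Lemma \ref{Extlemball}: its differential behaves like $1/|\vv-y_0|$, so $\pa_z(p_{y_0}\circ\vv)$ is controlled by $\int|\pa_z\vv|^2/|\vv-y_0|^2$, not by $Cr\|\pa_z\PP_b\|_{L^2(\pa B_r^2)}^2$; the Fubini choice of $y_0$ in that lemma controls only the in-slice gradient, and a $z$-independent admissible $y_0$ is not automatic. Finally, the boundary-layer reflection/reshaping you sketch for the $z=\pm L$ slice estimate is not carried through; whether the reshaped layer keeps the map within a $\varrho$-neighbourhood of $\cN$ and preserves the lateral matching condition is exactly the kind of delicate interpolation the paper's sweeping construction is designed to make trivial. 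In short, the paper's ``freeze one slice, then sweep'' idea is not an optional simplification but the device that removes the obstruction your approach runs into.
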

\begin{rem}
Here, for a.e. $ z\in[-L,L] $, $ \PP_b|_{\pa B_r^2\times\{z\}} $ has the same well defined free homotopy classes and denoted by $ [\PP_b|_{\Ga_{r,L}}]_{\cN} $. This is because the lateral surface of a cylinder is diffeomorphic to the annulus and we can apply Lemma \ref{propHomt} to obtain the result. 
\end{rem}

\begin{proof}
By using average arguments, there exists $ z_0\in[-L/4,L/4] $ such that
\be
\|\na_{\Ga_{r,L}}\PP_b\|_{L^2(\pa B_r^2\times\{z_0\})}^2\leq\f{8}{L}\|\na_{\Ga_{r,L}}\PP_b\|_{L^2(\Ga_{r,L})}\label{PPL8}
\ee
and $ [\PP_b]_{\cN}=[\PP_b(\cdot,z_0)]_{\cN} $. By a translation, we set $ z_0=0 $. Define $ \wh{\PP}:(\ol{B_r^2}\backslash B_{r/2}^2)\times[-L,L]\to\cN $ by
$$
\wh{\PP}(\rho,\theta,z)=\left\{\begin{aligned}
&\PP_b(r,\theta,z'(\rho,z))&\text{ if }&\rho_0(r,z)\leq\rho\leq r\text{ and }|z|\leq L,\\
&\PP_b(r,\theta,0)&\text{ if }&r/2\leq\rho<\rho_0(r,z)\text{ and }|z|\leq L,
\end{aligned}\right.
$$
where
$$
z'(\rho,z):=\f{2L}{r}\sgn(z)(\rho-r)+z,\,\,\rho_0(r,z):=r-\f{r}{2L}|z|.
$$
Here $ (\rho,\theta,z)\in[0,r]\times[0,2\pi]\times[-L,L] $ is the cylindrical coordinates. By simple calculations, we have
\begin{align*}
\|\na\wh{\PP}\|_{L^2(A_{r/2,r}^2\times[0,L])}&\leq r\(\f{4L^2}{r^2}+1\)\int_0^{2\pi} \int_0^L(r|\pa_z\PP_b|^2+r^{-1}|\pa_{\theta}\PP_b|^2)(r,\theta,\xi)\ud\xi\ud\theta\\
&\quad\quad+(\log 2)L\int_0^{2\pi}|\pa_{\theta}\PP_b|^2(r,\theta,0)\ud\theta.
\end{align*}
Combined with the analogous estimates for $ z\in[-L,0] $, we deduce
$$
\|\na\wh{\PP}\|_{L^2(A_{r/2,r}^2\times[-L,L])}^2\leq\(\f{4L^2}{r}+r\)\|\na_{\Ga_{r,L}}\PP_b\|_{L^2(\Ga_{r,L})}^2+(\log 2)rL\|\na_{\Ga_{r,L}}\PP_b\|_{L^2(\pa B_r^2\times\{0\})}^2.
$$
By \eqref{PPL8}, there holds
$$
\|\na\wh{\PP}\|_{L^2(A_{r/2,r}^2\times[-L,L])}^2\leq C\(\f{L^2}{r}+r\)\|\na_{\Ga_{r,L}}\PP_b\|_{L^2(\Ga_{r,L})}^2.
$$
Applying Lemma \ref{k2non-trivial} to $ \PP_b(\cdot,0) $, we can get $ \wh{\PP}_{\va}\in H^1(B_{r/2}^2,\Ss_0) $ such that
\begin{align*}
E_{\va}(\wh{\PP}_{\va},B_{r/2}^2)\leq\f{Cr}{L}\|\na_{\Ga_{r,L}}\PP_b\|_{L^2(\Ga_{r,L})}^2+\cE_*([\PP_b|_{\Ga_{r,L}}]_{\cN})\log\f{r}{\va}+C.
\end{align*}
Define
$$
\PP_{\va}(\rho,\theta,z)=\left\{\begin{aligned}
&\wh{\PP}(\rho,\theta,z)&\text{ if }&r/2<\rho\leq r,\\
&\wh{\PP}_{\va}(\rho,\theta)&\text{ if }&0<\rho<r/2.
\end{aligned}\right.
$$
Here $ \PP_{\va} $ is what we want.
\end{proof}
\subsection{Luckhaus type lemma}

In this subsection, we present the Luckhaus-type lemmas for our Landau de-Gennes model.

\begin{lem}[\cite{L88}, Lemma 1]\label{Luckhaus1}
For all $ \beta\in(1/2,1) $, there exists $ C>0 $ depending only on $ \beta $ such that the following properties hold. For any $ 0<\lda\leq 1/2 $, $ 0<\sg<1 $, and $ \U,\V\in H^1(\pa B_1,\cN) $ with
$$
K=\int_{\pa B_1}\(|\na_{\pa B_1}\U|^2+|\na_{\pa B_1}\V|^2+\f{|\U-\V|^2}{\sg^2}\)\ud\HH^2,
$$
there exists $ \W\in H^1(B_1\backslash B_{1-\lda},\Ss_0) $, satisfying
\begin{gather*}
\W(x)=\U(x)\text{ and }\W((1-\lda)x)=\V(x)\text{ for }\HH^2\text{-a.e. }x\in\pa B_1,\\
\dist(\W(x),\cN)\leq C\sg^{1-\beta}\lda^{-1/2}K^{1/2}\text{ a.e. in }B_1\backslash B_{1-\lda},
\end{gather*}
and
$$
\int_{B_1\backslash B_{1-\lda}}|\na\W|^2\ud x\leq C\lda(1+\sg^2\lda^{-2})K.
$$
\end{lem}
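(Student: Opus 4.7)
The plan is to follow Luckhaus's original argument from \cite{L88}, adapted to the target manifold $\cN \subset \Ss_0$. The overall strategy is to build $\W$ on the thin shell $B_1 \setminus B_{1-\lda}$ as a piecewise affine interpolation (in the ambient space $\Ss_0$) between $\U$ and $\V$ on a fine grid, rescale suitably, and then estimate both the energy and the distance to $\cN$ by exploiting the scale $\sg$ of the data.

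First, I would rescale to the cylinder: via the bi-Lipschitz parametrization $(x,t) \mapsto (1-t)x$ from $\pa B_1 \times [0,\lda]$ to $B_1\backslash B_{1-\lda}$, the problem reduces (modulo factors bounded for $\lda \leq 1/2$) to constructing a map on $\Ss^2 \times [0,\lda]$ with boundary values $\U$ at $t=0$ and $\V$ at $t = \lda$. Cover $\Ss^2$ by a grid of bi-Lipschitz cubes of side $\sg$. Using a standard Fubini/Chebyshev averaging over translations (or rotations) of this grid, one selects a grid whose $1$-skeleton $\cG^{(1)}$ carries $\U$ and $\V$ with good control: the one-dimensional $H^1$ norms along $\cG^{(1)}$, as well as $\sg^{-2}\int_{\cG^{(1)}}|\U-\V|^2$, are all bounded by a dimensional constant times $\sg^{-1}K$. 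This slicing is the essential tool both for energy estimates and for pointwise control on $\cG^{(1)}$.

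Next, construct $\W$ cell by cell, lifting from the $0$-skeleton up. At each vertex $v$ of the grid, set $\W(v,t) = (1 - t/\lda)\U(v) + (t/\lda)\V(v)$, a segment in $\Ss_0$ with endpoints in $\cN$. Along each grid edge $e$ in $\Ss^2 \times \{t\}$, define $\W$ by the homogeneous ($0$-degree) extension from $\pa e$, extending into each face and then each 3-cell by the same procedure of radial averaging from the boundary. The energy bound follows by integrating each contribution: on each cell, the tangential gradient is bounded using the grid averaging and the 1D Sobolev estimates along $\cG^{(1)}$, producing $C\lda K$; the radial derivative satisfies $|\pa_t \W|^2 \lesssim \lda^{-2}|\U-\V|^2$, whose integral over a cell produces the correction $\sg^2 \lda^{-1} K$, hence the factor $(1+\sg^2\lda^{-2})$ after multiplying by $\lda$.

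The key obstacle — and the reason for the exponent $\beta$ — is the pointwise bound $\dist(\W,\cN) \leq C\sg^{1-\beta}\lda^{-1/2}K^{1/2}$. On each cell, $\W$ is a convex combination in $\Ss_0$ of finitely many vertex values $\U(v_i),\V(v_j) \in \cN$, so $\dist(\W,\cN)$ is at most the diameter of this vertex set plus $\sup|\U-\V|$ on the neighbouring cells. The vertex oscillation is governed by the oscillation of $\U,\V$ on balls of radius $\sg$ in $\Ss^2$. Since $H^1(\Ss^2) \not\hookrightarrow L^\infty$, one cannot take $\beta = 1/2$; instead, using the 1D Sobolev inequality $H^1 \hookrightarrow C^{0,1/2}$ along $\cG^{(1)}$ together with the $L^p$-averaging step ($p$ finite, tending to $\infty$ as $\beta \downarrow 1/2$), the oscillation of the selected restriction to $\cG^{(1)}$ is bounded by $C\sg^{1/2}$ times a norm controlled by $\sg^{-\beta}K^{1/2}$ on the good grid, giving the claimed $\sg^{1-\beta}K^{1/2}$ after the extra loss. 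The factor $\lda^{-1/2}$ reflects the Fubini selection in the $t$-variable of a good slice where the radial $L^2$-norms are controlled. Carrying this bookkeeping through the three stages of skeletal extension — and verifying it is preserved under the convex interpolation in $\Ss_0$ — is the technical heart of the argument.
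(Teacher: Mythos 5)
The paper does not supply a proof of Lemma~\ref{Luckhaus1}: it simply cites it as \cite[Lemma~1]{L88}, and the statement is a direct transcription of Luckhaus's lemma with the abstract target manifold specialized to $\cN\subset\Ss_0$. So there is no ``paper's own proof'' to compare against; the relevant question is whether your sketch is a faithful reconstruction of Luckhaus's argument and whether it applies verbatim to $\cN$.

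Your outline captures the right structure: rescale the shell $B_1\setminus B_{1-\lda}$ to the cylinder $\Ss^2\times[0,\lda]$, cover $\Ss^2$ by a cubical grid of mesh $\sg$, use Fubini--Chebyshev averaging over grid translations to pick a ``good'' $1$-skeleton carrying controlled $H^1$ and $L^2$ data, interpolate linearly in the ambient space $\Ss_0$ on the skeleton times $[0,\lda]$, and fill in higher-dimensional cells by degree-zero homogeneous extension. This argument only uses the linear structure of the ambient Euclidean space and the compactness of the target, so it goes through for $\cN$ unchanged --- a point worth stating explicitly, since it is the reason the paper can simply cite Luckhaus without modification.

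Two concrete remarks. First, you write that you ``set $\W(v,t)=(1-t/\lda)\U(v)+(t/\lda)\V(v)$ at each vertex $v$,'' but pointwise values of $\U,\V$ at isolated points of $\Ss^2$ are not defined for $H^1$ maps; the construction must start from the one-skeleton, on which the averaged restrictions of $\U,\V$ are $H^1$ of one variable and hence continuous. This is only a phrasing issue, but it is exactly where the averaging is load-bearing and should not be elided. Second, and more seriously, your account of the pointwise distance bound does not close as written: you claim the oscillation on the good $1$-skeleton is ``$C\sg^{1/2}$ times a norm controlled by $\sg^{-\beta}K^{1/2}$,'' and then assert this gives $\sg^{1-\beta}K^{1/2}$. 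But $\sg^{1/2}\cdot\sg^{-\beta}=\sg^{1/2-\beta}$, which for $\beta>1/2$ has a \emph{negative} exponent and does not vanish as $\sg\to 0^+$. The actual derivation is more delicate: one cannot simply combine the $1$D H\"older estimate on a single edge with the averaged $H^1$ bound on the whole skeleton, because the averaging only controls a sum over edges, not the supremum. The exponent $\beta$ enters through an interpolation (or an $L^q$ maximal-function step with $q<\infty$) that trades integrability for a positive power of $\sg$, and the bookkeeping produces $\sg^{1-\beta}$, not $\sg^{1/2-\beta}$. The $\lda^{-1/2}$ prefactor comes separately from the $t$-slicing. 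If you intend to reconstruct the proof rather than cite \cite{L88}, this is the step that must actually be carried out.
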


In the rest of this paper, we define $ h(\ol{\va}):=\ol{\va}^{1/2}\log(1/\ol{\va}) $ with $ 0<\ol{\va}<1 $.

\begin{prop}\label{Luckhaus2}
Assume $ \{\sg_{\ol{\va}}\}_{0<\ol{\va}<1} $ satisfies $ \sg_{\ol{\va}}>0 $ and $ \lim_{\ol{\va}\to 0^+}\sg_{\ol{\va}}=0 $. Let $ \U_{\ol{\va}},\V_{\ol{\va}}\in H^1(\pa B_1,\Ss_0) $. Assume that for any $ 0<\ol{\va}<1 $,
\begin{gather}
\|\U_{\ol{\va}}\|_{L^{\ift}(\pa B_1)}\leq C_0,\label{Uolvabound}\\
\V_{\ol{\va}}\in\cN,\,\,\HH^2\text{-a.e. }x\in\pa B_1,\nn\\
\int_{\pa B_1}\(|\na_{\pa B_1}\U_{\ol{\va}}|^2+\f{1}{\ol{\va}^2}f_b(\U_{\ol{\va}})+|\na_{\pa B_1}\V_{\ol{\va}}|^2+\f{|\U_{\ol{\va}}-\V_{\ol{\va}}|^2}{\sg_{\ol{\va}}^2}\)\ud\HH^2\leq C_1.\label{BoundC1Uva}
\end{gather}
for some $ C_0,C_1>0 $. Set 
$$ 
\nu_{\ol{\va}}=h(\ol{\va})+(h^{1/2}(\ol{\va})+\sg_{\ol{\va}})^{1/4}(1-h(\ol{\va})).
$$
There exist a constant $ 0<\ol{\va}_1<1 $ depending only on $ \cA,C_0,C_1 $, and for $ 0<\ol{\va}<\ol{\va}_1 $, a function $ \W_{\ol{\va}}\in H^1(B_1\backslash B_{1-\nu_{\ol{\va}}},\Ss_0) $ such that
\begin{gather}
\W_{\ol{\va}}(x)=\U_{\ol{\va}}(x)\text{ and }\W_{\ol{\va}}((1-h(\ol{\va}))x)=\V_{\ol{\va}}(x)\text{ for }\HH^2\text{-a.e. }x\in\pa B_1\\
E_{\ol{\va}}(\W_{\ol{\va}},B_1\backslash B_{1-\nu_{\ol{\va}}})\leq C\nu_{\ol{\va}},
\end{gather}
where $ C>0 $ depends only on $ \cA,C_0 $, and $ C_1 $.
\end{prop}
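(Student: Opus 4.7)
The plan is to build $\W_{\ol{\va}}$ by concatenating, from $\pa B_1$ inward, three radial pieces: a very thin outer collar in which $\U_{\ol{\va}}$ is interpolated to a nearby $\cN$-valued map $\wt{\U}_{\ol{\va}}$; a Luckhaus shell of width $h(\ol{\va})$ produced by Lemma \ref{Luckhaus1} followed by a pointwise retraction onto $\cN$ via $\varrho$; and finally a homogeneous radial extension of $\V_{\ol{\va}}$ filling the inner slab of thickness $\nu_{\ol{\va}}-h(\ol{\va})$.

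The first ingredient is an $\cN$-valued approximation $\wt{\U}_{\ol{\va}}$ of $\U_{\ol{\va}}$ on $\pa B_1$. The bulk hypothesis together with Corollary \ref{fBc} yields $\int_{\pa B_1}\dist^2(\U_{\ol{\va}},\cN)\ud\HH^2\leq C\ol{\va}^2$, so the projection $\varrho\circ\U_{\ol{\va}}$ is well defined outside a bad set $E_{\ol{\va}}$ of measure $O(\ol{\va}^2)$. Setting $\wt{\U}_{\ol{\va}}$ equal to $\varrho\circ\U_{\ol{\va}}$ on the good set and extending by a controlled Lipschitz $\cN$-valued map across $E_{\ol{\va}}$ yields $\wt{\U}_{\ol{\va}}\in H^1(\pa B_1,\cN)$ with $\|\na_{\pa B_1}\wt{\U}_{\ol{\va}}\|_{L^2}\leq C$ and $\|\U_{\ol{\va}}-\wt{\U}_{\ol{\va}}\|_{L^2(\pa B_1)}\leq C\ol{\va}$.

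In the shell $B_1\bs B_{1-h(\ol{\va})}$ I would apply Lemma \ref{Luckhaus1} to $\wt{\U}_{\ol{\va}}$ and $\V_{\ol{\va}}$ with $\lda=h(\ol{\va})$ and $\sg=\sg_{\ol{\va}}+\ol{\va}$, so that $K\leq C$ via the triangle inequality $\|\wt{\U}-\V\|_{L^2}\leq\|\U-\V\|_{L^2}+\|\U-\wt{\U}\|_{L^2}\leq C(\sg_{\ol{\va}}+\ol{\va})$. The output $\W_0$ matches $\wt{\U}_{\ol{\va}}$ on $\pa B_1$ and $\V_{\ol{\va}}$ on $\pa B_{1-h(\ol{\va})}$ (after the prescribed rescaling), with $\int|\na\W_0|^2\leq Ch$ and $\dist(\W_0,\cN)\leq C\sg^{1-\beta}h^{-1/2}K^{1/2}$ almost everywhere. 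For $\beta\in(1/2,1)$ chosen close to $1$ and $\ol{\va}$ small, this pointwise distance drops below $\delta_0$, so $\W_1:=\varrho\circ\W_0$ is $\cN$-valued with the same boundary traces (since $\wt{\U}_{\ol{\va}},\V_{\ol{\va}}\in\cN$), $|\na\W_1|\leq C|\na\W_0|$ by Lemma \ref{varrhop}, and $f_b(\W_1)\equiv 0$. I would then replace $\W_1$ in a very thin outer collar $B_1\bs B_{1-\rho_{\ol{\va}}}$, with $\rho_{\ol{\va}}\simeq\ol{\va}$, by the radial interpolation between $\U_{\ol{\va}}$ on $\pa B_1$ and $\wt{\U}_{\ol{\va}}$ on $\pa B_{1-\rho_{\ol{\va}}}$. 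Using Corollary \ref{fBc} along the segment from $\wt{\U}=\varrho\circ\U$ to $\U$, the radial and tangential gradients contribute $O(\ol{\va}^2/\rho_{\ol{\va}})+O(\rho_{\ol{\va}})$ and the bulk contributes $O(\rho_{\ol{\va}})$, all $o(\nu_{\ol{\va}})$. Finally, in $B_{1-h(\ol{\va})}\bs B_{1-\nu_{\ol{\va}}}$ take the $0$-homogeneous extension $x\mapsto\V_{\ol{\va}}(x/|x|)$, which is $\cN$-valued, matches the required trace on $\pa B_{1-h(\ol{\va})}$, and contributes $\lesssim(\nu_{\ol{\va}}-h(\ol{\va}))\|\na_{\pa B_1}\V_{\ol{\va}}\|_{L^2}^2\leq C\nu_{\ol{\va}}$.

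The principal obstacle is the parameter balance in the Luckhaus step: the pointwise distance bound $C\sg^{1-\beta}h^{-1/2}K^{1/2}$ must fall below the validity radius $\delta_0$ of $\varrho$, while simultaneously $K$ must remain bounded—in particular the cost $\|\U-\wt{\U}\|_{L^2}^2/\sg^2$ of forcing $\U_{\ol{\va}}$ into $\cN$ has to be absorbed. Checking these constraints is exactly what dictates the peculiar form $\nu_{\ol{\va}}=h(\ol{\va})+(h^{1/2}(\ol{\va})+\sg_{\ol{\va}})^{1/4}(1-h(\ol{\va}))$: the extra buffer $\sim(h^{1/2}+\sg)^{1/4}$ provides the slack for the inner homogeneous extension and for all residual collar and projection errors without violating the final energy estimate $E_{\ol{\va}}(\W_{\ol{\va}},B_1\bs B_{1-\nu_{\ol{\va}}})\leq C\nu_{\ol{\va}}$.
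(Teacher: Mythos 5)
Your plan has the right shape — build $\W_{\ol{\va}}$ from three concentric radial pieces and project the Luckhaus shell onto $\cN$ — but two of the steps contain genuine errors.

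First, the choice $\lda=h(\ol{\va})$ in the application of Lemma~\ref{Luckhaus1} does not give a small pointwise distance. With $K\leq C$ and $\sg=\sg_{\ol{\va}}+\ol{\va}$ the bound reads $\dist(\W_0,\cN)\leq C\sg^{1-\beta}h(\ol{\va})^{-1/2}$, and the only assumption on $\sg_{\ol{\va}}$ is that it tends to zero, possibly very slowly. If $\sg_{\ol{\va}}$ decays slower than $h(\ol{\va})^{1/(2(1-\beta))}$, this bound blows up as $\ol{\va}\to0^+$, so $\varrho\circ\W_0$ is not defined. Your remark that taking $\beta$ close to $1$ helps is exactly backwards: $\sg^{1-\beta}\to1$ as $\beta\to1^-$, making the bound worse. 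The paper instead takes $\beta=3/4$ and sets the shell width to $\lda=\sg_{\ol{\va},1}^{1/4}$ with $\sg_{\ol{\va},1}=h^{1/2}(\ol{\va})+\sg_{\ol{\va}}$; then $\sg_{\ol{\va},1}^{1/4}\sg_{\ol{\va},1}^{-1/8}=\sg_{\ol{\va},1}^{1/8}\to0$ and the gradient term is $\lda(1+\sg^2\lda^{-2})K\simeq\sg_{\ol{\va},1}^{1/4}$, matching the buffer $(h^{1/2}+\sg)^{1/4}(1-h)$ built into $\nu_{\ol{\va}}$. Your own choice $\lda=h$ is inconsistent with the form of $\nu_{\ol{\va}}$ you are asked to achieve.

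Second, the construction of the $\cN$-valued approximation $\wt{\U}_{\ol{\va}}$ is asserted rather than proved. Having $\int_{\pa B_1}\dist^2(\U_{\ol{\va}},\cN)\,\ud\HH^2\leq C\ol{\va}^2$ controls the \emph{measure} of the bad set but says nothing about whether $\varrho\circ\U_{\ol{\va}}$ admits an $\cN$-valued $H^1$ extension across it with controlled tangential energy. That extension problem has a topological component (the restriction of $\varrho\circ\U_{\ol{\va}}$ to the boundaries of the cells covering the bad set must be homotopically trivial in $\cN$, which is nonabelian with $\pi_1(\cN)\cong Q_8$) and an energy component; this is precisely the content that Lemma~\ref{Luckhauslemma} supplies through its grid construction and the clearing-out argument that rules out nontrivial homotopy classes on small grid cells. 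The paper's proof simply invokes Lemma~\ref{Luckhauslemma} to produce the intermediate $\cN$-valued map $\U_{\ol{\va},1}$ together with the outer shell $\W_{\ol{\va},1}$ of thickness $h(\ol{\va})$ and the crucial $L^2$-proximity estimate $\|\U_{\ol{\va},1}-\V_{\ol{\va}}\|_{L^2}\leq C(h^{1/2}(\ol{\va})+\sg_{\ol{\va}})$, and then applies Lemma~\ref{Luckhaus1} with the parameters above. If you replace your hand-made $\wt{\U}_{\ol{\va}}$ and thin collar by a direct appeal to Lemma~\ref{Luckhauslemma}, and correct the Luckhaus parameters, your outline becomes the paper's proof.
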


\begin{lem}\label{Luckhauslemma}
For all $ C_0>0 $, there exist $ 0<\eta_1,\ol{\va}_{1}<1,C>0 $ depending only on $ \cA $ and $ C_0 $ with the following property. For any $ 0<\eta<\eta_1 $, $ 0<\ol{\va}<\ol{\va}_1 $ and $ \U_{\ol{\va}}\in(H^1\cap L^{\ift})(\pa B_1,\Ss_0) $ satisfying
\begin{align}
 E_{\ol{\va}}(\U_{\ol{\va}},\pa B_1)&\leq\eta\log\f{1}{\ol{\va}},\label{log1}\\
\|\U_{\ol{\va}}\|_{L^{\ift}(\pa B_1)}&\leq C_0,\label{Boun} 
\end{align}
there exist $ \V_{\ol{\va}}\in H^1(\pa B_1,\cN) $ and $ \W_{\ol{\va}}\in H^1(B_1\backslash B_{1-h(\ol{\va})},\Ss_0) $ such that
\begin{gather}
\W_{\ol{\va}}(x)=\U_{\ol{\va}}(x)\text{ and }\W_{\ol{\va}}((1-h(\ol{\va}))x)=\V_{\ol{\va}}(x)\text{ for }\HH^2\text{-a.e. }x\in\pa B_1\label{interpoL}\\
\f{1}{2}\int_{\pa B_1}|\na_{\pa B_1}\V_{\ol{\va}}|^2\ud\HH^2\leq C E_{\ol{\va}}(\U_{\ol{\va}},\pa B_1),\label{VboundL}\\
E_{\ol{\va}}(\W_{\ol{\va}},B_1\backslash B_{1-h(\ol{\va})})\leq Ch(\ol{\va}) E_{\ol{\va}}.(\U_{\ol{\va}},\pa B_1)\label{WboundL}\\
\|\U_{\ol{\va}}-\V_{\ol{\va}}\|_{L^2(\pa B_1)}\leq Ch^{1/2}(\ol{\va}) E_{\ol{\va}}^{1/2}(\U_{\ol{\va}},\pa B_1).\label{UolvaVolvami}
\end{gather}
\end{lem}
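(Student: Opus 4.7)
My plan is to follow the classical Luckhaus strategy adapted to the sextic potential: first construct the nearby $\cN$-valued map $\V_{\ol{\va}}$ by smoothing-then-projecting $\U_{\ol{\va}}$, and then apply Luckhaus' annular interpolation Lemma \ref{Luckhaus1} to build the interpolant $\W_{\ol{\va}}$ on the thin annulus $B_1\setminus B_{1-h(\ol{\va})}$.

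For the first stage, the bulk part of \eqref{log1} gives $\int_{\pa B_1} f_b(\U_{\ol{\va}})\,\ud\HH^2 \leq \ol{\va}^2 \eta\log(1/\ol{\va})$. Combined with Lemma \ref{fBl}, this forces the exceptional set $S_{\mathrm{bad}} := \{x\in\pa B_1 : \dist(\U_{\ol{\va}}(x),\cN)\geq \delta_0\}$ to have $\HH^2$-measure at most $C\ol{\va}^2 \eta\log(1/\ol{\va})$, while on its complement Corollary \ref{fBc} furnishes the $L^2$ estimate $\int \dist^2(\U_{\ol{\va}},\cN)\,\ud\HH^2 \lesssim \ol{\va}^2 E_{\ol{\va}}(\U_{\ol{\va}},\pa B_1)$. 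I would then mollify $\U_{\ol{\va}}$ on $\pa B_1$ at a scale comparable to $h(\ol{\va})$, using a Chebyshev/Vitali selection to pick a specific scale at which the mollification $\widetilde{\U}_{\ol{\va}}$ lands pointwise in the tubular neighborhood $\{\dist(\cdot,\cN)<\delta_0\}$ where the nearest-point projection $\varrho$ of Lemma \ref{varrhop} is defined and Lipschitz. Setting $\V_{\ol{\va}} := \varrho\circ\widetilde{\U}_{\ol{\va}}$, the Lipschitz estimate \eqref{Dvarrhoestimate} converts gradient control into \eqref{VboundL}, while the standard Poincar\'{e}-type mollification estimate combined with the $L^2$-distance bound yields \eqref{UolvaVolvami}.

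For the second stage, with $\U_{\ol{\va}},\V_{\ol{\va}}$ in hand I would apply Lemma \ref{Luckhaus1} on $B_1\setminus B_{1-h(\ol{\va})}$ with $\lda := h(\ol{\va})$ and $\sg := h^{1/2}(\ol{\va})$ (so that the $|\U_{\ol{\va}}-\V_{\ol{\va}}|^2/\sg^2$ term in $K$ is controlled by $E_{\ol{\va}}(\U_{\ol{\va}},\pa B_1)$ thanks to \eqref{UolvaVolvami}, hence $K \lesssim E_{\ol{\va}}(\U_{\ol{\va}},\pa B_1)$). The boundary matching \eqref{interpoL} is built into the conclusion of Lemma \ref{Luckhaus1}, and its gradient bound gives $\int|\na\W_{\ol{\va}}|^2 \lesssim h(\ol{\va})K \lesssim h(\ol{\va})E_{\ol{\va}}(\U_{\ol{\va}},\pa B_1)$. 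For the bulk part of $E_{\ol{\va}}(\W_{\ol{\va}},B_1\setminus B_{1-h(\ol{\va})})$ I would use the pointwise bound $\dist(\W_{\ol{\va}},\cN) \leq C\sg^{1-\beta}\lda^{-1/2}K^{1/2}$ with $\beta$ chosen close to $1$; picking $\eta_1$ and $\ol{\va}_1$ small so that $\sg^{1-\beta}\lda^{-1/2}K^{1/2}<\delta_0$, Corollary \ref{fBc}'s quadratic growth $f_b \lesssim \dist^2(\cdot,\cN)$ converts the pointwise distance bound into the desired bulk estimate, completing \eqref{WboundL}.

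The main obstacle will be the coordination of scales in stage one: ensuring that the mollified map $\widetilde{\U}_{\ol{\va}}$ stays everywhere inside the tubular neighborhood of $\cN$ on which $\varrho$ is defined. The $L^2$-closeness derived from the energy bound is not enough by itself because $\varrho$ is not even continuous across the singular set $\cC_1\cup\cC_2$ (eigenvalue coincidences). Overcoming this needs a quantitative covering argument: the smallness of $|S_{\mathrm{bad}}|$ and the averaged $L^2$ estimate, together with the freedom to shift the mollification scale within a factor of two, will guarantee that some admissible scale produces a uniform pointwise neighborhood bound — and it is precisely the smallness threshold $\eta<\eta_1$ and $\ol{\va}<\ol{\va}_1$ in the hypothesis that ensures this selection succeeds.
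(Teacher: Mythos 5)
Your proposal misapplies Lemma \ref{Luckhaus1} in two ways that cannot be repaired within your framework. First, Lemma \ref{Luckhaus1} requires \emph{both} $\U$ and $\V$ to lie in $H^1(\pa B_1,\cN)$; in the present lemma $\U_{\ol{\va}}$ is only $\Ss_0$-valued, so the hypothesis is simply not met (Lemma \ref{Luckhaus1} is used in the paper only inside Proposition \ref{Luckhaus2}, where the input $\U_{\ol{\va},1}$ is already $\cN$-valued precisely because it comes out of the present lemma). Second, even setting that aside, your parameters $\lda=h(\ol{\va})$, $\sg=h^{1/2}(\ol{\va})$ give $\int|\na\W|^2\le C\lda(1+\sg^2\lda^{-2})K=C(h+1)K\sim CK$, not $ChK$, so \eqref{WboundL} does not follow; and the distance bound $\dist(\W,\cN)\le C\sg^{1-\beta}\lda^{-1/2}K^{1/2}=Ch^{-\beta/2}K^{1/2}$ diverges as $\ol{\va}\to 0^+$ for every $\beta\in(1/2,1)$, so you cannot control $\ol{\va}^{-2}f_b(\W)$ at all. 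The bulk term, not the Dirichlet term, is the whole point of a Luckhaus lemma for Landau--de Gennes energies.

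The deeper gap is in Stage 1: you cannot promote $L^2$-closeness of $\U_{\ol{\va}}$ to $\cN$ into uniform closeness of a mollification on the $2$-sphere, because $H^1(\Ss^2)\not\hookrightarrow C^0$ and $\cN$ is not convex. This is exactly why the paper's proof does not mollify at all; instead it builds a family of grids on $\pa B_1$ of mesh $h(\ol{\va})$ (Step 1), restricts to the $1$-skeleton where the $1$D Sobolev embedding $H^1\hookrightarrow C^{0,1/2}$ gives genuine oscillation control on $\dist(\U_{\ol{\va}},\cN)$ (Step 2), projects to define $\V_{\ol{\va}}$ only on the $1$-skeleton, and then extends across each $2$-cell by Lemma \ref{k2trivial} \emph{after} verifying that $\V_{\ol{\va}}|_{\pa K_{i,2}^{\ol{\va}}}$ is homotopically trivial --- a step that requires the Jerrard--Sandier lower bound Corollary \ref{lowerboundcor} and is the \emph{only} place the smallness hypothesis $\eta<\eta_1$ is used. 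Your proposal never engages with this topological obstruction; if some $2$-cell boundary were non-trivial, any $\cN$-valued extension would cost $\gtrsim\log(1/\ol{\va})$ energy and \eqref{VboundL} would fail. Finally, the interpolant $\W_{\ol{\va}}$ is built directly, cell by cell (linear interpolation across the $1$-skeleton of the cylindrical grid and homogeneous extension in $3$-cells), with the bulk term controlled through the convexity estimate \eqref{fBconvex} along the interpolation segment --- not through Lemma \ref{Luckhaus1}.
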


\begin{proof}[Proof of Proposition \ref{Luckhaus2}]
Let $ 0<\eta_1<1 $ be given by Lemma \ref{Luckhauslemma} and choose $ 0<\ol{\va}_1<1 $ such that $ \eta_1\log(1/\ol{\va}_1)\geq C_1 $. By \eqref{BoundC1Uva}, we have $ E_{\ol{\va}}(\U_{\ol{\va}},\pa B_1)\leq C_1\leq\eta_1\log(1/\ol{\va}) $ for any $ \ol{\va}\in(0,\ol{\va}_1) $. In view of \eqref{Uolvabound}, we can take smaller $ 0<\ol{\va}_1<1 $ and apply Lemma \ref{Luckhauslemma} to get $ \U_{\ol{\va},1}\in H^1(\pa B_1,\cN) $ and $ \W_{\ol{\va},1}\in H^1(B_1\backslash B_{1-h(\ol{\va})},\Ss_0) $ such that
\begin{gather*}
\W_{\ol{\va},1}=\U_{\ol{\va}}(x)\text{ and }\W_{\ol{\va},1}(x-h(\ol{\va})x)=\U_{\ol{\va}}(x)\text{ for }\HH^2\text{-a.e. }x\in\pa B_1,\\
\int_{\pa B_1}|\na_{\pa B_1}\U_{\ol{\va},1}|^2\ud\HH^2\leq C E_{\ol{\va}}(\U_{\ol{\va}},\pa B_1)\leq C,\\
 E_{\ol{\va}}(\W_{\ol{\va},1},B_1\backslash B_{1-h(\ol{\va})})\leq Ch(\ol{\va}) E_{\ol{\va}}(\U_{\ol{\va}},\pa B_1)\leq Ch(\ol{\va}).
\end{gather*}
Moreover, by \eqref{UolvaVolvami}, we have $ \|\U_{\ol{\va},1}-\V_{\ol{\va}}\|_{L^2(\pa B_1)}\leq C\sg_{\ol{\va},1} $ with $ \sg_{\ol{\va},1}:=h^{1/2}(\ol{\va})+\sg(\ol{\va}) $. We get
$$
\int_{\pa B_1}\(|\na_{\pa B_1}\U_{\ol{\va},1}|^2+|\na_{\pa B_1} \V_{\ol{\va}}|^2+\f{|\U_{\ol{\va},1}-\V_{\ol{\va}}|^2}{\sg_{\ol{\va},1}^2}\)\ud\HH^2\leq C.
$$
Apply Lemma \ref{Luckhaus1} to $ \beta=3/4 $, $ \sg=\sg_{\ol{\va},1} $ and $ \lda=\sg_{\ol{\va},1}^{1/4} $. By scaling, there exists a map $ \W_{\ol{\va},2}\in H^1(B_{1-h(\ol{\va})}\backslash B_{1-\nu_{\ol{\va}}},\Ss_0) $ such that
$$
\int_{B_{1-h(\ol{\va})}\backslash B_{1-\nu_{\ol{\va}}}}|\na\W_{\ol{\va},2}|^2\ud x\leq C\sg_{\ol{\va},1}^{1/4}(1-h(\ol{\va})),\quad
\sup_{B_{1-h(\ol{\va})}\backslash B_{1-\nu_{\ol{\va}}}}\dist(\W_{\ol{\va},2},\cN)\leq C\sg_{\ol{\va},1}^{1/8}
$$
Since $ \sg_{\ol{\va},1}\to 0^+ $ when $ \ol{\va}\to 0^+ $, we can choose sufficiently small $ 0<\ol{\va}_1<1 $ such that $ \varrho\circ\W_{\va,2} $ is well defined. Now we define
$$
\W_{\ol{\va}}:=\left\{\begin{aligned}
&\W_{\ol{\va},1}&\text{ if }&x\in B_1\backslash B_{1-h(\ol{\va})},\\
&\varrho\circ\W_{\ol{\va},2}&\text{ if }&x\in B_{1-h(\ol{\va})}\backslash B_{1-\nu_{\ol{\va}}}.
\end{aligned}\right.
$$
Obviously, we have $ \W_{\ol{\va}}\in H^1(B_1\backslash B_{1-\nu_{\ol{\va}}},\Ss_0) $, $
\W_{\ol{\va}}(x)=\U_{\ol{\va}}(x) $ and $ \W_{\ol{\va}}(x-h(\ol{\va})x)=\V_{\ol{\va}}(x) $ for $ \HH^2 $-a.e. $ x\in\pa B_1 $. By simple calculations, there holds $
E_{\ol{\va}}(\W_{\ol{\va}},B_1\backslash B_{1-\nu_{\ol{\va}}})\leq C\nu_{\ol{\va}} $, which completes the proof.
\end{proof}

\begin{proof}[Proof of Lemma \ref{Luckhauslemma}]
We will divide the proof into several steps. Here the proof is given in sketch and we will give more details in the proof of Lemma \ref{Luckhauscylinder1}, which is another Luckhaus-type lemma. \smallskip

\noindent\textbf{\underline{Step 1.} Constructions of good family of grids of size $ h(\ol{\va})=\ol{\va}^{1/2}\log(1/\ol{\va}) $ on $ \pa B_1 $.} We claim that there exists $ \ol{\va}_1>0 $ and a family of good grids $ \mathcal{G}=\{\mathcal{G}^{\ol{\va}}\}_{0<\ol{\va}<\ol{\va}_1} $ with respect to $ \{\U_{\ol{\va}}\}_{0<\ol{\va}<\ol{\va}_1} $ satisfying \eqref{log1} and \eqref{Boun} with $ 0<\eta\leq 1 $ and $ C_0>0 $. Precisely speaking, there exists $ C_{\mathcal{G}} $, being an absolute constant such that
\be
\mathcal{G}^{\ol{\va}}=\{K_{i,j}^{\ol{\va}}\}_{1\leq i\leq k_j^{\ol{\va}},0\leq j\leq 2},\quad\pa B_1=\bigcup_{j=0}^2\bigcup_{i=1}^{k_j^{\ol{\va}}}K_{i,j}^{\ol{\va}}\label{G0}
\ee
and the following properties hold.
\begin{enumerate}
\item For $ 0<\ol{\va}<\ol{\va}_1 $, $ 1\leq i\leq k_j^{\ol{\va}} $ and $ j=0,1,2 $, $ K_{i,j}^{\ol{\va}} $ are mutually disjoint and there exist bilipschitz homeomorphism $ \vp_{i,j}^{\ol{\va}}:K_{i,j}^{\ol{\va}}\to B_{h(\ol{\va})}^j $ such that
\be
\|\na_{K_{i,j}^{\ol{\va}}}\vp_{i,j}^{\ol{\va}}\|_{L^{\ift}(K_{i,j}^{\ol{\va}})}+\|\na(\vp_{i,j}^{\ol{\va}})^{-1}\|_{L^{\ift}(B_{h(\ol{\va})}^j)}\leq C_{\mathcal{G}}.\label{G1}
\ee
\item For any $ p\in\{1,2,...,k_1^{\ol{\va}}\} $,
\be
\#\{q\in\left\{1,2,...,k_2^{\ol{\va}}\}:K_{p,1}^{\ol{\va}}\subset K_{q,2}^{\ol{\va}}\right\}\leq C_{\mathcal{G}}.\label{G2}
\ee
\item Define $ R_j^{\ol{\va}} $ be the $ j $-skeleton of $ \mathcal{G}^{\ol{\va}} $, i.e., $
R_j^{\ol{\va}}=\cup_{i=1}^{k_j^{\ol{\va}}}K_{i,j}^{\ol{\va}} $, $ j=0,1,2. $
There holds
\be
E_{\ol{\va}}(\U_{\ol{\va}},R_1^{\ol{\va}})\leq C_{\mathcal{G}}h^{-1}(\ol{\va}) E_{\ol{\va}}(\U_{\ol{\va}},\pa B_1).\label{G3}
\ee
\item There holds
\be
\int_{R_1^{\ol{\va}}}f_b(\U_{\ol{\va}})\ud \HH^1 \leq C_{\mathcal{G}}h^{-1}(\ol{\va})\int_{\pa B_1}f_b(\U_{\ol{\va}})\ud\HH^2.\label{G4}
\ee    
\end{enumerate}
To show the existence, we consider the grid (satisfying \eqref{G0} and \eqref{G1}) of $ \pa[0,1]^3 $ i.e, spanned by the points $
(\lceil h^{-1}(\ol{\va})\rceil^{-1} \Z^3)\cap \pa[0,1]^3 $. ($ \lceil x\rceil=\min\{k\in\Z:k\geq x\} $). Up to a bilipschitz homeomorphism, it induces a grid $ \FF^{\ol{\va}} $ on $ \pa B_1 $ such that \eqref{G1} and \eqref{G2} are true. Denote by $ T_1^{\ol{\va}} $ the $ 1 $-skeleton of $ F^{\ol{\va}} $. By average arguments, there exists $ \w\in\mathrm{SO}(3) $ such that $ \mathcal{G}^{\ol{\va}}=\{\w(K):K\in F^{\ol{\va}}\} $ satisfies \eqref{G3} and \eqref{G4}. This is exactly a good family of grids of size $ h(\ol{\va}) $ with respect to $ \U_{\ol{\va}} $. \smallskip

\noindent\textbf{\underline{Step 2.} Proving that $ \U_{\ol{\va}} $ is sufficiently close to $ \cN $ on $ R_1^{\ol{\va}} $ as $ \ol{\va}\to 0^+ $.} In view of \eqref{fBnond}, there exists $ c_1,c_2,\delta_0>0 $ and a continuous function $ \psi:[0,+\ift)\to\R $ such that
\be
\left\{\begin{aligned}
&\psi(s)=c_1s^2&\text{ for }&0\leq s<\delta_0,\\
&0<\psi(s)\leq C&\text{ for }&s\geq\delta_0,\\
&\psi(\dist(\PP,\cN))\leq f_b(\PP)&\text{ for }&\text{any }\PP\in\Ss_0.
\end{aligned}\right.\label{psifunction1}
\ee
Define $ F(s):=\int_0^s\psi^{1/6}(t)\ud t $ and $ d_{\ol{\va}}:=\dist(\U_{\ol{\va}},\cN) $. We have $ |\na_{R_1^{\ol{\va}}}d_{\ol{\va}}|\leq|\na_{R_1^{\ol{\va}}}\U_{\ol{\va}}| $ and then $ d_{\ol{\va}}\in H^1(B_1,\R_+) $. In view of \eqref{Boun}, \eqref{G3}, and the Young inequality, we get
\be
C\log\f{1}{\ol{\va}}\geq\ol{\va}^{-1/2}h(\ol{\va})\int_{R_1^{\ol{\va}}}|\na_{R_1^{\ol{\va}}}F(d_{\ol{\va}})|^{3/2}\ud\HH^1.\label{logvabou}
\ee
For $ 1 $-cell $ K_{i,1}^{\ol{\va}}\in \mathcal{G}^{\ol{\va}} $ with $ 1\leq i\leq k_1^{\ol{\va}} $, by using Sobolev embedding theorem, $
(\osc_{K_{i,1}^{\ol{\va}}}F(d_{\ol{\va}}))^{3/2}\leq C\ol{\va}^{1/4}\log(1/\ol{\va}) $. By this, we have $ \osc_{K_{i,1}^{\ol{\va}}}F(d_{\ol{\va}})\to 0^+ $ when $ \ol{\va}\to 0^+ $ and then 
\be
\lim_{\ol{\va}\to 0}\osc_{R_1^{\ol{\va}}}d_{\ol{\va}}=0.\label{R1va0}
\ee
On the other hand, by \eqref{Boun} and \eqref{G3}, we can obtain that for any $ 1 $-cell $ K_{i,1}^{\ol{\va}}\in \mathcal{G}^{\ol{\va}} $, with $ 1\leq i\leq k_1^{\ol{\va}} $,
$$
\lim_{\ol{\va}\to 0}\dashint_{K_{i,1}^{\ol{\va}}}\psi(d_{\ol{\va}})\ud\HH^1\leq\limsup_{\ol{\va}\to 0}\f{1}{h(\ol{\va})}\int_{R_1^{\ol{\va}}}f_b(U_{\ol{\va}})\ud\HH^1=0.
$$
This, together with the construction of $ \psi $ and \eqref{Boun}, implies that
$$
\lim_{\ol{\va}\to 0}\sup_{K_{i,1}^{\ol{\va}}\subset R_1^{\ol{\va}},\,\,1\leq i\leq k_1^{\ol{\va}}}\dashint_{K_{i,1}^{\ol{\va}}}d_{\ol{\va}}\ud\HH^1=0.
$$
Combining \eqref{R1va0}, we have
\be
\lim_{\ol{\va}\to 0}\sup_{x\in R_1^{\ol{\va}}}\dist(\U_{\ol{\va}},\cN)=0.\label{R1va00}
\ee
\smallskip

\noindent\textbf{\underline{Step 3.} Constructions of $ \V_{\ol{\va}} $ and $ \W_{\ol{\va}} $.} By \eqref{R1va00}, there exists $ 0<\ol{\va}_1<1 $ such that
\be
\sup_{0<\ol{\va}<\ol{\va}_1}\sup_{x\in R_1^{\ol{\va}}}\dist(\U_{\ol{\va}},\cN)<\delta_0,\label{delta0Uva}
\ee
where $ \delta_0 $ is given by Corollary \ref{fBc}. We can define $ \V_{\ol{\va}}=\varrho(\U_{\ol{\va}}) $ on $ R_1^{\ol{\va}} $. Obviously, $ \V_{\ol{\va}}\in H^1(R_1^{\ol{\va}},\Ss_0) $ and satisfies
\be
\sup_{x\in R_1^{\ol{\va}}}|\U_{\ol{\va}}(x)-\V_{\ol{\va}}(x)|\leq\delta_0.\label{R1vauv}
\ee
Next, we claim that if $ 0<\eta<1 $ in \eqref{log1} is sufficiently small, then $ \V_{\ol{\va}}|_{\pa K_{i,2}^{\ol{\va}}}:\pa K_{i,2}^{\ol{\va}}\to\cN $ is homotopically trivial for any $ K_{i,2}^{\ol{\va}} $ being a $ 2 $-cell of $ \mathcal{G}^{\ol{\va}} $ with $ 1\leq i\leq k_2^{\ol{\va}} $. Indeed, if such claim is not true, there exists $ K_{i_0,2}^{\ol{\va}}\in R_2^{\ol{\va}} $ such that $ [\varrho\circ\U|_{\pa K_{i_0,2}^{\ol{\va}}}]_{\cN}\neq\rH_0 $. Meanwhile, we have a bilipschitz homeomorphism $ \vp^{\ol{\va}}:K_{i_0,2}^{\ol{\va}}\to B_{h(\ol{\va})}^2 $ satisfying \eqref{G1}. Without loss of generality, we assume $ K_{i_0,2}^{\ol{\va}}=B_{h(\ol{\va})}^2 $. Since in \eqref{delta0Uva}, $ \delta_0 $ is sufficiently small, $ \U_{\ol{\va}}\notin\cC_1\cup\cC_2 $ on $ \pa B_{h(\ol{\va})}^2 $ and $
\phi_0(\U_{\ol{\va}},\pa B_{h(\ol{\va})}^2)>1/2 $ for $ 0<\ol{\va}\in(0,\ol{\va}_1) $. Furthermore, we can assume that $ \ol{\va}<h(\ol{\va})/80 $. It follows from Corollary \ref{lowerboundcor} that
$$
 E_{\ol{\va}}(\U_{\ol{\va}},B_{h(\ol{\va})}^2)+h(\ol{\va}) E_{\ol{\va}}(\U_{\ol{\va}},\pa B_{h(\ol{\va})}^2)\geq C_1\log\f{h(\ol{\va})}{\ol{\va}}-C_2.
$$
This, together with \eqref{G3}, implies that
$$
E_{\ol{\va}}(\U_{\ol{\va}},\pa B_1)\geq C( E_{\ol{\va}}(\U_{\ol{\va}},B_{h(\ol{\va})})+h(\ol{\va}) E_{\ol{\va}}(\U_{\ol{\va}},\pa B_{h(\ol{\va})}^2))\geq C\log\f{1}{\ol{\va}}-C'.
$$
If $ 0<\eta<1 $ is sufficiently small, i.e., there exists $ \eta_1>0 $ such that if $ 0<\eta<\eta_1 $ and \eqref{log1} is true, there is a contradiction. This completes the proof of this claim.

By this claim, for any $ K_{i,2}^{\ol{\va}}\subset R_2^{\ol{\va}} $ with $ 1\leq i\leq k_2^{\ol{\va}} $, we can use Lemma \ref{k2trivial} and \eqref{G1} to find $ \V_{\ol{\va},K_{i,2}^{\ol{\va}}}\in H^1(K_{i,2}^{\ol{\va}},\cN) $ such that $ \V_{\ol{\va},K_{i,2}^{\ol{\va}}}=\V_{\ol{\va}} $ on $ \pa K_{i,2}^{\ol{\va}} $ and
$$
\int_{K_{i,2}^{\ol{\va}}}|\na_{K_{i,2}^{\ol{\va}}}\V_{\ol{\va},K_{i,2}^{\ol{\va}}}|^2\ud\HH^2\leq Ch(\ol{\va})\int_{\pa K_{i,2}^{\ol{\va}}}|\na_{\pa K_{i,2}^{\ol{\va}}}\V_{\ol{\va}}|^2\ud\HH^1.
$$
Define $ \V_{\ol{\va}}:\pa B_1\to\cN $ by
$$ 
\V_{\ol{\va}}(x)=\left\{\begin{aligned}
&(\varrho\circ\U_{\ol{\va}})(x)&\text{ if }&x\in R_1^{\ol{\va}},\\
&\V_{\ol{\va},K_{i,2}^{\ol{\va}}}(x)&\text{ if }&x\in K_{i,2}^{\ol{\va}}\subset R_2^{\ol{\va}}.
\end{aligned}\right.
$$
By \eqref{G2}, \eqref{G3} and the definition of $ \V_{\ol{\va}} $, we have
$$
\int_{\pa B_1}|\na_{\pa B_1}\V_{\ol{\va}}|^2\ud\HH^2\leq C E_{\ol{\va}}(\U_{\va},\pa B_1).
$$
Such $ \V_{\va} $ satisfies the property in Lemma \ref{Luckhauslemma}. Now, let us construct $ \W_{\ol{\va}} $. Set $ A_{\ol{\va}}=B_1\backslash B_{1-h(\ol{\va})} $. The grid $ \mathcal{G}^{\ol{\va}} $ induces a grid $ \wh{\mathcal{G}}^{\ol{\va}} $ on $ A_{\ol{\va}} $, whose cells are
$$
\wh{K}_{i,j+1}^{\ol{\va}}=\left\{x\in\R^3:1-h(\ol{\va})\leq|x|\leq 1,\,\,\f{x}{|x|}\in K_{i,j}^{\ol{\va}}\right\}\text{ for any }K_{i,j}^{\ol{\va}}\in \mathcal{G}^{\ol{\va}}.
$$
Here $ K_{i,j}^{\ol{\va}} $ is a cell of dimension $ j $, and $ \wh{K}_{i,j+1}^{\ol{\va}} $ has dimension $ j+1 $, $ j\in\{0,1,2\} $. Define $
\wh{R}_1^{\ol{\va}}=\cup_{i=1}^{\wh{k}_{j+1}^{\ol{\va}}}\wh{K}_{i,j+1}^{\ol{\va}} $. Define $ \W_{\ol{\va}} $ as follows
\begin{enumerate}
\item If $ x\in\pa B_1\cup\pa B_{1-h(\ol{\va})} $, $ \W_{\ol{\va}} $ is given by $ \U_{\ol{\va}},\V_{\ol{\va}} $
\item For $ x\in R_1^{\ol{\va}}\cup \wh{R}_1^{\ol{\va}} $,
\be
\W_{\ol{\va}}(x)=\f{1-|x|}{h(\ol{\va})}\U_{\ol{\va}}\(\f{x}{|x|}\)+\f{h(\ol{\va})-1+|x|}{h(\ol{\va})}\V_{\ol{\va}}\(\f{x}{|x|}\),\label{homogeDef}
\ee
\item For $ 3 $-cell $ \wh{K}_{i,3}^{\ol{\va}} $ of $ \mathcal{G}^{\ol{\va}} $ with $ 1\leq i\leq \wh{k}_3^{\ol{\va}} $, the $ \W_{\ol{\va}} $ is extended homogeneously. Indeed,
$$
\W_{\ol{\va}}(x)=\W_{\ol{\va}}\circ(\Phi_{\wh{K}_{i,3}^{\ol{\va}}})^{-1}\(\f{\Phi_{\wh{K}_{i,3}^{\ol{\va}}}(x)}{|\Phi_{\wh{K}_{i,3}^{\ol{\va}}}(x)|}\),
$$
where $ \Phi^{\ol{\va}}:\wh{K}_{i,3}^{\ol{\va}}\to B_{h(\ol{\va})} $ is a bilipschitz map with respect to $ \mathcal{G}^{\ol{\va}} $. 
\end{enumerate}
Consequently, $ \W_{\ol{\va}}\in H^1(\wh{K}_{i,3}^{\ol{\va}},\Ss_0) $ for any $ 1\leq i\leq\wh{k}_3^{\ol{\va}} $ and $ \W_{\ol{\va}}\in H^1(A_{\ol{\va}},\Ss_0) $. In view of \eqref{G1}, \eqref{G2} and simple calculations, we have
$$
 E_{\ol{\va}}(\W_{\ol{\va}},A_{\ol{\va}})\leq Ch(\ol{\va})( E_{\ol{\va}}(\W_{\ol{\va}},\pa B_1)+ E_{\ol{\va}}(\W_{\ol{\va}},\pa B_{1-h(\ol{\va})})+ E_{\ol{\va}}(\W_{\ol{\va}},\wh{R}_1^{\ol{\va}})).
$$
We claim that $ E_{\ol{\va}}(\W_{\ol{\va}},\wh{R}_1^{\ol{\va}})\leq C(\ol{\va}^2h^{-2}(\ol{\va})+1) E_{\ol{\va}}(\W_{\ol{\va}},\pa B_1) $. If this claim is true, then
$$
 E_{\ol{\va}}(\W_{\ol{\va}},A_{\ol{\va}})\leq Ch(\ol{\va})(\ol{\va}^2h^{-2}(\ol{\va})+1) E_{\ol{\va}}(\U_{\ol{\va}},\pa B_1),
$$
which completes the proof of Lemma \ref{Luckhauslemma}. It remains to show the claim. The proof of this claim follows from direct computations. Indeed, by \eqref{fBconvex}, \eqref{R1vauv} and \eqref{homogeDef}, we have
\be
\int_{\wh{R}_1^{\ol{\va}}}f_b(\W_{\ol{\va}})\ud\HH^2\leq Ch(\ol{\va})\int_{\wh{R}_1^{\ol{\va}}}f_b(\U_{\ol{\va}})\ud\HH^2.\label{fW}
\ee
Again by \eqref{fBc}, \eqref{homogeDef} and \eqref{R1vauv}, one can obtain
$$
\int_{\wh{R}_1^{\ol{\va}}}|\na_{\wh{R}_1^{\ol{\va}}}\W_{\ol{\va}}|^2\ud\HH^2\leq Ch^{-1}(\ol{\va})\int_{\wh{R}_1^{\ol{\va}}}|\U_{\ol{\va}}-\V_{\ol{\va}}|^2\ud\HH^1\leq Ch^{-1}(\ol{\va})\int_{R_1^{\ol{\va}}}f_b(\U_{\ol{\va}})\ud\HH^1.
$$
This, together with \eqref{G4} and \eqref{fW}, implies that
$$
E_{\ol{\va}}(\W_{\ol{\va}},\wh{R}_1^{\ol{\va}})\leq C(h^{-1}(\ol{\va})+\ol{\va}^{-2}h(\ol{\va}))\int_{R_1^{\ol{\va}}}f_b(\U_{\ol{\va}})\ud\HH^1\leq C(h^{-2}(\ol{\va})+\ol{\va}^{-2})\int_{\pa B_1}f_b(\U_{\ol{\va}})\ud\HH^2,
$$
completes the proof of the claim.
\end{proof}

\section{Minimizers in bounded energy regime}\label{boundedenergyregime}
\subsection{Local minimizers with bounded energy} 

In this subsection, we will prove Theorem \ref{main1}. The proof will be divided into three steps as follows.\smallskip

\noindent\textbf{\underline{Step 1.} $ H^1 $ convergence and the minimizing property of the limit.} In view of \eqref{assumptionbound1}, we have, $ \{\Q_{\va}\}_{0<\va<1} $ is uniformly bounded in $ H^1(\om,\Ss_0) $. There exist $ \va_n\to 0^+ $ and $ \Q_0\in H^1(\om,\Ss_0) $ such that $
\Q_{\va_n}\wc\Q_0 $ weakly in $ H^1(\om,\Ss_0) $, $ \Q_{\va_n}\to\Q_0 $ strongly in $ L^2(\om,\Ss_0) $, and a.e. in $ \om $. By Fatou Lemma and  \eqref{assumptionbound1}, we have
$$
\int_{\om}f_b(\Q_0)\ud x\leq\liminf_{n\to+\ift}\va_n^2E_{\va_n}(\Q_{\va_n},\om)=0.
$$
This implies that $ f_b(\Q_0)=0 $ and then $ \Q_0\in H^1(\om,\cN) $. Fix $ B_r(x_0)\subset\subset\om $, we claim the following properties hold.

\begin{enumerate}
\item There is a relabeled subsequence, such that 
$$
\Q_{\va_n}\to\Q_0\text{ strongly in } H^1(B_r(x_0),\Ss_0),\,\,\lim_{n\to+\ift}\f{1}{\va_n^2}\int_{B_r(x_0)}f_b(\Q_{\va_n})\ud x=0.
$$
\item If $ \PP\in H^1(B_r(x_0),\Ss_0) $, $ \PP=\Q_0 $ on $ \pa B_r(x_0) $, then
$$
\f{1}{2}\int_{B_r(x_0)}|\na\Q_0|^2\ud x\leq\f{1}{2}\int_{B_r(x_0)}|\na\PP|^2\ud x.
$$
\end{enumerate}

The first and second results of Theorem \ref{main1} directly follows from the two properties above and basic covering arguments. Up to a translation, we assume that $ x_0=0 $. Since $ B_r\subset\subset\om $, we can take $ \mu>0 $ such that $ B_{(1+\mu)r}\subset\om $. Define
$$
D:=\left\{\rho\in(0,(1+\mu)r]:\liminf_{n\to+\ift} E_{\va_n}(\Q_{\va_n},\pa B_{\rho})>\f{2M}{\mu r}\right\}.
$$
We claim that $ \HH^1(D)\leq\mu r/2 $. Indeed, by \eqref{assumptionbound1}, we have
$$
\f{2M}{\mu r}\HH^1(D)\leq\int_r^{(1+\mu)r}\liminf_{n\to+\ift} E_{\va_n}(\Q_{\va_n},\pa B_{\rho})\ud\rho\leq\liminf_{n\to+\ift} E_{\va_n}(\Q_{\va_n},B_{(1+\mu)r}\backslash B_r)\leq M,
$$
which implies the result. By selecting a subsequence of $ \Q_{\va_n} $ without changing the notation, there must exist $ \rho\in(r,(1+\mu)r] $ such that for any $ n\in\Z_+ $, $
E_{\va_n}(\Q_{\va_n},\pa B_{\rho})\leq 2M/\mu r $. Defining $ \ol{\va}_n:=\va_n/\rho $, $ \U_n(x):=\Q_{\va_n}(\rho x) $ and $ \U_{\ift}(x):=\Q_0(\rho x) $ for $ x\in B_1 $, we have
\be
\begin{aligned}
&\U_n\wc\U_{\ift}\text{ weakly in }H^1(B_1,\Ss_0),\\
&\U_n\to\U_{\ift}\text{ strongly in }L^2(B_1,\Ss_0)\text{ and a.e. in }B_1,\\
&\U_{\ift}\in H^1(B_1,\cN),\\
& E_{\ol{\va}_n}(\U_n,\pa B_1)\leq 4C_0.
\end{aligned}\label{convegenceofUn}
\ee
By the compactness of trace operator and the convergence result above, we have
$$
\f{1}{2}\int_{\pa B_1}|\na_{\pa B_1}\U_{\ift}|^2\ud\HH^2\leq\limsup_{n\to+\ift} E_{\ol{\va}_n}(\U_n,\pa B_{\rho})\leq\f{2M}{\mu r}.
$$
Let $ \sg_n:=\|\U_n-\U_{\ift}\|_{L^2(\pa B_1)} $. We obtain that $ \sg_n\to 0^+ $, and
$$
\int_{\pa B_1}\(|\na_{\pa B_1}\U_n|^2+\f{1}{\ol{\va}_n^2}f_b(\U_n)+|\na_{\pa B_1}\U_{\ift}|^2+\f{|\U_n-\U_{\ift}|^2}{\sg_n^2}\)\ud\HH^2\leq C_0,
$$
where $ C_0>0 $ depends only on $ \mu,r $, and $ M $. We can apply Proposition \ref{Luckhaus2} to obtain $ \W_{n}\in H^1(B_1\backslash B_{1-\nu_n},\Ss_0) $ with $ \nu_n\to 0^+ $ such that $ \W_{n}(x)=\U_n(x) $, $ \W_{n}((1-\nu_n)x)=\U_{\ift}(x) $, $ \HH^2 $-a.e. on $ \pa B_1 $ and $ E_{\ol{\va}_n}(\W_n,B_1\backslash B_{1-\nu_n})\leq C\nu_n $. Choosing $ \W_* $ to be a minimizer of the problem
$$
\min\left\{\f{1}{2}\int_{B_1}|\na\U|^2\ud x:\U\in H^1(B_1,\cN),\,\,\U|_{\pa B_1}=\U_{\ift}\right\},
$$
we can define $ \W_n:B_1\to\Ss_0 $ by
$$
\W_n(x)=\left\{\begin{aligned}
&\W_n(x)&\text{ if }&x\in B_1\backslash B_{1-\nu_n},\\
&\W_*\(\f{x}{1-\nu_n}\)&\text{ if }&x\in B_{1-\nu_n}.
\end{aligned}\right.
$$
Using the local minimizing property of $ \U_n $ and the fact that $ \W_n $ is an admissible comparison map, we have $  E_{\ol{\va}_n}(\U_n,B_1)\leq  E_{\ol{\va}_n}(\W_n,B_1) $, and then
$$
E_{\ol{\va}_n}(\U_n,B_1)\leq  E_{\ol{\va}_n}(\W_n,B_1)=\f{1-\nu_n}{2}\int_{B_1}|\na\W_{*}|^2\ud x+ E_{\ol{\va}_n}(\W_n,B_1\backslash B_{1-\nu_n}).
$$
This, together with \eqref{convegenceofUn}, implies that
\be
\begin{aligned}
\f{1}{2}\int_{B_1}|\na\U_{\ift}|^2\ud x&\leq\liminf_{n\to+\ift}\f{1}{2}\int_{B_1}|\na\U_n|^2\ud x\\
&\leq\limsup_{n\to+\ift}\int_{B_1}\(\f{1}{2}|\na\U_n|^2+\f{1}{\ol{\va}_n^2}f(\U_n)\)\ud x\\
&=\f{1}{2}\int_{B_1}|\na\W_*|^2\ud x\leq\f{1}{2}\int_{B_1}|\na\U_{\ift}|^2\ud x.
\end{aligned}\label{FBconvImDe}
\ee
Combined with the choice of $ \W_* $, we obtain $
\U_n\to\U_{\ift} $ strongly in $ B_1 $,
and for any $ \V\in H^1(B_1,\cN) $ such that $ \V|_{\pa B_1}=\U_{\ift} $,
$$
\f{1}{2}\int_{B_1}|\na\U_{\ift}|^2\ud x\leq\f{1}{2}\int_{B_1}|\na\W_*|^2\ud x\leq\f{1}{2}\int_{B_1}|\na\V|^2\ud x.
$$
Moreover, by \eqref{FBconvImDe}, we also have
$$
\lim_{n\to+\ift}\f{1}{\va_n^2}\int_{B_{\rho}}f_b(\U_n)\ud x=0,
$$
By scaling $ \U_n $ and $ \U_{\ift} $ back to $ \Q_{\va_n} $ and $ \Q_0 $, we can complete the proof.\smallskip

\noindent\textbf{\underline{Step 2.} Existence of $ \cS_{\pts} $ and $ C^j $ convergence of $ \Q_{\va_n} $ in $ \om\backslash\cS_{\pts} $.} Since $ \Q_0 $ is a locally minimizing harmonic in $ \om $, we can use the famous regularity theory of harmonic maps in \cite{SU82} to obtain that there exists a locally finite set $ \cS_{\pts}\subset\om $, such that $ \Q_0 $ is smooth in $ \om\backslash\cS_{\pts} $. We will next show that up to a subsequence, $ \Q_{\va_n}\to\Q_0 $ in $ C_{\loc}^j(\om\backslash\cS_{\pts},\Ss_0) $ and satisfies \eqref{Reesti}. Firstly, we have the following lemma, which is on the uniform convergence of $ f_b(\Q_{\va_n}) $.

\begin{lem}\label{UniformfB}
For a compact set $ K\subset\om\backslash\cS_{\pts} $, there holds
$$
\lim_{\va_n\to 0^+}f_b(\Q_{\va_n})=0\text{ uniformly on }K.
$$
\end{lem}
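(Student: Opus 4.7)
The plan is to first reduce the statement to uniform convergence of $\dist(\Q_{\va_n},\cN)$ to zero on $K$, and then to establish the latter by a blow-up/compactness argument that uses the monotonicity formula together with the $H^1_{\loc}$ convergence and interior smoothness of $\Q_0$.

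First, since $K\subset\om\bs\cS_{\pts}$ is compact and $\cS_{\pts}$ is locally finite, I would pick an open neighborhood $U$ of $K$ with $\ol{U}\subset\om\bs\cS_{\pts}$. Because $\Q_0\in C^{\ift}(\om\bs\cS_{\pts},\cN)$, the gradient $|\na\Q_0|$ is bounded on $\ol{U}$, so $r^{-1}\int_{B_r(x)}|\na\Q_0|^2\ud x\leq Cr^2$ uniformly in $x\in K$ for all sufficiently small $r>0$. Once I know $\dist(\Q_{\va_n},\cN)\to 0$ uniformly on $K$, Lemma \ref{fBcomparison} gives $f_b(\Q_{\va_n})\leq C\dist^2(\Q_{\va_n},\cN)$ for large $n$, which yields the claim.

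The main step is therefore to show that $\sup_K\dist(\Q_{\va_n},\cN)\to 0$, and I would do this by contradiction. Suppose there exist $\delta>0$ and a (relabeled) sequence of points $x_n\in K$ with $\dist(\Q_{\va_n}(x_n),\cN)\geq\delta$. Since $K$ is compact, I may assume $x_n\to x_0\in K$. Define the rescaled maps
\[
\wt{\Q}_n(y):=\Q_{\va_n}(x_n+\va_n y),
\]
which solve $-\Delta\wt{\Q}_n+\Psi(\wt{\Q}_n)=0$ on the expanding balls $B_{R_n}$ with $R_n=\va_n^{-1}\dist(x_n,\pa U)\to+\ift$. The uniform $L^{\ift}$ bound on $\Q_{\va_n}$ together with Corollary \ref{smooth} and the interior estimates of Lemma \ref{Ckestimate} provide uniform $C^j_{\loc}$ bounds on $\wt{\Q}_n$ for every $j$. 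Passing to a subsequence, $\wt{\Q}_n\to\wt{\Q}_{\ift}$ in $C^j_{\loc}(\R^3,\Ss_0)$, where $\wt{\Q}_{\ift}$ is a bounded entire solution of $-\Delta\wt{\Q}_{\ift}+\Psi(\wt{\Q}_{\ift})=0$ with $\dist(\wt{\Q}_{\ift}(0),\cN)\geq\delta$.

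To derive a contradiction I plan to use the monotonicity formula (Corollary \ref{Monotonicity}). A direct change of variables yields the identity
\[
\f{1}{R}E_1(\wt{\Q}_n,B_R)=\f{1}{R\va_n}E_{\va_n}(\Q_{\va_n},B_{R\va_n}(x_n))
\]
for any fixed $R>0$ and all $n$ large. For any small fixed $r>0$ with $B_r(x_0)\subset U$, the monotonicity formula applied to $\Q_{\va_n}$ on $B_r(x_n)$ gives
\[
\f{1}{R\va_n}E_{\va_n}(\Q_{\va_n},B_{R\va_n}(x_n))\leq\f{1}{r}E_{\va_n}(\Q_{\va_n},B_r(x_n)).
\]
By the strong $H^1_{\loc}$ convergence $\Q_{\va_n}\to\Q_0$ and the fact that $\va_n^{-2}f_b(\Q_{\va_n})\to 0$ in $L^1_{\loc}(\om)$ (proved in Step 1 of Theorem \ref{main1}), the right-hand side converges to $(2r)^{-1}\int_{B_r(x_0)}|\na\Q_0|^2\ud x\leq Cr^2$. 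Taking $n\to\ift$ first and then $r\to 0^+$ yields $R^{-1}E_1(\wt{\Q}_{\ift},B_R)=0$ for every $R>0$. Hence $\wt{\Q}_{\ift}$ is constant, $f_b(\wt{\Q}_{\ift})\equiv 0$, so $\wt{\Q}_{\ift}\in\cN$, contradicting $\dist(\wt{\Q}_{\ift}(0),\cN)\geq\delta$.

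The main obstacle is organizing this blow-up argument with the correct interchange of the limits $n\to\ift$ and $r\to 0^+$, and in particular ensuring that the monotonicity bound is invoked at scales $r$ that are both small compared to $\dist(x_0,\cS_{\pts}\cup\pa\om)$ and much larger than $R\va_n$; this requires the sequence $R\va_n\ll r$ to be kept fixed while sending $n\to\ift$ before shrinking $r$. Everything else reduces to standard elliptic regularity and Arzel\`a--Ascoli.
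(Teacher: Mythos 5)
Your proposal is correct, but it takes a genuinely different route from the paper. The paper argues directly and quantitatively: it fixes $\eta>0$, sets $\al_n:=f_b(\Q_{\va_n}(x_1))$ for a point $x_1\in K$, uses the Lipschitz property of $f_b$ on bounded sets together with the gradient bound $\va_n\|\na\Q_{\va_n}\|_{L^{\ift}}\leq C$ from Corollary~\ref{smooth} to show $\al_n-C\rho_n/\va_n\leq f_b(\Q_{\va_n}(x))$ for $x\in B_{\rho_n}(x_1)$, integrates, and then combines the monotonicity formula with the $H^1_{\loc}$ convergence and the vanishing of $\va_n^{-2}\int f_b(\Q_{\va_n})$ to get $\al_n^3\leq C\eta$ after the optimal choice $\rho_n=c\,\al_n\va_n$. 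You instead reduce to showing $\dist(\Q_{\va_n},\cN)\to 0$ uniformly and prove this by contradiction via a blow-up at scale $\va_n$, pass to an entire solution $\wt\Q_{\ift}$ of the $\va=1$ equation with uniform $C^j_{\loc}$ bounds (justified via Corollary~\ref{smooth} and Lemma~\ref{Ckestimate}, whose hypotheses hold since $\tr\Y_1$ and $h_1$ are just polynomial expressions in the bounded $\wt\Q_n$), and then use the same two ingredients — monotonicity and the convergence results from Step~1 — plus smoothness of $\Q_0$ on $K$ to force the rescaled limit to have zero energy at every scale, hence to be constant in $\cN$, contradicting $\dist(\wt\Q_{\ift}(0),\cN)\geq\delta$. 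Your order of limits ($n\to\ift$ at fixed $R,r$, then $r\to 0^+$) is handled correctly. What you gain with the blow-up is a cleaner conceptual picture (entire solutions, Liouville-type conclusion); what the paper's direct computation gains is a shorter, subsequence-free argument that yields an explicit rate-like bound $\al_n^3\lesssim\eta$ without invoking the higher-order interior estimates of Lemma~\ref{Ckestimate}.
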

\begin{proof}
Since $ K $ is compact, by using standard covering arguments and a translation, it is reduced to show the case that $ K=\ol{B_r} $ with $ r>0 $ and $ \ol{B_{2r}}\subset\om\backslash\cS_{\pts} $. Fix $ \eta>0 $ and $ x_1\in\ol{B_r} $, we set $ \al_n=f_b(\Q_{\va_n}(x_1)) $. In view of the definition of $ f_b $ and the fact \eqref{assumptionbound1}, there exists $ L>0 $, independent of $ \va_n $ such that
$$
|f_b(\Q_{\va_n}(x))-f_b(\Q_{\va}(y))|\leq L|\Q_{\va}(x)-\Q_{\va}(y)|\text{ for any } x,y\in B_r.
$$
By applying Corollary \ref{smooth}, we have
\begin{align*}
\al_n&\leq f_b(\Q_{\va_n}(x))+L|\Q_{\va}(x_1)-\Q_{\va}(x)|\\
&\leq f_b(\Q_{\va_n}(x))+L\|\na\Q_{\va_n}\|_{L^{\ift}(B_{3r/2})}|x-x_1|\\
&\leq f_b(\Q_{\va_n}(x))+\f{C_1L}{\va_n}|x-x_1|,
\end{align*}
for any $ x\in B_{3r/2} $, where $ C_1,L>0 $ depend only on $ \cA,M,x_0 $, and $ r $. By this, we can obtain
$$
\al_n-\f{C_1L\rho_n}{\va_n}\leq f_b(\Q_{\va_n}(x)),\text{ for all }x\in B_{\rho_n}(x_1),\,\,\rho_n<\f{r}{2},
$$
and then
\be
\f{\rho_n^3}{\va_n^2}\(\al_n-\f{C_1L\rho_n}{\va_n}\)\leq\f{1}{\va_n^2}\int_{B_{\rho_n}(x_1)}f_b(\Q_{\va_n})\ud x.\label{fBvarhok}
\ee
Since $ \Q_0 $ is smooth in $ \ol{B_{2r}} $, we can choose $ r_1>0 $ sufficiently small, such that $ B_{r_1}(x_1)\subset B_{3r/2} $ and
$$
\f{1}{r_1}\int_{B_{r_1}(x_1)}\f{1}{2}|\na\Q_0|^2\ud x<\f{\eta}{3}.
$$
This, together with the property that $ \Q_{\va_n}\to\Q_0 $ strongly in $ H_{\loc}^1(\om,\Ss_0) $, implies that if $ \va_n>0 $ is sufficiently small, then
\be
\f{1}{r_1}\int_{B_{r_1}(x_1)}\f{1}{2}|\na\Q_{\va_n}|^2\ud x<\f{2\eta}{3}.\label{23eta}
\ee
It follows from Corollary \ref{Monotonicity}, \eqref{23eta} and the first property of Theorem \ref{main1} that when $ 0<\va_n<1 $ is sufficiently small, then
\begin{align*}
\f{1}{\va_n^2}\int_{B_{\rho_n}(x_1)}f_b(\Q_{\va_n})\ud x&\leq\f{\rho_n}{r_1}E_{\va_n}(\Q_{\va_n},B_{r_1}(x_1))\\
&\leq\f{\rho_n}{r_1}\int_{B_{r_1}(x_1)}\(\f{1}{2}|\na\Q_{\va_n}|^2+\f{1}{\va_n^2}f_b(\Q_{\va_n})\)\ud x\\
&\leq\rho_n\(\f{2\eta}{3}+\f{\eta}{3}\)
\end{align*}
when $ 0<\rho_n<r_1 $. This, together with \eqref{fBvarhok}, implies that 
$$
\f{\rho_n^2}{\va_n^2}\(\al_n-\f{C_1L\rho_n}{\va_n}\)\leq\eta.
$$
If we take $ \rho_n=\al_n\va_n/(2C_1L) $, then $
\al_n^3<C\eta $, where $ C>0 $ depends only on $ C_1 $ and $ L $. By the arbitrariness of $ \eta>0 $ and $ x_1\in \ol{B_r} $, we can complete the proof. 
\end{proof}

The following lemma shows that $ \Q_{\va_n}\to\Q_0 $ in $ C_{\loc}^0(\om\backslash\cS_{\pts},\Ss_0) $.

\begin{lem}\label{UniformQva}
For a compact set $ K\subset\om\backslash\cS_{\pts} $, there holds
\be
\|\na\Q_{\va_n}\|_{L^{\ift}(K)}\leq C,\label{nablaQbound}
\ee
where $ C>0 $ depends only on $ \cA,M,K $, and $ \Q_0 $. In particular, $ \Q_{\va_n}\to\Q_0 $ in $ C^0(K,\Ss_0) $.
\end{lem}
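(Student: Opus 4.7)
The plan is to combine the uniform convergence $f_b(\Q_{\va_n}) \to 0$ on $K$ from Lemma \ref{UniformfB} with the small-energy regularity result Lemma \ref{smallreg1}, and then apply Arzel\`a--Ascoli. First, since $K \subset \om \setminus \cS_{\pts}$ is compact and $\Q_0$ is smooth in $\om \setminus \cS_{\pts}$, I can choose an open set $U$ with $K \subset\subset U \subset\subset \om\setminus\cS_{\pts}$ on which $\Q_0$ is smooth with uniformly bounded derivatives; in particular $|\na \Q_0| \le C_U$ on $U$.

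Next, I would show that $\dist(\Q_{\va_n},\cN)\to 0$ uniformly on $U$. By Lemma \ref{UniformfB} applied to $\ol{U}$, $f_b(\Q_{\va_n}) \to 0$ uniformly on $\ol U$. Using \eqref{fBnond} from Corollary \ref{fBc}, once $\dist(\Q_{\va_n},\cN)<\delta_0$ we have $\dist^2(\Q_{\va_n},\cN) \le C f_b(\Q_{\va_n})$; the uniform bound $\|\Q_{\va_n}\|_{L^\infty(\om)} \le M$ combined with the fact that $f_b(\Q) \to 0$ forces $\Q$ to approach $\cN$ (by lower semicontinuity and the properness of $f_b$ on the bounded set $\{|\Q|\le M\}$), so for $n$ large enough we get $\|\dist(\Q_{\va_n},\cN)\|_{L^\infty(U)} < \delta_0$, where $\delta_0$ is the constant from Lemma \ref{smallreg1}.

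Now I would verify the small-energy hypothesis. Fix $x_0 \in K$ and $r>0$ with $B_{2r}(x_0) \subset U$. Since $\Q_0$ is smooth on $\ol{B_{2r}(x_0)}$, $\f{1}{r}\int_{B_r(x_0)}\f12|\na\Q_0|^2\ud x \le Cr^2$; by the first conclusion of Theorem \ref{main1}, $\Q_{\va_n}\to \Q_0$ in $H^1_{\loc}$ and $\va_n^{-2}f_b(\Q_{\va_n})\to 0$ in $L^1_{\loc}$, so
\[
\limsup_{n\to\infty} \f{1}{r}\int_{B_r(x_0)} e_{\va_n}(\Q_{\va_n})\ud x \le Cr^2.
\]
Choosing $r$ small enough (independent of $x_0\in K$, by compactness) that the right-hand side is $<E_0$, and taking $n$ large enough that the distance hypothesis holds, Lemma \ref{smallreg1} yields $r^2 \|e_{\va_n}(\Q_{\va_n})\|_{L^\infty(B_{r/2}(x_0))} \le C$, hence $|\na \Q_{\va_n}| \le C/r$ on $B_{r/2}(x_0)$. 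A finite cover of $K$ by such balls delivers \eqref{nablaQbound}.

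Finally, \eqref{nablaQbound} together with the $L^\infty$ bound $\|\Q_{\va_n}\|_{L^\infty(\om)} \le M$ gives equicontinuity and uniform boundedness of $\{\Q_{\va_n}\}$ on $K$, so by Arzel\`a--Ascoli every subsequence has a further subsequence converging uniformly on $K$. Since $\Q_{\va_n} \to \Q_0$ in $L^2$, the only possible uniform limit is $\Q_0$, which gives $\Q_{\va_n} \to \Q_0$ in $C^0(K,\Ss_0)$. The main obstacle is really the bookkeeping in the first paragraph: making the passage from ``$f_b(\Q_{\va_n})$ small uniformly'' to ``$\dist(\Q_{\va_n},\cN)$ small uniformly'' properly use the $L^\infty$ bound on $\Q_{\va_n}$ (so that one stays in a regime where Corollary \ref{fBc} applies), but no new ideas beyond the lemmas already established are needed.
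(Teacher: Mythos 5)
Your proof is correct and follows essentially the same strategy as the paper's: use Lemma \ref{UniformfB} (plus the $L^\infty$ bound) to put $\Q_{\va_n}$ uniformly within $\delta_0$ of $\cN$, use the strong $H_{\loc}^1$ convergence from Theorem \ref{main1} to satisfy the small-energy hypothesis of Lemma \ref{smallreg1} at small enough scale $r$, and then cover $K$; the appeal to Arzel\`a--Ascoli for the $C^0$ convergence at the end is the standard step the paper leaves implicit.
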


\begin{proof}
For $ x_0\in\om\backslash\cS_{\pts} $, since $ \Q_0 $ is smooth in a small neighborhood of $ x_0 $, we can choose sufficiently small $ r>0 $ such that $ \ol{B_r(x_0)}\subset\om\backslash\cS_{\pts} $ and
$$
\f{1}{r}\int_{B_{r}(x_0)}|\na\Q_0|^2\ud x\leq\f{E_0}{2},
$$
where $ E_0 $ is given by Lemma \ref{smallreg1}. This, together with the first property of Theorem \ref{main1}, implies that when $ 0<\va_n<1 $ is sufficiently small,
$$
\f{1}{r}\int_{B_{r}(x_0)}\(\f{1}{2}|\na\Q_{\va}|^2+\f{1}{\va_n^2}f(\Q_{\va_n})\)\ud x\leq\f{1}{2r}\int_{B_r(x_0)}|\na\Q_0|^2\ud x+\f{E_0}{4}+\f{E_0}{4}\leq\f{3E_0}{4}.
$$
Moreover, since $ \ol{B_r(x_0)}\subset\om\backslash\cS_{\pts} $, by using Lemma \ref{UniformfB}, there holds
$$
\|\dist(\Q_{\va_n},\cN)\|_{L^{\ift}(B_r(x_0))}<\delta_0
$$
when $ 0<\va_n<1 $ is sufficiently small, where $ \delta_0>0 $ is also given by Lemma \ref{smallreg1}. Now, we can apply this lemma to obtain 
$$
\|\na\Q_{\va_n}\|_{L^{\ift}(B_{r/2}(x_0))}\leq C,
$$
where $ C>0 $ depends only on $ \cA,M,x_0,r $, and $ \Q_0 $. Since $ K $ is compact, the result follows from standard covering arguments.
\end{proof}

The following lemma gives the $ C^j $ convergence of $ \Q_{\va_n} $.

\begin{lem}\label{UniformQvaj}
For a compact set $ K\subset\om\backslash\cS_{\pts} $, there holds
\be
\|D^j\Q_{\va_n}\|_{L^{\ift}(K)}\leq C,\label{nablaQboundj}
\ee
where $ C>0 $ depends only on $ \cA,M,K,j $, and $ \Q_0 $. In particular, $ \Q_{\va_n}\to\Q_0 $ in $ C_{\loc}^j(\om\backslash\cS_{\pts}) $ for any $ j\in\Z_+ $.
\end{lem}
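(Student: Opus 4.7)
The plan is to bootstrap from the zeroth and first derivative bounds already established in Lemma \ref{UniformQva} by chaining together the a priori estimates developed in Section \ref{AprioriSec}. First, by standard covering arguments and translation invariance, it suffices to produce, for each fixed $j \in \Z_+$, a constant $C > 0$ giving the estimate $\|D^{j+1}\Q_{\va_n}\|_{L^\infty(B_{r/4}(x_0))} \leq C$ whenever $\ol{B_r(x_0)} \subset \om \backslash \cS_{\pts}$ with $r > 0$ sufficiently small. The compact set $K$ can then be covered by finitely many such quarter-balls.

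Fix such $x_0$ and $r$. By Lemma \ref{UniformfB} applied to $\ol{B_r(x_0)}$, we have $\|\dist(\Q_{\va_n},\cN)\|_{L^\infty(B_r(x_0))} \to 0$; in particular this is less than the threshold $\delta_0$ of Lemma \ref{LiftQNll1} for all sufficiently large $n$. Combined with the bound $\|\na\Q_{\va_n}\|_{L^\infty(B_r(x_0))}\leq C$ from Lemma \ref{UniformQva}, the interior case of Lemma \ref{LiftQNll1} yields on $B_{r/2}(x_0)$ the pointwise bound
$$
\|(|\tr\Y_{\va_n}|+|h_{\va_n}|+|\na\Q_{\va_n}|)\|_{L^\infty(B_{r/2}(x_0))} \leq C,
$$
where $\Y_{\va_n}$ and $h_{\va_n}$ are defined by \eqref{Xva} and \eqref{hva}. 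With these bounds at hand, the hypotheses of Lemma \ref{Ckestimate} are satisfied on $B_{r/2}(x_0)$ uniformly in $n$, so choosing $\rho = r/4$ in that lemma produces, for each $j \in \Z_{\geq 0}$, a constant $C$ (depending only on $\cA$, $M$, $r$, $j$) such that
$$
\|(|D^j(\tr\Y_{\va_n})|+|D^jh_{\va_n}|+|D^{j+1}\Q_{\va_n}|)\|_{L^\infty(B_{r/4}(x_0))} \leq C.
$$
This gives \eqref{nablaQboundj} on $B_{r/4}(x_0)$, and covering finishes \eqref{nablaQboundj} on $K$.

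The $C^j_{\loc}$ convergence then follows from a standard Arzel\`a--Ascoli argument: for any compact $K \subset \om\backslash\cS_{\pts}$, we can take a slightly larger compact neighborhood $K' \subset \om\backslash\cS_{\pts}$ on which uniform $C^{j+1}$ bounds hold, making $\{D^j\Q_{\va_n}\}$ equicontinuous and bounded on $K$. Since Lemma \ref{UniformQva} already identifies the $C^0$ limit as $\Q_0$, uniqueness of the limit forces the full subsequence $\Q_{\va_n}$ to converge in $C^j(K)$. The main point is simply that we have all the machinery in place; there is no serious obstacle, since the interior a priori estimates of Section \ref{AprioriSec} were designed exactly to convert $L^\infty$ proximity to $\cN$ plus a gradient bound into bounds on arbitrarily many derivatives. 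The only item requiring minor care is that Lemma \ref{LiftQNll1} bounds $\Y_{\va_n}$ and $h_{\va_n}$, not higher derivatives directly, so one must feed its conclusion into Lemma \ref{Ckestimate} rather than iterate Lemma \ref{LiftQNll1} alone.
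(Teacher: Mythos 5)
Your proposal matches the paper's proof: both reduce by covering to a ball, invoke Lemma \ref{UniformfB} to get below the threshold $\delta_0$, combine Lemma \ref{UniformQva} with the interior case of Lemma \ref{LiftQNll1} to control $|\Y_{\va_n}|$ and $|h_{\va_n}|$, and then feed these into Lemma \ref{Ckestimate} to bound all higher derivatives. The concluding Arzel\`a--Ascoli step is the standard (implicit) way to pass from uniform bounds to $C^j_{\loc}$ convergence, so the argument is complete and equivalent.
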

\begin{proof}
Still by covering arguments and a translation, we can assume that $ K=\ol{B_r}\subset\ol{B_{4r}}\subset\om\backslash\cS_{\pts} $. By Lemma \ref{UniformfB}, we have that when $ 0<\va_n<1 $ is sufficiently small, $ \dist(\Q_{\va_n},\cN)<\delta_0 $ in $ B_{4r} $, where $ \delta_0>0 $ is given by Lemma \ref{LiftQNll1}. It follows from this lemma and Lemma \ref{UniformQva} that
$$
\|(|\Delta\Q_{\va}|+\va^{-2}f_b^{1/2}(\Q_{\va})+|h_{\va}|+|\Y_{\va}|)\|_{L^{\ift}(B_{2r})}\leq C.
$$
Combining the results in Lemma \ref{Ckestimate}, we can obtain that for any $ j\in\Z_+ $,
$$
\|D^j\Q_{\va_n}\|_{L^{\ift}(B_r)}\leq C,
$$
where $ C>0 $ depends only on $ \cA,M,r,j $, and $ \Q_0 $. Now we can complete the proof.
\end{proof}

\noindent\textbf{\underline{Step 3.} Estimates of the remainder.} Fix $ \ol{B_r(x_0)}\subset\om\backslash\cS_{\op{line}} $ with $ r>0 $. In view of the formula \eqref{Qeq} and the definitions of $ u_{\va},v_{\va} $ given by \eqref{uvavvadef}, we have
\begin{align*}
\Delta\Q_{\va_n}&=-\f{1}{2r_*^2}|\na\Q_{\va_n}|^2\Q_{\va_n}-\f{3}{r_*^4}\tr(\na\Q_{\va_n}\na\Q_{\va_n}\Q_{\va_n})\(\Q_{\va_n}^2-\f{2r_*^2}{3}\I\)\\
&\quad\quad+2a_6v_{\va_n}\Q_{\va_n}+\f{a_2}{2r_*^2}u_{\va_n}\Q_{\va_n}+a_6'v_{\va_n}\(\Q_{\va_n}^2-\f{1}{3}(\tr\Q_{\va_n}^2)\I\)\\
&\quad\quad+\f{\va_n^2}{a_6'r_*^4}h_{\va_n}\tr(\na\Q_{\va_n}\Q_{\va_n}\Q_{\va_n})\I.
\end{align*}
Define
$$
\mathbf{R}_n:=2a_6v_{\va_n}\Q_{\va_n}+\f{a_2}{2r_*^2}u_{\va_n}\Q_{\va_n}+a_6'v_{\va_n}\(\Q_{\va_n}^2-\f{1}{3}(\tr\Q_{\va_n}^2)\I\)+\f{\va_n^2}{a_6'r_*^4}h_{\va_n}\tr(\na\Q_{\va_n}\Q_{\va_n}\Q_{\va_n})\I.
$$
Using result in Lemma \ref{uvavvvainterior}, \eqref{Reesti} follows directly.

\subsection{An example: global minimizers of Landau-de Gennes functional}

\begin{thm}\label{expansion}
Let $ \om $ be a bounded smooth domain and $ \{\Q_{\va}\}_{0<\va<1}\subset H^1(\om,\Ss_0) $ be global minimizers of \eqref{energy} in the space $ H^1(\om,\Ss_0;\Q_b) $ with $ \Q_b\in C^{\ift}(\pa\om,\cN) $. There exists a subsequence $ \va_n\to 0^+ $ and $ \Q_0\in H^1(\om,\cN) $ such that $ \Q_{\va_n}\to\Q_0 $ in $ H^1(\om,\Ss_0) $ and $ \Q_0 $ is a minimizer of the problem \eqref{limitlift}. Let $ \cS_{\pts} $ be the singular set of $ \Q_0 $. Then $ \cS_{\pts} $ is finite and
\begin{align}
\Q_{\va_n}&\to\Q_0\text{ in }C_{\loc}^{1,\al}(\ol{\om}\backslash\cS_{\pts},\Ss_0)\text{ for any }0<\al<1,\label{C1alconvergence}\\
\Q_{\va_n}&\to\Q_0\text{ in }C_{\loc}^{j}(\om\backslash\cS_{\pts},\Ss_0)\text{ for any }j\in\Z_{\geq 2}.\label{Cinftyconvergence}
\end{align}
Moreover, if the linearized operator $ \mathcal{L}_{\Q_0}:H_0^1(\om,\MM^{3\times 3})\to H^{-1}(\om,\MM^{3\times 3}) $ given by
\begin{align*}
\mathcal{L}_{\Q_0}\Q&:=\Delta\Q+\f{1}{2r_*^2}|\na\Q_0|^2\Q+\f{1}{r_*^2}(\na\Q_0:\na\Q)\Q_0\\
&\quad\quad+\f{6}{r_*^4}\tr(\na\Q\na\Q_0\Q_0)\(\Q_0^2-\f{2r_*^2}{3}\I\)\\
&\quad\quad+\f{3}{r_*^4}\tr(\na\Q_0\na\Q_0\Q)\(\Q_0^2-\f{2r_*^2}{3}\I\)\\
&\quad\quad+\f{6}{r_*^4}\tr(\na\Q_0\na\Q_0\Q_0)\Q_0\Q
\end{align*}
is bijective and $ \Q_0 $ is smooth. There exists $ \Q_{0}^{(1)}\in C^{\ift}(\om,\Ss_0)\cap H^s(\om,\Ss_0) $ such that
\begin{align}
&\va_n^{-1}(\Q_{\va_n}-\Q_0)\to\Q_{0}^{(1)}\text{ in }H^s(\om,\Ss_0)\text{ for any }0<s<1/2,\label{Hsestcon}\\
&\va_n^{-1}(\Q_{\va_n}-\Q_0)\to\Q_{0}^{(1)}\text{ in }C_{\loc}^j(\om,\Ss_0)\text{ for any }j\in\Z_{\geq 2}.\label{jZgeq2}
\end{align}
Furthermore, if we split $ \Q_0^{(1)}=\Q^{\perp}+\Q^{\parallel} $, where $ \Q^{\perp}\in (T_{\Q_0}\cN)_{\Ss_0}^{\perp} $ and $ \Q^{\parallel}\in T_{\Q_0}\cN $, then the following properties hold.
\begin{enumerate}
\item $ \Q^{\perp} $ is given by
\be
\Q^{\perp}=-\f{1}{8r_*^4(a_4+4a_6r_*^2)}|\na\Q_0|^2-\f{1}{2a_6'r_*^8}\tr(\na\Q_0\na\Q_0\Q_0)(3\Q_0^2-2r_*^2\I).\label{Qperpfor}
\ee
\item $ \Q^{\parallel} $ satisfies the equation
\be
\begin{aligned}
(\Delta\Q^{\parallel})&=-\f{1}{r_*^4}(\tr(\Q^{\parallel}\na\Q_0\na\Q_0+\na\Q^{\parallel}\na\Q_0))(3\Q_0^2-2r_*^2\I)\\
&\quad-\f{1}{r_*^2}\tr(\na\Q^{\parallel}\na\Q_0)\Q_0+\Delta\Q_0-(\Delta\Q^{\perp})^{\perp}
\end{aligned}\label{Qparallelfor}
\ee
\end{enumerate}
\end{thm}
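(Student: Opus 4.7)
Parts (a)--(b) come almost for free. Since the boundary datum $\Q_b\in H^{1/2}(\pa\om,\cN)$ does not depend on $\va$, Proposition \ref{logboundenergy}(2) directly produces the subsequence $\va_n\to 0^+$, the strong $H^1$ convergence $\Q_{\va_n}\to\Q_0$ to a minimizer of \eqref{limitlift}, and $\va_n^{-2}\int_\om f_b(\Q_{\va_n})\to 0$. Finiteness of the singular set $\cS_{\pts}$ follows by combining the Schoen--Uhlenbeck interior partial regularity theorem for energy-minimizing harmonic maps into the smooth compact target $\cN$ (which gives a $0$-dimensional interior singular set) with the Hardt--Kinderlehrer--Lin boundary regularity theorem (valid since $\pa\om$ and $\Q_b$ are smooth); compactness of $\ol\om$ then forces the singular set to be finite.

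For the convergence \eqref{Cinftyconvergence} in the open interior, I apply Theorem \ref{main1} directly: its uniform-energy hypothesis is verified on any $\ol{B_r(x_0)}\subset\om$ by the $H^1$ convergence already obtained. For \eqref{C1alconvergence} up to the boundary at any $x_0\in\pa\om\setminus\cS_{\pts}$, smoothness of $\Q_0$ near $x_0$ together with strong $H^1$ convergence makes the local energy small on sufficiently small half-balls; Lemma \ref{smallreg2} then delivers a uniform $L^\infty$ bound on $e_{\va_n}(\Q_{\va_n})$ up to $\pa\om$, a boundary variant of Lemma \ref{UniformfB} (proved by the same Pohozaev--monotonicity argument as Corollary \ref{Monotonicity}) forces $\dist(\Q_{\va_n},\cN)\to 0$ uniformly, and Lemmas \ref{LiftQNll1}(2) and \ref{Cknearlabel} upgrade this to uniform $C^2$ bounds up to $\pa\om$ on compact pieces. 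Arzel\`{a}--Ascoli and identification of limits through the $H^1$ convergence then deliver \eqref{C1alconvergence}.

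Under the nondegeneracy hypothesis I introduce $\W_n:=\va_n^{-1}(\Q_{\va_n}-\Q_0)$ and analyse the Euler--Lagrange equation expanded about $\Q_0$. Because $\Q_0\in\cN$ implies $\Psi(\Q_0)=0$ (as $f_b(\Q_0)=0$, $Df_b(\Q_0)=0$, and $\tr\Q_0^3=0$), Taylor expansion of $-\va_n^2\Delta\Q_{\va_n}+\Psi(\Q_{\va_n})=0$ gives, after dividing by $\va_n$,
\begin{equation*}
-\va_n\Delta\Q_0-\va_n^2\Delta\W_n+D\Psi(\Q_0)\W_n+\tfrac{\va_n}{2}D^2\Psi(\Q_0)(\W_n,\W_n)+O(\va_n^2)=0.
\end{equation*}
Estimate \eqref{d2es} shows that $D\Psi(\Q_0)$ is an isomorphism of $(T_{\Q_0}\cN)^{\perp}_{\Ss_0}$ while vanishing on $T_{\Q_0}\cN$, so the normal projection of the expansion determines $\Q^{\perp}$ from lower-order identities. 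These identities are made quantitative by Lemma \ref{lemexpdestim}: the specially engineered quantities $u_{\va_n}$ and $v_{\va_n}$ of \eqref{uvavvadef} satisfy $u_{\va_n},v_{\va_n}=O(\va_n^2)$ on compacta, which pins the limits of $h_{\va_n}$ and $\tr\Y_{\va_n}$ to explicit functions of $\Q_0$ and $\na\Q_0$, and decoding these in the orthonormal normal basis $\{\Q_0/(\sqrt 2\,r_*),\sqrt 6\,\wh\Q_0/(2r_*^2)\}$ of $(T_{\Q_0}\cN)^{\perp}_{\Ss_0}$ yields formula \eqref{Qperpfor}. Projecting the expansion onto $T_{\Q_0}\cN$ and subtracting the identified normal contribution leaves an inhomogeneous linear equation for $\Q^{\parallel}$ which, after rearrangement, is exactly \eqref{Qparallelfor}; since $\W_n|_{\pa\om}=0$ and $\mathcal{L}_{\Q_0}:H^1_0\to H^{-1}$ is bijective by hypothesis, $\Q^{\parallel}$ is uniquely determined and inherits the regularity of $\Q_0$. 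The $H^s$ convergence \eqref{Hsestcon} for $0<s<1/2$ follows by a standard energy estimate for $\W_n$ using nondegeneracy of $\mathcal{L}_{\Q_0}$, and the $C^j_{\loc}$ convergence \eqref{jZgeq2} follows by iterating Lemma \ref{lemexpdestim}.

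The main obstacle will be the global $H^s$ bound on $\W_n$ in the presence of the finitely many singularities in $\cS_{\pts}$: the pointwise estimates of Lemma \ref{lemexpdestim} deteriorate polynomially as one approaches $\pa\om\cup\cS_{\pts}$, and controlling $\W_n$ near the singular set requires combining the exponential-in-$\va_n$ decay provided by the same lemma with a cut-off argument around $\cS_{\pts}$ (feasible because $H^s$ with $s<1/2$ does not see isolated point singularities in $\R^3$) together with the invertibility of $\mathcal{L}_{\Q_0}$, which is precisely what turns the otherwise degenerate linearization into a well-posed elliptic problem and delivers uniqueness of $\Q_0^{(1)}$.
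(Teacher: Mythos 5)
Your first half (existence of the subsequence from Proposition \ref{logboundenergy}, finiteness of $\cS_{\pts}$ from interior and boundary regularity of energy-minimizing harmonic maps, interior $C^j$ convergence via Theorem \ref{main1} and Lemma \ref{Ckestimate}, and boundary $C^{1,\al}$ convergence via a boundary monotonicity argument followed by Lemmas \ref{LiftQNll1}, \ref{smallreg1}, \ref{smallreg2}, \ref{Cknearlabel}) matches the paper's proof in substance; the paper's boundary clearing-out is phrased via Lemma \ref{Monoto2}, a Pohozaev-type monotonicity up to the boundary, rather than a named "boundary variant of Lemma \ref{UniformfB}," but the underlying mechanism is identical.

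The second half has a real gap stemming from the normalization. You set $\W_n:=\va_n^{-1}(\Q_{\va_n}-\Q_0)$ and Taylor-expand the Euler--Lagrange equation, obtaining (after division by $\va_n$) an identity whose $\va_n\to 0$ limit is $D\Psi(\Q_0)\W_\infty=0$. Since $D\Psi(\Q_0)$ annihilates $T_{\Q_0}\cN$ and is invertible on $(T_{\Q_0}\cN)^\perp_{\Ss_0}$ by \eqref{d2es}, the only information this yields is that $\W_\infty$ is tangential, i.e. $\Q^\perp=0$ --- which contradicts the nonzero formula \eqref{Qperpfor}. The balance that actually pins $\Q^\perp$ occurs only when the expansion parameter is $\va_n^2$: writing $\Q_{\va_n}=\Q_0+\va_n^2\PP_n$, the Euler--Lagrange equation divides through by $\va_n^2$ and yields $D\Psi(\Q_0)\Q_0^{(1)}=\Delta\Q_0$ in the limit, i.e. $\Delta\Q_0$ (purely normal, by \eqref{weakharmoniceq}) is the image of $\Q^\perp$ under the invertible restriction of $D\Psi(\Q_0)$, which gives \eqref{Qperpfor}. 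This is what the paper does, and it is the only scaling consistent with the estimate it then proves, $\|\Q_{\va_n}-\Q_0\|_{L^2(\om)}\leq C\va_n^2$ via Lemma \ref{multione} and the Fredholm-index continuity argument; the printed statement "$\va_n^{-1}(\Q_{\va_n}-\Q_0)\to\Q_0^{(1)}$" in the theorem is inconsistent with that rate and with the proof and appears to be a typo for $\va_n^{-2}$. Your secondary route through Lemma \ref{lemexpdestim} --- showing $u_{\va_n},v_{\va_n}\to 0$ and reading off $\lim h_{\va_n}=2\tr(\Q_0\Q^\perp)$, $\lim\tr\Y_{\va_n}=3\tr(\Q_0^2\Q^\perp)$ in the normal basis --- is in fact a neat alternative derivation of \eqref{Qperpfor}, but it, too, implicitly uses the $\va_n^2$ normalization (since $h_{\va_n}=\va_n^{-2}(\tr\Q_{\va_n}^2-2r_*^2)$), so it is internally inconsistent with your definition of $\W_n$; once you adopt $\va_n^{-2}$ the computation goes through.

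Separately, the closing paragraph about controlling $\W_n$ near $\cS_{\pts}$ via cut-offs is moot: the hypothesis under which the expansion is asserted is precisely "$\mathcal{L}_{\Q_0}$ is bijective \emph{and} $\Q_0$ is smooth," so $\cS_{\pts}=\emptyset$ for that part of the theorem. The genuine degeneration of the pointwise estimates of Lemma \ref{lemexpdestim} is only as one approaches $\pa\om$, and the paper handles it with the explicit remainder bounds \eqref{Rndistesti} together with Lemma \ref{NZlem2}, which shows such boundary-layer remainders are $O(\va_n^2)$ in $(H^2\cap H_0^1)'$ and in $H^s$ for $s<1/2$. That is the estimate you would need to make "a standard energy estimate for $\W_n$" precise, and it relies on the $n$-dependent lower-order coefficients $b^{(n)},c^{(n)}$ in \eqref{Sequation} tending to zero, a point worth flagging explicitly.
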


\begin{lem}\label{Monoto2}
Assume that $ 0<\va<1 $, $ U $ is a bounded $ C^{2,1} $ domain with $ r_{U,2},M_{U,2} $, and $ \Q_b\in C^2(\pa U,\cN) $. Let $ 0<r<r_{U,2} $ and $ x_0\in\pa U $. If $ \Q_{\va}\in C^{\ift}(U,\Ss_0)\cap C^2(\ol{U},\Ss_0) $ is a solution of 
\begin{align*}
\left\{\begin{aligned}
-\va^2\Delta\Q_{\va}+\Psi(\Q_{\va})&=0&\text{ in }&U,\\
\Q_{\va}&=\Q_b&\text{ on }&\pa U,
\end{aligned}\right.
\end{align*}
satisfying
\be
E_{\va}(\Q_{\va},U)\leq C_0,\label{energyboundC1}
\ee
for some $ C_0>0 $, then for any $ 0<\rho<r $,
$$
\f{\ud}{\ud \rho}\(\f{1}{\rho}E_{\va}(\Q_{\va},U\cap B_r(x_0))\)\geq-C,
$$
where $ C>0 $ depends only on $ \cA,r_{U,2},M_{U,2},C_0 $, and $ \Q_b $.
\end{lem}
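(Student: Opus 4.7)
The plan is to apply the Pohozaev identity of Lemma \ref{lemPohozaev} to the truncated domain $U_\rho := U \cap B_\rho(x_0)$, carefully track the contributions from $\Sigma_\rho := \pa U \cap B_\rho(x_0)$, and show that these contributions are $O(\rho^2)$, which yields the almost-monotonicity. Splitting $\pa U_\rho = S_\rho \cup \Sigma_\rho$ with $S_\rho := U \cap \pa B_\rho(x_0)$, and using that on $S_\rho$ one has $\nu = (x-x_0)/\rho$, hence $\nu \cdot (x-x_0) = \rho$ and $\mathbb{P}_{S_\rho}(x-x_0) = 0$, Pohozaev yields
\begin{align*}
E_\va(\Q_\va, U_\rho) + \f{2}{\va^2}\int_{U_\rho} f_b(\Q_\va)\,\ud x + \rho\int_{S_\rho} |\pa_\nu\Q_\va|^2\,\ud\HH^2 = \rho\int_{S_\rho} e_\va(\Q_\va)\,\ud\HH^2 + \mathcal{B}_\rho,
\end{align*}
where $\mathcal{B}_\rho$ gathers all contributions from $\Sigma_\rho$. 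Differentiating this identity gives
$$
\f{\ud}{\ud\rho}\left(\f{E_\va(\Q_\va, U_\rho)}{\rho}\right) \geq -\f{\mathcal{B}_\rho}{\rho^2},
$$
so it suffices to prove $|\mathcal{B}_\rho| \leq C\rho^2$ uniformly in $\va$.

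To estimate $\mathcal{B}_\rho$, I use the local graph chart near $x_0$ from Definition \ref{DefnLocal}. With $\psi(0,0)=0$ and $\na\psi(0,0) = 0$, the $C^{2,1}$ regularity of $U$ yields the standard geometric bound $|\nu(x)\cdot(x-x_0)| \leq C|x-x_0|^2 \leq C\rho^2$ on $\Sigma_\rho$, together with $|\mathbb{P}_{\pa U}(x-x_0)|\leq \rho$ and $\HH^2(\Sigma_\rho)\leq C\rho^2$. On $\Sigma_\rho$ one additionally has $f_b(\Q_b)=0$, and the tangential gradient satisfies $|\na_{\pa U}\Q_\va| = |\na_{\pa U}\Q_b|\leq \|\Q_b\|_{C^1(\pa U)}$. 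Putting these together, each of the three summands comprising $\mathcal{B}_\rho$ can be controlled by
$$
|\mathcal{B}_\rho|\leq C\rho^2\left(1 + \int_{\Sigma_\rho}|\pa_\nu\Q_\va|^2\,\ud\HH^2\right) + C\rho^2\left(\int_{\Sigma_\rho}|\pa_\nu\Q_\va|^2\,\ud\HH^2\right)^{1/2},
$$
so the matter reduces to producing a $\va$-independent bound on $\int_{\pa U}|\pa_\nu\Q_\va|^2\,\ud\HH^2$.

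This last step is the main obstacle: the naive pointwise estimate $|\pa_\nu\Q_\va|\lesssim \va^{-1}$ from Corollary \ref{smooth} is far too weak. To bypass this, I contract the stress–energy identity \eqref{StressEnergy} with $\nu^*_i$, where $\nu^* \in C^{1,1}(\ol{U}, \R^3)$ is an extension of the outward unit normal (whose existence and quantitative bounds are ensured by the $C^{2,1}$ regularity of $U$, with $\|\na\nu^*\|_\infty$ controlled by $M_{U,2}$). Integrating over $U$ and using $f_b(\Q_b)=0$ on $\pa U$ together with the pointwise identity $e_\va(\Q_\va)-|\pa_\nu\Q_\va|^2 = \f{1}{2}(|\na_{\pa U}\Q_b|^2 - |\pa_\nu\Q_\va|^2)$ there, one obtains
$$
\f{1}{2}\int_{\pa U}|\pa_\nu\Q_\va|^2\,\ud\HH^2 = \f{1}{2}\int_{\pa U}|\na_{\pa U}\Q_b|^2\,\ud\HH^2 - \int_U(e_\va\delta_{ij}-\pa_i\Q_\va:\pa_j\Q_\va)\pa_j\nu^*_i\,\ud x,
$$
whose right-hand side is bounded by $C(\Q_b, U) + C(M_{U,2})E_\va(\Q_\va, U) \leq C(\cA, r_{U,2}, M_{U,2}, C_0, \Q_b)$ thanks to the assumption \eqref{energyboundC1}. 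Feeding this uniform estimate into the bound for $\mathcal{B}_\rho$ gives $|\mathcal{B}_\rho|\leq C\rho^2$, and the almost-monotonicity follows.
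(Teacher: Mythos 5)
Your proof is correct and takes essentially the same route as the paper: both apply the Pohozaev identity to $U\cap B_\rho(x_0)$, reduce the question of almost-monotonicity to a uniform-in-$\va$ bound on $\int_{\pa U}|\pa_\nu\Q_\va|^2\,\ud\HH^2$, and obtain that bound by testing against an extension of the unit normal over $U$. The only cosmetic difference is that you contract the stress--energy identity \eqref{StressEnergy} with $\nu^*$, whereas the paper tests the Euler--Lagrange equation against $\vv\cdot\na\Q_\va$; these are the same computation (the stress--energy identity is derived by exactly that test), and your version is in fact slightly cleaner because the potential term is already absorbed into $e_\va$, so the chain-rule step $(f_b)_{ij}\pa_k(\Q_\va)_{ij}=\pa_k(f_b(\Q_\va))$ followed by another integration by parts that the paper performs is not needed.
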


\begin{proof}
Up to a translation, we assume that $ x_0=0 $. Using Lemma \ref{lemPohozaev} to $ U\cap B_{\rho} $ with $ 0<\rho<r $, we can obtain from simple calculations that
\begin{align*}
&-\f{1}{r^2}\int_{U\cap B_{\rho}}\(\f{1}{2}|\na\Q_{\va}|^2+\f{1}{\va^2}f_b(\Q_{\va})\)\ud x+\f{1}{r}\int_{U\cap\pa B_{\rho}}\(\f{1}{2}|\na\Q_{\va}|^2+\f{1}{\va^2}f_b(\Q_{\va})\)\ud\HH^2\\
&\quad\quad=\f{2}{\va^2\rho^2}\int_{U\cap B_{\rho}}f_b(\Q_{\va})\ud x+\int_{U\cap\pa B_{\rho}}\f{1}{\rho^3}|x\cdot\na\Q_{\va}|^2\ud\HH^2\\
&\quad\quad\quad\quad+\f{1}{2\rho^2}\int_{\pa U\cap B_{\rho}}(x\cdot\nu)(|\pa_{\nu}\Q_{\va}|^2-|\na_{\pa U}\Q_b|)\ud\HH^2\\
&\quad\quad\quad\quad+\f{1}{\rho^2}\int_{\pa U\cap B_{\rho}}\pa_{\nu}\Q_{\va}:(\mathbb{P}_{\pa U}(x)\cdot\na\Q_{\va})\ud\HH^2,
\end{align*}
where we also use the fact that $ |\na\Q_{\va}|^2=|\pa_{\nu}\Q_{\va}|^2+|\na_{\pa U}\Q_b|^2 $ and $ \Q_b\in\cN $ on $ \pa U $. Using Cauchy inequality, we have
\begin{align*}
\f{\ud}{\ud\rho}\(\f{1}{\rho}E_{\va}(\Q_{\va},U\cap B_{\rho})\)&\geq\f{1}{2\rho^2}\int_{\pa U\cap B_{\rho}}(x\cdot\nu)(|\pa_{\nu}\Q_{\va}|^2-|\na_{\pa U}\Q_b|)\ud\HH^2\\
&\quad\quad+\f{1}{\rho^2}\int_{\pa U\cap B_{\rho}}\pa_{\nu}\Q_{\va}:(\mathbb{P}_{\pa U}(x)\cdot\na\Q_{\va})\ud\HH^2\\
&\geq\f{1}{2\rho^2}\int_{\pa U\cap B_{\rho}}(|(x\cdot\nu)|-C\rho^2)|\pa_{\nu}\Q_{\va}|^2\ud\HH^2\\
&\quad\quad-\f{1}{\rho^2}\int_{\pa U\cap B_{\rho}}|\na_{\pa U}\Q_b|^2\ud\HH^2-\int_{\pa U\cap B_{\rho}}|\pa_{\nu}\Q_{\va}|^2\ud\HH^2\\
&\geq-\f{1}{\rho^2}\int_{\pa U\cap B_{\rho}}|\na_{\pa U}\Q_b|^2\ud\HH^2-C\int_{\pa U\cap B_{\rho}}|\pa_{\nu}\Q_{\va}|^2\ud\HH^2\\
&\geq-C\int_{\pa U\cap B_{\rho}}|\pa_{\nu}\Q_{\va}|^2\ud\HH^2-C,
\end{align*}
where for the second inequality, we have used the fact $ x\cdot\nu\geq|x\cdot\nu|-Cr^2 $ for some $ C>0 $ depending only on $ r_{U,2} $ and $ M_{U,2} $. Now the result of this lemma follows directly from the following claim:
\be
\int_{\pa U\cap B_r}|\pa_{\nu}\Q_{\va}|^2\ud\HH^2\leq C,\label{claimQvanu}
\ee
where $ C>0 $ depends only on $ \cA,r_{U,2},M_{U,2},C_0 $, and $ \Q_b $. Choose $ \vv\in C^1(\om,\R^3) $ such that $ \vv(x)=\nu(x) $ for any $ x\in\pa U $. Such function exists since we can first construct $ \vv $ in the small neighborhood of $ \pa U $ and then use the cut off function to give the definition in the interior of $ U $. Testing $ -\va^2\Delta\Q_{\va}+\Psi(\Q_{\va})=0 $ by the function $ \vv_k\pa_k\Q $, we have
\begin{align*}
\int_U\f{1}{\va^2}(f_b)_{ij}(\Q_{\va})&\vv_k\pa_k(\Q_{\va})_{ij}=\int_U\Delta(\Q_{\va})_{ij}\vv_{\ell}\pa_{\ell}(\Q_{\va})_{ij}\ud x\\
&=\int_{\pa U}|\pa_{\nu}\Q_{\va}|^2\ud\HH^2-\int_U\pa_k(\Q_{\va})_{ij}\pa_{k}\pa_{\ell}(\Q_{\va})_{ij}\vv_{\ell}\ud x\\
&\quad\quad-\int_U\pa_k(\Q_{\va})_{ij}\pa_{\ell}(\Q_{\va})_{ij}\pa_k\vv_{\ell}\ud x\\
&=\int_{\pa U}|\pa_{\nu}\Q_{\va}|^2\ud\HH^2+\f{1}{2}\int_U|\na\Q_{\va}|^2\op{div}\vv\ud x\\
&\quad\quad-\int_{\om}\pa(\Q_{\va})_{ij}\pa_{\ell}(\Q_{\va})_{ij}\pa_k\vv_{\ell}\ud x,
\end{align*}
where for the second and last equalities, we have used the integration by parts. As a result, we have
\begin{align*}
\int_{\pa U}|\pa_{\nu}\Q_{\va}|^2\ud\HH^2&\leq C+\int_U\f{1}{\va^2}(f_b)_{ij}(\Q_{\va})\vv_k\pa_k(\Q_{\va})_{ij}\\
&=C+\f{1}{\va^2}\int_U\pa_{\ell}(f_b(\Q_{\va}))\vv_{\ell}\ud x\\
&=C-\f{1}{\va^2}\int_Uf_b(\Q_{\va})\op{div}\vv\ud x,
\end{align*}
where for the last equality, we have used the integration by parts and the fact that $ \Q_b\in\cN $ on $ \pa U $. Now the \eqref{claimQvanu} follows directly from \eqref{energyboundC1}.
\end{proof}

\begin{proof}[Proof of Theorem \ref{expansion}]
The existence of $ \va_n $, $ \Q_0 $ is ensured by Proposition \ref{logboundenergy}. By standard results on harmonic maps, we have that $ \cS_{\pts} $ is finite. Since $ \om $ is smooth, then it is also $ C^{2,1} $ with $ r_{U,2} $ and $ M_{U,2} $. Now, we claim that if $ x_0\in\pa\om $, $ 0<r<r_0/10 $ and $ (\om\cap B_{10r}(x_0))\cap\cS_{\pts}=\emptyset $, then 
$$
\lim_{n\to+\ift}\sup_{x\in\om\cap B_r(x_0)}f_b(\Q_{\va_n}(x))=0.
$$
Fix $ \eta>0 $. Firstly, we choose $ 0<r_1<\min\{\eta/10,r/10\} $, depending on $ \Q_0 $ such that for any $ x\in\pa\om $,
$$
\f{1}{r_1}\int_{B_{r_1}(x)}\f{1}{2}|\na\Q_0|^2\ud x<\f{\eta}{10}.
$$
For sufficiently large $ n\in\Z_+ $, we have
\be
\f{1}{r_1}\int_{B_{r_1}(x)}\f{1}{2}|\na\Q_{\va_n}|^2\ud x<\f{\eta}{5}.\label{r1Qvaestim}
\ee
Fix $ x_1\in\om\cap B_r(x_0) $ and let $ \al_n=f_b(\Q_{\va_n}(x_1)) $. We can assume that $ \dist(x_1,\pa\om)<r_1/10 $ since for otherwise, we use results of Lemma \ref{UniformfB}. Let $ 0<\rho_n<r_1/2 $, we have $ \om\cap B_{\rho_n}(x_1)\subset\om\cap B_{2r}(x_0) $. By using Lemma \ref{boundednessofminimizer} and Corollary \ref{smooth}, we can obtain
\begin{align*}
\al_n&\leq f_b(\Q_{\va_n}(x))+C|\Q_{\va_n}(x_1)-\Q_{\va_n}(x)|\\
&\leq f_b(\Q_{\va_n}(x))+C|\na\Q_{\va_n}|_{L^{\ift}(\cap B_{2r}(x_0))}|x-x_1|\\
&\leq f_b(\Q_{\va_n}(x))+\f{C_1}{\va_n}|x-x_1|,
\end{align*}
for any $ x\in B_{\rho_n}(x_1) $. We deduce that
\be
\f{C_2}{\va_n^2}\(\al_n-\f{C_1\rho_n}{\va_n}\)\leq\f{1}{\va_n^2}\int_{\om\cap B_{\rho_n}(x_1)}f_b(\Q_{\va_n}(x))\ud x.\label{C2rhones}
\ee
Choose $ x_2\in\pa\om $, such that $ |x_2-x_1|=\dist(x_1,\pa\om) $. Obviously, $ x_2\in\cap B_{2r}(x_0) $ and $
\om\cap B_{\rho_n}(x_1)\subset\om\cap B_{2r_2}(x_2) $. By \eqref{C2rhones}, Corollary \ref{Monotonicity}, and Lemma \ref{Monoto2}, if $ 0<\rho_n<\dist(x_1,\pa\om) $, we have
\begin{align*}
\f{C_2\rho_n^3}{\va_n^2}\(\al_n-\f{C_1\rho_n}{\va_n}\)&\leq\f{\rho_n}{r_2}E_{\va_n}(\Q_{\va_n},B_{r_2}(x_1))\\
&\leq\f{\rho_n}{r_2}E_{\va_n}(\Q_{\va_n},\om\cap B_{2r_2}(x_2))\\
&\leq\rho_n\(\f{1}{r_1}E_{\va_n}(\Q_{\va_n},\om\cap B_{r_1}(x_2))+C(r_1-2r_2)\)\\
&\leq\f{\rho_n}{r_1}E_{\va_n}(\Q_{\va_n},\om\cap B_{r_1}(x_2))+C\rho_n\eta.
\end{align*}
If $ \rho_n\geq\dist(x_1,\pa\om) $, we get $ r_2\leq\rho_n $ and
\begin{align*}
\f{C_2\rho_n^3}{\va_n^2}\(\al_n-\f{C_1\rho_n}{\va_n}\)&\leq\f{r_2}{r_2}E_{\va_n}(\Q_{\va_n},\om\cap B_{2r_2}(x_2))\\
&\leq\rho_n\(\f{1}{r_1}E_{\va_n}(\Q_{\va_n},\om\cap B_{r_1}(x_2))+C(r_1-2r_2)\)\\
&\leq\f{\rho_n}{r_1}E_{\va_n}(\Q_{\va_n},\om\cap B_{r_1}(x_2))+C\rho_n\eta.
\end{align*}
In view of \eqref{fvanlimit} and \eqref{r1Qvaestim}, we can choose sufficiently large $ n\in\Z_+ $ such that 
$$
\f{C_2\rho_n^3}{\va_n^2}\(\al_n-\f{C_1\rho_n}{\va_n}\)\leq C_3\eta.
$$
Setting $ \rho_n=\al_n\va_n/(2C_1) $ and using the arbitrariness of $ \eta,x_1 $, we can complete the proof of this claim. Combined with Lemma \ref{UniformfB}, we can obtain that $ f_b(\Q_{\va_n})\to 0^+ $ in $ C_{\loc}^0(\om\backslash\cS_{\pts}) $ as $ n\to+\ift $. Applying Lemma \ref{LiftQNll1}, \ref{smallreg1}, and \ref{smallreg2}, we prove \eqref{C1alconvergence}. Also, \eqref{Cinftyconvergence} follows from Lemma \ref{Ckestimate}. Before we show \eqref{Hsestcon} and \eqref{jZgeq2}, we firstly assume that they are true and give \eqref{Qperpfor} and \eqref{Qparallelfor}. Set $ \Q_{\va_n}=\Q_0+\va_n^2\PP_n $, we can deduce from \eqref{E-L} that
\begin{align*}
\va_n^2\Delta\PP_n&=\f{1}{\va_n^2}(-a_2+a_4(\tr\Q_0^2)+a_6(\tr\Q_0^2)^2)\Q_0\\
&\quad\quad-a_2\PP_n+a_4(\tr\Q_0^2)\PP_n+2a_4\tr(\PP_n\Q_0)(\tr\Q_0^2)\Q_0\\
&\quad\quad-\Delta\Q_0+3a_6'\tr(\PP_n\Q_0^2)\(\Q_0^2-\f{\tr(\Q_0^2)}{3}\I\)+O(\va_n^2).
\end{align*}
Using the facts that $ \tr(\Q_0^2)=2r_*^2 $, \eqref{rstar} and choosing $ \va_n\to 0^+ $, we have
$$
(2a_4+8a_6r_*^2)\tr(\PP_n\Q_0)\Q_0+3a_6'\tr(\PP_n\Q_0^2)\(\Q_0^2-\f{2r_*^2}{3}\I\)-\Delta\Q_0=0.
$$
Using \eqref{weakharmoniceq}, \eqref{Qperpfor} follows directly. For \eqref{Qparallelfor}, we firstly note that
$$
\Delta\Q^{\parallel}=(\Delta\Q^{\parallel})+(\Delta\Q_0)^{\perp}-(\Delta\Q^{\perp})^{\perp}.
$$
Moreover, by the definition of $ \Q^{\parallel} $ and Remark \ref{normalremQ}, we have $ \tr(\Q^{\parallel}\Q_0)=0 $ and $ \tr(\Q^{\parallel}\Q_0)=0 $. Taking $ \Delta $ on both sides, we can obtain \eqref{Qparallelfor}.

Finally, we will show \eqref{Hsestcon} and \eqref{jZgeq2}. Firstly, we have
$$
\Delta\Q_{\va_n}=-\f{1}{2r_*^2}|\na\Q_{\va_n}|^2\Q_{\va_n}-\f{3}{r_*^4}\tr(\na\Q_{\va_n}\na\Q_{\va_n}\Q_{\va_n})\(\Q_{\va_n}^2-\f{2r_*^2}{3}\I\)+\mathbf{R}_n,  
$$
where $ \mathbf{R}_n\in C^{\ift}(\ol{\om},\Ss_0) $. By using Lemma \ref{lemexpdestim} and Theorem \ref{main1}, $ \mathbf{R}_n $ satisfies the estimates
\be
\begin{aligned}
|\mathbf{R}_n(y)|&\leq C\left\{\f{\va_n^2}{\dist(y,\pa\om)}+\exp\(-\f{\dist(y,\pa\om)}{\va_n}\)\right\},\\
|\na^{\ell}\mathbf{R}_n(y)|&\leq\f{C\va_n^2}{\dist(y,\pa\om)^{\ell+2}},
\end{aligned}\label{Rndistesti}
\ee
for any $ y\in\om $ and $ \ell\in\Z_{\geq 0} $. Let $ \fS_n=\Q_{\va_n}-\Q_0 $, we can use the convergence of $ \Q_{\va_n} $ to $ \Q_0 $ to get
\be
\mathcal{L}(\fS_n)_{ij}:=\mathcal{L}_{\Q_0}(\fS_n)_{ij}+b_{ijk}^{(n)\al\beta}\pa_k(\fS_n)_{\al\beta}+c_{ij}^{(n)\al\beta}(\fS_n)_{\al\beta}=(\mathbf{R}_n)_{ij}\label{Sequation}
\ee
such that $ b_{ijk}^{(n)\al\beta}\in C^{0,\al}(\ol{\om})\cap C^{\ift}(\om)\cap W^{1,p}(\om) $, $ c_{ij}^{(n)\al\beta}\in C^{0,\al}(\ol{\om})\cap C^{\ift}(\om) $ for any $ \al\in(0,1) $ and $ p>1 $, and satisfy
$$
\|b^{(n)}\|_{W^{1,p}(\om)}\leq C,\,\,\lim_{n\to+\ift}\|b^{(n)}\|_{C^{0,\al}(\ol{\om})}+\|c^{(n)}\|_{C^{0,\al}(\ol{\om})}=0.
$$
If $ n\in\Z_+ $ is sufficiently large and $ t\in[0,1] $, we have
$$
\mathcal{L}_t(\fS_n)_{ij}:=\mathcal{L}_{\Q_0}(\fS_n)_{ij}+tb_{ijk}^{(n)\al\beta}\pa_k(\fS_n)_{\al\beta}+tc_{ij}^{(n)\al\beta}(\fS_n)_{\al\beta}
$$
defined from $ H_0^1(\om)\to H^{-1}(\om) $ is injective. By the stability of Fredholm index, we have that $ \mathcal{L}_t $ are bijective for any $ t\in[0,1] $. Using Lemma \ref{multione}, we have
\be
\|\fS_n\|_{L^2(\om)}\leq C\|\mathbf{R}_n\|_{(H^2(\om)\cap H_0^1(\om))^*}\leq C\va_n^2.\label{SnL2}
\ee
On the other hand, we can deduce from \eqref{Sequation} that
\begin{align*}
\|\fS_n\|_{H^s(\om)}&\leq C(\|\Delta\fS_n\|_{H^{s-2}(\om)}+\|\fS_n\|_{H^{s-2}(\om)})\\
&\leq\f{1}{2}\|\fS_n\|_{H^s(\om)}+C(\|\mathbf{R}_{n}\|_{H^{s-2}(\om)}+\|\fS_n\|_{H^{s-2}(\om)})
\end{align*}
for any $ s>0 $. Thus, by using \eqref{Rndistesti}, Lemma \ref{multione} and \ref{NZlem2}, we have $ \|\fS_n\|_{H^s(\om)}\leq C\va_n^2 $ for any $ 0<s<1/2 $. This implies \eqref{Hsestcon}. For \eqref{jZgeq2}, we can use \eqref{Rndistesti}, \eqref{SnL2}, and standard elliptic estimates to get $
\|\fS_n\|_{H^j(K)}\leq C\va_n^2 $ for any $ K\subset\subset\om $ and $ j\in\Z_+ $.
\end{proof}

\section{Minimizers in logarithmic energy regime}\label{logarithmicenergyregime}

In this section, we will prove Theorem \ref{main}.

\subsection{Existence of line defect}

Firstly, we give the clearing-out lemma as follows, we remark that such lemma is also true for more general elastic energy density satisfying some convex conditions for $ \na\Q $.

\begin{lem}\label{varegu}
Let $ \om $ be a bounded, Lipschitz domain and $ \{\Q_{\va}\}_{0<\va<1} $ is a sequence of local minimizers of \eqref{energy} satisfying \eqref{assumptionbound}. There exist $ \eta_0,C>0 $ and $ 0<\ol{\va}_0<1 $, depending only on $ \cA $ and $ M $, such that if $ x_0\in\om $, $ r>0 $, $ \ol{B_r(x_0)}\subset\om $, $ 0<\va\leq\ol{\va}_0r $ and
$$
E_{\va}(\Q_{\va},B_r(x_0))\leq\eta_0 r\log\f{r}{\va},
$$
then $ E_{\va}(\Q_{\va},B_{r/2}(x_0))\leq Cr $, where $ C>0 $ depends only on $ \cA $ and $ M $.
\end{lem}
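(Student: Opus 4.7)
The plan is to adapt the clearing-out scheme of \cite{C17} to the biaxial setting via Luckhaus-type interpolation on a good spherical slice, followed by a topologically-free extension into $\cN$. First I would use the scaling $\Q_\va(y)\mapsto\Q_\va(ry)$ (which sends $\va$ to $\va/r$) to reduce to $r=1$, $x_0=0$, $\va\in(0,\ol\va_0)$, so that the hypothesis becomes $E_\va(\Q_\va,B_1)\le\eta_0\log(1/\va)$ and the desired conclusion becomes $E_\va(\Q_\va,B_{1/2})\le C$. The monotonicity formula (Corollary \ref{Monotonicity}) then propagates the bound to every smaller ball: $E_\va(\Q_\va,B_\rho)\le\rho\,\eta_0\log(1/\va)$ for all $\rho\in(0,1]$. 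Averaging the radial energy $\rho\mapsto E_\va(\Q_\va,\pa B_\rho)$ over the annulus $\rho\in(3/4,7/8)$ by Fubini produces a good slice radius $\rho_*\in(3/4,7/8)$ with $E_\va(\Q_\va,\pa B_{\rho_*})\le C\eta_0\log(1/\va)$.

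On this slice I would apply Lemma \ref{Luckhauslemma} to $\U_\va:=\Q_\va|_{\pa B_{\rho_*}}$ (after dilating $\pa B_{\rho_*}$ to $\pa B_1$, with $\ol\va:=\va/\rho_*$): provided $\eta_0$ is chosen below the Luckhaus admissibility threshold $\eta_1$, this produces an $\cN$-valued approximation $\V_\va\in H^1(\pa B_{\rho_*},\cN)$ satisfying $\|\na_{\pa B_{\rho_*}}\V_\va\|_{L^2}^2\le CE_\va(\Q_\va,\pa B_{\rho_*})$, together with a transition map $\W_\va$ on a thin shell of width $O(h(\ol\va))$ whose energy is bounded by $Ch(\ol\va)E_\va(\Q_\va,\pa B_{\rho_*})=o(1)$ as $\va\to 0$. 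Since Lemma \ref{FG} identifies $\Ss^3$ as the universal cover of $\cN$, one has $\pi_2(\cN)=\pi_2(\Ss^3)=0$, so every continuous map $\Ss^2\to\cN$ is homotopically trivial and in particular $\V_\va$ admits an $\cN$-valued extension; applying Lemma \ref{k3} with $k=3$ yields $\wt\V_\va\in H^1(B_{\rho_*-O(h(\ol\va))},\cN)$ extending $\V_\va$. Splicing $\Q_\va$ outside $B_{\rho_*}$, $\W_\va$ on the shell, and $\wt\V_\va$ in the interior yields a competitor $\PP_\va\in H^1(B_{\rho_*},\Ss_0)$ coinciding with $\Q_\va$ on $\pa B_{\rho_*}$, and the local-minimizing property of $\Q_\va$ forces $E_\va(\Q_\va,B_{\rho_*})\le E_\va(\PP_\va,B_{\rho_*})$.

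The main analytical obstacle is that a single pass of the above construction produces a competitor energy only of order $(\eta_0\log(1/\va))^{1/2}$ from Lemma \ref{k3}, which improves the hypothesized logarithmic bound but does not yet achieve the claimed $O(1)$ bound. To close this gap I would iterate the entire selection--Luckhaus--extension cycle on a nested sequence of radii $\rho_0:=\rho_*>\rho_1>\cdots>1/2$, at each step feeding the previously obtained interior bound back as the input hypothesis for the next cycle. Writing $A_k:=E_\va(\Q_\va,B_{\rho_k})$, the construction produces a recursion of the form $A_{k+1}\le C(A_k/(\rho_k-\rho_{k+1}))^{1/2}+o(1)$, which for $\eta_0$ sufficiently small (depending only on $\cA$ and $M$) drives $A_k$ to an $O(1)$ value in finitely many steps. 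The delicate bookkeeping is to verify at every iteration that the Luckhaus admissibility condition $A_k\le\eta_1\log(\rho_k/\va)$ continues to hold, to keep the radii $\rho_k$ strictly above $1/2$, and to control the accumulated constants; these combined constraints determine the smallness of $\eta_0$, the threshold $\ol\va_0$, and the final value of $C$.
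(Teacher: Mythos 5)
Your first seven steps—scaling to $r=1$, monotonicity, good-slice selection, Luckhaus interpolation, the $\pi_2(\cN)=0$ remark, extension via Lemma~\ref{k3}, and comparison with the competitor—are essentially the same building blocks that the paper assembles to obtain the key inequality $E_\va(\Q_\va,B_\rho)\le C_0 r\bigl(E_\va^{1/2}(\Q_\va,\pa B_\rho)+1\bigr)$. The trouble is entirely in your final bootstrapping step. Your recursion $A_{k+1}\le C(A_k/(\rho_k-\rho_{k+1}))^{1/2}+o(1)$ with $A_0=\eta_0\log(1/\va)$ cannot be driven down to $O(1)$ in a \emph{bounded} number of steps: near the fixed point $A^*\sim C^2/\delta$ of the map $x\mapsto C\sqrt{x/\delta}$, the iteration behaves like $\log A_{k+1}\approx\tfrac12(\log A_k+\log A^*)$, so reducing $A_k$ from $\eta_0\log(1/\va)$ to $O(A^*)$ requires on the order of $\log_2\log\log(1/\va)$ iterations, while you have only $O(1/\delta)$ radii available in $(1/2,1)$. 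Taking $\delta\to 0$ to allow more steps pushes $A^*=C^2/\delta\to\infty$, so the two constraints are irreconcilable, and no choice of $\eta_0$ repairs this because $A_0\to\infty$ as $\va\to 0$ regardless of $\eta_0$.

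The paper avoids iteration entirely. Instead of selecting a \emph{single} good slice radius, it introduces the set
$$
D^{\va}=\Bigl\{\rho\in(r/2,r):E_{\va}(\Q_{\va},\pa B_{\rho})\le 4\eta_0\log\tfrac{r}{\va}\Bigr\},
$$
which by an averaging argument satisfies $\HH^1(D^\va)\ge r/4$, and runs the Luckhaus--extension--comparison construction for \emph{every} $\rho\in D^\va$ simultaneously. This yields $E_\va(\Q_\va,B_\rho)\le C_0 r(E_\va^{1/2}(\Q_\va,\pa B_\rho)+1)$ on a set of radii of positive measure, which is a differential inequality in $\rho$ because $\tfrac{d}{d\rho}E_\va(\Q_\va,B_\rho)=E_\va(\Q_\va,\pa B_\rho)$. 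Assuming $E_\va(\Q_\va,B_\rho)>C_0 r$ throughout $(r/2,r)$ (the other case is trivial), one gets
$$
\frac{\tfrac{d}{d\rho}E_\va(\Q_\va,B_\rho)}{(E_\va(\Q_\va,B_\rho)-C_0r)^2}\ge C_0^{-2}r^{-2}\chi_{D^\va}(\rho),
$$
and integrating from $r/2$ to $r$ and using $\HH^1(D^\va)\ge r/4$ gives $E_\va(\Q_\va,B_{r/2})\le C_0(4+C_0)r$ in one stroke. This ODE-integration argument is what converts the $\sqrt{\cdot}$ gain into an absolute bound; no nested-radii bookkeeping is needed or would suffice. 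You should replace your Step~8 with the selection of the positive-measure set $D^\va$ (with the Luckhaus threshold $\eta_0=\eta_1/8$ ensuring admissibility for each $\rho\in D^\va$ via $r<2\rho$) followed by this Gronwall-type integration.
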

\begin{proof}
Up to a translation, we assume that $ x_0=0 $. For $ 0<\va\leq r/4 $, we define
$$
D^{\va}:=\left\{\rho\in\(\f{r}{2},r\):E_{\va}(\Q_{\va},\pa B_{\rho})\leq 4\eta_0\log\f{r}{\va}\right\},
$$
where $ \eta_0>0 $ is to be chosen. By average arguments, we have
\be
\HH^1(D^{\va})\geq\f{r}{4}.\label{HD14}
\ee
For any $ \rho\in D^{\va} $, there holds $ r<2\rho $. Thus we get
\be
E_{\va}(\Q_{\va},\pa B_{\rho})\leq 8\eta_0\log\f{\rho}{\va},\label{Condi}
\ee
for any $ 0<\va\leq r/4 $ and $ \rho\in D^{\va} $. We claim that when $ \eta_0=\eta_1/8 $ and $ \ol{\va}_0=\min\{1/4,\ol{\va}_1/2\} $, where $ \eta_1,\ol{\va}_1 $ are given by Lemma \ref{Luckhauslemma}, there exists $ C_0>0 $ depending only on $ \cA $ and $ M $ such that for any $ 0<\va\leq\ol{\va}_0r $ and $ \rho\in D^{\va} $,
\be
E_{\va}(\Q_{\va},B_{\rho})\leq C_0r(E_{\va}^{1/2}(\Q_{\va},\pa B_{\rho})+1).\label{Iteene}
\ee
We firstly assmue that this claim is true. If for some $ \rho\in(r/2,r) $, $ E_{\va}(\Q_{\va},B_{\rho})\leq C_0r $, then there is nothing to prove. On the other hand, if $ E_{\va}(\Q_{\va},B_{\rho})>C_0r $ for any $ \rho\in(r/2,r) $, then \eqref{Iteene} implies
$$
\f{\f{\ud}{\ud\rho}E_{\va}(\Q_{\va},B_{\rho})}{(E_{\va}(\Q_{\va},B_{\rho})-C_0r)^2}\geq C_0^{-2}r^{-2}\chi_{D^{\va}}(\rho).
$$
Integrating the above inequality on $ (r/2,r) $ and applying \eqref{HD14}, we have
\begin{align*}
\f{1}{E_{\va}(\Q_{\va},B_{r/2})-C_0r}&\geq\f{1}{E_{\va}(\Q_{\va},B_r)-C_0r}+\f{1}{C_0^2r^2}\cdot\f{r}{4}\geq\f{1}{4C_0^2r},
\end{align*}
which directly implies that $
E_{\va}(\Q_{\va},B_{r/2})\leq C_0(4+C_0)r $ and then the lemma is proved. Now it remains to show \eqref{Iteene}. Set $ \ol{\va}=\va/\rho $ and $ \U_{\ol{\va}}(x)=\Q_{\va}(\rho x) $ for $ x\in B_1 $. We only need to prove
$$
E_{\ol{\va}}(\U_{\ol{\va}},B_1)\leq C( E_{\ol{\va}}^{1/2}(\U_{\ol{\va}},\pa B_1)+1).
$$
For $ \rho\in D^{\va} $, in view of \eqref{Condi} and the choices of $ \ol{\va}_0,\eta_0 $, we have $
E_{\ol{\va}}(\U_{\ol{\va}},\pa B_1)\leq\eta_1\log(1/\ol{\va}) $ and $ 0<\ol{\va}<\ol{\va}_1<1 $. Combining \eqref{assumptionbound}, we can apply Lemma \ref{Luckhauslemma} to construct $ \V_{\ol{\va}}\in H^1(\pa B_1,\cN) $, $ \W_{\ol{\va}}\in H^1(B_1\backslash B_{1-h(\ol{\va})},\Ss_0) $ satisfying
\eqref{interpoL}, \eqref{VboundL}, and \eqref{WboundL} with $ h(\ol{\va})=\ol{\va}^{1/2}\log(1/\ol{\va}) $. Using Lemma \ref{k3}, there exists $ \V_{\ol{\va}}^{(1)}\in H^1(B_1,\cN) $ such that $ \V_{\ol{\va}}^{(1)}|_{\pa B_1}=\V_{\ol{\va}} $ and 
\be
\int_{B_1}|\na\V_{\ol{\va}}^{(1)}|^2\ud x\leq C E_{\ol{\va}}^{1/2}(\U_{\ol{\va}},\pa B_1).\label{12E}
\ee
Define $ \W_{\ol{\va}}^{(1)}:B_1\to\Ss_0 $ as
$$
\W_{\ol{\va}}^{(1)}(x):=\left\{\begin{aligned}
&\W_{\ol{\va}}(x)&\text{ for }&x\in B_1\backslash B_{1-h(\ol{\va})},\\
&\V_{\ol{\va}}^{(1)}\(\f{x}{1-h(\ol{\va})}\)&\text{ for }&x\in B_{1-h(\ol{\va})}.
\end{aligned}\right.
$$
In view of \eqref{VboundL}, \eqref{WboundL}, and \eqref{12E}, we can obtain
$$
E_{\ol{\va}}(\W_{\ol{\va}}^{(1)},B_1)\leq C(E_{\ol{\va}}^{1/2}(\U_{\ol{\va}},\pa B_1)+1).
$$
Since $ \W_{\ol{\va}}^{(1)}|_{\pa B_1}=\U_{\ol{\va}}|_{\pa B_1} $, \eqref{Iteene} follows directly by the property that $ \Q_{\va} $ is a local minimizer of \eqref{energy}.
\end{proof}

Let $ 0<\va<1 $ and $ \Q_{\va}\in H^1(\om,\Ss_0) $ be a local minimizer of of \eqref{energy}. We define a positive Radon measure $ \mu_{\va} $ on $ \ol{\om} $ by
$$
\mu_{\va}(U):=\f{E_{\va}(\Q_{\va},U)}{\log\f{1}{\va}}
$$
for any Borel-measurable set $ U\in\cB(\ol{\om}) $. By the assumption \eqref{assumptionbound}, we have 
\be 
\sup_{0<\va<1/2}\mu_{\va}(\ol{\om})\leq M\(1+\f{1}{\log 2}\).\label{UniformboundMeasure}
\ee
There exist a positive Radon measure $ \mu_0\in (C(\ol{\om}))' $ and a subsequence $ \{\va_n\}_{n=1}^{+\ift} $ such that as $ n\to+\ift $, there hold $ \va_n\to 0^+ $ and 
\be 
\mu_{\va_n}\wc^*\mu_0\text{ in }(C(\ol{\om}))'\label{munconve}
\ee
Define $ \cS_{\op{line}}:=\supp(\mu_0) $ be the support of $ \mu_0 $ in $ \ol{\om} $. We obtain that $ \cS_{\op{line}} $ is a closed subset of $ \ol{\om} $. 

\begin{lem}\label{smallmu0re}
Let $ \eta_0>0 $ be given by Lemma \ref{varegu}, $ x_0\in\om $ and $ r>0 $, such that $ \ol{B_r(x_0)}\subset\om $. If $ \mu_0(\ol{B_r(x_0)})<\eta_0r $, then $ \mu_0(B_{r/2}(x_0))=0 $, i.e., $ B_{r/2}(x_0)\subset\om\backslash\cS_{\op{line}} $.
\end{lem}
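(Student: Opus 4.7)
The plan is to transfer the smallness hypothesis on $\mu_0$ to a smallness hypothesis on $\mu_{\va_n}$, apply the $\va$-regularity of Lemma \ref{varegu} to obtain a uniform (not logarithmically growing) energy bound on $B_{r/2}(x_0)$, and then conclude via the lower semicontinuity of $\mu_{\va_n}$ on open sets that $\mu_0(B_{r/2}(x_0))=0$, which immediately yields $B_{r/2}(x_0)\cap\cS_{\op{line}}=\emptyset$ since $\cS_{\op{line}}=\supp(\mu_0)$ and $B_{r/2}(x_0)$ is open.

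First, I would use the weak$^*$ convergence $\mu_{\va_n}\wc^*\mu_0$ in $(C(\ol\om))'$, which by the standard portmanteau principle for positive Radon measures gives $\limsup_{n\to\infty}\mu_{\va_n}(K)\leq\mu_0(K)$ for every compact $K\subset\ol\om$. Applied to $K=\ol{B_r(x_0)}\subset\om$, the strict inequality $\mu_0(\ol{B_r(x_0)})<\eta_0 r$ lets me choose $\eta_0'\in(\mu_0(\ol{B_r(x_0)})/r,\eta_0)$, so that for all sufficiently large $n$,
\begin{equation*}
\mu_{\va_n}(\ol{B_r(x_0)})<\eta_0' r,\qquad\text{i.e.},\qquad E_{\va_n}(\Q_{\va_n},B_r(x_0))<\eta_0' r\log\tfrac{1}{\va_n}.
\end{equation*}

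Next, I have to upgrade this to the hypothesis actually required by Lemma \ref{varegu}, namely $E_{\va_n}(\Q_{\va_n},B_r(x_0))\leq\eta_0 r\log(r/\va_n)$. Writing $\log(r/\va_n)=\log(1/\va_n)+\log r$, this amounts to $(\eta_0-\eta_0')r\log(1/\va_n)\geq -\eta_0 r\log r$, which automatically holds once $\va_n$ is small enough (with $r>0$ fixed). Also $\va_n\leq\ol{\va}_0 r$ holds for $n$ large. Therefore, for all sufficiently large $n$, Lemma \ref{varegu} applies on the ball $B_r(x_0)$ and yields
\begin{equation*}
E_{\va_n}(\Q_{\va_n},B_{r/2}(x_0))\leq Cr,
\end{equation*}
with $C$ depending only on $\cA$ and $M$.

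Finally, dividing by $\log(1/\va_n)$ gives $\mu_{\va_n}(B_{r/2}(x_0))\leq Cr/\log(1/\va_n)\to 0$. Since $B_{r/2}(x_0)$ is open, the portmanteau inequality in the opposite direction gives
\begin{equation*}
\mu_0(B_{r/2}(x_0))\leq\liminf_{n\to\infty}\mu_{\va_n}(B_{r/2}(x_0))=0,
\end{equation*}
so $\mu_0(B_{r/2}(x_0))=0$. Because $B_{r/2}(x_0)$ is an open neighborhood of each of its points with zero $\mu_0$-mass, no such point lies in $\supp(\mu_0)=\cS_{\op{line}}$, giving $B_{r/2}(x_0)\subset\om\setminus\cS_{\op{line}}$ as claimed. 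The only step that required care is the comparison between $\log(1/\va_n)$ and $\log(r/\va_n)$; otherwise the argument is a direct transfer between $\mu_{\va_n}$ and $\mu_0$, exploiting that weak$^*$ convergence of Radon measures is upper semicontinuous on closed sets and lower semicontinuous on open sets, and there is no substantive obstacle beyond correctly invoking Lemma \ref{varegu}.
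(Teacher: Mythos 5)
Your proof is correct and follows essentially the same route as the paper: transfer the smallness of $\mu_0(\ol{B_r(x_0)})$ to an eventual smallness of $E_{\va_n}(\Q_{\va_n},B_r(x_0))$ relative to $r\log(r/\va_n)$ via portmanteau on the closed ball, invoke Lemma \ref{varegu} to get the uniform bound $Cr$ on the half ball, then conclude via portmanteau on the open ball. The only cosmetic difference is that you interpolate an intermediate constant $\eta_0'$ to handle the $\log(1/\va_n)$ vs. $\log(r/\va_n)$ discrepancy, whereas the paper folds this into a single $\limsup$ computation of the ratio; the two are equivalent.
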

\begin{proof}
Firstly, by using Proposition 4.26 of \cite{M2012}, it follows from \eqref{munconve} that
\be
\mu_{\va_n}(F)\geq\limsup_{n\to+\ift}\mu_{\va_n}(F),\quad\mu_{\va_n}(G)\leq\liminf_{n\to+\ift}\mu_{\va_n}(G),\label{weakconFG}
\ee
for any closed set $ F\subset\ol{\om} $ and open set $ G\subset\ol{\om} $. If $ \mu_0(\ol{B_r(x_0)})<\eta_0 r $, then
\begin{align*}
\limsup_{n\to+\ift}\f{ E_{\va_n}(\Q_{\va_n},B_r(x_0))}{r\log(r/\va_n)}&=\limsup_{n\to+\ift}\f{ E_{\va_n}(\Q_{\va_n},\ol{B_r(x_0)})}{r\log\f{1}{\va_n}}\f{\log\f{1}{\va_n}}{\log r+\log\f{1}{\va_n}}\\
&\leq\limsup_{n\to+\ift}\f{\log\f{1}{\va_n}}{\log r+\log\f{1}{\va_n}}\f{\mu_{\va_n}(\ol{B_r(x_0)})}{r}\\
&\leq\f{\mu_0(\ol{B_r(x_0)})}{r}<\eta_0.
\end{align*}
By using Lemma \ref{varegu}, we have $  E_{\va_n}(\Q_{\va_n},B_{r/2}(x_0))\leq Cr $ for sufficiently large $ n\in\Z_+ $. It follows from \eqref{munconve} and \eqref{weakconFG} that
$$
\mu_0(B_{r/2}(x_0))\leq\liminf_{n\to+\ift}\mu_{\va_n}(B_{r/2}(x_0))\leq\liminf_{n\to+\ift}\f{Cr}{\log\f{1}{\va_n}}=0,
$$
which completes the proof.
\end{proof}

\begin{lem}\label{monomu0}
For any $ x\in\om $, the map 
$$
r\in(0,\dist(x,\pa\om))\mapsto\f{\mu_0(\ol{B_r(x)})}{2r}
$$
is non-decreasing.
\end{lem}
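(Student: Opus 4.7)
The plan is to deduce the monotonicity of $r \mapsto \mu_0(\overline{B_r(x)})/(2r)$ from the pointwise monotonicity formula of Corollary \ref{Monotonicity} applied to the sequence $\{\Q_{\va_n}\}_{n\in\Z_+}$, together with a careful limiting argument for weak-$*$ convergence of Radon measures. Fix $x \in \om$ and $0 < r_1 < r_2 < \dist(x,\pa\om)$. Since each $\Q_{\va_n}$ is a local minimizer of \eqref{energy} and is smooth by Corollary \ref{smooth}, Corollary \ref{Monotonicity} gives
\[
\frac{1}{r_1} E_{\va_n}(\Q_{\va_n}, B_{r_1}(x)) \leq \frac{1}{r_2} E_{\va_n}(\Q_{\va_n}, B_{r_2}(x)),
\]
and dividing through by $\log(1/\va_n)$ yields $\mu_{\va_n}(B_{r_1}(x))/r_1 \leq \mu_{\va_n}(B_{r_2}(x))/r_2$ for every $n$.

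The main obstacle is that weak-$*$ convergence of measures only yields one-sided inequalities (upper semi-continuity on closed sets, lower semi-continuity on open sets), so passing the limit in the displayed inequality is not entirely routine. The fix is to observe that $\mu_0$ is a finite positive Radon measure on $\overline{\om}$ by \eqref{UniformboundMeasure}, so the set $N_x := \{r \in (0,\dist(x,\pa\om)) : \mu_0(\pa B_r(x)) > 0\}$ is at most countable. For every $r$ in the complement of $N_x$, the set $\pa B_r(x)$ is a $\mu_0$-null closed set, so the standard portmanteau-type consequence of \eqref{munconve} yields
\[
\mu_0(\overline{B_r(x)}) = \mu_0(B_r(x)) = \lim_{n\to+\ift} \mu_{\va_n}(B_r(x)) = \lim_{n\to+\ift}\mu_{\va_n}(\overline{B_r(x)}).
\]
Consequently, for every pair $r_1, r_2 \notin N_x$ with $r_1 < r_2$, one may pass to the limit in $\mu_{\va_n}(B_{r_1})/r_1 \leq \mu_{\va_n}(B_{r_2})/r_2$ directly to obtain $\mu_0(\overline{B_{r_1}(x)})/r_1 \leq \mu_0(\overline{B_{r_2}(x)})/r_2$.

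It remains to upgrade this to all pairs $0 < r_1 < r_2 < \dist(x,\pa\om)$. Given arbitrary such $r_1 < r_2$, I would pick $r_1', r_2' \notin N_x$ with $r_1 < r_1' < r_2' < r_2$, so that
\[
\frac{\mu_0(\overline{B_{r_1}(x)})}{r_1} \leq \frac{\mu_0(\overline{B_{r_1'}(x)})}{r_1} = \frac{r_1'}{r_1}\frac{\mu_0(\overline{B_{r_1'}(x)})}{r_1'} \leq \frac{r_1'}{r_1}\frac{\mu_0(\overline{B_{r_2'}(x)})}{r_2'} \leq \frac{r_1'}{r_1}\frac{\mu_0(\overline{B_{r_2}(x)})}{r_2'},
\]
where the first inequality uses monotonicity of $\mu_0$ on nested closed sets, the second is the already-established a.e. version of the claim, and the last uses $\overline{B_{r_2'}(x)} \subset \overline{B_{r_2}(x)}$. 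Sending $r_1' \searrow r_1$ and $r_2' \nearrow r_2$ along $(0,\dist(x,\pa\om))\setminus N_x$ (possible since $N_x$ is countable) then produces $\mu_0(\overline{B_{r_1}(x)})/r_1 \leq \mu_0(\overline{B_{r_2}(x)})/r_2$, which is equivalent to the asserted monotonicity after dividing by $2$. The factor $2$ in the denominator is inert for the monotonicity statement and is included only so that the density limit can later be compared to $\HH^1(\overline{B_r(x)}\cap L) = 2r$ for line segments $L$ through $x$, as needed in property \eqref{pr6}.
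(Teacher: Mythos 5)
Your proof is correct. The approach differs from the paper's only mildly: you route the argument through the countable exceptional set $N_x$ of radii where $\mu_0$ charges the sphere, invoke the portmanteau theorem to get full convergence $\mu_{\va_n}(\overline{B_r}) \to \mu_0(\overline{B_r})$ for $r \notin N_x$, establish the inequality on this a.e.\ set of radii, and then upgrade by squeezing $r_1', r_2'$ toward $r_1, r_2$. The paper instead sidesteps the atom-counting: it applies the one-sided weak-$*$ semicontinuity inequalities from \eqref{weakconFG} directly — using $\mu_0(\overline{B_{r_1}}) \leq \mu_0(B_{r_1+\delta}) \leq \liminf_n \mu_{\va_n}(B_{r_1+\delta})$ on the lower end and $\limsup_n \mu_{\va_n}(\overline{B_{r_2}}) \leq \mu_0(\overline{B_{r_2}})$ on the upper end — so that only the left radius needs to be perturbed, and a single parameter $\delta \to 0^+$ closes the argument. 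Both arguments are sound and essentially equivalent in difficulty; the paper's version is slightly more economical (one perturbation parameter, no appeal to countability of atoms), while yours is the more standard ``textbook'' maneuver for converting weak-$*$ convergence of measures into pointwise convergence of ball masses, and has the modest advantage of producing the intermediate fact that $r \mapsto \mu_0(\overline{B_r(x)})$ is continuous off a countable set, which is occasionally useful to have on record.
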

\begin{proof}
For $ 0<r_1<r_2<\dist(x,\pa\om) $, there exists $ \delta>0 $ such that $ r_1+\delta<r_2 $. By using Lemma \ref{Monotonicity}, \eqref{munconve}, and \eqref{weakconFG}, we deduce
\begin{align*}
\f{\mu_0(\ol{B_{r_1}(x)})}{2r_1}&\leq\f{\mu_0(B_{r_1+\delta}(x))}{2(r_1+\delta)}\cdot\f{r_1+\delta}{r_1}\\
&\leq\liminf_{n\to+\ift}\f{ E_{\va_n}(\Q_{\va_n},B_{r_1+\delta}(x))}{2(r_1+\delta)\log\f{1}{\va_n}}\cdot\f{r_1+\delta}{r_1}\\
&\leq\limsup_{n\to+\ift}\f{ E_{\va_n}(\Q_{\va_n},\ol{B_{r_2}(x)})}{2r_2\log\f{1}{\va_n}}\cdot\f{r_1+\delta}{r_1}\\
&\leq\f{\mu_0(\ol{B_{r_2}(x)})}{2r_2}\cdot\f{r_1+\delta}{r_1}.
\end{align*}
Letting $ \delta\to 0^+ $, we can complete the proof.
\end{proof}

Now, we can define $ \Theta(\mu_0,\cdot):\om\to\R $ as
\be
\Theta(\mu_0,x):=\lim_{r\to 0^+}\f{\mu_0(\ol{B_r(x)})}{2r},\,\,x\in\om.\label{Thetadensity}
\ee
If there is no ambiguity, we use $ \Theta(x) $ to denote $ \Theta(\mu_0,x) $ for simplicity. Here we call the function $ \Theta $ the density of $ \mu_0 $. 

\begin{lem}\label{Thetamu0eta0}
If $ \eta_0>0 $ is given by Lemma \ref{varegu}, then 
$$
\cS_{\op{line}}\cap\om=\{x\in\om:\Theta(x)>0\}=\left\{x\in\om:\Theta(x)\geq\f{\eta_0}{2}\right\}, 
$$
\end{lem}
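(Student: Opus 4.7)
The plan is to prove both equalities by establishing the dichotomy that the density $\Theta(x)$ is either zero or bounded below by $\eta_0/2$, and then to characterize $\cS_{\op{line}}\cap\om$ as the set where $\Theta>0$ using this dichotomy together with the monotonicity from Lemma \ref{monomu0}.

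First I would establish the key dichotomy: for every $x\in\om$, either $\Theta(x)=0$ or $\Theta(x)\geq\eta_0/2$. Suppose $0<\Theta(x)<\eta_0/2$. By Lemma \ref{monomu0}, the function $r\mapsto\mu_0(\ol{B_r(x)})/(2r)$ is non-decreasing in $r\in(0,\dist(x,\pa\om))$, hence the limit defining $\Theta(x)$ in \eqref{Thetadensity} is actually an infimum, so there exists $r_0>0$ with $\ol{B_{r_0}(x)}\subset\om$ such that for every $0<r<r_0$,
$$
\f{\mu_0(\ol{B_r(x)})}{2r}<\f{\eta_0}{2},\quad\text{i.e.,}\quad\mu_0(\ol{B_r(x)})<\eta_0 r.
$$
Applying Lemma \ref{smallmu0re} to such $r$, we conclude $\mu_0(B_{r/2}(x))=0$, hence $\mu_0(\ol{B_{r/4}(x)})=0$, forcing $\Theta(x)=0$, a contradiction. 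This yields the second equality $\{x\in\om:\Theta(x)>0\}=\{x\in\om:\Theta(x)\geq\eta_0/2\}$.

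Next I would prove the first equality. For the inclusion $\{\Theta>0\}\subset\cS_{\op{line}}\cap\om$, suppose $\Theta(x)>0$. By the monotonicity from Lemma \ref{monomu0}, for every $0<r<\dist(x,\pa\om)$ we have $\mu_0(\ol{B_r(x)})\geq 2r\Theta(x)>0$, and in particular $\mu_0(B_{2r}(x))\geq\mu_0(\ol{B_r(x)})>0$, so every open neighborhood of $x$ has positive $\mu_0$-mass; thus $x\in\supp(\mu_0)\cap\om=\cS_{\op{line}}\cap\om$. For the reverse inclusion, suppose $x\in\cS_{\op{line}}\cap\om$ but $\Theta(x)=0$. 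Then by the dichotomy proved above the argument gives $r>0$ with $\mu_0(B_{r/2}(x))=0$, so $B_{r/2}(x)\cap\supp(\mu_0)=\emptyset$, contradicting $x\in\supp(\mu_0)$. Hence $\Theta(x)>0$, completing the proof.

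No serious obstacle is expected here: once the clearing-out result (Lemma \ref{varegu}) and its measure-theoretic consequence (Lemma \ref{smallmu0re}) are in hand, together with the monotonicity formula (Lemma \ref{monomu0}), the argument is a short deduction. The only subtlety is to note that Lemma \ref{monomu0} makes $\Theta(x)$ a genuine infimum rather than merely a limit inferior, which is what justifies picking a single radius $r$ on which to invoke Lemma \ref{smallmu0re}.
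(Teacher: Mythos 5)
Your proof is correct and relies on the same two ingredients as the paper's argument: the clearing-out property (Lemma \ref{smallmu0re}) applied contrapositively to bound $\Theta$ from below, and the monotonicity from Lemma \ref{monomu0} to ensure the density quotient converges to $\Theta(x)$ from above. The only difference is organizational: you establish the dichotomy $\Theta(x)\in\{0\}\cup[\eta_0/2,\infty)$ first and then derive the identification with $\cS_{\op{line}}\cap\om$, whereas the paper proves the cyclic chain of inclusions $\{\Theta\geq\eta_0/2\}\subset\{\Theta>0\}\subset\cS_{\op{line}}\cap\om\subset\{\Theta\geq\eta_0/2\}$ directly — the last inclusion being the one that invokes Lemma \ref{smallmu0re}; both routes are equivalent.
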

\begin{proof}
Since $ \cS_{\op{line}}=\supp(\mu_0) $, $ x\notin\om\backslash\cS_{\op{line}} $ if and only if there exists $ r>0 $ such that $ \mu_0(B_r(x))=0 $. This implies that
\be
\left\{x\in\om:\Theta(x)\geq\f{\eta_0}{2}\right\}\subset\{x\in\om:\Theta(x)>0\}\subset\cS_{\op{line}}\cap\om.\label{Slinesubset}
\ee
Moreover, if $ x\in\cS_{\op{line}}\cap\om $, then for any $ r>0 $, $ \mu_0(B_r(x))>0 $. Applying Lemma \ref{smallmu0re}, we have $ \mu_0(\ol{B_{2r}(x)})\geq 2\eta_0 r $, which shows that
$$
\cS_{\op{line}}\cap\om\subset\left\{x\in\om:\Theta(x)\geq\f{\eta_0}{2}\right\}.
$$
This, together with \eqref{Slinesubset}, completes the proof.
\end{proof}

\subsection{Further results on the singular set}\label{sectionvarifold}

Let $ \bG(3,1) $ represent the Grassmann manifold, which consists of $ 1 $-dimensional subspaces of $ \R^3 $. Moreover, It can also be characterized as 
$$
\bG(3,1)=\{\A\in\MM^{3\times 3}:\A^2=\A,\,\,\A^{\T}=\A\}.
$$
We also define $ \bG(\om)=\om\times\bG(3,1) $. It is worth noting that a $ 1 $-dimensional varifold in $ \om $ is a positive Radon measure on $ \bG_3(\om) $. Consider $ S\subset\om $, which is countably $ \HH^1 $-rectifiable, and let $ \mu\in(C(\ol{\om}))' $ be a positive Radon measure such that $ \mu\left\llcorner\right.\om=\theta_S\HH^1\left\llcorner\right. S $, where $ \theta_S\in L^1(\om) $. For $ \HH^1 $-a.e. $ x\in S $, there exists a unique approximate tangent plane $ T_xS\in\bG(3,1) $ of $ S $ at $ x $. For $ \mu $-a.e. $ x\in\om $, let $ \A(x)\in\bG(3,1) $ denote the orthogonal projection onto $ T_xS $. The varifold $ V $ associated with $ \mu\left\llcorner\right.\om $ is defined as the pushforward measure $ V=(\I,\A)_{\#}(\mu\left\llcorner\right.\om) $, where
$$
V(E)=\mu(\{x\in\om:(x,\A(x))\in E\})
$$
for any Borel-measurable set $ E\in\cB(\bG_3(\om)) $. A varifold is considered stationary if it satisfies
$$
\int_{\om}\A_{ij}(x)\pa_i\uu_j(x)\ud\mu(x)=0\text{ for any }\uu\in C_c^1(\om,\R^3).
$$

\begin{lem}\label{stationvari}
For $ \cS_{\op{line}} $, the following properties hold.
\begin{enumerate}
\item $ \cS_{\op{line}}\cap\om $ is countably $ \HH^1 $-rectifiable and $ \HH^1(\cS_{\op{line}}\cap\om)<+\ift $. 
\item For $ \mu_0 $-a.e. $ x\in\om $, let $ \A_*(x)\in\bG(3,1) $ be the orthogonal projection on to $ T_x\cS_{\op{line}} $, the tangent plane of $ S_{\op{line}} $ at $ x $. The varifold $ V_0=(\I,\A_*)_{\#}(\mu_0\left\llcorner\right.\om) $ associated with $ \mu_0\left\llcorner\right.\om $ is stationary. Moreover, $ \mu_0\left\llcorner\right.\om=\Theta(\mu_0,x)\HH^1\left\llcorner\right.(\cS_{\op{line}}\cap\om) $.
\end{enumerate}
\end{lem}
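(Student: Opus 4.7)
The proof couples (A) passing the stress-energy identity to a divergence-free matrix-valued measure on $\om$, with (B) an analysis of $\mu_0$'s density and tangent measures yielding rectifiability and finite length, and finally (C) identifying the limiting tensor with $\A_*\,d\mu_0$ to conclude stationarity of $V_0$.

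For (A), I would test the stress-energy identity of Lemma \ref{StressThm} against an arbitrary $X\in C_c^1(\om,\R^3)$ to obtain $\int_\om T_{\va_n,ij}\,\pa_j X_i\,dx=0$, with $T_{\va,ij}:=e_\va(\Q_\va)\delta_{ij}-\pa_i\Q_\va:\pa_j\Q_\va$. Since $|T_{\va,ij}|\leq 2e_\va(\Q_\va)$, the rescaled matrix measures $T_{\va_n,ij}\,dx/\log(1/\va_n)$ are uniformly bounded in total variation by \eqref{assumptionbound}, so along a further subsequence they converge weak-$*$ to a matrix-valued Radon measure $\mathcal{T}$ on $\om$ satisfying $\int \pa_jX_i\,d\mathcal{T}_{ij}=0$ for every $X$. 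The trace identity $\tr(T_\va)=\tfrac12|\na\Q_\va|^2+3f_b(\Q_\va)/\va^2$ and the uniform bound $\int_\om f_b(\Q_{\va_n})/\va_n^2\,dx\leq C$ (obtained by combining the Jerrard--Sandier lower bound of Proposition \ref{lowerboundprop} with the matching upper bound from Lemma \ref{cylinderex}) imply $f_b(\Q_{\va_n})\,dx/(\va_n^2\log(1/\va_n))\wc^*0$, hence $\tr(\mathcal{T})=d\mu_0$.

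For (B), monotonicity of $r\mapsto\mu_0(\ol{B_r(x)})/(2r)$ from Lemma \ref{monomu0} gives existence of $\Theta(\mu_0,x)$ on all of $\om$; combined with the lower bound $\Theta\geq\eta_0/2$ on $\cS_{\op{line}}\cap\om$ from Lemma \ref{Thetamu0eta0} and the monotone upper bound coming from \eqref{UniformboundMeasure}, a standard Vitali $5r$-covering yields $\HH^1(\cS_{\op{line}}\cap\om)\leq C\mu_0(\om)/\eta_0<+\ift$. For rectifiability I would perform a tangent-measure analysis at every $x_0$ with $\Theta(x_0)>0$: any weak-$*$ limit $\mu_*$ of the blow-ups $\mu_0^{(r)}(E):=r^{-1}\mu_0(x_0+rE)$ is a $1$-cone of constant density $\Theta(x_0)$ by monotonicity and, by rescaling (A), carries a divergence-free symmetric tensor $\mathcal{T}_*$ with $\tr(\mathcal{T}_*)=d\mu_*$. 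The Preiss/Allard-type rigidity for $1$-dimensional stationary varifolds whose tangent measures are cones of positive density then forces $\mu_*=\Theta(x_0)\HH^1\llcorner L$ for a line $L$ through the origin. The Marstrand--Mattila criterion yields countable $\HH^1$-rectifiability of $\cS_{\op{line}}\cap\om$ and the representation $\mu_0\llcorner\om=\Theta(\mu_0,\cdot)\HH^1\llcorner(\cS_{\op{line}}\cap\om)$.

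Finally, for (C), rectifiability provides an approximate tangent $T_x\cS_{\op{line}}=\op{span}(\tau(x))$ at $\mu_0$-a.e.\ $x$. A blow-up analysis at such $x$, using the sharp lower bound of Proposition \ref{lowerboundprop} in each $2$-disk transverse to $\tau(x)$, shows that $\pa_i\Q_{\va_n}:\pa_j\Q_{\va_n}\,dx/\log(1/\va_n)$ concentrates transversally to $\tau(x)$, i.e.\ weak-$*$ converges to $(\delta_{ij}-\tau_i\tau_j)\,d\mu_0$; equivalently, the logarithmic divergence is carried entirely by the normal components of $\na\Q_{\va_n}$ while $\pa_\tau\Q_{\va_n}$ remains bounded in $L^2_{\loc}$. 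Combined with $e_{\va_n}\,dx/\log(1/\va_n)\wc^*d\mu_0$ this gives $\mathcal{T}_{ij}=\tau_i\tau_j\,d\mu_0=\A_{*,ij}\,d\mu_0$, and the identity from (A) becomes exactly the stationarity of $V_0$. The main obstacle is the rigidity step in (B) — showing every tangent measure is supported on a single line — which also underpins the tensor identification in (C); without it one only knows $\mu_*$ is an arbitrary $1$-cone. Following the Ambrosio--Soner/Lin--Rivière strategy, this rigidity is exactly where the Jerrard--Sandier lower bound of Proposition \ref{lowerboundprop} together with the cylindrical comparison of Lemma \ref{cylinderex} enters, forcing the blow-up profile to be asymptotically two-dimensional and cylindrically symmetric about its tangent direction.
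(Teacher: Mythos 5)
Your proposal takes a genuinely different route from the paper's proof. For step (A) you agree with the paper: pass the stress-energy identity to the weak-$*$ limit of the normalized tensors to get a symmetric divergence-free matrix measure $\A_0=\fF_0\,\mu_0\llcorner\om$, and record the pointwise constraints $\fF_0^{\T}=\fF_0$, $\tr\fF_0\geq 1$, $\lda_i(\fF_0)\leq 1$ obtained from \eqref{Avaes} and \eqref{envaes}, alongside the divergence-free identity \eqref{fF0station}. But for rectifiability and the varifold structure the paper invokes Proposition 3.1 of \cite{A19}, an abstract rectifiability theorem for divergence-free tensor measures, which in a single stroke produces a countably $\HH^1$-rectifiable carrier $\om_0$ and the representation $\A_0=\Theta(|\A_0|,x)\B_0\HH^1\llcorner\om_0$ with $\B_0(x)$ the rank-one projection onto $T_x\om_0$. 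The eigenvalue constraints then force $\fF_0=\A_*$ and $|\A_0|=\mu_0\llcorner\om$; rectifiability, finiteness of $\HH^1$-measure, stationarity of $V_0$, and the density representation all follow simultaneously. There is no tangent-measure blow-up or rigidity argument anywhere in the paper's proof.

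Your rectifiability step, by contrast, is the older Ambrosio--Soner / Lin--Rivi\`{e}re strategy --- essentially Canevari's original argument in \cite{C17}, which predates \cite{A19} --- and it has a real gap exactly where you flag one. You claim that every tangent measure $\mu_*$ of $\mu_0$ at $x_0$ is a ``$1$-cone of constant density $\Theta(x_0)$'' and that ``Preiss/Allard-type rigidity'' forces $\mu_*=\Theta(x_0)\HH^1\llcorner L$. Both halves are incorrect as stated: monotonicity gives $\Theta(\mu_*,0)=\Theta(x_0)$, not constancy of the density on the cone, and a $1$-dimensional stationary cone of positive density need not be a single line --- it can be a balanced junction of three rays at mutual angles $2\pi/3$, exactly the configuration that property (\ref{prb}) of Theorem \ref{main} shows does occur in $\cS_{\op{line}}$, and for such a cone the vertex density strictly exceeds the ray density. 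Ruling such cones out at $\mu_0$-a.e.\ point (not at every point) is the core technical difficulty of the tangent-measure approach; you point at Proposition \ref{lowerboundprop} and Lemma \ref{cylinderex} but do not carry the argument out. If you wish to avoid the appeal to \cite{A19}, a cleaner alternative is Preiss's density theorem for $m=1$: a Radon measure on $\R^n$ whose $1$-density exists, positive and finite, $\mu$-a.e.\ is $1$-rectifiable, and you already have those hypotheses from Lemmas \ref{monomu0}, \ref{Thetamu0eta0}, and \eqref{UniformboundMeasure}. Once rectifiability is in hand, flatness of tangent measures at $\HH^1$-a.e.\ point is automatic, and $\fF_0=\A_*$ follows from the eigenvalue constraints as in the paper.
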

\begin{proof}
Define $ \A_{\va}:\om\to\MM^{3\times 3} $ by $ \A_{\va}:=((\A_{\va})_{ij})_{1\leq i,j\leq 3} $ with
$$
(\A_{\va})_{ij}=\f{1}{\log\f{1}{\va}}(e_{\va}(\Q_{\va})\delta_{ij}-\pa_i\Q_{\va}:\pa_j\Q_{\va})
$$
for $ i,j=1,2,3 $. From this, we have $ \A_{\va}=(\A_{\va})^{\T} $ and the following properties hold.
\be
\begin{aligned}
|\A_{\va}|&\leq \f{1}{\log\f{1}{\va}}(3\e_{\va}(\Q_{\va})^2+|\na\Q_{\va}|^4)^{1/2}\leq 2\f{\ud\mu_{\va}}{\ud x},\\
\tr\A_{\va}&=\f{1}{\log\f{1}{\va}}(3e_{\va}(\Q_{\va})-|\na\Q_{\va}|^2)\geq\f{\ud\mu_{\va}}{\ud x},
\end{aligned}\label{Avaes}
\ee
where $
\f{\ud\mu_{\va}}{\ud x}=\f{e_{\va}(\Q_{\va})}{\log\f{1}{\va}} $. For any $ \vv\in\Ss^2 $, we also get
\be
(\A_{\va})_{ij}\vv_i\vv_j=\f{1}{\log\f{1}{\va}}(e_{\va}(\Q_{\va})-|\vv_i\pa_i\Q_{\va}|^2)\leq\f{\ud\mu_{\va}}{\ud x}.\label{envaes}
\ee
In view of Lemma \ref{StressThm}, for any $ \uu\in C_c^1(\om,\R^3) $, we have
\be
\int_{\om}(\A_{\va})_{ij}(x)\pa_i\uu_j(x)\ud x=0.\label{Avaupai0}
\ee
Choosing $ \va=\va_n $, we can use \eqref{Avaes} to obtain a subsequence of $ \va_n $, without relabeling and a vector-valued Radon measure $ \A_0\in(C_c(\om,\MM^{3\times 3}))' $ such that $ \A_{\va_n}\ud x\wc^*\A_0 $ in $ C_c(\om,\MM^{3\times 3}) $ as $ n\to+\ift $. In view of \eqref{munconve} and \eqref{Avaes}, we have $ |\A_0|\leq C\mu_0\left\llcorner\right.\om $, which implies that $ \A_0 $ is absolutely continuous with respect to $ \mu_0\left\llcorner\right.\om $. Using Radon-Nikodym theorem, there exists a $ \mu_0 $-measurable function $ \fF_0\in L^1(\om,\MM^{3\times 3},\mu_0) $ (this means that $ \int_{\om}|\fF_0|\ud\mu_0<+\ift $) such that $ \A_0=\fF_0\mu_0\left\llcorner\right.\om $ in $ (C_c(\om,\MM^{3\times 3}))' $. Letting $ n\to+\ift $, we can deduce from \eqref{Avaes}, \eqref{envaes}, and \eqref{Avaupai0} that for $ \mu_0 $-a.e. $ x\in\om $, there hold
\be
(\fF_0(x))^{\T}=\fF_0(x),\,\,\tr\fF_0(x)\geq 1,\,\,\lda_i(\fF_0(x))\leq 1\text{ for any }i=1,2,3,\label{Foengenva}
\ee
and
\be
\int_{\om}\fF_0(x)\pa_i\uu_j(x)\ud\mu_0(x)=0\label{fF0station}
\ee
for any $ \uu\in C_c^1(\om,\R^3) $. By using Proposition 3.1 of \cite{A19}, we have, there exist a countably $ \HH^1 $-rectifiable set $ \om_0\subset\om $ and a map $ \B_0\in L^{\ift}(\om_0,\MM^{3\times 3}) $ such that for $ \HH^1 $-a.e. $ x\in\om_0 $, $ |\B_0(x)|=1 $ and $ \B_0(x)\in\bG(3,1) $ is the orthogonal projection matrix onto $ T_x\om_0 $, satisfying 
\be
\A_0\left\llcorner\right.\{\Theta(|\A_0|,x)>0\}=\Theta(|\A_0|,x)\B_0(x)\HH^1\left\llcorner\right.\om_0.\label{Txomcla}
\ee
Here in \eqref{Txomcla}, $ T_x\om_0 $ is the approximate tangent plane to $ \om_0 $ at $ x $ and
$$
\Theta(|\A_0|,x)=\limsup_{r\to 0^+}\f{|\A_0|(\ol{B_r(x_0)})}{2r}.
$$
Since $ \A_0=\fF_0\mu_0\left\llcorner\right.\om $, we have
\be
\fF_0\mu_0\left\llcorner\right.\om=\Theta(|\A_0|,x)\B_0(x)\HH^1\left\llcorner\right.\om_0.\label{measEq}
\ee
The choice of $ \B_0 $ implies that for $ \mu_0 $-a.e. $ x\in\om $, two eigenvalues of $ \fF_0(x) $ are zeros. Since \eqref{Foengenva} holds for $ \mu_0 $-a.e. $ x\in\om $, we can deduce that up to a permutation, the set of eigenvalues of $ \fF_0 $ is $ (1,0,0) $ and $ |\fF_0(x)|=1 $ for $ \mu_0 $-a.e. $ x\in\om $. Combined with \eqref{Thetadensity}, \eqref{measEq},  and Lemma \ref{Thetamu0eta0}, this implies that $ |\A_0|=\mu_0\left\llcorner\right.\om $, $ \Theta(|\A_0|,x)=\Theta(\mu_0,x) $, and $ \om_0=\cS_{\op{line}}\cap\om $. As a result, $ \cS_{\op{line}} $ is a countably $ \HH^1 $-rectifiable set. In particular, for $ \mu_0 $-a.e. $ x\in\om $, $ \fF_0(x)=\A_*(x) $ and 
\be
V_0(\ud\A,\ud x)=\delta_{\A_*(x)}\otimes\mu_0=\delta_{\A_*(x)}\otimes\Theta_1(\mu_0,x)\HH^1\left\llcorner\right.(\cS_{\op{line}}\cap\om).\label{V0formula}
\ee
Therefore, $ V_0 $ is a $ 1 $-dimensional stationary rectifiable varifold and we complete the proof.
\end{proof}

\begin{lem}\label{Luckhauscylinder1}
Let $ L_0\geq 2 $ and $ h\in(1/20,1/2] $. Assume that for $ 0<\wh{\va}<1 $, $ \fG_{\wh{\va}}\in H^1(\Ga_{1,L_0},\Ss_0) $ satisfies
\be
\|\fG_{\wh{\va}}\|_{L^{\ift}(\Ga_{1,L_0})}\leq C_0.\label{fGboundbound}
\ee
for some $ C_0>0 $. There exist $ 0<\wh{\va}_1,\eta_1<1 $, depending only on $ \cA $ and $ C_0 $ such that the following properties hold. If $ 0<\wh{\va}<\wh{\va}_1 $ and 
\be
E_{\wh{\va}}(\fG_{\wh{\va}},\Ga_{1,L_0})\leq\eta\log\f{1}{\wh{\va}},\label{GvaboundaryH1}
\ee
for $ 0<\eta<\eta_1 $, then there exist $ z_-\in(-L_0+h,-L_0+2h) $, $ z_+\in(L_0-h,L_0-2h) $, $ \V_{\wh{\va}}^{(h)}\in H^1(\pa B_1^2\times(z_-,z_+),\cN) $, $ \W_{\wh{\va}}^{(h)}\in H^1(A_{1-h,1}^2\times(z_-,z_+),\Ss_0) $, and $ C>0 $ depending only on $ \cA $ and $ C_0 $ such that the following assertions are true.
\begin{enumerate}
\item For $ \HH^2 $-a.e. $ x\in \pa B_1^2\times(z_-,z_+) $, 
$$
\W_{\wh{\va}}^{(h)}(x)=\fG_{\wh{\va}}(x)\text{ and }\W_{\wh{\va}}^{(h)}((1-h)x)=\V_{\wh{\va}}^{(h)}(x).
$$
\item $ \V_{\wh{\va}}^{(h)} $ and $ \W_{\wh{\va}}^{(h)} $ satisfy
\begin{align}
E_{\wh{\va}}(\W_{\wh{\va}}^{(h)},A_{1-h,1}^2\times(z_-,z_+))&\leq ChE_{\wh{\va}}(\fG_{\wh{\va}},\Ga_{1,L_0}),\label{Wvaest2}\\
E_{\wh{\va}}(\V_{\wh{\va}}^{(h)},\pa B_{1-h}^2\times(z_-,z_+))&\leq CE_{\wh{\va}}(\fG_{\wh{\va}},\Ga_{1,L_0}).\label{Vvaes2}
\end{align}
\item $ \W_{\wh{\va}}^{(h),\pm}:=\W_{\wh{\va}}^{(h)}|_{A_{1-h,1}^2\times\{z_{\pm}\}}\in H^1(A_{1-h,1}^2\times\{z_{\pm}\},\Ss_0) $ satisfies
\be
E_{\va}(\W_{\wh{\va}}^{(h),\pm},A_{1-h,1}^2\times\{z_{\pm}\})\leq C E_{\wh{\va}}(\fG_{\wh{\va}},\Ga_{1,L_0}).\label{WpluesminusC2}
\ee
\end{enumerate}
\end{lem}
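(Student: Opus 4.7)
The plan is to follow the proof strategy of Lemma \ref{Luckhauslemma} essentially verbatim, but to replace the spherical grid on $\pa B_1$ by a cylindrical grid on $\pa B_1^2\times[z_-,z_+]$, and to perform a preliminary averaging step to select the slices $z_\pm$. First, by averaging $E_{\wh\va}(\fG_{\wh\va},\pa B_1^2\times\{z\})$ over $z\in(-L_0+h,-L_0+2h)$ and $z\in(L_0-2h,L_0-h)$, using \eqref{GvaboundaryH1}, I would pick $z_\pm$ so that $E_{\wh\va}(\fG_{\wh\va},\pa B_1^2\times\{z_\pm\})\leq Ch^{-1}E_{\wh\va}(\fG_{\wh\va},\Ga_{1,L_0})$; the constraint $h>1/20$ ensures $h^{-1}$ is absorbed into the constant $C$.

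Next, on $\pa B_1^2\times[z_-,z_+]$ I would build a good family of grids $\mathcal{G}^{\wh\va}$ of size $h(\wh\va)=\wh\va^{1/2}\log(1/\wh\va)$ by starting from the uniform rectangular grid in $(\theta,z)$-coordinates and shifting it via an averaging argument analogous to Step 1 of Lemma \ref{Luckhauslemma}, so that properties \eqref{G1}--\eqref{G4} hold with $\U_{\ol{\va}}$ replaced by $\fG_{\wh\va}$ and $\pa B_1$ by the cylindrical strip. Choosing the first and last rows of cells to lie exactly on the caps $\pa B_1^2\times\{z_\pm\}$ is important so that the slice estimate \eqref{WpluesminusC2} can eventually be read off directly. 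The Sobolev/Young inequality argument of Step 2, based on the function $F(s)=\int_0^s\psi^{1/6}(t)\,dt$ and \eqref{fBnond}, then gives $\sup_{R_1^{\wh\va}}\dist(\fG_{\wh\va},\cN)\to 0$ as $\wh\va\to 0^+$, so for small $\wh\va<\wh\va_1$ the projection $\V^{(h)}_{\wh\va}:=\varrho\circ\fG_{\wh\va}$ is well defined on the 1-skeleton.

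To extend $\V^{(h)}_{\wh\va}$ to each 2-cell, I would invoke Lemma \ref{k2trivial}, which requires triviality of the free homotopy class of the boundary loop of each 2-cell. This is exactly where the smallness assumption \eqref{GvaboundaryH1} enters: if some 2-cell had a non-trivial loop, Corollary \ref{lowerboundcor} combined with \eqref{G3} would force $E_{\wh\va}(\fG_{\wh\va},\Ga_{1,L_0})\geq C\log(1/\wh\va)$, contradicting \eqref{GvaboundaryH1} for $\eta<\eta_1$ small. Having defined $\V^{(h)}_{\wh\va}\in H^1(\pa B_1^2\times(z_-,z_+),\cN)$ with \eqref{Vvaes2}, I would construct $\W^{(h)}_{\wh\va}$ on the annular shell $A_{1-h,1}^2\times(z_-,z_+)$ as in Step 3 of Lemma \ref{Luckhauslemma}: linearly interpolate between $\fG_{\wh\va}$ and $\V^{(h)}_{\wh\va}$ along radial segments over the 1-skeleton, and extend $0$-homogeneously on each 3-cell of the induced shell grid; the computations leading to \eqref{WboundL} carry over directly and yield \eqref{Wvaest2}.

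The main obstacle, and the place that genuinely goes beyond a transcription of Lemma \ref{Luckhauslemma}, is establishing the cap estimate \eqref{WpluesminusC2}. The shell-grid extension automatically gives the volumetric bound \eqref{Wvaest2}, but on a single slice $A_{1-h,1}^2\times\{z_\pm\}$ one loses a factor of $h$. To recover \eqref{WpluesminusC2} I would align the cylindrical grid so that the boundary rows of 2-cells sit precisely at height $z_\pm$, and then observe that on those particular rows the interpolation formula \eqref{homogeDef} gives a cap-slice energy controlled directly by the 1-skeleton energy and the trace $E_{\wh\va}(\fG_{\wh\va},\pa B_1^2\times\{z_\pm\})$, both of which are bounded by $CE_{\wh\va}(\fG_{\wh\va},\Ga_{1,L_0})$ thanks to the choice of $z_\pm$ in the averaging step and \eqref{G3}--\eqref{G4} applied to this row. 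Combining the pointwise proximity $|\fG_{\wh\va}-\V^{(h)}_{\wh\va}|\lesssim\delta_0$ on $R_1^{\wh\va}$ with the convexity estimate \eqref{fBconvex} then controls the bulk term $\int f_b(\W^{(h),\pm}_{\wh\va})$ on the cap, completing \eqref{WpluesminusC2}.
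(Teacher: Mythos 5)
Your overall strategy matches the paper's, but there is one genuine gap that would break the construction: you propose to build the grid on the lateral surface $\pa B_1^2\times[z_-,z_+]$ at the vanishing scale $h(\wh\va)=\wh\va^{1/2}\log(1/\wh\va)$, imported from Lemma \ref{Luckhauslemma}, while at the same time interpolating across the \emph{fixed-thickness} shell $A_{1-h,1}^2\times(z_-,z_+)$ with $h\in(1/20,1/2]$. These two scales must match. If the lateral cells have size $h(\wh\va)\to 0$ but the radial thickness is $h\gtrsim 1/20$, the $3$-cells of the ``induced shell grid'' have aspect ratio $h/h(\wh\va)\to\infty$; they are no longer bilipschitz equivalent to balls with uniformly bounded constants, and the $0$-homogeneous extension estimate on each $3$-cell (the analogue of the step producing \eqref{WboundL}) fails. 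You would also not produce a map on the full annulus $A_{1-h,1}^2\times\{z_\pm\}$, only on the thin annulus of width $h(\wh\va)$, so \eqref{WpluesminusC2} would be on the wrong domain. The paper instead builds the grid at the \emph{fixed} size $h$ (which is legitimate here because $h\ge 1/20$): the proximity-to-$\cN$ argument of Step~2 of Lemma \ref{Luckhauslemma} still goes through because the oscillation bound becomes $\wh\va^{1/2}h^{-1/2}\log(1/\wh\va)\to 0$ and the cell average of $\psi(d_{\wh\va})$ becomes $\lesssim\wh\va^2 h^{-2}\log(1/\wh\va)\to 0$, both of which hold for fixed $h$.

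Once the grid scale is corrected, your idea for the cap estimate is the right one and is what the paper does, though what you call the ``1-skeleton'' should really be the $2$-dimensional skeleton $R_1^{\wh\va,h,*}$ of the $3$-dimensional shell grid (the radial extension of the lateral $1$-skeleton). Because the rows at heights $z_\pm$ are part of that skeleton, the slices $A_{1-h,1}^2\times\{z_\pm\}$ sit inside $R_1^{\wh\va,h,*}$, and the single estimate $E_{\wh\va}(\W_{\wh\va}^{(h)},R_1^{\wh\va,h,*})\le C(\wh\va^2 h^{-2}+1)E_{\wh\va}(\fG_{\wh\va},\Ga_{1,L_0})$ --- proved by the same combination of the convexity bound \eqref{fBconvex} for the bulk term and the proximity bound $|\U_{\wh\va}-\V_{\wh\va}^{(h)}|^2\lesssim f_b(\U_{\wh\va})$ for the radial derivative --- delivers \eqref{WpluesminusC2} without any additional work. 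You should also make explicit that the tangential part of $\na\W_{\wh\va}^{(h)}$ on this skeleton is controlled by $|\na\U_{\wh\va}|$ and $|\na\V_{\wh\va}^{(h)}|$ via the Lipschitz bound on $\varrho$, which contributes the ``$+1$'' in that factor. The rest of your outline (averaging to pick $z_\pm$, the Sobolev--Young proximity argument, Lemma \ref{k2trivial} plus the non-triviality contradiction via Corollary \ref{lowerboundcor}) matches the paper once the grid scale is fixed.
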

\begin{proof}
The proof of this lemma is similar to Lemma \ref{Luckhauslemma} and we will give more details. We also divide it into several steps. In the constructions of good grids on a cylinder, we use the arguments in \cite{BS24}.\smallskip

\noindent\textbf{\underline{Step 1.} Choices of $ \{z_{\pm}\} $.} By using average arguments, we can find $ z_-\in(-L_0+h,-L_0+2h) $ and $ z_+\in(L_0-2h,L_0-h) $, such that for any $ z\in\{z_{\pm}\} $, $ \fG_{\wh{\va}}|_{\pa B_1^2\times\{z\}}\in H^1(\pa B_1^2\times\{z\},\Ss_0) $ and the following property holds
\be
E_{\wh{\va}}(\fG_{\wh{\va}},\pa B_1^2\times\{z\})\leq 2h^{-1}E_{\wh{\va}}(\fG_{\wh{\va}},\Ga_{1,L_0}).\label{zpluszmius}
\ee
Define $ L=(z_+-z_-)/2 $, $ \U_{\wh{\va}}(y):=\fG_{\wh{\va}}((z_+-z_-)\e^{(3)}/2+y) $ for $ y\in\Ga_{1,L} $. \smallskip

\noindent\textbf{\underline{Step 2.} Constructions of good family of grids of size $ h $ on $ \ol{\Ga_{1,L}} $.} For $ 0<\wh{\va}<\wh{\va}_1 $, where $ 0<\wh{\va}_1<1 $ is to be chosen, we claim that there exists a family of good grids of size $ h $ on $ \ol{\Ga_{1,L}} $, denoted by 
$$ 
\cG:=\{\cG^{\wh{\va},h}\}_{0<\wh{\va}<\wh{\va}_1,1/20<h\leq 1/2}=\{K_{i,j}^{\wh{\va},h}\}_{1\leq i\leq k_j^{\wh{\va},h},j=0,1,2}, 
$$
with an absolute constant $ C_{\cG}>0 $, such that the following properties hold.
\begin{enumerate}
\item $ \cG $ form a decomposition of $ \ol{\Ga_{1,L}} $, namely, $
\ol{\Ga_{1,L}}=\cup_{j=0}^2\cup_{i=1}^{k_i^{\wh{\va},h}}K_{i,j}^{\wh{\va},h} $, and the $ j $-skeleton of the grid $ \cG $ is defined by $ R_j^{\wh{\va},h}:=\cup_{i=1}^{k_j^{\wh{\va},h}}K_{i,j}^{\wh{\va},h} $ for $ j=0,1,2 $.
\item For any $ 0\leq\wh{\va}\leq\wh{\va}_1 $, $ h\in(1/20,1/2] $, $ j=1,2 $, and $ i=1,2,...,k_{j}^{\wh{\va},h} $, there exists a bilipschitz homeomorphism $ \vp_{i,j}^{\wh{\va},h}:K_{i,j}^{\wh{\va},h}\to B_{h}^j $ such that 
\be
\|\na_{K_{i,j}^{\wh{\va},h}}\vp_{i,j}^{\wh{\va},h}\|_{L^{\ift}(K_{i,j}^{\wh{\va},h})}+\|\na(\vp_{i,j}^{\wh{\va},h})^{-1}\|_{L^{\ift}(B_{h}^j)}\leq C_{\cG}.\label{G1Cylinder}
\ee
\item For all $ p\in\{1,2,...,k_1\} $,
\be
\#\{q\in\{1,2,...,k_2\}:K_{p,1}^{\wh{\va},h}\subset\pa K_{q,2}^{\wh{\va},h}\}\leq C_{\cG}.\label{G2Cylinder}
\ee
\item The following estimates of energy hold.
\begin{align}
E_{\wh{\va}}(\U_{\wh{\va}},R_1^{\wh{\va},h})&\leq C_{\cG}h^{-1}E_{\wh{\va}}(\fG_{\wh{\va}},\Ga_{1,L_0}),\label{G3Cylinder}\\
\int_{R_1^{\wh{\va},h}}f_b(\U_{\wh{\va}})\ud\HH^1&\leq C_{\cG}h^{-1}\int_{\Ga_{1,L_0}}f_b(\fG_{\wh{\va}})\ud\HH^2.
\end{align}
\end{enumerate}

Since $ L_0\geq 2 $ and $ h\in(1/20,1/2] $, we have $ L\geq 1 $. Set $ n:=\lfloor 1/h\rfloor\geq 1 $. For any $ k\in[0,n-1] $ and $ j\in[0,2\lfloor L\rfloor] $, we define 
\begin{align*}
\ell_k&:=\ol{\(\cos\f{2k\pi}{n},\sin\f{2k\pi}{n},-L\)\(\cos\f{2k\pi}{n},\sin\f{2k\pi}{n},L\)}\subset\ol{\Ga_{1,L}},\\
x_j^k&:=\(\cos\f{2k\pi}{n},\sin\f{2k\pi}{n},-L+\f{Lj}{\lfloor L\rfloor n}\)\subset\ol{\Ga_{1,L}}.
\end{align*}
For each $ k\in[0,n-1] $, $ j\in[0,2\lfloor L\rfloor n-1] $, we define $ \ga_j^k $ the union of two geodesics on $ \ol{\Ga_{1,L}} $, i.e., $ \ga_j^k=\ga_{j,1}^k\cup\ga_{j,2}^k $, where $ \ga_{j,1}^k $ connects $ x_j^k,x_{j+1}^{k+1} $, and $ \ga_{j,2}^k $ connects $ x_{j+1}^k,x_j^{k+1} $. Let $ c_{k,z} $ with $ z\in\{-L,L\} $ be arcs on $ \pa B_1^2\times\{z\} $ connecting $ x_{2\lfloor L\rfloor n}^k,x_{2\lfloor L\rfloor n}^{k+1} $ if $ z=L $, and $ x_0^k,x_0^{k+1} $ if $ z=-L $. Observe that $ \ell_k $, $ \ga_j^k $, and $ c_{k,z} $ provide a decomposition of $ \ol{\Ga_{1,L}} $ into mutually disjoint $ 0,1,2 $-dimensional cells, denoted by $
\cF:=\{\cF^{h}\}_{1/20<h\leq 1/2}=\{F_{i,j}^{h}\}_{1\leq i\leq k_j^{h},j=0,1,2} $, such that $
\ol{\Ga_{1,L}}=\cup_{i=0}^2\bigcup_{j=1}^{k_i^h}F_{i,j}^{h} $, and for any $ h\in(1/20,1/2] $, $ j=1,2 $, $ i=1,2,...,k_j^h $, there exists a bilipschitz homeomorphism $ \vp_{i,j}^h:F_{i,j}^h\to B_h^j $ satisfying
\be
\|\na_{F_{i,j}^{h}}\vp_{i,j}^{h}\|_{L^{\ift}(F_{i,j}^h)}+\|\na(\vp_{i,j}^{h})^{-1}\|_{L^{\ift}(B_h^j)}\leq C_1\label{G1Cylinder2}
\ee
for some absolute constant $ C_1>0 $. We also use $ \{R_j^h\}_{1/20<h\leq 1/2,j=0,1,2} $ to denote $ j $-skeleton. Denote $ \mathrm{SO}(2) $ as the subgroup $ \mathrm{SO}(3) $ containing rotations with respect to the axis $ \{0\}\times\R $. Thus, for each $ \w\in\mathrm{SO}(2) $, we obtain a class of decompositions of $ \ol{\Ga_{1,L}} $ given by
\be
\ol{\Ga_{1,L}}=\bigcup_{i=0}^2\bigcup_{j=1}^{k_i^h}\w(F_{i,j}^{\w,h}).\label{decomGa}
\ee
By using the integral formula
\begin{align*}
\int_{\mathrm{SO}(2)}\ud\w\int_{\w(R_1^h\cap\Ga_{1,L})}e_{\wh{\va}}(\U_{\wh{\va}},\Ga_{1,L})\ud\HH^1&=\f{\HH^1(R_1^h\cap\Ga_{1,L})}{\HH^2(\Ga_{1,L})}E_{\wh{\va}}(\U_{\wh{\va}},\Ga_{1,L}),\\
\int_{\mathrm{SO}(2)}\ud\w\int_{\w(R_1^h\cap\Ga_{1,L})}f_b(\U_{\wh{\va}})\ud\HH^1&=\f{\HH^1(R_1^h\cap\Ga_{1,L})}{\HH^2(\Ga_{1,L})}\int_{\Ga_{1,L}}f_b(\U_{\wh{\va}})\ud\HH^2,
\end{align*}
we can use the average arguments to find a $ \w=\w(\wh{\va})\in\mathrm{SO}(2) $, such that
\begin{align*}
\int_{\w(R_1^h\cap\Ga_{1,L})}e_{\wh{\va}}(\U_{\wh{\va}},\Ga_{1,L})\ud\HH^1&\leq C_2h^{-1}E_{\wh{\va}}(\fG_{\wh{\va}},\Ga_{1,L_0}),\\
\int_{\w(R_1^h\cap\Ga_{1,L})}f_b(\U_{\wh{\va}})\ud\HH^1&\leq C_2h^{-1}\int_{\Ga_{1,L_0}}f_b(\fG_{\wh{\va}})\ud\HH^2,
\end{align*}
for some absolute constant $ C_2>0 $. In view of \eqref{zpluszmius}, we define $
\cG:=\{\w(F_{i,j}^{h})\}_{1\leq i\leq k_j^{h},j=0,1,2} $ and it is a family of good grids of size $ h $ on $ \ol{\Ga_{1,L}} $. \smallskip

\noindent\textbf{\underline{Step 3.} Proving that $ \U_{\wh{\va}} $ is sufficiently close to $ \cN $ on $ R_1^{\wh{\va},h} $ as $ \wh{\va}\to 0^+ $.} Indeed, we show that for any $ h\in(1/20,1/2] $,
\be
\lim_{\wh{\va}\to 0^+}\sup_{x\in R_1^{\wh{\va},h}}\dist(\U_{\wh{\va}}(x),\cN)=0.\label{smallR1h}
\ee
Now, we choose $ \psi:[0,+\ift)\to\R $ satisfying \eqref{psifunction1}. Define $ F(s):=\int_0^s\psi^{1/6}(t)\ud t $ and $ d_{\wh{\va}}:=\dist(\U_{\wh{\va}},\cN) $. We have 
\be |\na_{R_1^{\wh{\va},h}}d_{\wh{\va}}|\leq|\na_{R_1^{\wh{\va},h}}\U_{\wh{\va}}|,\label{dvapsiva}
\ee
and then $ d_{\wh{\va}}\in H^1(\Lda_{1,L},\R_+) $. Thus, we have
\be
\begin{aligned}
\int_{R_1^{\wh{\va},h}}|\na_{R_1^{\wh{\va},h}}F(d_{\wh{\va}})|^{3/2}\ud\HH^1&\leq\int_{R_1^{\wh{\va},h}}|\na_{R_1^{\wh{\va},h}}d_{\wh{\va}}|^{3/2}\psi^{1/4}(d_{\wh{\va}})\ud\HH^1\\
&\leq\int_{R_1^{\wh{\va},h}}\(\f{\wh{\va}^{1/2}}{2}|\na_{R_1^{\wh{\va},h}}d_{\wh{\va}}|^2+\f{1}{\wh{\va}^{3/2}}\psi(d_{\wh{\va}})\)\ud\HH^1\\
&\leq\wh{\va}^{1/2}E_{\wh{\va}}(\U_{\wh{\va}},R_1^{\wh{\va},h})\\
&\leq C_{\cG}\wh{\va}^{1/2}h^{-1}E_{\wh{\va}}(\U_{\wh{\va}},\Ga_{1,L})\\
&\leq C_{\cG}\wh{\va}^{1/2}h^{-1}\log\f{1}{\wh{\va}},
\end{aligned}\label{h-1vahat}
\ee
where for the second inequality, we have used Young inequality, for the third inequality, we have used \eqref{dvapsiva}, for the forth inequality, we have used \eqref{G2Cylinder}, and for the last inequality, we have used \eqref{GvaboundaryH1} by choosing $ 0<\eta_1<1 $. 

For $ 1 $-cell $ K_{i,j}^{\wh{\va},h} $ of $ \cG^{\wh{\va},h} $, by using Sobolev embedding theorem and the fact that $ h\geq 1/20 $, we deduce from \eqref{h-1vahat} that
\be
\begin{aligned}
\(\osc_{R_1^{\wh{\va},h}}F(d_{\wh{\va}})\)^{3/2}&\leq\sup_{1\leq i\leq k_1^{\wh{\va},h}}\(\osc_{K_{i,1}^{\wh{\va},h}}F(d_{\wh{\va}})\)^{3/2}\\
&\leq\sup_{1\leq j\leq k_1^{\wh{\va},h}}\([F(d_{\wh{\va}})]_{C^{0,1/3}(K^{\wh{\va},h})}^{3/2}h^{1/2}\)\\
&\leq Ch^{1/2}\int_{R_1^{\wh{\va},h}}|\na F(d_{\wh{\va}})|^{3/2}\ud\HH^1\\
&\leq C\wh{\va}^{1/2}h^{-1/2}\log\f{1}{\wh{\va}}\to 0,
\end{aligned}\label{ocses2}
\ee
as $ \wh{\va}\to 0^+ $. Also, in view of \eqref{GvaboundaryH1} and \eqref{G3Cylinder}, we obtain 
\be
\sup_{1\leq i\leq k_1^{\wh{\va},h}}\dashint_{K_{i,1}^{\wh{\va},h}}\psi(d_{\wh{\va}})\ud\HH^1\leq\f{6}{h}\int_{R_1^{\wh{\va},h}}f_b(\U_{\wh{\va}})\ud\HH^1\leq 6C_{\cG}\wh{\va}^2h^{-2}\log\f{1}{\wh{\va}}\to 0\label{intes2psi}
\ee
as $ \wh{\va}\to 0^+ $. By \eqref{fGboundbound}, there exists $ \kappa>0 $, depending only on $ \cA $ and $ C_0 $ such that $ \|\U_{\wh{\va}}\|_{L^{\ift}(\Ga_{1,L})}\leq\kappa $. For $ \delta\in(0,\kappa) $, we choose $
\psi_*(\delta)=\inf_{\delta\leq s\leq\kappa}\psi(s) $. Using the notations above, we have, for any $ K_{i,1}^{\wh{\va},h}\subset R_1^{\wh{\va},h} $ with $ i=1,2,...,k_1^{\wh{\va},h} $, there holds
\be
\f{\HH^1(\{d_{\wh{\va}}\geq\delta\}\cap K_{i,1}^{\wh{\va},h})}{\HH^1(K_{i,1}^{\wh{\va},h})}\psi_*(\delta)\leq\f{1}{\HH^1(K_{i,1}^{\wh{\va},h})}\int_{\{d_{\wh{\va}}\geq\delta\}\cap K_{i,1}^{\wh{\va},h}}\psi(d_{\wh{\va}})\ud\HH^1\leq\dashint_{K_{i,1}^{\wh{\va},h}}\psi(d_{\wh{\va}})\ud\HH^1.\label{H11es}
\ee
Consequently, we get
\begin{align*}
\sup_{1\leq i\leq k_1^{\wh{\va},h}}\int_{K_{i,1}^{\wh{\va},h}}d_{\wh{\va}}\ud\HH^1&=\sup_{1\leq j\leq k_1^{\wh{\va},h}}\left\{\f{1}{\HH^1(K_{i,1}^{\wh{\va},h})}\(\int_{\{d_{\wh{\va}}<\delta\}\cap K_{i,1}^{\wh{\va},h}}+\int_{\{d_{\wh{\va}}\geq\delta\}\cap K_{i,1}^{\wh{\va},h}}\)\psi(d_{\wh{\va}})\ud\HH^1\right\}\\
&\leq\sup_{1\leq j\leq k_1^{\wh{\va},h}}\(\f{\HH^1(\{d_{\wh{\va}}<\delta\}\cap K_{i,1}^{\wh{\va},h})}{\HH^1(K_{i,1}^{\wh{\va},h})}\delta+\f{\HH^1(\{d_{\wh{\va}}\geq\delta\}\cap K_{i,1}^{\wh{\va},h})}{\HH^1(K_{i,1}^{\wh{\va},h})}\kappa\)\\
&\leq\delta+\f{\kappa}{\psi_*(\delta)}\dashint_{K_{i,1}^{\wh{\va},h}}\psi(d_{\wh{\va}})\ud\HH^1\\
&\leq\delta+\f{C\kappa}{\psi_*(\delta)}\wh{\va}^2h^{-2}\log\f{1}{\wh{\va}}\to 0,
\end{align*}
as $ \wh{\va}\to 0^+ $, where for the second inequality, we have used \eqref{H11es} and for the last one, we have used \eqref{intes2psi}. This, together with \eqref{ocses2} and the property that $ F $ is continuous and strictly increasing, implies that $ \lim_{\wh{\va}\to 0^+}\osc_{R_1^{\wh{\va},h}}d_{\wh{\va}}=0 $ and \eqref{smallR1h} follows directly.
\smallskip

\noindent\textbf{\underline{Step 4.} Constructions of $ \V_{\wh{\va}}^{(h)} $, $ \W_{\wh{\va}}^{(h)} $ and $ \W_{\wh{\va}}^{(h),\pm} $.} By \eqref{smallR1h}, there exists $ 0<\wh{\va}_1<1 $ such that
\be
\sup_{0<\wh{\va}<\wh{\va}_1}\sup_{x\in R_1^{\wh{\va},h}}\dist(\U_{\wh{\va}},\cN)<\delta_0,\label{delta0Uva2}
\ee
where $ \delta_0 $ is given by Corollary \ref{fBc}. We can define $ \V_{\wh{\va}}^{(h)}:=\varrho(\U_{\wh{\va}}) $ on $ R_1^{\wh{\va},h} $. By \eqref{G3Cylinder}, we have, $ \V_{\wh{\va}}^{(h)}\in H^1(R_1^{\wh{\va},h},\cN) $ and satisfies the estimate
\be
\sup_{x\in R_1^{\wh{\va},h}}|\U_{\wh{\va}}(x)-\V_{\wh{\va}}^{(h)}(x)|<\delta_0.\label{R1vauv2}
\ee
Next, we claim that if $ 0<\eta_1<1 $ is chosen sufficiently small, then for any $ i=1,2,...,k_2^{\wh{\va},h} $, $ [\V_{\wh{\va}}^{(h)}|_{\pa K_{i,2}^{\wh{\va},h}}]_{\cN}=\rH_0 $. Indeed, if such claim is not true, there exists some $ i_0=1,2,...,k_2^{\wh{\va},h} $ and a $ 2 $-cell $ K_{i_0,2}^{\ol{\va},h}\in R_2^{\wh{\va},h} $ such that $ [\varrho\circ\U_{\wh{\va}}|_{\pa K_{i_0,2}^{\ol{\va},h}}]_{\cN}\neq\rH_0 $. In view of \eqref{G1Cylinder}, there exists a bilipschitz homeomorphism $ \vp_{i_0,2}^{\ol{\va}}:K_{i_0,2}^{\wh{\va},h}\to B_{h}^2 $ satisfying \eqref{G1}. Without loss of generality, we will assume that $ K_{i_0,2}^{\wh{\va},h}=B_h^2 $. By \eqref{delta0Uva2}, if we further choose smaller $ 0<\delta_0<1 $, there hold $ \U_{\wh{\va}}\notin\cC_1\cup\cC_2 $ on $ \pa B_{h}^2 $ and moreover, $
\phi_0^2(\U_{\wh{\va}},\pa B_{h}^2)>1/2 $
for $ \wh{\va}\in(0,\wh{\va}_1) $. Since $ h\in(1/20,1/2] $, we can choose $ 0<\wh{\va}_1<1/1600 $ such that $ 0<\wh{\va}<h/80 $. Now, we can apply Corollary \ref{lowerboundcor} to obtain
$$
E_{\wh{\va}}(\U_{\wh{\va}},B_{h}^2)+h E_{\wh{\va}}(\U_{\wh{\va}},\pa B_{h}^2)\geq C\log\f{h}{\wh{\va}}-C'.
$$
As a result, \eqref{G3Cylinder} implies that
$$
E_{\wh{\va}}(\U_{\wh{\va}},\Ga_{1,L})\geq C(E_{\wh{\va}}(\U_{\wh{\va}},B_h^2)+hE_{\wh{\va}}(\U_{\wh{\va}},\pa B_h^2))\geq C|\log(h/\wh{\va})|-C'\geq C\log\f{1}{\wh{\va}}-C'.
$$
If $ \eta_1 $ is sufficiently small, there is a contradiction and we can complete the proof of this claim.

By this claim, for any $ K_{i,2}^{\wh{\va},h}\subset R_2^{\wh{\va},h} $, we can apply Lemma \ref{k2trivial} to construct $ \V_{\wh{\va},K_{i,2}^{\wh{\va},h}}\in H^1(K_{i,2}^{\wh{\va},h},\cN) $ such that $ \V_{\wh{\va},K_{i,2}^{\wh{\va},h}}=\V_{\wh{\va}}^{(h)} $ on $ \pa K_{i,2}^{\wh{\va},h} $ and
\be
\int_{K_{i,2}^{\wh{\va},h}}|\na_{K_{i,2}^{\wh{\va},h}}\V_{\wh{\va},K_{i,2}^{\wh{\va},h}}|^2\ud\HH^2\leq Ch\int_{\pa K_{i,2}^{\wh{\va},h}}|\na_{\pa K_{i,2}^{\wh{\va},h}}\V_{\wh{\va}}|^2\ud\HH^1.\label{K2jvahVva}
\ee
Define $ \V_{\wh{\va}}^{(h)}:\Ga_{1,L}\to\cN $ by
$$ 
\V_{\wh{\va}}^{(h)}(x)=\left\{\begin{aligned}
&(\varrho\circ\U_{\wh{\va}})(x)&\text{ if }&x\in R_1^{\wh{\va},h}\\
&\V_{\wh{\va},K_{i,2}^{\wh{\va},h}}(x)&\text{ if }&x\in K_{i,2}^{\wh{\va}}\subset R_2^{\wh{\va},h},\,\,i=1,2,...,k_2^{\wh{\va},h}.
\end{aligned}\right.
$$
By the construction of $ \V_{\wh{\va}}^{(h)} $, we have
\begin{align*}
\int_{\Ga_{1,L}}|\na_{\Ga_{1,L}}\V_{\wh{\va}}^{(h)}|^2\ud\HH^2&\leq\sum_{j=1}^{k_2^{\wh{\va},h}}\int_{K_{i,2}^{\wh{\va},h}}|\na_{K_{i,2}^{\wh{\va},h}}\V_{\wh{\va}}^{(h)}|^2\ud\HH^2\\
&\leq Ch\sum_{i=1}^{k_2^{\wh{\va},h}}\int_{\pa K_{i,2}^{\wh{\va},h}}|\na_{\pa K_{i,2}^{\wh{\va},h}}\V_{\wh{\va}}^{(h)}|^2\ud\HH^2\\
&\leq Ch\int_{R_1^{\wh{\va},h}}|\na_{R_1^{\wh{\va},h}}\V_{\wh{\va}}^{(h)}|^2\ud\HH^1\\
&\leq Ch\int_{R_1^{\wh{\va},h}}|\na_{R_1^{\wh{\va},h}}\U_{\wh{\va}}|^2\ud\HH^1\leq CE_{\wh{\va}}(\fG_{\va},\Ga_{1,L_0}),
\end{align*}
where for the second inequality, we have used \eqref{K2jvahVva}, for the third inequality, we have used \eqref{G2Cylinder}, for the forth inequality, we have used the construction of $ \V_{\wh{\va}}^{(h)} $, and for the last inequality, we have used \eqref{G3Cylinder}. 
Such $ \V_{\wh{\va}}^{(h)} $ satisfies \eqref{Vvaes2}.

Finally, we will construct $ \W_{\wh{\va}}^{(h)} $. For fixed $ 0<\wh{\va}<\wh{\va}_1 $ and $ h\in(1/20,1/2] $, the grid $ \cG^{\wh{\va},h} $ induces a grid $ \wh{\cG}^{\wh{\va},h,*} $ on $ A_{1,1-h}^2\times(-L,L) $, whose cells are given by
$$
K_{i,j+1}^{\wh{\va},h,*}=\left\{x\in\R^3:1-h\leq|x|\leq 1,\,\,\f{x}{|x|}\in K_{i,j}^{\wh{\va},h}\right\}\text{ for any }K_{i,j}^{\wh{\va},h}\in \cG^{\wh{\va},h},
$$
with $ j=0,1,2 $ and $ i=1,2,...,k_j^{\wh{\va},h} $. Define $
R_j^{\wh{\va},h,*}:=\cup_{i=1}^{k_{j}^{\wh{\va},h}}K_{i,j+1}^{\wh{\va},h,*} $. Now, we define $ \W_{\wh{\va}}^{(h)} $ as follows.
\begin{enumerate}
\item If $ x\in\Ga_{1,L}\cup\Ga_{1-h,L} $, $ \W_{\wh{\va}}^{(h)}(\cdot) $ is given by $ \U_{\wh{\va}}(\cdot) $ and $ \V_{\wh{\va}}^{(h)}(\cdot/(1-h)) $.
\item If $ x\in R_1^{\wh{\va},h}\cup R_1^{\wh{\va},h,*}, $
\be
\W_{\wh{\va}}^{(h)}(x)=\f{1-|x|}{h}\U_{\wh{\va}}\(\f{x}{|x|}\)+\f{h-1+|x|}{h}\V_{\wh{\va}}^{(h)}\(\f{x}{|x|}\).\label{homogeDef2}
\ee
\item For $ 3 $-cell $ K_{i,3}^{\wh{\va},h,*} $ of $ \cG^{\wh{\va},h,*} $, the $ \W_{\wh{\va}}^{(h)} $ is extended homogeneously. Indeed,
$$
\W_{\wh{\va}}^{(h)}(x)=\W_{\wh{\va}}^{(h)}\circ(\Phi_{i,3}^{\wh{\va},h,*})^{-1}\(\f{\Phi_{i,3}^{\wh{\va},h,*}(x)}{|\Phi_{i,3}^{\wh{\va},h,*}(x)|}\),
$$
where $ \Phi_{i,3}^{\wh{\va},h,*}:K_{3,i}^{\wh{\va},h,*}\to B_1 $ is a bilipschitz map.
\end{enumerate}
Consequently, $ \W_{\wh{\va}}^{(h)}\in H^1(K_{i,3}^{\wh{\va},h,*},\Ss_0) $ and $ \W_{\wh{\va}}^{(h)}\in H^1(A_{1-h,1}^2,\Ss_0) $. In view of \eqref{G1Cylinder} and \eqref{G2Cylinder}, we have
\begin{align*}
&E_{\wh{\va}}(\W_{\wh{\va}}^{(h)},A_{1-h,1}^2\times(-L,L))\leq Ch\sum_{i=1}^{k_3^{\wh{\va},h,*}}E_{\wh{\va}}(\W_{\wh{\va}}^{(h)},\pa K_{i,3}^{\wh{\va},h,*})\\
&\quad\quad\quad\quad\leq Ch(E_{\wh{\va}}(\W_{\wh{\va}}^{(h)},\Ga_{1,L})+ E_{\wh{\va}}(\W_{\wh{\va}}^{(h)},\Ga_{1-h,L})+E_{\wh{\va}}(\W_{\wh{\va}}^{(h)},R_1^{\wh{\va},h,*})).
\end{align*}
We claim that
\be
E_{\wh{\va}}(\W_{\wh{\va}}^{(h)},R_1^{\wh{\va},h,*})\leq C(\wh{\va}^2h^{-2}+1)E_{\wh{\va}}(\W_{\wh{\va}}^{(h)},\Ga_{1,L}).\label{ClaimWvah}
\ee
If this claim is true, then
\begin{align*}
E_{\wh{\va}}(\W_{\wh{\va}}^{(h)},A_{1-h,1}^2\times(-L,L))&\leq Ch(\wh{\va}^2h^{-2}+1)E_{\wh{\va}}(\U_{\wh{\va}},\Ga_{1,L})\\
&\leq Ch(\wh{\va}^2h^{-2}+1)E_{\wh{\va}}(\fG_{\wh{\va}},\Ga_{1,L_0}),
\end{align*}
which implies \eqref{Wvaest2}. Moreover, since $ A_{1-h,1}^2\times\{z_{\pm}\}\subset R_1^{\wh{\va},h,*} $, it also shows \eqref{WpluesminusC2}. It remains to show the claim \eqref{ClaimWvah}. The proof of this follows from direct computations. Indeed, by \eqref{fBconvex}, \eqref{R1vauv2} and \eqref{homogeDef2}, we have
\be
\int_{R_1^{\wh{\va},h,*}}f_b(\W_{\wh{\va}}^{(h)})\ud\HH^2\leq Ch\int_{R_1^{\wh{\va},h,*}}f_b(\U_{\wh{\va}})\ud\HH^2.\label{fW2}
\ee
Also, by \eqref{fBnond} and the constructions of $ \W_{\wh{\va}}^{(h)} $, We can obtain
$$
\int_{R_1^{\wh{\va},h,*}}|\na_{R_1^{\wh{\va},h,*}}\W_{\wh{\va}}^{(h)}|^2\ud\HH^2\leq Ch^{-1}\int_{R_1^{\wh{\va},h}}|\U_{\wh{\va}}-\V_{\wh{\va}}^{(h)}|^2\ud\HH^1\leq Ch^{-1}\int_{R_1^{\wh{\va},h}}f_b(\U_{\wh{\va}})\ud\HH^1.
$$
This, together \eqref{fW2}, implies that
$$
E_{\wh{\va}}(\W_{\wh{\va}}^{(h)},R_1^{\wh{\va},h,*})\leq C(h^{-1}+\wh{\va}^{-2}h)\int_{R_1^{\wh{\va},h}}f_b(\U_{\wh{\va}})\ud\HH^1\leq C(h^{-2}+\wh{\va}^{-2})\int_{\Ga_{1,L}}f_b(\U_{\wh{\va}})\ud\HH^2,
$$
which complete the proof of the claim.
\end{proof}

\begin{lem}\label{Luckhauscylinder2}
Let $ \delta\in(0,1/2] $ and $ r>0 $. Assume that for $ 0<\ol{\va}<1 $, $ \fG_{\delta,\ol{\va}}\in H^1(\Ga_{\delta,1},\Ss_0) $ satisfies
\be
\|\fG_{\delta,\ol{\va}}\|_{L^{\ift}(\Ga_{\delta,1})}\leq C_0\label{fGboundbound2}
\ee
for some $ C_0>0 $. There exist $ 0<\eta_1,\wh{\va}_1<1 $, depending only on $ \cA $ and $ C_0 $ such that the following properties hold. If $ 0<\ol{\va}<\wh{\va}_1\delta $, and 
\begin{align}
E_{\ol{\va}}(\fG_{\delta,\ol{\va}},\Ga_{\delta,1})&\leq\eta\log\f{\delta}{\ol{\va}},\label{GvaboundaryH12}
\end{align}
with $ 0<\eta<\eta_1 $, then there exist $ z_-\in(-1+\delta/2,-1+\delta) $, $ z_+\in(1-\delta,1-\delta/2) $, $ \V_{\delta,\ol{\va}}\in H^1(\pa B_{\delta}^2\times(z_-,z_+),\cN) $, $ \W_{\delta,\ol{\va}}\in H^1(A_{\delta/2,\delta}^2\times(z_-,z_+),\Ss_0) $, and $ C>0 $ depending only on $ \cA,C_0 $ such that the following assertions are true.
\begin{enumerate}
\item For $ \HH^2 $-a.e. $ x\in \pa B_{\delta}^2\times(z_-,z_+) $, 
$$
\W_{\delta,\ol{\va}}(x)=\fG_{\delta,\ol{\va}}(x)\text{ and }\W_{\delta,\ol{\va}}\(\f{x}{2}\)=\V_{\delta,\ol{\va}}(x).
$$
\item $ \V_{\delta,\ol{\va}} $ and $ \W_{\delta,\ol{\va}} $ satisfy
\begin{align}
E_{\ol{\va}}(\W_{\delta,\ol{\va}},A_{\delta/2,\delta}^2\times(z_-,z_+))&\leq C\delta E_{\ol{\va}}(\fG_{\delta,\ol{\va}},\Ga_{\delta,1}),\label{Wvaest22}\\
E_{\ol{\va}}(\V_{\wh{\va}},\pa B_{\delta}^2\times(z_-,z_+))&\leq CE_{\ol{\va}}(\fG_{\delta,\ol{\va}},\Ga_{\delta,1}).\label{Vvaes22}
\end{align}
\item $ \W_{\delta,\ol{\va}}^{\pm}:=\W_{\delta,\ol{\va}}|_{A_{\delta/2,\delta}^2\times\{z_{\pm}\}}\in H^1(A_{\delta/2,\delta}^2\times\{z_{\pm}\},\Ss_0) $ satisfies
\be
E_{\va}(\W_{\delta,\ol{\va}}^{\pm},A_{\delta/2,\delta}^2\times\{z_{\pm}\})\leq C E_{\ol{\va}}(\fG_{\delta, \wh{\va}},\Ga_{\delta,1}).\label{WpluesminusC22}
\ee
\end{enumerate}
\end{lem}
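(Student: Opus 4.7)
The plan is to reduce this statement to Lemma \ref{Luckhauscylinder1} by the dilation $x = \delta y$, which transforms the short cylinder $\Lda_{\delta,1}$ (radius $\delta$, height $2$) into the long cylinder $\Lda_{1,1/\delta}$ (radius $1$, height $2/\delta \geq 4$). Since $\delta \in (0,1/2]$ forces $L_0 := 1/\delta \geq 2$, the geometric hypothesis of Lemma \ref{Luckhauscylinder1} is satisfied, and selecting $h = 1/2$ there produces (after undoing the dilation) precisely the annulus $A_{\delta/2,\delta}^2$ required in \eqref{Wvaest22}.

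More precisely, I would set $\wh{\va} := \ol{\va}/\delta$ and $\wt{\fG}_{\wh{\va}}(y) := \fG_{\delta,\ol{\va}}(\delta y)$ on $\Ga_{1,L_0}$. A direct change of variables shows that the two-dimensional Landau-de Gennes energy is invariant under this scaling:
\[
E_{\wh{\va}}(\wt{\fG}_{\wh{\va}},\Ga_{1,L_0}) = E_{\ol{\va}}(\fG_{\delta,\ol{\va}},\Ga_{\delta,1}),
\]
since the factor $\delta^{-2}$ from the surface measure cancels the $\delta^{2}$ gained by the gradient, while the potential parameter is rescaled to $\wh{\va} = \ol{\va}/\delta$. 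The $L^\infty$ bound \eqref{fGboundbound2} is trivially preserved; the assumption $\ol{\va} < \wh{\va}_1 \delta$ translates to $\wh{\va} < \wh{\va}_1$; and \eqref{GvaboundaryH12} becomes $E_{\wh{\va}}(\wt{\fG}_{\wh{\va}},\Ga_{1,L_0}) \leq \eta \log(1/\wh{\va})$. Applying Lemma \ref{Luckhauscylinder1} with $h=1/2$ then yields $\wt{z}_\pm$, $\wt{\V}_{\wh{\va}}^{(1/2)}$, and $\wt{\W}_{\wh{\va}}^{(1/2)}$ with all the stated properties on the rescaled cylinder.

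Scaling back, I would set $z_\pm := \delta \wt{z}_\pm$, $\V_{\delta,\ol{\va}}(x) := \wt{\V}_{\wh{\va}}^{(1/2)}(x/\delta)$, and $\W_{\delta,\ol{\va}}(x) := \wt{\W}_{\wh{\va}}^{(1/2)}(x/\delta)$. The ranges $\wt{z}_- \in (-1/\delta+1/2,-1/\delta+1)$ and $\wt{z}_+ \in (1/\delta-1,1/\delta-1/2)$ translate exactly to $z_- \in (-1+\delta/2,-1+\delta)$ and $z_+ \in (1-\delta,1-\delta/2)$. Assertion~(1) follows from the chain $\W_{\delta,\ol{\va}}(x/2) = \wt{\W}_{\wh{\va}}^{(1/2)}(x/(2\delta)) = \wt{\V}_{\wh{\va}}^{(1/2)}(x/\delta) = \V_{\delta,\ol{\va}}(x)$. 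For the energy estimates, the three-dimensional integral in \eqref{Wvaest22} acquires a factor of $\delta$ from the volume change of variables, which matches the $C\delta$ on the right-hand side via \eqref{Wvaest2}; meanwhile the two-dimensional estimates \eqref{Vvaes22} and \eqref{WpluesminusC22} are scale-invariant and follow directly from \eqref{Vvaes2} and \eqref{WpluesminusC2} with constants depending only on $\cA$ and $C_0$.

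The only subtle bookkeeping point is that \eqref{Vvaes2} is formulated on $\pa B_{1-h}^2 = \pa B_{1/2}^2$ rather than on the outer surface $\pa B_1^2$, so after scaling one must observe that the energy of $\V_{\delta,\ol{\va}}$ viewed on $\pa B_\delta^2$ versus the energy of $\wt{\V}_{\wh{\va}}^{(1/2)}(\cdot/2)$ on $\pa B_{\delta/2}^2$ differ only by a fixed universal factor depending on $h = 1/2$, which can be absorbed into the constant $C$. Beyond this, no genuine obstacle arises; the proof amounts to a careful dimensional bookkeeping built on the preceding lemma.
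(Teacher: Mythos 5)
Your proposal is correct and matches the paper's proof essentially verbatim: both perform the dilation $x = \delta y$ to transform $\Lda_{\delta,1}$ into $\Lda_{1,1/\delta}$, observe that $L_0 := 1/\delta \geq 2$, invoke Lemma~\ref{Luckhauscylinder1} with $h = 1/2$, and scale back. Your observation about the $\pa B_{1-h}^2$ vs.\ $\pa B_\delta^2$ bookkeeping in \eqref{Vvaes2}/\eqref{Vvaes22} is valid and handled the same way (absorbed into the constant).
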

\begin{proof}
Define $ \wh{\va}=\ol{\va}/\delta $, $ \fG_{\wh{\va}}(x):=\fG_{\delta,\ol{\va}}(\delta x) $ for $ x\in\Ga_{1,1/\delta} $. Thus we have
$$
\|\fG_{\wh{\va}}\|_{L^{\ift}(\Ga_{1,1/\delta})}\leq C_0.
$$
Let $ 0<\eta_1,\wh{\va}_1<1 $ be given by Lemma \ref{Luckhauscylinder1}. We deduce from \eqref{fGboundbound2} and \eqref{GvaboundaryH12} that
$$
E_{\wh{\va}}(\fG_{\wh{\va}},\Ga_{1,1/\delta})=E_{\ol{\va}}(\fG_{\delta,\ol{\va}},\Ga_{\delta,1})\leq\eta\log|\wh{\va}|.
$$
By applying Lemma \ref{Luckhauscylinder1}, to $ L_0=1/\delta $, $ h=1/2 $, and $ \fG_{\wh{\va}} $, there exist $ \zeta_-\in(-1/\delta+1/2,-1/\delta+1) $, $ \zeta_+\in(1/\delta-1,1/\delta-1/2) $, $ \W_{\wh{\va}}^{(1/2)}\in H^1(A_{1/2,1}^2\times(\zeta_-,\zeta_+),\Ss_0) $, $ \V_{\wh{\va}}^{(1/2)}\in H^1(\pa B_1^2\times(\zeta_-,\zeta_+),\cN) $, $ \W_{\wh{\va}}^{(1/2),\pm}\in H^1(A_{1/2,1}^2\times\{\zeta_{\pm}\}),\Ss_0) $ satisfying \eqref{Wvaest2}, \eqref{Vvaes2}, and \eqref{WpluesminusC2}. Scaling back to $ \Ga_{\delta,1} $, namely, choosing $ z_{\pm}=\delta\zeta_{\pm} $, $ \W_{\delta,\ol{\va}}(\cdot)=\W_{\wh{\va}}^{(1/2)}(\cdot/\delta) $, $ \V_{\delta,\ol{\va}}(\cdot)=\V_{\wh{\va}}^{(1/2)}(\cdot/\delta) $, and $ \W_{\delta,\ol{\va}}^{\pm}(\cdot)=\W_{\wh{\va}}^{(1/2),\pm}(\cdot/\delta) $, we can complete the proof.
\end{proof}

\begin{lem}\label{chooseseveralcases}
Let $ \delta\in(0,1/2] $. Assume that for $ 0<\ol{\va}<1 $, $ \fG_{\delta,\ol{\va}}\in H^1(\Ga_{\delta,1},\Ss_0) $ satisfies
\begin{align}
\|\fG_{\delta,\ol{\va}}\|_{L^{\ift}(\pa\Lda_{\delta,1})}&\leq C_0,\label{fGboundbound23}\\
E_{\ol{\va}}
(\fG_{\delta,\ol{\va}},B_{\delta}^2\times\{\pm 1\})&\leq C_0\log\f{\delta}{\ol{\va}},\label{GvaboundaryH13M}
\end{align}
for some $ C_0>0 $. There exists $ 0<\eta_1,\wh{\va}_1<1 $ depending only on $ \cA $ and $ C_0 $ such that if $ 0<\ol{\va}<\wh{\va}_1\delta $,
\be
E_{\ol{\va}}(\fG_{\delta,\ol{\va}},\Ga_{\delta,1})\leq\eta\log\f{\delta}{\ol{\va}},\label{GvaboundaryH13}
\ee
with $ 0<\eta<\eta_1 $, and $ \U_{\delta,\ol{\va}} $ is a minimizer of $ E_{\ol{\va}}(\cdot,\Lda_{\delta,1}) $ in $ H^1(\Lda_{\delta,1},\Ss_0;\fG_{\delta,\ol{\va}}) $, then there exists $ i\in\{0,1,2,3,4\} $, such that
$$
\left|E_{\ol{\va}}(\U_{\delta,\ol{\va}},\Lda_{\delta,1})-2\cE^*(\rH_i)\log\f{\delta}{\ol{\va}}\right|
\leq \al(C_0,\delta,\eta)\log\f{\delta}{\ol{\va}}+C,
$$
where $ C>0 $ depends only on $ \cA,C_0 $, and
$$
\al(C_0,\delta,\eta)\leq C(C_0\delta+\delta^2\eta+\delta\eta+\eta+\delta^{-1}\eta).
$$
\end{lem}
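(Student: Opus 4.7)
The plan is to obtain matching upper and lower bounds of the form $2\cE^*(\rH_i)\log(\delta/\ol{\va}) + \text{(error)}$ for $E_{\ol{\va}}(\U_{\delta,\ol{\va}},\Lda_{\delta,1})$, where the index $i \in \{0,1,2,3,4\}$ is determined by the free homotopy class of the $\cN$-valued lateral lift supplied by Lemma \ref{Luckhauscylinder2}. Apply that lemma first to the lateral trace $\fG_{\delta,\ol{\va}}|_{\Ga_{\delta,1}}$, obtaining heights $z_\pm$ with $|z_+-z_-| = 2 + O(\delta)$, an $\cN$-valued lift $\V_{\delta,\ol{\va}} \in H^1(\pa B_\delta^2 \times (z_-,z_+), \cN)$ (naturally identified with an $\cN$-valued map on $\pa B_{\delta/2}^2 \times (z_-,z_+)$ via $x \mapsto x/2$), an annular interpolation $\W_{\delta,\ol{\va}}$ satisfying \eqref{Wvaest22}, and end-annulus slices $\W^\pm_{\delta,\ol{\va}}$ satisfying \eqref{WpluesminusC22}. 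Set $\rH_i := [\V_{\delta,\ol{\va}}|_{\pa B_\delta^2 \times \{z\}}]_\cN$, which is independent of $z$ by Lemma \ref{propHomt}.

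For the upper bound I will build a competitor $\PP_{\delta,\ol{\va}} \in H^1(\Lda_{\delta,1},\Ss_0; \fG_{\delta,\ol{\va}})$ in three layers: on the annular region $A_{\delta/2,\delta}^2 \times (z_-,z_+)$ use $\W_{\delta,\ol{\va}}$; on the inner cylinder $B_{\delta/2}^2 \times (z_-,z_+)$ apply Lemma \ref{cylinderex} with $r = \delta/2$ and $L = (z_+-z_-)/2 \approx 1$ to extend $\V_{\delta,\ol{\va}}$, producing energy at most $2\cE^*(\rH_i)L\log(\delta/(2\ol{\va})) + CL(L/r + r/L)\|\na_\Ga \V_{\delta,\ol{\va}}\|_{L^2}^2 + CL$; on the thin end caps $B_\delta^2 \times (z_+,1)$ and $B_\delta^2 \times (-1,z_-)$ (each of height $\leq \delta$) use a homogeneous interpolation between the data just produced at $z_\pm$ and the prescribed end data $\fG_{\delta,\ol{\va}}|_{B_\delta^2 \times \{\pm 1\}}$, whose energy is controlled by the small height times the energies of the two bounding slices (the end slices being bounded via hypothesis \eqref{GvaboundaryH13M}, and the $z_\pm$ slices via \eqref{WpluesminusC22} and the trace of the Lemma \ref{cylinderex} extension). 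Substituting $\|\na_\Ga \V\|_{L^2}^2 \leq C E_{\ol{\va}}(\fG,\Ga_{\delta,1}) \leq C\eta\log(\delta/\ol{\va})$, so that the Lemma \ref{cylinderex} error contributes $C\delta^{-1}\eta \log(\delta/\ol{\va})$, and the cap contributions give $C(C_0\delta + \delta\eta)\log(\delta/\ol{\va})$, the minimality of $\U_{\delta,\ol{\va}}$ yields the stated upper bound.

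For the matching lower bound, slice $\Lda_{\delta,1}$ by horizontal planes. By Fubini the trace $\U_{\delta,\ol{\va}}|_{B_\delta^2 \times \{z\}}$ is $H^1$ for a.e.\ $z$, with lateral trace $\fG_{\delta,\ol{\va}}|_{\pa B_\delta^2 \times \{z\}}$. Repeating the good-grid and Sobolev-oscillation analysis of Step 3 in the proof of Lemma \ref{Luckhauscylinder1} on the one-parameter family of loops indexed by $z$ shows that outside a set of $z \in (z_-,z_+)$ of measure $O(\al(C_0,\delta,\eta))$, one has $\phi_0(\fG|_{\pa B_\delta^2\times\{z\}}) \geq 1/2$ and $[\varrho\circ\fG|_{\pa B_\delta^2\times\{z\}}]_\cN = \rH_i$ (the latter by the uniform proximity to $\V_{\delta,\ol{\va}}$ together with Proposition \ref{freeNprop}). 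On these good slices, Corollary \ref{lowerboundcor} with $r = \delta$ gives
\begin{equation*}
E_{\ol{\va}}(\U_{\delta,\ol{\va}}, B_\delta^2\times\{z\}) + (4\log 5) E_{\ol{\va}}(\U_{\delta,\ol{\va}}, \pa B_\delta^2\times\{z\}) \geq \cE^*(\rH_i)\log(\delta/\ol{\va}) - C.
\end{equation*}
Integrating in $z$ and using $\int_{z_-}^{z_+} E_{\ol{\va}}(\U, B_\delta^2\times\{z\})\,dz \leq E_{\ol{\va}}(\U, \Lda_{\delta,1})$ together with $\int_{z_-}^{z_+} E_{\ol{\va}}(\U, \pa B_\delta^2\times\{z\})\,dz \leq E_{\ol{\va}}(\fG, \Ga_{\delta,1}) \leq \eta\log(\delta/\ol{\va})$ closes the lower bound with an error of the same form $\al(C_0,\delta,\eta)\log(\delta/\ol{\va}) + C$.

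The hardest step will be the slice-by-slice topological argument in the lower bound: one must control, with quantitative bounds of the form $\al(C_0,\delta,\eta)$, the measure of the set of $z$ for which either $\dist(\fG|_{\pa B_\delta^2 \times \{z\}},\cN)$ fails to be small in $L^\infty$ or the free homotopy class of $\varrho \circ \fG$ fails to agree with $\rH_i$. This requires promoting the qualitative oscillation arguments of Step 3 in the proof of Lemma \ref{Luckhauscylinder1} to explicit quantitative estimates in terms of $\eta$ and $\delta$; the dominant error term $\delta^{-1}\eta$ in $\al$ ultimately reflects the $L/r$ ratio in the radial extension of Lemma \ref{cylinderex}, while the $C_0\delta$ term traces to the homogeneous interpolation on the two end caps.
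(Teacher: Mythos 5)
Your upper bound construction matches the paper's step 1 essentially verbatim: Lemma \ref{Luckhauscylinder2} supplies $z_\pm$, the lift $\V_{\delta,\ol{\va}}$ and the interpolation $\W_{\delta,\ol{\va}}$; then Lemma \ref{cylinderex} fills the inner subcylinder, homogeneous extension fills the thin end caps, and minimality finishes the argument. That part is correct.

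The lower bound, however, diverges from the paper in a way that opens a genuine gap. You propose to apply Corollary \ref{lowerboundcor} directly to the horizontal slices of the original minimizer, with boundary datum the raw lateral trace $\fG_{\delta,\ol{\va}}|_{\pa B_\delta^2 \times\{z\}}$, and to control the set of ``bad'' slices where either $\phi_0(\fG|_{\pa B_\delta^2 \times\{z\}})$ degenerates or the homotopy class of $\varrho\circ\fG$ fails to equal $\rH_i$. The trouble is that neither of these can be quantified slice-by-slice at the rate $\al(C_0,\delta,\eta)$ you need. The oscillation analysis you invoke from Step 3 of Lemma \ref{Luckhauscylinder1} works on a fixed $2$-dimensional grid of mesh $h\geq 1/20$ over the entire lateral surface; it gives a qualitative statement ($\dist(\fG,\cN)\to 0$ on the $1$-skeleton as $\ol{\va}\to 0^+$), not a quantitative control on how many slices $z$ fail. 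On a single bad slice, $E_{\ol{\va}}(\fG,\pa B_\delta^2\times\{z\})$ can be as large as $O(\delta^{-1}\eta\log(\delta/\ol{\va}))$ while still being consistent with \eqref{GvaboundaryH13}; on such a slice the $1$D Sobolev bound does not keep $\fG$ close to $\cN$, and the homotopy class $[\varrho\circ\fG|_{\pa B_\delta^2\times\{z\}}]_\cN$ is not even defined, let alone equal to $\rH_i$. Moreover, Corollary \ref{lowerboundcor} has the penalty term $(4\log 5)E_{\ol{\va}}(\Q,\pa B_\delta^2)$ multiplied by $\phi_0^2(\Q,\pa B_\delta^2)$ on the positive side; when $\phi_0$ drops to something small on a slice the lower bound weakens dramatically and cannot be repaired simply by restricting to a full-measure set of good slices.

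The paper sidesteps all of this by first building the auxiliary map $\wh{\U}^{(1)}_{\delta,\ol{\va}}$: it compresses $\U_{\delta,\ol{\va}}$ radially into $B_{\delta/2}^2\times(z_-,z_+)$ and pastes the (reflected) Luckhaus interpolation $\W_{\delta,\ol{\va}}$ into the outer annulus $A_{\delta/2,\delta}^2\times(z_-,z_+)$. This modification costs only the controlled energy $E_{\ol{\va}}(\W_{\delta,\ol{\va}},\cdot)\leq C\eta\log(\delta/\ol{\va})$, and the key payoff is that the lateral trace of $\wh{\U}^{(1)}_{\delta,\ol{\va}}$ is exactly $\V_{\delta,\ol{\va}}$, which is $\cN$-valued with $\phi_0\equiv 1$ and has the fixed homotopy class $\rH_i$ on every slice (since it is a single $H^1$ map on a cylinder, Lemma \ref{propHomt} applies). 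Then Corollary \ref{lowerboundcor} can be applied to every slice of $\wh{\U}^{(1)}_{\delta,\ol{\va}}$ uniformly, with no exceptional set, and the integrated lateral boundary term is controlled by \eqref{Vvaes22}. To make your lower bound work you should adopt the same device: modify the minimizer in a thin lateral shell before slicing, so that the slice boundaries are the already-projected map $\V_{\delta,\ol{\va}}$ rather than the raw trace $\fG_{\delta,\ol{\va}}$.
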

\begin{proof}
Applying Lemma \ref{Luckhauscylinder2}, for $ 0<\eta<\eta_1 $ and $ 0<\ol{\va}<\wh{\va}_1\delta $, there exist $ z_-\in(-1+\delta/2,-1+\delta) $, $ z_+\in(1-\delta/2,1-\delta) $, $ \V_{\delta,\ol{\va}}\in H^1(\pa B_{\delta}\times(z_-,z_+),\cN) $, $ \W_{\delta,\ol{\va}}\in H^1(A_{\delta/2,\delta}^2\times(z_-,z_+),\Ss_0) $ and $ \W_{\delta,\ol{\va}}^{\pm}\in H^1(A_{\delta/2,\delta}^2\times\{z_{\pm}\},\Ss_0) $ satisfying \eqref{Wvaest22}, \eqref{Vvaes22}, and \eqref{WpluesminusC22}. Consequently, $ [\V_{\delta,\ol{\va}}]_{\cN} $ is well defined. Assume that $ [\V_{\delta,\ol{\va}}]_{\cN}=\rH_i $ for some $ i\in\{0,1,2,3,4\} $. Next, we will divide the proof into two steps.\smallskip

\noindent\textbf{\underline{Step 1.} Upper bound of $ E_{\ol{\va}}(\U_{\delta,\ol{\va}},\Lda_{\delta,1}) $.} We firstly define
\begin{align*}
\Lda_{\delta}^-&:=B_{\delta}^2\times(-1,z_-),\quad\quad
\Lda_{\delta}^+:=B_{\delta}^2\times(z_+,1),\\
\Lda_{\delta}^0&:=B_{\delta/2}^2\times(z_-,z_+),\quad\quad
E_{\delta}:=A_{\delta/2,\delta}^2\times(z_-,z_+).
\end{align*}
We intend to define a competitor $ \wh{\U}_{\delta,\ol{\va}}\in H^1(\Lda_{\delta,1},\Ss_0) $ such that $ \wh{\U}_{\delta,\ol{\va}}|_{\pa\Lda_{\delta,1}}=\fG_{\delta,\ol{\va}} $. Since $ \V_{\delta,\ol{\va}}\in H^1(\pa B_{\delta}^2\times(z_-,z_+),\cN) $, and $ 0<\ol{\va}<\wh{\va}_1\delta<\delta/2 $, we can apply Lemma \ref{cylinderex} to $ \PP_b(\cdot)=\V_{\delta,\ol{\va}}(2\cdot) $, $ r=\delta/2 $, and $ L=(z_+-z_-)/2 $. As a result, we obtain $ \wh{\U}_{\delta,\ol{\va}}^{(0)} $ such that $ \wh{\U}_{\delta,\ol{\va}}^{(0)}|_{\pa B_{\delta/2}^2\times(-1,1)}=\V_{\delta,\ol{\va}}(2\cdot) $ and satisfies
\be
\begin{aligned}
E_{\ol{\va}}(\wh{\U}_{\delta,\ol{\va}}^{(0)},\Lda_{\delta/2,1})&\leq C\(\f{z_+-z_-}{2}\)\(\f{z_+-z_-}{\delta}+\f{\delta}{z_+-z_-}\)\|\na_{\Ga_{\delta,1}}\V_{\delta,\ol{\va}}\|_{L^2(\Ga_{\delta,1})}^2\\
&\quad\quad+(z_+-z_-)\(\cE^*(\rH_i)\log\f{\delta/2}{\ol{\va}}+C\)\\
&\leq (2\cE^*(\rH_i)+C(\delta^{-1}+\delta)\eta)\log\f{\delta}{\ol{\va}}+C,
\end{aligned}\label{inteU0hat}
\ee
and
\be
\begin{aligned}
E_{\ol{\va}}(\wh{\U}_{\delta,\ol{\va}}^{(0)},B_{\delta/2}^2\times\{z_{\pm}\})&\leq C\(\f{z_+-z_-}{\delta}+\f{\delta}{z_+-z_-}\)\|\na_{\Ga_{\delta,1}}\V_{\delta,\ol{\va}}\|_{L^2(\Ga_{\delta,1})}^2\\
&\quad\quad+\f{z_+-z_-}{2}\(\cE^*(\rH_i)\log\f{\delta/2}{\ol{\va}}+C\)\\
&\leq (\cE^*(\rH_i)+C(\delta^{-1}+\delta)\eta)\log\f{\delta}{\ol{\va}}+C,
\end{aligned}\label{zplusmiunsU0}
\ee
where we have used the fact that $ z_+-z_-\leq 2 $ when $ \delta\in(0,1/2] $. Now, we define $ \fG_{\delta,\ol{\va}}^{\pm}:\pa\Lda_{\delta}^{\pm}\to\Ss_0 $ by
\begin{align*}
\fG_{\delta,\ol{\va}}^{\pm}(x):=\left\{\begin{aligned}
&\fG_{\delta,\ol{\va}}(x)&\text{ if }&x\in\pa\Lda_{\delta}^+\cap\pa\Lda_{\delta,1},\\
&\W_{\delta,\ol{\va}}^{\pm}(x)&\text{ if }&x\in A_{\delta/2,\delta}^2\times\{z_{\pm}\},\\
&\wh{\U}_{\delta,\ol{\va}}^{(0)}(x)&\text{ if }&x\in\ol{B_{\delta}^2}\times\{z_{\pm}\}.
\end{aligned}\right.
\end{align*}
In view of this construction, we have $ \fG_{\delta,\ol{\va}}^{\pm}\in H^1(\pa\Lda_{\delta}^{\pm},\Ss_0) $. By using \eqref{WpluesminusC22}, \eqref{GvaboundaryH13M}, \eqref{GvaboundaryH13}, and \eqref{zplusmiunsU0}, it follows that
\be
\begin{aligned}
E_{\ol{\va}}(\fG_{\delta,\ol{\va}}^{\pm},\pa\Lda_{\delta}^{\pm})&\leq E_{\ol{\va}}(\wh{\U}_{\delta,\ol{\va}}^{(0)},B_{\delta/2}^2\times\{z_{\pm}\})+E_{\ol{\va}}(\W_{\delta,\ol{\va}}^{\pm},A_{\delta/2,\delta}^2\times\{z_{\pm}\})\\
&\quad\quad+E_{\ol{\va}}(\fG_{\delta,\ol{\va}},\pa\Lda_{\delta}^{\pm}\cap\pa\Ga_{\delta,1})+E_{\ol{\va}}(\fG_{\delta,\ol{\va}},B_{\delta}^2\times\{\pm L\})\\
&\leq(\cE^*(\rH_i)+C(\delta^{-1}+\delta)\eta)\log\f{\delta}{\ol{\va}}+C\eta\log\f{\delta}{\ol{\va}}+C_0\log\f{\delta}{\ol{\va}}+C\\
&\leq(\cE^*(\rH_i)+C_0+C(1+\delta^{-1}+\delta)\eta)\log\f{\delta}{\ol{\va}}+C.
\end{aligned}\label{Gvadeltaplusminus}
\ee
Noticing that there exist two bilipschitz map $ \Phi^{\pm}:\ol{\Lda_{\delta}^{\pm}}\to \ol{B_{\delta}^3} $, with absolute bilipschitz constant, we can define
$$
\wh{\U}_{\delta,\ol{\va}}^{\pm}:=\fG_{\delta,\ol{\va}}^{\pm}\((\Phi^{\pm})^{-1}\(\f{\delta\Phi^{\pm}(x)}{|\Phi^{\pm}(x)|}\)\)\text{ for }x\in\Lda_{\delta}^{\pm}\backslash\{\Phi^{\pm}(0)\}.
$$
By using \eqref{Gvadeltaplusminus}, we obtain 
\be
\begin{aligned}
E_{\ol{\va}}(\U_{\delta,\ol{\va}}^{\pm},\Lda_{\delta}^{\pm})&\leq C\delta E_{\ol{\va}}(\fG_{\delta,\ol{\va}}^{\pm},\pa\Lda_{\delta}^{\pm})\\ 
&\leq\delta(\cE^*(\rH_i)+C_0+C(1+\delta^{-1}+\delta)\eta)\log\f{\delta}{\ol{\va}}+C\\
&\leq C(C_0\delta+(\delta+\delta^2+\delta^{-1})\eta)\log\f{\delta}{\ol{\va}}+C.
\end{aligned}\label{U0plusminusinmte}
\ee
Now, we define
\begin{align*}
\wh{\U}_{\delta,\ol{\va}}(x):=\left\{\begin{aligned}
&\wh{\U}_{\delta,\ol{\va}}^{(0)}(x)&\text{ if }&x\in\Lda_{\delta}^0,\\
&\W_{\delta,\ol{\va}}(x)&\text{ if }&x\in E_{\delta},\\
&\wh{\U}_{\delta,\ol{\va}}^{\pm}(x)&\text{ if }&x\in\Lda_{\delta}^{\pm}.
\end{aligned}\right.
\end{align*}
It follows from \eqref{Wvaest22}, \eqref{zplusmiunsU0}, and \eqref{U0plusminusinmte} that
$$
E_{\ol{\va}}(\wh{\U}_{\delta,\ol{\va}},\Lda_{\delta,1})\leq (2\cE^*(\rH_i)+C(C_0\delta+\delta^2\eta+\delta\eta+\eta+\delta^{-1}\eta))\log\f{\delta}{\ol{\va}}+C.
$$
The upper bound follows directly from the comparison $
E_{\ol{\va}}(\U_{\delta,\ol{\va}},\Lda_{\delta,1})\leq E_{\ol{\va}}(\wh{\U}_{\delta,\ol{\va}},\Lda_{\delta,1}) $. \smallskip

\noindent\textbf{\underline{Step 2.} Lower bound of $ E_{\ol{\va}}(\U_{\delta,\ol{\va}},\Lda_{\delta,1}) $.} For cylindrical coordinate $ (\rho,\theta,z)\in[0,\delta]\times[0,2\pi)\times[-1,1] $, we define
\begin{align*}
\wh{\U}_{\delta,\ol{\va}}^{(1)}(\rho,\theta,z):=\left\{\begin{aligned}
&\U_{\delta,\ol{\va}}(2\rho,\theta,z)&\text{ if }&\rho\in[0,\delta/2)\text{ and }z\in(z_-,z_+),\\
&\W_{\delta,\ol{\va}}\(3\delta/2-\rho,\theta,z\)&\text{ if }&\rho\in[\delta/2,\delta)\text{ and }z\in(z_-,z_+).
\end{aligned}\right.
\end{align*}
By this definition, we have $ \wh{\U}_{\delta,\ol{\va}}^{(1)}\in H^1(B_{\delta}^2\times(z_-,z_+),\Ss_0) $ and
\be
\wh{\U}_{\delta,\ol{\va}}^{(1)}(x)=\V_{\delta,\ol{\va}}(x)\text{ for }\HH^2\text{-a.e. }x\in\pa B_{\delta}^2\times(z_-,z_+).\label{boundaryconUV2}
\ee
Moreover, it follows by direct computations and \eqref{Wvaest22} that
\be
\begin{aligned}
E_{\ol{\va}}(\wh{\U}_{\delta,\ol{\va}}^{(1)},\Lda_{\delta,1})&\leq E_{\ol{\va}}(\U_{\delta,\ol{\va}},\Lda_{\delta,1})+E_{\ol{\va}}(\W_{\delta,\ol{\va}},A_{\delta/2,\delta}^2\times(z_-,z_+))\\
&\leq E_{\ol{\va}}(\U_{\delta,\ol{\va}},\Lda_{\delta,1})+C\eta\log\f{\delta}{\ol{\va}}.
\end{aligned}\label{U1Udeltaminus}
\ee
Since we can choose $ 0<\wh{\va}_1<1 $ sufficiently small, it can be assumed that $ \ol{\va}<\delta/80 $. It follows from Fubini theorem and Corollary \ref{lowerboundcor} and \eqref{boundaryconUV2} that
$$
E_{\ol{\va}}(\wh{\U}_{\delta,\ol{\va}}^{(1)},B_{\delta}^2\times\{z\})+C\delta E_{\ol{\va}}(\V_{\delta,\ol{\va}},\pa B_{\delta}^2\times\{z\})\geq\cE^*(\rH_i)\log\f{\delta}{\ol{\va}}-C.
$$
Integrating the above formula with respect to $ z\in(z_-,z_+) $, we can deduce from the fact $ z_+-z_-\geq 2-\delta $ that
$$
\E_{\ol{\va}}(\wh{\U}_{\delta,\ol{\va}}^{(1)},\Lda_{\delta,1})+C\delta\int_{\pa B_{\delta}^2\times(z_-,z_+)}|\na\V_{\delta,\ol{\va}}|^2\ud\HH^2\geq 2\cE^*(\rH_i)\log\f{\delta}{\ol{\va}}-C.
$$
This, together with \eqref{Vvaes22}, \eqref{GvaboundaryH13}, and \eqref{U1Udeltaminus}, implies that
$$
E_{\ol{\va}}(\U_{\delta,\ol{\va}},\Lda_{\delta,1})\geq (2\cE^*(\rH_i)-C(\delta+1)\eta)\log\f{\delta}{\ol{\va}}-C,
$$
which obtain the lower bound and completes the proof.
\end{proof}

\begin{lem}\label{densitydescrete}
For $ \HH^1 $-a.e. $ x\in\cS_{\op{line}}\cap\om $, there holds $ \Theta(x)\in\{\kappa_*,2\kappa_*\} $.
\end{lem}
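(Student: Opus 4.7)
The plan is to blow up at a generic rectifiable point of $ \cS_{\op{line}}\cap\om $ and apply the cylinder minimizer classification from Lemma~\ref{chooseseveralcases}. Fix $ x_0\in\cS_{\op{line}}\cap\om $ at which both the density $ \Theta(x_0) $ and the approximate tangent line $ T_{x_0}\cS_{\op{line}} $ exist; by Lemma~\ref{stationvari} and standard facts for $ 1 $-rectifiable sets, this holds for $ \HH^1 $-a.e. $ x_0 $. After a rigid motion I may assume $ x_0=0 $ and $ T_{x_0}\cS_{\op{line}}=\{0\}\times\{0\}\times\R $, and choose $ r_0>0 $ with $ B_{2r_0}\subset\om $. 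For $ r\in(0,r_0) $ I introduce the blow-ups
\begin{equation*}
\wt{\Q}_n^{(r)}(y):=\Q_{\va_n}(ry)\quad\text{on }B_2,\qquad\ol{\va}_n:=\va_n/r,
\end{equation*}
together with the rescaled measures $ \wt{\mu}_n^{(r)}(U):=E_{\ol{\va}_n}(\wt{\Q}_n^{(r)},U)/\log(1/\ol{\va}_n) $ and $ \mu_0^{(r)}(U):=r^{-1}\mu_0(rU) $. Since $ \log(r/\va_n)/\log(1/\va_n)\to 1 $, I obtain $ \wt{\mu}_n^{(r)}\wc^*\mu_0^{(r)} $ in $ (C(\ol{B_1}))' $ as $ n\to+\ift $, and rectifiability plus the tangent-line property of $ \mu_0 $ give $ \mu_0^{(r)}\wc^*2\Theta(x_0)\,\HH^1\left\llcorner\right.(\{0\}\times\{0\}\times[-1,1]) $ on $ B_1 $ as $ r\to 0^+ $.

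Fix $ \delta\in(0,1/2] $ and consider the thin cylinder $ \Lda_{\delta,1}=B_\delta^2\times(-1,1)\subset B_1 $. The local minimality of $ \Q_{\va_n} $ makes $ \wt{\Q}_n^{(r)}|_{\Lda_{\delta,1}} $ a minimizer of $ E_{\ol{\va}_n} $ with its own boundary trace $ \fG $ on $ \pa\Lda_{\delta,1} $, and \eqref{assumptionbound} supplies the $ L^\ift $ bound \eqref{fGboundbound23}. To verify the top/bottom energy bound \eqref{GvaboundaryH13M} and the lateral smallness \eqref{GvaboundaryH13}, I run Fubini/average arguments on the annular cylinder $ A_{\delta/2,\delta}^2\times(-1,1) $ and on the slabs $ B_\delta^2\times\{|y^{(3)}|\in(1-\delta,1)\} $, allowing slight variations of the radius $ \delta $ and of the heights; the resulting $ 2 $-dimensional trace energies on a well-chosen subcylinder are controlled by $ C\delta^{-1}\wt{\mu}_n^{(r)} $ of those annulus/slab regions times $ \log(1/\ol{\va}_n) $. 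Taking $ n\to+\ift $ and then $ r\to 0^+ $, the tangent-line concentration forces $ \mu_0^{(r)} $ to vanish on these annulus/slab regions, so the smallness parameter $ \eta $ in Lemma~\ref{chooseseveralcases} can be driven to $ 0 $ for $ n $ large and $ r $ small.

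Lemma~\ref{chooseseveralcases} then produces an index $ i=i(n,r,\delta)\in\{0,1,2,3,4\} $ such that
\begin{equation*}
\bigl|E_{\ol{\va}_n}(\wt{\Q}_n^{(r)},\Lda_{\delta,1})-2\cE^*(\rH_i)\log(\delta/\ol{\va}_n)\bigr|\leq\alpha(C_0,\delta,\eta)\log(\delta/\ol{\va}_n)+C,
\end{equation*}
with $ \alpha(C_0,\delta,\eta)\leq C(C_0\delta+(\delta^2+\delta+1+\delta^{-1})\eta) $. Dividing by $ \log(1/\ol{\va}_n) $ and using $ \log(\delta/\ol{\va}_n)/\log(1/\ol{\va}_n)\to 1 $ as $ n\to+\ift $ with $ r,\delta $ fixed, the left-hand side converges to $ \mu_0^{(r)}(\Lda_{\delta,1}) $ while the right-hand side becomes $ 2\cE^*(\rH_i)+O(\alpha) $. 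Sending $ r\to 0^+ $ yields $ \mu_0^{(r)}(\Lda_{\delta,1})\to 2\Theta(x_0) $, and taking $ \delta\to 0^+ $ with $ \eta=o(\delta^2) $ (so that $ \alpha\to 0 $) together with the finiteness of the index set gives, after extracting a limiting index $ i_0 $, the identity $ 2\Theta(x_0)=2\cE^*(\rH_{i_0}) $. By Lemma~\ref{kappa123} this forces $ \Theta(x_0)\in\{0,\kappa_*,2\kappa_*\} $, and Lemma~\ref{Thetamu0eta0} excludes the value $ 0 $ on $ \cS_{\op{line}}\cap\om $, yielding $ \Theta(x_0)\in\{\kappa_*,2\kappa_*\} $.

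The principal technical obstacle is consistently arranging the quadruple limit $ n\to+\ift $, $ r\to 0^+ $, $ \delta\to 0^+ $, $ \eta\to 0^+ $: the $ \delta^{-1}\eta $ term in the error estimate forces $ \eta\ll\delta $, which in turn requires the annular/slab energies of $ \mu_0^{(r)} $ to vanish quantitatively as $ r\to 0^+ $. This quantitative vanishing is precisely what the tangent-line property of the rectifiable measure $ \mu_0 $ provides, and a diagonal extraction along sequences $ (n_k,r_k,\delta_k) $ with $ \va_{n_k}/r_k\to 0 $ and $ \eta_k=o(\delta_k^2) $ will complete the argument.
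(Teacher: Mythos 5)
Your proposal is correct and follows essentially the same path as the paper's proof: blow up at a generic rectifiable point of $\cS_{\op{line}}\cap\om$ using the tangent-line property of $\mu_0$ from Lemma~\ref{stationvari}, verify the hypotheses of Lemma~\ref{chooseseveralcases} on thin cylinders via average/Fubini arguments together with the monotonicity formula and the global bound \eqref{assumptionbound}, apply that lemma to pin the density to one of $\{2\cE^*(\rH_i)\}$ up to an error $\alpha$, then send the parameters to zero (the paper orders the limits as $n\to+\ift$, then $s\to 0^+$, then $\eta\to 0^+$ with $\delta=\eta^{1/2}$, which is the same bookkeeping as your $\eta=o(\delta^2)$ choice and avoids the need for a simultaneous diagonal extraction), and finally exclude $\Theta(x_0)=0$ via Lemma~\ref{Thetamu0eta0}.
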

\begin{proof}
By Lemma \ref{stationvari}, $ \mu_0 $ can be approximated by tangent line for $ \HH^1 $-a.e. $ x_0\in\cS_{\op{line}}\cap\om $. Indeed, for $ \mu_0 $-a.e. $ x\in\om $, there exists a unique $ 1 $-dimensional subspace $ L_x=T_x\cS_{\op{line}}\subset\R^3 $ such that for any $ \vp\in C_c(\R^3) $, there holds
\be
\lim_{s\to 0^+}\f{1}{s}\int_{\R^3}\vp\(\f{z-x}{s}\)\ud\mu_0(z)=\Theta(x)\int_{L_x}\vp(y)\ud\HH^1(y).\label{tangapp}
\ee
Fix such a point $ x_0\in\cS_{\op{line}}\cap\om $. By Lemma \ref{Thetamu0eta0}, $ \Theta(x_0)>0 $. We can use the formula \eqref{tangapp} to obtain $ L_{x_0}\subset\R^3 $ and a sequence of measures $ \{\nu_{s_n}\}_{0<s_n<1}\subset (C(\om))' $ with $ \nu_{s_n}(E)=s_n^{-1}\mu_0(s_nE\cap\om) $ for any $ E\in\mathcal{B}(\R^3) $ and $ \lim_{n\to+\ift}s_n=0 $, such that
\be
\nu_{s_n}\wc^*\nu_0:=\Theta(x)\HH^1\left\llcorner\right.L_{x_0}\text{ in }(C(\om))'\text{ as }n\to+\ift.\label{nu0con}
\ee
Up to a translation and a rotation, we can assume that $ x_0=0 $ and $ L_{x_0}=\{x\in\R^3:x_1=x_2=0\} $. Set $ C_0=\max\{16\sqrt{2}M/\dist(x_0,\pa\om),M\} $, where $ M>0 $ is the constant given by \eqref{assumptionbound}. Let $ 0<\eta_1,\wh{\va}_1<1 $ be given by Lemma \ref{chooseseveralcases} with respect to $ C_0 $ and $ 0<\delta<1 $ to be determined later. Let $ 0<\eta<\eta_1 $. By \eqref{nu0con}, we can choose a positive number $ 0<s_0<1 $, depending on $ \delta,\eta $, such that $ s_0<\dist(x_0,\pa\om)/(2\sqrt{2}) $ and $
\mu_0(\Ga_{s\delta,s})\leq s\eta/2 $ for any $ 0<s<s_0 $. Recall that $ \mu_{\va_n}=E_{\va_n}(\Q_{\va_n},\cdot)/\log(1/\va_n)\wc^*\mu_0 $ as $ n\to+\ift $. There exists $ n_0\in\Z_+ $, depending only on $ C_0,\eta,\delta,s $, such that for any $ n\geq n_0 $, there hold $ \ol{\va}_n:=\va_n/s\in(0,\wh{\va}_1) $ and
\be
E_{\va_n}(\Q_{\va_n},\Ga_{s\delta,s})\leq s\eta\log\f{s\delta}{\va_n}.\label{nn0geq}
\ee
Set $ r_0:=\dist(x_0,\pa\om)/2 $. Simple calculations yield that $ \Lda_{s\delta,1}\subset B_{r_0} $. Using \eqref{assumptionbound} and \eqref{Mo}, we can choose larger $ n_0\in\Z_+ $ if necessary, such that when $ n\geq n_0 $, there holds
$$
E_{\va_n}(\Q_{\va_n},\Lda_{s\delta,s})\leq\f{\sqrt{2}s}{r_0}E_{\va_n}(\Q_{\va_n},\Lda_{\delta s,s})\leq \f{C_0s}{4}\log\f{s\delta}{\va_n}.
$$
By using Fatou Lemma, we have
$$
\int_{-s}^s\liminf_{n\to+\ift}E_{\va_n}(\Q_{\va_n},B_{s\delta}^2\times\{z\})\ud z\leq \f{C_0s}{4}\log\f{s\delta}{\va_n}.
$$
Consequently, it follows from average arguments that there exists $ z_-\in[-s,-3s/4] $ and $ z_+\in[3s/4,s] $ and a subsequence of $ \va_n $ without relabeling such that
\be
E_{\va_n}(\Q_{\va_n},B_{s\delta}^2\times\{z_{\pm}\})\leq C_0\log\f{s\delta}{\va_n}.\label{upperlowb}
\ee
Without of loss of generality, we assume that $ z_{\pm}=\pm s $ and for general case one can apply almost the same arguments and a translation. Define $ \U_{\delta,\ol{\va}_n}(y):=\Q_{\va_n}(sy) $ for $ y\in\Lda_{\delta,1} $. We can deduce from \eqref{assumptionbound}, \eqref{nn0geq}, \eqref{upperlowb}, and the fact $ 0<s<1 $ that when $ n>n_0 $, the following properties hold. For $ 0<\ol{\va}_n<\wh{\va}_1\delta $, we have
\begin{align*}
\|\U_{\delta,\ol{\va}_n}\|_{L^{\ift}(\Lda_{\delta,1})}&\leq C_0,\\
E_{\ol{\va}_n}(\U_{\delta,\ol{\va}_n},\Ga_{\delta,1})&\leq\eta\log\f{\delta}{\ol{\va}_n},\\
E_{\ol{\va}_n}(\U_{\delta,\ol{\va}_n},B_{\delta}^2\times\{\pm 1\})&\leq C_0\log\f{\delta}{\ol{\va}_n}.
\end{align*}
As a result, we can apply Lemma \ref{chooseseveralcases} and the change of variables to obtain that there exists $ i\in\{0,1,2,3,4\} $ such that
$$
\left|E_{\va_n}(\Q_{\va_n},\Lda_{s\delta,s})-2s\cE^*(\rH_i)\log\f{s\delta}{\va_n}\right|
\leq s\al\log\f{s\delta}{\va_n}+C,
$$
where $ C>0 $ depends only on $ \cA,C_0 $, and
$$
\al\leq C(C_0\delta+\delta^2\eta+\delta\eta+\eta+\delta^{-1}\eta).
$$
Passing $ n\to+\ift $ and then letting $ s\to 0^+ $, we can obtain from \eqref{weakconFG} that $
|\nu_0(\Lda_{\delta,1})-2\cE^*(\rH_i)|\leq\al $. Choose $ \delta=\eta^{1/2} $ and let $ \eta\to 0^+ $, we can obtain that $ \Theta(x_0)\in\{\cE^*(\rH_i):i=0,1,2,3,4\} $. Using Lemma \ref{kappa123}, there holds $ \Theta(x_0)\in\{0,\kappa_*,2\kappa_*\} $. Since $ \Theta(x_0)>0 $, we have $ \Theta(x_0)\in\{\kappa_*,2\kappa_*\} $, which completes the proof.
\end{proof}

\begin{proof}[Proof of (\ref{pr6}) in Theorem \ref{main}]
Recall the definition of the varifold $ V_0 $ associated with $ \mu_0\left\llcorner\right.\om $, defined in Lemma \ref{stationvari}. We set $ \|V_0\| $ as the weight of it, such that 
$$
\|V_0\|(E):=V_0(E\times\bG(3,1))\text{ for any }E\in\mathcal{B}(\om).
$$
Moreover, we can deduce from the definition of $ V_0 $ that
$$
\mu_0\left\llcorner\right.\om=\Theta(\|V_0\|,x)\HH^1\left\llcorner\right.(\cS_{\op{line}}\cap\om),
$$
where by Lemma \ref{Thetamu0eta0}, there holds
$$
\Theta(\|V_0\|,x):=\lim_{r\to 0^+}\f{\|V_0\|(\ol{B_r(x)})}{2r}=\lim_{r\to 0^+}\f{\|V_0\|(\ol{B_r(x)})}{2r}=\Theta(\mu_0,x)\geq\f{\eta_0}{2}
$$
for any $ x\in\cS_{\op{line}}\cap\om $. This, together with the result of Theorem p. 89 in \cite{AA76}, implies that $
V_0=\sum_{i=1}^{+\ift}\Theta(
|V_0\|,I_i)|I_i| $, where $ \{I_i\}_{i\in\Z_+} $ is a collection of bounded open intervals (segments without endpoint), $ \Theta(\|V_0\|,I_i) $ is unique member of the range of $ \Theta(\|V_0\|,\cdot) $ restricted to $ I_i $, and $ |I_i|\in(C_c(\om\times\bG(3,1)))' $ is a finite positive Radon measure given by $
|I_i|=\delta_{\vv_i\vv_i}\otimes(\HH^1\left\llcorner\right.I_i) $, where $ \vv_i $ is the unit directing vector of $ I_i $. In view of Lemma \ref{densitydescrete}, we can apply Theorem p.89 in \cite{AA76} again to deduce that for each $ x\in\cS_{\op{line}} $, there exists $ r>0 $ such that $ \cS_{\op{line}}\cap\om\cap \ol{B_r(x)} $ is a finite union of closed straight line segments joining $ x $ with $ \pa B_r(x) $. As a result, for any $ K\subset\subset\om $, $ \cS_{\op{line}}\cap\ol{K} $ is a finite union of closed line segments, $ L_1,L_2,...,L_p $. By subdividing these segments, we can assume that for $ i,j\in\{1,2,...,p\} $ with $ i\neq j $, either $ L_i\cap L_j=\emptyset $ or they intersect to a common endpoint.

To show Property (\ref{pra}), assume that $ L $ is a straight segment in $ \cS_{\op{line}}\cap\ol{K} $, and $ B_r^2(x)\cap L=\{x\} $. Without loss of generality, we assume that $ L=\{x_1=x_2=0\} $ and $ x=0 $. Moreover, We can also assume that there exists a small neighborhood $ U $ of $ B_r^2(x) $ such that $ U\cap\cS_{\pts}=\emptyset $. This is from the property that $ \cS_{\pts} $ is locally finite and the definition of free homotopy class. To this end, we let $ B_r^2(x)\subset\{x_3=0\} $. Using the arguments of the proof for Lemma \ref{densitydescrete}, we see that the related $ \U_{\delta,\ol{\va}_n} $ satisfies
$$
\left|E_{\va_n}(\U_{\delta,\ol{\va}_n},\Lda_{\delta,1})-2\cE^*(\rH_i)\log\f{\delta}{\ol{\va}_n}\right|\leq\al\log\f{\delta}{\ol{\va}_n}+C
$$
for $ i\in\{1,2,3,4\} $ and $ C>0 $ depends only on $ \cA,M,K $. Note that in the proof of Lemma \ref{chooseseveralcases}, $ \rH_i $ represents the free homotopy class of $ \V_{\delta,\ol{\va}_n} $. Recall the construction of $ \V_{\delta,\ol{\va}_n} $, on the grids of the $ \ol{\Ga_{\delta,1}} $, $ \V_{\delta,\ol{\va}_n}=\varrho(\U_{\delta,\va_n}) $. This implies that $ [\varrho(\Q_{\va_n})|_{\Ga_{s\delta,s}}]_{\cN}\neq\rH_0 $. Since $ \cS_{\pts} $ is locally finite, we can use the uniform convergence result to obtain that $ [\Q_0|_{\pa B_r^2(x)}]=[\Q_0|_{\pa B_{\delta}^2(x)}]\neq\rH_0 $.

Finally, we prove Property (\ref{prb}). For $ x_0\in\cS_{\op{line}} $ being an endpoint of straight segments of $ \cS_{\op{line}}\cap\ol{K} $, we can choose $ r>0 $ such that $ \ol{B_r(x_0)}\subset K $ and $ \cS_{\op{line}}\cap B_r(x_0)=\cup_{j=1}^qL_{i_j} $. We denote $ \vv_j $ as the unit direction vector of $ L_{i_j} $. We claim,
\be
\sum_{j=1}^q\cE^*([\Q_0|_{\pa B_{r_j}^2(x_j)}]_{\cN})\vv_j=0,\label{claimsumldai}
\ee
where $ B_{r_j}^2(x_j) $ satisfies $ B_{r_j}^2(x_j)\cap\cS_{\op{line}}=\{x_j\} $ and $ B_{r_j}^2(x_j)\subset B_r(x_0) $ with $ x_j $ being in the relative interior of $ L_j $. We firstly assume that \eqref{claimsumldai} is true. Obviously, in view of Property (\ref{pra}), for any $ j=1,...,q $, $ \cE^*(\Q_0|_{\pa B_{r_j}^2(x_j)})>0 $, this implies that $ q\geq 2 $. If $ q=2 $, there must by $ \vv_1=-\vv_2 $ and the result follows directly. If $ q=3 $, by Lemma \ref{kappa123}, up to the relabel of $ i=1,2,3 $, there are three cases. If $ 2\vv_1+\vv_2+\vv_3=0 $, we see that $ \vv_2=\vv_3 $, which is a contradiction since we choose different $ \vv_i $ at the beginning. If $ \vv_1+\vv_2+\vv_3=0 $, there is nothing to prove. If $ \vv_1+2\vv_2+2\vv_2=0 $, up to a rotation and a translation, we assume that $ \vv_1=(0,\sqrt{15}/4,1/4) $, $ \vv_2=(0,-\sqrt{15}/4,1/4) $, and $ \vv_3=(0,0,-1) $. Let $ 0<\delta<1 $ be a small number, such that $ \Lda_{\delta,\delta/20}\subset B_r(x_0)\backslash\cS_{\pts} $ and $ L_{i_j}\cap\Lda_{\delta,\delta/20}=x_j $ for $ j=1,2,3 $. Indeed, $ x_1=(0,\sqrt{15}\delta/20,\delta/20) $, $ x_2=(0,-\sqrt{15}\delta/20,\delta/20) $, and $ x_3=(0,0,-\delta/2) $. For this case, we note that $ [\Q_0|_{\pa B_{\delta/50}^2(x_1)}]_{\cN},[\Q_0|_{\pa B_{\delta/50}^2(x_2)}]_{\cN}\in\{\rH_3,\rH_4\} $ and $ [\Q_0|_{\pa B_{\delta/50}^2(x_3)}]_{\cN}\in\{\rH_1,\rH_2\} $. Let $ x_4:=x_1-(0,\delta/50,0) $, $ x_5:=x_2+(0,\delta/50,0) $, and $ x_6:=(0,0,\delta/20) $. Using Lemma \ref{homcor}, we can obtain
\begin{align*}
&\vp_{\Q_0(x_6)}^{-1}([\Q_0|_{\ol{x_4x_6}}*\Q_0|_{\pa B_{\delta/50}^2(x_1)}*\Q_0|_{\ol{x_6x_4}}]_{\cN,\Q_0(x_6)})\in\{-1,\pm\kk\},\\
&\vp_{\Q_0(x_6)}^{-1}([\Q_0|_{\ol{x_5x_6}}*\Q_0|_{\pa B_{\delta/50}^2(x_2)}*\Q_0|_{\ol{x_6x_5}}]_{\cN,\Q_0(x_6)})\in\{-1,\pm\kk\},
\end{align*}
and then
$$
[\Q_0|_{\ol{x_5x_6}}*\Q_0|_{\pa B_{\delta/50}^2(x_2)}*\Q_0|_{\ol{x_6x_5}}*\Q_0|_{\ol{x_4x_6}}*\Q_0|_{\pa B_{\delta/50}^2(x_1)}*\Q_0|_{\ol{x_6x_4}}]_{\cN}\in\{\rH_3,\rH_4\}.
$$
Since $ \Q_0 $ itself is the free homotopy function, we have $ [\Q_0|_{\pa B_{\delta}^2(x_6)}]\in\{\rH_3,\rH_4\} $. On the other hand, there holds $ [\Q_0|_{\pa B_{\delta}^2(x_6)}]_{\cN}\in\{\rH_1,\rH_2\} $. This is a contradiction, since by the definition of $ \Lda_{\delta,\delta/20} $, $ \Q_0\in C^{\ift}(\Ga_{\delta,\delta/20},\cN) $ and then $ [\Q_0|_{\pa B_{\delta}^2(x_5)}]_{\cN}=[\Q_0|_{\pa B_{\delta}^2(x_6)}]_{\cN} $. Now it remains to show the claim \eqref{claimsumldai}. Indeed, since the first variation of $ V_0 $ is vanishing, and \eqref{V0formula} holds true, we deduce
\begin{align*}
0=\int_{\om}\A_0(x):\na\uu(x)\ud\mu_0(x)=\int_{\cS_{\op{line}}\cap B_r(x_0)}\Theta(x)\A_0(x):\na\uu(x)\ud\HH^1(x)
\end{align*}
for any $ \uu\in C_c^1(B_r(x_0),\R^3) $. Noticing that $ \cS_{\op{line}}\cap B_r(x_0)=\cup_{j=1}^qL_{i_j} $, we have
$$
0=\sum_{j=1}^q\beta_j\int_{L_{i_j}}\vv_j\vv_j:\na\uu\ud\HH^1=\sum_{j=1}^q\beta_j\int_0^{\HH^1(L_{i_j})}\f{\ud}{\ud t}\<\uu(x_0+t\vv_j),\vv_j\>\ud\HH^1=-\sum_{j=1}^q\<\chi(x_0),\beta_j\vv_j\>,
$$
where $ \beta_j=\cE^*([\Q_0|_{\pa B_{r_j}(x_j)}]_{\cN}) $. This directly implies the claim. Here we have also used the fact that $ \Theta(x_j)=\cE^*([\Q_0|_{\pa B_{r_j}(x_j)}]_{\cN}) $ for any $ j $, $ \mu_0\left\llcorner\right.B_r(x_0)=\sum_{j=1}^q\Theta(x_j)\HH^1\left\llcorner\right.L_{i_j} $, and $ \A_0\left\llcorner\right.L_{i_j}=\vv_j\vv_j $. 
\end{proof}

\appendix

\section{Some results in elliptic equations}

The following are some basic results in elliptic equations. 

\begin{lem}[\cite{HL97}, Lemma 4.1]\label{Hanarck}
Let $ B_r^k\subset\R^k $ with $ k\geq 2 $. Assume that $ u\in H^1(B_r) $ is non-negative and satisfies $ -\Delta u\leq C_0u $ in $ B_r $ for some $ C_0>0 $ in weak sense, 
i.e., for any $ 0\leq\vp\in H_0^1(B_r) $, 
$$
\int_{B_r}\na u\cdot\na\vp\ud x\leq C_0\int_{B_r}u\vp\ud x.
$$
There exists $ C>0 $ depending only on $ C_0 $ and $ k $ such that $
u(0)\leq C\dashint_{B_r}u\ud x $.
\end{lem}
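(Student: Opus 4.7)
\textbf{Proof proposal for Lemma \ref{Hanarck}.} The plan is the classical Moser iteration strategy for subsolutions of uniformly elliptic linear equations, adapted to our very simple operator $-\Delta$ with a lower order term of opposite sign. The overall scheme has two stages: first obtain an $L^\infty$--$L^2$ bound $\sup_{B_{r/2}} u \leq C(\dashint_{B_r} u^2)^{1/2}$ via a De~Giorgi/Moser iteration, and then lower the exponent from $L^2$ down to $L^1$ by an absorption argument using interpolation between $L^\infty$, $L^2$, and $L^1$. A scaling reduction to $r=1$ at the start removes any nuisance with the radius.

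For the first stage I would test the differential inequality with $\varphi = \eta^2 u^{\beta}$ for $\beta \geq 1$, where $\eta\in C_c^\infty(B_{r_2})$ is a standard cutoff equal to $1$ on $B_{r_1}$ with $\|\nabla\eta\|_\infty \leq 2/(r_2-r_1)$. Since $u\geq 0$ and $\nabla(\eta^2 u^\beta) = 2\eta u^\beta \nabla\eta + \beta \eta^2 u^{\beta-1}\nabla u$, the weak inequality, combined with Cauchy--Schwarz and the elementary identity $u^{\beta-1}|\nabla u|^2 = \frac{4}{(\beta+1)^2}|\nabla u^{(\beta+1)/2}|^2$, yields the Caccioppoli-type bound
\[
\int_{B_1} |\nabla(\eta u^{(\beta+1)/2})|^2 \ud x \leq C(\beta+1)\Bigl\{(r_2-r_1)^{-2}+C_0\Bigr\}\int_{B_{r_2}} u^{\beta+1}\ud x.
\]
Plugging this into the Sobolev embedding $H_0^1(B_{r_2}) \hookrightarrow L^{2\chi}(B_{r_2})$ with $\chi = k/(k-2)$ if $k\geq 3$, or any fixed $\chi>1$ if $k=2$, gives the standard reverse H\"older inequality on nested balls. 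Iterating with exponents $q_j = 2\chi^j$ and radii $r_j = 1/2 + 2^{-j-1}$ and summing the geometric series of the constants produces the $L^\infty$--$L^2$ estimate $\sup_{B_{1/2}} u \leq C(\dashint_{B_1} u^2)^{1/2}$, with $C$ depending only on $k$ and $C_0$.

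For the second stage, I would work with $\phi(t) := \sup_{B_t} u$ on $t\in[1/2,1]$ and observe that the previous bound, applied on any ball $B_\rho(x)\subset B_1$, gives $\phi(\tau) \leq C(s-\tau)^{-k/2}\|u\|_{L^2(B_s)}$ for $1/2\leq \tau<s\leq 1$. Interpolating $\|u\|_{L^2}^2 \leq \phi(s)\,\|u\|_{L^1(B_s)}$ and applying the Cauchy inequality, for any $\delta\in(0,1)$,
\[
\phi(\tau) \leq \delta\, \phi(s) + C(s-\tau)^{-k}\delta^{-1}\|u\|_{L^1(B_1)}.
\]
The standard iteration lemma (choose $\delta=1/2$ and iterate on a geometric sequence of radii converging to $1/2$ from above) then absorbs the $\phi(s)$ term and produces $\phi(1/2)\leq C\|u\|_{L^1(B_1)}$. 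Undoing the rescaling and using $u(0)\leq \phi(1/2)$ yields exactly the conclusion of the lemma.

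The only mildly delicate step is the last interpolation/absorption, where one has to be careful that the constants on the two sides scale correctly so that the geometric summation converges; everything else is a direct transcription of the Moser argument, and all constants depend only on $k$ and $C_0$ as required.
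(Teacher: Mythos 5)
Your proof is correct and is exactly the Moser-iteration argument behind the cited result [HL97, Lemma~4.1]; the paper offers no proof of its own, only the citation. Two routine technicalities are glossed over: for $u\in H^1$ unbounded and $\beta>1$ one should test with $\eta^2\bar u^\beta$ where $\bar u=\min\{u,N\}$ and then let $N\to\infty$ so that the test function is in $H_0^1$, and in the final absorption step the geometric ratio $\lambda$ of the radii must be chosen with $\tfrac12\lambda^{-k}<1$ (i.e.\ $\lambda$ close to $1$ depending on $k$) for the iteration to close with $\delta=\tfrac12$.
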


\begin{lem}[\cite{BBH93}, Lemma A.1]\label{Intp}
Let $ U\subset\R^k $ be a bounded domain. Assume that $ u\in C^2(U) $ and $ f\in L^{\ift}(U) $ satisfy the equation $
-\Delta u=f $ in $ U $. There holds
$$
|\na u(x)|^2\leq C(\|f\|_{L^{\ift}(U)}\|u\|_{L^{\ift}(U)}+\dist^{-2}(x,\pa U)\|u\|_{L^{\ift}(U)}^2),
$$
for any $ x\in U $, where $ C>0 $ is a constant depending only on $ k $.
\end{lem}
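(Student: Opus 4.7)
\textbf{Proof plan for Lemma \ref{Intp}.} The statement is a standard pointwise interior gradient estimate for the Poisson equation; the plan is to reduce it, by scaling, to the unit ball and then optimize in the radius. Fix $x\in U$ and set $d:=\dist(x,\pa U)$. If $\|u\|_{L^{\ift}(U)}=+\ift$ the inequality is trivial, so I may assume $M:=\|u\|_{L^{\ift}(U)}<\ift$ and write $F:=\|f\|_{L^{\ift}(U)}$.

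First I would prove the following scale-fixed version: if $v\in C^2(B_1)\cap L^{\ift}(B_1)$ solves $-\Delta v=g$ with $g\in L^{\ift}(B_1)$, then
\[
|\na v(0)|\leq C_k\bigl(\|v\|_{L^{\ift}(B_1)}+\|g\|_{L^{\ift}(B_1)}\bigr),
\]
for some $C_k>0$ depending only on $k$. This is the standard interior gradient estimate for Poisson's equation. One way to obtain it cleanly is to split $v=w_1+w_2$ on $B_{1/2}$ where $-\Delta w_1=g$ in $B_{1/2}$ with $w_1|_{\pa B_{1/2}}=0$, and $w_2$ is harmonic in $B_{1/2}$ with $w_2|_{\pa B_{1/2}}=v|_{\pa B_{1/2}}$. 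The Green's function representation gives $|\na w_1(0)|\leq C_k\|g\|_{L^{\ift}(B_{1/2})}$, while mean-value / Poisson-kernel differentiation gives $|\na w_2(0)|\leq C_k\|v\|_{L^{\ift}(\pa B_{1/2})}$; combining yields the claim.

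Next I would apply the rescaled version on the ball $B_r(x)\subset U$ for an arbitrary $r\in(0,d]$. Setting $v(y):=u(x+ry)$ on $B_1$, we get $-\Delta v(y)=r^2 f(x+ry)=:g(y)$ with $\|v\|_{L^{\ift}(B_1)}\leq M$ and $\|g\|_{L^{\ift}(B_1)}\leq r^2 F$. Applying the unit-ball estimate and chain rule,
\[
r|\na u(x)|=|\na v(0)|\leq C_k(M+r^2 F),\qquad\text{hence}\qquad |\na u(x)|\leq C_k\bigl(r^{-1}M+rF\bigr).
\]

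Finally I would optimize in $r$. If $F=0$ then sending $r\to d$ yields $|\na u(x)|\leq C_k d^{-1}M$, which is stronger than needed. If $F>0$, choose $r:=\min\{d,\sqrt{M/F}\}\in(0,d]$. With this choice $r^2F\leq M$, so $rF\leq\sqrt{MF}$, and $r^{-1}M\leq d^{-1}M+\sqrt{MF}$. Squaring and using $(a+b)^2\leq 2a^2+2b^2$ gives
\[
|\na u(x)|^2\leq 2C_k^2\bigl(r^{-2}M^2+r^2F^2\bigr)\leq C\bigl(d^{-2}M^2+MF\bigr),
\]
which is precisely the stated inequality. No step is really delicate; the only point that requires a little care is the radius optimization, which is what produces the asymmetric mixed term $\|f\|_{L^{\ift}}\|u\|_{L^{\ift}}$ (rather than $\|f\|_{L^{\ift}}^2$) on the right-hand side.
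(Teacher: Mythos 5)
Your proof is correct, and it follows the same decomposition–plus–radius-optimization strategy as the cited Lemma~A.1 in \cite{BBH93}: split $u$ on a ball into a Newtonian potential of $f$ and a harmonic remainder, estimate each piece by the standard interior gradient bounds, and then choose the ball radius $r\in(0,d]$ to balance $r^{-1}\|u\|_{L^{\ift}}$ against $r\|f\|_{L^{\ift}}$, which is exactly what produces the mixed term $\|f\|_{L^{\ift}}\|u\|_{L^{\ift}}$. The only cosmetic difference is that you first normalize to the unit ball before decomposing, while the cited argument works directly on $B_r(x)$; this is immaterial.
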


\begin{lem}[\cite{NZ13}, Lemma 6]\label{au}
Let $ B_r\subset\R^3 $. Assume that $ u\in W^{1,\ift}(B_r)\cap C^0(\ol{B_r}) $ satisfies in the weak sense $
-L\Delta u+au\leq C_0 $ in $ B_r $,
where $ L,a>0 $ and $ C_0>0 $ are positive constants. For any $ \ga\geq 0 $, there holds the property that in $ B_r $,
\begin{align*}
u(x)&\leq\f{C_0}{a}+2\exp\(-\sqrt{\f{a}{L}}\f{r-|x|}{2}\)\sup_{\pa B_r}u^+\leq\f{C_0}{a}+C\min\left\{1,\f{L^{\ga}}{a^{\ga}(r-|x|)^{2\ga}}\right\}\sup _{\pa B_r}u^{+},
\end{align*}
where $ u^{+}=\max(u,0) $ and $ C>0 $ depends only on $ \ga $.
\end{lem}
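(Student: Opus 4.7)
The plan is a comparison argument against an explicit radial supersolution. First reduce to the homogeneous case: setting $v := u - C_0/a$, the weak inequality becomes $-L\Delta v + av \leq 0$ in $B_r$, with $v \leq M := \sup_{\pa B_r}u^+$ on $\pa B_r$. Writing $s := \sqrt{a/L}$, the main construction is the radial barrier
$$w(x) := \f{Mr}{\sinh(sr)} \cdot \f{\sinh(s|x|)}{|x|}, \qquad x \in \ol{B_r},$$
extended smoothly at the origin by $w(0) = Msr/\sinh(sr)$. Setting $g(\rho) := \sinh(s\rho)/\rho$, a direct computation using $\Delta g(|x|) = g''(\rho) + (2/\rho)g'(\rho)$ and $\sinh''(s\rho) = s^2\sinh(s\rho)$ gives $\Delta w = s^2 w$ in $B_r$, hence $-L\Delta w + aw \equiv 0$; moreover $w \geq 0$ and $w \equiv M$ on $\pa B_r$.

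Next apply the weak maximum principle. Since $-L\Delta(v-w) + a(v-w) \leq 0$ weakly in $B_r$ and $(v-w)^+ \in H_0^1(B_r)$ is an admissible non-negative test function, one obtains
$$\int_{B_r}\bigl(L|\na(v-w)^+|^2 + a[(v-w)^+]^2\bigr)\,\ud x \leq 0,$$
so $v \leq w$ a.e.\ in $B_r$, hence $u \leq C_0/a + w$ in $\ol{B_r}$ by continuity.

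The final task is to bound $w$ pointwise. The identity $\sinh(sr) - e^{s(r-|x|)}\sinh(s|x|) = e^{-s|x|}\sinh(s(r-|x|)) \geq 0$ (obtained by expanding both sides via the addition formula) yields the clean estimate $\sinh(s|x|)/\sinh(sr) \leq e^{-s(r-|x|)}$. For $|x| \geq r/2$ one has $r/|x| \leq 2$ directly, so $w(x) \leq 2Me^{-s(r-|x|)} \leq 2Me^{-s(r-|x|)/2}$. For $|x| < r/2$ one uses instead that $\rho \mapsto \sinh(s\rho)/\rho$ is non-decreasing on $(0,\infty)$ (a consequence of $t\cosh t \geq \sinh t$ for $t \geq 0$), which gives
$$w(x) \leq \f{Mr}{\sinh(sr)} \cdot \f{2\sinh(sr/2)}{r} = \f{M}{\cosh(sr/2)} \leq 2Me^{-sr/2} \leq 2Me^{-s(r-|x|)/2}$$
via the duplication formula $\sinh(sr) = 2\sinh(sr/2)\cosh(sr/2)$ and the elementary bound $\cosh t \geq e^t/2$. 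The second inequality in the lemma then follows from the first by combining the trivial pointwise bound $w \leq M$ with the elementary comparison $e^{-t/2} \leq C_{\ga} t^{-2\ga}$ for $t > 0$ (with $C_{\ga}$ depending only on $\ga$, since $t^{2\ga}e^{-t/2}$ attains a finite maximum on $(0,\infty)$), applied to $t = s(r-|x|)$.

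The only technical step is identifying the correct radial barrier — namely, the regular Helmholtz-type fundamental solution $\sinh(s|x|)/|x|$ — and verifying the two-case pointwise bound with the explicit constant $2$; there is no conceptual obstacle, and the reduction, the comparison via the weak maximum principle, and the passage from exponential to polynomial decay are all entirely standard.
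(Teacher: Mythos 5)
Your proof is correct. The paper cites this result from [NZ13, Lemma 6] without reproducing a proof, so there is no in-paper argument to compare against; the comparison against the regular radial Helmholtz supersolution $w(x)=\frac{Mr}{\sinh(sr)}\,\frac{\sinh(s|x|)}{|x|}$ (with $s=\sqrt{a/L}$) is the standard route for exactly this kind of exponential interior decay estimate. All the ingredients check out: $\Delta w=s^2 w$ by the 3D radial formula $\Delta(h(\rho)/\rho)=h''(\rho)/\rho$; the weak maximum principle applied to $(v-w)^+\in H_0^1(B_r)$ is legitimate since $v-w\in W^{1,\infty}(B_r)\cap C^0(\ol{B_r})$ with $v-w\le -C_0/a<0$ on $\pa B_r$; the identity $\sinh(sr)-e^{s(r-\rho)}\sinh(s\rho)=e^{-s\rho}\sinh(s(r-\rho))$ and the monotonicity of $\rho\mapsto\sinh(s\rho)/\rho$ together give the two-case bound $w\le 2Me^{-s(r-|x|)/2}$ with the sharp constant $2$; and the final passage to $C\min\{1,t^{-2\ga}\}$ with $t=s(r-|x|)$ is elementary since $t^{2\ga}e^{-t/2}$ is bounded on $(0,\infty)$ for each $\ga\ge 0$.
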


\begin{cor}[\cite{NZ13}, Lemma 7]\label{aub}
Let $ U $ be a bounded Lipschitz domain with $ r_{U,0} $ and $ M_{U,0} $. Assume that $ u\in W^{1,\ift}(U\cap B_r)\cap C^0(\ol{ U\cap B_r}) $ satisfies in the weak sense $ -L\Delta u+au\leq C_0 $ in $  U\cap B_r $ and $ u=0 $ on $ \pa U\cap B_r $ with $ 0<r<r_{U,0} $, where $ L,a>0 $ and $ C_0>0 $ are positive constants. For any $ \ga\geq 0 $,
$$
u(x)\leq\f{C_0}{a}+C\min\left\{1,\f{L^{\ga}}{a^{\ga}(r-|x|)^{2\ga}}\right\}\sup_{ U\cap\pa B_r(x_0)}u^+\text{ in }\om\cap B_r,
$$
where $ C>0 $ depends only on $ \ga $.
\end{cor}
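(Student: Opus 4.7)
The plan is to reduce Corollary \ref{aub} to Lemma \ref{au} via a zero-extension argument, after first removing the inhomogeneous right-hand side $C_0$ by a Kato truncation. First I would set $v:=u-C_0/a$ and form $w:=v^+=(u-C_0/a)^+$; by Kato's inequality, $w$ belongs to $W^{1,\infty}(U\cap B_r(x_0))\cap C^0(\overline{U\cap B_r(x_0)})$, is nonnegative, weakly satisfies the homogeneous inequality $-L\Delta w+aw\leq 0$, and vanishes on $\partial U\cap B_r(x_0)$ because $u=0$ there while $C_0/a>0$.

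Next I would extend $w$ by zero across $\partial U\cap B_r(x_0)$, defining $\tilde w:=w$ on $\overline{U\cap B_r(x_0)}$ and $\tilde w:=0$ elsewhere in $B_r(x_0)$. Continuity of $w$ together with $w|_{\partial U\cap B_r(x_0)}=0$ and the Lipschitz regularity of $\partial U$ guarantee $\tilde w\in W^{1,\infty}(B_r(x_0))\cap C^0(\overline{B_r(x_0)})$. The core claim of this step is that $\tilde w$ weakly satisfies $-L\Delta\tilde w+a\tilde w\leq 0$ on the full ball. Formally, testing against $\varphi\in C_c^\infty(B_r(x_0))$ with $\varphi\geq 0$ and applying Green's formula on $U\cap B_r(x_0)$ yields a volume term (nonpositive by Step~1) plus a boundary contribution $L\int_{\partial U\cap B_r(x_0)}\partial_{\nu_U}w\cdot\varphi\,d\mathcal{H}^2$, where $\nu_U$ denotes the outward unit normal to $U$. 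Since $w\geq 0$ in $U$ attains its minimum value zero along $\partial U\cap B_r(x_0)$, one has $\partial_{\nu_U}w\leq 0$ there; hence the boundary term is also nonpositive, proving the claim. Applying Lemma \ref{au} to $\tilde w$ with vanishing inhomogeneous constant then delivers
\[
\tilde w(x)\leq C\min\Bigl\{1,\frac{L^\gamma}{a^\gamma(r-|x-x_0|)^{2\gamma}}\Bigr\}\sup_{\partial B_r(x_0)}\tilde w^+.
\]
Because $\tilde w$ vanishes on $\partial B_r(x_0)\setminus U$, the supremum on the right reduces to $\sup_{U\cap\partial B_r(x_0)}w\leq\sup_{U\cap\partial B_r(x_0)}u^+$, and the bound $u\leq C_0/a+\tilde w$ in $U\cap B_r(x_0)$ closes the argument.

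The principal obstacle is the rigorous justification of the weak inequality for $\tilde w$ at $W^{1,\infty}$ regularity on the Lipschitz domain $U\cap B_r(x_0)$: the Green's-formula computation above must be upgraded to allow $\partial_{\nu_U}w$ only in a distributional/trace sense. The cleanest route is to mollify $w$ inside $U\cap B_r(x_0)$, observe that the nonnegativity of $w$ together with $w|_{\partial U\cap B_r(x_0)}=0$ forces the limiting distributional normal derivative to sit in the right half-line, and then pass to the limit in the test identity. Alternatively, one may compute $\Delta\tilde w$ directly as a distribution on $B_r(x_0)$, decompose it into its absolutely-continuous bulk part plus a singular measure supported on $\partial U\cap B_r(x_0)$, and verify that the latter has the correct sign thanks to $w\geq 0$ and its vanishing trace. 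Once this is in hand, Steps~1 and 3 are immediate from Kato's inequality and Lemma \ref{au}, respectively.
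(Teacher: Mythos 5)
The paper states this corollary as a citation to \cite{NZ13} (Lemma 7) and supplies no in-paper proof, so there is no argument here to compare against; I will evaluate your proposal on its own merits. Your derivation from Lemma \ref{au} --- subtract $C_0/a$, take the positive part via Kato's inequality, extend by zero across $\partial U\cap B_r$, then apply the interior lemma with vanishing source --- is correct, and the bookkeeping at the end (namely $u\le C_0/a+\tilde w$ and $\sup_{\pa B_r}\tilde w^+\le\sup_{U\cap\pa B_r}u^+$) closes the argument as you say.

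One point worth making, because it removes the ``principal obstacle'' you flag in Step 4: the issue of controlling the sign of $\pa_{\nu_U}w$ across a merely Lipschitz boundary does not actually arise in this setting. Since $C_0>0$ and $u=0$ on $\pa U\cap B_r$, the function $v=u-C_0/a$ equals $-C_0/a<0$ on $\pa U\cap B_r$; by continuity of $u$ the set $\{v<0\}$ is relatively open in $\ol{U\cap B_r}$ and contains $\pa U\cap B_r$, so $w=v^+$ vanishes identically on an open neighborhood of $\pa U\cap B_r$ in $\ol{U\cap B_r}$. For any $0\le\vp\in C_c^\ift(B_r)$, $\supp\vp\cap\pa U$ is then a compact subset of $\pa U\cap B_r$ that is surrounded by a full tubular neighborhood (inside $\ol{U\cap B_r}$) on which $w\equiv0$. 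Consequently $\tilde w\in W^{1,\ift}(B_r)$ for free, and the Green's-formula boundary term in your computation is identically zero rather than merely of the right sign: the weak inequality $-L\Delta\tilde w+a\tilde w\le0$ on $B_r$ reduces immediately to the interior inequality for $w$ tested against a function supported away from $\pa U$, with no mollification or distributional-trace argument required. Stating this observation would make your Step 4 a triviality rather than the delicate point you present it as.
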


\begin{cor}\label{corau}
Let $ r>0 $. Assume that $ u\in W^{1,\ift}(B_r)\cap C^0(\ol{B_r}) $ and $ F\in L^{\ift}(B_r) $ satisfy in the weak sense $
-L\Delta u+au=F $ in $ B_r $, where $ L,a>0 $ are positive constants. For $ \ga\geq 0 $, there holds the property that in $ B_r $,
\begin{align*}
|u(x)|&\leq\f{1}{a}\|F\|_{L^{\ift}(B_r)}+2\exp\(-\sqrt{\f{a}{L}}\f{r-|x|}{2}\)\sup_{\pa B_r}\|u\|_{L^{\ift}(\pa B_r)}\\
&\leq\f{1}{a}\|F\|_{L^{\ift}(B_r)}+C\min\left\{1,\f{L^{\ga}}{a^{\ga}(r-|x|)^{2\ga}}\right\}\sup_{\pa B_r}\|u\|_{L^{\ift}(\pa B_r)},
\end{align*}
where $ C>0 $ depends only on $ \ga $.
\end{cor}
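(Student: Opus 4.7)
The strategy is to reduce the statement to a direct double application of Lemma \ref{au}. Since the equation $-L\Delta u + au = F$ is linear and $F$ is bounded, one has the two pointwise inequalities
\begin{equation*}
-L\Delta u + au \le \|F\|_{L^\infty(B_r)}, \qquad -L\Delta(-u) + a(-u) \le \|F\|_{L^\infty(B_r)}
\end{equation*}
both in the weak sense in $B_r$. Moreover, because $u \in W^{1,\infty}(B_r)\cap C^0(\overline{B_r})$, both $u$ and $-u$ satisfy the regularity hypotheses required by Lemma \ref{au}.

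The first step is to apply Lemma \ref{au} to $u$ with constant $C_0 = \|F\|_{L^\infty(B_r)}$, which yields, for every $x\in B_r$,
\begin{equation*}
u(x)\le \frac{\|F\|_{L^\infty(B_r)}}{a} + 2\exp\!\left(-\sqrt{\tfrac{a}{L}}\tfrac{r-|x|}{2}\right)\sup_{\partial B_r} u^+.
\end{equation*}
Applying the same lemma to $-u$ gives the companion estimate with $u^+$ replaced by $(-u)^+=u^-$. Since $\sup_{\partial B_r} u^+\le \|u\|_{L^\infty(\partial B_r)}$ and similarly for $u^-$, combining the two bounds produces the announced two-sided inequality
\begin{equation*}
|u(x)| \le \frac{\|F\|_{L^\infty(B_r)}}{a}+2\exp\!\left(-\sqrt{\tfrac{a}{L}}\tfrac{r-|x|}{2}\right)\|u\|_{L^\infty(\partial B_r)},
\end{equation*}
which is exactly the first line of the conclusion.

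For the second line, I would use the elementary estimate: for every $\gamma\ge 0$ there exists $C=C(\gamma)>0$ with $e^{-t}\le C\min\{1,t^{-2\gamma}\}$ for all $t\ge 0$. Applying this with $t=\tfrac12\sqrt{a/L}\,(r-|x|)$ upgrades the exponential factor to $C\min\{1, L^\gamma a^{-\gamma}(r-|x|)^{-2\gamma}\}$, which is the second bound. No genuine obstacle is expected here: the entire argument is a linearity-plus-sign-flip reduction to the subsolution estimate already established in Lemma \ref{au}, and the final step is a purely scalar inequality for the exponential function.
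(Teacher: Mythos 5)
Your proposal is correct and follows the same route as the paper: apply Lemma \ref{au} to both $u$ and $-u$ with $C_0=\|F\|_{L^\infty(B_r)}$ and combine. The only minor redundancy is that you re-derive the scalar inequality $e^{-t}\le C\min\{1,t^{-2\gamma}\}$, whereas Lemma \ref{au} already states both the exponential bound and its $\min$-form consequence, so that step can simply be cited.
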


\begin{proof}
By the assumption, we can obtain 
$$ 
\max\{-L\Delta u+au,-L\Delta(-u)+a(-u)\}\leq\|f\|_{L^{\ift}(B_r)}:=C_0. 
$$
The result follows directly from Lemma \ref{au}.
\end{proof}

\begin{lem}[\cite{NZ13}, Lemma 11]\label{LempRe}
Let $ U $ be a bounded $ C^{2,1} $ domain with $ r_{U,2} $ and $ M_{U,2} $. Assume that $ 0<r<r_{U,2} $, $ F\in L^p(U\cap B_r) $, $ p>3 $, $ g\in C^2(\pa U\cap B_r) $ and $ u\in W^{2,p}(U\cap B_r) $ satisfy
$$
\left\{\begin{aligned}
-a_{ij}(x)\pa_{ij}^2u+b_i(x)\pa_iu&=F&\text{ in }& U\cap B_r,\\
u&=g&\text{ on }&\pa U\cap B_r,
\end{aligned}\right.
$$
where the coefficients $ a_{ij} $ and $ b_i $ are continuous and satisfy $ \mu^{-1}\I\leq(a_{ij})\leq\mu\I $, $ \|b_i\|_{\ift}\leq\mu $ for some $ \mu>0 $ and for any $ x,y\in U\cap B_r $,
$$
\sum_{i,j=1}^3|a_{ij}(x)-a_{ij}(y)|+\sum_{i=1}^3|b_i(x)-b_i(y)|\leq\w(|x-y|),
$$
where $ \w:[0,+\ift)\to[0,+\ift) $ is a continuous increasing function with $ \w(0)=0 $. There hold
\begin{align*}
\|\na u\|_{L^{\ift}(U\cap B_{r/2})}&\leq C\|\na_{\pa U}g\|_{L^{\ift}(\pa  U\cap B_r)}+r\|D_{\pa U}^2g\|_{L^{\ift}(\pa  U\cap B_r)}\\
&\quad\quad+Cr^{-3/2}\|\na u\|_{L^2(U\cap B_r)}+Cr^{1-3/p}\|F\|_{L^p(U\cap B_r)},
\end{align*}
where $ C>0 $ depends only on $ \mu,p,r_{U,2},M_{U,2} $ and $ \w $.
\end{lem}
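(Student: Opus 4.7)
The estimate is a boundary $C^1$ gradient bound for a linear second-order elliptic equation with continuous coefficients, and the proof follows the standard path of flattening, extending, and applying a Calder\'on--Zygmund-type $W^{2,p}$ estimate combined with Sobolev embedding. I would proceed in four stages, with the book-keeping of scaling being the only delicate point.

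First, I would rescale. Set $\tilde u(y)=u(x_0+ry)$, $\tilde a_{ij}(y)=a_{ij}(x_0+ry)$, $\tilde b_i(y)=r\,b_i(x_0+ry)$, $\tilde F(y)=r^{2}F(x_0+ry)$, and $\tilde g(y)=g(x_0+ry)$. Then $\tilde u$ solves the same type of equation on $(\tilde U)\cap B_1$ with ellipticity constant still $\mu$, coefficient $\tilde b_i$ uniformly bounded by $r\mu\le r_{U,2}\mu$, and continuity modulus $\w$ unchanged (it is only dilated, which does not enlarge it). The target estimate reduces to
\[
\|\nabla \tilde u\|_{L^{\infty}((\tilde U)\cap B_{1/2})}
\le C\bigl(\|\nabla_{\partial\tilde U}\tilde g\|_{L^\infty}+\|D^2_{\partial\tilde U}\tilde g\|_{L^\infty}+\|\nabla \tilde u\|_{L^2}+\|\tilde F\|_{L^p}\bigr),
\]
because the scaling factors $r^{-3/2}$ and $r^{1-3/p}$ correspond exactly to the $L^2$- and $L^p$-scaling of $\nabla u$ and $F$ under $x=x_0+ry$ in three dimensions.

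Second, I would flatten the boundary. Because $U$ is $C^{2,1}$ with constants $r_{U,2},M_{U,2}$, there exists a $C^{2,1}$ diffeomorphism $\Phi$ from $(\tilde U)\cap B_1$ to a Lipschitz graph region (upper-half ball) whose derivatives up to order two are bounded in terms of $M_{U,2}$ only, with inverse of the same quality. Pulling the equation back by $\Phi$ produces another equation $-\bar a_{ij}\partial^2_{ij}\bar u+\bar b_i\partial_i\bar u=\bar F$ on the flat half-ball, with coefficients of the same continuity modulus (possibly with a universal rescaling) and with boundary data $\bar g=\tilde g\circ \Phi^{-1}$ defined on a flat piece. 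The norms of $\bar g$ in $C^2$, of $\bar u$ in $H^1$, and of $\bar F$ in $L^p$ are all equivalent to the corresponding norms of $\tilde g,\tilde u,\tilde F$, with constants depending only on $M_{U,2}$.

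Third, I would extend $\bar g$ and apply the global $W^{2,p}$ estimate. Using a standard extension (for instance by reflection across the flat boundary combined with a cut-off, or by Whitney extension), one produces $G\in C^2$ on the half-ball with $\|G\|_{C^2}\le C(\|\nabla_{\partial} \bar g\|_\infty+\|D^2_{\partial}\bar g\|_\infty)$ and $G=\bar g$ on the flat boundary. Setting $v=\bar u-G$, the function $v$ vanishes on the flat part of the boundary and satisfies $-\bar a_{ij}\partial^2_{ij}v+\bar b_i\partial_i v=\bar F+\bar a_{ij}\partial^2_{ij}G-\bar b_i\partial_i G$ in the half-ball. Since the coefficients are continuous with modulus $\w$ (hence in particular in $\mathrm{VMO}$) and the flat boundary is smooth, the Chiarenza--Frasca--Longo global $W^{2,p}$ estimate for the Dirichlet problem, applied on concentric half-balls $B_{3/4}\supset B_{1/2}$, yields
\[
\|v\|_{W^{2,p}(B_{1/2}\cap\bH)}
\le C\bigl(\|\bar F\|_{L^p}+\|G\|_{W^{2,p}}+\|v\|_{L^{p}(B_{3/4}\cap\bH)}\bigr).
\]
The term $\|v\|_{L^p}$ is controlled by Poincar\'e on $B_{3/4}\cap\bH$ (since $v$ vanishes on the flat part of the boundary) by $C\|\nabla v\|_{L^2}\le C(\|\nabla \bar u\|_{L^2}+\|\nabla G\|_{L^\infty})$, giving the desired right-hand side after embedding $L^2\hookrightarrow L^p$ on a unit ball.

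Fourth, I invoke the Sobolev embedding $W^{2,p}\hookrightarrow C^{1}$ for $p>3$, which bounds $\|\nabla v\|_{L^\infty(B_{1/2}\cap\bH)}$ by $\|v\|_{W^{2,p}}$, and hence $\|\nabla \bar u\|_{L^\infty}\le \|\nabla v\|_{L^\infty}+\|\nabla G\|_{L^\infty}$ is under control. Transferring back through $\Phi$ and unscaling in $r$ produces the claimed inequality, with the precise powers $r^{-3/2}$ and $r^{1-3/p}$ tracking the natural scalings of the $L^2$ and $L^p$ norms in $\R^3$.

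The main obstacle is the third step: one must apply a genuinely global (interior + boundary) $W^{2,p}$ estimate with merely continuous coefficients, not classical Schauder theory, because the regularity of $a_{ij}$ is only a modulus $\w$ rather than H\"older. This is exactly what the VMO-coefficient theory of Chiarenza--Frasca--Longo provides, and the continuity of $a_{ij},b_i$ quantified by $\w$ gives membership in VMO with a modulus depending only on $\w$, which is what allows the constant $C$ to depend only on $\mu,p,r_{U,2},M_{U,2},\w$ as claimed. Every other step is a routine scaling or embedding argument.
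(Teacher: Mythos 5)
This lemma is cited from \cite{NZ13} and the paper does not reproduce a proof, so there is nothing in the present paper to compare your proposal against; I therefore evaluate the argument on its own terms.

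Your structure --- rescale to $r=1$, flatten the $C^{2,1}$ boundary, extend the boundary datum to a $C^2$ function $G$ on the half-ball, set $v=\bar u-G$, apply the global $W^{2,p}$ estimate to the Dirichlet problem on concentric half-balls, and finish by the Sobolev embedding $W^{2,p}\hookrightarrow C^1$ for $p>3$ --- is the standard route and the scaling computations are right (the powers $r^{-3/2}$ and $r^{1-3/p}$ do come from the $L^2$- and $L^p$-scalings of $\nabla u$ and $F$ in dimension $3$). A remark: invoking the Chiarenza--Frasca--Longo VMO theory is heavier machinery than necessary here, since the coefficients are assumed to have a genuine modulus of continuity $\omega$; the classical $L^p$ estimates for non-divergence form operators with uniformly continuous coefficients (e.g.\ Gilbarg--Trudinger, Theorems 9.11 and 9.13, proved by freezing coefficients) already give the boundary $W^{2,p}$ bound with a constant depending only on $\mu$, $p$, the dimension and $\omega$.

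The one genuine gap is the last estimate in your third step. You control the lower-order term $\|v\|_{L^p(B_{3/4}\cap\bH)}$ by the chain $\|v\|_{L^p}\le C\|\nabla v\|_{L^2}$ followed by ``embedding $L^2\hookrightarrow L^p$''. That embedding runs the wrong way on a bounded domain when $p>2$, and the Poincar\'e--Sobolev inequality in $\R^3$ gives $\|v\|_{L^p}\le C\|\nabla v\|_{L^2}$ only for $p\le 6$ (since $H^1\hookrightarrow L^6$). For the statement's full range $p>3$ this includes $p>6$, where the step fails. The standard repair is interpolation with absorption: for any $\varepsilon>0$,
$$
\|v\|_{L^p(B_{3/4}\cap\bH)}\le\varepsilon\|D^2v\|_{L^p(B_{3/4}\cap\bH)}+C(\varepsilon)\|v\|_{L^2(B_{3/4}\cap\bH)},
$$
then move the $\varepsilon\|D^2 v\|_{L^p}$ term to the left side of the $W^{2,p}$ estimate and bound $\|v\|_{L^2}$ by $C\|\nabla v\|_{L^2}$ via Poincar\'e using the vanishing of $v$ on the flat boundary. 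Alternatively, bootstrap through a finite chain of exponents starting from $p=2$. With that correction the argument closes for all $p>3$.
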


\begin{lem}\label{Boundarymu}
Let $ U $ be a bounded $ C^{2,1} $ domain with $ r_{U,2} $ and $ M_{U,2} $. Assume that $ u\in C^2(\ol{U\cap B_r}) $ satisfies
$$
\left\{\begin{aligned}
|\Delta u(x)|&\leq C_0\dist^{-\mu-1}(x,\pa U)&\text{ in }& U\cap B_r,\\
u&=0&\text{ on }&\pa U\cap B_r,
\end{aligned}\right.
$$
for some $ \mu\in(0,1) $ and $ C_0>0 $. There exists $ C>0 $ depending only on $ r_{U,2},M_{U,2} $, and $ \mu $, such that
$$
|u(x)|\leq Cr^{-1}\|u\|_{L^{\ift}(U\cap B_r)}+CC_0\dist^{1-\mu}(x,\pa U)
$$
for any $ x\in  U\cap B_{r/2} $.
\end{lem}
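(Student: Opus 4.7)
The plan is to establish the bound by the classical comparison principle for $-\Delta$, using a barrier built from $d(x) := \dist(x,\partial U)$. The $C^{2,1}$ structure of $\partial U$ with parameters $r_{U,2},M_{U,2}$ ensures that $d\in C^{1,1}$ on the one-sided tubular neighborhood $\mathcal{N}_{\delta_0} := \{x\in U : d(x) < \delta_0\}$ with $|\nabla d|\equiv 1$ and $\|\Delta d\|_{L^\infty(\mathcal N_{\delta_0})} \leq M_1$, for some $\delta_0, M_1$ depending only on $r_{U,2},M_{U,2}$. Setting $\delta_1 := \min\{\delta_0,\mu/(2M_1)\}$, a direct computation yields
\[
-\Delta(d^{1-\mu}) \;=\; \mu(1-\mu)\,d^{-1-\mu} \,-\, (1-\mu)\,d^{-\mu}\,\Delta d \;\geq\; \tfrac12\,\mu(1-\mu)\,d^{-1-\mu} \quad \text{in } \mathcal{N}_{\delta_1},
\]
so $d^{1-\mu}$ is a local super-solution of the singular Poisson equation $-\Delta w \gtrsim d^{-1-\mu}$ near $\partial U$.

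Next, I would extend this to a global super-solution on $U\cap B_r$. Pick a concave, non-decreasing $C^2$-function $\eta\colon [0,\infty)\to\mathbb{R}$ equal to $s^{1-\mu}$ on $[0,\delta_1/2]$ and to $\delta_1^{1-\mu}$ on $[\delta_1,\infty)$, smoothly interpolated in between, and set $\tilde\psi(x) := \eta(d(x))$. Then $0 \leq \tilde\psi \leq d^{1-\mu}$, $\tilde\psi|_{\partial U} = 0$, and $-\Delta\tilde\psi = -\eta''(d) - \eta'(d)\Delta d$ is bounded below by $\tfrac12\mu(1-\mu)d^{-1-\mu}$ in $\mathcal{N}_{\delta_1/2}$ and by a constant $-C_1$ throughout $U$ (by the uniform $L^\infty$-bounds on $\eta',\eta'',\Delta d$). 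Let $\phi\in C^2(U\cap B_r)\cap C(\overline{U\cap B_r})$ solve $-\Delta\phi = 1$ in $U\cap B_r$ with $\phi = 0$ on $\partial(U\cap B_r)$; standard maximum principle and boundary Lipschitz estimates give $0 \leq \phi \leq Cr^2$ globally and the Hopf-type bound $\phi(x) \leq Cr\, d(x,\partial U)$ for $x\in U\cap B_{r/2}$ near $\partial U$. Set $\Psi := A_1\tilde\psi + A_2\phi$ and choose $A_1 \geq 2/(\mu(1-\mu))$ and $A_2 \geq A_1 C_1 + (\delta_1/2)^{-1-\mu}$, so that $-\Delta\Psi \geq d^{-1-\mu}$ holds throughout $U\cap B_r$; combining the above with the observation that $d(x) \leq C\,d(x)^{1-\mu}$ on the bounded set $U\cap B_{r/2}$, one checks $\Psi(x) \leq C_2\, d(x,\partial U)^{1-\mu}$ there, with $C_2$ depending only on $r_{U,2},M_{U,2},\mu$.

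Finally, I would apply the comparison principle to
\[
\Phi(x) \;:=\; C_0\,\Psi(x) \;+\; B\,r^{-1}\|u\|_{L^\infty(U\cap B_r)},
\]
where $B$ is chosen (depending on $r_{U,2}$) so that $B/r \geq 1$ for all admissible $r < r_{U,2}$. Then $-\Delta\Phi \geq C_0\,d^{-1-\mu} \geq |\Delta u|$ in $U\cap B_r$, $\Phi \geq 0 = u$ on $\partial U\cap B_r$, and $\Phi \geq Br^{-1}\|u\|_{L^\infty} \geq \|u\|_{L^\infty} \geq |u|$ on $U\cap\partial B_r$. The comparison principle applied to $\pm u - \Phi$ yields $|u| \leq \Phi$ throughout $U\cap B_r$; restricting to $x\in U\cap B_{r/2}$ and invoking $\Psi(x) \leq C_2\, d(x,\partial U)^{1-\mu}$ gives the claimed inequality.

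The main technical obstacle is the patching: one must verify that the cutoff $\eta$ can be chosen concave on the transition interval $[\delta_1/2,\delta_1]$ with suitably controlled $\eta', \eta''$, and that the deficit in $-\Delta\tilde\psi$ there (coming from the mixed term $\eta'(d)\Delta d$) is compensated by the $A_2\phi$ contribution \emph{without} spoiling the pointwise bound $\Psi \lesssim d^{1-\mu}$ on $U\cap B_{r/2}$ --- this is precisely where the boundary Lipschitz decay $\phi(x) \leq Cr\, d(x,\partial U)$ is crucial, as it allows the auxiliary term $A_2\phi$ to be absorbed into the $d^{1-\mu}$ part of the final estimate. Once this bookkeeping is settled, the comparison argument closes cleanly.
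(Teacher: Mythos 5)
Your proposal is correct, but constructs the barrier along a genuinely different route from the paper. The paper works in a single boundary chart, sets $v(y):=y_3-\psi(y_1,y_2)$ (the graph height, comparable to $\dist(\cdot,\pa U)$ and explicitly $C^{2,1}$), and invokes a preliminary covering/rescaling step so that $r_{U,2}$ and $\|\na\psi\|_{L^{\ift}}$ may be assumed small enough that $v^{1-\mu}$ alone is a supersolution of $-\Delta w\gtrsim C_0\dist^{-1-\mu}$ on \emph{all} of $U\cap B_r$; the outer boundary $U\cap\pa B_r$ is then handled by adding $\|u\|_{L^{\ift}}w$, where $w$ is harmonic with data $0$ on $\pa U\cap B_r$ and $1$ on $U\cap\pa B_r$. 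You instead work directly with $d=\dist(\cdot,\pa U)$, truncate $d^{1-\mu}$ by a concave cutoff $\eta$ so the singular computation is confined to the tube where $d\in C^{1,1}$, use the torsion function $\phi$ (solving $-\Delta\phi=1$) to restore the supersolution inequality away from $\pa U$, and handle the outer boundary by the additive constant $Br^{-1}\|u\|_{L^{\ift}}$ rather than a harmonic function. The paper's version is shorter once the covering/smallness step is granted and keeps every barrier ingredient classically $C^2$; yours avoids the covering reduction entirely, which makes the dependence of $C$ on $r_{U,2},M_{U,2},\mu$ more transparent, at the cost of the cutoff bookkeeping you flagged and the minor observation that $\tilde\psi=\eta(d)$ is only $W^{2,\ift}$ in the tube (harmless: $\pm u-\Phi$ remains distributionally subharmonic, so the maximum principle still applies). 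One simplification worth noting: you do not actually need the sharp pointwise bound $\tilde\psi\leq d^{1-\mu}$; since $\eta$ is bounded and equals $s^{1-\mu}$ near $0$, one automatically has $\tilde\psi\leq C\,d^{1-\mu}$ with $C$ depending only on $\delta_1,\mu$, which is all the final step uses, so the constant level of $\eta$ on $[\delta_1,\ift)$ may be chosen freely and the delicate interpolation constraints you identified can be sidestepped.
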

\begin{proof}
Without loss of generality, we only need to prove
\be
u(y)\leq Cr^{-1}\|u\|_{L^{\ift}(U\cap B_r)}+CC_0\dist^{1-\mu}(y,\pa U)\label{Oneside}
\ee
for any $ y\in  U\cap B_{r/2} $. For otherwise, we apply the estimate \eqref{Oneside} to $ -u $. Also, by basic covering arguments and the change of coordinate systems, we can assume that $ r_{U,2}>0 $ and $ C_1:=\|\na\psi\|_{L^{\ift}(\R^2)}>0 $ are sufficiently small, such that
$$
\dist(y,\pa U)\leq y_3-\psi(y_1,y_2)\leq 2\dist(y,\pa U)
$$
in $  U\cap B_r $ when $ 0<r<r_{U,2} $, where $ \psi $ is given by \eqref{UcapB}. Let $ w $ be the solution of 
$$
\left\{\begin{aligned}
\Delta w&=0&\text{ in }& U\cap B_r,\\
w&=0&\text{ on }&\pa U\cap B_r,\\
w&=1&\text{ on }& U\cap\pa B_r.
\end{aligned}\right.
$$
By standard boundary $ W^{1,\ift} $ estimates and the maximum principle, we have
$$
0\leq w(y)\leq\|\na w\|_{L^{\ift}(U\cap B_{r/2})}\dist(y,\pa U)\leq Cr^{-1}\|w\|_{L^{\ift}(U\cap B_r)}\dist(y,\pa U)\leq Cr^{-1}\dist(y,\pa U)
$$
for any $  y\in U\cap B_{r/2} $. Define
$$
w_1(y):=\|u\|_{L^{\ift}(U\cap B_r)}w(y)+4C_0(\mu-\mu^2)^{-1}(y_3-\psi(y_1,y_2))^{1-\mu}.
$$
Obviously, we have
\begin{align*}
-\Delta w_1(y)&=4C_0(y_3-\psi(y_1,y_2))^{-\mu-1}(1+(\pa_1\psi)^2+(\pa_2\psi)^2)+4C_0\mu^{-1}(y_3-\psi(y_1,y_2))^{-\mu}(\pa_1^2\psi+\pa_2^2\psi)\\
&\geq 3C_0(y_3-\psi(y_1,y_2))^{-\mu-1},
\end{align*}
where for the inequality, we have used the fact that $ r_{U,2},C_1 $ are sufficiently small, (depending on $ \mu $). Thus, we have $ -\Delta(w_1-u)\geq 0 $ in $ U\cap B_r $ and $ w_1-u\geq 0 $ on $ \pa(U\cap B_r) $. The result follows directly from the maximum principle of subharmonic functions.
\end{proof}
\section{Approximation results}

Here, we will present several approximation results for $ H^1 $ maps from manifolds to manifolds. The first approximation result pertains to maps from a two-dimensional manifold to an arbitrary closed manifold.

\begin{lem}[\cite{SU83}, Proposition p.267]\label{SmoothApproximation}
Let $ \cX $ be a $ 2 $-dimensional compact Riemannian manifold with possibly non-empty $ C^1 $ boundary. Assume that $ \cY $ is a smooth compact Riemannian manifold without boundary. $ C^{\ift}(\cX,\cY) $ is dense in $ H^1(\cX,\cY) $.
\end{lem}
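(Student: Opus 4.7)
The plan is to follow the Schoen--Uhlenbeck strategy: mollify in an ambient Euclidean space and then project back onto $\cY$ via the nearest--point projection, using the special feature of dimension two (and the associated Poincar\'e inequality) to guarantee that the mollifications stay inside a tubular neighborhood of $\cY$ where the projection is smooth.

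First, by Nash's embedding theorem I may assume $\cY \hookrightarrow \R^N$ isometrically for some $N\in\Z_+$; by compactness and smoothness of $\cY$ there exists $\delta>0$ such that the nearest--point projection $\pi:\cY_\delta\to\cY$ is well defined and $C^\infty$ on the $\delta$--tubular neighborhood $\cY_\delta:=\{y\in\R^N:\dist(y,\cY)<\delta\}$, with uniformly bounded derivatives. Using a finite atlas of $C^1$ boundary/interior charts on $\cX$ together with a smooth partition of unity, I reduce the problem to approximating $u\in H^1(D,\cY)$, where $D$ is either a Euclidean disk $B_1^2$ or a half--disk $B_1^2\cap\{y_2>0\}$ with the boundary datum kept on $\{y_2=0\}$; the ambient manifold structure on $\cX$ adds only smooth coefficients and does not affect the density argument. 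I then extend $u$ by reflection to a map $\tilde u\in H^1(B_2^2,\R^N)$.

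Next, I form the standard mollifications $\tilde u_\epsilon:=\tilde u*\rho_\epsilon$ with $\rho_\epsilon$ a radial mollifier supported in $B_\epsilon^2$. Clearly $\tilde u_\epsilon\in C^\infty$ and $\tilde u_\epsilon\to u$ in $H^1(D,\R^N)$ as $\epsilon\to 0^+$. The key pointwise estimate is
\[
\dist^2(\tilde u_\epsilon(x),\cY)\le \fint_{B_\epsilon^2(x)}|\tilde u_\epsilon(x)-\tilde u(y)|^2\,dy
\le C\,\epsilon^{-2}\!\!\int_{B_\epsilon^2(x)}\!\!\!\!\int_{B_\epsilon^2(x)}\!|\tilde u(z)-\tilde u(y)|^2\,dy\,dz
\le C\!\int_{B_{2\epsilon}^2(x)}\!|\nabla\tilde u|^2\,dz,
\]
where the last bound is the two--dimensional Poincar\'e inequality (scale--invariant because the two powers of $\epsilon$ on each side cancel). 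This is precisely the place where $\dim\cX=2$ is used. Since $|\nabla\tilde u|^2\in L^1$, for every $\eta>0$ there exists $\epsilon_\eta>0$ such that $\int_{B_{2\epsilon}^2(x)}|\nabla\tilde u|^2<\eta$ for all $\epsilon<\epsilon_\eta$ and all $x$ outside a set $E_\eta$ with $|E_\eta|\to 0$.

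The main obstacle, and the most delicate step, is to promote this ``almost everywhere small'' distance into a genuinely uniform one so that the composition $\pi\circ\tilde u_\epsilon$ is well defined everywhere. I will handle this by the Schoen--Uhlenbeck dilation averaging: introduce a two--parameter family $v_{\epsilon,\tau}(x):=\fint_{B_\tau}\tilde u(x+\epsilon z)\,dz$ with $\tau\in[1/2,1]$ and average the distance estimate in $\tau$ via Fubini. For generic $\tau$ one then extracts a sequence $(\epsilon_k,\tau_k)$ along which $\sup_{x\in D_k}\dist(v_{\epsilon_k,\tau_k}(x),\cY)<\delta/2$ on an exhausting family of subdomains $D_k\Subset D$; away from these subdomains I splice the approximation back to $u$ through a partition of unity with a cutoff whose gradient is controlled by $|\nabla u|$ on the transition layer. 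The resulting map is $C^\infty$ on the interior and stays in $\cY_\delta$, so $u_k:=\pi\circ v_{\epsilon_k,\tau_k}$ belongs to $C^\infty(\cX,\cY)$. Finally, the $H^1$ convergence $u_k\to u$ in $H^1(\cX,\cY)$ follows from the chain rule, the uniform boundedness of $D\pi$ on $\cY_\delta$, the $H^1$ convergence of $v_{\epsilon_k,\tau_k}$ to $\tilde u$, and the fact that the splicing layer has vanishing $H^1$--energy contribution. Boundary conditions (when $\pa\cX\neq\emptyset$) are preserved because the mollification is carried out against the reflected extension and the composition with $\pi$ fixes points already lying on $\cY$.
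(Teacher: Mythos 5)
The paper does not supply a proof of this lemma: it is cited directly from Schoen--Uhlenbeck \cite{SU83}, so there is no in-paper argument to compare against. Your proposal follows the standard Schoen--Uhlenbeck strategy (isometrically embed $\cY$, reduce to charts, mollify in the ambient $\R^N$, exploit the scale-invariant two-dimensional Poincar\'e inequality, project by the nearest-point map), and the overall route is correct. Two remarks.

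First, the paragraph you flag as the ``main obstacle'' is a misdiagnosis, and the dilation-averaging device you invoke is unnecessary here. Your key estimate
$$\dist^2\bigl(\tilde u_\epsilon(x),\cY\bigr)\le C\int_{B_{2\epsilon}^2(x)}|\nabla\tilde u|^2\,dz$$
holds pointwise for \emph{every} $x$ in a fixed compact subset of the chart domain once $\epsilon$ is small. Since $|\nabla\tilde u|^2\in L^1$, the absolute continuity of the Lebesgue integral gives, for every $\eta>0$, an $\epsilon_0>0$ such that $\int_{B_{2\epsilon}^2(x)}|\nabla\tilde u|^2<\eta$ \emph{uniformly} in $x$ whenever $\epsilon<\epsilon_0$, because $|B_{2\epsilon}^2(x)|=4\pi\epsilon^2$ does not depend on $x$. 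There is no exceptional set $E_\eta$ to chase, no second parameter $\tau$ to average over, and no splicing layer: once $\epsilon$ is small enough the full mollification $\tilde u_\epsilon$ lies in $\cY_\delta$ everywhere, and $\pi\circ\tilde u_\epsilon$ is globally defined and smooth. (The dilation averaging you have in mind appears in Schoen--Uhlenbeck in the higher-dimensional partial regularity arguments, not in this $2$-dimensional density statement.) With this simplification the proof is shorter and cleaner.

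Second, a notational inconsistency in the displayed chain: the middle expression $C\epsilon^{-2}\iint_{B_\epsilon^2(x)\times B_\epsilon^2(x)}|\tilde u(z)-\tilde u(y)|^2$ does not have the same homogeneity as the two sides it sits between (the correct prefactor when both inner averages are written as plain integrals is $\epsilon^{-4}$ in dimension two). The endpoints and the scale-invariance remark are right, but the intermediate line should be fixed for consistency. Finally, when you glue chart-wise mollifications through a partition of unity, you are combining $\R^N$-valued (not $\cY$-valued) maps and only then projecting; to ensure the glued map stays in $\cY_\delta$ you should note that on chart overlaps the different mollifications both converge to $u$ in $L^2$ and are uniformly close to $\cY$, so their convex combination remains in $\cY_\delta$ for small $\epsilon$ --- this deserves a sentence rather than being left implicit.
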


Here the density of $ C^{\ift}(\cX,\cY) $ is in the norm of $ H^1(\cX,\R^k) $, where $ \cY $ is isometrically embedded into $ \R^k $. Indeed, in the original paper, for any $ u\in H^1(\cX,\cY) $, the authors construct a sequence $ u_{\delta}\in C^{\ift}(\cX,\cY) $ with $ \delta\in(0,1) $ such that $
\lim_{\delta\to 0^+}\|u-u_{\delta}\|_{H^1(\cX,\R^k)}=0 $. The aforementioned smooth approximation result is not applicable when the dimension of the manifold $ \cX $ exceeds $ 2 $. Further developments on this topic can be found in \cite{BZ88}. For $ \cX $ of higher dimensions, assuming it is simply connected, there exists a result that considers weak topological approximation.

\begin{lem}[\cite{PR03}, Theorem I]\label{weakApproximation}
Let $ \cX $ and $ \cY $ be compact smooth Riemannian manifolds and assume that $ \cX $ is simply connected. $ C^{\ift}(\cX,\cY) $ is dense in $ H^1(\cX,\cY) $ for the sequential weak topology, i.e. if $ \cY $ is isometrically embedded into $ \R^k $, then for and $ u\in H^1(\cX,\cY) $, there is $ u_k\in C^{\ift}(\cX,\cY) $ such that for any $ u_k\wc u $ when $ k\to+\ift $ in $ H^1(\cX,\R^k) $.
\end{lem}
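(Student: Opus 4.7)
The plan is to follow the strategy of Pakzad--Rivière: one first shows that an arbitrary $u\in H^1(\cX,\cY)$ can be strongly approximated in $H^1$ by maps that are smooth except on a set of codimension at least $2$, then removes those remaining singularities by inserting cancelling ``dipoles'' whose energy cost remains bounded but concentrates on thin sets, so that the final smooth maps converge weakly (not strongly) to $u$. The role of the hypothesis $\pi_1(\cX)=0$ is exactly to guarantee that the global topological charges carried by the singularities can be trivialised, so the dipole cancellation procedure closes up into a globally well-defined smooth map.

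First I would fix an isometric embedding $\cY\hookrightarrow\R^k$ and a tubular neighbourhood $\cV\subset\R^k$ on which the nearest point projection $\pi_\cY:\cV\to\cY$ is smooth. Mollifying $u$ componentwise gives $u_\delta\in C^\infty(\cX,\R^k)$ with $u_\delta\to u$ strongly in $H^1$. On a generic grid $\mathcal{G}_\delta$ of size comparable to $\delta$ (chosen by the Fubini/averaging argument as in the proof of Lemma~\ref{Luckhauslemma}), $u_\delta$ lies inside $\cV$ on the lower-dimensional skeleton $R_{m-1}^\delta$, so $\pi_\cY\circ u_\delta$ is a well-defined $\cY$-valued smooth map there. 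Extending this cell by cell (radially from a small number of bad points inside each top-dimensional cell, using homogeneous extension from the boundary) yields maps $v_\delta\in H^1(\cX,\cY)$ which are smooth outside a closed set $\Sigma_\delta\subset\cX$ of Hausdorff codimension $\ge 2$, and which satisfy $v_\delta\rightharpoonup u$ weakly with $\|v_\delta\|_{H^1}$ bounded. This is essentially the first half of the argument.

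The heart of the matter is removing $\Sigma_\delta$. Around each connected piece of $\Sigma_\delta$ the map $v_\delta$ has a well-defined topological type in $\pi_{m-1}(\cY)$ (for $m=\dim\cX$) read off from restrictions to small linking spheres. The simple connectedness of $\cX$ is used here in two ways: it allows one to join any two points of $\cX\setminus\Sigma_\delta$ by smooth paths along which the homotopy class carried from one end to the other is unambiguous, and it guarantees that the total topological charge vanishes (the ``global zero-sum'' identity in Bethuel--Brezis--Coron / Pakzad--Rivière). Consequently one can pair the singular pieces into oppositely charged dipoles, connect the members of each pair by an embedded tube of radius $r_\delta\to 0^+$, and on the boundary of such a tube insert a bubble representing the cancelling class. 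The $H^1$-energy of each bubble is $O(1)$ (independent of $r_\delta$), but it is supported in a set of volume $O(r_\delta^{m-1})\to 0$, so the modified map $w_\delta\in C^\infty(\cX,\cY)$ still has uniformly bounded $H^1$ norm and differs from $v_\delta$ only on a set that shrinks to $\Sigma_\delta$.

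This dipole construction is the main obstacle: showing that the tubes, bubbles and cancellations can be carried out globally (not just locally) without creating new singularities, and that the resulting energy is controlled uniformly in $\delta$. Once this is achieved, weak convergence $w_\delta\rightharpoonup u$ in $H^1(\cX,\R^k)$ follows by combining $v_\delta\rightharpoonup u$ with the fact that $w_\delta-v_\delta$ is supported in sets of measure $\to 0$, bounded in $H^1$, and hence converges weakly to $0$ by the Rellich--Kondrachov theorem and a standard subsequence argument. Together with the diagonal extraction of a single sequence $\va_n\to 0^+$ as $n\to+\infty$ this yields the claimed sequential weak density of $C^\infty(\cX,\cY)$ in $H^1(\cX,\cY)$.
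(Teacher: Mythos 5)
The paper does not prove Lemma~\ref{weakApproximation}; it is quoted verbatim as Theorem~I of \cite{PR03} and used as a black box (in the proof of Lemma~\ref{k3}). There is therefore no ``paper's own proof'' to compare your argument against, and what you have written is a sketch of what you believe the Pakzad--Rivi\`{e}re argument to be, not something the paper supplies.

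As a reconstruction of \cite{PR03}, your sketch has the right overall shape (approximate by maps smooth off a small singular set; use the topological hypothesis to connect and cancel singularities; conclude weak convergence), but several specifics are off. For $u\in H^1=W^{1,2}$ on an $m$-manifold with $m\ge 3$, the Bethuel-type projection on a generic grid gives maps smooth outside a set of \emph{codimension $3$} (points in $m=3$, curves in $m=4$, etc.), not codimension $\ge 2$ as you write; the link of that singular set is $\Ss^2$, so the local topological type lives in $\pi_2(\cY)$, not $\pi_{m-1}(\cY)$ --- the two agree only when $m=3$, which is the case relevant to this paper but not to the cited theorem in general. More importantly, the role of simple connectedness of $\cX$ in \cite{PR03} is not a pointwise ``dipole pairing'' argument. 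Pakzad and Rivi\`{e}re encode the topological singular set as an $(m-3)$-dimensional integral flat chain with coefficients in $\pi_2(\cY)$ and show it is a \emph{boundary}; $\pi_1(\cX)=0$ enters through the homological/homotopical identifications needed for that boundary statement to hold globally, after which the connection is filled by a ``minimal connection'' type construction rather than ad hoc tubes joining charge pairs. Your assertion that $\pi_1(\cX)=0$ makes ``the total topological charge vanish'' by a ``global zero-sum identity'' is therefore a heuristic gloss of a genuinely more delicate step, and as written it would not go through for higher-dimensional singular sets (where there is no discrete set of charges to sum). Finally, your last paragraph claiming $w_\delta - v_\delta \rightharpoonup 0$ ``by Rellich--Kondrachov'' is circular as stated: weak convergence to $0$ in $H^1$ for a bounded sequence supported on shrinking sets follows from testing against smooth functions and a density argument, not from compact embedding.

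None of this affects the paper, since it legitimately uses the result as cited; but if you intend your sketch as a proof, the codimension-$3$/flat-chain machinery and the precise boundary statement are indispensable and cannot be replaced by the informal dipole picture you describe.
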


\section{Some results on the operators}

\begin{lem}[\cite{NZ13}, Lemma 17]\label{multione}
Let $ \om\subset\R^3 $ be a bounded $ C^{2,1} $ domain. Consider the elliptic operator $
\mathcal{L}u:=\Delta u+b_{i}\pa_iu+cu $, where $ u\in H_0^1(\om) $, $ b_{i}\in W^{1,p}(\om) $ for some $ p\geq 2 $, $ c\in L^{\ift}(\om) $ for $ 1\leq i\leq 3 $. If $ \mathcal{L}:H_0^1(\om)\to H^{-1}(\om) $ is bijective, then $
\|u\|_{L^2(\om)}\leq C\|\mathcal{L}u\|_{(H^2(\om)\cap H_0^1(\om))'} $ for any $ u\in H_0^1(\om) $, where $ (H^2(\om)\cap H_0^1(\om))' $ denotes the dual space of $ H^2(\om)\cap H_0^1(\om) $, and $ C>0 $ is a constant that depends only on $ \om,b_i $ and $ c $.
\end{lem}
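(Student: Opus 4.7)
The strategy is a classical duality argument against the formal adjoint of $\mathcal{L}$. Define
\[
\mathcal{L}^* v := \Delta v - \pa_i(b_i v) + cv = \Delta v - b_i\pa_iv - (\pa_ib_i)v + cv,
\]
which is well-defined as a bounded operator $H_0^1(\om)\to H^{-1}(\om)$ under our hypotheses on $b_i,c$. For every $u\in H_0^1(\om)$ and $\vp\in H^2(\om)\cap H_0^1(\om)$, integration by parts (the boundary terms vanish since $\vp|_{\pa\om}=0$) gives the basic identity
\[
\langle \mathcal{L}u,\vp\rangle_{H^{-1},H_0^1} = \int_\om u\,\mathcal{L}^*\vp\,\ud x,
\]
and this identity extends the left-hand side to a bounded linear functional on $H^2(\om)\cap H_0^1(\om)$ for any $u\in L^2(\om)$, which is exactly the meaning of $\|\mathcal{L}u\|_{(H^2\cap H_0^1)'}$ that we need to exploit.

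The first main step is to show that $\mathcal{L}^*\colon H_0^1(\om)\to H^{-1}(\om)$ is bijective. Since $\mathcal{L}$ is a compact perturbation of $\Delta$ (the lower order terms give a compact map $H_0^1\to H^{-1}$), it is Fredholm of index $0$, and the same is true of $\mathcal{L}^*$. Standard functional-analytic duality then identifies $\ker(\mathcal{L}^*)$ with the annihilator of the range of $\mathcal{L}$. Since $\mathcal{L}$ is surjective by hypothesis, $\ker(\mathcal{L}^*)=\{0\}$, and by Fredholm alternative $\mathcal{L}^*$ is also an isomorphism.

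The second step, which is the main technical obstacle, is the $H^2$ regularity estimate
\[
\|v\|_{H^2(\om)} \leq C\|f\|_{L^2(\om)}\quad\text{whenever }\mathcal{L}^*v=f\in L^2(\om),
\]
with $C=C(\om,b_i,c)$. Rewriting the equation as $\Delta v = f + b_i\pa_iv + (\pa_ib_i)v - cv$, one applies standard $W^{2,q}$ theory on the $C^{2,1}$ domain $\om$. The right-hand side is a priori only in $L^{3/2}(\om)$ when $p=2$ (using $\pa_ib_i\in L^2$, $v\in L^6$, $b_i\in L^6$, $\pa_iv\in L^2$ via Sobolev embedding), so a bootstrap is required: first obtain $v\in W^{2,3/2}(\om)\hookrightarrow W^{1,3}(\om)\hookrightarrow L^q(\om)$ for every $q<\infty$, then reinsert to get $b_i\pa_iv, (\pa_ib_i)v\in L^2(\om)$ and conclude $v\in H^2(\om)$. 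This is where the assumption $b_i\in W^{1,p}(\om)$ with $p\geq 2$ is essential.

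Granting these two ingredients, the estimate follows at once. For any $f\in L^2(\om)$, let $v\in H^2(\om)\cap H_0^1(\om)$ be the unique solution of $\mathcal{L}^*v=f$. Then for any $u\in H_0^1(\om)$,
\[
\int_\om uf\,\ud x = \int_\om u\,\mathcal{L}^*v\,\ud x = \langle \mathcal{L}u,v\rangle_{(H^2\cap H_0^1)',\,H^2\cap H_0^1} \leq \|\mathcal{L}u\|_{(H^2\cap H_0^1)'}\|v\|_{H^2(\om)},
\]
and applying the regularity estimate bounds this by $C\|\mathcal{L}u\|_{(H^2\cap H_0^1)'}\|f\|_{L^2(\om)}$. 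Taking the supremum over $f\in L^2(\om)$ with $\|f\|_{L^2}=1$ yields $\|u\|_{L^2(\om)}\leq C\|\mathcal{L}u\|_{(H^2\cap H_0^1)'}$, as required.
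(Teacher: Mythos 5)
Your proof is the standard duality argument against the formal adjoint, which is the expected route for a statement of this kind (and is the approach in \cite{NZ13}); the paper itself only cites the result and does not reprove it. The reduction to bijectivity of $\mathcal{L}^*$ via compactness of the lower-order terms and the closed range theorem is correct, and the concluding duality pairing is clean.

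There is one imprecision in the bootstrap for the $H^2$ a priori estimate that you should repair. After obtaining $v\in W^{2,3/2}(\om)$ you correctly get $\nabla v\in L^3(\om)$ and $v\in L^q(\om)$ for every $q<\infty$, and hence $b_i\pa_iv\in L^6\cdot L^3\subset L^2$. However, $(\pa_ib_i)v$ with $\pa_ib_i\in L^2$ and $v\in L^q$ for all $q<\infty$ only lands in $L^s(\om)$ for every $s<2$, not in $L^2$ directly. One further iteration is required: from $v\in W^{2,s}(\om)$ with $3/2<s<2$, Morrey's embedding gives $v\in L^\infty(\om)$, after which $(\pa_ib_i)v\in L^2(\om)$ and the final $W^{2,2}$ step closes. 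You should also note explicitly that the quantitative bound $\|v\|_{H^2}\leq C\|f\|_{L^2}$ follows by tracking constants through this bootstrap together with the bijectivity of $\mathcal{L}^*$ on $H_0^1\to H^{-1}$ (which controls $\|v\|_{H_0^1}$ by $\|f\|_{H^{-1}}\leq C\|f\|_{L^2}$ and seeds the iteration). These are routine repairs and do not affect the soundness of your overall strategy.
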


\begin{lem}[\cite{NZ13}, Lemma 18, 19, and 20]\label{NZlem2}
Let $ \om\subset\R^3 $ be a bounded smooth domain. The following properties hold.
\begin{enumerate}
\item If $ R:\om\to\R $ satisfies $
|R(x)|\leq C_0\exp\{-C_1\dist(x,\pa\om)/\va\} $ for any $ x\in\om $ with $ 0<\va<1 $ and $ C_0,C_1>0 $, then $
\max\{\|R\|_{(H^2(\om)\cap H_0^1(\om))'},\|R\|_{H^s(\om)}\}\leq C\va^2 $ for any $ s\in[-2,-3/2) $, where $ \<R,\vp\>_{(H^2(\om)\cap H_0^1(\om))'\times(H^2(\om)\cap H_0^1(\om))}=\int_{\om}R\vp\ud x $ for any $ \vp\in H^2(\om)\cap H_0^1(\om) $ and $ C>0 $ depends only on $ \om,C_0 $, and $ C_1 $.
\item If $ R:\om\to\R $ satisfies $
|R(x)|\leq C_0/\dist(x,\pa\om) $ for any $ x\in\om $ with some $ C_0>0 $, then $
\|R\|_{H^{-1}(\om)}\leq C\va^2 $, where $ C>0 $ depends only on $ \om $ and $ C_0 $.
\end{enumerate}
\end{lem}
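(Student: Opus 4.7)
\textbf{Proof proposal for Lemma \ref{NZlem2}.}

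The plan is to work by duality throughout: one writes $\|R\|_{X'} = \sup_{\|\vp\|_X \leq 1}|\int_\om R\vp \,\ud x|$ with $X$ being $H^2(\om)\cap H_0^1(\om)$, $H^{|s|}_0(\om)$, or $H_0^1(\om)$, and exploits the fact that any admissible test function $\vp$ vanishes on $\pa\om$. The exponential decay of $R$ concentrates its mass in a tubular layer of width $O(\va)$ around $\pa\om$, and the vanishing of $\vp$ allows us to trade a factor of $\va$ for every order of boundary-vanishing we use. The main work is to parametrize the tubular neighborhood $U_\delta:=\{x\in\om:d(x)<\delta\}$ (with $d(x):=\dist(x,\pa\om)$) by coordinates $(y,t)\in\pa\om\times(0,\delta)$ via the normal flow, noting that $\mathrm{d}x=(1+O(t))\ud t\ud\HH^2(y)$ since $\pa\om$ is smooth, and to perform Taylor/Fubini manipulations in the $t$-variable while controlling the tangential factors by trace inequalities.

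For the $(H^2(\om)\cap H_0^1(\om))'$ estimate in part (1), I would fix $\vp\in H^2\cap H_0^1$ and split $\om=U_\delta\cup(\om\setminus U_\delta)$ with $\delta\sim\va|\log\va|$; on $\om\setminus U_\delta$, $|R|\leq C_0\exp(-C_1|\log\va|)\leq C\va^{10}$ (say) so that contribution is negligible. On $U_\delta$, Taylor-expand in the normal direction using $\vp(y,0)=0$:
\[
\vp(y,t)=t\,\pa_\nu\vp(y,0)+\int_0^t(t-s)\pa_s^2\vp(y,s)\ud s.
\]
Inserting this and using Fubini gives two contributions. The first,
\[
\int_{\pa\om}\pa_\nu\vp(y,0)\(\int_0^\delta tR(y,t)\ud t\)\ud\HH^2(y),
\]
is controlled by noting $|\int_0^\delta tR\,\ud t|\leq C\va^2$ pointwise in $y$ and applying the trace bound $\|\pa_\nu\vp\|_{L^2(\pa\om)}\leq C\|\vp\|_{H^2}$. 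The second piece rewrites as $\int_0^\delta\pa_s^2\vp(y,s)\tilde R(y,s)\ud s$ with $\tilde R(y,s):=\int_s^\delta(t-s)R(y,t)\ud t$ satisfying $|\tilde R|\leq C\va^2\exp(-C_1 s/\va)$; hence $\|\tilde R\|_{L^2(U_\delta)}\leq C\va^{5/2}$, and Cauchy--Schwarz with $\|\pa_s^2\vp\|_{L^2}\leq\|\vp\|_{H^2}$ yields an $O(\va^{5/2})$ bound. Both contributions are $\leq C\va^2\|\vp\|_{H^2}$, giving the claim.

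For $\|R\|_{H^s}$ with $s\in[-2,-3/2)$, test functions $\vp\in H^{|s|}_0(\om)$ vanish on $\pa\om$ (but $\pa_\nu\vp$ need not vanish). I would perform only the first integration by parts, obtaining $\int R\vp\,\ud x=\int\pa_t\vp(y,s)\widehat R(y,s)\,\ud s\ud\HH^2(y)$ with $\widehat R(y,s):=\int_s^\delta R(y,t)\ud t$, $|\widehat R|\leq C\va\exp(-C_1 s/\va)$, and then interpolate: one has $\|\widehat R\|_{L^2(U_\delta)}\leq C\va^{3/2}$ giving $\|R\|_{H^{-1}}\leq C\va^{3/2}$, while the $(H^2\cap H_0^1)'$ argument yields $\|R\|_{H^{-2}}\leq C\va^2$; complex interpolation between these endpoints gives $\|R\|_{H^s}\leq C\va^{2-(3/2+s)}\cdot\va^{3/2+2+s}=C\va^2$ uniformly for $s\in[-2,-3/2)$ (the endpoint $s=-3/2$ is excluded, matching the failure of embedding $H^{3/2}_0\hookrightarrow H_0^1$). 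For part (2) (interpreted so the estimate scales correctly, e.g., with the bound $|R(x)|\leq C_0\va^2/d(x)$ as used in the application in Theorem \ref{expansion}), Hardy's inequality $\|\vp/d\|_{L^2}\leq C\|\na\vp\|_{L^2}$ for $\vp\in H_0^1$ directly gives $|\int R\vp\,\ud x|\leq C\va^2\|\vp/d\|_{L^2}|\om|^{1/2}\leq C\va^2\|\vp\|_{H_0^1}$.

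The main obstacle will be handling the intermediate $H^s$ spaces cleanly: one must either carry out the interpolation by hand (e.g., via the $K$-method between $H^{-1}$ and $H^{-2}$, with the attendant check that $H^{|s|}_0$ is the interpolation space between $H_0^1$ and $H^2_0$ in the correct sense, which is subtle at the boundary exponent $|s|=3/2$), or avoid interpolation by redoing the integration-by-parts argument directly for fractional smoothness, using Besov-type boundary traces. A cleaner route may be to extend $R$ by zero to all of $\R^3$ and estimate $\|R\|_{H^s(\R^3)}$ via Fourier analysis, relying on the fact that $R\chi_\om$ inherits the $\va^2$-smallness once the boundary layer is resolved; this would bypass the boundary-regularity subtleties at $s=-3/2$.
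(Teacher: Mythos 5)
The paper does not supply its own proof of this lemma; it cites \cite{NZ13} (Lemmas 18, 19, 20), so the comparison is against what that argument has to look like rather than against text in the present paper. Your overall plan (duality, tubular coordinates, exploiting boundary vanishing of the test function via Taylor expansion or Hardy inequalities) is the right one and is in the same spirit as \cite{NZ13}. Your treatment of the $(H^2(\om)\cap H_0^1(\om))'$ bound is correct, and you correctly diagnose that part (2) as literally stated cannot be true (there is no $\va$ in the hypothesis); the intended statement has an $\va^2$ factor (cf.\ \eqref{Rndistesti}), and then Hardy gives the claim immediately.

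However, your interpolation argument for the $H^s$ bound, $s\in[-2,-3/2)$, contains a genuine gap. Interpolating between $\|R\|_{H^{-1}}\le C\va^{3/2}$ and $\|R\|_{H^{-2}}\le C\va^{2}$ gives, for $s=-1-\theta$ with $\theta=-1-s\in(1/2,1)$,
\[
\|R\|_{H^s}\le C\,\va^{\frac32(1-\theta)}\va^{2\theta}=C\,\va^{\frac32+\frac{\theta}{2}}=C\,\va^{1-\frac{s}{2}},
\]
and $1-s/2<2$ for every $s>-2$, so this is \emph{strictly weaker} than the claimed $\va^2$ (and indeed the arithmetic $\va^{2-(3/2+s)}\cdot\va^{3/2+2+s}$ in your write-up has an exponent of $4$, not $2$, which is inconsistent). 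The fix is to notice that for $|s|\in(3/2,2]$ the test functions in $H_0^{|s|}(\om)$ vanish on $\pa\om$ together with their normal derivative, so the fractional (Grisvard) Hardy inequality $\int_\om|\vp|^2/d^{2|s|}\le C\|\vp\|_{H^{|s|}}^2$ is available for $|s|\neq3/2$. Pairing this with $\|R\,d^{|s|}\|_{L^2}\le C\va^{|s|+1/2}$ gives $\|R\|_{H^s}\le C\va^{|s|+1/2}\le C\va^2$ for all $|s|\ge3/2$, uniformly; equivalently, one may interpolate from the \emph{sharper} endpoint $\|R\|_{H^{-2}}\le C\va^{5/2}$ (which also follows from the $d^4$-weighted Hardy inequality), and then interpolation with $\|R\|_{H^{-1}}\le C\va^{3/2}$ gives $\va^{1/2-s}\le\va^2$ on $[-2,-3/2)$. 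Either route removes the gap; the exclusion of $s=-3/2$ is precisely the failure of the Hardy inequality at the half-integer exponent (Lions--Magenes phenomenon), which you correctly flagged but did not resolve.
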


\section*{Acknowledgments}
The authors are partially supported by the National Key R\&D Program of China under Grant 2023YFA1008801 and NSF of China under Grant 12288101.

\end{document}